\title{On graphs containing few disjoint excluded minors. Asymptotic number and structure of graphs containing few disjoint minors $K_4$}
%\title{Random graphs containing few disjoint minors $K_4$ and graphs from related classes}
%\date{10 September 2013}
\date{28 October 2013}

  \documentclass[11pt]{article}

  \usepackage{amsmath}
  \usepackage{amssymb}
  \usepackage{latexsym}
  \usepackage{graphicx}

   \usepackage{color}
  %\usepackage[pdftex]{graphicx, color}  
  %\DeclareGraphicsRule{.pdftex}{pdf}{.pdftex}{}
 %\usepackage{type1cm}
 
  \usepackage{enumerate}
  \usepackage{setspace}

  \usepackage{cite} %sort citations when citing

  \usepackage{hyperref}
  \hypersetup{ %should work [hidelinks] in the new version, but not yet,
    colorlinks=false,
    pdfborder={0 0 0},
  }

  \newenvironment{proof}{\noindent{\bf Proof\,}}{\hspace*{\fill}$\Box$}
  \newenvironment{proofof}[1]{%
  \noindent {\bf Proof of #1}}%
  {\hspace*{\fill}$\Box$}

  \newtheorem{theorem}{Theorem}[section]
  \newtheorem{lemma} [theorem] {Lemma}%[section]
  \newtheorem{corollary} [theorem] {Corollary}%[section]
  \newtheorem{prop} [theorem] {Proposition}%[section]
  %[section]
  %[section]
  %[section]
  %[section]

\def\E{{\mathbb E}\,}

\let\eps=\epsilon
\def\enddiscard{}
\long\def\discard#1\enddiscard{}

  \newcommand{\pr}{\mathbb P}

  \newcommand{\cp}{{\mathcal P}}

  \newcommand{\ca}{{\mathcal A}}
  \newcommand{\cb}{{\mathcal B}}
  \newcommand{\cc}{{\mathcal C}}
  \newcommand{\cd}{{\mathcal D}}
  \newcommand{\ce}{{\mathcal E}}
  \newcommand{\cf}{{\mathcal F}}
  
  \newcommand{\ch}{{\mathcal H}}
  \newcommand{\ci}{{\mathcal I}}
  \newcommand{\cL}{{\mathcal L}}
  \newcommand{\cP}{{\mathcal P}}
  \newcommand{\cR}{{\mathcal R}}
  \newcommand{\cs}{{\mathcal S}}
  \newcommand{\ct}{{\mathcal T}}
  \newcommand{\cu}{{\mathcal U}}
  
  \newcommand{\cz}{{\mathcal Z}}

  \newcommand{\ex}{{\rm Ex\,}}

  \newcommand{\apex}{\mbox{apex \!}}

  %apex to the power of something
  \newcommand{\apexp}[1] {\mbox{apex}^{\,#1}\,}
  \newcommand{\rd}[1] {\mbox{rd}_{\,#1}\,} %graphs with a redundant blocker of size x
  \newcommand{\crd}[1] {\mathcal A_{#1}} %coloured equivalent
   %coloured equivalent

  %\newcommand{\ctree}[1] {\mathcal C ^{\circ #1}} %C-trees

  \newcommand{\tw}{\mbox{tw}}

  \newcommand{\aw}[1] {{\rm aw}_{#1}\,} %apex width of order s
  \newcommand{\caw}[1] {{\rm caw}_{#1}\,} %coloured apex width of order s

  \newcommand{\SET} {{\rm SET}}
  \newcommand{\SEQ} {{\rm SEQ}}
  \newcommand{\Col} {{\rm Col}}
  \newcommand{\col} {{\rm col}}
  \newcommand{\Ext} {{\rm Ext}}
  \newcommand{\ext} {{\rm ext}}
  \newcommand{\gamupper} {{\overline \gamma}}
  \newcommand{\gamlower} {{\underline \gamma}}
%  \newcommand{\bullet} {\dot} 
%%%%%%%%%%%%%%%%%%%%%%%%%%%%%%%%%%%%%%%%%%%%%%%%%%%%%%%

\begin{document}

\author{Valentas Kurauskas\footnote{Vilnius University, Faculty of Mathematics and Informatics, Didlaukio 47, LT-08303 Vilnius, Lithuania.\vspace{-1cm}}
}
\maketitle
\begin{abstract}
    Let $\ex \cb$ be a 
    minor-closed class of graphs with a set $\cb$ of
    minimal excluded minors. 
    Kurauskas and McDiarmid (2012) studied classes
    $\ca$ of graphs that have at most $k$
    disjoint minors in $\cb$, %more precisely
    that is, at
    most $k$ vertex disjoint subgraphs
    %$H_1, \dots, H_k$, so that each of these subgraphs
    %containing
    %an excluded 
    with a 
    minor in $\cb$. 
    Denote by $\ca_n$ the class $\ca$ restricted to graphs on
    the vertex set $\{1,2,\dots,n\}$.
   % (For example, the class $\ex (k+1) \{K_3\}$ is the class
   % of graphs containing at most $k$ vertex-disjoint cycles.)
    In the case when all graphs
    in $\cb$ are 2-connected and $\ex \cb$ excludes some fan
    %(a graph is called a fan, if it consists of 
    (a path %$P$
    %and
    together with a vertex %$v$ 
    joined to each vertex on the path),
    %not in $P$ connected to every 
    %$vertex on $P$),
    they determined the asymptotics of $|\ca_n|$
    and properties of typical graphs in $\ca_n$ as $n \to \infty$. % tends to infinity. 
    In particular, they showed that 
    all but an exponentially small proportion of
    %graphs
    $G \in \ca_n$
    %in $\ex (k+1) \cb$ on vertex set $[n]=\{1,\dots,n\}$
    contain a set $S$
    of 
    %size 
    $k$
    vertices
    such that $S$ is a $\cb$-blocker, 
    i.e., $G - S \in \ex \cb$.  

    %In this paper 
    Here we consider %study %analogous questions for 
    the case when $\ex \cb$ contains
    all fans. Firstly, for good enough $\cb$ we obtain results on %the growth of
    asymptotics of $|\ca_n|$.
    %for the number $a_n$ of graphs on vertex set $[n]$ containing at most $k$ disjoint minors from $\cb$.
    For example, we give a sufficient condition for the sequence $y_n = (|\ca_n|/n!)^{1/n}$
    to have a limit (a growth constant) as $n \to \infty$. %, which is called the growth constant of $\ca$.
    %  for example, that for good enough $\cb$, the class
    %$\ex (k+1) \cb$ has a growth constant.
    %as $n \to \infty$ for some $\gamma > 0$. 
    A $\cb$-blocker $Q$ of $G$ is redundant if for each $x \in Q$,
    $Q\setminus\{x\}$ is still a $\cb$-blocker. % of $G$.
    Let $R_n$ be a graph drawn uniformly at random from $\ca_n$.
    For large enough constant $k$
    we show that the upper limit of $y_n$ is realised by the subclass of graphs %$G \in \ca$
    that have a redundant $\cb$-blocker $Q$ of size $2k+1$, and there are $n' \to \infty$ %for subsequences realising this limit
    such that $R_{n'}$ has no $\cb$-blocker smaller than $2k$ with probability $1 - e^{-\Omega(n')}$.
%    For large enough constant $k$
%    we show that $R_n$ has no $\cb$-blocker smaller than $2k$ with probability $1 - e^{-\Omega(n)}$,
%    and the upper limit of $y_n$ is realised by the subclass of graphs $G \in \ca$
%    that have a redundant $\cb$-blocker $Q$ of size $2k+1$. 
%
    %We apply more general results to a specific important case $\cb = \{K_4\}$:
    Secondly, we %use our general results and analytic combinatorics to study a
    %specific important case
    %study in detail 
    explore the structure
    %of classes 
    of graphs that have at most $k$ disjoint minors $K_4$ (i.e., $\cb = \{K_4\})$.
    %graphs that do not have a minor isomorphic to $k+1$ disjoint copies of the complete graph on four vertices.
    For $k = 0$ this is the class %is the class of %corresponds to the well-known class of 
    of series-parallel graphs.
    For $k = 1, 2, \dots$ we show that there are constants $c_k, \gamma_k$, such that $|\ca_n| = c_k  n^{-5/2} \gamma_k^n n! (1+o(1))$. We prove that the random graph $R_n$ with probability $1 - e^{-\Omega(n)}$ has 
    a redundant $\{K_4\}$-blocker $Q$ of size $2k+1$ and each vertex of $Q$ has a linear degree.
    Additionally, we consider the case $\cb = \{K_{2,3}, K_4\}$ related to outerplanar graphs.
    %to demonstrate
    %that the form of the main asymptotic term may depend not only on $\cb$ but also on $k$.
\end{abstract}

Keywords: \emph{disjoint excluded minors, blocker, disjoint $K_4$, series-parallel}.

%\bigskip

\section{Introduction}\label{sec.introK4}

In this paper calligraphic letters such as $\ca, \cb, \dots$ will denote classes of objects, mostly classes of labelled graphs or graphs where some vertices are distinguished and/or unlabelled; these classes will always be closed under isomorphism. %We will consider only classes closed under isomorphism.
We denote by $\ca_n, \cb_n, \dots$ the respective classes restricted to objects (graphs) of size $n$ with labels $[n] = \{1, \dots, n\}$.

A class of graphs is called \emph{proper}, if it is not the class of all graphs. Each proper minor-closed class of graphs $\ca$ is \emph{small}, that is,
the supremum as $n \to \infty$ of the sequence
\begin{equation} \label{eq.small}
 \left( \frac {|\ca_n|} {n!} \right)^{1/n}
\end{equation}
is finite \cite{nstw06}, see also \cite{dn10}. If the above sequence converges to $\gamma \in [0, \infty)$, we say that $\ca$
has a \emph{growth constant} $\gamma = \gamma(\ca)$.

A minor-closed class of graphs is called \emph{addable}, if %it contains $K_2$ and 
each excluded minor is 2-connected.
McDiarmid, Steger and Welsh \cite{msw05} showed that
any proper addable minor-closed class %$\ca = \ex \cb$
has a growth constant, %and McDiarmid \cite{cmcd09}
further properties for such classes were obtained by McDiarmid \cite{cmcd09}. Bernardi, Noy and Welsh \cite{bnw10} asked whether every proper minor-closed class $\ca$ of graphs has a growth constant. %, their conjecture is still open at present.

For any class of graphs $\ca$
we denote the upper and lower limits of (\ref{eq.small}) by $\gamupper(\ca)$ and $\gamlower(\ca)$
respectively.  Also, let $\rho(\ca)$ denote the radius of convergence of the exponential
generating function $A$ of $\ca$. Of course, $\gamupper(\ca) = \rho(\ca)^{-1}$ (if we assume that $0^{-1} = \infty$).

Given a set of graphs $\cb$, % and a graph $G$,
a set $Q \subseteq V(G)$ is called a 
\emph{$\cb$-blocker} (or a $\cb$-minor-blocker) for a graph $G$ if $G - Q \in \ex \cb$, i.e., $G-Q$
has no minor in the set $\cb$. 
We call a $\cb$-blocker $Q$ of a graph $G$ \emph{redundant} (``$0$-redundant'', in the terminology of \cite{cmcdvk2011}) if for each vertex $v \in Q$ the set $Q\setminus \{v\}$ is still a $\cb$-blocker for $G$. We denote the class of graphs that have a redundant $\cb$-blocker of size $k$ by $\rd k \cb$. %For sets containing a single graph $H$,
%we will often omit the curly brackets and write, for example, 
For a graph $H$, we will often abbreviate $\ex \{H\}$ to $\ex H$, $\rd k \{H\}$ to $\rd k H$, etc.

%$\rd k H$, $\ex H$, $H$-blocker instead of $\rd k \{H\}$, $\ex \{H\}$, $\{H\}$-blocker.

Let $\apexp k \ca$ denote the class of all graphs such that by deleting at most $k$ vertices we may obtain a graph in $\ca$.
Also, given a positive integer $s$
call a graph $G$ an \emph{$s$-fan} if $G$ is a union of a complete bipartite graph with parts $A$ and 
$B$, where $|A| = s$, and a path $P$ with $V(P) = B$. We call $1$-fans simply \emph{fans}. 
Given a positive integer $k$ and a set of graphs $\cb$ we denote by $k \cb$ the class of graphs
consisting of $k$ vertex disjoint copies of graphs in $\cb$ (with repetitions allowed).
Thus $\ex (k+1) \cb$ is the class of graphs that do not have
$k+1$ vertex disjoint subgraphs $H_1, \dots, H_{k+1}$, each with a minor in $\cb$.

%Graphs with few disjoint excluded minors have been studied in the past from various perspectives.
A classical result related to our topic is the generalisation of Erd\H{o}s and P\'{o}sa
theorem by Robertson and Seymour \cite{rs86}: given a planar graph $H$
and any positive integer $k$ there is a number $f(k)$, such that any graph, that has at most
$k$ disjoint minors $H$ has an $H$-blocker of size at most $f(k)$.
Another famous result by Robertson and Seymour says that each minor-closed class
can be characterised by a finite set $\cb$ of \emph{minimal excluded minors}, i.e. $\ca = \ex \cb$ (see, e.g., \cite{diestel}).
%Examples of other work concerning
%graphs with such a restriction are \cite{thomason}.

%Though in this paper we use and prove a number of structural results, our goal is characterising random graphs
The following %result has been
theorem was 
proved in \cite{cmcdvk2012} (see also \cite{mk-cmcd-09, cmcdvk2011}).
\begin{theorem} \label{thm.cmcdvk2012}
  Let $\ca$ be a proper addable minor-closed class of graphs, with a set $\cb$ of minimal excluded minors. 
  If $\ca$ does not contain all fans, then for each positive integer $k$, as $n \to \infty$
  %\m{was $\Omega(n)$}
\begin{equation} \label{eqn.main1}
  |(\ex (k\!+\!1)\cb)_n| = (1+e^{-\Theta(n)}) |(\apexp {k} \ca)_n|. 
\end{equation}
\end{theorem}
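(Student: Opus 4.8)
\medskip
\noindent\emph{Plan of a proof.}
The plan is to prove the two inequalities hidden in~(\ref{eqn.main1}). The direction $|(\apexp {k}\ca)_n| \le |(\ex(k\!+\!1)\cb)_n|$ is immediate: if $G-Q \in \ex\cb$ with $|Q|\le k$, then any $k+1$ pairwise disjoint subgraphs of $G$ each with a minor in $\cb$ would each have to meet $Q$ (a graph with no $\cb$-minor has no subgraph with a $\cb$-minor), which is impossible. So all the work is in the reverse direction, up to the factor $1+e^{-\Theta(n)}$. Write $\cd = \ex(k\!+\!1)\cb$, $\ca' = \apexp {k}\ca$, $\ce_n = \cd_n \setminus \ca'_n$, and $\gamma = \gamma(\ca)$, a finite positive growth constant since $\ca$ is a proper addable class \cite{msw05}. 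The first step is to pin down the growth of $\ca'$: the construction ``choose a $k$-set $Q\subseteq[n]$, an arbitrary member of $\ca$ on $[n]\setminus Q$, and arbitrary edges inside $Q$ and between $Q$ and $[n]\setminus Q$'' produces only members of $\ca'$, produces each member of $\ca'$ in at most $n^k$ ways, and $|\ca_{n-k}| = \gamma^{n-k}(n-k)!\, e^{o(n)}$; with the trivial matching upper bound this shows $\ca'$ has growth constant $2^k\gamma$ and $|\ca'_n| \ge (2^k\gamma)^n n!\, e^{-o(n)}$. So it suffices to prove the \emph{strict} inequality $\gamupper(\ce)\le \gamma_0$ for some $\gamma_0<2^k\gamma$: then $|\ce_n| \le \gamma_0^n n!\, e^{o(n)} \le e^{-\Omega(n)}|\ca'_n|$, which gives the upper bound, while the (easy) reverse bound $|\ce_n|\ge e^{-O(n)}|\ca'_n|$ — needed only to get $\Theta$ rather than $\Omega$ in the exponent — comes from an explicit construction of exceptional graphs ($k\!-\!1$ arbitrarily attached apex vertices over a member of $\ca$, together with a fixed bounded ``doubly essential'' $\cb$-minor obstruction).

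To bound $\gamupper(\ce)$ I would decompose an exceptional graph $G$ structurally. Since $\ca=\ex\cb$ excludes a fan $F_h$, some $B_0\in\cb$ is a minor of $F_h$ and hence planar; a $B_0$-minor being a $\cb$-minor, $G$ has at most $k$ disjoint $B_0$-minors, so by the Erd\H{o}s--P\'osa theorem of Robertson and Seymour \cite{rs86} $G$ has a $B_0$-blocker of bounded size, and, controlling the remaining members of $\cb$ within the bounded-treewidth class $\ex B_0$ in the same way, one obtains a constant $\ell=\ell(k,\cb)$ such that every graph in $\cd$ has a $\cb$-blocker of size at most $\ell$. Fix a minimum such blocker $D$ of $G$; since $G\notin\ca'$ we have $k+1\le|D|=:m\le\ell$, and $G-D\in\ca$. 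Because $\ca$ excludes the planar graph $F_h$ it has bounded treewidth, so $G-D$ is ``thin''. The crucial structural claim is that thinness of $G-D$, together with minimality of $D$ and the bound $k$ on disjoint $\cb$-minors, forces at most $k-1$ vertices of $D$ to be \emph{free} — meaning $v\in D$ can, in $\Omega(|V(G)|)$ pairwise vertex-disjoint ways, realise a $\cb$-minor using $v$ and a bounded-size piece of $G-D$ — while the remaining $\ge m-k+1$ vertices of $D$ together with their attachment to $G-D$ form a \emph{rigid core} which, for a given $G-D$, admits only $e^{o(n)}$ (indeed polynomially many) completions. Granting this, bound $|\ce_n|$ by choosing in turn the vertex set and isomorphism type of the rigid core ($e^{o(n)}$ choices), the $\le k-1$ free vertices with arbitrary attachments ($\le n^{k-1}2^{(k-1)n}$), and a member of $\ca$ on the remaining $n-O(1)$ vertices ($\le \gamma^n n!\, e^{o(n)}$); this yields $|\ce_n|\le (2^{k-1}\gamma)^n n!\, e^{o(n)}$, so $\gamma_0=2^{k-1}\gamma<2^k\gamma$ works and the theorem follows.

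The main obstacle is the rigid-core claim. The bound of $k-1$ on the number of free vertices is the easier half: if $k$ vertices of $D$ were free, choose pairwise disjoint pieces of $G-D$ realising a $\cb$-minor with each of them, and then use that no $(m-1)$-subset of $D$ is a blocker (as $m$ is minimum) to find one further $\cb$-minor avoiding those pieces — it exists because it cannot lie inside $G-D\in\ex\cb$ — producing $k+1$ disjoint $\cb$-minors, a contradiction. The hard half is showing that the attachment of the non-free vertices of $D$ to $G-D$, subject to $D$ remaining a minimum $\cb$-blocker, is determined up to $e^{o(n)}$ choices; this is where the structure theory of minor-closed classes of bounded treewidth enters. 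The intuition is that only a bounded-size ``essential'' part of $G-D$ can matter to a minimal $\cb$-minor obstruction living in $G-D$ plus $O(1)$ apex vertices, the rest of $G-D$ being irrelevant to whether $D$ is a minimum blocker, and for fixed $G-D$ this essential part ranges over a set of size polynomial in $|V(G)|$. Making this precise — most cleanly, I expect, after first reducing via the structure of $\ca$ to the case where $\cb$ is a single $2$-connected planar graph and $G-D$ has a simple form (say series-parallel or subcubic), and then arguing about minimal minor-models — is where the bulk of the work lies.
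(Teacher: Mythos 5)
This theorem is cited by the paper from Kurauskas and McDiarmid~\cite{cmcdvk2012}; the present paper contains no proof of it, only the statement, so there is no line-by-line internal proof to compare against. Comparison therefore has to be against the ingredients that \emph{are} in this paper and were imported from~\cite{cmcdvk2012} (Lemma~\ref{lem.red2}/\ref{lem.red2gen}, Lemma~\ref{lem.apexring}, Lemma~\ref{lem.apexdomination}, Theorem~1.2 and Lemma~5.5 of~\cite{cmcdvk2012}).

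The easy pieces of your plan are fine: the containment $\apexp k \ca \subseteq \ex(k+1)\cb$, the fact that $\apexp k \ca$ has growth constant $2^k\gamma$ (this is Theorem~1.2 of~\cite{cmcdvk2012}), the reduction to showing the exceptional class $\ce$ has $\gamupper(\ce)<2^k\gamma$, the existence of a bounded $\cb$-blocker via Robertson--Seymour, and the ``at most $k-1$ free vertices'' argument (with the small repair of choosing the $k$ disjoint pieces only \emph{after} fixing the bounded-size $\cb$-critical subgraph $H'$ that avoids $D\setminus\{u\}$, so that the pieces can be picked disjoint from $H'$).

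The genuine gap is exactly where you flag it: the claim that, for fixed $G-D$, the non-free (``rigid'') vertices of a minimum blocker admit only $e^{o(n)}$ completions. That claim is not a side lemma; it is the entire content of the theorem, and as stated it is not obviously true under the hypothesis given. The only assumption is that $\ca$ omits some fan, which gives bounded treewidth of $\ca$ but does \emph{not} give bounded apex width: by Lemma~\ref{lem.bounded_aw}, $\aw 1(\ca)<\infty$ also requires excluding some $K_{2,t}$. So a rigid vertex can a priori be adjacent to linearly many vertices of $G-D$ (attaching to a large ``broom'' or $K_{2,t}$-like region need not create a $\cb$-minor), and the naive ``polynomially many attachments'' count does not follow from bounded treewidth alone. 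Making the rigid-core bound work would require using the minimality of $D$ much more delicately than ``only a bounded essential part of $G-D$ matters''; nothing in the sketch pins this down. The route actually taken in~\cite{cmcdvk2012}, reflected here in Lemmas~\ref{lem.red2}--\ref{lem.red2gen} (normal trees and active ancestors), Lemma~\ref{lem.apexring} (pendant appearances forced in random apex graphs), and Lemma~\ref{lem.apexdomination} (a counting argument over pairs $(G_1,G_2)$ with $G_1$ a bounded-size double-blocker piece and $G_2\in\apex(\ex k\cb)$), is structurally different: instead of classifying individual blocker vertices as free/rigid and counting attachments directly, it decomposes $G$ along a normal tree, shows that the random decomposition almost surely contains a set $S$ of $k$ vertices each attached to linearly many pendant copies of a fixed small graph, and deduces that if $G\notin\apexp k\ca$ then $G$ would contain $k+1$ disjoint $\cb$-minors. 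You should either prove the rigid-core bound (in which case you will likely rediscover the normal-tree/pendant machinery) or switch to that framework.
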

Suppose $\ex \cb$ is addable but contains all fans. Then the class $\apexp k (\ex \cb) \subseteq \ex (k+1) \cb$ still seems a natural candidate to be the dominating
subclass of $\ex (k+1)\cb$.
However, it was shown in \cite{cmcdvk2012} that for such $\cb$ the above theorem fails, at least for large~$k$. 
Our first theorem shows that a very different subclass determines the convergence radius of $\ex (k+1) \cb$, namely,
the class $\rd {2k+1} \cb$. Clearly, $\rd {2k+1} \cb \subseteq \ex (k+1) \cb$: if $Q$ is a redundant blocker for
$G$ and $|Q| = 2k+1$ then each subgraph of $G$ with a minor in $\cb$ uses at least two vertices of $Q$, 
so we can find no more than $k$ disjoint such subgraphs. % in~$G$.
%\begin{theorem} \label{thm.main1}
%    %Let $k$ be a positive integer; 
%    Let $\cb$ be a set of 2-connected 
%    graphs containing at least one planar graph. Suppose 
%    $\ex \cb$ contains all fans and $\aw 2 (\ex \cb)$ is finite. 
%    %Let $\gamma_0$ be the growth 
%    %constant of $\ex \cb$. 
%
%    Then there is a positive integer $k_0 = k_0(\cb)$ such that 
%    for any integer $k \ge k_0$ we have
%    \[
%    \rho(\ex (k+1) \cb )  = \rho (\rd {2k+1} \cb) < \rho\left( (\ex (k+1) \cb) \cap \apexp {2k-1}(\ex \cb) \right).
%    \]
%%    Furthermore, if the class $\ex (k_0+1) \cb$ has a growth constant, then
%%    so do the classes $\ex (k+1) \cb$ and $\rd {2k+1} \cb$.
%\end{theorem}
\begin{theorem} \label{thm.main1}
    %Let $k$ be a positive integer; 
    Let $\ca$ be a proper addable minor-closed class of graphs, with a set $\cb$ of minimal excluded minors
    and growth constant $\gamma$. 
    %Let $\cb$ be a set of 2-connected 
    %graphs containing at least one planar graph. 
    Suppose $\ca$ contains all fans, but not all $2$-fans,
    nor all complete bipartite graphs $K_{3,t}$.
    %$\aw 2 (\ca)$ is finite
    %Let $\gamma$ be the growth constant of $\ca$. %Suppose further that $K_{3,t} \not \in \ca$ for some integer $t$.
    
    Then there is a positive integer $k_0 = k_0(\cb)$ such that the following holds. 
    Let $k$ be a positive integer. If $k \ge k_0$,
    \[
    \rho(\ex (k+1) \cb )  = \rho (\rd {2k+1} \cb) < \rho\left( (\ex (k+1) \cb) \cap \apexp {2k-1} \ca \right).
    \] 
    If $k < k_0$, the class $\ex (k+1) \cb$ has a growth constant $2^k \gamma$.
    %\[
    %  \gamma(\ex (k+1) \cb) = \gamma(\apexp k (\ca)) = 2^k \gamma,
    %\]
    Furthermore, if $\rho(\rd {2k+1} \cb)^{-1} < 2^k \gamma $ then (\ref{eqn.main1}) holds.
%    Furthermore, if the class $\ex (k_0+1) \cb$ has a growth constant, then
%    so do the classes $\ex (k+1) \cb$ and $\rd {2k+1} \cb$.
\end{theorem}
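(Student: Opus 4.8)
\emph{Setup and easy bounds.} The plan is to classify the graphs of $\ex(k+1)\cb$ by how their $\cb$-minors can be blocked, and to show that at the exponential scale only two mechanisms contribute. Write $\nu(G)$ for the maximum number of vertex-disjoint subgraphs of $G$ each having a minor in $\cb$, and $\tau(G)$ for the minimum size of a $\cb$-blocker of $G$, so that $\ex(k+1)\cb=\{G:\nu(G)\le k\}$ and, since $G-Q\in\ex\cb$ precisely when $Q$ is a $\cb$-blocker, $\apexp j\ca=\{G:\tau(G)\le j\}$. Put $\lambda_k:=\rho(\rd{2k+1}\cb)^{-1}=\gamupper(\rd{2k+1}\cb)$. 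Now $\rd{2k+1}\cb\subseteq\ex(k+1)\cb$ (noted before the statement), and $\apexp k\ca\subseteq\ex(k+1)\cb$ because every minimal subgraph with a $\cb$-minor meets every $\cb$-blocker, so at most $\tau(G)$ of them are pairwise disjoint; using the known fact that an apex class $\apexp j\ca$ over a proper addable class $\ca$ of growth constant $\gamma$ itself has growth constant $2^j\gamma$, these inclusions give $\gamupper(\ex(k+1)\cb)\ge\max(2^k\gamma,\lambda_k)$ and $\gamlower(\ex(k+1)\cb)\ge2^k\gamma$. It remains to prove the matching upper bound $\gamupper(\ex(k+1)\cb)\le\max(2^k\gamma,\lambda_k)$ in a form sharp enough to separate the two mechanisms.

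\emph{Locating the threshold $k_0$.} I would first locate $\lambda_k$, showing $2^{2k+1}\le\lambda_k\le 2^{2k+1}\gamma$. For the lower bound take a path on $n-2k-1$ vertices together with $2k+1$ further vertices, each joined to an arbitrary subset of the path: this lies in $\rd{2k+1}\cb$, since the hypotheses force every $1$-fan into $\ca$ and deleting all but one of the $2k+1$ extra vertices leaves a subgraph of a $1$-fan, and this family alone has $2^{(2k+1)n}\,n!/n^{O(1)}$ members; the upper bound is the crude count ``a base in $\ca$ with at most $2k+1$ further vertices, each joined in at most $2^n$ ways''. In particular $\lambda_k/(2^k\gamma)\to\infty$. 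Moreover, since $\lambda_k\ge 2^{2k+1}$ forces the base realising $\lambda_k$ to admit at least $(2-o(1))^n$ ``safe'' attachment sets --- sets $S$ for which the base with one further vertex joined to $S$ remains in $\ca$ --- enlarging a near-optimal $\rd{2k+1}\cb$-configuration by two more vertices joined through such sets (which preserves redundancy of the blocker) gives $\lambda_{k+1}\ge(2-o(1))^2\lambda_k$; hence $\lambda_k/(2^k\gamma)$ is strictly increasing for all large $k$ and, checking the remaining finitely many values, it crosses $1$ exactly once. I let $k_0=k_0(\cb)$ be the crossing index, so that $\lambda_k>2^k\gamma$ iff $k\ge k_0$.

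\emph{The structural upper bound (the main step).} The core estimate I would prove is
\[
\gamupper\bigl(\ex(k+1)\cb\setminus\apexp k\ca\bigr)\le\lambda_k
\qquad\text{and}\qquad
\gamupper\bigl((\ex(k+1)\cb)\cap\apexp{2k-1}\ca\bigr)<\lambda_k .
\]
Since $\ca$ misses some $2$-fan, $\cb$ contains a planar graph $H_0$ (a minor of a planar $2$-fan); as a disjoint family of subgraphs with $H_0$-minors is in particular a disjoint family with $\cb$-minors, every $G\in\ex(k+1)\cb$ has at most $k$ disjoint subgraphs with $H_0$-minors. The Robertson--Seymour theorem for the planar graph $H_0$, combined with the structure theory for classes $\ex\cb$ containing all fans but not all $2$-fans nor all $K_{3,t}$, then yields a set $W=W(G)$ with $|W|\le f(k)$ such that $G-W\in\ca$ and $G-W$ is ``thin'': writing $t_0$ for a value with $K_{3,t_0}\notin\ca$ and $m_0$ for one with the $(2+m_0)$-vertex $2$-fan not in $\ca$, every minimal subgraph of $G$ with a $\cb$-minor (a ``local obstruction'') uses only a bounded portion of $G-W$. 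Conditioning on $W$ and on the bounded trace of $G$ on $W$, a graph $G\in\ex(k+1)\cb$ is encoded by a base $H\in\ca$ on at least $n-f(k)$ vertices together with the attachments of the vertices of $W$ to $V(H)$, subject to the constraint that no $k+1$ local obstructions are pairwise disjoint. The combinatorial heart is to show that at the exponential scale only two patterns survive. In the first, all local obstructions are ``charged'' to at most $k$ vertices of $W$ (equivalently $\tau(G)\le k$, so $G\in\apexp k\ca$): these act as unconstrained apices over a typical $H\in\ca$ and contribute the term $2^k\gamma$. In the second, $W$ contains a set $Q$ of size $2k+1$ met at least twice by every local obstruction --- a redundant $\cb$-blocker of $G$ --- while the vertices of $W$ outside $Q$ attach in only $n^{O(1)}$ ways; this contributes $\lambda_k$. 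Every intermediate pattern is exponentially suppressed: if more than $k$ vertices of $W$ carry obstructions but no such $Q$ exists, one can choose $k+1$ pairwise disjoint local obstructions --- one per carrying vertex, each using a bounded disjoint piece of $H$, the disjointness being available exactly because no suitable redundant blocker is present --- contradicting $\nu(G)\le k$; the configurations that do survive have at most $2k-1$ free apex directions, and a direct count bounds their contribution by $o(\lambda_k^n\,n!)$. Finally, since a redundant $\cb$-blocker of size $2k+1$ forces the minimum blocker size to equal $2k$ in the extremal configurations, a graph of $\ex(k+1)\cb$ lying in $\apexp{2k-1}\ca$ cannot realise the second pattern, which is the source of the strict inequality.

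\emph{Assembling the conclusions, and the main obstacle.} Combining the easy bounds with the structural one, $\gamupper(\ex(k+1)\cb)=\max(2^k\gamma,\lambda_k)$ and $\gamupper((\ex(k+1)\cb)\cap\apexp{2k-1}\ca)<\lambda_k$. If $k\ge k_0$ then $\lambda_k>2^k\gamma$, so $\gamupper(\ex(k+1)\cb)=\lambda_k=\gamupper(\rd{2k+1}\cb)$, i.e.\ $\rho(\ex(k+1)\cb)=\rho(\rd{2k+1}\cb)$, and the strict inequality with $\rho((\ex(k+1)\cb)\cap\apexp{2k-1}\ca)$ is the second display. If $k<k_0$ then $\lambda_k\le 2^k\gamma$, so $2^k\gamma\le\gamlower(\ex(k+1)\cb)\le\gamupper(\ex(k+1)\cb)\le\max(2^k\gamma,\lambda_k)=2^k\gamma$, and $\ex(k+1)\cb$ has growth constant $2^k\gamma$. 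For the last assertion, suppose $\rho(\rd{2k+1}\cb)^{-1}<2^k\gamma$; then $\ex(k+1)\cb\setminus\apexp k\ca$ has growth rate at most $\lambda_k<2^k\gamma=\gamupper(\apexp k\ca)$, so for all large $n$ the difference $|(\ex(k+1)\cb)_n|-|(\apexp k\ca)_n|$ is at most $(\lambda_k+o(1))^n\,n!$ while $|(\apexp k\ca)_n|\ge(2^k\gamma-o(1))^n\,n!$, whence $|(\ex(k+1)\cb)_n|=(1+e^{-\Omega(n)})|(\apexp k\ca)_n|$; an exponentially large subfamily of $\ex(k+1)\cb\setminus\apexp k\ca$ --- for instance a fixed graph $G_0$ with $\nu(G_0)=\tau(G_0)-1$ glued along one vertex to an arbitrary member of $\ca$ --- makes the exponent genuinely $\Theta(n)$, giving (\ref{eqn.main1}). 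The main obstacle is the structural upper bound of the previous paragraph: proving that, subject to $\nu(G)\le k$, the only attachment patterns of the bounded apex part $W$ that matter exponentially are ``at most $k$ charged vertices'' and ``a size-$(2k+1)$ redundant blocker plus $n^{O(1)}$ further choices'', so that every pattern with between $k+1$ and $2k$ apex-like vertices costs an exponential factor. This is precisely where the hypotheses that $\ca$ misses some $2$-fan and some $K_{3,t}$ --- bounding the size of minimal local obstructions and controlling how few or how many extra vertices can sit over a thin base before a $\cb$-minor is forced --- enter essentially.
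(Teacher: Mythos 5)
Your high-level framework---a dichotomy between the apex pattern $\apexp k\ca$ (contributing $2^k\gamma$) and the redundant-blocker pattern $\rd{2k+1}\cb$ (contributing $\lambda_k$), with a threshold $k_0$ at the crossing point---matches the shape of the theorem, and the easy inclusions and the lower bound $\lambda_k\ge 2^{2k+1}$ are the same ones the paper uses. But the central step, your ``structural upper bound'' $\gamupper(\ex(k+1)\cb\setminus\apexp k\ca)\le\lambda_k$, is exactly where the genuine work lies, and the sketch you give of it does not close the gap. The Ramsey-type assertion---that if more than $k$ vertices of the bounded apex set $W$ ``carry obstructions'' and no size-$(2k+1)$ redundant blocker exists, then one can greedily pick $k+1$ pairwise disjoint $\cb$-minors---is not justified and, as stated, does not appear to be provable: obstructions attached to different vertices of $W$ can be forced to overlap in the base $H$, and controlling that overlap is precisely what the paper's machinery (normal trees, the active-ancestor bound $a_T(v)\le w$, and the $(2,k+1,\cb)$-blocker / double-blocker hierarchy of Lemmas~\ref{lem.red2gen} and~\ref{lem.2canddecomp}) is for. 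In fact, the paper does not prove a bound as strong as yours directly; it proves instead the identity $\gamupper(\ex(k+1)\cb)=\max(\gamupper(\rd{2k+1}\cb),\,\gamupper(\apex(\ex k\cb)))$ (equation~(\ref{eq.min})), in which the second term is $\apex(\ex k\cb)$, not $\apexp k\ca$, and only the subsequent induction together with $\gamupper(\apex\cd)\le 2\gamupper(\cd)$ converts this into the statement of the theorem. Your sharper claim quietly bypasses this entire inductive step, and you give no argument for why $\apex(\ex k\cb)$ cannot overshoot both candidate growth rates.

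Your second independent ingredient---the assertion that $\lambda_{k+1}\ge(2-o(1))^2\lambda_k$, used to show $\lambda_k/(2^k\gamma)$ crosses $1$ exactly once---also has a gap. You infer from $\lambda_k\ge 2^{2k+1}$ that ``the base realising $\lambda_k$'' admits $(2-o(1))^n$ safe attachment sets, but $\lambda_k$ is a $\limsup$ over a weighted sum $\sum_H f(H)^{2k+1}$ and says nothing pointwise about any particular $H$; and even granting an extremal $H$ with many safe sets, adding two more apices via two independent safe sets requires that safety is preserved jointly, which needs an argument. The paper sidesteps all of this with the Cauchy--Schwarz inequality $\gamupper(\rd k\cb)^2\le\gamupper(\rd{k+r}\cb)\gamupper(\rd{k-r}\cb)$ (Lemma~\ref{lem.gammaHiolder}), which gives log-convexity of the sequence $\gamupper(\rd j\cb)$ purely combinatorially and so yields the monotonicity of the ratio without any structural analysis of near-extremal configurations. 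Two further pieces you do not mention that the paper relies on essentially are the Colour Reduction Lemma~\ref{lem.colreduction} (to pass from a $(2,\cb)$-blocker of size $r$ to a redundant blocker of size $k+1$ at the level of radii of convergence, Lemma~\ref{lem.doubleblockers}) and the characterisation of finite apex width (Lemma~\ref{lem.bounded_aw}), which is where the hypotheses on $2$-fans and $K_{3,t}$ enter. Without these, what you have is a plausible outline with the key lemma still to prove, and the particular way you have formulated that lemma is both stronger than and differently structured from what the paper actually establishes.
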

%Note, that the requirement of the theorem implies that  %that $\aw 2(\ca)$ is finite implies that some 
%planar graph (a 2-fan) is excluded in $\ca$.

For $t \ge 4$ we denote by $W_t$ a wheel graph on $t$ vertices.
Some examples of classes $\ca$ for which Theorem~\ref{thm.main1} applies
are $\ex K_4$, $\ex K_{2,t}$ for $t \ge 3$, $\ex W_5$, and $\ex \{ K_{3,t}, F_s\}$, where $t \ge 2$, $s \ge 5$ and $F_s$
is a 2-fan on $s$ vertices. The conditions of Theorem~\ref{thm.main1} are not satisfied for, say, $\ca = \ex W_6$. 
We believe that these conditions %of Theorem~\ref{thm.main1}
can be weakened. 
%Theorem~\ref{thm.main1}
%still holds (with $k_0 = 1$) if we replace 
For example, it may be possible 
%to replace 
%the requirement that $\aw 2 (\ca)$ is finite by a requirement
%that $\ca$ does not contain arbitrarily large $2$-fans, 
to drop the requirement that $\ca$ does not contain all graphs $K_{3,t}$,
%similarly as
to get an assumption similar to the one of
Theorem~\ref{thm.cmcdvk2012}. We also believe that for $k \ge k_0$ 
the fraction of graphs in $(\ex (k+1) \cb)_n$ that are not in $(\rd {2k+1} \cb)_n$
is exponentially small.
%; furthermore, it seems very plausible that, similarly as in Theorem~\ref{thm.cmcdvk2012},
%\begin{conjecture}
%    Let $\cb$ be a set of 2-connected graphs. Suppose $\ex \cb$ contains all
%    1-fans but not all 2-fans. Then $\ex (k+1) \cb$
%    has a growth constant and for any positive integer $k$
%    \[
%      |(\ex (k+1) \cb)_n| = (1 + e^{-\Theta(n)}) |(\rd {2k+1} \cb) _ n|.
%    \]
%\end{conjecture} 
%This conjecture could be generalised
%for the case where $\ex \cb$ contains arbitrarily large $s$-fans, but not arbitrarily large $(s+1)$-fans.

Let $\ca$ be an addable class of graphs. In \cite{msw05}, two properties are fundamental
in the proof that $\ca$ has a growth constant. First, the class $\ca$ is %an addable class $\ca$ of graphs is
\emph{decomposable},
meaning that $G$ is in $\ca$ if and only if each component of $G$ is. Second, $\ca$ is \emph{bridge-addable}, i.e., it is closed
under adding bridges between distinct components.
Classes $\ex (k+1) \cb$ are bridge-addable, but not decomposable.
Theorem~\ref{thm.main1} reduces the problem of
proving that a growth constant of $\ex (k+1) \cb$ exists, to the analogous problem 
for the class $\rd {2k+1} \cb$. 
%This allows us to represent graphs in 
Graphs in $\rd {2k+1} \cb$
with a fixed redundant blocker can be represented by a decomposable class of coloured graphs, see Section~\ref{sec.gcgen}.
%The next result shows that $\ex (k+1) \cb$ has a growth constant, if $\cb$ satisfies 

With stronger conditions on $\ca$, this allows us to prove the following. %\m{$\cb$ all 3-connected graphs?}
\begin{theorem} \label{thm.gc}
     Let $k$ be a positive integer and let $\ca$ be a proper addable minor-closed class of graphs with a set $\cb$ of minimal excluded minors. 
    Suppose each graph in $\cb$ is 3-connected,
    $\ca$ does not contain all 2-fans, nor all complete bipartite graphs $K_{3,t}$, nor all wheels.
   % $\aw 2 (\ca)$ is finite, 
   % each graph in $\cb$ is 3-connected and there is a constant $c = c(\cb)$ such that the following holds:
   % for each $G \in \ca$ and each vertex $x \in V(G)$, if $G - \{x\}$ is 2-connected then $x$ has at most $c$ neighbours in $G$.
   % if $G \in \ca$ contains a vertex $x$, such that $G - \{x\}$ is 2-connected, then $x$ can have at most $c$ neighbours in $G$.
    %a vertex can have at most $c$ 
    %edges to a 2-connected graph in $\ca$ without creating a mino.
    Then $\ex (k+1) \cb$ has a growth constant.
\end{theorem}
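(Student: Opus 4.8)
The plan is to reduce the existence of a growth constant for $\ex (k+1)\cb$ to that of the auxiliary class $\rd{2k+1}\cb$, using Theorem~\ref{thm.main1}, and then to establish the growth constant for $\rd{2k+1}\cb$ via a decomposable/bridge-addable coloured-graph representation together with the machinery of \cite{msw05}. First I would check that the hypotheses of Theorem~\ref{thm.gc} imply those of Theorem~\ref{thm.main1}: since every graph in $\cb$ is 3-connected it is in particular 2-connected, so $\ca=\ex\cb$ is addable; and the excluded sets of graphs — all 2-fans, all $K_{3,t}$, all wheels — are each forbidden by hypothesis. Note that $\ca$ contains all fans: a fan is series-parallel, hence has treewidth $2$, and a 3-connected graph has treewidth at least $3$, so no 3-connected graph is a minor of a fan; thus no member of $\cb$ is a minor of any fan, i.e.\ every fan lies in $\ex\cb$. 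So Theorem~\ref{thm.main1} applies. If $k<k_0(\cb)$, Theorem~\ref{thm.main1} already gives a growth constant $2^k\gamma$ and we are done. If $k\ge k_0$, Theorem~\ref{thm.main1} gives $\rho(\ex(k+1)\cb)=\rho(\rd{2k+1}\cb)$, so it suffices to show that the sequence $(|(\ex(k+1)\cb)_n|/n!)^{1/n}$ actually converges — i.e.\ that its $\liminf$ equals its $\limsup=\rho(\rd{2k+1}\cb)^{-1}$. For this I would show the matching lower bound by producing enough graphs in $\rd{2k+1}\cb\subseteq\ex(k+1)\cb$.

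The core of the argument is therefore: $\rd{2k+1}\cb$ has a growth constant, and more precisely $(|(\rd{2k+1}\cb)_n|/n!)^{1/n}\to\rho(\rd{2k+1}\cb)^{-1}$. The idea (as flagged in the remark before the theorem, and to be developed in Section~\ref{sec.gcgen}) is that a graph $G$ with a fixed redundant $\cb$-blocker $Q$ of size $2k+1$ is determined by: the graph $G[Q]$, the graph $G-Q\in\ex\cb$, and, for each component $C$ of $G-Q$, the attachment pattern of $C$ to $Q$. Because $Q$ is redundant, deleting any one vertex of $Q$ still yields a graph in $\ex\cb$; this constraint is ``local'' to each block/component interacting with $Q$, so the family of allowed coloured components forms a \emph{decomposable} class $\cc$ (a component is allowed iff it satisfies the constraint, independently of the others). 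One then represents $(\rd{2k+1}\cb)_n$, up to a bounded factor coming from the $\binom{n}{2k+1}$ choices of $Q$ and the finitely many choices of $G[Q]$, by $\SET(\cc)$ for this decomposable class. The class of coloured components is bridge-addable in the appropriate sense — adding a bridge between two allowed components keeps each piece allowed, since the blocker constraint concerns only deletions within $Q$ and $Q$ is untouched — so the growth-constant argument of McDiarmid–Steger–Welsh \cite{msw05}, in the form extended to such coloured/weighted classes (as in \cite{cmcd09}), applies to give convergence of $(|\cc_n|/n!)^{1/n}$, hence of $(|(\rd{2k+1}\cb)_n|/n!)^{1/n}$.

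To make this rigorous I would: (1) formalise the coloured-graph model — vertices of $G-Q$ keep their labels, and each such vertex receives a ``colour'' recording to which subset of the $2k+1$ blocker slots it is adjacent, with the host graph $G[Q]$ fixed as a parameter; (2) characterise exactly which coloured connected graphs are admissible, namely those $H$ such that, for the fixed $G[Q]$ and for every $v\in Q$, the graph obtained by gluing $H$ onto $Q$ and deleting $v$ has no $\cb$-minor — and observe this is a property of the single component $H$ alone, giving decomposability; (3) verify bridge-addability of this admissible class and that it is proper and small (it is contained in $\apexp{2k+1}(\ex\cb)$, which is small since $\ex\cb$ is minor-closed and proper, using \cite{nstw06}); (4) invoke the \cite{msw05}/\cite{cmcd09} growth-constant theorem for decomposable bridge-addable classes to get a growth constant for the admissible class, sum over the finitely many parameters $G[Q]$ and the $\Theta(n^{2k+1})$ (sub-exponential) choices of $Q$, and conclude a growth constant for $\rd{2k+1}\cb$; (5) combine with Theorem~\ref{thm.main1}: the $\limsup$ of $(|(\ex(k+1)\cb)_n|/n!)^{1/n}$ equals $\rho(\rd{2k+1}\cb)^{-1}$ and the $\liminf$ is at least $\lim (|(\rd{2k+1}\cb)_n|/n!)^{1/n}=\rho(\rd{2k+1}\cb)^{-1}$ by inclusion, so the limit exists.

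The main obstacle I expect is Step~(2): showing that admissibility of a coloured component is genuinely a local, per-component condition, so that the class really is decomposable. One must rule out ``global'' obstructions — e.g.\ a $\cb$-minor in $G-v$ that is spread across several components glued through $Q\setminus\{v\}$. This is where 3-connectedness of the graphs in $\cb$ (rather than mere 2-connectedness) should be essential: a 3-connected minor cannot be assembled from pieces sharing only one or two cut vertices, so a $\cb$-minor living in $G-v$ must be ``concentrated'' near a single component together with its attachments to the small set $Q\setminus\{v\}$, reducing the constraint to one component at a time. Pinning down this concentration statement — presumably via an analysis of how a $\cb$-minor can interact with the bounded separator $Q$, and invoking the exclusion of large fans, $2$-fans, $K_{3,t}$ and wheels to control the ``attached'' part of each component — is the technical heart, and is exactly what the stronger hypotheses of Theorem~\ref{thm.gc} (compared with Theorem~\ref{thm.main1}) are there to buy.
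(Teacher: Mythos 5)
Your high-level plan is the right one and matches the paper: check the hypotheses of Theorem~\ref{thm.main1}, reduce the problem to establishing a growth constant for $\rd{2k+1}\cb$, pass to the coloured-graph class $\crd{2k+1}$ (and thence to the connected class $\cc^{2k+1}$ via the exponential formula), and note that $\ca$ contains all fans because fans have no 3-connected minor. But there is a concrete, fatal gap in Step~(3): the claim that the admissible coloured-component class is bridge-addable is false. The paper gives a counterexample in Section~\ref{subsec.proofgc} (Figure~\ref{fig.unrootable}): for $\cb=\{K_4\}$ and $l=3$, colour the three vertices of a $K_3$ with the three distinct pairs from $\{1,2,3\}$ to get $H$; then $2H\in\crd 3$, but no bridge may be added between the two copies. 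Your informal justification --- ``the blocker constraint concerns only deletions within $Q$ and $Q$ is untouched'' --- misses that a colour $c$ encodes adjacency to a vertex $q_c\in Q$, so adding a bridge between two components can complete a $\cb$-minor through $q_c$ (or through two such vertices) even though no vertex of $Q$ is deleted. Consequently you cannot invoke the McDiarmid--Steger--Welsh growth-constant theorem for bridge-addable decomposable classes; it simply does not apply here. (Your suggested worry about decomposability in Step~(2), by contrast, is not an issue: $\crd l$ \emph{is} decomposable; the obstruction lies entirely in bridge-addability.)

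What the paper does instead: it defines a graph $G\in\cc^l$ to be \emph{$l$-rootable} at a vertex $x$ if colouring $x$ with all of $[l]$ keeps it in $\cc^l$, and works with the pointed class $\cc^{\bullet l}$ of graphs rooted at a rootable vertex. Lemma~\ref{lem.rootablestring} is the substitute for bridge-addability: for $\cb$ consisting of 3-connected graphs, one may join a family of graphs in $\cc^{\bullet l}$ along a path through their rootable roots and remain in $\cc^{\bullet l}$ (the proof hinges on Lemma~\ref{lem.2cuts}, which uses 3-connectedness of the excluded minors). This gives the superadditivity needed to run a Fekete-type argument (Lemma~\ref{lem.fekete}) and prove in Lemma~\ref{lem.gcrooted} that $\cc^{\bullet l}$ has a growth constant. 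Separately, Lemma~\ref{lem.unrootable} shows the non-rootable graphs in $\cc^l$ have $\gamupper \le \gamupper(\cc^{l-1})$; this is where the wheel exclusion enters, via Lemma~\ref{lem.bounded_aw_no_wheel}, to bound the number of vertices in a block incident to a fixed colour. An induction on $l$ (Lemma~\ref{lem.gcindstep}, Lemma~\ref{lem.gcrd}) then yields a growth constant for $\cc^l$ and hence for $\rd{2k+1}\cb$, and Theorem~\ref{thm.main1} finishes. So the ``technical heart'' you anticipated is real, but it is the repair of bridge-addability via rootable vertices --- not a per-component decomposability statement --- that carries the load, and the 3-connectedness and wheel exclusions are deployed precisely there.
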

Classes $\ca$ that satisfy the condition of Theorem~\ref{thm.gc} are, for instance, $\ex K_4$ and $\ex W_5$, but not $\ex K_{2,3}$.

Recall that series-parallel graphs are exactly the class $\ex K_4$. Asymptotic counting formulas and other
properties of series-parallel and outerplanar graphs were obtained by Bodirsky, Gim\'{e}nez, Kang and Noy \cite{momm05}; the degree distribution
was studied by Bernasconi, Panagiotou and Steger \cite{bps08} and Drmota, Noy and Gim\'{e}nez \cite{dng2010}. 
Our next main result concerns
the number %and structure
of graphs, not containing a minor isomorphic to $k+1$ disjoint
copies of $K_4$ (i.e., $\cb = \{K_4\}$). 
%TO THE ABSTRACT?
%All but an exponentially small fraction of graphs $G$ that do not contain a minor isomorphic to $k+1$ disjoint copies of $K_4$
%            contain a set $S$ of size $2k+1$ such that $G- (S\setminus x)$ is series-parallel for any $x \in S$.%
\begin{theorem} \label{thm.K4}
    Let $k$ be a positive integer. We have
          \[
          |(\ex (k+1) K_4)_n| = (1 + e^{-\Theta(n)}) | (\rd {2k+1} K_4)_n|.
          \]
         There are constants $c_k > 0$ and $\gamma_k > 0$, such that 
         \[
           |(\rd {2k+1} K_4)_n| = c_k n^{-5/2} n! \gamma_k^n (1 + o(1)).
         \]
         Furthermore, $\gamma_1 = 23.5241..$. % and $\gamma_k \ge \gamma_1 (\gamma_1/\gamma_0)^{k-1}$, where $\gamma_0 = \gamma(\ex K_4)=9.073311..$.
\end{theorem}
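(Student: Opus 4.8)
\emph{Overview.} I would establish the three statements in the order (ii) [the asymptotics of $\rd {2k+1} K_4$], then (i) [the comparison], then (iii) [the value $\gamma_1$], the first being the technical core and the second the one needing a genuinely new structural argument. Note first that $\ca=\ex K_4$ satisfies the hypotheses of Theorem~\ref{thm.main1}: $K_4$ is $2$-connected, so $\ex K_4$ is addable; every fan has treewidth at most $2$ and hence lies in $\ex K_4$; and a $2$-fan on at least four vertices, and $K_{3,3}$, each contain a $K_4$-minor, so $\ex K_4$ contains all fans but neither all $2$-fans nor all $K_{3,t}$. Since by (iii) we will have $\gamma_1=\gamma(\rd 3 K_4)=23.52\ldots>2\gamma(\ex K_4)$ (recall that $\gamma(\ex K_4)=9.07\ldots$) while $\rd 3 K_4\subseteq\ex 2 K_4$, the class $\ex 2 K_4$ cannot have growth constant $2\gamma(\ex K_4)$; this forces $k_0(\{K_4\})=1$, so every $k\ge 1$ lies in the range $k\ge k_0$ of Theorem~\ref{thm.main1}, which therefore gives $\rho(\ex (k{+}1) K_4)=\rho(\rd {2k+1} K_4)$. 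What Theorem~\ref{thm.main1} does not give — and what the argument must supply — is the subexponential order $n^{-5/2}$ and the exponentially small error in~(i).

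\emph{Asymptotics of $\rd {2k+1} K_4$.} Fix a graph $G$ on $[n]$ with a redundant $K_4$-blocker $Q=\{q_1,\dots,q_{2k+1}\}$, and put $H=G-Q$. Deleting $Q\setminus\{q_i\}$ leaves precisely the induced graph $H$ together with $q_i$ and its edges into $H$; so the requirements that $Q$ be a redundant $K_4$-blocker of $G$ are exactly that $H\in\ex K_4$ and that, for every $i$, the graph $H+q_i$ — that is, $H$ with one extra vertex $q_i$ joined to some $N_i\subseteq V(H)$ — lies in $\ex K_4$; the edges inside $Q$ are unconstrained. A short argument shows that $H+q_i\in\ex K_4$ if and only if $C+(q_i\sim N_i)\in\ex K_4$ for every component $C$ of $H$ (any $K_4$-minor using $q_i$ can be localised to $q_i$ together with one component of $H$), so the choices of $N_1,\dots,N_{2k+1}$ decompose over the components of $H$ and are mutually independent. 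Writing $a(C)$ for the number of $N\subseteq V(C)$ with $C+(q\sim N)\in\ex K_4$, it follows that the exponential generating function $D$ of the class $\cd$ of pairs $(G,Q)$ is
\[
  D(x)\;=\;\frac{2^{\binom{2k+1}{2}}}{(2k+1)!}\,x^{2k+1}\exp\bigl(W(x)\bigr),
  \qquad
  W(x)\;=\;\sum_{m\ge 1}\Bigl(\,\sum_{C}a(C)^{2k+1}\Bigr)\frac{x^{m}}{m!},
\]
the inner sum running over connected $C\in\ex K_4$ with vertex set $[m]$. The weight $a(C)^{2k+1}$ is governed by a recursion on the series-parallel/block decomposition of $C$: a near-apex vertex can attach to a fixed $2$-connected block of size $s$ in only polynomially many ways (of degree depending on $k$), whereas a tree-like part of size $s$ contributes a factor up to $2^{(2k+1)s}$. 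As indicated in Section~\ref{sec.gcgen}, this realises $\exp(W)$ — with $W$ its connected part — as the generating function of a decomposable, bridge-addable class of coloured series-parallel graphs, controlled by a finite algebraic system extending the one of Bodirsky, Gim\'{e}nez, Kang and Noy~\cite{momm05}. One then checks that the system is \emph{subcritical} — the composition point of the block decomposition remains strictly inside the disc of convergence of the $2$-connected generating function — so that, just as for connected series-parallel graphs, $W$ has a singularity of type $(1-x/\rho)^{3/2}$ at its radius of convergence $\rho<\infty$. This singular type survives $\exp$ and multiplication by $x^{2k+1}$, so a standard transfer theorem gives $|\cd_n|=c'_k\, n^{-5/2}\, n!\, \gamma_k^n(1+o(1))$, where $\gamma_k:=\rho^{-1}$. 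Finally, a graph with two or more redundant $K_4$-blockers of size $2k+1$ must have a vertex of $H$ that attaches to $H$ in a ``near-apex'' fashion, which is a strict restriction and makes such graphs exponentially rare; correcting for this multiplicity yields $|(\rd {2k+1} K_4)_n|=c_k\, n^{-5/2}\, n!\, \gamma_k^n(1+o(1))$ for a suitable $c_k>0$.

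\emph{The comparison.} The inclusion $\rd {2k+1} K_4\subseteq\ex (k{+}1) K_4$ gives $|(\ex (k{+}1) K_4)_n|\ge|(\rd {2k+1} K_4)_n|$, so it remains to prove
\[
  \gamupper\bigl((\ex (k{+}1) K_4)\setminus(\rd {2k+1} K_4)\bigr)\;<\;\gamma_k.
\]
Let $G$ lie in this exceptional class. By the Robertson--Seymour (Erd\H{o}s--P\'{o}sa type) theorem~\cite{rs86}, $G$ has a $K_4$-blocker $B$ with $|B|\le f(k)$; take $B$ minimal and put $H=G-B$. One tries to enlarge $B$ to a redundant $K_4$-blocker of size $2k+1$: for this one needs a single set of at most $2k+1-|B|$ vertices of $H$ that is a $K_4$-blocker of $H+x$ for every $x\in B$ simultaneously, and this can only fail when some $H+x$ has no small $K_4$-blocker inside $V(H)$ — forcing $x$ to have a ``rigid'', high-multiplicity attachment to $H$ (as with a vertex joined to all of many pairwise disjoint triangles inside $H$); a parallel analysis of the $(|B|-1)$-element subsets of $B$ disposes of the case $|B|\ge 2k+2$. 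Thus in every exceptional case $G$ contains a rigid local configuration spanning linearly many vertices, and graphs carrying such a configuration cannot exploit the $\gamma_k^n$ worth of freedom enjoyed by generic members of $\rd {2k+1} K_4$; bounding the number of such graphs by a series-parallel generating function of strictly larger radius of convergence (again using the structure of $\ex K_4$) yields the displayed inequality, and hence $|(\ex (k{+}1) K_4)_n|=(1+e^{-\Theta(n)})|(\rd {2k+1} K_4)_n|$. I expect this step — turning the qualitative rigidity dichotomy into a clean exponential bound, uniformly over all blocker sizes up to $f(k)$ — to be the main obstacle.

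\emph{The value $\gamma_1$.} For $k=1$ the blocker has size $3$ and $\gamma_1=\rho^{-1}$, where $\rho$ is the radius of convergence of $\exp(W(x))$ for the weight $a(C)^{3}$. I would write out the explicit algebraic system for the generating functions involved — the series-parallel generating functions of~\cite{momm05} (with an edge-marking variable), those describing how a single near-apex vertex may attach to a $2$-connected, respectively connected, series-parallel graph, and the block and connectivity decompositions above — and locate its dominant singularity numerically; this gives $\gamma_1=23.5241\ldots$, and in particular $\gamma_1>2\gamma(\ex K_4)$ as used in the Overview.
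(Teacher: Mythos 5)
Your overall plan mirrors the paper's, and the translation to the weighted generating function $D(x) = \frac{2^{\binom{2k+1}{2}}}{(2k+1)!}x^{2k+1}\exp(W(x))$, with $W$ the connected weighted series-parallel class, is exactly the paper's passage to the coloured class $\crd{2k+1}$ (so $W = C^{2k+1}$). But the singularity analysis has real gaps. First, the class is \emph{not} bridge-addable: Figure~\ref{fig.unrootable} in Section~\ref{sec.gcgen} is an explicit two-component counterexample, and this failure is exactly why Lemmas~\ref{lem.unrootable}--\ref{lem.gcrd} are needed to establish even a growth constant. Second, the weight $a(C)^{2k+1}$ does not factor over the block tree of $C$ in any simple way — a candidate set $N$ can be spread across many blocks, and whether $C + (q\sim N)$ has a $K_4$-minor depends on how $N$ distributes across the tree — so there is no naive ``block decomposition of the weighted class.'' Precisely to circumvent this the paper introduces the $C$-tree formalism (Section~\ref{subsec.cbt}), the inclusion-exclusion of Lemma~\ref{lem.BAC}, and Theorem~\ref{thm.Tk} (coloured biconnected pieces are Cayley trees with SP-networks substituted for edges). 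Third, your claim of subcriticality is misplaced: the paper shows $\rho(\cc^{2k+1}) < \rho(\cd) = \rho(\cb_t)$, so the dominant singularity does not come from any series-parallel ingredient reaching its own singularity; it comes from the Cayley tree function $R$ hitting its branch point $e^{-1}$ in $A_{RGB}(x) = R(E(x)) - B(x)$. This produces a \emph{square-root} singularity for the rooted class $\cc^{\bullet l}$, and the type $(1-x/\rho)^{3/2}$ for $W = C^{l}$ appears only after passing from rooted to unrooted trees, which is nontrivial here because one cannot root at an arbitrary vertex. The paper proves a dedicated result (Theorem~\ref{thm.treeleaf}) for unrooted Cayley trees with objects attached to edges, internal vertices and leaves, precisely to extract the $n^{-5/2}$ and the finiteness of $A$ at $\rho$ needed for the Bell--Bender--Cameron--Richmond step.

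For the comparison $|(\ex(k{+}1)K_4)_n| = (1 + e^{-\Theta(n)})|(\rd{2k+1}K_4)_n|$, you correctly flag it as the main obstacle, but your ``rigid local configuration spanning linearly many vertices'' is only a heuristic. The paper's concrete mechanism is the spike criterion: Lemma~\ref{lem.notcomplex} supplies a constant-size $(2k,2,K_4)$-double blocker $Q$ for all but exponentially few graphs, Lemma~\ref{lem.rdmain} shows $Q$ upgrades to a redundant blocker of size $2k+1$ whenever $G - Q$ has linearly many spikes, and Lemmas~\ref{lem.conspikes} and \ref{lem.disconspikes} (resting on the pendant-appearance machinery of Lemma~\ref{lem.pendant} and Corollary~\ref{col.pendantK4}) bound the exceptional set. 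Without some quantitative criterion of this kind — yielding a uniform exponential bound over all blocker sizes up to $f(k)$ — your sketch does not close; and the uniqueness of the redundant blocker, needed to convert $|\crd{2k+1,n}|$ into $|(\rd{2k+1}K_4)_{n}|$ without overcounting, also rests on the same spike criterion (Lemma~\ref{lem.rdcount}), not on the vaguer ``near-apex'' dichotomy you propose.
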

The proof of the above theorem yields the following facts about the structure of typical graphs without a minor isomorphic to $(k+1)K_4$. 
%We have not determined the value of the constant $c_k$ for any $k \ge 1$.
Given a class of graphs $\ca$, we write $R_n \in_u \ca$ to mean that $R_n$ 
is a uniformly random graph drawn from $\ca_n$.
%Denote by $\Frag(G)$ 
%the graph obtained by removing from $G$ its (lexicographically) largest component.
\begin{theorem}\label{thm.K4struct}
    Let $k$ be a positive integer and let $R_n \in_u \ex (k+1) K_4$. %, and let $C > 0$ be any constant.
   % There
   % are constants $a_k, c_k \in (0, 1)$ such that the following holds. 
    \begin{enumerate}[{\hskip 5 mm} (a)]
        \item There is a constant $a_k > 0$, such that 
          with probability $1 - e^{-\Theta(n)}$, the graph $R_n$ has a unique redundant 
          $K_4$-blocker $Q$ of size $2k+1$, each vertex in $Q$ has degree at least $a_k n$,
          and any $K_4$-blocker $Q'$ with $|Q \setminus Q'| > 1$, 
          has at least $a_k n$ vertices.
      %\item .%$\E |V(\Frag(R_n))| \to b_k$.
      \item The probability that $R_n$ is connected converges to $p_k = A(\gamma_k^{-1})$
          where $A$ is the exponential generating function of $\ex K_4$ and $\gamma_k$ is as in Theorem~\ref{thm.K4}.
    \end{enumerate}
\end{theorem}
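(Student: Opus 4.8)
The plan is to read both parts off the singularity analysis that proves Theorem~\ref{thm.K4}, together with a large-deviations principle: a uniformly random graph in $(\rd{2k+1}K_4)_n$ deviates from the \emph{dominant configuration} of that proof only with probability $e^{-\Omega(n)}$. Since $\rd{2k+1}K_4\subseteq\ex(k+1)K_4$, Theorem~\ref{thm.K4} gives that $R_n\in\rd{2k+1}K_4$ with probability $1-e^{-\Theta(n)}$, so it suffices to study the decomposable class $\ce$ of coloured graphs used in that proof to count $\rd{2k+1}K_4$: one marks a redundant $K_4$-blocker $Q$ of size $2k+1$, records for each other vertex its \emph{colour} --- the set of vertices of $Q$ it is joined to --- and the remainder is controlled by the series-parallel generating function subject to the redundancy constraints $(G-Q)+q\in\ex K_4$ for all $q\in Q$. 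The dominant configuration consists of a $\Theta(n)$-sized series-parallel part carrying $Q$, on which the colour of a typical vertex is essentially unconstrained, plus a bounded number of detachable series-parallel components; this realises both the exponent $5/2$ and the constant $\gamma_k$. We prove the structural assertions for a uniformly random element of $\ce_n$; since uniqueness of the marked blocker is among them, $\ce_n$ and $(\rd{2k+1}K_4)_n$ are equivalent up to an $e^{-\Omega(n)}$ error, and by Theorem~\ref{thm.K4} the conclusions pass to $R_n$.

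For part~(a), the linear minimum degree on $Q$ comes from the colour count. If some $q\in Q$ is joined to at most $an$ vertices, then each of the $\Theta(n)$ vertices of the series-parallel part not joined to $q$ has colour confined to a set of size $2^{2k}$ rather than $2^{2k+1}$, so the proportion of such coloured graphs is at most $\binom{n}{an}\,2^{-\Omega(n)}$, which is $e^{-\Omega(n)}$ once $a=a_k$ is small enough; the $2k+1$ choices of $q$ keep this $e^{-\Omega(n)}$. The statements on alternative blockers cannot be obtained by a union bound over the exponentially many candidate sets $Q'$; instead they follow from a robustness property of the dominant configuration, again by colour counting: with probability $1-e^{-\Omega(n)}$, for every pair of distinct $q_i,q_j\in Q$, deleting any further $a_kn$ vertices still leaves $q_i$ and $q_j$ with three common neighbours spanning an edge, so that contracting one such neighbour into $q_i$ exhibits a $K_4$ minor. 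Hence any $K_4$-blocker omitting two vertices of $Q$ has at least $a_kn$ vertices; in particular every small blocker $Q'$ has $|Q\setminus Q'|\le 1$, and a second redundant $(2k{+}1)$-blocker $Q''\neq Q$ would be $Q''=(Q\setminus\{q_i\})\cup\{q'\}$ with $q'$ in the series-parallel part, whence redundancy of $Q''$ would force the single vertex $q'$ to block the many disjoint $K_4$ minors of $(G-Q)+q_i+q_j$ for all $j$ --- impossible with high probability. This yields the uniqueness and the claimed bounds.

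For part~(b), we use the exact decomposition. Every $G\in\ex(k+1)K_4$ is uniquely $G_0\sqcup G_1$ with $G_1$ the union of the series-parallel components of $G$ and $G_0$ the union of the rest; since disjoint $K_4$ minors lie inside single components, $G_0\in\ex(k+1)K_4$ has no series-parallel component, and conversely any such $G_0$ with any $G_1\in\ex K_4$ gives a graph in $\ex(k+1)K_4$. Writing $F$, $D_0$ and $A$ for the exponential generating functions of $\ex(k+1)K_4$, of its subclass with no series-parallel component, and of $\ex K_4$ respectively, this reads $F=D_0\cdot A$; and $D_0=e^{D_1}$ introduces the generating function $D_1$ of connected non-series-parallel graphs in $\ex(k+1)K_4$. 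Since $\gamma_k>\gamma(\ex K_4)$ --- because $\gamma_1=23.52\ldots>\gamma(\ex K_4)$ and $\gamma_k$ is non-decreasing in $k$ --- the series $A$ is analytic at $\rho:=\gamma_k^{-1}$, hence $\rho(D_0)=\rho$ and the singular expansions of $F$, $D_0$ and $D_1$ at $\rho$ are all of the $3/2$-type producing the exponent $5/2$ in Theorem~\ref{thm.K4}. Letting $C$ be the generating function of connected graphs in $\ex(k+1)K_4$ --- so $C=D_1+C_{\ex K_4}$ with $C_{\ex K_4}$ (connected series-parallel graphs) analytic at $\rho$ and hence negligible --- singularity analysis gives that $\Pr[R_n \text{ connected}]=[x^n]C(x)/[x^n]F(x)$ converges, and evaluating the generating functions above at $\gamma_k^{-1}$ identifies the limit as the constant $p_k$.

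\textbf{The main obstacle} is part~(a): turning the heuristic picture of the dominant configuration into rigorous $e^{-\Omega(n)}$ bounds for all possible defects, and in particular the ``no small blocker far from $Q$'' statement, where the exponentially many candidates $Q'$ rule out a union bound and one must instead establish a uniform structural robustness of the typical graph. This requires extracting from the proof of Theorem~\ref{thm.K4} a description of the coloured series-parallel class explicit enough that conditioning on any such local defect is visibly of strictly smaller exponential order than $\gamma_k^n$.
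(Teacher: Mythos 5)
Your overall framework for part~(a) — pass to the coloured class $\crd{2k+1}$, control the total-variation distance to the ``construction'' class, then argue structural robustness of a typical configuration — is the same as the paper's. But the colour-counting heuristics you sketch (``each vertex not joined to $q$ has its colour confined to a set of size $2^{2k}$'') do not directly give an $e^{-\Omega(n)}$ bound, because the admissible colourings of a series-parallel graph are highly constrained and far from independent over vertices: the number of valid colourings of $G$ is $X_G^{\,2k+1}$ where $X_G$ is the (graph-dependent) number of $1$-rootable vertices, not $(2^{2k+1})^{\,n}$. The paper's route to all three assertions of~(a) is a single clean tool that you are missing: the pendant-appearance lemma (the analogue of McDiarmid--Steger--Welsh's Theorem~4.1, adapted in Lemma~\ref{lem.pendant}/Corollary~\ref{col.pendantK4}) applied to the class $\cc^{2k+1}$ rooted at the nice vertices. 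This produces, with probability $1-e^{-\Omega(n)}$, a linear number of \emph{disjoint} pendant copies of a short fully coloured path (``spikes''). Each spike is joined to all of $Q$, giving the linear degree at once; each spike together with any two distinct $q_i,q_j\in Q$ contains a $K_4$, so any blocker omitting two members of $Q$ must meet all $\Omega(n)$ disjoint spikes, giving the lower bound $|Q'|\ge a_kn$; and uniqueness follows from the same count. You correctly identified that a union bound over $Q'$ fails and that one needs a uniform robustness statement, but the ingredient that actually supplies it is the pendant-appearance lemma, and without it your argument remains a sketch.

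Part~(b) contains a genuine error. You write $F=D_0\cdot A$, which is fine, but then assert $D_0=e^{D_1}$ with $D_1$ the egf of \emph{connected} non-series-parallel graphs in $\ex(k+1)K_4$. That identity would require the class ``graphs in $\ex(k+1)K_4$ with no series-parallel component'' to be decomposable, and it is not: a disjoint union of two connected graphs from $\ex(k+1)K_4$ can have more than $k$ disjoint $K_4$-minors (indeed already two copies of $K_4$ fail for $k=1$). So $D_0$ is strictly smaller than $e^{D_1}$, and your singular-analysis reduction collapses. The paper sidesteps this entirely: $\ex(k+1)K_4$ is bridge-addable and, by Theorem~\ref{thm.K4}, smooth; one then invokes the Boltzmann--Poisson limit for the fragment of a random graph from a bridge-addable smooth class (McDiarmid, restated as Lemma~6.3 of \cite{cmcdvk2012}), together with the fact (Lemma~6.2 of \cite{cmcdvk2012}) that the fragment lies in $\ex K_4$ whp, to obtain $\Pr[R_n\text{ connected}]\to A(\gamma_k^{-1})^{-1}$. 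You would at minimum need to (i) drop $D_0=e^{D_1}$, (ii) prove that almost all members of $D_0$ are connected, and (iii) control the singular expansion of $D_0$ — none of which are addressed, and all of which the bridge-addable machinery gives for free.

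One further small point: as a sanity check, the limit in part~(b) should be $A(\gamma_k^{-1})^{-1}$, not $A(\gamma_k^{-1})$, since $A(\rho)>1$ for $\rho>0$ and a probability must be at most $1$; the paper's proof gives the inverse.
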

%The last theorem states only the most basic facts about the graph $\Frag(R_n)$. 
%The results of \cite{cmcdvk2012} allow to completely specify the 
%We note
Let us point out that the complete asymptotic distribution of the `fragment' graph of $R_n$ ($R_n$ minus its largest component)
and the asymptotic Poisson distribution of the number of components in $R_n$
can be easily obtained (in terms of $A$ and $\gamma_k$) using results from \cite{cmcd09}. Furthermore, the expected number of vertices not in the largest component of $R_n$ is $O(1)$,
this holds more generally for random graphs from any bridge-addable class \cite{cmcd09}. We provide an approach to evaluate $\gamma_k$, $k = 1, 2, \dots$ numerically to arbitrary precision. Then $p_k$ can be numerically evaluated using results of \cite{momm05}.

The proof of Theorem~\ref{thm.K4} is much more complicated than the proof in the case $\cb = \{K_3\}$ in \cite{cmcdvk2011} or
the more general Theorem~\ref{thm.cmcdvk2012}. When $\ex \cb$ does not contain all fans, 
a random graph from $(\ex (k+1) \cb)_n$ essentially consists of
a random graph in $\ex \cb$ on $n-k$ vertices and $k$ apex vertices with their neighbours chosen independently at random, each
with probability $1/2$ \cite{cmcdvk2012}. 
Meanwhile, if $Q$ is a redundant blocker for $G \in \ex (k+1) K_4$, then the
%possible
%edges from a 
possible
neighbours of a vertex $v \in Q$ depend on the series-parallel graph $G-Q$. To solve this, we  obtain decompositions
of the dominating subclass of $\rd {2k+1} \cb$ into tree-like structures
and analyse the corresponding generating functions. %to count graphs in $\rd {2k+1} \cb$

In our last result we look at classes $\ex (k+1) \{K_{2,3}, K_4\}$. For $k=0$, this corresponds to outerplanar graphs. % (also studied in \cite{momm05}).
%Quite surprisingly, t
%The situation is different than that of $\ex (k+1) K_4$.
\begin{theorem}\label{thm.outerplanar}
    Let $\cb = \{K_{2,3}, K_4\}$.
    The class $\ex (k+1) \cb$ has a growth constant $\gamma_k'$ for each $k = 1, 2, \dots$.
    We have
    \[
    \gamma_1' = \gamma(\apex(\ex \cb))  =  2 \gamma(\ex \cb) > \gamma(\rd 3 \cb)
    \]
    and for a positive constant $c$
    \[
    | (\ex 2 \cb)_n|  = c n^{-3/2} \gamma_1'^n n! (1 + o(1)).
    \]
    However, for any $k \ge 2$
    \[
      %\gamma(\ex (k+1) \cb) 
      \gamma_k' = \gamma(\rd {2k+1} \cb) > \gamma(\apexp k (\ex \cb))
    \]
    and
    \[
    | (\ex (k+1) \cb)_n| = e^{\Omega(n^{1/2})} \gamma_k'^n n!
    \]

    The first few values are $\gamma_1' = 14.642..$, $\gamma_2' = 34.099..$, $\gamma_3' = 130.023..$,
    and for $k \ge 2$ $\gamma_k'$ admits a closed-form expression.
\end{theorem}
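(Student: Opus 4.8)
\begin{proofof}{Theorem~\ref{thm.outerplanar} (proof outline)}
The plan is to reduce to the classes $\rd{2k+1}\cb$ and $\apexp k(\ex\cb)$ and then analyse generating functions. Since $\ex\cb$ is precisely the class of outerplanar graphs, it contains every fan but not every $2$-fan (a $2$-fan on at least three path vertices contains $K_{2,3}$ as a subgraph) and not every $K_{3,t}$ (each contains $K_{3,3}\supseteq K_{2,3}$), so Theorem~\ref{thm.main1} applies, with some threshold $k_0=k_0(\cb)$. The structural fact underlying everything is: for $|Q|=2k+1$, the set $Q$ is a redundant $\cb$-blocker of $G$ if and only if $H:=G-Q$ is outerplanar and, for every $x\in Q$, adding $x$ to $H$ together with its edges into $H$ keeps the graph outerplanar; and, given $H$, the neighbourhoods $N_G(x)\cap V(H)$ ($x\in Q$) are then chosen independently. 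Writing $a(H)$ for the number of $N\subseteq V(H)$ such that $H$ with one extra vertex joined to $N$ is outerplanar, one gets, up to a factor $n^{O(1)}$, that $|(\rd{2k+1}\cb)_n|$ equals $\sum_H a(H)^{2k+1}$ with the sum over outerplanar $H$ on $[n-2k-1]$, so that $\gamma(\rd{2k+1}\cb)$ is the exponential growth rate of $\sum_H a(H)^{2k+1}/(n-2k-1)!$.

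The heart of the proof is to evaluate this. An outerplanar $H$ is a disjoint union of connected outerplanar blocks, an apex sees the blocks independently, and a short analysis of when adding an apex keeps a block outerplanar gives: in a $2$-connected block the apex may meet the outer cycle in at most two vertices, so such blocks contribute negligible $a$; for a tree block $B$ the admissible $N$ are exactly those whose Steiner tree in $B$ is a path; and $a(H)=2^m$ exactly when $H$ is a linear forest. Already this yields $\gamma(\rd{2k+1}\cb)\ge 2^{2k+1}$, which for $k\ge 2$ exceeds $\gamma(\apexp k(\ex\cb))=2^k\gamma$ since $2^{k+1}>\gamma$. Thus $\sum_H a(H)^{2k+1}/m!=[x^m]\exp(D(x))$, where $D(x)=\sum_{\ell\ge1}b_\ell\,x^\ell/\ell!$ with $b_\ell=\sum_{B}a(B)^{2k+1}$ summed over connected outerplanar $B$ on $[\ell]$, and $\gamma(\rd{2k+1}\cb)=\rho(D)^{-1}$. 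I would then identify the dominant blocks: bushier trees are more numerous but have exponentially fewer admissible $N$, and once $2k+1\ge5$ the penalty $a(B)^{2k+1}$ pushes the optimum onto a one-parameter family of ``caterpillar-type'' trees (a long path carrying a tuned density of short pendants), which are enumerated by a sequence construction, so $D$ has a \emph{pole} at $\rho(D)$. Hence $\exp(D)$ has an essential singularity, $[x^m]\exp(D(x))=e^{\Theta(m^{1/2})}\rho(D)^{-m}m^{O(1)}$ (this produces the $e^{\Omega(n^{1/2})}$ factor), and $\gamma_k'=\rho(D)^{-1}$ is the largest root of the explicit ``chain equation'' $Q(x)=1$ attached to this family; for $k\ge2$ that equation is solvable in closed form and gives $\gamma_k'>2^{2k+1}$, e.g.\ $\gamma_2'=34.099\ldots$ and $\gamma_3'=130.023\ldots$. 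I expect this identification — pinning down the optimal block family, ruling out that any denser outerplanar family does better, and extracting the closed form — to be the main obstacle.

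It remains to assemble the statements. The class $\rd{2k+1}\cb$ has a growth constant, by the decomposable-coloured-class argument of Section~\ref{sec.gcgen}; combined with Theorem~\ref{thm.main1}, $\ex(k+1)\cb$ then has a growth constant, equal to $\gamma(\rd{2k+1}\cb)$ when $k\ge k_0$ (here using $\rho(\ex(k+1)\cb)=\rho(\rd{2k+1}\cb)$ and $\rd{2k+1}\cb\subseteq\ex(k+1)\cb$) and to $2^k\gamma$ when $k<k_0$. Moreover $k_0=2$: for $k\ge 2$ the bound $\gamma(\rd{2k+1}\cb)\ge 2^{2k+1}>2^k\gamma$ shows $k\ge k_0$, while for $k=1$ one must show $\gamma(\rd3\cb)<2\gamma$, which is where the hypotheses ``not all $2$-fans / not all $K_{3,t}$'' enter, via the outerplanar value $\gamma=7.3209\ldots$ of \cite{momm05} and the optimisation above (now with exponent $3$). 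For $k=1$: $\apex(\ex\cb)\subseteq\ex 2\cb$ gives $\gamma_1'\ge 2\gamma$; the clause of Theorem~\ref{thm.main1} that $(\ref{eqn.main1})$ holds once $\rho(\rd3\cb)^{-1}<2\gamma$ gives $|(\ex2\cb)_n|=(1+e^{-\Theta(n)})|\apex(\ex\cb)_n|$; and a pointing argument (counting pairs $(G,v)$ with $G-v$ outerplanar, using that the valid apex is unique with high probability) together with the outerplanar asymptotics of \cite{momm05} evaluates $|\apex(\ex\cb)_n|\sim c\,n^{-3/2}(2\gamma)^n n!$, the extra factor $n$ from pointing turning the outerplanar $n^{-5/2}$ into $n^{-3/2}$; hence $\gamma_1'=2\gamma=14.642\ldots$ and $|(\ex2\cb)_n|=c\,n^{-3/2}(\gamma_1')^n n!(1+o(1))$. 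For $k\ge2$: $\gamma_k'=\gamma(\rd{2k+1}\cb)>\gamma(\apexp k(\ex\cb))$, and the essential-singularity estimate gives $|(\ex(k+1)\cb)_n|=e^{\Omega(n^{1/2})}(\gamma_k')^n n!$ with $\gamma_k'$ the closed-form root above.
\end{proofof}
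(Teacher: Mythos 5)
Your high-level plan matches the paper's: reduce to $\rd{2k+1}\cb$ via Theorem~\ref{thm.main1}, interpret $|(\rd{2k+1}\cb)_n|$ (up to polynomial factors) as $\sum_H a(H)^{2k+1}$ over outerplanar $H$, observe that the path-like structure forced by $K_{2,3}$ produces a simple pole in the connected generating function, and extract the $e^{\Theta(\sqrt n)}$ factor from the exponential of that function. Your local observations are also correct: for a tree $B$ the admissible apex-neighbourhoods are precisely those whose Steiner tree is a path; for a $2$-connected outerplanar block the apex may meet the Hamilton cycle in at most two vertices and they must be consecutive; linear forests give $a(H)=2^m$ and hence $\gamma(\rd{2k+1}\cb)\ge 2^{2k+1}>2^k\gamma$ for $k\ge2$, which establishes $k_0\le 2$.

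However there is a genuine gap at the point you yourself flag as ``the main obstacle'': the identification of the dominant family. You conjecture ``caterpillar-type trees,'' i.e.\ a long path with tuned pendants; but the paper (Lemma~\ref{lem.outerplanarpath} and Lemma~\ref{lem.structouterplanar}) shows the coloured core is a \emph{chain of biconnected outerplanar networks}, each a member of the class $\tilde{\cd}$ of two-pole biconnected outerplanar graphs to which an apex may be attached at both poles, with only the joints carrying colour. The explicit formula $\tilde{C}^l(x)=2^{l-1}xL(x)^2/\bigl(1-2^lx\tilde D(x)\bigr)$ with $\tilde D(x)=\frac1{4x}\bigl(1+x-\sqrt{x^2-6x+1}\bigr)$ then gives the pole $r_l=\frac1{2^l}\bigl(1-\frac1{2^l-1}\bigr)$ and $\gamma_k'=\psi_F(r_{2k+1})^{-1}$. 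Since $\tilde D(x)>1$ for $x>0$ one has $r_l<2^{-l}$ strictly, so the biconnected ``beads'' shift the growth constant by an exponentially significant factor; a tree-only (caterpillar) family would give the denominator $1-2^lx$ instead, underestimating $\gamma_k'$. Your argument therefore could not produce the claimed closed form or the numerical values $34.099\ldots$, $130.023\ldots$.

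This matters concretely for the case $k=1$, where you need the opposite inequality $\gamma(\rd3\cb)<2\gamma$. You say this ``is where the hypotheses enter, via the optimisation above (now with exponent $3$),'' but that optimisation is not carried out, and nothing in the linear-forest bound can produce an \emph{upper} bound on $\gamma(\rd3\cb)$. The paper obtains $\gamma(\rd3\cb)=\sigma_3^{-1}=10.48\ldots<14.64\ldots=2\gamma$ only by computing $\sigma_3=\psi_F(r_3)$ from the explicit chain decomposition. Without that you cannot conclude $k_0\ge 2$, so the $k=1$ asymptotics $|(\ex2\cb)_n|\sim c\,n^{-3/2}(2\gamma)^nn!$ (which you correctly derive from $(\ref{eqn.main1})$, Lemma~\ref{lem.apex_not_rd}, and the $\apex$-pointing heuristic once that inequality is granted) are not established either. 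In short: right skeleton, but the structural lemma pinning down the dominant chain family with its explicit $\tilde D$ is the missing load-bearing step.
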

The last theorem shows that Theorem~\ref{thm.main1} does not hold in general with $k_0 = 1$.
The unusual subexponential factor for $k \ge 2$ shows up because the underlying structure of typical graphs
in $\rd {2k+1} \{K_{2,3}, K_4\}$ is ``path-like'', whereas it is
``tree-like'' for graphs in $\rd {2k+1} K_4$, see Section~\ref{subsec.illustration} below.

%, and suggests
%that the asymptotic number of graphs in classes $\ex (k+1) \cb$
% By Theorem~\ref{thm.K4}, for $\cb = \{K_4\}$
% the subexponential factor in the main asymptotic term of $(\ex (k+1) \cb)_n$
% is $n^{-5/2}$. %This is also true for many other $\cb$ \cite{cmcdvk2012}.
% The proof of Theorem~\ref{thm.outerplanar} reveals, however that 
% the subexponential term in the case $\cb = \{K_{2,3}, K_4\}$ is
% most likely of a different type. This is because, unlike
% the graphs from classes $\ex (k+1) K_4$ for which the key constructions
% are forests with certain networks substituted for edges, 
% in the case $\rd {2k+1} \{K_{2,3}, K_4\}$ the corresponding underlying structures are path forests.
% %and the theorem holds if in the definition of $\aw 2$ we ask that the tree $T$ was a path.
% %and the theorem
% %holds for any class $\ex (k+1) \cb$ where $\ex \cb$ does not contain arbitrarily large
% %minors consisting of a path $P$ and two apex vertices not on the path connected to each vertex in $V(P)$.

Half of the paper consists of structural results which yield general theorems
(i.e., Theorem~\ref{thm.main1} and Theorem~\ref{thm.gc}) with rough asymptotics. 
The other half is a study of the specific case $\ex (k+1) K_4$, which requires
knowledge of the structure of the class of series-parallel graphs and
analysis of specific generating functions, but yields much sharper conclusions (Theorem~\ref{thm.K4}).

%The four main large sections of the paper each have a different flavor. 
In Section~\ref{sec.struct} we prove our key structural lemmas and Theorem~\ref{thm.main1}. %, the flavour is ``structural graph theory with
%a dash of analytic combinatorics''.
Section~\ref{sec.gcgen} is similar, using a superadditivity argument as in \cite{msw05}, we prove Theorem~\ref{thm.gc} there. In Section~\ref{sec.part2}, we explore the rich structure of the classes
$\rd {2k+1} K_4$, which we then translate into generating functions and apply analytic combinatorics to get the growth constant when $k = 1$. In Section~\ref{sec.unrooting} we count graphs obtained from unrooted Cayley trees where edges, internal vertices and leaves
may be replaced by graphs from different classes. Then, in Section~\ref{sec.unrootedstructure} we complete the proof of Theorem~\ref{thm.K4} and present Figure~\ref{fig.Ctree2} illustrating the structure of typical graphs in $\ex 2 K_4$ together with a short intuitive explanation. In Section~\ref{sec.outerplanar} we prove Theorem~\ref{thm.outerplanar}. Finally, in Section~\ref{sec.conclusion} we discuss open questions that arise from this work and give some concluding remarks.

\section{Definitions}

\subsection{Definitions for coloured graphs}
\label{sec.def}

Let $t \ge 0 $ be a fixed integer. We will consider $\{0,1\}^t$-coloured graphs $G$ where each vertex
$v \in V(G)$ is assigned a colour %\m{col, Col}
\[
\col(v) = \col_G(v) = (\col_1(v), \dots, \col_t(v)) \in \{0,1\}^t.
\]
 We say that $v\in V(G)$ has colour $i$ if $\col_i(v) = 1$.
 We denote by $\Col(v) = \Col_G(v) = \{k: c_k(v) \ne 0\}$ the set of all colours of $v$, similarly let $\Col(G)$ 
 be the 
 %set of all colours of $G$.
 union of $\Col_G(v)$ for all $v \in V(G)$. 
 Also, denote by $N(G)$ the uncoloured graph obtained by removing all colours from $G$. 
 Whenever $t$ is clear from the context or not important, we will call
 $\{0,1\}^t$-coloured graphs just \emph{coloured graphs} or simply \emph{graphs}.
We call a vertex $v \in V(G)$ \emph{coloured} if $\Col_G(v) \ne \emptyset$. %\m{move above?}

%When we wish to indicate that $c$ or $C$ corresponds to the graph $G$ we use notation $c_G$ or $C_G$, respectively.

Let $G$ be a $\{0,1\}^t$-coloured graph. Given a set $L=\{s_1, \dots, s_t\}$
such that $s_1, \dots,s_t \not \in V(G)$ and $s_1 < \dots < s_t$, we
can obtain an (uncoloured) graph $G^L$ on vertex set $V(G) \cup L$ by connecting $s_i$ to each vertex $v \in V(G)$ that has colour $i$.
We call $G^L$ an \emph{extension of $G$}. We denote by $\Ext(G)$ the set of all extensions $G^L$ of $G$ such that $|L|=t$, %such that $L$ and $V(G)$ are disjoint, 
and denote by $\ext(G)$ an arbitrary representative of $\Ext(G)$.
%\m{define restriction}

For a $\{0,1\}^t$-coloured graph $G$ we
define the contraction operation in the standard way (see, e.g., \cite{diestel}) with the addition that the vertex $w$ obtained from contracting an edge $uv \in E(G)$ has colours $\Col(w) = \Col(u) \cup \Col(v)$.
A $\{0,1\}^t$-coloured graph $H$ is a \emph{subgraph} of $G$ if $H$ is a subgraph of $G$, if the colours are ignored, and for each $v \in V(H)$ we have $\Col_H(v) \subseteq \Col_G(v)$.
$H$~is  a \emph{coloured minor} of $G$ if it can be obtained by contraction and subgraph operations from $G$. 

When a (coloured) graph $G$ has $V(G) \subseteq [n]$ for some positive integer~$n$, we will usually assume that 
the new vertex $w$ resulting from the contraction of an edge $e = xy$ has label $\min(x,y)$, so that
$V(G / e) \subseteq [n]$.
For a (coloured) graph $G$ and $J \subseteq E(G)$ we will denote by $G / J$ the graph resulting from the contraction of all of the edges in~$J$. The operation $G / J$ corresponds to
a partition of $V(G)$ into a set of ``bags'' $\{Bag(v): v \in V(G / J) \}$ where $Bag(v)$ is the set of vertices
that contract to $v$. We call a subgraph $H$ of $G$ \emph{stable} with respect to contraction of $J$ in $G$ if no pair of vertices of $H$ is contracted into the same bag.

%We say that two $\{0,1\}^t$-coloured graphs $G'$ and $G''$ are isomorphic if there are bijections $f: V(G') \to V(G'')$ and $g: [k] \to [k]$ such that $x y \in E(G')$ if and only if $f(x) f(y) \in E(G'')$ and $g(C_{G'}(x)) = C_{G''}(f(x))$. 
We say that two $\{0,1\}^t$-coloured graphs $G'$ and $G''$ are \emph{isomorphic} if there is a bijection $f: V(G') \to V(G'')$ such that $x y \in E(G')$ if and only if $f(x) f(y) \in E(G'')$ and $\Col_{G'}(x) = \Col_{G''}(f(x))$ for each $x \in V(G')$.

%If there is a bijection $g: [t] \to [t]$ 
%%between the colours of $G'$ such that
%%the graph $G'$ coloured using $g(C_{G'})$
%such that the graph obtained from $G'$ by applying $g$ to the colours of $G'$
%is isomorphic to $G''$ then we say that $G'$ and $G''$ are \emph{weakly isomorphic}.  

%\m{define contraction}

%We call $H$ \emph{single-coloured} if there is a colour $j \in \{1, \dots, t\}$ such that $\Col_H(v) = \{j\}$ for each $v \in V(H)$. \m{is it necessary?}
%
%We will call a single-coloured graph $H$, where each vertex has colour 1 (red), a \emph{red $H$}.
%Let $CP_3$ be a $\{0,1\}^2$-coloured graph formed by colouring each vertex on the path on 3 vertices where each vertex is coloured $\{red,green\}$. 
%Let $H_1$ be the $\{0,1\}^2$-coloured graph with one vertex coloured $\{red, green\}$.
%For notational convenience, let us assume that after contracting an edge $xy$ in a graph $G$
%the newly created vertex is always labelled $min(x,y)$. 
%\m{return $min(x,y)$}.

 For a $\{0,1\}^t$-coloured graph $G$, we say that $S \subseteq V(G)$ has colour $c$ %if any vertex $v \in S$ satisfies
 if $c \in \Col_G(v)$ for some $v \in S$. We say that $G$ has colour $c$ if $V(G)$ does. 
 For a vertex $v \in V(G)$ we let $\Gamma(v) = \Gamma_G(v)$ denote the set of neighbours of $v$ in $G$.
It will be convenient to call the colours 1, 2 and 3 \emph{red, green and blue} respectively.

Let $C$ be the set of cut points of $G$ and let $\cb$ be the set of its blocks. Fix $r \in V(G)$.
Then the tree $T_r$ with vertex set  $C \cup \{r\} \cup \cb$ and edges given by $uB$ where $u \in C\cup\{r\}$, $B \in \cb$ and $u \in V(B)$ will be called a \emph{rooted block tree} of $G$, rooted at $r$. (This is a minor modification of the usual block tree, see~\cite{diestel}.) We call graphs that are either 2-connected or isomorphic to $K_2$ \emph{biconnected}.

For a graph $G$ and a set $S$, we write $G \cap S = G[V(G) \cap S]$. For two graphs $G_1 = (V_1, E_1)$ and $G_2 = (V_2, E_2)$ we write $G_1 \cup G_2 = (V_1 \cup V_2, E_1 \cup E_2)$ and $G_1 \cap G_2 = (V_1 \cap V_2, E_1 \cap E_2)$. 

\subsection{Analytic combinatorics}

We will apply the ``symbolic method'' of Flajolet and Sedgewick~\cite{fs09} to study the asymptotic
number of graphs from various classes. %$(\rd 3 K_4)_n$.

In this paper we follow the notational conventions used in~\cite{fs09}. %in the notation of classes of objects.
%Most of them are classes of graphs (also graphs where some vertices are distinguished, have no label and/or are assigned a colour).
%In this paper 
The size of a graph $G$ is the number of labelled vertices, while $V(G)$
refers to the set of all vertices of $G$, including the unlabelled (pointed) ones.
The exponential generating function of $\ca, \cb, \dots$ is denoted $A(x), B(x), \dots$ respectively.
For instance,
$A(x) = \sum_{n=0}^{\infty} \frac{|\ca_n|} {n!} x^n$.
By $\cz$ we denote the class of graphs consisting of a single vertex with a label, such that $Z(x) = x$.
 %, for readability, we will sometimes suppress the index.

%We denote by $\rho(\ca)$
%the convergence radius of the exponential generating function $A$ of the class $\ca$.
We use the notation $\ca + \cb$, $\ca \times \cb$, $\ca(\cb)$ to denote the class of graphs obtained
by the (disjoint) union, labelled product and composition operations respectively, see \cite{fs09}.
For a positive integer $k$, $\ca^k$ denotes the class consisting of a sequence of $k$ disjoint members from $\ca$,
and we define $\ca^0$ to be the class with exponential generating function $A^0(x) = 1$.
We also refer to \cite{fs09} for the formal definition of the class $\SET(\ca)$ (obtained by taking
arbitrary sets of elements of $\ca$ and appropriately relabelling), the class $\SEQ(\ca)$ (obtained
by taking any ordered sequence of elements of $\ca$ and appropriately relabelling), and classes
$\SET_{\ge k}(\ca)$ (sets of at least $k$ elements) and $\SEQ_{\ge k}$ (sequences of at least $k$ elements). Given a positive integer $k$, we will denote by $k \times \ca$  a combinatorial class with the counting sequence $(k |\ca_n|, n=0,1,\dots)$.

To denote dependence of a class $\ca$ (or a generating function $A$) on a parameter $l$ we will use either superscript $\ca^{<l>}$, $\ca^l$ or a subscript $\ca_l$.\footnote{When this coincides with the notation for the elements $\ca_n$ in $\ca$ with labels in $[n]$ or with a power of a class $\ca^k$, the meaning should be determined from the context.}

If  $\ca$ and $\cb$ have identical counting sequences $|\ca_n| = |\cb_n|$ for $b = 0, 1, \dots$ %$(|\ca_n|, n = 0, 1,\dots), (|\cb_n|, n = 0, 1, \dots)$
we call $\ca$ and  $\cb$ \emph{combinatorially isomorphic} and write $\ca = \cb$. We note that most of the
decomposition results of Section~\ref{sec.part2} and onwards yield a stronger kind of isomorphism than just the combinatorial one: we prove unique decompositions of graphs from one class into unions of subgraphs with disjoint sets of labels from other classes. This is important since many of our proofs rely on the structure of graphs.

\section{Structural results for $\ex (k+1) \cb$}
\label{sec.struct}

\subsection{The colour reduction lemma}
\label{subsec.colreduction}

The following simple lemma will be very useful in our structural proofs. % the following lemma first.
\begin{lemma}\label{lem.apex2P3}
    Let $l$ be a non-negative integer. Let $G$ be a $\{0,1\}^2$-coloured graph. Suppose $G$ does not have $l+1$ disjoint connected subgraphs
    containing both colours. Then there is a set $S$ of at most $l$ vertices such that each component of $G - S$ has at most one colour.
\end{lemma}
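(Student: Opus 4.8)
The statement is essentially a min-max / LP-duality style fact: a $\{0,1\}^2$-coloured graph with no $l+1$ disjoint ``bichromatic'' connected subgraphs admits a hitting set of size $l$ whose removal leaves only monochromatic components. The natural approach is induction on $l$ (with the base case $l=0$ being immediate: if $G$ has no connected subgraph containing both colours, then already every component of $G$ is monochromatic, so $S=\emptyset$ works). For the inductive step, if $G$ itself has no connected bichromatic subgraph we are done with $S=\emptyset$; otherwise fix a connected bichromatic subgraph $H$ of $G$, and we would like to find a single vertex $v$ whose deletion decreases the packing number, so that $G-v$ satisfies the hypothesis with $l$ replaced by $l-1$ and we can apply induction to get a set $S'$ of size $l-1$, taking $S = S' \cup \{v\}$.

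The key step is therefore the existence of such a vertex $v$. I would argue as follows. Among all connected bichromatic subgraphs of $G$, choose a minimal one, $H$; by minimality $H$ is a tree, and moreover it has the form of a path (or more carefully: a minimal connected subgraph meeting both colours is a path from a red vertex to a green vertex with no other coloured vertices — any extra structure could be pruned). Actually the cleanest route: take $H$ to be a shortest path from a vertex having colour $1$ to a vertex having colour $2$; then no internal vertex of $H$ has colour $1$ or colour $2$. Now I claim some vertex $v \in V(H)$ hits every maximum packing of disjoint connected bichromatic subgraphs, i.e. the packing number of $G - v$ is strictly smaller. Suppose not: then for each $v \in V(H)$ there is a packing of $l+1$... no — we only know the packing number of $G$ is at most $l$, so we need: for each $v\in V(H)$ the packing number of $G-v$ still equals that of $G$. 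Rather than pushing this delicate argument, I would instead directly pick $v$ to be an endpoint-region vertex, or better, reduce via a connectivity/separator argument.

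The slicker alternative, which I expect is what the paper does, is to avoid choosing $v$ from a single $H$ and instead proceed by a direct ``greedy peeling'': repeatedly, while $G$ has a connected bichromatic subgraph, it has a minimal one $H$ which is a path $P$ with monochromatic-free interior; contract all of $P$ to a single vertex — this vertex now has both colours — and observe that contractions do not create new bichromatic connected subgraphs that were not ``present'' before, while the hypothesis is inherited. Hmm, contraction changes the count in the wrong direction. So the honest approach is really the induction with the separator claim, and I expect \textbf{the main obstacle to be establishing that the packing number drops after deleting a suitable single vertex} — equivalently, a Helly-type or submodularity argument showing one cannot have, for every $v\in V(H)$, a full-size packing in $G-v$ avoiding $v$ while $H$ itself is bichromatic and connected and disjoint from... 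This is where I would invoke the following clean fact: if $\mathcal{F}$ is a family of ``objects'' (connected bichromatic subgraphs) closed under the operation of taking a connected bichromatic subgraph of a union, and $H\in\mathcal F$, then either $H$ meets every maximum packing, or $H$ is disjoint from some maximum packing — in the latter case $H$ together with that packing is a packing of size $(\text{packing number})+1 \le l+1$... wait that still only gives $\le l+1$, not a contradiction with $\le l$. Re-examining: if $H$ is disjoint from a packing of size $m$ (the packing number), we get a packing of size $m+1$, forcing $m+1\le l$, i.e. $m\le l-1$; then $G$ (not just $G-v$) already satisfies the hypothesis with $l-1$, and induction on $G$ directly gives $|S|\le l-1\le l$. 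So the dichotomy is: \emph{either} some vertex of $H$ meets every maximum packing (delete it, induct on $G-v$ with parameter $l-1$), \emph{or} $H$ is disjoint from a maximum packing (then the packing number is $\le l-1$ and we induct on $G$ itself with parameter $l-1$). Both branches close the induction. The one genuine thing to verify carefully is the dichotomy itself — that if no single vertex of $H$ is in all maximum packings, one can assemble a maximum packing disjoint from $V(H)$ — which follows because $V(H)$ is finite and one can, starting from any maximum packing, reroute it off $V(H)$ one vertex at a time using the failure of each $v\in V(H)$ to be a ``covering'' vertex; making this swapping argument rigorous (ensuring the rerouted subgraphs stay disjoint) is the crux.
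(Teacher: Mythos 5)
Your approach has a genuine gap at the exact point you flag as ``the crux.'' The dichotomy you need --- that if no single vertex of $H$ hits every maximum packing, then some maximum packing is disjoint from all of $V(H)$ --- is not a free fact; the ``rerouting/swapping'' argument you sketch is precisely the kind of augmenting argument that goes into a proof of Menger's theorem, and making it work for packings of connected bichromatic subgraphs is essentially as hard as proving Menger in this setting. As stated, the dichotomy is unproven and there is no obvious elementary way to close it: having, for each $v\in V(H)$, a maximum packing avoiding $v$ does not by any general Helly-type principle produce a single maximum packing avoiding the whole set $V(H)$.

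The paper's proof avoids all of this by reducing \emph{directly} to Menger's theorem. Form the extension $G'=G^{\{s,t\}}$: add two new vertices $s$ and $t$, join $s$ to every vertex with colour $1$ and $t$ to every vertex with colour $2$. Then the internal vertex sets of internally disjoint $s$--$t$ paths in $G'$ are exactly (a collection of) pairwise disjoint connected subgraphs of $G$ each carrying both colours, and conversely every such family yields internally disjoint $s$--$t$ paths. The hypothesis says there are at most $l$ such paths, so Menger gives an $s$--$t$ separator $S\subseteq V(G)$ with $|S|\le l$; any component of $G-S$ with both colours would give an $s$--$t$ path in $G'-S$, so no such component exists. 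Your instinct that this is a ``min-max / LP-duality'' fact is right, but the efficient route is to recognise it as a Menger instance, not to reprove the duality by hand via induction and rerouting.
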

%For graphs with bounded treewidth we could use a proof similar to that of Lemma~3.2 in \cite{cmcdvk2012}.  We give an alternative proof which does not require this assumption (but works only in this special case).
%
%\medskip
%
\begin{proof}
    For two new vertices $s, t \not \in V(G)$ consider the extension $G' = G^{\{s,t\}}$. $G'$ has $l+1$ internally disjoint paths from $s$ to $t$ if and only if $G$ has $l+1$ disjoint connected subgraphs containing both colours. By Menger's theorem %(see, for example, \cite{diestel})
 we may find a set $S$ of at most $l$ vertices in $V(G)$ such that $S$ separates $s$ from $t$ in $G'$, and hence each component of $G - S$ can have at most one colour.
\end{proof}

\bigskip

Given an integer $s$ and a graph $G$, we define its \emph{apex width of order $s$}, denoted $\aw s G$
as the maximum number $j$ such that $G$ has a minor $H$ on $j+s$ vertices where
$H$ is a union of a tree $T$ with $|V(T)| = j$ and a complete bipartite graph with parts $V(T)$ and $V(H) \setminus V(T)$.
For a class of graphs $\ca$ we define $\aw s (\ca)$ to be the supremum of $\aw s (G)$ over $G \in \ca$.
In this paper we will only use the parameter $\aw 2$. For example, it is easy to check that 
its value
%the order 2 apex width
for classes $\ex K_4$, $\ex K_{2,t}$ %$\ex \{ K_{3,3}, K_5\}$
and $\ex K_5$ is $2$, $t-1$ and $\infty$ respectively. In Section~\ref{subsec.finite_aw} below we give
a condition to check if $\aw j (\ca)$ is finite.

%Motivated by the last lemma, we 
%We next introduce a new graph parameter.
Given integers $s$ and $t$, $1 \le s \le t$ and a $\{0,1\}^t$-coloured graph $G$,
define its \emph{coloured apex width of order $s$}, denoted $\caw s (G)$, as the maximum
number $j$, for which there are $j$ pairwise disjoint connected subgraphs $H_1, \dots, H_j$ of $G$
that have at least $s$ common colours, i.e., $|\Col(H_1) \cap \dots \cap \Col(H_j)| \ge s$.
For a class of coloured graphs $\ca$ we define $\caw s (\ca)$ as the supremum of $\caw s (G)$ over the graphs $G \in \ca$.

We state one of our key structural lemmas next. We assume that all graphs here have vertices in $\mathbb{N}$.
\begin{lemma}[Colour reduction lemma] \label{lem.colreduction}
    Let $t \ge 2$ be an integer and let $G$ be a connected $\{0,1\}^t$-coloured graph.
    %with $V(G) \subseteq [n]$ for some
    %positive integer $n$.
    Suppose $\caw 2 (G) \le j$, for some non-negative integer $j$.
    
    Then there is a connected $\{0,1\}^t$-coloured graph $G'$ and a set $J \subseteq E(G')$ of size at most
    $(j + 1)^{t-1} - 1$ such that a) each component of $G' - J$ has at most one colour,
    b) $G'/J = G$ and c) each component of $G'-J$ is stable with respect to contraction of $J$ in $G'$.
\end{lemma}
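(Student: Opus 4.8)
The plan is to induct on $t$, the number of colour coordinates. The base case $t=2$ is essentially Lemma~\ref{lem.apex2P3}: the hypothesis $\caw 2 (G) \le j$ says that $G$ has no $j+1$ disjoint connected subgraphs sharing $\ge 2$ colours, i.e.\ containing both colour $1$ and colour $2$, so that lemma produces a set $S$ of at most $j$ vertices such that each component of $G-S$ has at most one colour. The point is to convert this vertex deletion into an edge contraction. For each $v \in S$ I would replace $v$ in $G$ by a small tree (a star, or a path) $T_v$ whose edges will go into $J$, distributing the neighbours of $v$ among the leaves of $T_v$ in such a way that each leaf, once we also re-assign colours, ends up in a single-colour component of $G'-J$. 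One has to choose the splitting so that (a) holds; since each colour-$1$ and colour-$2$ incidence at $v$ must be separated and there are two colours, a star with one centre and enough leaves, or more carefully a structure with at most one edge per ``colour class'' meeting $v$, suffices, giving $|J| \le |S| \cdot (\text{something bounded by }(j+1)^{t-1}-1)$ after the count is done carefully. Contracting $T_v$ back to a point recovers $v$ with the union of the colours of its pieces, so $G'/J = G$; stability (c) holds because within each component of $G'-J$ no two vertices were created by splitting the same original vertex — they lie in different components by construction.

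For the inductive step, suppose the claim holds for $t-1$. Given a connected $\{0,1\}^t$-coloured $G$ with $\caw 2(G) \le j$, I would first look only at colours $1$ and $2$ (or, more symmetrically, group the $t$ colours): apply the $t=2$ argument to the $\{0,1\}^2$-colouring obtained by remembering only whether a vertex has colour $t$ and whether it has any of colours $1,\dots,t-1$; this is legitimate because $\caw 2$ can only drop when colours are merged, so the bound $j$ is preserved. This yields a graph $G^{(1)}$ and an edge set $J_1$ of size at most $j$ (or a power of $j+1$, depending on the exact splitting used) such that $G^{(1)}/J_1 = G$, each component of $G^{(1)} - J_1$ is stable, and each such component has at most one of the two merged colours — so every component either avoids colour $t$ entirely or uses only colours among $\{1,\dots,t-1\}$. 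On each component of the first type we are done; on each component of the second type the colouring uses only $t-1$ coordinates and still has coloured-apex-width at most $j$, so the inductive hypothesis applies and contributes an edge set of size at most $(j+1)^{t-2}-1$ per component. Assembling: $J = J_1 \cup \bigcup (\text{inner sets})$; the recursion for the bound is $N(t) \le (j+1) \cdot N(t-1) + j$ with $N(2) = j$, which solves to $N(t) \le (j+1)^{t-1}-1$, matching the statement. Connectedness of $G'$ is maintained because we only split vertices and reattach via trees, and $G'/J = G$ follows by contracting the stages in order.

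The main obstacle I expect is the bookkeeping in the inductive step: one must ensure that when the $\{0,1\}^2$-reduction is applied to the \emph{merged} colouring, the resulting single-colour components on the ``colours $1,\dots,t-1$'' side genuinely carry a valid $\{0,1\}^{t-1}$-colouring with $\caw 2 \le j$ inherited from $G$ (via stability, the disjoint subgraphs witnessing large $\caw 2$ in a component would pull back to disjoint subgraphs of $G$), and that colours are re-assigned to the split pieces consistently so that $G'/J=G$ holds on the nose — in particular the leaf that inherits a given neighbour of $v$ must also inherit exactly the colours needed, and the contraction must recombine them correctly. A secondary subtlety is verifying condition~(c), stability, survives the two stages: a pair of vertices in a final component of $G'-J$ that got identified under $J$ would have to be identified either at the inner stage (excluded by the inductive hypothesis's stability) or at the outer stage (excluded by the $t=2$ stability), and one should check these two cases do not interact badly. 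Once the splitting gadget at each vertex is pinned down precisely, the rest is a routine induction.
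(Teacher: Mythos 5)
Your argument is correct and lands on the same bound, but the induction is nested in the opposite order to the paper's, and this is worth noting. The paper's proof first \emph{deletes} colour $t$, applies the inductive hypothesis to the resulting $\{0,1\}^{t-1}$-coloured graph to get $G_1'$ and $J_1$ with $|J_1|\le (j+1)^{t-2}-1$, then \emph{re-inserts} colour $t$ by picking a representative in each contraction bag, and finally runs the two-colour step (Lemma~\ref{lem.apex2P3} plus a matching split) once \emph{inside each of the $\le |J_1|+1$ components}; the recursion is thus ``outer $= (t-1)$-colour, inner $= 2$-colour''. You do the reverse: first run the two-colour step on the \emph{merged} colouring (colour $t$ versus the union of colours $1,\dots,t-1$), which uses at most $j$ matching edges and leaves components that either carry only colour $t$ (done) or only colours in $[t-1]$; then recurse on the latter, each contributing at most $(j+1)^{t-2}-1$ edges. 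Both nestings give the same recursion $N(t)\le (j+1)N(t-1)+j$, $N(2)=j$, and hence the same bound. Two places where you should pin things down to match the paper's cleanliness: (i) for the base case, your ``small tree $T_v$'' gadget is overkill and risks inflating $N(2)$; as in the paper a single edge $vv'$ per separator vertex suffices (split $v$ so that one endpoint keeps the red-side neighbours and red colour, the other the green-side neighbours and green colour), which is exactly what makes $N(2)=j$ hold; (ii) the key technical point you flag, that the $[t-1]$-coloured components still have $\caw 2\le j$, is precisely the role of condition~(c): stability makes each such component (with its inherited colours) isomorphic to a coloured subgraph of $G$, so any $j+1$ disjoint bichromatic connected subgraphs in it would already live in $G$. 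You raise these as ``obstacles'' but the resolutions are exactly the two observations above, so the proposal is sound; your ordering has the mild advantage of avoiding the paper's ``re-insert colour $t$ into an arbitrary bag representative'' step, at the cost of having to track which split piece of each separator vertex inherits which subset of $\Col(v)$.
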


\begin{proof}
    We use induction on $t$. For $t=2$ by Lemma~\ref{lem.apex2P3}
    there is a set $B$ of at most $j$ vertices in $G$ such that each component of 
    $G - B$ has at most one colour. Denote by $V_{green}$ the set of vertices that belong to a component of $G - B$ that has the green colour.

    %Consider a graph $G'$ obtained from $G$ as follows. 
    Let $G_0 = N(G[B])$.
    For each vertex $v \in B$ take a new vertex $v' \not \in V(G)$; let $B' = \{v': v \in B\}$.
    Now define a matching $J = \{v v' : v \in B\}$ on $|B| \le j$ edges.    
    Consider
    the $\{0,1\}^2$-coloured graph $G_1$ on the vertex set $B \cup B'$, with edges $E(G[B]) \cup J$
    and colours 
    \[
    \Col_{G_1}(v) = \{red\} \cap \Col_G(v) \quad\mbox{and}\quad \Col_{G_1}(v') = \{green\} \cap \Col_G(v).
    \]
    for each $v \in B$.

    Now let $G'$ be the union of $G_1$, $G - B$ and the set of edges %of the bipartite graph $H$ with parts $V(G_1)$ and $V(G-B)$, and the set of edges 
    $E_1 \cup E_2$ defined as follows: 
    \begin{align*}
        & E_1 = \{ v' x : v \in B, x \in V_{green}, \mbox{ and } vx \in E(G) \};
        \\  & E_2 = \{ v x:  v \in B, x \in V(G) \setminus (B \cup V_{green}), \mbox{ and } vx \in E(G) \}.
    \end{align*}
    In words, $G'$ is obtained from $G$ by splitting each vertex $v \in B$, so that one of the new vertices
    %resulting from the splitting of $v$
    inherits the green colour of $v$ (if it had that colour) and all neighbours of $v$ in $V_{green}$ 
    while the other vertex inherits the rest of the neighbours of $v$. Obviously, $G' / J = G$ if we make sure that the newly created
    vertices $v'$ have larger labels than those in  $V(G)$. 
    %The components of $G' - J$ are obtained from the components of $G$ and the vertices in $B'$, but since each
    %vertex $v \in B'$ can have edges only to $V_{green}$, each component in $G' - J$ can have at most one colour. 
    %The matching $J$ separates $B$ and $B'$ in $G_1[B \cup B']$, 
    %Since $J$ separates $B$ and $B'$ in $G_1$, 
    By our construction, $J$ separates $B$ and $B'$ in $G_1$ 
    and each component of $G' - J$ containing green colour can have vertices only in $V_{green} \cup B'$,
    thus each component of $G' - J$ has at most one colour.

    Consider a component $C$ of $G' - J$. Since $J$ is a matching and each edge of $J$ 
    is between different components of $G' - J$, the contraction of $J$ in $G'$ may not put two vertices of $C$ into
    the same bag.
    This completes the proof for the case $t = 2$.

    Suppose $t > 2$. Assuming that we have proved the claim for any $t' < t$, we now prove it with $t' = t$.
    Delete the colour $t$ from $G$ to get a $\{0,1\}^{t-1}$-coloured graph $G_1$.
    By induction, there is a $\{0,1\}^{t-1}$-coloured graph $G_1'$ and a set of edges $J_1$,
    such that $|J_1| \le (j+1)^{t-2}-1$, each component of $G_1' - J_1$ has at most one colour, it is
    stable with respect to contraction of $J_1$ in $G_1'$ and
    %$G_1'/J$ is strongly isomorphic to $G_1$
    $G_1'/J_1 = G_1$.
    %Recall that 
    For a vertex $v$ of $G_1'/J_1$,
    denote by $Bag(v)$ the set of vertices of $G_1'$ that contract to $v$. % and 
    %fix the (strong) isomorphism $\sigma$ from $V(G_1'/J)$ to $G_1$.

    Now let us return the colour $t$ back as follows. For each vertex $v$ of $G$ that
    has colour $t$ in $G$ pick one vertex %in 
    %$v' \in Bag(\sigma^{-1}(v)) \subseteq V(G')$,
    $v' \in Bag(v) \subseteq V(G')$
    and add the colour $t$ to $\Col_{G'}(v')$: we obtain a $\{0,1\}^t$-coloured graph $G_2$
    %which is strongly isomorphic to $G$
    such that $G_2 / J_1 = G$
    %(under $\sigma$)
    and each component of $G_2 - J_1$
    can have at most two colours.

    Now since each component $C$ of $G_2 - J_1$ is %strongly 
    %isomorphic to a coloured minor of $G$,
    stable with respect to contraction of $J_1$ in $G_2$,
    we have $\caw 2 (C) \le \caw 2 (G) \le j$. Thus, by symmetry, we can apply the already proved case $t = 2$ 
    of the lemma to each such $C$ to obtain a $\{0,1\}^t$-coloured 
    graph $C'$ and a set $J_C \subseteq E(C')$ of at most $j$ edges, such that $\Col(C) = \Col(C')$, every component of $C' - J_C$ has
    at most one colour, is stable with respect to contraction of $J_C$ in $C'$ and 
    %$C' / J_C$ is strongly isomorphic to $C$
    $C' / J_C  = C$. % Denote this isomorphism by $\sigma_C$
    %and for
    %When applying induction, we have to
    We assume that the labels for the new vertices are chosen 
    so that they are larger than any label of $G_2$ and $V(C_1')$ and $V(C_2')$ remain disjoint for distinct components $C_1$ and $C_2$ of $G_2$.

    For
    any $v \in V(C' / J_C)$ denote by $Bag_C(v)$ the set of all vertices of $C'$ that contract to $v$.
    Now for any edge $e = x y \in J_1$, let $C_x$ and $C_y$ be the components of $G_2 - J_1$ containing
    $x$ and $y$ respectively, and define $e' = x'y'$ where $x'$ and $y'$ are any vertices in 
    %$Bag(\sigma_{C_x}^{-1}(x'))$ and $Bag(\sigma_{C_y}^{-1}(y))$
    $Bag_{C_x}(x)$ and $Bag_{C_y}(y)$ 
    respectively. Set $J = \{e': e \in J_1\} \cup \bigcup_{C} J_C$ 
    where the union is over the components of $G_2 - J_1$. Finally, 
    let $G'$ be the graph obtained  by adding $J$ to the union of the disjoint graphs $C'$, for each component $C$ of $G_2 - J_1$.
    
    Clearly, each component of $G' - J$ has at most one colour.
    Now consider the operation $G'/J$ in two stages: in the first stage
    contract all edges $\bigcup_{C} J_C$, in the second stage contract the edges in $J_1$.
    Then at the first step we obtain the graph $G_2$, and in the second step, we obtain the graph $G$.
    Furthermore, if $\tilde{C}$ is a component of $G' - J$, then it is a component of $C' - J_C$ for some $C$.
    $\tilde{C}$ is stable with respect to contraction of $J_C$ in $C'$ and $C$ is stable
    with respect to contraction of $J_1$ in $G_2$, therefore $\tilde{C}$ is stable with respect to contraction of $J$ in $G'$.

    %and $G' / J$ is %strongly 
    %isomorphic to $G$. 

    Also, since $G$ is connected $G_2$ has at most $|J|+1$ components. Therefore $|J| \le |J_1| + (|J_1| + 1) j \le (j+1)^{t-1} - 1$.
    %The proof is complete.
\end{proof}

\subsection{Redundant blockers}

\bigskip
%Let $Q$ be a $\cb$-blocker of $G$. %We call $Q$  \emph{$k$-redundant} if there is a set $S \subseteq Q$ of size at most $k$ such that
%each subgraph $H$ of $G$ not in $\ex \cb$ that shares at most $k$ vertices with $G$ has a vertex in $S$.

The main result of this section is the following lemma %which is a counterpart of
(cf. Lemma~1.6 of~\cite{cmcdvk2012}). %\m{Can we drop the assumption on width?}
\begin{lemma}\label{lem.red2}
   Let $k$ be a positive integer, 
   and let $\ca$ be a proper addable minor-closed class with a set $\cb$
   of minimal excluded minors.
   %and let $\cb$ be a set of 2-connected graphs containing at least one planar graph.
   %let $\ca$ be an addable minor-closed class.
   Suppose that $\aw 2 (\ca)$ is finite. 
   %and the set $\cb$ of excluded minors for $\ca$ contains
   %a planar graph $P_0$. 

   Then there is a constant $c = c(\cb, k)$ such that any graph in $\ex (k+1) \cb$
   has a $\cb$-blocker $Q$ of size at most $c$ and a set $S \subseteq Q$ of 
   size at most $2k$ such that any subgraph $H$ of $G$ with $H \not \in \ca$
   that meets $Q$ in at most two points, also meets the set $S$.
\end{lemma}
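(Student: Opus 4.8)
The plan is to combine the Robertson–Seymour Erdős–Pósa bound for the first blocker with the colour reduction lemma to control the "two points" condition. First I would observe that since $\ca$ is addable, every graph in $\cb$ is $2$-connected, hence planar (otherwise some $K_5$ or $K_{3,3}$ minor would force $\ca$ to contain... actually planarity of all members of $\cb$ is needed for Erdős–Pósa; I would note that the hypothesis $\aw 2(\ca)$ finite rules out $K_5 \in \ex\cb$ and more generally forces the excluded minors to be "small" enough — in any case the members of $\cb$ are planar since $\ex\cb$ being proper minor-closed with finite $\aw2$ excludes large apex-like minors, and the classical argument applies). By the Robertson–Seymour theorem cited in the excerpt, there is $f = f(\cb,k)$ such that every $G \in \ex(k+1)\cb$ has a $\cb$-blocker $Q_0$ of size at most $f(k)$; enlarging slightly we may assume $|Q_0| \le f$.

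The core of the argument is to show we may take $Q \supseteq Q_0$ of bounded size together with a set $S \subseteq Q$, $|S| \le 2k$, catching every bad subgraph that hits $Q$ in $\le 2$ vertices. The idea: consider pairs $\{u,v\} \subseteq Q_0$ (and singletons, and the empty set, treated similarly). For each pair $P = \{u,v\}$, colour $G - (Q_0 \setminus P)$ by giving $u$ colour $1$ (red) and $v$ colour $2$ (green), i.e. form the $\{0,1\}^2$-coloured graph $G_P$ on $V(G) \setminus (Q_0 \setminus P)$ where $u$ gets red, $v$ gets green. A bad subgraph $H$ of $G$ meeting $Q_0$ exactly in $\{u,v\}$ corresponds, after contracting $u,v$ out, to a connected subgraph of the extension carrying both colours; more precisely, since $G - Q_0 \in \ex\cb$ and $H \notin \ca$ has a minor in $\cb$, $H$ must genuinely use both $u$ and $v$, so in $G_P$ it yields a connected subgraph containing both colours. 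Now if $G$ had $k+1$ disjoint such subgraphs for a single pair $P$, they would be $k+1$ disjoint subgraphs each with a $\cb$-minor — contradicting $G \in \ex(k+1)\cb$. Hence $\caw 2(G_P) \le k$. Apply the colour reduction lemma (Lemma~\ref{lem.colreduction}) to each component of $G_P$: there is a set $J_P$ of at most $(k+1)^{t-1}-1 = k$ edges (here $t=2$) so that after un-contracting, the two colours are separated into different components; translating back via Lemma~\ref{lem.apex2P3} there is a set $B_P$ of at most $k$ vertices of $G_P$ separating red from green, so every bad subgraph hitting $Q_0$ precisely in $\{u,v\}$ must meet $B_P$.

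Then I would set $Q = Q_0 \cup \bigcup_P B_P \cup \bigcup_u B_{\{u\}}$ over all pairs and singletons $P$; this has size at most $f + \binom{f}{2}k + f k =: c(\cb,k)$, bounded. The set $S$ should be chosen inside $Q$ of size $\le 2k$ — and here is the subtlety. A single global $S$ of size $2k$ must catch \emph{all} bad subgraphs meeting $Q$ in $\le 2$ points, not just those meeting $Q_0$. The right move is to argue more cleverly: take $Q = Q_0$ already and instead bound things by an Erdős–Pósa / LP-duality style argument directly — any family of pairwise-disjoint bad subgraphs has size $\le k$, so by a packing-covering argument on the hypergraph whose vertices are $V(G)$ and whose edges are (the vertex sets of) bad subgraphs meeting $Q$ in $\le 2$ points, a fractional cover argument or an explicit greedy argument produces a hitting set $S$ of size $O(k)$ — one shows that if no $2k$ vertices hit all such subgraphs, then one greedily extracts $k+1$ disjoint ones, each bad, using that each such subgraph is "anchored" at $\le 2$ of the $\le f$ blocker vertices so there are only $O(f^2)$ "types". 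I would make this precise by: repeatedly pick a bad subgraph $H$ meeting $Q$ in $\le 2$ points not yet hit; it uses $\le 2$ of the $\le c$ vertices found so far together with a $\ca$-part; after $k+1$ rounds, Menger/colour-reduction on each of the $O(c^2)$ pair-classes guarantees the extracted subgraphs can be made disjoint outside $Q$, and disjointness inside $Q$ is arranged since each type uses its own $\le 2$ anchor vertices which we place into $S$ — giving $|S| \le 2(k+1)$ or after sharpening $|S|\le 2k$.

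The main obstacle is exactly this last step: getting the \emph{small} set $S$ of size $2k$ (not merely bounded, and not depending on which pair of $Q$-vertices a bad subgraph uses) that simultaneously blocks every bad subgraph meeting $Q$ in at most two points. The colour reduction lemma handles one colour-pair at a time cleanly, but stitching the per-pair separators into one size-$2k$ set requires the packing argument above and careful bookkeeping of how the $\le k$-disjointness budget of $G \in \ex(k+1)\cb$ is shared among the $O(c^2)$ pair-types — this is where I expect the real work to lie, and where the finiteness of $\aw2(\ca)$ (ensuring the colour-reduction edge-sets stay bounded) is essential.
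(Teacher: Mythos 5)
Your proposal starts in the right place — the Robertson--Seymour Erd\H{o}s--P\'osa theorem gives a bounded $\cb$-blocker $Q_0$, and this is indeed the first line of the paper's proof. But the justification is muddled: you do not need every member of $\cb$ to be planar (this is generally false), only that $\ca$ excludes \emph{some} planar graph; here $\aw 2(\ca)<\infty$ forces a $2$-fan (planar) out of $\ca$, which is enough to invoke Robertson--Seymour. There is also a concrete error in your colouring step. If you colour the vertices $u$ and $v$ themselves red and green, then any connected subgraph containing both colours contains both $u$ and $v$, so there is never more than one \emph{disjoint} such subgraph and the bound $\caw 2(G_P)\le k$ is vacuous. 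What the paper actually does (via Lemma~\ref{lem.apex2P3}) is to colour the vertices of $G-Q_0$ according to their adjacency to $u$ and $v$; and, crucially, the upper bound on the number of disjoint bicoloured connected subgraphs then comes from $\aw 2(\ca)<\infty$, not from $G\in\ex(k+1)\cb$: a family of such subgraphs is just a family of connected subgraphs of $G-Q_0\in\ex\cb$, none of which has a $\cb$-minor on its own, so $k+1$ of them do not contradict $\ex(k+1)\cb$. Consequently the per-pair separator $B_P$ has size bounded by $\aw 2(\ca)$, not by $k$.

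The real gap, which you honestly flag, is in producing a \emph{single} set $S$ of size $2k$, and your fallback packing argument does not close it. ``After $k+1$ rounds the extracted subgraphs can be made disjoint outside $Q$'' is asserted but not justified: two bad subgraphs anchored at the same pair $\{u,v\}$ can intersect arbitrarily in $G-Q$, and Menger-type separators give a cut, not a packing of disjoint subdivisions. Also ``each type uses its own $\le 2$ anchor vertices'' is false — pairs $\{u,v\}$ and $\{u,w\}$ share $u$, so the anchor sets are not disjoint and putting each type's anchors into $S$ overshoots $2k$. The paper resolves exactly this with an induction on $k$ encapsulated in the stronger Lemma~\ref{lem.red2gen}, using the notion of a $(2,k+1,\cb)$-blocker. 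The inductive step does not try to handle all pairs at once: it takes a normal tree $T$ of $G-Q$ (available because $\aw 2(\ca)<\infty$ bounds the treewidth), locates the deepest vertex $u$ such that $G[T_u\cup\{x,y\}]$ has a $\cb$-minor for some $x,y$ in the blocker-plus-active-ancestors, extracts from this one $\cb$-critical subgraph a set $S_0$ of at most two vertices, and shows that after deleting $S_0$ the remaining structure is a $(2,k,\cb)$-blocker. Recursing gives $|S|\le 2k$ with each level contributing at most two vertices. This ``one bad pair per level'' induction is the piece your proposal is missing, and it is where the budget sharing you worry about is actually controlled.
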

%I thought I could drop width, but I can't..
%\begin{lemma}\label{lem.red2}
%   Let $\cb$ be a set of 2-connected graphs containing at least one planar graph.
%   Then for each integer $k \ge 1$ there is an integer $f(k, \cb)$ such that the following holds.
%
%   Each graph in $\ex (k+1) \cb$ has a $\cb$-blocker $Q$ of size at most $c$ and a set $S \subseteq Q$ of 
%   size at most $2k$ such that any subgraph $H$ of $G$ with $H \not \in \ex \cb$
%   that meets $Q$ in at most two points also meets the set $S$.
%\end{lemma}
The proof will follow from a slightly more general result, Lemma~\ref{lem.red2gen} below;
we first need a few definitions.
%To state and prove it, we will need a few definitions.
Given a graph $G$, a set of graphs $\cb$ and a set $Q \subseteq V(G)$, we say that $Q$ is
a $(j, \cb)$-\emph{blocker} of $G$ if $G$ contains no subgraph $H$, such that $H \not \in \ex \cb$
and $|V(H) \cap Q| \le j$. We say that 
$Q$ is a $(j, s, \cb)$-\emph{blocker} of $G$ if (a) $Q$ is a $\cb$-blocker for $G$ and (b) $G$ does not contain
$s$ pairwise disjoint subgraphs $H_1, \dots, H_s \not \in \ex \cb$ 
where each $H_i$, $i = 1, \dots, s$ has at most $j$ vertices in $Q$.

%is no subgraph $H$ of $G$ which is an
%extension of a graph in $s \cb$, each component of $H$ sharing at most $j$ vertices with $Q$.

A graph $H$ will be called \emph{$\cb$-critical} if $H \not \in \ex \cb$ but $H' \in \ex \cb$ for any $H' \subset H$. 
%Observe that each graph not in $\ex \cb$ has a $\cb$-critical subgraph. Also, 
Notice that if each graph in $\cb$ is 2-connected, then so is each $\cb$-critical graph.

As in \cite{cmcdvk2012}, we will use normal trees for our proofs.
Let $G$ be a graph, and let $T$ be a rooted tree on the same vertex set
  $V(G)$, with root vertex $r$.
  The tree $T$ induces a tree-ordering $\le_T$ on $V(G)$ where
  $u \le_T v$ if and only if $u$ is on the path from $r$ to $v$ in $T$.
  $T$ is a {\em normal} tree for $G$
  if $u$ and $v$ are comparable for every edge $uv$ of $G$ (notice that we do not require that $T$ is a subgraph of $G$).
  For $u \in V(T)$ we will denote by $T_u$ the induced subtree of $T$ on vertices $\{v: u \le_T v\}$.
  
  %We think of the tree $T$ as hanging down from its root.
  We say that $u$ is above $v$ (and $v$ is below $u$) in $T$ if
  $u <_T v$. 
  Given a graph $G$ and a normal tree $T$ for $G$, for each vertex $v$ of $G$ we define its
  set $AA_T(v)$ of {\em active ancestors} by
\[
  AA_T(v) =
  \left\{ u <_T v :\ \exists z \geq_T v \mbox{ with } uz \in E(G) \right\}.
\]
%\m{see comment on this definition at end of section} 
  and write $a_T(v) = |AA_T(v)|$. Kloks observed (see Theorem 3.1 of \cite{cmcdvk2012} for a proof) that the treewidth $tw(G)$ of a graph $G$ satisfies
%  \m{do we have a simple example to show that the theorem does not hold if we insist that the trees are subgraphs?}
%\begin{theorem} % \cite{bod98}]
%\label{thm.kloks}
%  %Let $G = (V, E)$ be a graph. Then
%  The treewidth $\tw(G)$ of a graph $G$ satisfies
\begin{equation}
  \tw(G) = \min_{T \in \ct} \max_{v \in V(G)} \ a_T(v) \label{eqn.kloks}
\end{equation}
  where $\ct$ is the set of all normal trees for $G$.
%\end{theorem}

Finally, denote by $f^{*n}$ the $n$-th iteration of the function $f$, so that $f^{*0}(x) = x$ and $f^{*(n+1)}(x) = f(f^{*n}(x))$ for $n = 0, 1,\dots$.

%for any connected $\{0,1\}^2$-coloured graph $K$  that contains $j$
% pairwise disjoint connected subgraphs with both colours and any $x, y \not \in v(K)$ we have $K^{x,y} \not \in \ex \cb$.

\begin{lemma}\label{lem.red2gen} 
 Let $k$ be a positive integer, let $\ca$ be a proper addable minor-closed class with
 a set $\cb$ of minimal excluded minors.
 Suppose that $\aw 2 (\ca) \le j$. 
 %and the set $\cb$ of excluded minors for $\ca$ contains
 %a planar graph $P_0$. 
 %Suppose that $W_5 \not \in \ca$, where $W_5$ is the wheel graph on five vertices. 
 
 There are positive constants $c_1 = c_1(\cb)$ and $w = w(\cb)$ such that the following holds. Define a function $f: \mathbb{N} \to \mathbb{R_+}$ by $f(q) = j c_1 q^2 + j c_1 w q$.
 Suppose $Q$ is a non-empty $(2, k+1, \cb)$-blocker for a graph $G$. Then there are sets $S, Q' \subseteq V(G)$, such that 
 %$S\subseteq Q'$, $Q \subseteq Q'$,
 $S, Q \subseteq Q'$, 
 $|S| \le 2k$, $|Q' \setminus S| \le f^{*k} (|Q|)$ and $Q' \setminus S$ is a $(2, \cb)$-blocker for $G - S$.
 \end{lemma}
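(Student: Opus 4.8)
The plan is to prove Lemma~\ref{lem.red2gen} by induction on $k$, peeling off disjoint ``bad'' subgraphs one at a time while controlling how much the blocker grows at each step. The base case $k=0$ is trivial: if $Q$ is a $(2,1,\cb)$-blocker then $Q$ is already a $(2,\cb)$-blocker, so we may take $S = \emptyset$ and $Q' = Q$. For the inductive step, suppose $Q$ is a $(2,k+1,\cb)$-blocker for $G$ but not a $(2,\cb)$-blocker (otherwise again take $S=\emptyset$, $Q'=Q$). Then there is a subgraph $H \notin \ex\cb$ meeting $Q$ in at most $2$ vertices; we may take $H$ to be $\cb$-critical, hence $2$-connected, hence of bounded size once we bound its treewidth.

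The heart of the argument is to find a \emph{small} set $W$ with $|W| \le w(\cb)$ such that $H - (Q \cup W)$ (or rather some controlled enlargement) has bounded size, or more precisely such that adding $W$ to $Q$ ``cuts $H$ down'' into the bounded part. First I would use the fact that $\aw 2(\ca) \le j$ together with equation~(\ref{eqn.kloks}) (Kloks' characterisation of treewidth via normal trees and active ancestors) to argue that any $2$-connected graph in $\ca$ has bounded treewidth --- this is where the $j c_1 q^2$ term comes from: a normal tree for $G$ restricted to $H$ gives each vertex of $H$ a bounded set of active ancestors unless $H$ contains a large apex-fan minor, contradicting $\aw 2(\ca) \le j$. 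Since $H$ is $\cb$-critical, $H$ minus one vertex lies in $\ex\cb \subseteq \ca$ and thus has treewidth $O(j)$, so $H$ itself has treewidth $O(j)$ and, being $\cb$-critical (every proper subgraph in $\ex\cb$), bounded size depending only on $\cb$. Then I would let $S$ gain the (at most $2$) vertices of $V(H)\cap Q$, let $W = V(H) \setminus Q$ be added into $Q'$, and set $Q'' = Q \cup W$, $S'' = V(H) \cap Q$. After deleting $S''$, the graph $G - S''$ has $Q'' \setminus S''$ as a $(2, k, \cb)$-blocker: any family of $k$ disjoint bad subgraphs in $G - S''$ each meeting $Q'' \setminus S''$ in $\le 2$ points would, together with $H$, give $k+1$ disjoint bad subgraphs in $G$ each meeting $Q$ in $\le 2$ points (using $S'' \subseteq Q$ and $W \subseteq Q''$), contradicting that $Q$ is a $(2,k+1,\cb)$-blocker. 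Here I must be slightly careful: a bad subgraph in $G-S''$ avoiding $S''$ meets $Q$ only in $Q \setminus S''$, and since its intersection with $Q''$ is $\le 2$ it certainly meets $Q$ in $\le 2$ points; and it is disjoint from $H$ since $V(H) \subseteq Q'' \cup S''$ and it avoids both.

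Having reduced from $k+1$ to $k$, I apply the inductive hypothesis to $G - S''$ with the blocker $Q'' \setminus S''$, whose size is at most $|Q| + |W| \le |Q| + w \le f(|Q|)$ (in fact much less; the quadratic slack in $f$ is there precisely so the bound survives the iteration). I get sets $\hat S, \hat Q'$ with $\hat S, (Q'' \setminus S'') \subseteq \hat Q'$, $|\hat S| \le 2(k-1)... $ wait, $|\hat S| \le 2k$ for the $k$-step version --- I should state the inductive hypothesis so that the $k$-blocker case gives $|\hat S| \le 2k$; re-indexing, for the $(2,k,\cb)$-blocker I get $|\hat S| \le 2(k-1)$ and $|\hat Q' \setminus \hat S| \le f^{*(k-1)}(|Q''\setminus S''|) \le f^{*(k-1)}(f(|Q|)) = f^{*k}(|Q|)$, using monotonicity of $f$. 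Then set $S = S'' \cup \hat S$, so $|S| \le 2 + 2(k-1) = 2k$, and $Q' = \hat Q' \cup S''$; one checks $S, Q \subseteq Q'$ and $Q' \setminus S = \hat Q' \setminus \hat S$ (up to the already-counted $S''$), which is a $(2,\cb)$-blocker for $(G - S'') - \hat S = G - S$. This closes the induction.

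The main obstacle I expect is the treewidth/size bound on the $\cb$-critical subgraph $H$: showing that $\aw 2(\ca) \le j$ forces every $2$-connected member of $\ca$ (equivalently, every $\cb$-critical graph minus a vertex) to have treewidth bounded by a function of $j$ and $\cb$. This requires the normal-tree argument with active ancestors --- if some vertex $v$ had many active ancestors $u_1, \dots, u_m$, then the subtrees $T_{z_i}$ hanging below $v$ that witness the edges $u_i z_i$, together with paths up the tree, should be assembled into a minor that is a tree joined completely to a $2$-element apex set, contradicting $\aw 2 \le j$; making this assembly precise (choosing the tree $T$ to be a minor-respecting DFS tree, extracting genuinely disjoint connecting paths, handling the $2$-connectivity to get the apex structure rather than just one apex) is the delicate part and is presumably where the constants $c_1(\cb)$ and $w(\cb)$ are pinned down. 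Once that structural bound is in hand, the inductive bookkeeping with $f(q) = jc_1 q^2 + jc_1 w q$ and its iterates $f^{*k}$ is routine.
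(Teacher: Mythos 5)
Your proposal has two genuine gaps, one of which is fatal.

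The fatal one is the claim that a $\cb$-critical subgraph $H$ has size bounded in terms of $\cb$ alone. This is false. For $\cb=\{K_4\}$ the $\cb$-critical graphs include every subdivision of $K_4$, which can be arbitrarily large (each is not series-parallel, but deleting any edge makes it so). Bounded treewidth plus criticality does not force bounded order. As a result $W=V(H)\setminus Q$ need not be a bounded set, so $|Q''|$ is not controlled by $f(|Q|)$ and the inductive bookkeeping collapses. The paper never tries to swallow all of $H$ into the new blocker; it instead picks a deepest vertex $u$ in a normal tree for $G-Q$ such that $G[V(T_u)\cup\{x,y\}]\notin\ex\cb$ for some $x,y$, looks at the $\le c_1(\cb)$ components of $G[T_u]-u$ that $H$ touches (where $c_1$ bounds the number of components created by deleting \emph{three} vertices from a critical graph---this is a finite quantity for a finite $\cb$ even though critical graphs themselves are unbounded), and then for each ordered pair $x,y$ applies Lemma~\ref{lem.apex2P3} (via $\aw2\le j$, translated into $\caw2\le j$) to find a set $D_i(x,y)$ of at most $j$ vertices cutting $x$-$y$ connections inside each such component. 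Only $u$, $P$, $AA_T(u)$ and these $O(q^2)$-many sets of size $\le j$ get added, which is where $f(q)=jc_1 q^2 + jc_1 w q$ comes from. Your treewidth observation ($\aw2<\infty$ implies a planar excluded minor implies bounded treewidth) is correct and is also used in the paper, but only to get the bound $w$ on active ancestors; it does not give a size bound on $H$, and the real work done by $\aw2\le j$ is the local Menger-type cutting, which your sketch omits entirely.

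There is also a second, independent gap in the step where you claim the $k$ disjoint bad subgraphs in $G-S''$ are disjoint from $H$. A bad subgraph $B_i$ in $G-S''$ is only required to meet $Q''\setminus S''$ in at most two vertices, and $W=V(H)\setminus Q$ is a subset of $Q''\setminus S''$, so $B_i$ may well contain vertices of $H$. The paper handles exactly this difficulty by a careful case analysis (several pages) showing that any such overlap forces a cut vertex in $B_i$, contradicting $2$-connectivity of critical graphs; the placement of the sets $D_i(x,y)$ is tailored to make those contradictions available. Your argument ``it avoids both $Q''$ and $S''$'' is not what the $(2,k,\cb)$-blocker condition gives you.
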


 \begin{proofof}{Lemma~\ref{lem.red2}}
    The assumption that $\aw 2(\ca) < \infty$ implies that some planar graph, a 2-fan, is excluded from $\ca$.
    %Since $\cb$ contains a planar minor, 
    By the theory of graph minors \cite{rs86},
    see also \cite{diestel} and %Lemma 3.4
    Proposition~3.6 
    of \cite{cmcdvk2012}, there is a constant $c' = c'(\cb,k)$
    such that every graph $G \in \ex (k+1) \cb$ has a $\cb$-blocker $Q_0$ of size at most $c'$ (we may assume $Q_0$ is non-empty).
    Such a set $Q_0$ is clearly also a $(2, k+1, \cb)$-blocker for~$G$.
    Now Lemma~\ref{lem.red2gen} ensures that there is a $\cb$-blocker $Q$ of size at most $c = c(\cb, k) = f^{*k}(c')$
     and a set $S \subseteq Q$ of size at most $2k$ as required.
\end{proofof}

\medskip

\begin{proofof}{Lemma~\ref{lem.red2gen}} 
We use two results from the theory of graph minors of Robertson and Seymour \cite{rs86}:
since $\ca$ excludes a planar graph (a 2-fan on $j+2$ vertices),
the maximum treewidth  over graphs in $\ca$, denoted $w = w(\cb)$, is finite. Furthermore, % since minors of a planar graph $W_5$ are forbidden
the set $\cb$ is finite.

Let $c_1 = c_1(\cb)$ be the maximum number of components that can be created by removing three vertices from
a $\cb$-critical graph.
Since there is a finite number of graphs in $\cb$, the number $c_1$ is finite (see Lemma~5.2 in \cite{cmcdvk2012}).
%$c_1(\cb)$ is always less than $3+\max_{H \in \cb} |V(H)|$ 
For example, we have $c_1(\{K_4\}) = 4$. Since $\ca$ is addable, we have $j \ge 1$ and $c_1 \ge 1$. 
%Furthermore,
%since the case $|Q| = 0$ is trivial, we will assume $q = |Q| \ge 1$.
The case $V(G) = Q$ is also trivial (take $S = \emptyset$ and $Q' = Q$), so we will assume $Q \subset V(G)$. 

%Let $w(\cb)$ the maximum treewidth of graphs in $\ca$: 
%Fix $c_{\cb}$ such that $c_1 (q^2 - q - w q) \le c_\cb q^2$ for all $q \ge 0$.
    
We will prove the lemma by induction on $k$.
    The case $k=0$ is trivial: we may take $Q' = Q$ and $S = \emptyset$. Assuming the lemma holds
    for $0 \le k < k'$, where $k'$ is a positive integer, we prove it for $k = k'$.

%We use two results from the theory of graph minors by Robertson and Seymour \cite{rs86}.
%    First, since a planar graph $W_5$ is forbidden in $\ca$ %, Robertson and Seymour, 
%we have
%   that the treewidth of the graph $G - Q$ is bounded by an absolute constant $w(\cb)$. Second,
%    the set $\cb$ is finite. 
    By %the Kloks'es theorem
    (\ref{eqn.kloks}), %see Theorem~3.1
%of \cite{cmcdvk2012},
the graph $G-Q$ has a (rooted) normal tree $T$ %such that
with
$\max_v a_T(v) \le w$.
Let $r$ be the root of $T$.
%Suppose $T$ is rooted at $r$. %\m{define $a_T$ and $AA_T$}
%
%Let $T$ be a Kloks tree for $G-Q$ (a Kloks tree = a normal tree $T$ such that  $\max_v a_T(v)$ is minimal).
Form a set $U$ of all such vertices $v \in V(G) \setminus Q$ for which there are some vertices $x, y \in Q \cup AA_T(v)$ that the subgraph of $G$ induced on $V(T_v) \cup \{x,y\}$ has a minor in $\cb$.
Choose a vertex $u \in U$ with maximum distance from $r$ in $T$.
Let $R = Q \cup AA_T(u)$.

%  
%  %Now let $P \in R$ be a set of at most two vertices showing that $u \in A$.
%  If there is a single vertex $z \in Q$ such that  $G[\{z\} \cup V(T_u)] \not \in \ex \cb$, we 
%  define $P = \{z\}$, otherwise define $P$ to be any pair of vertices in $R$ such that $G[P \cup V(T_u)] \not \in \ex \cb$.
%  %It might be that there exists a single vertex $z$ such that $G[z \cup V(T_u)] \not \in \ex \cb$. In that case we 
%  %prefer this vertex and define $x_0 = y_0 = z$ (this is just not to repeat the proof for two very similar cases).

Let $P$ be a set of minimum size such that $G[P \cup V(T_u)] \not \in \ex \cb$. Then $|P| \in \{1,2\}$. Fix 
a $\cb$-critical graph $H$, such that $H \subseteq G[P \cup V(T_u)]$.

Consider the graph $G_1 = G[T_u] - u$. This graph consists of some connected components. 
Since $H \cap V(G_1) = H - (P \cup \{u\})$, the graph $H$ has vertices in $t \le c_1(\cb)$ such components. %, where  $t \le  c_1(\cb)$.
Call these components $C_i$, $i = 1,\dots,t$. 
Fix a pair $x,y \in R$ such that $\{x,y\} \cap P = \emptyset$. We claim, that for $i=1,\dots,t$ there is a set of at most $j$ vertices $D_i(x,y)$, such that in $C_i - D_i(x,y)$ no component has edges to both $x$ and $y$.

Let us show why it is true. In the component $C_i$ colour vertices adjacent to $x$ red, and those adjacent to $y$ green
to obtain a $\{0,1\}^2$-coloured graph $C_i'$
(vertices adjacent to both $x$ and $y$ are coloured $\{red, green\}$, and the remaining vertices are coloured $\emptyset$).
%(no colour, if adjacent to nothing, both colours if adjacent to both). 
%Let $H_1$ be the graph consisting of a single vertex $v$ with $C_{H_1}(v) = \{red, green\}$.
%Observe that $C_i$ cannot have %a coloured minor $(j+1) H_1$.
Suppose $C_i'$ has $j+1$ disjoint connected subgraphs containing both colours.
%For if it had one, we could form a minor weakly isomorphic to $CP_3$. 
%But the extension $CP_3^{(4,5)}$ is isomorphic to the wheel $W_5$. 
%If it had one, this would mean,
%Otherwise, using the fact that
Then since $\aw 2 (\ca) \le j$ we would have that $G[ \{x,y\} \cup V(C_i)] \not \in \ca$. %\m{Here we could apply the Colour reduction lemma instead.}
But this contradicts to the choice of $u$: since $T$ is normal, the vertices of the component $C_i$ must be entirely contained in 
%one of the subtrees of $T_u$ and we could have chosen another vertex
$V(T_{u'})$ for some 
$u' \in V(T_u - u)$, so $u' \in U$. Thus $C_i'$ 
cannot have %a coloured minor isomorphic to $(j+1) H_1$
$j+1$ connected subgraphs containing both colours, so
we may apply Lemma~\ref{lem.apex2P3} to find a suitable set $D_i(x,y)$ of size at most~$j$.

%Repeat this for every pair $\{x,y\} \subseteq R$ such that $P \cap \{x,y\} = \emptyset$. 

Now define sets $S_0, Q_1$ as follows. If $|P|=1$, let $S_0 = P \cup \{ u\}$, otherwise,
let $S_0 = P$. Set
\[
Q_1 = \left( (Q \cup AA(T_u) \cup \{u\}) \setminus S_0 \right) \cup  \bigcup_{i \in [t], x,y \in R\setminus P, x \ne y} D_i(x,y) .
\]
%
%\[
%Q_1 = (Q \setminus P) \cup  AA(T_u) \cup \left(\bigcup_{i \in [t], x,y \in R\setminus P, x \ne y} D_i(x,y) \right).
%\]
%Otherwise, let $S_0 = P$ and 
%\[
%Q_1 = (Q \setminus P) \cup AA(T_u) \cup \{u\} \cup \left(\bigcup_{i \in [t], x,y \in R\setminus P, x \ne y} D_i(x,y) \right).
%\]
Writing $q = |Q|$ and 
considering the cases $|P|=1$ and $|P|=2$ separately we get that %$|A_0| \le c_1(\cb) |(|Q| - 1) (|Q| + w(\cb)) + w(\cb) \le c |Q|^2$ for some constant $c = c(\cb)$ and $|Q'| \le |A_0| + 2 + |Q|$.
\[
  |Q_1| \le jc_1(q -1) (q - 2 + w) + q - 1 + w \le  f(q).
\]

%Write $Q_0 = S_0 \cup A_0$.
If $Q_1$ is a $(2, k, \cb)$-blocker for $G - S_0$, then we can use induction
to find sets $S', Q' \subseteq V(G) \setminus S_0$ such that $S',Q_1 \subseteq Q'$, %, $Q_1 \subseteq Q'$,
$|S'| \le 2(k-1)$, $|Q'\setminus S'| \le f^{*(k-1)}(|Q_1|) \le f^{*k}(q)$ and $Q' \setminus S'$ is a $(2, \cb)$-blocker for $G - S_0$. Then the lemma follows with $S = S' \cup S_0$ and $Q'$.

It remains to show that $Q_1$ is a $(2, k, \cb)$-blocker for $G - S_0$. Assume this is not true.
%Fix a $\cb$-critical subgraph $H$ of $G[ P \cup V(T_u)]$.
%First assume that it is possible to choose $x_0, y_0$ so that $G' \not \in \ex K_4$.
%
%We claim that the sets $A$ and $S$ are as required.
%
%Suppose not, and 
Let $\tilde{H} \subseteq G$ be a $k \cb$-critical subgraph of $G - S_0$ showing that $Q_1$ is not
a $(2, k, \cb)$-blocker: that is $\tilde{H} = H_1' \cup \dots \cup H_k'$, where
$H_i' \not \in \ex \cb$, $i = 1, \dots, k$ are 2-connected and pairwise disjoint;
and for each $i \in [k]$ we have $|V(H_i') \cap Q_1| \le 2$. 

Now $\tilde{H}$ and $H$ may not be disjoint: otherwise $Q$ would not be a $(2, k+1, \cb)$ blocker for $G$.
Let $H'$ be a component of $\tilde{H}$ which shares at least one vertex with $H$. % and $V(H') \cap S = \emptyset$.
%Since all graphs in $\cb$ are 2-connected we can assume that $H'$ is 2-connected. 
Then $V(H') \cap V(H) \subseteq V(H) \setminus S_0 \subseteq V(T_u)$.

Suppose first that $V(H') \cap Q_1$ consists of a single vertex $v$. Note, that we must have $v \in Q \setminus S_0$. 
The graph $H'-v$ %cannot be contained in $G - Q -  T_u$, because then we would have two disjoint minors in $\cb$.
%must have at least one vertex in $V(T_u)$.
%It 
cannot be entirely contained in $G[V(T_u)]$.
To see why, observe first that by our construction in this case $u \in S_0$. 
Since $H'-v$ is connected it must be entirely contained in one of 
the proper subtrees of $T_u$, but this contradicts our choice of $u$.

Thus $H' - v$ must have a vertex $a$ in $V(T_u - u)$ and a vertex $b$ in $G - (S_0\cup Q_1 \cup V(T_u))$. 
%\m{$S_0$ added here} %All edges from $T_u$ to $G - Q -T_u$ are destroyed by removing 
The set $AA_T(u) \subseteq Q_1 \setminus \{v\}$ separates $V(T_u - u)$ from $G - (S_0\cup Q_1 \cup V(T_u)$. On the other hand, there is a path from $a$ to $b$ in the connected graph $H' - v$: this is a contradiction.

Now suppose $H'$ has exactly two vertices $x,y$ in $Q_1$. %\m{$\setminus S_0$ removed} % \setminus S_0$. %, such that neither of them is in $S$.

First consider the case where $x,y \in Q \cup AA_T(u)$. 
%Since $G \in \ex 2 \cb$, the graph $H'$ must share some vertex $v$ with $H$. 
Let $a$ be a vertex in $V(H) \cap V(H')$. %\m{multiply def}
It cannot be $a = u$ since in this case we have that $|V(H') \cap Q_1| \ge 3$. It follows that $a \in V(C')$ where $C'$
is a component of $C_i - D_i(x,y)$ for some $i \in \{1,\dots,t\}$.
But $C'$ %the component $C' \subseteq C_i - D_i(x,y)$ 
cannot have edges to both $x$ and $y$. This means that either $x$ or $y$ is a cut vertex in $H'$: 
this contradicts the fact that $\tilde{H}$ is $k \cb$-critical.%we have a contradiction to the minimality of $H'$.

If $x \in Q$ and $y \in D_i(x',y')$ for some pair $\{x', y'\}$, then suppose that $H' - x$ is contained in $G[V(T_u)]$. By the choice of the set $P$ %\m{somehow make it not look bulletular..}
this means that $u \in S_0$.  
But since $T$ is normal, this contradicts the definition of $u$. %the graph $H' - \{x,y\}$ must contain $u$ to avoid a contradiction with the definition of $u$.
Otherwise, suppose that $H' - x$ has a 
%vertices both in $T_u-u$ and $(G - Q_1) - V(T_u)$
vertex in $G - (S_0 \cup Q_1 \cup V(T_u))$.
Since $AA_T(u) \subseteq Q_1 \setminus \{x,y\}$ we have that $x$ must be a cut point in $H'$: this is a contradiction to the $k\cb$-criticality of $\tilde{H}$.

Finally, consider the case $x \in Q$ and $y = u$. Note, that the only case when $u \not \in S_0$ by our construction is when there is no vertex $z$ such that $G[V(T_u) \cup \{z\}]$ has a minor in $\cb$. Thus $H$ must contain at least one vertex in $G-(S_0 \cup Q_1 \cup V(T_u))$, and
we saw earlier that it has a vertex in $(V(H) \setminus S_0) \subseteq V(T_u)$. 
%$H$ must also contain at least one vertex in $T_u$, since otherwise we would have two disjoint minors in $\cb$.

Again, each path in $H'$ from $V(H') \cap V(T_u)$ to $V(H') \cap (V(G) \setminus (S_0 \cup Q_1 \cup T_u))$ must use $x$, since $AA_T(u) \subseteq Q_1 \setminus \{x,y\}$ separates $T_u$ from the rest of $G-Q$. So we obtain a contradiction to the fact that $H'$ is 2-connected. In all of the cases we obtained a contradiction, so we conclude that $Q_1$ must be a $(2,k,\cb)$-blocker for $G-S_0$. %This completes the proof.
\end{proofof}

\subsection{Blockers of size $2k$}
\label{subsec.b2k}

%  In the case when the class $\ex \cb$ does not contain all fans we know that $\rho(\ex (k+1)\cb)$ is determined by $\rho(\apex^k (\ex \cb))$. In this section we combine the structural lemmas above to prove the following counting result.
%  \begin{theorem} \label{thm.2candidates}
%    Let  $k$ be a positive integer, let $\cb$ be a set of 2-connected graphs containing at least one planar graph
%    and suppose $\aw 2 (\ex \cb)$ is finite. Then $\rho(\ex (k+1) \cb)$ is equal to the smaller of $\rho(\rd {2k+1} \cb)$ and
%      $\rho( \apex(\ex k \cb))$.
%  \end{theorem}
%In this section we combine Lemma~\ref{lem.red2gen} and Lemma~\ref{lem.colreduction} to prove the following lemma.
We will need another definition. We call a $\cb$-blocker $Q$ of a graph $G$ a \emph{$(k, j, \cb)$-double blocker}
if there is a set $S \subseteq Q$ of size at most $k$, which is a redundant $\cb$-blocker for $G - (Q \setminus S)$, and $Q \setminus S$ is a
$(j, \cb)$-blocker for $G - S$. Such a set $S$ is called a \emph{special set} of $Q$. 
\begin{lemma}\label{lem.2canddecomp}
    Let  $k, l$ be positive integers and let $\cb$ be the
    set of minimal excluded minors of a proper addable minor-closed class
    and assume that $\cb$ contains a planar graph.
    %finite set of 2-connected graphs containing at least one planar graph.
    %and suppose $\aw 2 (\ex \cb)$ is finite.
    Then there is a positive constant %$w = w(\cb)$ 
    $w = w(\cb)$
    such that the following holds.

    Suppose $G \in \ex (k+1) \cb$ has %$\cb$-blockers $Q$ and $S$, such that $|Q|\le q$, $S \subseteq Q$, $|S| \le l$ and
    a $\cb$-blocker $Q$ of size at most $q$ and a set $S \subseteq Q$ of size at most $l$ such that
    $Q\setminus S$ is a $(2,\cb)$-blocker for $G-S$.
    Then $G$ can be represented
    as the union of two graphs, $G = G_1 \cup G_2$ where %with $|V(G_1) \cap V(G_2)| \le c$ where
    \begin{itemize}
        \item the graph $G_1$ has an $(l, 2, \cb)$-double blocker $Q_1 \supseteq Q$ of size at most 
            $q + w+1$,
            %revert% $q + w$,
            such that $S$
            is the special set;
        \item $G_2 \in \apex (\ex k \cb)$ and $Q \subseteq V(G_1) \cap V(G_2) \subseteq Q_1$. % and if $G_2$ is non-empty,
           % then $Q \subseteq V(G_2)$.
        %\item $E$ is a set of at most $c$ edges $u v$, where $v \in V(G_1) \cap V(G_2)$ and $u \in Q_1$ is fixed.
    \end{itemize}
\end{lemma}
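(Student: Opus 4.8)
\medskip

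The plan is to build $G_1$ from $G$ by ``closing off'' the part of $G$ that is separated from the rest by a small cut contained in $Q_1$, and to let $G_2$ be the remaining part plus the edges inside $Q_1$ needed to reconstruct $G$.  First I would set $w = w(\cb)$ to be the maximum treewidth of a graph in $\ca = \ex\cb$, which is finite because $\cb$ contains a planar graph (this is the same $w$ used in the proof of Lemma~\ref{lem.red2gen}).  Write $q' = |Q\setminus S|$.  Since $Q\setminus S$ is a $(2,\cb)$-blocker for $G-S$, the graph $(G-S) - (Q\setminus S) = G - Q$ lies in $\ca$, so it has a normal tree $T$ (rooted at some $r$) with $\max_v a_T(v)\le w$.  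Using $T$, I would pick the vertex $u \in V(G)\setminus Q$ of maximal depth such that $G[V(T_u)\cup\{x,y\}]$ has a minor in $\cb$ for some $x,y \in Q\cup AA_T(u)$, exactly as in the proof of Lemma~\ref{lem.red2gen}; if no such $u$ exists, then $G-S$ already has $Q\setminus S$ as a $(1,\cb)$-blocker (indeed no subgraph meeting $Q$ in $\le 2$ vertices is outside $\ca$ once we know more), and in that degenerate case I can take $G_1 = G[Q]$, $G_2 = G$, $Q_1 = Q$, $S = S$, which trivially satisfies both bullets.  Otherwise set $Q_1 = Q \cup AA_T(u) \cup \{u\}$; since $|AA_T(u)| \le w$ we get $|Q_1| \le q + w + 1$.

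\medskip

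Next I would let $W = V(T_u)$ be the vertices of the subtree hanging below $u$ (not counting the ancestors), and decompose $G$ along the cut $AA_T(u) \cup \{u\} \subseteq Q_1$.  Because $T$ is normal for $G-Q$, the set $AA_T(u)\cup\{u\}$ separates $W\setminus\{u\}$ from $V(G)\setminus(Q\cup W)$ in $G$; this is the separation argument used repeatedly in the proof of Lemma~\ref{lem.red2gen}.  Define $G_1 = G[W \cup Q_1]$ (so $G_1$ contains the ``deep'' part together with $Q_1$) and $G_2 = G - (W\setminus Q_1) = G[(V(G)\setminus W)\cup Q_1]$, where I add back to $G_2$ all edges of $G$ with both endpoints in $Q_1$.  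Then $G_1 \cup G_2 = G$ because every edge of $G$ is either internal to $G_1$, internal to $G_2$, or crosses the separator and hence has an endpoint in $Q_1$, so it is retained in whichever side contains its other endpoint; and $Q \subseteq Q_1 = V(G_1)\cap V(G_2) $ (after possibly intersecting, $V(G_1)\cap V(G_2) = Q_1$), so the containments $Q \subseteq V(G_1)\cap V(G_2)\subseteq Q_1$ hold.

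\medskip

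Now I would verify the two bullet properties.  For $G_2 \in \apex(\ex k\cb)$: by the choice of $u$ as the \emph{deepest} such vertex, no subtree $T_{u'}$ with $u' \in V(T_u)\setminus\{u\}$ contributes a $\cb$-minor with two outside vertices, and $u$ itself is the apex vertex to delete — more precisely, I would argue that $G_2 - v$ for an appropriate single vertex $v\in Q_1$ (namely $v=u$, or a vertex of $P$ with $|P|=1$, as in Lemma~\ref{lem.red2gen}) has at most $k$ disjoint subgraphs with a $\cb$-minor, because any such collection in $G_2-v$ avoids $W\setminus Q_1$ entirely and hence, combined with the $\cb$-critical subgraph $H \subseteq G[P\cup W]$ found above (which is disjoint from it), would give $k+1$ disjoint $\cb$-minor-subgraphs in $G$, contradicting $G \in \ex(k+1)\cb$.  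For $G_1$: I would show $S$ is a special set, i.e. $S$ is a redundant $\cb$-blocker for $G_1 - (Q_1\setminus S)$ and $Q_1\setminus S$ is a $(2,\cb)$-blocker for $G_1 - S$.  The latter is inherited from the hypothesis that $Q\setminus S$ is a $(2,\cb)$-blocker for $G-S$, since $G_1 - S$ is a subgraph of $G-S$ and $Q_1\setminus S \supseteq Q\setminus S$.  For redundancy of $S$ in $G_1 - (Q_1\setminus S)$, note $G_1 - Q_1 = G[W]\setminus Q_1$ which is a subgraph of $G-Q \in \ca = \ex\cb$, so $S$ is a $\cb$-blocker; redundancy ($S\setminus\{x\}$ still blocks for each $x$) should follow because any $\cb$-minor-subgraph in $G_1 - (Q_1\setminus S) - (S\setminus\{x\})$ uses only $x$ from $Q_1$, hence meets $Q$ in $\le 1$ vertex, hence by the $(2,\cb)$-blocker hypothesis on $Q\setminus S$ — combined with the fact that such a subgraph lives inside $W\cup\{x\}$ — contradicts the choice of $u$.

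\medskip

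\textbf{Main obstacle.}  The delicate point — and the step I expect to be hardest — is pinning down \emph{which} single vertex to delete from $G_2$ to land in $\apex(\ex k\cb)$, and simultaneously ensuring $S$ remains a special set of $Q_1$ in $G_1$.  This requires mirroring the case analysis of Lemma~\ref{lem.red2gen} ($|P| = 1$ versus $|P| = 2$, and whether $u \in S_0$) carefully, so that the ``$+1$'' in the bound $q + w + 1$ is exactly the room needed to absorb $u$ (or the lone vertex of $P$) into $Q_1$, and so that the $\cb$-critical witness $H$ sitting in $G_1$ is the one blocking a $(k+1)$-st disjoint copy in $G_2$.  Once the separator and the apex vertex are correctly identified, the union identity and the blocker inheritances are routine.
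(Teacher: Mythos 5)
Your overall strategy --- find a normal tree $T$ for $G-Q$ with bounded active-ancestor number, pick a deepest ``bad'' vertex $u$, split $G$ along $Q\cup AA_T(u)\cup\{u\}$, and check the two bullet properties --- is exactly the paper's strategy, so you have the right skeleton. But two concrete choices go wrong, and the first one is precisely the reason your ``main obstacle'' remains unresolved.

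\textbf{(1) Wrong choice of the bad set $U$.} You copy the definition from Lemma~\ref{lem.red2gen} and let $u$ be the deepest vertex for which $G[V(T_u)\cup\{x,y\}]$ has a $\cb$-minor for some \emph{pair} $x,y\in Q\cup AA_T(u)$. The paper instead defines $U$ as the set of $v\in V(G-Q)$ such that $G[V(T_v)\cup\{x\}]\notin\ex\cb$ for some \emph{single} $x\in S$. This smaller $U$ is what makes the apex vertex of $G_2$ canonical: the witness $x_0\in S$ for the chosen $u$ is the vertex you delete, and disjointness of $V(T_u)\cup\{x_0\}$ from $V(G_2)-\{x_0\}$ immediately gives $G_2-x_0\in\ex k\cb$ via $G\in\ex(k+1)\cb$. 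Note also that $G_2$ in the paper is $G-V(T_u)$, so it does \emph{not} contain $u$; your $G_2=G-(V(T_u)\setminus\{u\})$ keeps $u$, which breaks the disjointness of the two pieces in the apex argument. With the paper's $U$, the special-set redundancy of $S$ in $G_1-(Q_1\setminus S)$ also drops out directly: a $\cb$-critical $H$ in $G[(V(T_u)\setminus\{u\})\cup\{x\}]$ for $x\in S$ must contain $x$, $H-x$ is connected and lies in some $T_v$ with $v$ strictly below $u$ by normality, contradicting the maximality of $u$'s depth with a single witness $x\in S$. Your version of this argument conflates the $(2,\cb)$-blocker hypothesis (about $G-S$) with what is really a statement about single elements of $S$, and would need the $|P|$-casework of Lemma~\ref{lem.red2gen}; the paper avoids it entirely.

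\textbf{(2) Wrong degenerate case.} If $U=\emptyset$, you take $G_1=G[Q]$ and $G_2=G$, and claim the second bullet. But $G\in\ex(k+1)\cb$ does \emph{not} imply $G\in\apex(\ex k\cb)$; that implication is exactly what would trivialize the whole enterprise. The paper's degenerate choice is the mirror image: $G_1=G$ (which is fine: with $U=\emptyset$, taking $v=r$ shows $G[(V(G)\setminus Q)\cup\{x\}]\in\ex\cb$ for each $x\in S$, so $S$ is redundant in $G-(Q\setminus S)$, and the hypothesis gives the $(2,\cb)$-blocker condition, so $Q$ is already an $(l,2,\cb)$-double blocker for $G$) and $G_2=\bar K_Q$, the empty graph on $Q$, which is trivially in $\apex(\ex k\cb)$.

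So the fix is to restrict the violation test to a single vertex of $S$; once you do that, the apex vertex, the redundancy of $S$ in $G_1$, and the degenerate case all become routine, with no need to import the $P$-set casework from Lemma~\ref{lem.red2gen}.
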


\begin{proof}
    The set $\cb$ contains a planar graph, so by the theory of graph minors, see \cite{diestel, rs86},
    the treewidth of $G - Q$ is bounded by a constant $w = w(\cb)$. Since the claim is trivial in the case $Q = V(G)$
    (take $G_1 = G$, $Q_1 = Q$ and $G_2 = \bar{K}_Q$, where $\bar{K}_Q$ is the empty graph on $Q$),
    we will assume that $Q \subset V(G)$. By the Kloks theorem (\ref{eqn.kloks}),
    $G-Q$ has a normal tree $T$ such that the number of active ancestors satisfies $a_T(v) \le w$ for each $v \in V(T)$.
    Denote by $r$ the root of $T$.

    Let $U$ be the set of all vertices $v \in V(G-Q)$ such that $G[V(T_v) \cup \{x\}] \not \in \ex \cb$ for some $x \in S$.
    If $U = \emptyset$ then $Q$ is itself an $(l, 2, \cb)$-double blocker for $G$, so we may take $G_1 = G$ and $G_2 = \bar{K}_Q$.
    %So we may 
    Now assume that $U$ is non-empty. Let $u \in U$ be a vertex with maximum distance in $T$ from the root $r$
    and let $x_0$ be a vertex in $S$ showing that $u \in U$. 
    Write $A = AA_T(u)$, let $G_1 = G[V(T_u) \cup Q \cup A]$, % let $E$ be the set of at most $w$ edges
    %connecting $u$ with its ancestors in the tree $T$,
    and let $G_2 = G - V(T_u)$. We claim that $G_1$ and $G_2$ are as required.
    
    %Let $c(\cb, k)$ be the constant provided by Lemma~\ref{lem.red2gen} and set
    %Define $c =c_1(\cb, k) + w + 1$.
    We have $V(G_1) \cap V(G_2) = Q \cup A$, so $G_1$ and $G_2$ share $|Q \cup A| \le q+w$ vertices. % \m{was $|Q|+w + 1$} 

    %We claim that $G_1, G_2$ and $E$ are as required.
    
    %First consider $G_1$. 
    We will show that $Q_1 = Q \cup A \cup \{u\}$
    is an $(l, 2, \cb)$-double blocker for $G_1$, and $S$ is its special set.  %is a redundant blocker of $G_1' = G_1 - (Q_1 \setminus S)$.
    Indeed, %using Lemma~\ref{lem.red2gen},
    using the assumption of the lemma,
    the set $Q_1 \setminus S = (Q \setminus S) \cup A \cup \{u\}$ is a $(2, \cb)$-blocker for $G_1 - S$. 
    %\m{strange sentence about $G_1'$ deleted}
    %Now $G_1'$ is the graph $G[V(T_u) \cup S]$. %Clearly, $G[V(T_u)] \subseteq G - Q \in \ex \cb$. 
    Now suppose that $G[V(T_u-u) \cup \{z\}] \not \in \ex \cb$
    for some $x \in S$. Let $H$ be a $\cb$-critical subgraph of $G[V(T_u-u) \cup \{x\}]$. Then, since $T$ is normal, all vertices of the connected graph $H - x$
    must be contained in $V(T_v)$ for some $v$ strictly below $u$ in $T$. %, since the tree $T$ is normal. 
    This is a contradiction 
    to the choice of $u$. %We have shown that $Q_1$ is an $(l, 2, \cb)$-double blocker of $G_1$.

    %Now the set $E$ is of size at most $w$ since the endpoints of edges of $E$, other than $u$, are in $AA_T(u)$.

   % Finally consider the graph $G_2$. 
    
    %We have $V(G_1) \cap V(G_2) = Q_1 \setminus \{u\}$ has size at most $|Q| + w - 1 \le c$ by Lemma~\ref{lem.red2gen}.

    Finally observe that if $G_2 - x_0$ contains a minor in $k \cb$ then
    since $V(T_u) \cup \{x_0\}$ and $V(G_2 - x_0)$ are disjoint, $G \not \in \ex (k+1) \cb$. So $G_2 \in \apex ( \ex k \cb)$ is as claimed.
\end{proof}

\medskip

In the proof of the next lemma and in much of the remaining part of the paper,
we will find it more convenient to represent graphs with small blockers 
as coloured graphs, where %the sets of colours 
a colour class
corresponds to a vertex in the blocker, and the set of colours of a vertex corresponds to the set of its neighbours in the blocker.
What follows is an attempt to capture this formally.

Let $r$ be a fixed integer. We call a graph $G$ with $r$ distinct distinguished
vertices, or roots, an \emph{$r$-rooted graph}. The roots will be labelled and ordered.
We say that a class $\Pi$ of $r$-rooted graphs  %\m{define ``rooted'' above}
is an $r$-property if
$\Pi$ is closed under isomorphism and under deleting edges between the roots.
We say that an unrooted graph $G$ has $r$-property $\Pi$ if it is possible to root $r$ of its vertices, so
that the resulting $r$-rooted graph is in $\Pi$.
We associate two classes with $\Pi$: the class $\ca_\Pi$ of uncoloured, unrooted graphs that
have $r$-property $\Pi$ and the class $\tilde{\ca}_\Pi$ of $\{0,1\}^r$-coloured graphs $G$, such that 
%$\ext(G)$ has property $\Pi$.
if $q_1 < \dots < q_r$ are not elements of $V(G)$, then $G^{\{q_1, \dots, q_r\}}$ with roots $(q_1, \dots, q_r)$
belongs to $\Pi$.

The next proposition just spells out the well known fact that a class of rooted
graphs has the same radius of convergence as the class of corresponding unrooted graphs.
%Let $\cp$ be any $r$-property, denote by $\ca$ of all graphs that have an $r$-property $\cp$
%and let  $\tilde{\ca}$ be the class of $\{0,1\}^r$-coloured
%graphs $G$, such that $\ext(G)$ has an $r$-property $\cp$. We will
%call $\tilde{\ca}$ the coloured class \emph{determined} by $\cp$,
\begin{prop}\label{prop.r-property}
    Let $\Pi$ be an $r$-property for some positive integer $r$. Then the sequence (\ref{eq.small}) for the class
    of rooted graphs $\Pi$, the class of unrooted graphs $\ca = \ca_\Pi$ and the class of $\{0,1\}^r$-coloured graphs $ \tilde{\ca} = \tilde{\ca}_\Pi$ has the same set $L$ of limit points, $L \subseteq [0; \infty]$.
    %Then the class of all 
    %unrooted graphs $\ca$ that have $r$-property $\Pi$
    %and the class $\tilde{\ca}$ of $\{0,1\}^r$-coloured graphs $\tilde{G}$ such that $\ext(\tilde{G})$
    %has $r$-property $\Pi$ satisfy
%    \[
%    \gamupper(\ca) = \gamupper(\tilde{\ca}) = \gamupper(\Pi) \quad \mbox{and} \quad \gamlower(\ca) = \gamlower(\tilde{\ca}) = \gamlower(\Pi).
%    \]
\end{prop}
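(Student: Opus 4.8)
The plan is to show the three sequences $\bigl(|\Pi_n|/n!\bigr)^{1/n}$, $\bigl(|\ca_n|/n!\bigr)^{1/n}$ and $\bigl(|\tilde\ca_n|/n!\bigr)^{1/n}$ differ only by factors that are subexponential in $n$, whence they share the same set of limit points. I would work with the exponential generating functions $P(x)$, $A(x)$, $\tilde A(x)$ and compare their coefficients; since having the same set of limit points of $(|\cdot_n|/n!)^{1/n}$ is equivalent to the three power series having the same radius of convergence (recall $\gamupper = \rho^{-1}$, and for the \emph{set} of limit points one needs the comparison at the level of coefficients, not just the $\limsup$), I would in fact establish a two-sided bound $\alpha_n\, |\ca_n| \le |\Pi_n| \le \beta_n\, |\ca_n|$ with $\alpha_n^{1/n},\beta_n^{1/n}\to 1$, and similarly relating $\ca$ to $\tilde\ca$.

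The key steps, in order. First, the map from $\Pi_n$ to $\ca_n$ (forget the root labels/order, keep the underlying unrooted graph): each graph $G\in\ca_n$ has at most $n(n-1)\cdots(n-r+1) \le n^r$ preimages (a choice of which $r$ vertices are roots, in order), and at least one preimage since $G$ has $r$-property $\Pi$; but one must be careful because two distinct rootings may give isomorphic $r$-rooted graphs, so strictly $1 \le |\Pi_n|/|\ca_n|$ fails directly — instead I count labelled objects: a labelled graph in $\ca_n$ together with an ordered choice of $r$ roots making it lie in $\Pi$ is one labelled object of a class $\ca^{\bullet r}$, and $|\ca_n| \le |\ca^{\bullet r}_n| = \sum_G (\text{number of valid rootings}) \le n^r |\ca_n|$, while also $|\ca^{\bullet r}_n| \ge |\ca_n|$. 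Rooted graphs in $\Pi$ counted as labelled structures are exactly $\ca^{\bullet r}$ up to relabelling the roots, so $|\Pi_n|$ and $|\ca^{\bullet r}_n|$ agree up to a bounded relabelling factor; thus $|\ca_n| \le C\, |\Pi_n| \le C' n^r |\ca_n|$ for constants depending only on $r$. Second, for $\tilde\ca$ versus $\ca$: a $\{0,1\}^r$-coloured graph on $[n]$ whose $r$-vertex extension lies in $\Pi$ corresponds, after adjoining the $r$ extension vertices and relabelling the $n+r$ vertices, to a graph in $\ca_{n+r}$ with a distinguished ordered $r$-set (the extension vertices), i.e. again essentially $\ca^{\bullet r}_{n+r}$; so $|\tilde\ca_n|$ equals $|\ca^{\bullet r}_{n+r}|$ up to a factor bounded by a function of $r$ and by $\binom{n+r}{r}^{\pm1}$ for the relabelling, both subexponential. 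Third, I assemble these: since $A(x)$ and $x^{-r}$-shifted versions of $A$, and $A^{\bullet r}(x)$, all have the same radius of convergence, and polynomial/binomial factors do not move $\rho$, all three sequences have the same $\limsup$ and more precisely the same set of subsequential limits.

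The main obstacle I expect is the bookkeeping around \emph{labels of roots} and \emph{isomorphism classes}: the statement is about classes closed under isomorphism, so one must decide whether "$\Pi_n$" counts isomorphism classes or labelled objects, and whether the $r$ roots carry labels from $[n]$ or from a separate label set — the excerpt says roots "will be labelled and ordered", suggesting labels disjoint from $[n]$, which is exactly the setup of $\tilde\ca$'s extension vertices. Getting these conventions aligned so that the inequalities above are literally correct (rather than off by an $r!$ or an $n^{\underline r}$ that is harmless anyway) is the only delicate point; once the conventions are pinned down, everything reduces to the trivial observations that (i) adding at most $r$ vertices and (ii) multiplying the counting sequence by a factor between $1$ and $n^r$ changes neither the radius of convergence nor the set of limit points of $(|\cdot_n|/n!)^{1/n}$. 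I would therefore state the normalisation explicitly at the start of the proof and then dispatch the three comparisons in a few lines each.
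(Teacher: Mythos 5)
Your proposal is correct and follows essentially the same route as the paper: the paper dispatches the claim with a single cyclic chain $|\tilde{\ca}_n| \le |\ca_{n+r}| \le |\Pi_{n+r}| \le 2^{\binom r 2}(n+r)_r\,|\tilde{\ca}_n|$ and then notes that polynomial factors and the shift $n\mapsto n+r$ do not affect limit points of $(|\cdot_n|/n!)^{1/n}$, which is exactly your observations (i) and (ii). Your detour through the intermediate class $\ca^{\bullet r}$ and the pairwise comparisons are just a slightly longer way of bookkeeping the same subexponential factors; once you note that $\Pi_n$ is literally the class of $\ca$-graphs on $[n]$ with an ordered valid $r$-rooting (so no extra "bounded relabelling factor" is needed there), your argument collapses to the paper's.
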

\begin{proof}
%For any class of graphs $\ca'$, if $\rho(\ca') = \rho$ if and only if
%such that
%\[
%\lim \sup \left (\frac {|\ca_n|} {n!}\right)^{1/n} \to \rho^{-1}.
%\]
The claim follows, since
\[
|\tilde{\ca}_n| \le |\ca_{n+r}| \le |\Pi_{n+r}| \le 2^{\binom r 2} (n+r)_r |\tilde{\ca}_n|,
\]
and if (\ref{eq.small}) has a limit for a subsequence $(n_k, k=1,\dots)$ for any of the classes, it has the same limit for the other two.
%The claim follows by (\ref{eq.small}) for $\ca$ and~$\tilde{\ca}$.
%and recalling that $\rho^-1 = \gamupper$ (if we accept that $0^{-1} = \infty$) %, since $\rho^{-1} (\ca') = \gamupper(\ca')$,
%since $\gamupper(\ca) = \rho(\ca)$ and $\gamupper(\tilde{\ca}) n$
%and $r$ is constant.
\end{proof}

\medskip
%\m{simplify, maybe do not use the term ``property''?}
The most important $r$-property for us will be the property of having a redundant blocker of size $r$.
Formally, given a set $\cb$ of graphs and a positive integer $r$, $\Pi_0$ is the set of all $r$-rooted graphs $G$, such 
that any subgraph of $G$ containing just one of the roots of $G$ is in $\ex \cb$.

Define 
$\crd {r,\cb} = \tilde{\ca}_{\Pi_0}$ and notice that $\ca_{\Pi_0} = \rd r \cb$. To keep the notation simpler,
below we will omit the subscript $\cb$, since the class $\cb$ will always be fixed. Proposition~\ref{prop.r-property} implies that the $\rho(\crd r) = \rho(\rd r \cb)$. We will also denote by $\cc^r = \cc^{r,\cb}$ the class of connected graphs in $\crd r$.

Given a set of graphs $\cb$ and a coloured graph $G$ with $N(G) \in \ex \cb$, call a colour $c$ \emph{bad} for $G$ (with respect to $\cb$), if $N(G) \in \ex(\cb)$, but adding to $G$ a new vertex $x_c$ connected to every vertex $v \in V(G)$ which has colour $c$ we produce a graph with a minor in $\cb$.
Otherwise, call $c$ good for $G$.
%satisfies $G' \not \in \ex \cb$. Otherwise we call $C$ good for $G$ (with respect to $\cb$).
%adding a new vertex to $G$ connected
%to each vertex of $G$ that has colour $c$, we produce a minor in $\cb$, otherwise call $c$ \emph{good} for $H$.
Notice, that $G \in \crd k$ if and only if $N(G) \in \ex \cb$ and  $\Col(G) \subseteq [k]$ and every colour is good for $G$.
%By Proposition~\ref{prop.r-property} there is a class of uncoloured graphs, namely $\rd r \cb$ and a class of
%Denote by $\ca^r$ be the class of $\{0,1\}^r$-coloured graphs corresponding to $\Pi_0$ as in Proposition~\ref{prop.r-property}.
%Then Proposition~\ref{prop.r-property} says that $\rd r \cb$ and $\crd r$ have the same convergence radius. %We will focus on this class in the following sections. 
%
%Below we will focus on the class $\crd r$ of $\{0,1\}^r$-coloured graphs corresponding to $\rd r \cb$, subject to $\cp_0$ as in
% Proposition~\ref{prop.r-property}.
%
%We will often omit the formal definition of relevant $r$-properties, but it should be obvious from the context. 
%Coloured class that we will focus on 
%
%
\begin{lemma} \label{lem.doubleblockers}
    Let $k$ and $r$ be positive integers, such that $k < r$, and let $\cb$ be a finite set of graphs. 
    Suppose $\aw 2 (\ex \cb)$ is finite.
    Let $\ca$ be the class of graphs that have a $(k, 2, \cb)$-double blocker of size $r$.
    Then $\rho(\ca) = \rho(\rd {k+1} \cb)$.
\end{lemma}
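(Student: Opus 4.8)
The plan is to prove the two inequalities $\rho(\ca) \le \rho(\rd{k+1}\cb)$ and $\rho(\ca) \ge \rho(\rd{k+1}\cb)$ separately. The first is immediate: if $G$ has a redundant $\cb$-blocker $Q$ of size $k+1$, then taking $S = Q$ (which is redundant, hence certainly redundant for $G - \emptyset$) and noting that $Q \setminus S = \emptyset$ is trivially a $(2,\cb)$-blocker for $G - S = G$, we see that $G$ has a $(k,2,\cb)$-double blocker of size $k+1 \le r$; pad $Q$ with $r - (k+1)$ arbitrary extra vertices to get size exactly $r$ (the special set $S$ stays the same size $\le k$, and adding vertices to $Q \setminus S$ keeps it a $(2,\cb)$-blocker). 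Hence $\rd{k+1}\cb \subseteq \ca$ and so $\rho(\ca) \le \rho(\rd{k+1}\cb)$.

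For the reverse inequality, the idea is to show that every graph in $\ca_n$ can be built from a graph in $(\rd{k+1}\cb)_{n'}$ with $n' \ge n - O(1)$ by a bounded amount of extra data, so that $|\ca_n| \le \mathrm{poly}(n)\, |(\rd{k+1}\cb)_{n - c}|$ for a constant $c = c(\cb,r)$, which gives $\rho(\ca) \ge \rho(\rd{k+1}\cb)$. So let $G \in \ca$ with a $(k,2,\cb)$-double blocker $Q$ of size $r$ and special set $S$, $|S| \le k$. By definition $S$ is a redundant $\cb$-blocker for $G - (Q \setminus S)$, and $Q \setminus S$ is a $(2,\cb)$-blocker for $G - S$. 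The key observation is that $G' := G - (Q \setminus S)$ has a redundant $\cb$-blocker of size $|S| \le k < r$, and after padding with dummy vertices we can regard $G'$ (together with colour information recording the attachment of the deleted vertices) as essentially an element of $\crd{|S|} \subseteq \crd{r}$; recovering $G$ from $G'$ requires specifying only the $\le r$ deleted vertices of $Q \setminus S$, their labels, and their adjacencies to $V(G')$ --- but the latter is the obstacle, since a deleted vertex could have up to $n$ neighbours, giving $2^n$ choices rather than polynomially many.

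The hard part, and the reason the hypothesis $\aw2(\ex\cb) < \infty$ is needed, is precisely to control the neighbourhoods of the vertices in $Q \setminus S$. The plan is to invoke Lemma~\ref{lem.colreduction} (the colour reduction lemma): view $G - S$ as an $r' $-coloured graph where $r' = |Q \setminus S| \le r$, one colour per vertex of $Q \setminus S$. Since $Q \setminus S$ is a $(2,\cb)$-blocker for $G - S$, the graph $G - S$ (with $S$ deleted) has no subgraph with a $\cb$-minor meeting $Q \setminus S$ in $\le 2$ vertices; translating via extensions, this bounds $\caw2$ of the coloured graph $G - S - (Q\setminus S)$ by a constant depending only on $\cb$ (using $\aw2(\ex\cb) = j < \infty$, arguing componentwise as in the proof of Lemma~\ref{lem.red2gen}: two colour classes meeting $j+1$ disjoint connected subgraphs would force a $2$-fan-sized apex structure, contradicting $\aw2 \le j$ together with criticality). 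Then Lemma~\ref{lem.colreduction} produces a graph $G''$ with $G''/J = (G - S)$-coloured-graph and $|J| \le (j+1)^{r-1} - 1 =: c$, such that each component of $G'' - J$ is monochromatic. Each monochromatic component attaches to at most one vertex of $Q \setminus S$, so after re-inserting $S$ (only $|S| \le k$ vertices, contributing at most $k$ more colour classes / a bounded blow-up since $S$ is a redundant $\cb$-blocker, so by the same argument its colour classes also have bounded $\caw$) we obtain a decomposition of $G$ into: a core coloured graph on $n - r$ vertices whose underlying graph lies in $\rd{\le k}\cb \subseteq \rd{k+1}\cb$ after a trivial padding, plus the at most $c$ contracted edges of $J$, plus the $\le r$ blocker vertices with their labels. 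Since $c$, $r$, $k$ and $|\cb|$ are all bounded in terms of $\cb$ and $r$ only, the number of graphs in $\ca_n$ mapping to a fixed such core is $\mathrm{poly}(n)$, giving $|\ca_n| \le \mathrm{poly}(n)\,|(\rd{k+1}\cb)_{n-r}|$ and hence $\gamupper(\ca) \le \gamupper(\rd{k+1}\cb)$, i.e. $\rho(\ca) \ge \rho(\rd{k+1}\cb)$. Combining the two inequalities yields $\rho(\ca) = \rho(\rd{k+1}\cb)$.

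I expect the main technical obstacle to be making the second direction fully rigorous: one must carefully check that after the colour-reduction surgery the resulting ``core'' really does have a \emph{redundant} $\cb$-blocker of size $\le k$ (not merely a $\cb$-blocker), i.e. that contracting $J$ and re-attaching does not destroy redundancy of $S$, and that the monochromatic components can indeed be glued back with only polynomially many labelled choices. This is a bookkeeping argument analogous to (but simpler than) the proof of Lemma~\ref{lem.red2gen}, and --- as the surrounding text of the paper suggests --- the isomorphism obtained is combinatorial, which is all that is needed for the statement about $\rho$.
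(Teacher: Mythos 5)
Your plan matches the paper's at a high level (an inclusion for the easy direction, the Colour Reduction Lemma for the hard direction), but the easy direction as written does not go through. Taking $S = Q$ violates the requirement $|S| \le k$ in the definition of a $(k,2,\cb)$-double blocker, since $|Q| = k+1 > k$. (You also write $G - S = G$, which fails for nonempty $S$.) The inclusion $\rd {k+1} \cb \subseteq \ca$ (for graphs with at least $r$ vertices, which is all that matters for $\rho$) is still true, but one must choose $S$ to be a $k$-element subset of the redundant blocker $Q_0$, set $x := Q_0 \setminus S$, and pad $Q_0$ to $Q$ of size $r$; then $G - S \in \ex\cb$ because $S = Q_0 \setminus \{x\}$ is itself a blocker by redundancy, so the $(2,\cb)$-blocker condition on $Q \setminus S$ is vacuous, and for each $v \in S$, $(G - (Q\setminus S)) - (S \setminus \{v\}) \subseteq G - (Q_0 \setminus \{v\}) \in \ex\cb$, which gives redundancy of $S$ for $G - (Q\setminus S)$.

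For the hard direction you correctly identify the Colour Reduction Lemma and the $\caw 2$ bound as the key tools, but you stop exactly at the step you flag as the crux: showing that each piece, after mapping its one colour in $\{k+1,\dots,r\}$ to $k+1$, genuinely lies in $\cc^{k+1}$, i.e.\ that all its colours remain \emph{good}. This is precisely what the stability clause in Lemma~\ref{lem.colreduction} is for: each component of $G'' - J$ is stable with respect to the contraction, hence isomorphic to a coloured subgraph of the original $G$, so any colour that were bad for the piece would already be bad for $G$, contradicting $G \in \tilde{\ca}$. Without this step you have not established membership in $\cc^{k+1}$, only the weaker blocker property. Also, the counting bound $|\ca_n| \le \mathrm{poly}(n)\,|(\rd{k+1}\cb)_{n-r}|$ you state is not what the reduction produces: it yields $\kappa \le N$ disjoint pieces of $\cc^{k+1}$ whose label sets partition $[n+l]$, and the number of such partitions is exponential. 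The paper instead bounds $[x^n/n!]\,\tilde{C}(x)$ by a finite sum (over bounded $\kappa, l, m$) of coefficients of $((r-k)\,C^{k+1})^\kappa$ at $x^{n+l}$; this still gives $\rho(\tilde{\cc}) \ge \rho(\cc^{k+1})$, so your radius-of-convergence conclusion is correct, but the route to it needs these repairs.
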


%The key element in the proof is Lemma~\ref{lem.colreduction}. %: here we finally see the relation between the quantities $\aw 2$ and $\caw 2$.
\begin{proof}
    Let $\Pi$ be the $r$-property for ``containing a $(k,2,\cb)$-double blocker of size $r$'', i.e.,
    $\Pi$ is the set of all graphs  $G \in \ca$ with $r$ distinct roots $q_1, \dots, q_r$ so that $\{q_1, \dots, q_r\}$ is a $(k, 2, \cb)$-double blocker for $G$ with a special set $\{q_1, \dots, q_k\}$. % form a redundant blocker of $G - \{q_{k+1}, \dots, q_r\}$. 
    Then $\ca = \ca_\Pi$ and $\tilde{\ca} = \tilde{\ca}_\Pi$ have the same radius of convergence by Proposition~\ref{prop.r-property}.
    %be the class of $r$-coloured graphs corresponding to $\ca$ as in Proposition~\ref{prop.r-property} using
    %the property $\Pi$ defined as follows. 

    Let $\tilde{\cc}$ be the class of connected graphs in $\tilde{\ca}$. 
    %Also let $\cc^{k+1}$ be the class of connected graphs in~$\crd {k+1}$.
    The exponential formula, see, i.e., \cite{fs09}, gives that for $n = 0, 1, 2, \dots$ %the corresponding exponential generating functions satisfy
    \[
        [x^n] \tilde{C}(x) \le [x^n] \tilde{A}(x) \le [x^n] e^{\tilde{C}(x)}
    \]
    so $\rho(\tilde{\cc}) = \rho(\tilde{\ca})$. %(Note that we have only an inequality in the last line,
    %since a disjoint union of graphs 
    Similarly $\rho(\cc^{k+1}) = \rho (\crd {k+1})$. Therefore by Proposition~\ref{prop.r-property} it suffices to prove that
    \begin{equation}
        \rho(\tilde{\cc}) = \rho(\cc^k).
    \end{equation}
    %For $n \ge r$ we 
    We have $\cc^k_n \subseteq \tilde{\cc}_n$, so $\rho(\cc^k) \ge \rho(\tilde{\cc})$. The difficult part is the opposite inequality.
    Our idea is to use the ``Colour reduction lemma'', Lemma~\ref{lem.colreduction}, to represent
    each graph in $\tilde{\cc}$ as a transformation of a finite set of disjoint graphs in $\cc^k$.

    Write $a = \aw 2 (\ex \cb)$.
    Consider a $\{0,1\}^r$-coloured graph $G \in \tilde{\cc}$. Let $G'$ be a $\{0,1\}^r$-coloured graph obtained by removing the
    colours $\{1, \dots, k\}$ from~$G$. 
    Suppose $\caw 2 (G') > a$. Then for any set $L = \{l_1, \dots, l_r\}$
    such that $l_1 < \dots < l_r$ and $L \cap V(G) = \emptyset$, the graph %we have %Then, for $L = (l_1, \dots, l_r)$ such that $G^L \in \Ext(G)$ the graph
    $G^L - \{l_1, \dots, l_k\}$ has a subgraph $H \not \in \ex \cb$ such that $H$ has
    at most two vertices in $\{l_{k+1}, \dots, l_r\}$. But by the definition of $\tilde{\ca}$ (and $\tilde{\cc}$), $\{l_{k+1}, \dots, l_r\}$
    is a $(2, \cb)$-blocker for $G^L - \{l_1,\dots,l_k\}$, a contradiction. Therefore $\caw 2 (G') \le a$.

    By Lemma~\ref{lem.colreduction}, there is a graph $G_1$ obtained from the union of $\kappa \le N = (a+1)^{r-k-1}$ disjoint graphs,
    %$H_1, \dots, H_N$, 
    each with at most one colour in $\{k+1, \dots, r\}$,
    and a set  $J$ of $m \le N-1$ edges between these graphs, such that $G_1 / J = G'$ 
    and each component of $G_1 - J$ is stable with respect to $G_1 / J$.
    % is strongly isomorphic to $G'$.

    Now return the colours $\{1, \dots, k\}$ back: starting with the coloured graph $G_1$, for each $c \in \{1, \dots, k\}$ and each $v \in V(G_1 / J)$, %\m{revise}
    add $c$ to the set of colours for one of the vertices $v' \in Bag(v)$. Denote the newly obtained graph by $G''$.
    Then % we have that $G''/ J$ is strongly isomorphic to $G$.% has at most $k+1$ colours.
    $G'' / J = G$. 
    %Crucially,  
    Each component $C$ of $G'' - J$ can have at most one colour $c \in \{k+1, \dots, r\}$, and  so
    at most $k+1$ colours in total. Crucially, if $C$ contains a colour
    $c \in \{k+1, \dots, r\}$,
    %$|\Col(C)| = k+1$, then we can map the colour 
    %$c \in \Col(C) \cap \{k+1, \dots, r\}$, 
    we can map the colour $c$
    to $k+1$, and this yields a graph $C'$ in $\cc^{k+1}$. Why? 
    %%wrong%% Since all edges in $J$
    %%wrong%% connect different components of $G''-J$, the graph $C$ is isomorphic to a coloured minor of $G$. 
    Since $C$ is stable with respect to contraction of $J$ in $G_1$ (and also in $G''$),
    $C$ is isomorphic to a (coloured) subgraph of $G$.
    %If $C^{(l_1, \dots , l_{k+1})}$
    %for $l_1, \dots, l_{k+1} \not \in V(C) \cup V(G)$
    %had a subgraph not in $\ex \cb$ with just one vertex in $L = \{l_1, \dots, l_{k+1}\}$, 
    %then $G^{l_1, \dots, l_k}$ would have the same property. 
    If there was a colour $j \in \col(C)$ which was bad for $C$, then it would be bad for $G$.
    But $G \in \tilde{\ca}$,
    %\subseteq \crd r$,
    this gives a contradiction.
    %$L$ is a $(k, 2, \cb)$-blocker (and in fact,
    %a redundant blocker) of $G^{(l_1, \dots, l_{k+1})}$.

    %Contracting an edge in a multigraph reduces the size of a graph by zero or one. 
    Recall that we assume that a contraction of an edge $x y$
    produces a new vertex with label $\min(x,y)$.
    Thus,
    each graph $G \in \tilde{\cc}_n$ (as well as other graphs) can be obtained
    by choosing integers $\kappa, l, m$ with $0 \le l, m, \kappa-1 \le N-1$,
    a graph $G_0 \in \left(\cc^{k+1}\right)^\kappa_{n+l}$, %\m{power of a class}
    for each component of $G_0$, mapping the colour $k+1$ to an arbitrary
    colour in $\{k+1, \dots, r\}$, adding a set $J$ of $m$ edges to $G_0$ and finally contracting them.   
    Therefore we have
    \begin{align*}
    \left[\frac {x^n} {n!}\right]\tilde{C}(x) \le \sum_{l = 0}^{N-1} \sum_{\kappa = 1}^N \sum_{m = 0}^ {N - 1} (n+l)^{2m} \left[\frac {x^{n+l}} {(n+l)!}\right]  \left( (r-k) C^{k+1} \right)^\kappa,
    \end{align*}
    from which it follows that $\rho(\tilde{\cc}) \ge \rho(\cc^{k+1})$.
\end{proof}

\begin{lemma}\label{lem.gammaHiolder}
   Let $\cb$ be any set of graphs.
   Let $k$ and $r$ be positive integers, $k \ge r$. %and suppose $\rd {k-r} \cb$ and $\rd k \cb$
   %have positive growth constants, $\gamma_{k-r}$ and $\gamma_{k}$, respectively. 
   Then
   \[
   %\rho (\rd {k+r} \cb)^{-1} \ge \frac {\gamma_k^2} {\gamma_{k-r}}.
   \gamupper(\rd k \cb)^2 \le \gamupper (\rd {k+r} \cb) \gamupper(\rd {k-r} \cb).
   \]
\end{lemma}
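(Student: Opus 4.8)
The plan is to translate the problem into a Cauchy--Schwarz (Hölder) inequality for a family of weighted sums over $\ex\cb$, using the coloured-graph encoding of redundant blockers developed earlier in the paper.

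First I would pass from $\rd j\cb$ to the associated coloured class $\crd j$, which by Proposition~\ref{prop.r-property} has the same value of $\gamupper$. For a graph $H\in\ex\cb$ and a set $S\subseteq V(H)$, call $S$ \emph{good for $H$} (the uncoloured analogue of a good colour) if adding to $H$ one new vertex adjacent to exactly the vertices of $S$ produces no minor in $\cb$, and let $g(H)$ be the number of good subsets of $V(H)$ — an isomorphism invariant. The bookkeeping step is the observation that a $\{0,1\}^j$-coloured graph on $[n]$ is precisely an uncoloured graph $H$ on $[n]$ together with an ordered $j$-tuple $(S_1,\dots,S_j)$ of subsets of $V(H)$, the colour classes, and that (by the characterisation of $\crd j$ recalled just before Lemma~\ref{lem.doubleblockers}) such a graph lies in $\crd j$ exactly when $H\in\ex\cb$ and each $S_i$ is good for $H$, with no interaction whatsoever between distinct colour classes. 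This gives the exact identity
\[
|(\crd j)_n| \;=\; \sum_{H\in(\ex\cb)_n} g(H)^j \;=:\; a_j(n),\qquad j\ge 0,
\]
the case $j=0$ reducing to $a_0(n)=|(\ex\cb)_n|=|(\rd 0\cb)_n|$. Combined with Proposition~\ref{prop.r-property} this yields $\gamupper(\rd j\cb)=\limsup_{n}\bigl(a_j(n)/n!\bigr)^{1/n}$ for every $j\ge 0$.

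Then, since $k=\tfrac12(k+r)+\tfrac12(k-r)$, I would split $g(H)^k=g(H)^{(k+r)/2}\,g(H)^{(k-r)/2}$ and apply Cauchy--Schwarz termwise over $H\in(\ex\cb)_n$ to obtain $a_k(n)\le a_{k+r}(n)^{1/2}\,a_{k-r}(n)^{1/2}$ for every $n$. Dividing by $n!$, taking $n$-th roots, and passing to $\limsup$ (using $\limsup(x_ny_n)\le\limsup x_n\cdot\limsup y_n$ for non-negative sequences, the identity of the previous paragraph, and the continuity of $t\mapsto t^{1/2}$) gives
\[
\gamupper(\rd k\cb) \;\le\; \gamupper(\rd{k+r}\cb)^{1/2}\,\gamupper(\rd{k-r}\cb)^{1/2},
\]
which is the claimed inequality. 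Beforehand I would dispose of the degenerate cases in which one of the three quantities is $0$ or $\infty$; these follow from the monotonicity $\gamupper(\rd{k-r}\cb)\le\gamupper(\rd k\cb)\le\gamupper(\rd{k+r}\cb)$, which holds because any superset of size $j+r$ of a redundant $\cb$-blocker of size $j$ is again a redundant $\cb$-blocker, so $(\rd j\cb)_n\subseteq(\rd{j+r}\cb)_n$ for $n\ge j+r$.

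The one step that genuinely needs care is the independence of the colour classes in the identity $|(\crd j)_n|=a_j(n)$: one must be sure that whether a given vertex receives colour $i$ is constrained only by the single requirement that $S_i$ be good for $N(G)$, and not by the other colour classes nor by any edges among the notional blocker vertices. This is exactly the feature of the definition of $\crd j$ that makes $|(\crd j)_n|$ a sum of $j$-th powers, and hence the feature that opens the door to Cauchy--Schwarz; I do not expect any real obstacle beyond stating it carefully, everything after it being routine.
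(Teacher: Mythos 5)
Your proposal is correct and is essentially the paper's own argument: both reduce to the coloured classes $\crd j$ via Proposition~\ref{prop.r-property}, exploit the fact that the colour classes are constrained independently to write $|(\crd j)_n|$ as a sum of $j$-th powers of the number of good subsets over the underlying uncoloured graphs in $\ex\cb$, and then apply Cauchy--Schwarz termwise before passing to $\limsup$. The paper phrases the last two steps probabilistically (as $(\E X^l)^2\le\E X^{l+r}\,\E X^{l-r}$ for a uniformly random $G$ and then extracting a subsequence realising the upper limit of the left-hand side), but this is the same computation as yours.
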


\begin{proof}\,
  %  As before, for any positive integer $l$, denote by $\ca^l$ the class of $\{0,1\}^l$-coloured graphs $G$
  %  such that $\{n+1, \dots, n+l\}$ is a redundant blocker of $G^{(n+1, \dots, n+l)}$.
  %  Since
  %  \begin{equation} \label{eq.rdAl}
  %  \frac {| (\rd l \cb)_n|} {\binom n l 2^{\binom l 2} } \le |\ca^l_{n-l}| \le | (\rd l \cb)_n|,
  %  \end{equation}
  %  we have that $\rho(\ca^l) = \rho(\rd l \cb)$.
  %As in the proof of Lemma~\ref{lem.doubleblockers}, let $\ca^l$ be the class of $\{0,1\}^l$-coloured graphs corresponding to $\rd l \cb$.
  %Then by Proposition~\ref{prop.r-property}, $\gamupper(\ca^l) = \gamupper(\rd l \cb)$.
  By Proposition~\ref{prop.r-property} it suffices to show that $\gamupper(\crd k)^2 \le \gamupper(\crd {k+r}) \gamupper(\crd {k-r})$ (see above for the definition of $\crd k$).
  Fix positive integers $l, n$. Note that $\crd 0 = \ex \cb$. We can partition the class $\crd {l,n}$ (of $\{0,1\}^l$-coloured graphs on vertex set $[n]$) into $|\crd {0,n}|$ disjoint subclasses according to the underlying uncoloured graph $N(G)$ of
    $G \in \crd {l,n}$.
    %the $\{0,1\}^1$-coloured graph $G$ obtained by 
    %removing all colours from the vertices of $G' \in \ca^l_n$, except the colour 1 (red).
    
    Given %such a $\{0,1\}^1$-coloured graph
    an uncoloured graph $G \in \crd 0$,
    let $X_G$ be the number of ways to add colour $1$ so that the resulting $\{0,1\}^1$-coloured graph is in $\crd 1$.
    Since the constraint for redundant blockers involves only individual vertices, we can
    pick the sets of vertices coloured % with colours $\{1, \dots, l\}$
    $1, 2, \dots, l$ 
    independently, in $X_G^l$ ways. Therefore
    \[
        |\crd {l,n}| = \sum_{G \in \crd {0,n}} X_G^l.
    \]
    We see that the equality also holds when $l = 0$.
    Choose $G$ from $\crd {0,n}$ uniformly at random. Then $X = X_G$ is a random variable and $|\crd {l,n}| = |\crd {0,n}| \E X^l$. 
    By the
    Cauchy-Schwarz
    %H\"older's 
    inequality applied with random variables $X^{ (l - r)/2}$ and $X^{(l+r)/2}$ we have
    \[
      (\E X^l)^2 \le \E X^{l + r} \E X^{l -r},
    \]
    or
    \[
    |\crd {l,n}|^2 \le |\crd {l+r,n}| |\crd {l-r,n}|.
    \]
    Now the claim follows by dividing each side by $(n!)^2$, raising to $1/n$ and considering
    the subsequence that realizes the upper limit of the left side.
\end{proof}

\bigskip

\begin{lemma}\label{lem.gamupper}
    Let $\ca$ be a proper addable minor-closed class of graphs with a set $\cb$ of minimal excluded minors.
    %Let $\cb$ be a set of 2-connected graphs containing at least one planar graph and
    Suppose $\aw 2 (\ex \cb)$ is finite and there is a positive integer $k_0$
    such that
    \[
    r = \frac {\gamupper( \ex (k_0+1) \cb)} {\gamupper(\ex k_0 \cb)} > 2.
    \]
    Then for any $k \ge k_0$
    \[
    \gamupper(\ex (k+1) \cb) = \gamupper(\rd {2k+1} \cb) \ge r \gamupper (\ex k \cb).
    \]
\end{lemma}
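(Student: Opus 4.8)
The plan is to bootstrap the hypothesis $r = \gamupper(\ex(k_0+1)\cb)/\gamupper(\ex k_0 \cb) > 2$ upward using the two basic ingredients already available: (i) the inclusion $\rd{2k+1}\cb \subseteq \ex(k+1)\cb$ together with the fact (Lemma~\ref{lem.red2}, Lemma~\ref{lem.2canddecomp}, Lemma~\ref{lem.doubleblockers}) that, conversely, every graph in $\ex(k+1)\cb$ decomposes as $G_1 \cup G_2$ with $G_1$ carrying a double blocker and $G_2 \in \apex(\ex k\cb)$ on a bounded shared vertex set; and (ii) the log-concavity-type inequality of Lemma~\ref{lem.gammaHiolder} for the sequence $k \mapsto \gamupper(\rd k \cb)$. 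First I would record the elementary two-sided bound coming from the decomposition: since a graph in $\ex(k+1)\cb$ is obtained by gluing $G_1$ (with a blocker of bounded size, hence essentially a coloured graph in $\crd{r_k}$ for $r_k = 2k+O(1)$) to a graph in $\apex(\ex k\cb)$ along $O(1)$ vertices, and conversely $\rd{2k+1}\cb$ and $\apexp k(\ex\cb) = \apex(\ex(k{-}1)\cdots)$-type classes both embed into $\ex(k+1)\cb$, one gets
\[
\max\bigl\{\gamupper(\rd{2k+1}\cb),\, 2\gamupper(\ex k\cb)\bigr\} \le \gamupper(\ex(k+1)\cb) \le \max\bigl\{\gamupper(\rd{2k+1}\cb),\, 2\gamupper(\ex k\cb)\bigr\},
\]
where the factor $2$ for the apex term is because an apex vertex chooses its neighbourhood freely (probability $1/2$ each), exactly as in the discussion after Theorem~\ref{thm.main1}; the bounded gluing sets contribute only polynomial factors, which vanish on taking $n$-th roots. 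Thus $\gamupper(\ex(k+1)\cb) = \max\{\gamupper(\rd{2k+1}\cb),\, 2\gamupper(\ex k\cb)\}$, and the whole lemma reduces to showing that for $k \ge k_0$ the first term dominates and equals $r\,\gamupper(\ex k\cb)$, or at least is $\ge r\,\gamupper(\ex k\cb)$.

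Next I would run the induction on $k \ge k_0$. For the base case $k = k_0$: by the displayed identity $\gamupper(\ex(k_0+1)\cb) = \max\{\gamupper(\rd{2k_0+1}\cb), 2\gamupper(\ex k_0\cb)\}$; since the left side equals $r\,\gamupper(\ex k_0\cb)$ with $r > 2$, the max cannot be realised by the $2\gamupper(\ex k_0\cb)$ term, so $\gamupper(\rd{2k_0+1}\cb) = r\,\gamupper(\ex k_0\cb)$, giving the claim at $k_0$. For the inductive step, assume $\gamupper(\ex(k+1)\cb) = \gamupper(\rd{2k+1}\cb) \ge r\,\gamupper(\ex k\cb)$; I want the same with $k+1$ in place of $k$. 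Apply Lemma~\ref{lem.gammaHiolder} with the index $2k+1$ and shift parameter: $\gamupper(\rd{2k+1}\cb)^2 \le \gamupper(\rd{2k+3}\cb)\,\gamupper(\rd{2k-1}\cb)$, i.e. the sequence $\log\gamupper(\rd{2j+1}\cb)$ is convex in $j$. Convexity plus the inductive lower bound on the increment from step $k-1$ to step $k$ forces the increment from $k$ to $k+1$ to be at least as large; quantitatively, $\gamupper(\rd{2k+3}\cb)/\gamupper(\rd{2k+1}\cb) \ge \gamupper(\rd{2k+1}\cb)/\gamupper(\rd{2k-1}\cb) \ge r/?$ — here I must be careful to relate $\gamupper(\rd{2k-1}\cb)$ to $\gamupper(\ex k\cb)$, which is where the identity $\gamupper(\ex(k+1)\cb) = \max\{\gamupper(\rd{2k+1}\cb), 2\gamupper(\ex k\cb)\}$ is used a second time: since $\gamupper(\rd{2k+1}\cb) \ge r\gamupper(\ex k\cb) > 2\gamupper(\ex k\cb) \ge \gamupper(\rd{2k-1}\cb) \cdot$(something), one gets the ratio $\ge r$, hence $\gamupper(\rd{2k+3}\cb) \ge r\,\gamupper(\rd{2k+1}\cb) \ge r\,\cdot r\,\gamupper(\ex k\cb) > 2 r\,\gamupper(\ex k\cb) \ge 2\gamupper(\ex(k+1)\cb)\cdot$const — and then the identity at level $k+1$ picks out the $\rd{2k+3}\cb$ term, yielding $\gamupper(\ex(k+2)\cb) = \gamupper(\rd{2k+3}\cb) \ge r\,\gamupper(\ex(k+1)\cb)$. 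This closes the induction.

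The main obstacle I anticipate is making the "decomposition identity" $\gamupper(\ex(k+1)\cb) = \max\{\gamupper(\rd{2k+1}\cb),\, 2\gamupper(\ex k\cb)\}$ fully rigorous — in particular the upper bound direction, which requires that every $G \in \ex(k+1)\cb$ be reconstructed from a $\crd{2k+1+O(1)}\cb$-piece and an $\apex(\ex k\cb)$-piece glued along a bounded set, summed over boundedly many choices; this is exactly what Lemmas~\ref{lem.red2}, \ref{lem.2canddecomp}, and \ref{lem.doubleblockers} are set up to deliver (the first produces a blocker of bounded size with a size-$2k$ special core, the second splits off an apex part, the third shows the double-blocker class has the same radius of convergence as $\rd{2k+1}\cb$), so the work is bookkeeping: tracking that all the "$+O(1)$" slack in blocker sizes only changes $\gamupper$ by the polynomial factors that disappear under $(\cdot)^{1/n}$, and checking the apex part genuinely contributes the clean factor $2$ (one free bit per apex vertex, with the base graph in $\ex k\cb$) rather than something larger. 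A secondary subtlety is the direction of the Hölder/convexity argument: Lemma~\ref{lem.gammaHiolder} only gives convexity of $j \mapsto \log\gamupper(\rd{2j+1}\cb)$ along a subsequence realising the upper limit, so I should phrase the increment comparison in terms of upper limits throughout and avoid claiming honest limits exist (the lemma does not assert growth constants, only statements about $\gamupper$).
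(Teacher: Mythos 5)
Your proposal follows the paper's proof of Lemma~\ref{lem.gamupper} essentially verbatim: the decomposition identity is the paper's equation~(\ref{eq.min}) (with $\gamupper(\apex(\ex k\cb))$ replaced by the equivalent $2\gamupper(\ex k\cb)$), the base case uses $r>2$ to eliminate the apex term, and the inductive step applies Lemma~\ref{lem.gammaHiolder}. The step you leave as a placeholder is supplied by the containment $\rd{2k-1}\cb\subseteq\ex k\cb$, which gives $\gamupper(\rd{2k-1}\cb)\le\gamupper(\ex k\cb)$; combined with the inductive hypothesis $\gamupper(\rd{2k+1}\cb)\ge r\,\gamupper(\ex k\cb)$ this yields $\gamupper(\rd{2k+1}\cb)/\gamupper(\rd{2k-1}\cb)\ge r$, which closes the induction. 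One small correction to your commentary: Lemma~\ref{lem.gammaHiolder} is stated directly as an inequality among the quantities $\gamupper(\rd j\cb)$, so there is no subsequence caveat to track --- the log-convexity holds for the $\gamupper$'s themselves.
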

\begin{proof}
    In the proof we will need the following important consequence of the preceding lemmas.
    
    Let $t$ be a positive integer. By Lemma~\ref{lem.red2}, every graph $G \in \ex (t+1)\cb$ has a $\cb$-blocker $Q$, such that $Q$ contains a
    set $S$ of size at most $2t$ and $Q \setminus S$ is a $(2, \cb)$-blocker for $G - S$.
    Furthermore, the size of $Q$ is bounded by a constant $c = c(\cb, t)$. % that does not depend on $G$.

    For any integer $j \ge 0$ write $\gamupper_j = \gamupper (\rd j \cb)$.  Then
    %It follows by Lemma~\ref{lem.2canddecomp}
    %and Lemma~\ref{lem.doubleblockers} that % for any positive integer $t$
    \begin{equation}\label{eq.min}
     \gamupper(\ex (t+1) \cb) = \max (\gamupper_{2t+1}, \gamupper (\apex (\ex t \cb))).
    \end{equation}
    Let us prove (\ref{eq.min}). Write $d = c+w$ where $w=w(\cb)$ is as in  Lemma~\ref{lem.2canddecomp}. % and let $d = c+w$.
    We claim that for $n \ge d+1$ we have
    \begin{equation}\label{eq.dom6}
        [x^n] A(x) \ge [x^n] A_1(x) A_2(x),
    \end{equation}
    %the coefficient $[x^n] A(x)$ of the exponential generating function
    where $A(x), A_1(x), A_2(x)$ are exponential generating functions of $\ex (t+1) \cb$,
    %by Lemma~\ref{lem.2canddecomp} 
%    is at most  $[x^n] A_1(x) A_2(x)$,
    %where $A_1(x)$ is the exponential generating function of 
    the class $\ca_1$ of graphs that have a $(2t, 2, \cb)$-double 
    blocker of size 
    %at most
    $d+1$ with 
    %at most
    $d$ rooted vertices,
    %and $A_2(x)$ is the exponential generating function of 
    and the class $\ca_2$ of
    graphs in $\apex( \ex t \cb)$
    which %additionally have at most 
    have
    $d$ pointed (i.e., unlabelled, but distinguishable) vertices respectively. %Here $w$ is as in Lemma~\ref{lem.2canddecomp}. 

    (\ref{eq.dom6}) can be seen as follows. Given graphs $G_1 \in \ca_1$ and $G_2 \in \ca_2$ %, % with equal number $d$
%    each with $c+w$
%    distinguished vertices  
    with disjoint labels
    we can obtain a new graph
    by identifying the $i$-th distinguished vertex of $G_1$ with the $i$-th distinguished
    vertex of $G_2$ for $i = 1, \dots, d$ and 
    removing repetitive edges.
    %merging the 
    %edges of the distinguished sets. 
    By Lemma~\ref{lem.2canddecomp},
    the set of all resulting graphs will contain
    all graphs in
    $\ex (k+1) \cb$ of size at least $d+1$. Note that if $G \in \ex (k+1) \cb$ has at least $d+1$ vertices,
    we may assume that $G_1, G_2$ given by Lemma~\ref{lem.2canddecomp} are such that $G_1$ has exactly $d+1$
    vertices, $|Q_1| = d+1$ and $|V(G_2) \cap V(G_2)| = d$; otherwise $G_1$, $G_2$ can be extended by including extra isolated vertices, and $Q_1$ can be extended by adding more vertices from $G_1 - Q_1$.
    
%    $(k, 2, \cb)$ blocker of $G_1$, $|V(G)| \ge c+w+1$, but $|Q'| < c+w+1$ we may add
%    arbitrary $c+w +1$ vertices to $Q$ to get another $(k, 2, \cb)$-blocker; also if $V(G_1) \cap V(G_2)$

    %\m{pointed, long\dots}
    %Now by Lemma~\ref{lem.doubleblockers} and the fact that
    Now rooting or pointing a fixed number of vertices does not change the convergence radius of a class (see, i.e.,  Proposition~\ref{prop.r-property}),
    therefore $A_1$ has the convergence radius $\rho_1 %=  \rho(\rd {2t+1} \cb ) 
    = \gamupper_{2t+1}^{-1}$ by Lemma~\ref{lem.doubleblockers},
    $A_2$ has convergence radius $\rho_2 = % \rho(\apex(\ex t \cb)) =
    \gamupper(\apex(\ex t \cb))^{-1}$, 
    and by the theory of generating functions, see  i.e., \cite{fs09},
    the convergence radius $\rho$ of $A$ is at least $\min(\rho_1, \rho_2)$. Finally, $\rho$ is exactly this, since
    both $\rd {2t+1} \cb$ and $\apex (\ex t \cb)$ are contained in $\ex (t+1) \cb$.

    We will also use a simple bound
    $\gamupper(\apex(\cd)) \le 2 \gamupper(\cd)$, which is valid
    for any class of graphs $\cd$, since 
    %\[
    $|\apex(\cd)_n| \le n 2^{n-1} |\cd_{n-1}|$.
    %\]
   
    Let us now prove the lemma. We use induction on $k$.  First consider the case $k = k_0$. 
    %Thus
    %\[
    %\gamupper(\ex (k_0+1) \cb) > 2^{k_0} \gamma(\ex \cb) \ge \gamma(\apex (\ex k_0 \cb)),
    %\]
    We have $\gamupper(\apex(\ex k_0 \cb)) \le 2 \gamupper(\ex k_0 \cb) < \gamupper(\ex (k_0+1) \cb)$
    and so by (\ref{eq.min}), only one candidate
    to realize $\gamupper(\ex (k_0 +1) \cb)$ remains:
    \[
    \gamupper(\ex (k_0 +1) \cb) = \gamupper_{2k_0+1}.
    \]
   %This completes the proof for the case $k = k_0$

   Now let $k' > k_0$ be an integer, assume we have
    proved the lemma with $k < k'$, let us now prove the case $k = k'$.

    We have $\gamupper(\ex k \cb) = \gamupper_{2k-1}$ by induction, therefore
    \[
    \gamupper(\apex (\ex k \cb)) \le 2 \gamupper_{2k-1}.
    \]
    Now Lemma~\ref{lem.gammaHiolder} and induction yields
    \[
    \gamupper_{2k+1} \ge \gamupper_{2k-1} \frac {\gamupper_{2k-1}} {\gamupper_{2k-3}} \ge r \gamupper_{2k-1}.
    \]
    %Here in the second step we used induction.
    So finally
    \[
    \gamupper_{2k+1} \ge r \gamupper_{2k-1} >
     %= r \gamupper(\apex(\ex k \cb)) > 2 \gamupper(\apex(\ex k \cb))
     2 \gamupper_{2k-1} \ge \gamupper(\apex(\ex k \cb))
    \]
    and the claim follows by (\ref{eq.min}).
\end{proof}

\bigskip

We are now ready to prove our first main theorem. %Theorem~\ref{thm.main1}.

\medskip

%The proof is correct but I would like to say that smaller blockers have smaller growth constant.
\begin{proofof}{Theorem~\ref{thm.main1}} \,
    We use the notation from Lemma~\ref{lem.gamupper}. By Lemma~\ref{lem.bounded_aw} below,
    $\aw 2 (\ca) < \infty$.
    As has been noticed in \cite{cmcdvk2012}, since $\ca$ contains all fans, $\apexp {2k+1} (\cp) \subseteq \ex (k+1) \cb$,
    where $\cp$ is the class of paths. So we have
    $\gamlower(\ex (k+1) \cb) \ge 2^{2k+1}$ for $k = 1, 2, \dots$. Also by Theorem~1.2 of \cite{cmcdvk2012},
    $\gamma (\apexp k (\ca)) = 2^k \gamma$. 

    Let $k_0$ be the smallest positive integer, such 
    that 
    \[
    \gamupper(\ex (k_0+1) \cb) %\ge 2^{2k_0+1} > 2^{k_0} \gamma(\ex \cb)= 
    >
    \gamma(\apexp {k_0} (\ca) ). % = \gamma(\apex(\ex k_0 \cb)).
    \]
    Then for  $1 \le j < k_0$, by applying (\ref{eq.min}) $j$ times we have
    \[
    \gamupper(\ex (j+1) \cb) % = \gamupper(\apex (\ex j \cb)) = \gamupper(\apexp j \cb) 
    = 2^j \gamma.
    \]
    Since $\apexp j (\ca) \subseteq \ex (j+1) \cb$, it follows that $2^j \gamma$ is the growth 
    constant of $\ex (j+1) \cb$.
    Thus
    \[
    \gamupper(\ex (k_0+1) \cb) > 2^{k_0} \gamma = 2\gamma(\ex k_0 \cb), % \ge \gamma(\apex (\ex k_0 \cb)),
    \]
    therefore $\gamupper (\ex (k+1) \cb) = \gamupper( \rd {2k+1} \cb)$ for all $k \ge k_0$ by Lemma~\ref{lem.gamupper}.

\medskip

Let us show that for $k \ge k_0$,  
\[
\gamupper \left( \left(\ex (k+1) \cb\right) \cap  \apexp {2k-1} (\ca)\right)  < \gamupper_{2k+1}.
\]
By %Claim~\ref{claim.36} 
Lemma~\ref{lem.gamupper}
we have
\[
\gamupper(\apex(\ex k \cb)) \le 2 \gamupper _ {2k-1} < \gamupper _ {2k+1}.
\]
%Also, by 
So, using Lemma~\ref{lem.gammaHiolder} 
%and % Claim~\ref{claim.36}, 
%Lemma~\ref{lem.gamupper}
\[
\gamupper_{2k} \le \sqrt{\gamupper_{2k+1} \gamupper_{2k-1}} \le 2^{-1/2}\gamupper_{2k+1}.
\]
Now apply Lemma~\ref{lem.2canddecomp} (with $k, \cb$ and $l = 2k-1$), Lemma~\ref{lem.doubleblockers} and an inequality analogous to (\ref{eq.min}),
to get
\[
\gamupper \left( \left(\ex (k+1) \cb\right) \cap  \apexp {2k-1} (\ca)\right) = \max(\gamupper_{2k}, \gamupper(\apex(\ex k \cb))) < \gamupper _ {2k+1}.
\]

Finally, let us show that (\ref{eqn.main1}) holds in the case $\gamupper_{2k+1} < 2^k \gamma$. Note that, in such case $1 \le k < k_0$. It cannot be that for some $j \in \{1, \dots, k-1\}$ we have $\gamupper_{2j+1} > 2^j \gamma$, since Lemma~\ref{lem.gamupper}
would imply that $\gamupper_{2k+1} > 2^k \gamma$. Similarly, if $\gamupper_{2j+1} = 2^j \gamma$,
we would have $\gamupper_{2k+1} \ge 2^{k-j} \gamupper_{2j+1} \ge 2^k \gamma$ by Lemma~\ref{lem.gammaHiolder}.
Thus $\gamupper_{2j+1} < 2^j \gamma$ for all $j = 1, \dots, k - 1$.

Trivially, $|(\ex (0 + 1) \cb)_n| = |\ca_n| = | (\apexp 0 (\ca))_n|$. Using Lemma~\ref{lem.apexdomination} (given in Section~\ref{subsec.turningpoint} below) and induction,
we get that for $j \in \{1, \dots, k\}$
\[
|(\ex (j+1) \cb)_n|= | (\apexp j (\ca))_n| (1 + e^{-\Theta(n)}).
\]
%This completes the proof.
\end{proofof}

\subsection{When is apex width finite?}
\label{subsec.finite_aw}

In Section~\ref{subsec.colreduction} we introduced the apex width parameter, which is not standard. Here
we present a characterisation of classes with bounded apex width in terms of excluded minors.
\begin{lemma} \label{lem.bounded_aw} Let $\ca$ be a minor-closed class. Then
    $\aw j (\ca) < \infty$ if and only if some $j$-fan and some bipartite graph $K_{j+1, t}$ does not belong to $\ca$.
\end{lemma}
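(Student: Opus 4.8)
The plan is to read off both implications from one observation: a witness for $\aw j(\ca)$ is a tree $T$ together with $j$ extra vertices completely joined to $V(T)$, and the two ``extreme'' shapes of a large tree --- a long path and a large star --- are exactly what turn such a witness into, respectively, a large $j$-fan and a large complete bipartite graph $K_{j+1,t}$. So the strategy is purely to translate between these three shapes.

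\emph{Necessity.} Suppose $\aw j(\ca)\le M$. The $j$-fan $F$ on $M+j+1$ vertices is, by definition, the union of a path on $M+1$ vertices (a tree) and the complete bipartite graph joining that path to the remaining $j$ vertices, so $F$ itself shows $\aw j(F)\ge M+1>M$; hence $F\notin\ca$. Next, in $K_{j+1,M+2}$ with parts $A$ ($|A|=j+1$) and $B$ ($|B|=M+2$), contracting one edge $a_0b_0$ gives a vertex $v_0$ adjacent to the $j$ vertices of $A\setminus\{a_0\}$ and to the $M+1$ vertices of $B\setminus\{b_0\}$: the star on $\{v_0\}\cup(B\setminus\{b_0\})$ is a tree on $M+2$ vertices to which each vertex of $A\setminus\{a_0\}$ is completely joined, so this minor shows $\aw j(K_{j+1,M+2})\ge M+2>M$, and therefore $K_{j+1,M+2}\notin\ca$. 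This produces the two required excluded graphs.

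\emph{Sufficiency.} Suppose some $j$-fan on $N$ vertices and some $K_{j+1,t_0}$ lie outside $\ca$. Since a $j$-fan (resp. a complete bipartite graph) on more vertices contains a smaller one as a subgraph, and $\ca$ is minor-closed hence subgraph-closed, we may take $N$ and $t_0$ as large as convenient; set $D=\max(N,t_0)$. Assume for contradiction that $\aw j(\ca)=\infty$. Then, choosing $j'$ large enough in terms of $D$, there is a graph in $\ca$ with a minor $H$ that is the union of a tree $T$ on $j'$ vertices and $j$ further vertices $u_1,\dots,u_j$, each joined to all of $V(T)$; as $\ca$ is minor-closed, $H\in\ca$. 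The one combinatorial input is the elementary fact that a tree on $j'$ vertices, for $j'$ large enough in terms of $D$, either contains a path on at least $D$ vertices or has a vertex of degree at least $D$: otherwise, rooting $T$ at an endpoint of a longest path bounds its height by $D-2$ and its branching by $D-1$, forcing $|V(T)|$ to be bounded in terms of $D$ (e.g. $|V(T)|<D^D$). In the first case, that path together with $u_1,\dots,u_j$ is a $j$-fan on at least $D+j\ge N$ vertices, hence contains the excluded $j$-fan as a subgraph of $H$. In the second case, taking a vertex $v$ of degree at least $D$ in $T$, the sets $\{v,u_1,\dots,u_j\}$ and any $D$ neighbours of $v$ in $T$ span a $K_{j+1,D}$ as a subgraph of $H$ (all the required cross edges are present), which contains the excluded $K_{j+1,t_0}$. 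Either way $\ca$ contains a forbidden graph, a contradiction, so $\aw j(\ca)<\infty$.

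I do not expect a genuine obstacle here: the only content beyond bookkeeping is the ``long path or high-degree vertex'' dichotomy for trees together with its crude size bound, which is a short rooted-tree counting argument. The only thing that needs a little care is degenerate parameter values (e.g.\ $j=0$, or $\ca$ containing essentially no graphs), but these are all absorbed by the freedom to enlarge $N$ and $t_0$ noted above.
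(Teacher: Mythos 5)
Your proof is correct and follows essentially the paper's route: both directions translate between the three ``shapes'' (tree-plus-apex, long fan, complete bipartite graph), and the sufficiency step rests on a dichotomy for large trees. The one genuine difference is in that dichotomy. The paper argues ``long path or many leaves'': if the tree has no long path it has bounded height, and Lemma~\ref{lem.leaves_bounded_height} then gives many leaves, which after contracting the internal vertices of $T_r$ into a single node yields a $K_{j+1,t}$ minor. You instead use ``long path or high-degree vertex'', proved inline by a crude size bound on bounded-height bounded-degree rooted trees, and a high-degree vertex $v$ directly gives $K_{j+1,D}$ as a subgraph on $\{v,u_1,\dots,u_j\}$ and $D$ neighbours of $v$, with no contraction needed. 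This avoids the paper's leaf-counting lemma and produces a subgraph rather than a minor, at the cost of a slightly worse quantitative bound ($\aw j(\ca) < D^D$ versus the paper's $\aw j(\ca) < st$); both are fine for the qualitative statement. Your necessity direction is also a minor variant: you contract a single edge of $K_{j+1,M+2}$ to expose the star structure, whereas the paper introduces the auxiliary graph $K_{j+1,t}^*$ and observes it is a minor of $K_{j+1,t+j}$ — same idea, slightly different packaging.
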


For stating Theorem~\ref{thm.gc} in terms of minors, we will need another lemma.
\begin{lemma}\label{lem.bounded_aw_no_wheel}
 Let $\ca$ be a minor-closed class such that $\aw 2 (\ca) < \infty$.
 Then the following two statements are equivalent.
 \begin{enumerate}[(1)]
     \item There is a constant $c$ such that for each $G \in \ca$ and each
         $v \in V(G)$, if $G-v$ is 2-connected then the degree of $v$ in $G$
         is at most $c$.
     \item Some wheel does not belong to $\ca$.
 \end{enumerate}
\end{lemma}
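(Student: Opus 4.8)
For the easy direction $(1)\Rightarrow(2)$, I would argue by contraposition: if $\ca$ contained every wheel then for each constant $c$ the wheel $W_{c+2}$ (or a larger one, to make it a legitimate wheel) would witness the failure of (1), since its hub has degree $c+1$ and deleting the hub leaves the $2$-connected cycle $C_{c+1}$.

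For $(2)\Rightarrow(1)$ I would again prove the contrapositive: assuming that for every $c$ there is some $G\in\ca$ with a vertex $v$ of degree more than $c$ such that $G-v$ is $2$-connected, I would show that $\ca$ contains arbitrarily large wheels, which suffices since $\ca$ is minor-closed and $W_{m-1}$ is a minor of $W_m$ (contract a rim edge). The first step is to extract from $\aw 2(\ca)<\infty$, via Lemma~\ref{lem.bounded_aw}, a planar excluded $2$-fan and an excluded complete bipartite graph $K_{3,t_1}$; since $\ca$ excludes a planar graph its members have treewidth bounded by a constant $w=w(\cb)$ (see \cite{rs86,diestel}). The crux is then the following claim, which I would isolate and prove separately: there is a constant $c=c(w,t_1,t_0)$, increasing in $t_0$, such that whenever $H$ is $2$-connected with treewidth at most $w$, $U\subseteq V(H)$ with $|U|\ge c$, and the graph $H^{+}$ obtained from $H$ by adding one new vertex adjacent to exactly $U$ has no $K_{3,t_1}$-minor, then $H$ has a cycle minor using at least $t_0$ branch sets that meet $U$. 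Granting the claim, I would apply it with $H=G-v$ and $U=N_G(v)$ for $G$ as above with $\deg_G(v)\ge c$; since then $H^{+}\cong G\in\ca$ has no $K_{3,t_1}$-minor, the claim produces in $G-v$ a cycle minor through $\ge t_0$ branch sets adjacent to $v$, and this cycle together with $v$ is a $W_{t_0+1}$-minor of $G$. Letting $t_0\to\infty$ then finishes the proof.

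To prove the claim I would fix a reduced tree decomposition $(T,\mathcal B)$ of $H$ of width $\le w$, root it, and assign each vertex of $U$ to the topmost bag containing it, so that each bag receives at most $w+1$ vertices of $U$; calling a bag relevant if it receives one, there are at least $|U|/(w+1)$ relevant bags and they span a subtree $T'$ of $T$. The key structural point is that $T'$ has bounded degree: for a node $t$ and a child $s$ whose subtree meets $T'$, some component $Y_s$ of $H_s\setminus(B_s\cap B_t)$ (where $H_s$ is the union of the bags below $s$) contains an assigned vertex of $U$, and, $H$ being $2$-connected, $Y_s$ has edges to at least two vertices of $B_t$; the $Y_s$ for distinct such children are disjoint, so if there were more than $t_1\binom{w+1}{2}$ of them, pigeonhole would give $t_1$ of them with edges to a common pair $\{a,b\}\subseteq B_t$, and contracting each such $Y_s$ to a vertex and adjoining the apex of $H^{+}$ (adjacent to each, as each meets $U$) would yield a $K_{3,t_1}$-minor of $H^{+}$, a contradiction. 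Thus every node of $T'$ has at most $\beta:=t_1\binom{w+1}{2}+1$ neighbours in $T'$. Finally I would route a cycle in $H$ following a depth-first traversal of $T'$: each piece of $H$ attached at a node of $T'$ is entered and left through the two ports supplied by an adhesion set of size $\ge2$, and inside that piece --- recursively, with those two ports as terminals --- one links a path through a vertex of $U$; the bounded degree of $T'$ and the boundedly many children per adhesion pair force such a traversal, read off in $H$, to visit a number of $U$-vertices growing with $|T'|$, hence with $|U|$, so a large enough $c$ makes it at least $t_0$.

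I expect the main obstacle to be exactly this last routing step: converting a depth-first traversal of $T'$ into an honest cycle of $H$ that collects $U$-vertices, i.e.\ the recursive construction, for each piece of the $2$-separator (equivalently, SPQR) decomposition of $H$, of a path between two chosen boundary vertices meeting as many interior vertices of $U$ as possible --- including the base cases of bounded-size $3$-connected torsos and of single edges, and the port bookkeeping at bags shared by several pieces. By contrast the degree bound on $T'$ and all the reductions above are routine.
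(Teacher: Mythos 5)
Your $(1)\Rightarrow(2)$ argument is the same contrapositive the paper gives and is correct. For $(2)\Rightarrow(1)$, however, you take a genuinely different route from the paper, and the route you take has a hole at exactly the point you identify.

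The paper's proof of $(2)\Rightarrow(1)$ avoids tree decompositions entirely. It colours the neighbours of $v$ red inside $G-v$, then invokes Lemma~\ref{lem.2conn_contractions} (every 2-connected graph has, for each specified vertex, an incident contractible edge) to repeatedly contract edges incident to \emph{uncoloured} vertices, producing an all-red 2-connected minor $H$ of $G-v$ on exactly $d=\deg(v)$ branch sets. It then applies Lemma~\ref{lem.thomas}: a large 2-connected graph either has a long cycle, or has $K_{2,k}$ as a minor. In the first case $v$ together with the cycle gives a large wheel minor; in the second case $v$ together with the $K_{2,k}$ minor gives a $K_{3,k}$ minor, which has $K_{3,j}^{*}$ as a minor, contradicting $\aw 2(\ca)\le j$. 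Both contradictions are immediate, and the only structural work is hidden in the two quoted lemmas, each of which is self-contained. In particular, the ``collect many red branch sets on one cycle'' problem that you are trying to solve directly is absorbed by first forcing \emph{every} vertex to be red (via the contraction lemma) and then invoking Lemma~\ref{lem.thomas}, which already asserts the existence of the long cycle.

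Your approach, by contrast, fixes a tree decomposition of $H=G-v$, assigns $U=N_G(v)$ to topmost bags, builds a subtree $T'$ of relevant bags, bounds its degree by a $K_{3,t_1}$-pigeonhole argument, and then tries to route a single cycle of $H$ through many $U$-vertices by a depth-first traversal of $T'$. Up through the degree bound on $T'$ the argument is plausible (though you should also account for a relevant parent, not just children, in the degree count). But the ``routing step'' that you flag as the main obstacle is not a routine detail; it is essentially a proof from scratch of a Thomas-type long-cycle lemma in the bounded-treewidth setting. Concretely, you need to build a single cycle of $H$ whose branch sets hit $\Theta(|T'|)$ bags, entering and leaving each piece through adhesion pairs, which requires a recursive ``path between two prescribed ports through a new $U$-vertex'' construction with careful bookkeeping of which ports are still available at each bag — and you do not supply it. As stated your proposal does not prove the lemma; it reduces it to a claim that is at least as hard as the part of the paper's argument (Lemma~\ref{lem.thomas}) that you are trying to replace. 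If you want to keep the tree-decomposition viewpoint, the cleaner fix is to first run the paper's contraction step (Lemma~\ref{lem.2conn_contractions}) so that $U=V(H)$, at which point ``a cycle through $t_0$ $U$-vertices'' is literally ``a cycle of length $t_0$'' and you can quote Lemma~\ref{lem.thomas} directly instead of re-deriving it.
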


To prove the above lemmas, we need a few simple preliminary results. Recall
that the \emph{height} of a rooted tree is the number of edges in the longest path starting from the root.
A \emph{leaf} of a rooted tree is a vertex of degree $1$, which is not the root.
A straightforward fact is:
\begin{lemma}\label{lem.leaves_bounded_height}
    Let $T$ be a rooted tree of size $n$, height $h$ and with $l$ leaves. Then
    $l h \ge n-1$.
    %the number of leaves $l$ in $T$ satisfies $l \ge \lceil\frac {n-1} h \rceil$.
\end{lemma}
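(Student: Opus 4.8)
The plan is to count edges. Since $T$ is a tree of size $n$, it has exactly $n-1$ edges, so it suffices to show that the $n-1$ edges can be covered by $l$ sets, each of size at most $h$.

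For each leaf $\lambda$ of $T$, let $P_\lambda$ be the unique path in $T$ from the root to $\lambda$. Since the height of $T$ is $h$, the path $P_\lambda$ has at most $h$ edges. First I would verify that every edge of $T$ lies on $P_\lambda$ for some leaf $\lambda$: given an edge $uv$ with $u$ the parent of $v$ in the rooted tree, pick any leaf $\lambda$ with $v \le_T \lambda$ (such a leaf exists, e.g.\ by repeatedly passing to a child until one reaches a vertex of out-degree $0$); then the root-to-$\lambda$ path passes through $u$ and then $v$, hence contains the edge $uv$. Therefore $E(T) = \bigcup_{\lambda \text{ leaf}} E(P_\lambda)$.

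Now combine these two observations:
\[
  n - 1 = |E(T)| = \Bigl| \bigcup_{\lambda \text{ leaf}} E(P_\lambda) \Bigr| \le \sum_{\lambda \text{ leaf}} |E(P_\lambda)| \le l \cdot h,
\]
which is the desired inequality. There is essentially no obstacle here; the only point requiring a word of care is the claim that every edge lies on some root-to-leaf path, which follows immediately from the fact that every vertex has a descendant leaf in a finite rooted tree.
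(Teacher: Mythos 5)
Your proof is correct. The paper itself gives no argument here (the lemma is labelled ``a straightforward fact''), and the edge-covering argument you give—every edge lies on some root-to-leaf path, each such path has at most $h$ edges, and there are $l$ such paths—is exactly the natural way to fill that gap.
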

%% \begin{proof}
%%     We use induction on the number of leaves $l$. If $T$ has just one leaf, then $n \le h+1$, and the claim holds. Let $l \ge 2$, assume we have proved the claim for all trees with less than $l$ leaves, and let $T$ be a tree with $l$ leaves. Let $x_0$ be a leaf, and let $x_0 x_1\dots x_j$ be the first $j+1$ nodes on the path from $x_0$ to the root of $T$, where $j$ is such that $x_j$ is the first vertex which is either
%% the root or has degree at least $3$. Then $T' = T - \{x_0, \dots, x_{j-1}\}$
%% has $l-1$ leaves and at least $n-j \ge n-h$ nodes. The claim follows by induction.
%% \end{proof}
%% 
%\bigskip
For positive integers $j$ and $s$, denote by $F^j_s$ the $j$-fan on $s$ vertices. Also, let $K_{j,s}^*$ denote a graph obtained from the union of a $K_{j,s}$
and a $(j-1)$-star on the part of size $j$. Note, that for $j \le s+1$, $\aw j (K_{j+1,s}^*) = s+1$, and $K_{j+1, s}^*$ is isomorphic to a minor of $K_{j+1, s+j}$.

% (this notation is slightly different from the one used in the introduction).
\bigskip

\begin{proofof}{Lemma~\ref{lem.bounded_aw}}
    If $\ca$ contains all $j$-fans then $\aw j (\ca) = \infty$ by definition.
    If $\ca$ contains all graphs $K_{j+1,t}$, then it contains all graphs $K_{j+1, t}^*$, where $t$ is arbitrary
    and $j$ is fixed, so %then since it is minor-closed, 
    again $\aw j (\ca) = \infty$.

    Now suppose there are $t \ge 1$ and $s \ge 2$ such that
    $F^j_{j+s+1}, K_{j+1,t} \not \in \ca$.
    %Since $\ca$ is addable, it contains all trees.
    Let $T$ be any tree on at least $st$ vertices, let $S$ be a set of $j$ vertices,
    disjoint from $T$, and let $H$ be the union of $T$ and the complete bipartite graph with parts $S$ and $V(T)$. 
    If $T$ has a path of length at least $s$, then $H$ has a subgraph isomorphic to $F^j_{j+s+1}$.
    Otherwise we can root $T$ at a vertex $r$, so that the resulting rooted tree $T_r$ has height
    at most $s-1$. By Lemma~\ref{lem.leaves_bounded_height}, $T_r$
    has at least $\left \lceil \frac {s t - 1} {s-1} \right \rceil \ge t$ leaves. Therefore, contracting all internal vertices of $T_r$ into a single vertex and using vertices in $S$, we obtain a minor of $K_{j+1, t}$. 
    We have shown that $\aw j (\ca) < st$.
\end{proofof}
\bigskip

It is well known that each 2-connected graph on at least 3 vertices has a contractible edge (so that contracting
this edge yields again a biconnected graph), see, e.g., \cite{diestel}. We need a simple
refinement of this.
\begin{lemma}\label{lem.2conn_contractions} Let $G$ be a 2-connected graph on at least 3 vertices, and let $x \in V(G)$. There is an
    edge $xy \in E(G)$, such that $G/xy$ is biconnected.
\end{lemma}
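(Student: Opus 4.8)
The plan is to argue by contradiction: assume that for \emph{every} neighbour $y$ of $x$ the graph $G/xy$ fails to be biconnected, and derive a contradiction. The first step is to normalise the situation. Contraction preserves connectedness, so $G/xy$ is always connected, and it has $|V(G)|-1\ge 2$ vertices; a connected graph on exactly two vertices is $K_2$, hence biconnected, which the assumption forbids. Therefore $|V(G/xy)|\ge 3$ (so in particular $|V(G)|\ge 4$), and $G/xy$, being connected on at least three vertices and not $2$-connected, has a cut vertex.

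The second step identifies that cut vertex. Let $w$ denote the vertex of $G/xy$ obtained by contracting $xy$. If $G/xy$ had a cut vertex $c\neq w$, then $c\in V(G)\setminus\{x,y\}$ and $G/xy-c=(G-c)/xy$; but $G-c$ is connected because $G$ is $2$-connected, and contracting an edge of a connected graph keeps it connected, so $(G-c)/xy$ is connected, contradicting the choice of $c$. Hence the cut vertex must be $w$, i.e. $G/xy-w=G-\{x,y\}$ is disconnected. Conclusion so far: under our assumption, $\{x,y\}$ is a separating pair of $G$ for \emph{every} $y\in N(x)$.

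The third step is the heart of the argument, a standard ``innermost component'' descent adapted so that the contracted edge stays incident with $x$. Among all pairs $(y,C)$ with $y\in N(x)$ and $C$ a component of $G-\{x,y\}$, choose one with $|C|$ minimum (the family is nonempty since $N(x)\neq\emptyset$ and each $\{x,y\}$ separates $G$). Since $G-y$ is connected, $x$ has a neighbour $z$ inside $C$; then $z\in N(x)$, so $\{x,z\}$ also separates $G$. I claim that every component $D$ of $G-\{x,z\}$ other than the one containing $y$ satisfies $D\subseteq C\setminus\{z\}$. To see this, note first that every component $C'$ of $G-\{x,y\}$ has an edge to $y$ --- otherwise $C'$ would attach to $x$ only, making $x$ a cut vertex of $G$. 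Now any such $C'$ with $C'\neq C$ is disjoint from $\{x,z\}$ (as $z\in C$), hence lies in a single component of $G-\{x,z\}$, and via its edge to $y$ it lies in the \emph{same} component as $y$. Thus $V(G)\setminus(C\cup\{x\})=\{y\}\cup\bigcup_{C'\neq C}C'$ all lies in $y$'s component of $G-\{x,z\}$, so any other component $D$ sits inside $C$, and since $z\notin D$ we get $D\subseteq C\setminus\{z\}$, whence $1\le|D|<|C|$. But $(z,D)$ is then an admissible pair with strictly smaller component, contradicting minimality. This contradiction proves the lemma; note the case $|V(G)|=3$ is automatically covered, since there the hypothesis "$G/xy$ not biconnected" is already impossible.

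I expect the main obstacle to be getting the bookkeeping of the descent step exactly right: verifying that $x$ genuinely has a neighbour inside the minimal component $C$, that every component of $G-\{x,y\}$ attaches to $y$ (both consequences of $2$-connectivity), and that the component $D$ produced for the pair $\{x,z\}$ is both strictly smaller than $C$ and a legitimate competitor in the minimization. The remaining ingredients --- preservation of connectivity under contraction and the reduction to separating pairs --- are routine.
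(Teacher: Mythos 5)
Your proof is correct and takes essentially the same approach as the paper's: a minimization over pairs $(y,C)$ with $y\in N(x)$ and $C$ a component of $G-\{x,y\}$, picking a neighbour $z$ of $x$ inside the minimal $C$, and showing that the outside of $C$ stays connected in $G-\{x,z\}$ so any other component lies strictly inside $C$, contradicting minimality. The only cosmetic differences are that you spell out the reduction to ``$\{x,y\}$ is a separating pair for every neighbour $y$'' (which the paper asserts without proof) and you justify connectedness of $V(G)\setminus(C\cup\{x\})$ directly via ``every component of $G-\{x,y\}$ has an edge to $y$'' rather than citing Menger's theorem.
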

\begin{proof}
    %If $x$ has a neighbour $y$ such that $\{x,y\}$ is not a cut in $G$, the claim follows.
    Assume the claim is false: then for each neighbour $u$ of $x$, $\{u,x\}$ must be a cut in $G$. Denote by $C(u)$ a component of $G - \{x,u\}$ of minimal size,
    and let $u'$ minimize $|V(C(u))|$ over the neighbours $u$ of $x$. Since $G$ is 2-connected, %$C(u')$ must contain
    %a vertex $z$, such that $z \ne u'$ and $xz \in E(G)$.
    Both $x$ and $u$ must have neighbours in $C(u')$. Let $z \in C(u')$ be a neighbour of $x$ in $G$. 
    Suppose $\{x,z\}$ is a cut in $G$. The graph $(G - C(u')) - x$ is connected (this follows using Menger's theorem),
    so $G - \{x,z\}$ must have a connected component which is strictly contained in $C(u')$. But this contradicts to
    the definition of $u'$. We conclude that $\{x,z\}$ is not a cut in $G$, so $G / xz$ must be 2-connected.
%    Consider the graph $G' = G - C(u')$. Using Menger's theorem we see that $G' - x$ is connected.
%    
%    $\{x,z\}$ cannot be a cut of $G$,
%    since this would imply $|C(z)|< |C(u')|$. This is a contradiction.
%    %, so we conclude that the claim is true.
\end{proof}

\bigskip

The following ``simple fact'' about the size of largest cycle (circumference) of a 2-connected graph is stated in \cite{thomas93}. For completeness, we include a proof.
\begin{lemma}\label{lem.thomas}
    Let $k \ge 3$ be an integer. There is a positive integer $N = N(k)$ such that each 
    2-connected graph with at least $N$ vertices contains either a cycle of length $k$ as a subgraph
    or the complete bipartite graph $K_{2, k}$ as a minor.
\end{lemma}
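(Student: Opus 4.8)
\emph{Setup.} Here a cycle ``of length $k$'' is to be read as one of length at least $k$; equivalently, the statement asserts that a large $2$-connected graph has either circumference at least $k$ or a $K_{2,k}$-minor (this is the form suggested by the reference to circumference, and it is also what is needed, since $C_n$ has neither a cycle of length exactly $k$ nor a $K_{2,k}$-minor). So I would fix $k\ge 3$, assume $G$ is $2$-connected with circumference at most $k-1$, and construct a $K_{2,k}$-minor once $|V(G)|$ is large enough. The first observation is that $G$ then has small diameter: if $u,v$ lie at distance $D$ and $P$ is a shortest $u$--$v$ path, then by Menger's theorem there is a $u$--$v$ path $P'$ internally disjoint from $P$, and $P\cup P'$ is a cycle of length $|P|+|P'|\ge D+1$; since this is at most $k-1$, we get $\operatorname{diam}(G)\le k-2$.

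\emph{Finding a high-degree vertex.} If $\Delta(G)\le k^{k}-1$, then, every vertex being within distance $k-2$ of a fixed one, $|V(G)|< (k^{k})^{\,k-1}=k^{k^{2}-k}$. Hence, setting $N(k):=k^{k^{2}}$, whenever $|V(G)|\ge N(k)$ there is a vertex $v$ with $\deg_G(v)\ge k^{k}$. Put $X=N_G(v)$, so $|X|\ge k^{k}$. Since $G$ is $2$-connected, $G-v$ is connected, so it contains a tree $T$ spanning $X$; choosing $T$ inclusion-minimal, every leaf of $T$ lies in $X$ and $|V(T)|\ge|X|\ge k^{k}$.

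\emph{Extracting the minor.} Because $G$ has circumference at most $k-1$, the tree $T$ has no path of length $\ge k-2$: such a path would join two vertices of $X$ and, together with $v$, close a cycle of length $\ge k$. Hence $\operatorname{diam}(T)\le k-3$, so $T$ has radius at most $(k-1)/2$, and the elementary degree--radius estimate $|V(T)|\le \Delta(T)^{\operatorname{rad}(T)+1}$ gives a vertex $y$ with $\deg_T(y)\ge |V(T)|^{2/(k+1)}\ge k$. Now $T-y$ has at least $k$ components $Y_1,\dots,Y_k$, and each $Y_j$ meets $X$: a component on at least two vertices has a leaf of $T$ distinct from its (unique) vertex attached to $y$, and a singleton component is itself a leaf of $T$. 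Pick $x_j\in X\cap Y_j$ and let $A_j\subseteq Y_j$ be the $T$-path from the neighbour of $y$ in $Y_j$ to $x_j$. Then $B_1=\{v\}$, $B_2=\{y\}$, $A_1,\dots,A_k$ are pairwise disjoint connected sets, disjoint from $\{v,y\}$, with each $A_j$ adjacent to $v$ (via $x_j$) and to $y$ (via its attachment vertex); this is a $K_{2,k}$-model in $G$.

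\emph{Main obstacle.} The only subtle step is the last one. One has to be careful that the minimal tree $T$ spanning $X$ inside $G-v$ has all of its leaves in $X$, and, after deleting a high-degree vertex $y$ of $T$, that \emph{every} resulting component still contains a vertex of $X$; this is precisely what provides the $k$ internally disjoint ``legs'' joining the two hubs $v$ and $y$. The diameter bound and the degree--radius counting are routine, and the entire argument uses nothing beyond Menger's theorem; the bound $N(k)=k^{k^{2}}$ is far from optimal but suffices.
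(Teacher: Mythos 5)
Your argument is correct and follows essentially the same strategy as the paper: force a high-degree vertex $v$ via a diameter (resp.\ longest-path) bound, take a minimal Steiner tree $T$ in $G-v$ spanning $N(v)$, bound the diameter of $T$ via the circumference hypothesis, and extract a $K_{2,k}$-minor from $T$ together with $v$. The one genuine variation is in the final step and in the tree-diameter bound: the paper shows $T$ has at least $k$ leaves using Lemma~\ref{lem.leaves_bounded_height} and contracts the interior of $T$ to a second hub, while you locate a high-degree interior vertex $y$ of $T$ (via a radius/degree count) and use $\{y\}$ itself as the second hub with the components of $T-y$ as the $k$ legs; and your bound $\operatorname{diam}(T)\le k-3$ is sharper than the paper's $k^2$ because you exploit that $T$'s leaves lie in $N(v)$, although either bound suffices here.
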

\begin{proof}
    Write $N = k^{3k^2} + 1$, $\Delta = k^3 + 1$, and let $G$ be a 2-connected graph
    of size at least $N$.

    Let $P$ be the longest path in $G$, and let $x,y$ be its endpoints. Suppose $P$ has
    length at least $k^2$.
    By Menger's theorem, $G$ has a cycle $C$ containing $x$ and $y$. We can
    assume that $|V(C)| \le k-1$. The vertices in $V(C) \cap V(P)$ partition
    $P$ into at most $k-1$ subpaths, with internal vertices disjoint from $C$ (and endpoints in $C$).
    One of these subpaths must have at least $k$ vertices. This subpath,
    together with a part of the cycle $C$ yields a cycle of length at least $k$ in $G$.

    Therefore we may assume that that each path of $G$ has length at most $k^2-1$.  Let $T_r$ be a rooted spanning tree of $G$.
    A rooted tree with maximum degree at most $\Delta - 1$
    and height $h$ can have at most
    \[
    1 + (\Delta - 1) + \dots + (\Delta -1)^{h} \le (\Delta-1)^{h+1}
    \]
    vertices. Since $|V(G)| \ge N$, $T$ (and $G$) has a vertex $v$ of degree at least $\Delta$.

    Consider the $\{0,1\}^1$-coloured connected graph $G'$ obtained from $G-v$ by setting $C_{G'}(u) = \{red\}$ for
    each neighbour $u$ of $v$ in $G$. Let $T'$ be a minimal (Steiner) subtree of $G'$ containing all the red
    vertices. $|V(T')| \ge \Delta$ and $T'$ has diameter at most $k^2$. By Lemma~\ref{lem.leaves_bounded_height},
    $T'$ has at least $\lceil (\Delta - 1) / k^2\rceil \ge k$ leaves. By the minimality of $T'$,
    each leaf has the red colour. Contract the internal vertices of $T'$ to a single vertex;
    this vertex, the leaves of $T'$ and the vertex $x$ demonstrate that $G$ has a minor $K_{2,k}$.
\end{proof}

\bigskip

\begin{proofof}{Lemma~\ref{lem.bounded_aw_no_wheel}}
   It is trivial to see that if $\ca$ has arbitrarily large wheels, then (1) does not hold: the ``hub'' vertex $x$ of a wheel $W_t$ has $t-1$ neighbours, and 
   $W_t - \{x\}$ is 2-connected.
   Suppose a wheel $W_r$ is excluded from $\ca$ and (1) does not hold: we will obtain a contradiction. Set $j = \aw 2 (\ca)$. Take a graph $G \in \ca$ and a vertex $x$ such that
   $G - \{x\}$ is $2$-connected, and $x$ has  $d \ge N(k) + 3$ neighbours, where $k = \max(r-1, j+2)$ and $N(k)$ is as in Lemma~\ref{lem.thomas}.
   Let $G'$ be the $\{0,1\}^1$-coloured graph obtained from $G-\{x\}$ by colouring the former neighbours of $u$ $\{red\}$.
   %if and only if $xu \in E(G)$.
   By Lemma~\ref{lem.2conn_contractions}, the graph $G'$ has an all-red
   2-connected minor $H$ of size $d$ (repeatedly contract a contractible edge incident to an uncoloured vertex,
   until no uncoloured vertices remain). Now by Lemma~\ref{lem.thomas}, $H$ either has a cycle $C$ of length $k \ge r-1$,
   or $K_{2,k}$ as a minor. Recalling that $v$ in $G$ is incident to each red vertex of $G'$,
   we get that $G\in\ca$ has a minor $W_{k+1}$ in the first case (contradiction to the fact that $W_r \not \in \ca$). 
   In the second case, we see that $G$ has a minor $K_{3,j+2}$, therefore
   also a minor $K_{3,j}^*$, so $\aw 2 (\ca)  \ge j+1$, a contradiction.    
\end{proofof}

\section{Growth constants for $\ex (k+1) \cb$}

\label{sec.gcgen}

\subsection{Proof of Theorem~\ref{thm.gc}}
\label{subsec.proofgc}

In this section we prove Theorem~\ref{thm.gc}. Similarly as McDiarmid, Steger and Welsh \cite{msw05},
we will make use of a version of Fekete's lemma.
\begin{lemma} \label{lem.fekete}
    Suppose $(f(n), n= 1, 2, \dots)$ is a sequence of real numbers such that for any positive integers
    $n, m$
    \[
    f(n+m + 1) \ge f(n) + f(m).
    \]
    Then
    \[
    \sup \frac {f(n)} {n+1} \le \lim \inf \frac {f(n)} n; \quad \lim \sup \frac {f(n)} n \le \sup \frac {f(n)} {n}. 
    \]
\end{lemma}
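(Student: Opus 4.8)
The plan is to dispatch the two inequalities separately: the second is essentially free, while the first is a routine adaptation of the classical Fekete superadditivity argument to the ``$+1$'' shift.

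For the second inequality I would simply note that $\limsup_n f(n)/n \le \sup_n f(n)/n$ holds for \emph{any} real sequence, since every term of $\bigl(f(n)/n\bigr)$ is bounded above by its supremum, and hence so is every subsequential limit point.

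For the first inequality I would fix a positive integer $m$ and prove $\liminf_n f(n)/n \ge f(m)/(m+1)$; taking the supremum over $m$ then finishes the proof. The key step is to iterate the hypothesis in the form $f(a) \ge f(m) + f(a-m-1)$, which is valid whenever $a \ge m+2$ (take $n'=a-m-1\ge 1$ and $m'=m$ in the assumption). Given $n$, set $q = \lfloor (n-1)/(m+1)\rfloor$ and $s = n - q(m+1)$, so that $1 \le s \le m+1$. A short check — using $a_i := n - i(m+1)$ and the identity that $a_{i-1}\ge m+2$ is equivalent to $a_i \ge 1$ — shows that, starting from $a_0 = n$ and repeatedly subtracting $m+1$, one may legitimately apply the hypothesis exactly $q$ times before the argument lands in $\{1,\dots,m+1\}$, yielding
\[
  f(n) \ \ge\ q\, f(m) + f(s).
\]
Dividing by $n$ and letting $n \to \infty$, we have $q/n \to 1/(m+1)$ while $f(s)/n \to 0$, because $s$ ranges over the finite set $\{1,\dots,m+1\}$ so $f(s)$ is bounded; hence $\liminf_n f(n)/n \ge f(m)/(m+1)$, as required.

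There is no genuine obstacle here; the only point needing care is the bookkeeping in the iteration, namely verifying that the side condition $a \ge m+2$ under which the hypothesis may be applied stays in force for precisely $q$ steps, which is immediate from $a_i = n - i(m+1)$. One should also observe that the argument remains valid in the degenerate cases: if $\sup_m f(m)/(m+1) = +\infty$ the bound forces $\liminf_n f(n)/n = +\infty$, while taking $m=1$ gives $\liminf_n f(n)/n \ge f(1)/2 > -\infty$, so the left-hand side of the first inequality is never $-\infty$.
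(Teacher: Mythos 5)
Your proof is correct and follows essentially the same route as the paper: both iterate the shifted superadditivity to get $f(n)\ge q\,f(m)+f(s)$ after dividing $n$ by $m+1$, and then pass to the limit. The only cosmetic difference is that you keep the remainder $s$ in $\{1,\dots,m+1\}$ so no base-case adjustment is needed, whereas the paper takes the remainder in $\{0,\dots,d\}$ and patches the $r=0$ case by defining $f(0):=f(d+1)-f(d)$.
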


\begin{proof}
    The second inequality is obvious. 
    %We will show that for 
    Fix any $d \in \mathbb{N}$. We will show that
    \[
    \frac {f(d)} {d+1} \le \lim \inf \frac {f(n)} n.
    \]
    Define $f(0) := f(d+1) - f(d)$.
    For any $n \in \mathbb{N}$, let $k$ and $r$ be such integers that $n = (d+1)k + r$ and $0\le r \le d$. Then using the assumption
    \[
    \frac {f(n)} n \ge \frac {k f(d) + f(r)} {(d+1) k + r} \to \frac{f(d)} {d+1}, \quad \mbox{as } n\to \infty.
    \]
%    as $n \to \infty$.
\end{proof}

%\bigskip

We will use the following lemma multiple times in the following sections (cf. Lemma~4.4.4 of \cite{diestel}).
\begin{lemma}\label{lem.2cuts}
    %Let $G$ be a graph, and suppose 
    Let $\{v_1, v_2\}$ be a cut in a graph~$G$. Suppose $G_1$ and $G_2$
    are subgraphs of $G$ with $V(G_1) \cap V(G_2) = \{v_1, v_2\}$ and $G_1 \cup G_2 = G$.
    Let $H'$ be a subdivision of a 3-connected graph $H$, and suppose $H'$ is not a subgraph of $G_1$ or $G_2$. Then either $G_1 \cap H'$ or $G_2 \cap H'$ is a
    path from $v_1$ to $v_2$.
\end{lemma}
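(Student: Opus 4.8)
The plan is to exploit that $H$ is $3$-connected and hence $H'$, being a subdivision of $H$, is $2$-connected, together with the standard description of a subdivision in terms of its \emph{branch vertices} (the images of $V(H)$, of degree $\ge 3$) and its \emph{inner vertices} (the subdivision vertices, of degree $2$, which form the interiors of the subdivided edges).

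First I would set $H'_i = G_i \cap H'$ for $i=1,2$, so that $H' = H'_1 \cup H'_2$ and $V(H'_1)\cap V(H'_2)\subseteq\{v_1,v_2\}$, and the hypothesis $H'\not\subseteq G_i$ says exactly $H'_i \ne H'$. Using that $H'$ is $2$-connected I would show first that $V(H'_1)\cap V(H'_2)=\{v_1,v_2\}$: if the intersection were empty or a single vertex, deleting that (possibly empty) set from $H'$ would disconnect it, since $\{v_1,v_2\}$ is a cut of $G$ and there are no $G$-edges between $V(G_1)\setminus\{v_1,v_2\}$ and $V(G_2)\setminus\{v_1,v_2\}$. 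Consequently $\{v_1,v_2\}$ separates $X_1 := V(H'_1)\setminus\{v_1,v_2\}$ from $X_2 := V(H'_2)\setminus\{v_1,v_2\}$ in $H'$. If one of $X_1,X_2$ is empty, say $X_2$, then $H'_2$ is a subgraph on $\{v_1,v_2\}$; it cannot be edgeless (else $H'\subseteq G_1$), so $H'_2$ is the single edge $v_1v_2$, i.e.\ a path, and we are done. So assume both $X_i\ne\emptyset$.

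The key step is to show that every branch vertex of $H'$ other than $v_1,v_2$ lies on the same side, say in $X_1$. Suppose not, say $b_1\in X_1$ and $b_2\in X_2$ are branch vertices. Translate $v_1,v_2$ into $H$: a $v_i$ that is a branch vertex corresponds to a vertex $\bar v_i$ of $H$, and a $v_i$ that is an inner vertex corresponds to the edge $e_i$ of $H$ whose subdivision contains it. Deleting from $H$ these at most two items (two vertices, two edges, or one of each) leaves $H$ connected, because $H$ is $3$-connected, hence $3$-edge-connected, and remains connected after removing any two vertices. Lifting a path between $\bar b_1$ and $\bar b_2$ in this still-connected graph back up to $H'$ (replacing each edge by its subdivided path) produces a path from $b_1$ to $b_2$ in $H'$ that avoids both $v_1$ and $v_2$, contradicting the separation. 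Hence $X_2$ consists only of inner (degree-$2$) vertices.

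It then remains to deduce that $H'_2$ is a $v_1$–$v_2$ path. Since $X_2$ carries no branch vertex, $H'[X_2]$ is a disjoint union of subpaths of subdivided edges, and because $X_2$ is separated from the branch vertices in $X_1$ by $\{v_1,v_2\}$, the endpoints of each such subpath attach only to $v_1$ or $v_2$; moreover $H'_1$ and $H'_2$ are connected (again since $H'$ is $2$-connected, as above), which forces $X_2$ to be a single path attached at its two ends to $v_1$ and to $v_2$. Finally $H'_2$ can carry neither a second such path nor an extra edge $v_1v_2$, as either would create a multi-edge in the simple graph $H$; hence $H'_2$ is exactly a path from $v_1$ to $v_2$. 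The main obstacle is the bookkeeping in this last part and in the translation $v_i\mapsto$ (vertex or edge of $H$): one must treat separately whether each of $v_1,v_2$ is a branch or an inner vertex and whether they lie on a common subdivided edge, while keeping careful track of the possible edge $v_1v_2$ — but the underlying argument is identical in every case.
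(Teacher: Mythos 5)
Your proof is correct and follows essentially the same approach as the paper: both establish that $\{v_1,v_2\}$ separates $H'$ and then exploit $3$-connectedness of $H$, lifted through the subdivision, to pin one side of the separation down to a single subdivided path from $v_1$ to $v_2$. You organise the cases by where the branch vertices fall rather than by whether $v_1,v_2$ lie on a common subdivided edge as the paper does, and your explicit treatment of the degenerate case $X_2=\emptyset$ (where one side contributes only the edge $v_1v_2$) tidies up a detail the paper leaves implicit, but the core argument is the same.
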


\begin{proof}
  %If $H'$ is a subdivision $H$ then the cuts of $H'$ of size $2$ are exactly the
  %pairs $\{u,v\}$ where both $u$ and $v$ belong to the same subdivided edge of $H$.
  %Let $v_1$ and $v_2$ be the two vertices separating $B$ from $G$. The claim is trivial if $H'$ is contained in $B$ or in $G' = G - (B - \{v_1,v_2\})$.
  %Suppose $H$ has vertices both in $B - \{v_1, v_2\}$ and in $G'$.
  For $i = 1,2$ write $G_i' = G_i - \{v_1, v_2\}$. By the assumption, $H'$ must have vertices both in $G_1'$ and in $G_2'$,
  and so $\{v_1, v_2\}$ is a cut in $H'$ (both of these vertices must belong to $H'$ since it is 2-connected).
%  However, $\{v_1, v_2\}$ is a cut in $H'$. %One of the components, say $C$ of $B - \{v_1, v_2\}$ contains
  %a vertex in $H'$.

  Suppose first, that $v_1$ and $v_2$ are both on a path $P'$ of $H'$ which is a subdivided edge of $H$.
  Let $P$ be the path connecting $v_1$ with $v_2$ in $P'$. %that uses no vertices of $H$
  %. Note that $H - \{v_1, v_2\}$
  If $P$ has no internal vertices, then because $H$ is 3-connected, the graph $H - \{v_1, v_2\}$ is connected, a contradiction.
  So $P$ has at least one internal vertex, so that $H - \{v_1, v_2\}$ has exactly two parts, one of which is the path $P - \{v_1, v_2\}$. %and the graph $H' - P - \{v_1, v_2\}$ (which is connected since $H$ is 3-connected).
  Since $H'$ has vertices both in $G_1'$ and $G_2'$, one of these parts is contained in a component of $G_1'$ and 
  another in a component of $G_2'$. The path $P$ is a subgraph of $P'$, so $H[V(P)] = P$, and in particular $v_1v_2 \not \in E(H)$.
  The claim follows.

  %Then since $H$ is 2-connected, $H' - \{v_1, v_2\}$ has two parts, and one of these parts 
  %The claim follows in this case with $P = v_1 P' v_2$.   

  Now suppose there is no path $P$ in $H'$ which is a subdivided edge of $H$ and contains both $v_1$ and $v_2$.
  Using the fact that $H$ is 3-connected we see easily that $H' - \{v_1, v_2\}$ must be connected, a contradiction.
  %This 
  %contradicts the assumption that $H'$ has vertices in both graphs $G_1', G_2'$, since there is
  %no path from $G_1'$ to $G_2'$ in $G - \{v_1,v_2\}$.
\end{proof}

\bigskip

%Think, for a moment, of 
Let a positive integer $l$ and a class $\cb$ be fixed.
As mentioned in the introduction, the class of $\{0,1\}^l$-coloured graphs $\crd l$ obtained from $\rd l \cb$ 
in Proposition~\ref{prop.r-property} %\m{check link} 
is decomposable. Unfortunately, this class is not bridge-addable: in the case $\cb = \{K_4\}$, consider,
for example, the coloured graph $H$ obtained from $K_3$, by colouring each vertex with a distinct pair from $\{1,2,3\}$, see Figure~\ref{fig.unrootable}.
Then the graph $2 H \in \crd 3$, but no bridge can be added between the two components.

\begin{figure}
     \begin{center}
    \includegraphics[height=3cm]{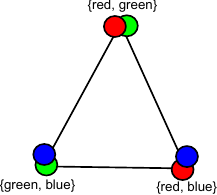}
    \end{center}
    \caption{This graph is not $3$-rootable with respect to $\cb = \{K_4\}$.}
    \label{fig.unrootable}
\end{figure}

Let, as before, $\cc^l$ be the class of connected graphs in $\crd l$. 
%For an integer
%$j$, $0 \le j \le l$, define $\cc_{[j]}^l$ to be the class of graphs in $\cc^l$ where one vertex is pointed and uncoloured,
%but if the pointed vertex was labelled and coloured with $[j]$, the resulting graph would be in $\cc^l$. Also, let
%$\ca_{[j]}^{l}$ be the set of all graphs with components in $\cc^l_{[j]}$. 
We call a graph $G \in \cc^l$ \emph{$l$-rootable} at a vertex $x \in V(G)$, if %there is a vertex $x \in V(G)$ such that
%if we
colouring $x$ with $[l]$ %, the resulting graph is
yields a graph %still
still in $\cc^l$. $G$ is $l$-rootable if it is $l$-rootable at some vertex $x \in V(G)$.
%Given a graph $G$, which is $l$-rootable at $x$,
%let $G'$ denote the rooted graph obtained from $G$ by declaring $x$ the root.
For example, in the case $\cb = \{K_4\}$, the coloured graph $H$ from Figure~\ref{fig.unrootable} is not $3$-rootable.
%\m{Meanwhile, we will focus on rootable graphs for the case $\cb = \{K_4\}$ and present many illustrations in the next part.}
Denote by $\cc^{\bullet l}$ the class of all rooted graphs (with at least one vertex) that can be obtained 
by declaring an $l$-rootable vertex of a graph in $\cc^l$ the root.
%from 
%$l$-rootable graphs in $\cc^l$. 

We will see next, that
when $\cb$ consists of $3$-connected graphs, 
%the concept of rootable vertices helps 
it is possible 
to partly restore the property of bridge-addability: if we add an edge $xy$ connecting different components of $G \in \crd l$, we obtain a graph in $\crd l$, provided that $x$ and $y$ are rootable in their respective components.
%\m{inefficiency [just ``nice'' does not require ``rootable''] but no
%energy}
%,
%and let $\ca^{\bullet l}$ be the set of all graphs with components in $\cc^{\bullet l}$.
\begin{lemma}\label{lem.unrootable} %Let $l$ be a positive integer and let $\cb$ be a set of graphs.
    %Suppose $\aw 2 \ex \cb$ is finite, each graph in $\cb$ is 3-connected 
    %and there is a constant $c = c(\cb)$ such that a vertex can have at most $c$ 
    %edges to a 2-connected graph in $\ex \cb$ without creating a minor in $\cb$.
    %Then $\gamupper(\rd l \cb) = \gamupper(\cc^l_{[l]})$ and $\gamlower(\rd l \cb) = \gamlower(\cc^l_{[l]})$.
    Let $l$ be a positive integer, and let $\cb$ be as in Theorem~\ref{thm.gc}.
    Then the class $\cu'$ of graphs in $\cc^l$
    that are not $l$-rootable has $\gamupper(\cu') \le \gamupper(\cc^{l-1})$.
\end{lemma}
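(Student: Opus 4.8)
The plan is to bound $|\cu'_n|$ by $\mathrm{poly}(n)$ times $|\cc^{l-1}_{n'}|$ for some $n' = n + O(1)$, which, by the same kind of comparison of the sequences $(\,|\cdot_n|/n!\,)^{1/n}$ used in Proposition~\ref{prop.r-property}, yields $\gamupper(\cu') \le \gamupper(\cc^{l-1})$. I work throughout with the coloured classes. Recall that $G \in \cc^l$ iff $N(G) \in \ex\cb$, $\Col(G) \subseteq [l]$ and every colour is good for $G$, and --- writing $q_i$ for the apex vertex joined in the extension to the colour-$i$ vertices --- that $G$ is $l$-rootable at $x$ iff for every colour $i$ the graph $G^{\{q_i\}} + x q_i$ lies in $\ex\cb$. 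Hence $G$ is \emph{not} $l$-rootable iff for every $x \in V(G)$ there is a colour $i = i(x)$ and a $\cb$-critical subgraph $F_x$ of $G^{\{q_i\}} + x q_i$ containing the edge $x q_i$; since every graph in $\cb$ is $2$-connected, each $F_x$ is $2$-connected, and since the graphs in $\cb$ are $3$-connected and of bounded order, each $F_x$ is a subdivision of a graph in $\cb$ and so has at most $M = M(\cb)$ branch vertices. Note also that in a non-rootable $G$ no vertex may carry all $l$ colours, since colouring such a vertex with $[l]$ changes nothing.

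Two ingredients underlie the argument. First, a per-block degree bound: since $\ex\cb$ omits some $2$-fan and some $K_{3,t}$ (so $\aw 2(\ex\cb) < \infty$ by Lemma~\ref{lem.bounded_aw}) and omits some wheel, Lemma~\ref{lem.bounded_aw_no_wheel} provides a constant $c = c(\cb)$ such that for every $2$-connected block $B$ of $N(G)$ and every colour $i$, the apex $q_i$ has at most $c$ neighbours in $B$; indeed $B + q_i$ is a subgraph of $G^{\{q_i\}} \in \ex\cb$ whose deletion of $q_i$ is the $2$-connected graph $B$. Thus each colour class meets each block of $N(G)$ in at most $c$ vertices. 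Second, Lemma~\ref{lem.2cuts} controls how the subdivisions $F_x$ meet the $2$-separations of $N(G)$: across a $2$-separator of $N(G)$ not lying ``inside'' $F_x$, the intersection of $F_x$ with one side is merely a path between the two separator vertices.

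With these in hand, the structural core is to show that non-rootability is incompatible with $N(G)$ having both many blocks and a freely varying colouring. Concretely, I would show that if $G$ is not $l$-rootable then, after an appropriate choice of colour $i_0$, the colour-$i_0$ assignment of $G$ is determined --- up to $\mathrm{poly}(n)$ choices --- by $N(G)$ together with the colours in $[l]\setminus\{i_0\}$: a vertex $x$ in a block $B$ not carrying colour $i_0$ can be $q_{i_0}$-bad only because the colours already present near $B$ force a $\cb$-subdivision through $x q_{i_0}$, and the per-block degree bound together with Lemma~\ref{lem.2cuts} pins down the finitely many local patterns of the other colours that can do this; summing the forced choices over blocks, and using that the ``badness'' constraints chain the colour classes together (no vertex carries all $l$ colours), bounds the total by $\mathrm{poly}(n)$. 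The map $G \mapsto (i_0,\ G'')$, where $G''$ is $G$ with colour $i_0$ deleted and the remaining colours re-indexed --- which lands in $\cc^{l-1}$ --- is then at most $\mathrm{poly}(n)$-to-one onto graphs of size $n$, giving the claimed inequality; in the residual case, where this localisation fails, one falls back, via Lemma~\ref{lem.colreduction} exactly as in the proof of Lemma~\ref{lem.doubleblockers}, on expressing $G$ as a bounded contraction of a bounded number of disjoint graphs of lower colour-complexity.

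The main obstacle is precisely this localisation step --- excluding a non-rootable $G$ whose colours are all spread over linearly many small blocks, and quantifying the ``$\mathrm{poly}(n)$ choices'' bound so that it sums to a polynomial rather than to a smaller exponential. This is where all three hypotheses on $\cb$ enter: $3$-connectivity, to describe $\cb$-critical graphs as subdivisions and to invoke Lemma~\ref{lem.2cuts}; the exclusion of all $2$-fans and all $K_{3,t}$ (finiteness of $\aw 2$) and of all wheels (the per-block degree bound), to keep the set of admissible local colour patterns finite and hence enumerable. The bookkeeping of these forced choices is the delicate part of the proof.
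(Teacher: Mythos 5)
Your outline identifies the right ingredients---the per-block degree bound from Lemma~\ref{lem.bounded_aw_no_wheel}, Lemma~\ref{lem.2cuts} controlling how critical subgraphs cross $2$-cuts, the fact that $\cb$-critical subgraphs are subdivisions of $3$-connected graphs---but the load-bearing step is missing. You propose to show that, for some colour $i_0$, the colour-$i_0$ assignment of a non-rootable $G$ is determined up to $\mathrm{poly}(n)$ choices by $N(G)$ and the remaining colours, and then to map $G$ to $\cc^{l-1}$ by deleting colour $i_0$. You acknowledge that pinning this down is ``the delicate part'', and I do not think this claim holds as stated: non-rootability imposes, at each vertex $x$, a constraint on \emph{some} colour $i(x)$ near $x$, but it does not tie a single fixed colour $i_0$ to the uncoloured graph and the other colours by a polynomially bounded amount of data. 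Nor does the suggested fallback via Lemma~\ref{lem.colreduction} rescue this: that lemma reduces the number of colours for which the coloured apex width is bounded, and in the proof of Lemma~\ref{lem.doubleblockers} the target class retains $k+1$ colours, not one fewer than the source. So as written there is a genuine gap exactly where you flag it.

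The paper avoids this entirely by a block-tree argument on a weaker property. Call $G$ \emph{nice} if some vertex $x$ has at least two components of $G-x$ containing all $l$ colours; one first shows (using Lemma~\ref{lem.2cuts} and the subdivision description of $\cb$-critical graphs, which needs $3$-connectivity) that a nice graph is $l$-rootable at $x$, so $\cu'$ sits inside the class $\cu$ of non-nice graphs. For $G\in\cu$, rooting the block tree and taking a deepest block $B$ whose associated subgraph carries all $l$ colours, every component of $G-V(B)$ has at most $l-1$ colours. Thus the attached pieces inject (after relabelling a missing colour) into $\cc^{l-1}$, and Lemma~\ref{lem.bounded_aw_no_wheel} bounds, per colour, the number of vertices of $B$ at which coloured material can attach by a constant $c$. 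This gives the coefficient bound $[x^n/n!]U(x)\le[x^n/n!]\,x^{cl}A^{(cl)}(x)\bigl(l\,(C^{l-1})'(x)\bigr)^{cl}$, whose radius of convergence is at least $\min(\rho(\ex\cb),\rho(\cc^{l-1}))=\rho(\cc^{l-1})$. The contrast is instructive: rather than try to ``determine'' one colour from the others, the paper locates a single block on which all colours are forced to converge, and then the per-block constants and the $\cc^{l-1}$ pieces do the counting.
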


\begin{proof} %{Lemma~\ref{lem.pointed}} 
    Call $G \in \cc^l$ \emph{nice}, if there is $x \in V(G)$ such that $G-x$ has at least
    two components containing all $l$ colours. 
    %We claim that for such $G$ and $x$, we may
    %remove the label and all colours from $x$ so that the resulting graph $G_{\circ}$ is in $\cc^{\circ l}$.
    We claim that in this case, $G$ is rootable at $x$.
    Suppose the contrary. 
    %Let $G'$ be obtained from $G$ by setting $C(x) = [l]$,
    %let $G'' = G'^L$, where $L = (s_1, \dots, s_l)$ does not share any vertex with $V(G')$,
    %and let $H$ be a $\cb$-critical subgraph in $G''$ containing just one vertex $s_i$. 
    %\m{Is it true that expansions of 3-connected graphs are 3-connected?}
    
    Then there is a colour $i \in [l]$ such the graph $G'$ obtained by adding a new vertex $s$
    to $G$ connected to $x$ and each vertex of $G$ that has colour $i$, contains a
    $\cb$-critical subgraph $H$.

    Clearly $i \not \in \Col_G(x)$ and $sx \in H$, otherwise we would have 
    that $G \not \in \cc^l$.
   % be a subgraph of $G^L \in \rd l \cb$.
    Suppose $H$ shares vertices with more than one component of $G-x$.
    Let $C$ be one such component. 
    By Theorem~9.12 of \cite{bondymurty} an expansion of a 3-connected graph at a vertex of degree at least 4 is 3-connected, 
    and so $H$ is a subdivision of a 3-connected graph.
    We may apply Lemma~\ref{lem.2cuts} with $G_1 = G'[V(C) \cup \{s,x\}]$,
    $G_2 = G' - V(C)$ and the cut $\{s,x\}$ to get that $H$ meets either $G_1$ or $G_2$ just by a path
    from $s$ to $x$: since $sx \in H$ this path must be $sx$, a contradiction.

    %We may replace this path by the edge $xu$ to get a smaller $\cb$-critical subgraph $H$.
    Thus $V(H) \setminus \{s,x\}$ must be completely contained in $V(C)$ for a component $C$ of $G - x$.
    But this means that $G \not \in \cc^l$: we may replace the edge $sx$ by a path from $x$ to $s$ in
    $G'-C$, since $G - s$ has a component, disjoint from $C$ with the colour $i$. We conclude
    that indeed $G$ is rootable at $x$. %$G_{\circ} \in \cc^{\circ l}$.

    Denote by $\cu$ the class of graphs in $\cc^l$ that are not nice. Then $\cu' \subseteq \cu$, and we need
    to show that $\gamupper(\cu) \le \gamupper(\cc^{l-1})$.
    Each graph $G \in \cu$ contains a block 
    $B$, such that for any cut vertex $y \in V(B)$, the components of $G - y$
    disjoint from $B$ can have at most $l-1$ colours.
    %(This can be easily seen by considering a rooted block tree of $G$.)
    (This can be seen as follows. Consider the rooted block tree $T$ of $G$, and let $r$ be its root.
    For any block $B'$ let $r(B') \in V(G)$ be its parent in $T$, and denote
     by $G_{B'}$ the component of $G - r(B')$ containing $B - r(B')$. Define the set $\cs$ of
     of all blocks $B'$, such that $\Col(G_{B'}) = [l]$.
     We can assume it is non-empty, otherwise any block containing $r$ has the required property. 
     Pick a block $B \in \cs$ 
     with maximum distance from $r$ in $T$. Then each component $C$ of $G - V(B)$
     contains at most $l-1$ colours: otherwise, if $C \subseteq G_{B}$ we would
     have contradiction to the choice of $B$, and if $C \subseteq G - G_{B'}$,
     then $r(B)$ would be a vertex showing that $G$ is nice.)

    By %the condition of the lemma and 
    Lemma~\ref{lem.bounded_aw_no_wheel} for each colour $i \in [l]$ there can be at most a constant number $c$ of
    vertices $x \in V(B)$ such 
    that either $x$ has colour $i$ or the graph ``attached'' to $B$ at $x$ has colour $i$.
    Each graph in $\cc^l$ with at most $l-1$ colours can be obtained from a pair $(j, G_1)$, where $j \in [l]$ and $G_1 \in \cc^{l-1}$
    by mapping the colour $j$ to $l$ in $G_1$. %\m{mapping.}
    Therefore, for $n \ge cl$
    %\[
    %|\cd_n| \le 2^{cl^2} n^{cl} |\cc_n^{l-1}|,
    %\]
    the coefficients $\left [\frac {x^n} {n!} \right]U(x)$ are bounded from above by
    \[
    \left [ \frac {x^n} {n!}  \right ]x^{cl} A^{(cl)}(x)\left(l (C^{l-1}(x))' \right) ^ {cl}
    \]
    where $A(x)$ is the generating function of $\ca = \ex \cb$ and $A^{(cl)}(x)$ is its $cl$-th derivative. % of $A(x)$.
    The convergence radius of $U(x)$ is at least $\min(\rho(\ca), \rho(\cc^{l-1}))$.
    The class $\cc^{l-1}$ contains all connected graphs in $\ca$; by the exponential formula
    we have $\rho(\ca) \ge \rho(\cc^{l-1})$.
    So $\gamupper(\cu) \le \gamupper(\cc^{l-1})$. % = \gamupper(\rd {l-1} \cb)$.
\end{proof}    

\bigskip

We will need a simple technical lemma next.
%\begin{lemma}\label{lem.gcunion}
%    Suppose the classes of graphs $\ca$ and $\cb$ are disjoint and both have growth constants.
%    Then $\gamma(\ca \cup \cb)$ exists and is equal to $\max(\gamma(\ca), \gamma(\cb))$.
%\end{lemma}
%\begin{proof}
%    We may assume that $\gamma(\ca) \ge \gamma(\cb)$. %Suppose first that $\gamma(\ca) = 0$.
%    Clearly, $\gamlower(\ca \cup \cb) \ge \gamma(\ca)$. Suppose there is a subsequence
%    $(n_k, k=1,2,\dots)$ such that 
%    \[
%    \left(\frac {(|\ca_{n_k}| + |\cb_{n_k}|)} {n_k!} \right)^{1/{n_k}} \to a > \gamma(\ca).
%    \]
%    If $\gamma(\ca) > \gamma(\cb)$ then $|\ca_n| / |\cb_n| \to \infty$ and there is $k_0$ such that for $k \ge k_0$ we have
%    $|\ca_{n_k}| \ge |\cb_{n_k}|$. If $\gamma(\ca) = \gamma(\cb)$ then either $|\ca_{n_k}| \ge |\cb_{n_k}|$
%    or  $|\ca_{n_k}| \le |\cb_{n_k}|$ for infinitely many $k$. In this case rename $\ca$ and $\cb$ if necessary, 
%    so that the former holds. We get that $(n_k)$ contains a subsequence $(n'_l, l=1,2,\dots)$ such that
%     \[
%      \left(\frac {(|\ca_{n_l'}| + |\cb_{n_l'}|)} {n_l'!} \right)^{1/{n_l'}} \le \left(\frac {2|\ca_{n_l'}|} {n_l'!} \right)^{1/{n_l'}} \to \gamma(\ca), 
%    \]
%    a contradiction.  
%\end{proof}
\begin{lemma}\label{lem.gcunion}
    Suppose classes of graphs $\ca$ and $\cb$ both have growth constants.
    Then $\gamma(\ca \cup \cb)$ exists and is equal to $\max(\gamma(\ca), \gamma(\cb))$.
\end{lemma}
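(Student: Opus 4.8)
The plan is to sandwich the counting sequence of $\ca \cup \cb$ between that of the larger of the two classes and twice it. Since $\ca \cup \cb$ is just the set-theoretic union (both classes being isomorphism-closed families of labelled graphs), for every $n$ we have $(\ca\cup\cb)_n = \ca_n \cup \cb_n$, and hence
\[
\max(|\ca_n|,|\cb_n|) \;\le\; |(\ca\cup\cb)_n| \;\le\; |\ca_n| + |\cb_n| \;\le\; 2\max(|\ca_n|,|\cb_n|).
\]
Dividing through by $n!$, raising to the power $1/n$, and using $(\max(a,b))^{1/n} = \max(a^{1/n},b^{1/n})$ for non-negative reals together with $2^{1/n}\cdot c = (2^{1/n})c$, I obtain
\[
\max\!\left(\bigl(\tfrac{|\ca_n|}{n!}\bigr)^{1/n},\bigl(\tfrac{|\cb_n|}{n!}\bigr)^{1/n}\right) \;\le\; \bigl(\tfrac{|(\ca\cup\cb)_n|}{n!}\bigr)^{1/n} \;\le\; 2^{1/n}\,\max\!\left(\bigl(\tfrac{|\ca_n|}{n!}\bigr)^{1/n},\bigl(\tfrac{|\cb_n|}{n!}\bigr)^{1/n}\right).
\]

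Next I would invoke the hypotheses: $\bigl(|\ca_n|/n!\bigr)^{1/n}\to\gamma(\ca)$ and $\bigl(|\cb_n|/n!\bigr)^{1/n}\to\gamma(\cb)$, so the maximum of the two sequences converges to $\max(\gamma(\ca),\gamma(\cb))$; and $2^{1/n}\to 1$. A squeeze then gives that $\bigl(|(\ca\cup\cb)_n|/n!\bigr)^{1/n}$ converges to $\max(\gamma(\ca),\gamma(\cb))$, which is precisely the statement that $\gamma(\ca\cup\cb)$ exists and equals $\max(\gamma(\ca),\gamma(\cb))$.

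There is essentially no obstacle here; the only minor point to address is the degenerate case in which $|\ca_n|=0$ (or $|\cb_n|=0$) for some $n$, in which case the corresponding $n$-th root is read as $0$ — all displayed inequalities remain valid, and since growth constants are finite by definition no issue with convergence arises.
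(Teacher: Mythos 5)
Your proof is correct and rests on the same core inequality that the paper uses, namely $\max(|\ca_n|,|\cb_n|) \le |(\ca\cup\cb)_n| \le 2\max(|\ca_n|,|\cb_n|)$ together with $2^{1/n}\to 1$. You present it as a direct squeeze while the paper phrases the upper bound as a proof by contradiction via subsequence extraction, but the underlying argument is essentially the same; if anything your version is slightly more streamlined.
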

\begin{proof}
    We may assume that $\gamma(\ca) \ge \gamma(\cb)$. %Suppose first that $\gamma(\ca) = 0$.
    Clearly, $\gamlower(\ca \cup \cb) \ge \gamma(\ca)$. Suppose there is a subsequence
    $(n_k, k=1,2,\dots)$ such that 
    \[
    \left(\frac {|\ca_{n_k} \cup \cb_{n_k}|} {n_k!} \right)^{1/{n_k}} \to a > \gamma(\ca).
    \]
    If $\gamma(\ca) > \gamma(\cb)$ then $|\ca_n| / |\cb_n| \to \infty$ and there is $k_0$ such that for $k \ge k_0$ we have
    $|\ca_{n_k}| \ge |\cb_{n_k}|$. If $\gamma(\ca) = \gamma(\cb)$ then either $|\ca_{n_k}| \ge |\cb_{n_k}|$
    or  $|\ca_{n_k}| \le |\cb_{n_k}|$ for infinitely many $k$. In this case, rename $\ca$ and $\cb$ if necessary, 
    so that the former holds. We get that $(n_k)$ contains a subsequence $(n'_l, l=1,2,\dots)$ such that
    $|\ca_{n_l'} \cup \cb_{n_l'}| \le 2 |\ca_{n_l'}|$ and
     \[
      \left(\frac {|\ca_{n_l'} \cup \cb_{n_l'}|} {n_l'!} \right)^{1/{n_l'}} \le \left(\frac {2|\ca_{n_l'}|} {n_l'!} \right)^{1/{n_l'}} \to \gamma(\ca). 
    \]
    %for $l = 1, 2, \dots$. 
    This is a contradiction.
\end{proof}

\begin{lemma}\label{lem.gcindstep} Let $l$ be a positive integer and let $\cb$ be as in Theorem~\ref{thm.gc}. Suppose that $\cc^{l-1}$ and
    $\cc^{\bullet l}$ have growth constants. Then $\cc^l$ has a growth constant 
    \[
     \gamma(\cc^l) = \max(\gamma(\cc^{l-1}), \gamma(\cc^{\bullet l})).
    \]
\end{lemma}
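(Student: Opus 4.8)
The plan is to relate the class $\cc^l$ of connected $\{0,1\}^l$-coloured graphs in $\crd l$ to the pointed class $\cc^{\bullet l}$ via a superadditivity (Fekete-type) argument, in the spirit of McDiarmid--Steger--Welsh, using Lemma~\ref{lem.fekete} and Lemma~\ref{lem.unrootable}. First I would split $\cc^l$ into the subclass $\cc^l_{\mathrm{root}}$ of $l$-rootable graphs and the remaining class $\cu'$ of non-$l$-rootable graphs. By Lemma~\ref{lem.unrootable} we have $\gamupper(\cu') \le \gamupper(\cc^{l-1}) = \gamma(\cc^{l-1})$ (the latter by hypothesis), and since $\cc^{l-1}$ embeds into $\cc^l$ (map colour $l$ to, say, nothing — more precisely $\cc^{l-1}$ sits inside $\cc^l$ by adding an unused colour) we also get $\gamlower(\cc^l) \ge \gamma(\cc^{l-1})$. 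So the contribution of $\cu'$ is harmless, and by Lemma~\ref{lem.gcunion} it suffices to show that $\cc^l_{\mathrm{root}}$ has a growth constant equal to $\max(\gamma(\cc^{l-1}), \gamma(\cc^{\bullet l}))$.

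The heart of the argument is a superadditivity inequality for the sequence $f(n) = \log |(\cc^{\bullet l})_n|$ (or for $\cc^l_{\mathrm{root}}$ itself). Given an $l$-rootable graph $G_1 \in \cc^l_{\mathrm{root}}$ on a vertex set of size $n$ and an $l$-rootable graph $G_2$ on a disjoint vertex set of size $m$, root each at an $l$-rootable vertex and add a bridge joining the two roots. The key claim — which is where Lemma~\ref{lem.unrootable}'s proof technique and the $3$-connectedness of $\cb$ are used — is that the resulting graph lies in $\crd l$: because $\cb$ consists of $3$-connected graphs, any $\cb$-critical subgraph is a subdivision of a $3$-connected graph, so by Lemma~\ref{lem.2cuts} applied at the cut formed by the bridge's endpoints (or rather at the $\{s,x\}$-type cuts created by the new edge, exactly as in the proof of Lemma~\ref{lem.unrootable}), no newly created $\cb$-critical subgraph can span the bridge; hence every colour remains good and the graph stays connected and in $\cc^l$. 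Moreover the new vertex structure is such that rootable vertices persist. Counting the number of labelled ways to do this (choosing which labels go to $G_1$ versus $G_2$, i.e.\ a factor $\binom{n+m}{n}$, and a bounded number of choices for roots), one gets $|(\cc^{\bullet l})_{n+m}| \gtrsim \binom{n+m}{n}|(\cc^{\bullet l})_n||(\cc^{\bullet l})_m|$, which after dividing by $(n+m)!$ and taking logs becomes the hypothesis of Lemma~\ref{lem.fekete} (possibly with an additive shift by $1$ or a bounded constant absorbed into the ``$n+m+1$'' form). Lemma~\ref{lem.fekete} then yields that $(|(\cc^{\bullet l})_n|/n!)^{1/n}$ converges, i.e.\ $\cc^{\bullet l}$ has a growth constant; rooting changes neither the counting sequence's exponential rate nor (via Proposition~\ref{prop.r-property}-type reasoning) that of $\cc^l_{\mathrm{root}}$, so $\gamma(\cc^l_{\mathrm{root}})$ exists. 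Since by hypothesis $\gamma(\cc^{\bullet l})$ exists, one checks $\gamma(\cc^l_{\mathrm{root}}) = \gamma(\cc^{\bullet l})$: the inequality $\ge$ is clear because every pointed graph is obtained from a graph in $\cc^l_{\mathrm{root}}$ by forgetting the root (losing only a polynomial factor $n$), and $\le$ because pointing a graph in $\cc^l_{\mathrm{root}}$ at one of its (at least one) rootable vertices gives a member of $\cc^{\bullet l}$, again up to a factor $n$.

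Finally, combine everything: $\cc^l = \cc^l_{\mathrm{root}} \cup \cu'$, with $\gamma(\cc^l_{\mathrm{root}}) = \gamma(\cc^{\bullet l})$ and $\gamupper(\cu') \le \gamma(\cc^{l-1}) \le \gamlower(\cc^l)$, so by Lemma~\ref{lem.gcunion} (applied, if necessary, after noting $\cu'$ alone need not have a growth constant but is dominated) the class $\cc^l$ has growth constant $\max(\gamma(\cc^{\bullet l}), \gamma(\cc^{l-1}))$. The main obstacle I expect is verifying rigorously that adding the bridge between two rootable vertices keeps the graph in $\cc^l$ — i.e.\ that no colour becomes bad — which requires re-running the Lemma~\ref{lem.2cuts}/subdivision-of-$3$-connected-graph argument carefully for the new vertex introduced when testing a colour, and checking that the rootability is preserved after the bridge is added so that the superadditivity can be iterated. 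The bookkeeping in the superadditive inequality (the exact shift needed to match the form $f(n+m+1)\ge f(n)+f(m)$ in Lemma~\ref{lem.fekete}, and handling the bounded number of root/colour choices) is routine but must be done with care.
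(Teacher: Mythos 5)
Your proposal has the right skeleton — split $\cc^l$ into the $l$-rootable graphs and the rest, identify the growth constant of the first piece with $\gamma(\cc^{\bullet l})$, and control the second piece via Lemma~\ref{lem.unrootable} — and this is indeed how the paper proceeds. However, there are two points worth flagging.

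First, the bulk of your write-up re-derives the superadditivity/Fekete argument showing that $\cc^{\bullet l}$ has a growth constant. But that fact is a \emph{hypothesis} of the lemma; in the paper it is the content of a separate result (Lemma~\ref{lem.gcrooted}), proved precisely via the bridge-and-root-string construction plus Lemma~\ref{lem.fekete} that you sketch. Inside Lemma~\ref{lem.gcindstep} you are entitled simply to use that $\gamma(\cc^{\bullet l})$ exists; all you then need is the sandwich $|\cc'_n| \le |\cc^{\bullet l}_n| \le n\,|\cc'_n|$ (where $\cc'$ is the $l$-rootable subclass) to deduce $\gamma(\cc')=\gamma(\cc^{\bullet l})$. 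The detour is not wrong, but it suggests you overlooked the hypothesis, and it smuggles in the hardest part of the whole section where it is not needed.

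Second, your invocation of Lemma~\ref{lem.gcunion} with the partition $\cc^l = \cc' \cup \cu'$ is not literally valid: Lemma~\ref{lem.gcunion} requires \emph{both} classes to have growth constants, and $\cu'$ (the non-rootable graphs) need not have one — Lemma~\ref{lem.unrootable} only gives an upper bound $\gamupper(\cu') \le \gamupper(\cc^{l-1})$. You acknowledge this (``$\cu'$ alone need not have a growth constant but is dominated''), and indeed a direct argument with $\gamupper$/$\gamlower$ and the embedding $\cc^{l-1}\hookrightarrow\cc^l$ works. The paper avoids this wrinkle more cleanly by enlarging the second piece: it takes $\cc''$ to be the graphs in $\cc^l$ that are not $l$-rootable \emph{or} use only colours in $[l-1]$. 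Then $\cc^{l-1}\subseteq\cc''$, Lemma~\ref{lem.unrootable} still gives $\gamupper(\cc'')\le\gamma(\cc^{l-1})$, so $\cc''$ genuinely has growth constant $\gamma(\cc^{l-1})$, and Lemma~\ref{lem.gcunion} applies to $\cc^l=\cc'\cup\cc''$ without further fuss. Your route can be made rigorous, but as written it leaves a gap that the reader must fill; the paper's choice of $\cc''$ sidesteps it.
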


\begin{proof}
    %Since $\ex \cb$ contains all fans, it contains all cycles, denote the class of cycles by $\mathcal{CYC}$.
    Denote by $\cc'$ the class of all $l$-rootable graphs in $\cc^l$, and denote by $\cc''$ the class
    of graphs $G \in \cc^{l}$ that are either not $l$-rootable or have $\Col(G) \subseteq [l-1]$.

    Then $\cc'$ has a growth constant, since $\cc^{\bullet l}$ does, and 
    \[
     |\cc_n'| \le |\cc^{\bullet l}_n| \le n |\cc_n'|.
    \]
    The class $\cc''$ also has a growth constant: $\gamma(\cc'') = \gamma(\cc^{l-1})$. This is because
    by Lemma~\ref{lem.unrootable}, $\gamupper(\cc'') \le \gamma(\cc^{l-1})$, and % since 
    $\cc^{l-1} \subseteq \cc''$. %, $\gamlower(\cc'') \ge \gamma(\cc^{l-1})$.

    Now,  by Lemma~\ref{lem.gcunion}, the class $\cc^l = \cc' \cup \cc''$ is as claimed.
%    The intersection $\cc' \cap \cc''$ has growth constant $\gamma(\cc^{\bullet (l-1)})$:
%    because graphs in $\cb$ are 2-connected, clearly if $G$ has $C[G] \subseteq [l-1]$ and is $l-1$-rootable at $x$, then it is $l$-rootable at $x$. 
\end{proof}

\bigskip

The next lemma shows that for good enough $\cb$, the class
$\cc^{\bullet l}$ is closed under joining smaller rooted graphs into ``strings''. %\m{Any tree will do here. But not for $K_{2,3}$?}
\begin{lemma}\label{lem.rootablestring}
    Let $l$ be a positive integer and let $\cb$
    be a set of 3-connected graphs.
    %be as in Theorem~\ref{thm.gc} %in Lemma~\ref{lem.unrootable}.
    Let $G$ be a graph obtained from a non-empty set $\cs$ of disjoint graphs in $\cc^{\bullet l}$, and a path on the set $R(\cs)$
    of the roots of the graphs in $\cs$; and let $G$ be rooted at $r \in R(\cs)$. Then $G \in \cc^{\bullet l}$.
\end{lemma}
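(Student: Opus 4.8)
The plan is to show first that $N(G) \in \ex\cb$ (i.e.\ the underlying uncoloured graph has no minor in $\cb$), and then that every colour is good for $G$, so that $G \in \cc^l$; finally that $G$ is $l$-rootable at $r$, which gives $G \in \cc^{\bullet l}$. Label the graphs in $\cs$ as $G_1,\dots,G_m$ with roots $x_1,\dots,x_m$ in path order, so that $G$ is $\bigl(\bigcup_i G_i\bigr)$ together with the path edges $x_1x_2, x_2x_3, \dots, x_{m-1}x_m$, rooted at some $x_j$. Each $G_i$ lies in $\cc^{\bullet l}$, so colouring $x_i$ with $[l]$ keeps $G_i$ in $\cc^l$; in particular the extension $G_i^{L}$ (attaching a single apex $s_i$ joined to every vertex of every colour, i.e.\ to all of $V(G_i)$ after we think of $x_i$ as fully coloured) lies in $\ex\cb$ when we identify the $l$ apex vertices into one --- this is what ``$l$-rootable at $x_i$'' buys us. Equivalently, the graph $\widehat{G_i}$ obtained from $G_i$ by adding one vertex $s_i$ adjacent to $x_i$ and to every coloured vertex of $G_i$ is in $\ex\cb$.

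For the first step I would argue that a $\cb$-critical subgraph $H$ of $N(G)$, were one to exist, cannot be confined to a single $N(G_i)$ (since $N(G_i) \in \ex\cb$), so $H$ must cross at least one path-cut $\{x_i\}$ or, more precisely, use a cut vertex of the block structure of $G$. Here is the key point: since every graph in $\cb$ is $3$-connected, every $\cb$-critical graph is $3$-connected, and by Theorem~9.12 of \cite{bondymurty} any subdivision of a $3$-connected graph appearing as a subgraph behaves well under $2$-cuts. The vertices $x_i$ are cut vertices of $G$ (each $x_i$ separates $\{G_1,\dots,G_i\}$-side from $\{G_{i+1},\dots,G_m\}$-side, for $1 \le i \le m-1$); a single cut vertex cannot be met by a $3$-connected subdivision in more than one side except trivially, so $H$ is forced inside one $N(G_i)$ after all --- contradiction. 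Hence $N(G) \in \ex\cb$. The same mechanism, now using the $2$-cut version of Lemma~\ref{lem.2cuts} exactly as in the proof of Lemma~\ref{lem.unrootable}, handles a colour $c$: if attaching an apex $s$ (joined to $r$ and to all vertices of colour $c$ in $G$) created a $\cb$-critical $H$, then $H$ is a subdivision of a $3$-connected graph, and one applies Lemma~\ref{lem.2cuts} to the cuts $\{s, x_i\}$ successively; each application either shows $H$ meets one side in just the path $s x_i$ (impossible once $s x_i \notin E$, or pushes the ``interesting'' part of $H$ into one $\widehat{G_i}$, contradicting $\widehat{G_i} \in \ex\cb$. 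So every colour is good for $G$ and $G \in \cc^l$; since $r \in R(\cs)$ was a rootable vertex of its own $G_i$, rootability of $G$ at $r$ follows by the identical argument (it is the case $c \in [l]$, $s$ playing the role of the blocker vertex), giving $G \in \cc^{\bullet l}$.

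The main obstacle I expect is the bookkeeping in the inductive/repeated application of Lemma~\ref{lem.2cuts} along the path: one must verify that after ``resolving'' the cut at $x_1$ the remaining relevant portion of $H$ still lives on the $G_2 \cup \dots \cup G_m$-side and is still a subdivision of a $3$-connected graph (or is a path, which is then ruled out), and that the apex $s$ stays attached correctly at each stage. A clean way to avoid an explicit induction is to pick, among all the $G_i$ that $H$ genuinely meets (i.e.\ $V(H) \cap (V(G_i)\setminus\{x_{i-1},x_i\}) \ne \emptyset$), an extreme one in the path order, say the one of largest index; then the single cut separating it from the rest, together with $s$, is a $2$-cut $\{s,x\}$ of $G' = G + s$, and Lemma~\ref{lem.2cuts} applied once forces $H$ to meet that side only in the edge $sx$ --- but $H$ has interior vertices there, contradiction. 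This reduces the whole argument to one application of Lemma~\ref{lem.2cuts} plus the observation that $\widehat{G_i}\in\ex\cb$ for each $i$, which is precisely the hypothesis $G_i \in \cc^{\bullet l}$ unpacked. I would also note that the case $m=1$ is trivial (then $G = G_1$ rooted at its own rootable vertex) so we may assume $m \ge 2$ throughout.
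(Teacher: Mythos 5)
Your overall strategy --- add the apex $s$ and apply Lemma~\ref{lem.2cuts} to the 2-cuts $\{s, r_i\}$ --- is exactly what the paper's proof does, and the case split in Lemma~\ref{lem.2cuts} is indeed where the work happens. But there are two genuine gaps that keep the argument from closing.

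First, the base case is missing. You dismiss $m=1$ as trivial, but the case that actually needs handling is the one where every $G_i$ in $\cs$ has a single vertex. Then $G$ is just a coloured path, none of your $G_i$ has an ``interior'' for $H$ to genuinely meet, and your extremal choice is vacuous. The paper resolves this at the very start of its proof by observing that $\ex\cb$ contains all fans (because every $3$-connected graph has a $K_4$-minor, so no fan can contain a member of $\cb$ as a minor), so $G+s$ is a minor of a fan and therefore in $\ex\cb$. Without this observation the recursion/extremal argument has nowhere to bottom out.

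Second, the ``one application of Lemma~\ref{lem.2cuts}'' shortcut does not work as stated. You pick the extremal $G_i$ that $H$ genuinely meets and claim Lemma~\ref{lem.2cuts} forces $H$ to meet that side in just the edge $s r_i$, giving an immediate contradiction since $H$ has interior vertices there. But the lemma only forces one of the two intersections to be a \emph{path} from $r_i$ to $s$, and such a path can perfectly well have internal vertices inside $G_i$ (a path through $G_i$ from $r_i$ to an $i$-coloured vertex, and then to $s$). When it is the \emph{other} intersection (the $G-G_i$ side) that is the path, you do get a contradiction by replacing the path with the edge $r_i s$ and concluding that $G_i$ was not $l$-rootable. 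When it is the $G_i$ side that is the path, there is no immediate contradiction; the natural move is to replace $G_i$ by a single vertex coloured $\{i\}$ (contracting the path to the edge $r_i s$ and preserving the $\cb$-minor and the path structure). This reduces a measure that the paper carefully sets up --- minimizing first $|\cs|$, then the number $N(\cs)$ of members of size at least $2$ --- and that is how the paper pushes the argument down to the all-single-vertex base case. Your instinct that iterated bookkeeping is awkward is reasonable, but the minimal-counterexample formulation is exactly the tool that cleans it up, and dropping it leaves the first branch of the case split unresolved.

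A minor slip: $\cb$-critical graphs are subdivisions of $3$-connected graphs (hence $2$-connected), not $3$-connected themselves; this does not affect the applicability of Lemma~\ref{lem.2cuts}, but the claim as written is false.
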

\begin{proof}
    We first note that $\ex \cb$ contains all fans: indeed fans are series-parallel graphs,
    and if a fan $F$ had a minor in $\cb$, then since each $3$-connected graph has $K_4$
    as a minor (Lemma~3.2.1 of \cite{diestel}), $F$ would have $K_4$ as a minor, a contradiction.

    %Suppose the contrary: there is a set $\cs_0$ of $s$ disjoint graphs and a path $P$ on
    %the set of their root vertices $R$ such that the resulting graph $G \not \in \cc^{\bullet l}$.
    Suppose the claim does not hold.
    Call $\cs$ bad if the lemma fails for $\cs$, a path $P$ on $R(\cs)$ and a vertex $r \in V(P)$.
    Let $N(\cs)$ be the number of graphs in $\cs$ that have size at least~$2$. 
    Let $\nu$ be the size of the smallest bad set $\cs$, and 
    consider
    a set $\cs'$ which minimizes $N(\cs)$ over bad sets $\cs$ of size $\nu$,
    let $P'$ be a path on $R(\cs')$ and let $r' \in V(P')$ be the root for which the lemma fails.

    Then there is a colour $i \in [l]$ such that if we add a new vertex $s$ to $G$,
    connect $s$ to $r'$ and every vertex of $G$ that has colour $i$ and remove all the colours, the resulting graph $G' \not \in \ex \cb$.
    Let $H$ be a $\cb$-critical graph in $G'$.

    Suppose all graphs in $\cs$ are of size $1$. Then $G'$ is isomorphic to a minor of a fan, and $G' \in \ex \cb$, a contradiction. 

    Let $G_1$ be  a graph in $\cs'$ with root $r_1$ and size at least $2$. If $H$ has no vertices in $G_1 - r_1$,
    then we could replace $G_1$ with $G_1[\{r_1\}]$ and obtain a bad set $\cs''$ with $N(\cs'') < N(\cs')$. Thus,
    we may assume $G_1$ has at least one vertex coloured $i$, other than $r_1$. Since $G_1 \in \cc^{\bullet l}$,
    $H - s$ also has a vertex not in $G_1$. Now $\{r_1, s\}$ is a cut in $H$, so by Lemma~\ref{lem.2cuts},
    either $H \cap (V(G_1) \cup \{s\}) $ or $H \cap (V(G - G_1) \cup \{s,x\})$ is a path $P_1$ from $r_1$ to $s$.
    In the first case we may replace $G_1$ with the graph % $G_1[\{r_1\}]$
    consisting of a single vertex $r_1$ coloured $\{i\}$ 
    to obtain a set $\cs''$ with $N(\cs'') < N(\cs')$.
    In the second case,  we may replace $P$ with the edge $r_1 s$ to show that $G_1 \not \in \cc^{\bullet l}$.
    
    %since the graphs in $\cb$ are 3-connected, the path $P_1$ is either redundant (contradicting
    %that $H$ is $\cb$-critical) or can be replaced by
    %the edge $s r_1$ (contradicting that $G_1 \in \cc^{\bullet l}$). 
    In each case we obtained a contradiction, so it must be that $G \in \cc^{\bullet l}$.
\end{proof}
\medskip
%\m{Redefine rooted graphs so that the root can have colour!!}

\begin{lemma}\label{lem.gcrooted}
    Let $l$ be a positive integer and let $\cb$ be as in Theorem~\ref{thm.gc}. Then $\cc^{\bullet l}$ has a growth constant.
\end{lemma}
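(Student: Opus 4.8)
The plan is to prove that $\cc^{\bullet l}$ has a growth constant by the same superadditivity strategy used by McDiarmid, Steger and Welsh, adapted to the ``string'' closure property established in Lemma~\ref{lem.rootablestring}. Set $f(n) = \log |\cc^{\bullet l}_n|$ (with the convention that $f(n) = -\infty$ or is simply ignored when $\cc^{\bullet l}_n$ is empty; one checks $\cc^{\bullet l}_n$ is non-empty for all sufficiently large $n$ since $\ex \cb$ contains all fans, hence all paths, which are trivially $l$-rootable after colouring a pendant vertex appropriately, so single connected series-parallel graphs suffice). The key combinatorial input is that, given a rooted graph $G_1 \in \cc^{\bullet l}_n$ and a rooted graph $G_2 \in \cc^{\bullet l}_m$ on disjoint label sets, we may take the two-element set $\cs = \{G_1, G_2\}$, join their roots by a path consisting of a single new edge (or, to keep the size bookkeeping clean, by a path through one extra labelled vertex), and root the result at the root of $G_1$; by Lemma~\ref{lem.rootablestring} this lies in $\cc^{\bullet l}$. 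This gives an injection from (ordered pairs of graphs in $\cc^{\bullet l}_n \times \cc^{\bullet l}_m$, together with a choice of how to interleave the labels and relabel) into $\cc^{\bullet l}_{n+m+1}$, yielding an inequality of the shape $\binom{n+m+1}{n} |\cc^{\bullet l}_n| \, |\cc^{\bullet l}_m| \le |\cc^{\bullet l}_{n+m+1}|$, i.e. for $g(n) = f(n) - \log n!$ we get $g(n+m+1) \ge g(n) + g(m)$.

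Next I would apply Lemma~\ref{lem.fekete} to the sequence $g$: it gives
\[
\sup_n \frac{g(n)}{n+1} \le \liminf_n \frac{g(n)}{n} \le \limsup_n \frac{g(n)}{n} \le \sup_n \frac{g(n)}{n}.
\]
To conclude that the limit exists and equals $\log \gamma(\cc^{\bullet l})$ — equivalently that $\gamlower(\cc^{\bullet l}) = \gamupper(\cc^{\bullet l})$ — it remains to show $\sup_n g(n)/(n+1) = \sup_n g(n)/n$, or directly that $\limsup g(n)/n \le \liminf g(n)/n$. This follows from the standard observation that $\cc^{\bullet l}$ is \emph{small}: since $\ex \cb$ is a proper minor-closed class and $\crd l$, and hence $\cc^{\bullet l}$, injects (after deleting colours and roots, using Proposition~\ref{prop.r-property}) into a proper minor-closed class up to a polynomial factor, the sequence $(|\cc^{\bullet l}_n|/n!)^{1/n}$ is bounded; boundedness together with the superadditivity of $g$ forces $\sup g(n)/n < \infty$, and then the ratio $\frac{n+1}{n} \to 1$ closes the gap between the two suprema. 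Thus $\lim_n (|\cc^{\bullet l}_n|/n!)^{1/n}$ exists.

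The main obstacle — and the reason Lemma~\ref{lem.rootablestring} was proved first — is verifying that the string-joining operation genuinely stays inside $\cc^{\bullet l}$ with the right control on the new root; without the $3$-connectedness hypothesis on $\cb$ this can fail (as the $K_3$ example in Figure~\ref{fig.unrootable} shows), so the superadditivity identity would break down. A secondary, purely bookkeeping nuisance is making the relabelling/label-interleaving count exact so that the binomial coefficient appears cleanly and the Fekete inequality has precisely the ``$+1$'' shift; inserting one auxiliary labelled vertex on the connecting path, rather than a bare edge, is the cleanest way to handle this and is harmless since $\cc^{\bullet l}$ is closed under subdividing that connecting edge (again by Lemma~\ref{lem.rootablestring}, as a path of length two on the two roots is still a path). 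Everything else is routine once Lemmas~\ref{lem.fekete} and~\ref{lem.rootablestring} are in hand.
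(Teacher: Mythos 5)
Your proof follows the same strategy as the paper's: establish superadditivity of $g(n) = \ln(|\cc^{\bullet l}_n|/n!)$ by joining two rooted graphs through a new root vertex and appealing to Lemma~\ref{lem.rootablestring}, apply the shifted Fekete lemma (Lemma~\ref{lem.fekete}), and invoke smallness to bound the constant. The one place where you genuinely depart from the paper is the final step, namely deducing from Fekete's two bounds that the limit exists. The paper builds a second family of graphs for this purpose, strings of $k$ disjoint rooted graphs of size $d$ together with the disjoint class $\cR$ of rooted cycles, precisely so as to get the \emph{strict} inequality $f(kd)/(kd) > f(d)/d$; from that strictness it concludes $\sup_n f(n)/(n+1)$ is not attained, hence equals $\limsup_n f(n)/(n+1) = \limsup_n f(n)/n$, and the limit follows. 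You instead try to close the gap directly from boundedness and $(n+1)/n \to 1$. As literally stated, ``$\sup_n g(n)/(n+1) = \sup_n g(n)/n$'' does not follow from those hypotheses; but you do not need it. The inequality you need is the weaker $\sup_n g(n)/(n+1) \ge \limsup_n g(n)/n$, which is immediate: take $n_k \to \infty$ with $g(n_k)/n_k \to \limsup_n g(n)/n$, and note $g(n_k)/(n_k+1) = \tfrac{n_k}{n_k+1} g(n_k)/n_k$ converges to the same value, so the supremum of $g(n)/(n+1)$ is at least that limit. Combined with Fekete's $\sup_n g(n)/(n+1) \le \liminf_n g(n)/n$, this gives $\limsup \le \liminf$, and the limit exists. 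Once phrased this way your shortcut is correct and does buy something: it makes the paper's auxiliary $\cp, \cR$ construction unnecessary.

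Two small slips are worth flagging. The binomial coefficient in your construction should be $(n+m+1)\binom{n+m}{n} = \tfrac{(n+m+1)!}{n!\,m!}$ (one choice of label for the new root, then a split of the remaining $n+m$ labels), not $\binom{n+m+1}{n}$; the version you wrote is short by a factor of $m+1$ and therefore gives only $g(n+m+1) \ge g(n) + g(m) - \ln(m+1)$, which is not the superadditivity Lemma~\ref{lem.fekete} requires. You should also handle the overcounting when $n = m$, as the paper does by breaking the symmetry lexicographically.
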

\begin{proof}
    Let $n, m$ be positive integers. We claim that
    \begin{equation}\label{eq.rootableconstr}
    |\cc^{\bullet l}_{n+m+1}| \ge (n+m+1) \binom {n+m} n |\cc^{\bullet l}_n| |\cc^{\bullet l}_m|.
\end{equation}
    The above formula follows from the following construction for graphs on $n+m+1$ vertices. In the case $n\ne m$,
    pick a root vertex $r$, divide the remaining $n+m$ vertices into two parts of sizes $n$ and $m$,
    and add a graph $G_1 \in \cc^l$ of size $n$ on the first part and a graph $G_2 \in \cc^l$ of size $m$ on the second part.
    Connect $r$ to the roots $r_1$ and $r_2$ of $G_1$ and $G_2$ respectively, and declare $r$ the root of the
    formed graph $G$. Each construction gives a unique graph, because given a graph $G$ obtained in this way
    we may recover $G_1$ and $G_2$ uniquely by deleting the root of $G$ and declaring the vertices adjacent to 
    the two neighbours of $G$ the roots of the respective components.

    In the case $n = m$, we have to avoid obtaining each graph twice because of symmetry. So if $V(G_1)$ is lexicographically smaller
    than $V(G_2)$ output the graph $G$ as above, otherwise output the graph $G - r r_1 + r_1 r_2$. 
    To finish the proof of (\ref{eq.rootableconstr}), note the constructions $G$ are always in the class $\cc^{\bullet l}$ by
    Lemma~\ref{lem.rootablestring}. %\m{does this follow from the next part?}

    %Next, we claim that for any positive integer $d$ there is $n > d$
    %such that $f(d)/d < f(n) / n$. Indeed, 
    
    Now let $d, k$ be positive integers, $k\ge 2$ and set $n = kd$.
    Form a graph $G$ by taking an arbitrary set $\cs$ of $k$ disjoint graphs in $\cc^{\bullet l}$ of size $d$,
    adding a path, $P$ rooted at one of the endpoints $r$ and
    with $V(P)$ consisting of all roots of the graphs in $\cs$. Declare $r$ the root of $G$.
    By Lemma~\ref{lem.rootablestring}, $G \in \cc^{\bullet l}$.

    Note also, that we never construct a graph $G$ twice: it is always possible to recover the path $P$
    and the set $\cs$ uniquely from $G$. (Start with the root $r$ of $G$. There can be only one edge $rx \in G$,
    such that $G-rx$ has a component $C$ of size $d$: $rx$ is the first edge of $P$. Delete $rx$ 
    and proceed in the same way with the component of $G-rx$ containing $x$, rooted at $x$.) 
    Let $\cp$  be the set of all graphs constructed in this way. Then
    \[
      |\cp_n| = \frac {(dk)!} {d!^k} |\cc^{\bullet l}_d|^k
    \]
    Next we observe, that since $\ex \cb$ contains all apex paths, the class $\cR$ of
    rooted uncoloured cycles is contained in $\cc^{\bullet l}$. This class is clearly disjoint from $\cp$,
    in which every graph has a bridge.

  For $t = 1,2, \dots$ let $f(t) = \ln ( |\cc^{\bullet l}_t| / t!)$. 
    We have
    \begin{equation}\label{eq.increase}
    \frac {f(n)} n \ge \frac 1 n \ln \left( \frac {|\cp_n| + |\cR_n|} {n!}   \right) > \frac {f(d)} d. 
    \end{equation}
    %and so 
    %\frac {f(d)} d < \frac {f(n)} n. 
    From (\ref{eq.rootableconstr})
    it follows that
    \[
       f(n+m+1) \ge f(n) + f(m).
    \]
    Thus by the modification of Fekete's lemma, Lemma~\ref{lem.fekete}
    \[
       \sup \frac {f(n)} {n+1} \le \lim \inf \frac {f(n)} n; \quad \lim \sup \frac {f(n)} n \le \sup \frac {f(n)} n.
    \]
    %We claim that for any positive integer $d$ there is $n > d$
    %such that $f(d)/d < f(n) / n$.
    By (\ref{eq.increase}), for $n = k d$ and any integer $k = 2, 3,\dots$
    \[
    \frac {f(n)} {n+1} - \frac {f(d)} {d+1} > \frac {d} {d+1} \left( \frac {f(n)} n - \frac {f(d)} d \right) > 0.
    \]
    Therefore 
    \[
    \sup \frac {f(n)} {n+1} = \lim \sup \frac {f(n)} {n+1} = \lim \sup \frac {f(n)} n,
    \]
    and 
    \[
    \frac {f(n)} n \to \lim \sup \frac {f(n)} n \in [0; \infty].
    \]
    %Therefore, s
    Since $\ex \cb$ is small by \cite{dn10}, we conclude that $\cc^{\bullet l}$ has a growth constant.
    
    Also, because $\ex \cb$ includes all graphs without a 3-connected minor, by Lemma~3.2.1 of \cite{diestel},
    $\gamma(\cc^{\bullet l}) \in [\gamma(\ex K_4); \infty)$, where $\gamma(\ex K_4) = 9.073..$, see Section~\ref{sec.part2}.
\end{proof}

\begin{lemma}\label{lem.gcrd}
    Let $l$ be a non-negative integer and let $\cb$ be as in Theorem~\ref{thm.gc}.
    Then the class $\cc^l$ has a growth constant.
\end{lemma}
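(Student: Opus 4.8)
The plan is to prove Lemma~\ref{lem.gcrd} by induction on $l$, using the preceding lemmas as building blocks. The base case $l = 0$ is immediate: $\cc^0$ is precisely the class of connected graphs in $\ex\cb$, and since $\ex\cb$ is a proper addable (hence bridge-addable and decomposable) minor-closed class, it has a growth constant by \cite{msw05}, and therefore so does its connected subclass by the exponential formula (the radius of convergence of $C^0(x)$ equals that of $A(x)$, and $|\cc^0_n| \le |\ca_n|$ while $[x^n]e^{C^0(x)} = [x^n]A(x)$ forces $\rho(\cc^0) = \rho(\ca)$; combined with superadditivity of $\ln(|\cc^0_n|/n!)$ coming from bridge-addability this yields the growth constant).

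For the inductive step, suppose $l \ge 1$ and $\cc^{l-1}$ has a growth constant. First I would invoke Lemma~\ref{lem.gcrooted} to conclude that $\cc^{\bullet l}$ has a growth constant. Then, since both $\cc^{l-1}$ and $\cc^{\bullet l}$ have growth constants, Lemma~\ref{lem.gcindstep} applies directly and gives that $\cc^l$ has a growth constant, namely
\[
\gamma(\cc^l) = \max\bigl(\gamma(\cc^{l-1}),\, \gamma(\cc^{\bullet l})\bigr).
\]
This closes the induction. Note that Lemma~\ref{lem.gcrooted} and Lemma~\ref{lem.gcindstep} (via Lemma~\ref{lem.unrootable} and Lemma~\ref{lem.rootablestring}) already require $\cb$ to consist of $3$-connected graphs and $\ex\cb$ to exclude some wheel, some $2$-fan and some $K_{3,t}$ — exactly the hypotheses of Theorem~\ref{thm.gc} that we are assuming here — so there is nothing extra to check.

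Since there is essentially no obstacle left once the earlier lemmas are in place, the only subtlety worth spelling out is the base case: one must be slightly careful that ``$\cb$ as in Theorem~\ref{thm.gc}'' does include the requirement that $\ca = \ex\cb$ is a \emph{proper addable} minor-closed class, so that the McDiarmid--Steger--Welsh theorem genuinely applies and $\cc^0 = \cc^{0,\cb}$ is the class of connected graphs of an addable class. If $l$ is interpreted as ranging over non-negative integers, the statement for $l = 0$ is part of the claim and must be handled as above rather than deferred. With that observed, the proof is a one-line induction citing Lemmas~\ref{lem.gcrooted} and~\ref{lem.gcindstep}.

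Finally, I would remark that Theorem~\ref{thm.gc} then follows immediately: by Proposition~\ref{prop.r-property}, $\rho(\rd{2k+1}\cb) = \rho(\crd{2k+1})$, and $\rho(\crd{2k+1}) = \rho(\cc^{2k+1})$ by the exponential formula, so $\rd{2k+1}\cb$ has a growth constant; combined with Theorem~\ref{thm.main1} (which identifies $\gamupper(\ex(k+1)\cb)$ with either $\gamupper(\rd{2k+1}\cb)$ or $2^k\gamma$, both of which are genuine limits) one obtains that $\ex(k+1)\cb$ has a growth constant. The expected main ``obstacle'' is thus not in this lemma at all but was already dispatched in Lemma~\ref{lem.gcrooted}, where the superadditivity relation $f(n+m+1) \ge f(n) + f(m)$ for $f(t) = \ln(|\cc^{\bullet l}_t|/t!)$ had to be established together with the strict-increase inequality $f(n)/n > f(d)/d$ for $n = kd$, which is what upgrades Fekete-type convergence to an actual limit rather than leaving a gap between $\liminf$ and $\limsup$.
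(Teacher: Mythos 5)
Your proposal is correct and follows exactly the same route as the paper: induction on $l$, with the base case cited from \cite{msw05,cmcd09} and the inductive step combining Lemma~\ref{lem.gcrooted} (to get a growth constant for $\cc^{\bullet l}$) with Lemma~\ref{lem.gcindstep}. The only quibble is that your aside deriving the base case (``superadditivity of $\ln(|\cc^0_n|/n!)$ coming from bridge-addability'') glosses over the standard but slightly subtler step of transferring the growth constant from $\ex\cb$ to its connected subclass; the paper avoids this by simply citing \cite{msw05,cmcd09}, which is the cleaner move.
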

\begin{proof}
    We use induction on $l$. The class $\cc^0$ is the class of connected graphs in
    $\ex \cb$, this class has a growth constant by \cite{msw05, cmcd09}.
    Suppose now that $l > 0$ and assume that we have proved the claim for each class $\ex (l'+1) \cb$, where $l' < l$,
    we now prove it for $l' = l$.

    The class $\cc^{\bullet l}$ has a growth constant by Lemma~\ref{lem.gcrooted}. The class $\cc^{(l-1)}$ has a growth constant
    by induction. So $\cc^l$ has a growth constant by Lemma~\ref{lem.gcindstep}.
\end{proof}

\medskip
We can now combine the lemmas of this section to finish the proof of Theorem~\ref{thm.gc}.
\medskip

\begin{proofof}{Theorem~\ref{thm.gc}}
   % By theorem $\ref{thm.main1}$, $\gamupper(\ex (k+1)\cb) = \gamupper(\rd {2k+1} \cb)$.
   % Also, clearly, $\rd {2k+1} \cb \subseteq \ex (k+1) \cb$, so $\gamlower(\ex(k+1) \cb) \ge \gamlower(\rd {2k+1} \cb)$.
    The class $\cc^{2k+1}$ has a growth constant $\gamma$ by Lemma~\ref{lem.gcrd}. Since
    \[
    [x^n]C^{2k+1}(x) \le [x^n]A_{2k+1}(x) \le [x^n]e^{C^{2k+1}(x)},
    \]
    we get, see i.e. \cite{fs09}, that $\crd {2k+1}$ also has growth constant $\gamma$. Using Proposition~\ref{prop.r-property},
    we see that $\rd {2k+1} \cb$ has growth constant $\gamma$ as well.
    By the assumption of the theorem, there must be a constant $c$, such 
    that $\cb$ does not contain a wheel $W_{c+1}$ as a minor (which is a planar graph).
    Now Theorem~\ref{thm.main1} completes the proof.
\end{proofof}

\subsection{Small blockers and small redundant blockers}
\label{subsec.turningpoint}
%\m{a corollary hidden in the comments}
%We may summarize the results with the following.
%\begin{corollary}\label{col.2cand}
%    Let $\cb$ be a set of 2-connected graphs containing a planar graph and suppose
%    $\aw 2 (\ex \cb)$ is finite.
%    Suppose there is an integer $k_0$ such that $\gamlower(\rd {2k_0+1} \cb) > 2^{k_0} \gamma(\ex \cb)$
%    and  $\gamupper(\rd {2k+1} \cb) < 2^{k} \gamma(\ex \cb)$ for all $k < k_0$. Then
%    \begin{enumerate}
%        \item For $k = 1, 2, \dots, k_0 - 1$ all but an exponentially small
%            proportion of graphs in $(\ex (k+1) \cb)_n$ have a $\cb$-blocker of size $k$.
%        \item For $k = k_0, k_0+1, \dots$ all but an exponentially small proportion
%              of graphs in $(\ex (k+1) \cb)_n$ have a $(2k, 2, \cb)$-double blocker
%              of size $r = r(\cb,k)$.
%    \end{enumerate}
%\end{corollary}
%\begin{proof}
%    For $k < k_0$ the claim follows by Lemma~\ref{lem.apexdomination} and induction.
%    For $k \ge k_0$ note that
%\end{proof}

%Two more lemmas will be useful later.
%This sections contains a few lemmas needed to complete the proof of
In this section we collect several auxiliary lemmas. %a few more useful, but slightly technical lemmas are collected.
For $\cb$ and $k_0$ as in Theorem~\ref{thm.main1}, we can often conclude that $R_n \in \ex (k+1) \cb$ either has a blocker of size $k$ (if $k < k_0$)  with probability $1 - e^{-\Omega(n)}$
 or (if $k \ge k_0$) it has a constant size $(2k, 2, \cb)$-double blocker  with probability $1 - e^{-\Omega(n)}$
. 
Using results of this section, it can be shown that this happens, for example, when $\gamma(\rd {2k+1} \cb) \ne 2^k \gamma(\ex \cb)$ exists for all $k$.

%\m{New lemmas - may have typos\dots}
\begin{lemma}\label{lem.notcomplex} Let $k$ be a positive integer and let 
    $\cb$ be the set of minimal excluded minors for a proper addable minor-closed class of graphs.
    %$\cb$ be a class of 2-connected graphs containing at least
    %one planar graph. 
    Suppose $\aw 2 (\ex \cb)$ is finite. 
    
    If $\gamlower(\ex (k+1)\cb) > 2 \gamupper(\ex k \cb)$ 
    then there is a constant $r = r(k, \cb)$ such that
    all but at most $e^{-\Omega(n)}$ fraction of graphs in $(\ex (k+1) \cb)_n$ have
    a $(2k, 2, \cb)$-double blocker of size $r$.
\end{lemma}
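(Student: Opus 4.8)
The plan is to combine the structural decomposition of Lemma~\ref{lem.2canddecomp} with a counting argument showing that the ``apex part'' of such a decomposition cannot dominate. First I would recall that by Lemma~\ref{lem.red2} (applicable since $\aw 2(\ex\cb)<\infty$), every $G\in\ex(k+1)\cb$ has a $\cb$-blocker $Q$ of size at most some constant $c=c(\cb,k)$, together with a subset $S\subseteq Q$ of size at most $2k$ such that $Q\setminus S$ is a $(2,\cb)$-blocker for $G-S$. Feeding this into Lemma~\ref{lem.2canddecomp} (with $l=2k$) writes $G=G_1\cup G_2$, where $G_1$ has a $(2k,2,\cb)$-double blocker $Q_1\supseteq Q$ of size at most $r:=c+w+1$ with special set $S$, and $G_2\in\apex(\ex k\cb)$ with $Q\subseteq V(G_1)\cap V(G_2)\subseteq Q_1$. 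Thus the graphs we want to show are rare are exactly those $G$ for which, for \emph{every} choice of small blocker and decomposition, $G_1$ fails to be (essentially) all of $G$ --- equivalently, those $G$ whose decomposition forces a large ``genuinely apex'' piece $G_2$.

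The key step is the counting estimate. As in the proof of Lemma~\ref{lem.gamupper}, the decomposition $G=G_1\cup G_2$ (with the $d$ shared vertices of $G_1$ rooted/pointed, $d$ a constant depending on $r$) gives, for $n$ large,
\[
[x^n]A(x)\le [x^n]\,A_1(x)\,A_2(x),
\]
where $A$, $A_1$, $A_2$ are the exponential generating functions of $\ex(k+1)\cb$, of the class of graphs with a $(2k,2,\cb)$-double blocker of size $r$ (with $d$ roots), and of $\apex(\ex k\cb)$ (with $d$ pointed vertices), respectively. Rooting/pointing a constant number of vertices does not change radii of convergence (Proposition~\ref{prop.r-property}), so $A_2$ has radius $\gamupper(\apex(\ex k\cb))^{-1}\le(2\gamupper(\ex k\cb))^{-1}$, using the crude bound $|\apex(\cd)_n|\le n2^{n-1}|\cd_{n-1}|$. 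By hypothesis $\gamlower(\ex(k+1)\cb)>2\gamupper(\ex k\cb)$, so if we let $\cg'$ denote the subclass of graphs in $\ex(k+1)\cb$ admitting \emph{no} double blocker of size $r$, then on $\cg'$ the decomposition is, by Lemma~\ref{lem.2canddecomp}, not available in the ``trivial'' form $G_2\subseteq Q_1$; one must peel off at least one additional vertex into the genuine apex part. I would make this precise by iterating: each graph in $\cg'$ can be written as $G_1'\cup G_2'$ with $G_1'$ on at most $n-1$ vertices having a double blocker of size $r$ and $G_2'\in\apex(\ex k\cb)$ nontrivially, giving
\[
|\cg'_n|\le \text{(poly}(n))\sum_{i}\binom{n}{i}|(\text{double-blocker class})_{n-i}|\,|\apex(\ex k\cb)_i|,
\]
and since the second factor grows like $(2\gamupper(\ex k\cb)+o(1))^i$ while $|\ex(k+1)\cb)_n|$ grows like $(\gamlower(\ex(k+1)\cb)+o(1))^n$ with $\gamlower(\ex(k+1)\cb)>2\gamupper(\ex k\cb)$, the ratio $|\cg'_n|/|(\ex(k+1)\cb)_n|$ is $e^{-\Omega(n)}$.

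The main obstacle I anticipate is bookkeeping the iteration cleanly: Lemma~\ref{lem.2canddecomp} already hands us \emph{one} decomposition with $G_2\in\apex(\ex k\cb)$, so strictly speaking the whole class $\ex(k+1)\cb$ is covered by products of the double-blocker class (size $r$) with $\apex(\ex k\cb)$, and the point is simply that for a \emph{random} $G$ the shared-vertex set $V(G_1)\cap V(G_2)$ is $\Theta(1)$ rather than $\Theta(n)$, which is forced precisely when $\gamlower(\ex(k+1)\cb)$ strictly exceeds $2\gamupper(\ex k\cb)$; the subexponential corrections to $\gamlower$ versus $\gamupper$ (we only know a lower bound on the growth of $\ex(k+1)\cb$ and an upper bound on $\ex k\cb$) have to be tracked carefully, but the strict inequality in the hypothesis gives the needed slack. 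Once the estimate $|\cg'_n|\le e^{-\Omega(n)}|(\ex(k+1)\cb)_n|$ is in hand, the lemma follows, since graphs not in $\cg'$ have a $(2k,2,\cb)$-double blocker of size $r$ by definition.
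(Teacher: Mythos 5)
Your proposal correctly sets up the decomposition via Lemma~\ref{lem.red2} and Lemma~\ref{lem.2canddecomp}, but the counting step has a genuine gap, and the missing idea is the one the paper's proof is built around. You bound the exceptional class $\cg'$ by
\[
\sum_{i\ge 1}\binom{n}{i}\,|(\text{double-blocker class})_{n-i}|\cdot|\apex(\ex k\cb)_i|,
\]
and argue that the apex factor grows at rate $2\gamupper(\ex k\cb)<\gamlower(\ex(k+1)\cb)$. But this does not yield $e^{-\Omega(n)}$: the double-blocker class has the \emph{same} exponential growth rate as $\ex(k+1)\cb$ (indeed by Lemma~\ref{lem.doubleblockers} and Lemma~\ref{lem.gamupper} its upper growth equals $\gamupper(\rd{2k+1}\cb)=\gamupper(\ex(k+1)\cb)$), so the $i=1$ term of your sum is already of order $|\ex(k+1)\cb|_n$. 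A graph need not have a double blocker of size $r$ just because it admits a decomposition in which $G_2$ has only one extra vertex outside $Q_1$. (Note also that $|V(G_1)\cap V(G_2)|\le r$ is \emph{always} $\Theta(1)$, so that quantity cannot be the rare event you want.)

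The key observation the paper uses — and that you need — is a dichotomy that converts ``no small double blocker'' into a structural statement about \emph{both} pieces. Call $G$ \emph{complex} if $G_2-(Q\setminus S)$ has a subgraph $H\not\in\ex\cb$ meeting $S$ in a single vertex $z$. If $G$ is not complex, one checks directly that $Q$ itself is a $(2k,2,\cb)$-double blocker for $G$ (so $G\notin\cg'$). If $G$ is complex, then since $V(G_1)\cap V(G_2)\subseteq Q$ and $V(H)\cap Q=\{z\}$, the graph $G_1-z$ is disjoint from $H$; as $G\in\ex(k+1)\cb$ and $H\not\in\ex\cb$, this forces $G_1-z\in\ex k\cb$, hence $G_1\in\apex(\ex k\cb)$ \emph{as well}. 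Therefore every complex graph arises by gluing two graphs in $\apex(\ex k\cb)$ along a bounded set, so the complex class is bounded by $[x^n]\sum_{s=0}^r x^s(A^{(s)}(x))^2$ with $A$ the generating function of $\apex(\ex k\cb)$, giving inverse radius at most $\gamupper(\apex(\ex k\cb))\le 2\gamupper(\ex k\cb)<\gamlower(\ex(k+1)\cb)$. Now the hypothesis yields the exponentially small bound. Your plan, by keeping one factor of full growth rate, cannot close this gap; you need the ``complex $\Rightarrow G_1$ is also apex'' step, which is not derivable from growth-rate considerations alone.
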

\begin{proof}
    By Lemma~\ref{lem.red2} and Lemma~\ref{lem.2canddecomp} there is a constant $r = r(k,\cb) > 2k$
    such that every graph in $G \in \ex (k+1) \cb$ %with at least $r$ vertices 
    is a union of two graphs $G_1$ and $G_2$,
    %such that
    where 
    $G_1$ has  a $(2k,2,\cb)$-double blocker $Q$ of size at most $r$ with a special set $S$, 
    %, such that $Q$ is a $(2k, 2, \cb)$-double blocker
    %of $G_1$, 
    $S \subseteq V(G_1) \cap V(G_2) \subseteq Q$, $G_2 \in \apex(\ex k \cb)$ and $Q$ is a $\cb$-blocker for $G$. 
    %Furthermore, the proof of Lemma~\ref{lem.2canddecomp} shows
    %that $Q$ can be chosen so, that there is a set %$S$  
   % such that
    %$S \subseteq V(G_1) \cap V(G_2)$, which is a redundant blocker of $G_1 - (Q\setminus S)$
    %and $Q\setminus S$ is a $(2,\cb)$-blocker of $G - S$. 
    
    We may assign each $G \in \ex (k+1) \cb$ a unique tuple $t(G) = (G_1, G_2, Q, S)$ as above.
    %Let $S$ be a special set of $Q$.
    Call $G$ \emph{complex}, if %$G_1, G_2, Q, S$ (with the above restrictions) can be chosen so that
    $G_2 - (Q\setminus S)$ contains a subgraph $H \not \in \ex \cb$
    which has only one vertex $z \in S$. Observe, that if $G$ is not complex, then 
    $Q$ is a $(2k, 2, \cb)$-double blocker for $G$, and $S$ is its special set. %\m{Slightly fiddly, what if $S = \emptyset$?} 

    Suppose $G$ is complex, and let $H$ be a subgraph of $G_2 - (Q\setminus S)$ such that
    $V(H) \cap S = \{z\}$ for $z \in S$. Then $G_1 - z$ is disjoint from $H$, so $G_1 -z \in \ex k \cb$
    and $G_1 \in \apex(\ex k \cb)$. In this case, if $G$ has
    at least $r$ vertices, it can be obtained from a graph ${\tilde G}_1$ in $\apex(\ex k \cb)$ which
    has $s = |V(G_1) \cap V(G_2)|$ distinguished vertices (roots) and another graph ${\tilde G}_2$ in $\apex(\ex k \cb)$ which has $s$ pointed vertices, by identifying the $i$-th rooted vertex with the $i$-th pointed vertex and merging edges between the distinguished vertices.

    Thus the $n$-th coefficient, of the exponential generating function for 
    the complex graphs is bounded by %\m{Maybe requires more care, what if the intersection smaller than $r$?}
    \[
    [x^n] \sum_{s = 0}^r x^s \left(A^{(s)}(x) \right)^2,
    \]
    where $A$ is the exponential generating function of $\apex(\ex k \cb)$ and $A^{(s)}$
    is the $s$-th derivative of $A$. %(so $A^{(s)}$ and $x A^{(s)}$ are exponential generating functions of
    %$s$-pointed graphs and $s$-rooted graphs respectively).
    This shows that the inverse radius of convergence for
    the class of complex graphs is at most
    \[
    \gamupper(\apex(\ex k \cb)) \le 2 \gamupper(\ex k \cb) < \gamlower(\ex (k+1) \cb),
    \]
    see Proposition~\ref{prop.r-property} and the proof of Theorem~\ref{thm.main1}. Hence, all but at most $e^{-\Omega(n)}$ fraction of graphs in $| (\ex (k+1) \cb)_n |$ are not complex, and therefore have a $(2k, 2, \cb)$-double blocker.
\end{proof}

%Before we prove the last theorem, we need a preliminary result, corresponding
%to the case in Lemma~\ref{lem.2canddecomp} when the apex construction has larger growth constant than the
%corresponding redundant blocker construction.
\bigskip

We call a connected subgraph $H$ of $G$ a \emph{pendant subgraph}, if there is exactly one edge 
in $G$ between $V(H)$ and $V(G) \setminus V(H)$.

\begin{lemma}\label{lem.apexring}
    Let $\ca$ be a proper addable minor-closed class of graphs.
    %with a
    %set $\cb$ of excluded minors.
    Let $H \in \ca$ be a connected graph
    %$\cb$, and let $H^- = H - x$ for any $x \in V(H)$.
    and let $k$ be a non-negative integer.
    
    There is a constant $c > 0$, such that the random graph $R_n \in_u \apexp k (\ca)$
    with probability $1 - e^{-\Omega(n)}$ has a set $S$ of $k$ vertices,
    such that %for each vertex $v \in S$,
    $R_n - S$ contains a family $\ch$ of at least $c n$
    pairwise disjoint 
    pendant subgraphs isomorphic to $H$,
    %such that 
    and each vertex of $S$ is incident to all vertices of every graph $\tilde{H} \in \ch$.
    %and $V(H') \cap V(H'') = \{v\}$ for any $H', H'' \in \ch_v$, $H' \ne H''$.
\end{lemma}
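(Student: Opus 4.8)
The plan is to first understand a random graph in $\ca_{n-k}$ and then add back the $k$ apex vertices. Since $\ca$ is a proper addable minor-closed class, it is bridge-addable and decomposable, so by the results of McDiarmid--Steger--Welsh and McDiarmid \cite{msw05, cmcd09} the random graph $R'_{m} \in_u \ca_m$ has, with probability $1 - e^{-\Omega(m)}$, a linear number of pendant copies of any fixed connected $H \in \ca$ appearing as pendant subgraphs attached by a bridge; more precisely one shows $\E[\text{\# pendant copies of }H] = \Theta(m)$ and concentration via the standard "pointing/bridge-swapping" argument (if few appeared one could add many bridges and blow up the count, contradicting the growth-constant bound $|\ca_m| \le \gamma^m m! (1+o(1))$). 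I would isolate this as the core combinatorial input. Then $\apexp k (\ca)$ is, up to a polynomial factor, obtained from $\ca_{n-k}$ by choosing $k$ distinguished vertices $s_1, \dots, s_k$ outside and attaching each $s_i$ to an arbitrary subset of $V(R'_{n-k})$; the number of such graphs on $[n]$ is (within a factor $n^{O(1)}$ and with the $e^{-\Theta(n)}$ error already absorbed) $\binom{n}{k} |\ca_{n-k}| 2^{k(n-k)}$, which is $\rho(\ca)^{-(n-k)} 2^{kn} n! (1+o(1))^n$ up to subexponential factors.

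Next I would run a "planting plus union bound" argument directly on $\apexp k (\ca)_n$. Fix the set $S$ to be a particular $k$-subset (there are only $\binom n k$ choices of $S$, a polynomial factor). Conditioned on $S$ and on the graph $G - S \in \ca_{n-k}$, the adjacencies between $S$ and $V(G-S)$ are uniform in $\{0,1\}^{k(n-k)}$. Take the family of pendant copies of $H$ in $G - S$ guaranteed by the core input; there are $\Theta(n)$ of them, pairwise disjoint. A fixed copy $\tilde H$ has all its $|V(H)|$ vertices joined to all of $s_1,\dots,s_k$ with probability exactly $2^{-k|V(H)|}$, independently across the disjoint copies. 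So by a Chernoff bound, with probability $1 - e^{-\Omega(n)}$ at least a $\tfrac12 2^{-k|V(H)|}$ fraction of them — still $\Theta(n)$ copies — have every vertex joined to every $s_i$. One subtlety: after attaching the apex vertices, a former "pendant" copy $\tilde H$ of $H$ in $G-S$ may fail to be pendant in $G$, since its vertices now have edges to $S$; but the lemma only asks that $\tilde H$ be pendant \emph{in $R_n - S$}, which is exactly $G - S$, so this is automatic. Summing the $e^{-\Omega(n)}$ failure probability over the $\mathrm{poly}(n)$ choices of $S$ and over the $e^{-\Omega(n)}$-exceptional $G-S$ keeps the total failure probability $e^{-\Omega(n)}$.

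The main obstacle is establishing the core input cleanly in the form needed here: that $R'_m \in_u \ca_m$ has \emph{linearly many pairwise disjoint pendant copies of $H$} with exponentially small failure probability. The existence of many pendant appearances of a fixed connected graph in a random graph from a bridge-addable class is standard (it is essentially the "pendant appearances" theorem, e.g.\ in the style of \cite{cmcd09}), but I would need to be a little careful to get (i) \emph{pairwise disjointness} of $\Theta(n)$ of them and (ii) the \emph{exponential} (not merely $1-o(1)$) concentration. Disjointness follows by a greedy/packing argument since each pendant copy has bounded size and there are linearly many; exponential concentration is the point where I would invoke the bridge-addability machinery: conditioning on the "core" $R'_m$ minus all small pendant pieces and using that bridges to the rest can be rearranged, one gets that the number of pendant-$H$'s stochastically dominates a binomial with linear mean, giving the Chernoff-type tail. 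If one prefers to avoid re-deriving this, it can be cited directly from \cite{cmcd09}, where pendant appearances in random graphs from bridge-addable decomposable classes are analysed with exactly such tail bounds. Everything after that is the routine union-bound-and-Chernoff computation sketched above.
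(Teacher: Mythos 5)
The paper does not give a self-contained proof of this lemma; it simply refers to the proof of Theorem~1.2 of \cite{cmcdvk2012}. Your sketch is a correct reconstruction of essentially that argument: enumerate (up to a polynomial factor) graphs in $(\apexp k \ca)_n$ by triples $(S, G_0, E_1)$ with $S\subseteq [n]$ of size $k$, $G_0 \in \ca$ on $[n]\setminus S$, and $E_1$ an arbitrary set of edges incident to $S$; bound the bad triples by invoking the pendant-appearance theorem with exponential tail for $G_0$ (Theorem~4.1 of \cite{msw05}, or \cite{cmcd09}, or the paper's own Lemma~\ref{lem.pendant}) together with a Chernoff bound over the uniform choice of $E_1$; and take a union bound over the polynomially many choices of $S$. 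Your observation that pendancy is only required in $R_n - S$, so attaching the apices cannot destroy it, is correct and is the right place to pause.

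One step in your reasoning is looser than it should be: the extraction of $\Theta(n)$ \emph{pairwise disjoint} pendant copies. ``Bounded size and linearly many'' does not by itself yield a linear packing (consider $m$ sets of size $h$ all through a common vertex). What actually makes a greedy extraction work is that the overlap degree is bounded by a constant depending only on $h=|V(H)|$: if two distinct pendant appearances $W_1\neq W_2$ of $H$ in $G_0$ overlap, then since $|W_1|=|W_2|=h$ both $W_1\setminus W_2$ and $W_2\setminus W_1$ are nonempty, and connectedness of $G_0[W_1]$ forces the unique bridge out of $W_2$ to join $W_1\cap W_2$ to $W_1\setminus W_2$ while the unique bridge out of $W_1$ joins $W_1\cap W_2$ to $W_2\setminus W_1$; consequently no edge leaves $W_1\cup W_2$, so $W_1\cup W_2$ is a connected component of $G_0$ on at most $2h-1$ vertices, and every appearance meeting $W_1$ lies inside it. Hence each appearance meets at most $\binom{2h-1}{h}$ others, and a greedy choice yields a pairwise disjoint subfamily of proportional size. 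With this small repair the argument is complete and sound.
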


\begin{proof}
    This fact is proved in the proof of Theorem~1.2 of \cite{cmcdvk2012}.% (in that
    %proof, $H$ was taken such that $H = H_1-v$ for some $H_1 \in \cb$ and $v \in V(H_1)$,
    %but it is trivial to see that it also works for an arbitrary connected $H \in \ex \cb$). 
    %and Theorem 4.1 of \cite{msw05}, using the construction as in the proof of Theorem~1.2 of \cite{cmcdvk2012}. 
\end{proof}

\bigskip

\begin{lemma}\label{lem.apexdomination}
    Let $k$ be a positive integer and let $\cb$ be the set of minimal excluded minors
    for a proper addable minor-closed class of graphs.
    %and let $\cb$ be any set of 2-connected graphs containing
    %at least one planar graph. 
    Suppose $\ex \cb$ contains all fans, $\aw 2 (\ex \cb)$ is finite,
    \[
    |(\ex k \cb)_n| \le | (\apexp {k-1} (\ex \cb))_n| \left(1 + e^{-\Omega(n)}\right)
    \]
    and $\gamma_2 > \gamma_1$, where
    $\gamma_2=\gamupper(\apex (\ex k \cb))$, $\gamma_1 = \gamupper(\rd {2k+1} \cb)$.
    Then 
    \[
     | ( \ex (k+1) \cb )_n | = |(\apexp k (\ex \cb))_n| \left(1 + e^{-\Theta(n)}\right).
    \]
\end{lemma}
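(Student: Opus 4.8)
The plan is to show each inequality direction separately. The easy direction is $|(\ex(k+1)\cb)_n| \ge |(\apexp k(\ex\cb))_n|$, which is immediate because $\apexp k(\ex\cb) \subseteq \ex(k+1)\cb$ (deleting $k$ vertices to land in $\ex\cb$ certainly kills all but at most $k$ disjoint subgraphs with a minor in $\cb$). So the content is the upper bound $|(\ex(k+1)\cb)_n| \le |(\apexp k(\ex\cb))_n|(1+e^{-\Theta(n)})$.

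For the upper bound I would first invoke Lemma~\ref{lem.red2} together with Lemma~\ref{lem.2canddecomp} (exactly as in Lemma~\ref{lem.notcomplex}) to decompose each $G \in \ex(k+1)\cb$ with $|V(G)| \ge d+1$ (for a suitable constant $d = d(\cb,k)$) as $G = G_1 \cup G_2$, where $G_1$ has a $(2k,2,\cb)$-double blocker $Q_1$ of bounded size with special set $S$, $G_2 \in \apex(\ex k\cb)$, and $Q \subseteq V(G_1)\cap V(G_2) \subseteq Q_1$ with $Q$ a $\cb$-blocker for $G$. As in Lemma~\ref{lem.notcomplex}, call $G$ \emph{complex} if $G_2 - (Q\setminus S)$ contains a subgraph $H \notin \ex\cb$ meeting $S$ in exactly one vertex; the same counting argument there shows the class of complex graphs has inverse radius of convergence at most $\gamupper(\apex(\ex k\cb)) = \gamma_2$. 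Now I need a second estimate: the number of graphs in $\ex(k+1)\cb$ that are \emph{not complex} but are \emph{not} in $\apexp k(\ex\cb)$ should also grow with a smaller exponential rate than $\gamlower(\ex(k+1)\cb)$. If $G$ is not complex, then $Q$ is a $(2k,2,\cb)$-double blocker with special set $S$, so $G_1$ has a redundant $\cb$-blocker on (a subset of) its roots once we observe that the neighbours of each $x\in S$ inside $G_1 - (Q\setminus S)$ cannot by themselves create a minor in $\cb$; intuitively the ``extra'' data of $G_1$ beyond being in $\apexp k(\ex\cb)$ is confined to a bounded set of roots and to the redundant-blocker part, whose growth rate is $\gamma_1 = \gamupper(\rd{2k+1}\cb)$. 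Combining with $G_2 \in \apex(\ex k\cb)$ and using that identifying a bounded number of roots/pointed vertices does not change radii of convergence (Proposition~\ref{prop.r-property}), the non-complex graphs outside $\apexp k(\ex\cb)$ have inverse radius of convergence at most $\max(\gamma_1,\gamupper(\apexp k(\ex\cb)))$. Here I would use the hypothesis $|(\ex k\cb)_n| \le |(\apexp{k-1}(\ex\cb))_n|(1+e^{-\Omega(n)})$, which via Lemma~\ref{lem.apexdomination}-type bookkeeping (or directly, since $\apex$ multiplies the generating function's behaviour by a factor $\rho^{-1} = 2\gamupper$) gives $\gamupper(\apexp k(\ex\cb)) = \gamupper(\apex(\apexp{k-1}(\ex\cb))) \le 2\gamupper(\apexp{k-1}(\ex\cb))$, and iterating, $\gamupper(\apexp k(\ex\cb)) = \gamupper(\apex(\ex k\cb)) = \gamma_2$.

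Finally I would assemble these: $|(\ex(k+1)\cb)_n|$ is $|(\apexp k(\ex\cb))_n|$ plus the count of graphs that are complex or non-complex-but-outside-$\apexp k(\ex\cb)$, and the latter is $e^{-\Omega(n)}|(\apexp k(\ex\cb))_n|$ provided $\gamlower(\apexp k(\ex\cb)) = \gamma(\apexp k(\ex\cb))$ strictly exceeds both $\gamma_2 = \gamupper(\apex(\ex k\cb))$ — wait, that is not true, $\gamma_2 = \gamma(\apexp k(\ex\cb))$. So the correct comparison is: the complex graphs and the bad non-complex graphs must be counted against $\gamlower(\ex(k+1)\cb) \ge \gamma(\apexp k(\ex\cb)) = \gamma_2$, and I need their rates to be strictly below $\gamma_2$. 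For the complex graphs this needs $\gamma_2$ to strictly dominate, which is exactly where one uses $\gamma_2 > \gamma_1$ together with the bound $\gamupper(\text{complex}) \le 2\gamupper(\ex k\cb) = 2\cdot\tfrac{\gamma_2}{2} = \gamma_2$ — this is borderline, so I must be more careful and show the complex-graph rate is \emph{strictly} less than $\gamma_2$, arguing that a complex $G$ forces $G_1 \in \apex(\ex k\cb)$ as well (as in Lemma~\ref{lem.notcomplex}) and hence $G$ is counted by a product of two generating functions each with radius $\rho(\apex(\ex k\cb))$ but with at least a bounded number of shared roots \emph{plus} the constraint that makes it a genuine product bounded term-by-term below $\gamma_2^n n!$ in the sense that its exponential generating function has radius exactly $\rho(\apex(\ex k\cb))$, which after the merging construction gives strict inequality only if $\apexp{k}(\ex\cb)$ is not ``product-closed'' — this is the delicate point and the real analogue of the argument in \cite{cmcdvk2012}. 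The main obstacle is precisely this: carefully separating the exponential rate of the complex and bad-non-complex graphs from $\gamma_2$ and showing it is strictly smaller, using $\gamma_2 > \gamma_1$; once that strict gap is established, the $e^{-\Theta(n)}$ (rather than merely $e^{-\Omega(n)}$) in the conclusion follows from the lower bound $|(\ex(k+1)\cb)_n| \ge |(\apexp k(\ex\cb))_n|$ combined with the matching upper bound, since the error term is at least the contribution of, say, a single extra apex-path-like gadget, giving a genuine $\Theta$.
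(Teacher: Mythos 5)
Your proposal correctly states the easy lower bound and starts from the right decomposition (Lemma~\ref{lem.red2} plus Lemma~\ref{lem.2canddecomp}), but the path you take through Lemma~\ref{lem.notcomplex} runs into the obstacle you yourself flag, and that obstacle is a genuine gap, not a detail. Lemma~\ref{lem.notcomplex} works under the hypothesis $\gamlower(\ex(k+1)\cb) > 2\gamupper(\ex k\cb)$, i.e.\ the redundant-blocker regime dominates; in Lemma~\ref{lem.apexdomination} we are in the opposite regime $\gamma_1 < \gamma_2$, where $\gamlower(\ex(k+1)\cb) = \gamma_2 = \gamupper(\apex(\ex k\cb))$. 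Your estimate for the complex graphs gives only $\gamupper(\text{complex}) \le 2\gamupper(\ex k\cb) = \gamma_2$, and the ``bad non-complex'' class similarly only comes out $\le \gamma_2$; with no strict gap in the exponential rate you cannot extract the $e^{-\Omega(n)}$ factor by a radius-of-convergence comparison. Trying to manufacture strictness by appealing to $\gamma_2 > \gamma_1$ does not help here because $\gamma_1$ does not enter the complex-graph bound, and your speculation that $\apexp{k}(\ex\cb)$ is ``not product-closed'' is not developed into an argument.

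The paper's proof takes a genuinely different route that sidesteps the borderline rate comparison. It enumerates all \emph{constructions} (tuples $(n_2,q,V_2,Q,G_1,G_2)$ with $G_2 \in \apex(\ex k\cb)$ and $G_1$ a bounded-size double-blocker graph), treats the uniform random construction $R_n$, and establishes three events with probability $1-e^{-\Omega(n)}$: $A$, the large piece has $\ge (1-\eps)n$ vertices (here $\gamma_1 < \gamma_2$ is used, since the small-$n_2$ constructions carry weight $(\gamma_1+\delta)^{n-n_2}(\gamma_2+\delta)^{n_2}$); $B$, that $G_2 \in \apexp k(\ex\cb)$ (using the hypothesis $|(\ex k\cb)_n|\le|(\apexp{k-1}(\ex\cb))_n|(1+e^{-\Omega(n)})$); and $C$, that $G_2$ has $\Theta(n)$ good pendant copies of a $\cb$-critical-minus-a-vertex graph attached to its apex set (Lemma~\ref{lem.apexring}). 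On $A \cap B \cap C$, if the assembled $R_n$ were in $\ex(k+1)\cb$ but not in $\apexp k(\ex\cb)$, the extra forbidden subgraph together with $k$ of the pendant gadgets would yield $k+1$ disjoint minors in $\cb$, a contradiction. So the ``bad'' constructions are an exponentially small \emph{fraction} of all constructions, and since $u_n = n!\gamma_2^{n+o(n)}$ equals the count of $\apexp k(\ex\cb)_n$ up to a polynomial, the upper bound follows; the matching lower bound comes from Lemma~5.5 of \cite{cmcdvk2012}. To repair your proposal you would need to replace the radius-of-convergence comparison with this conditional/structural argument (or some other mechanism that produces a strict exponential gap), because the split into complex versus bad-non-complex alone cannot deliver it in the $\gamma_1 < \gamma_2$ regime.
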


\begin{proof}
    %Consider a random graph on vertex set $[n]$ obtained by picking in every
    %step a uniformly random object from the corresponding set
    %of available objects.
    %By Lemma~\ref{lem.2canddecomp} and Lemma~\ref{lem.red2}
     By Lemma~\ref{lem.red2} and Lemma~\ref{lem.2canddecomp} there is a constant $r = r(k,\cb) > 2k$
    such that  every graph in $G \in \ex (k+1) \cb$ with at least $r$ vertices can be generated as follows.
    %is a union of two graphs $G_1$ and $G_2$,
    %such that $G_1$ has  a $(2k,2,\cb)$-double blocker $Q$ of size at most $r$
%
%   Write $\ca = \apex (\ex k \cb)$ and  $\cd$ for the class
    \begin{enumerate}[\quad 1)]
        \item Pick $n_2 \in \{0, \dots, n\}$.
        \item Pick a set $V_2 \subseteq [n]$ of size $n_2$.
        \item Pick $q \in \{0, \dots, n_2 \wedge r\}$. %, the positive integer $r$ to be specified below.
        \item Pick a set $Q \subseteq V_2$ of size $q$.
        \item Put any graph $G_2 \in \ca$ on $V_2$. Here $\ca = \apex (\ex k\cb)$.
        \item Add edges of any graph $G_1 \in \cd$ on $V_1 = ([n] \setminus V_2) \cup Q$ (merge
            repetitive edges, if necessary). Here 
            $\cd$ is the class of graphs with a $(2k, 2, \cb)$-double blocker of size at most $r$. 
    \end{enumerate}
%    Call the resulting random graph $R_n$.
%    Using Lemma~\ref{lem.2canddecomp} and Lemma~\ref{lem.red2}
%    we can and do choose the constant $r$ such
%    that for any $G \in \ex (k+1) \cb$ we have $\pr(R_n = G) > 0$.

    Let $u_n$ be the total number of constructions, i.e., 
    the total number of different tuples $(n_2,q, V_2, Q, G_1, G_2)$ that can be generated
    by the above procedure. Denote by $\cu$ be the combinatorial class with the counting sequence
    $(u_n, n = 0, 1, \dots)$. Also, let $R_n$ be the graph obtained by taking the tuple
    $(n_2, q, V_2, Q, G_1, G_2)$ uniformly at random from all $u_n$ possible tuples. (In the rest of
    the proof $n_2, q, V_2, Q, G_1, G_2$ will be random variables.)

By Lemma~\ref{lem.doubleblockers},
$\gamupper(\cd) = \gamma_1$.
Similarly as in the proof of (\ref{eq.min})
\[
u_n \le [x^n/n!] \sum_{q = 0}^{r} x^q A^{(q)}(x) D^{(q)}(x),
\]
so, see \cite{fs09}, $\gamupper(\cu) \le \gamma_2$.
%$\rho(\cu)$ is at least $1/\gamma_1$
%and
%\begin{equation}\label{eq.tn_upper}
%    u_n \le n! \gamma_2^{n + o(n)}.
%\end{equation}
    %Since $\ex \cb$ is addable, $\gamma(\ex \cb) \ge e$ and
    %$\gamma_2 \ge 2^k e$.
    Fix $\eps \in (0, 0.5)$ and $\delta >0$ such that
    \[
    (\gamma_1+\delta)^{\eps} (\gamma_2 + \delta)^{1-\eps} < \gamma_2.
    \]
    Let $H$ be a graph of minimal size that can
    be obtained by removing one vertex from a graph in $\cb$.
    Let $c$ be a constant as in Lemma~\ref{lem.apexring} applied with $\ex \cb$ and $H$.
    We say that a set $S \subseteq V_2$ is \emph{good} if
    $|S| = k$ and %for every vertex $v \in S$ 
    there is a family $\ch_S$ of at least $c n/4$ disjoint pendant subgraphs $\tilde{H}$ 
    in $G_2 - (S \cup Q)$ such that $\tilde{H}$ is isomorphic to $H$
    and  every vertex $v \in S$ is incident to every vertex
    of $V(\tilde{H})$.

    %Fix $\eps \in (0; 0.5)$ and 
    Define the following events:
    \begin{align*}
           &A = \{\mbox{$G_2$ has at least $(1-\eps)n$ vertices}\};
        \\ &B = \{G_2 \in \apexp k (\ex \cb)\};
        \\ &C = \{G_2 \mbox{ has a good set }S \}.
    \end{align*}
    We will show that
    \begin{align}
      \pr(\bar{A}) \le e^{-\Omega(n)}; \quad \pr(\bar{B}) \le e^{-\Omega(n)}; \quad \pr(\bar{C}) \le e^{-\Omega(n)}. \label{eq.e1}
    \end{align}
    and
    \begin{equation}\label{eq.e4}
    \gamma(\cu) = \gamma(\apexp k (\ex \cb)) = \gamma_2.
\end{equation}
%
    %Since $\apexp k (\ex \cb) \subseteq \ex (k+1) \cb$, we 
    %and %for some constant $C >0$
    For $n$ large enough, $A, B$ and $C$ imply that either $R_n \in \apexp k (\ex \cb)$ 
    %(any good set $S$ is such that $R_n - S \in \ex \cb$),
    or $R_n$ has $k+1$ disjoint subgraphs not in $\ex \cb$. Indeed, by  Lemma~5.3 of \cite{cmcdvk2012}, if $S$ is a good set,
    then for a $\cb$-critical subgraph $H_1$ of $G - S$, there is
    at most a constant number $N_{k,\cb}$ of subgraphs $\tilde{H} \in \ch_S$,
    which are not disjoint from $H_1$.
   % such that $(S \cup Q) \cap V(\tilde{H}) \ne 0$ or
   % $V(H_1) \cap V(\tilde{H}) \ne 0$.
    For $n$ large enough, $k < c n/4 - N_{k,\cb}$, so we can construct $k$ disjoint subgraphs not in $\ex \cb$, each containing
    one vertex from $S$, and each disjoint from $H_1$, producing $k+1$ disjoint forbidden subgraphs in total.

    Denote $a_n' := |\apexp k(\ex \cb)_n|$. 
    %Now the lemma follows
    %To prove the lemma, it suffices to prove 
    Assuming (\ref{eq.e1}) and (\ref{eq.e4}) hold,
    %assuming we succeeded, 
    we have for $n$ large enough
    \begin{align}
     &|(\ex (k+1) \cb)_n \setminus (\apexp k (\ex \cb))_n| \le u_n (\pr(\bar{A}) + \pr(\bar{B}) + \pr(\bar{C})) \nonumber
     \\ &= n! \gamma_2^{n - \Omega(n)} = e^{-\Omega(n)} a_n'. \label{eq.omegaa}
    \end{align}
    Let us show (\ref{eq.e1}) and (\ref{eq.e4}).
Recall that by Theorem~1.2 of \cite{cmcdvk2012}, $\gamma(\apex^l(\ex \cb))$ $= 2^l \gamma(\ex \cb)$
    for any $l = 0, 1,2 ,\dots$.     
    By the definition of apex classes and the assumption of the lemma
    \begin{align*}
        &|\apex(\ex k\cb)_n \setminus (\apexp k (\ex \cb))_n | 
        \\ &\le n 2^{n-1} |(\ex k\cb)_{n-1} \setminus (\apex^{k-1} (\ex \cb))_{n-1}|
        \\ & \le n 2^{n-1} e^{-\Omega(n)} | (\apexp {k-1} (\ex \cb))_{n-1}|
\end{align*}
The last line is $e^{-\Omega(n)} a_n'$ again by Theorem~1.2 of \cite{cmcdvk2012}. So
\begin{equation}\label{eq.e5}
    a_n := |\apex(\ex k\cb)_n| \le a_n' (1 + e^{-\Omega(n)})
\end{equation}
and $\gamma_2 = 2^k \gamma(\ex \cb)$ is the growth constant of $\apex(\ex k \cb)$.
Since $u_n \ge a_n'$, we have $\gamlower(\cu)\ge \gamma_2$ and (\ref{eq.e4}) follows. 
    %where $a_n' = |(\apexp k (\ex \cb))_n|$.
%Combining this with  yields .

%The estimate for $a_n$ in (\ref{eq.e4}) follows by Theorem~1.2 of \cite{cmcdvk2012}.
%Let us estimate $t_n$.
Let $d_n = |\cd_n|$. % he number of graphs on $[n]$ with a $(2k, 2, \cb)$-double blocker of size at most $r$.
There is a constant $C$, such that for any $n = 1, 2,\dots$ 
\[
a_n \le C n! (\gamma_2 + \delta)^n; \quad d_n \le C n! (\gamma_1 + \delta)^n.
\]
%Also $\gamma_1, \gamma_2 > e > 1$ since the corresponding classes include the class of forests.

Using (\ref{eq.e4}), there is a constant $C'>0$, such that the number of constructions with $n_2 < (1-\eps) n$ is at most
\begin{align*}
        &\sum_{n_2 = 0}^{\lfloor (1-\eps)n \rfloor} \sum_{q = 0}^{r \wedge n_2} \binom n {n_2} \binom {n_2} q a_{n_2} d_{n-n_2 + q}
     \\ &\le C' n^{2q + 1} n! \max_{n_2<(1-\eps)n} (\gamma_1 + \delta)^{n-n_2} (\gamma_2+\delta)^{n_2}
     \\ &\le  C' n^{2q + 1} n! \left( (\gamma_2 + \delta)^{1-\eps}  (\gamma_1+\delta)^\eps \right)^n
     \\ &\le e^{-\Omega(n)} u_n,
\end{align*}
%for some constant $C'$. Here we used (\ref{eq.e4}) in the last line.
and the first bound of (\ref{eq.e1}) follows.

The second bound of (\ref{eq.e1}) follows by the first one and (\ref{eq.e5}) since
\[
\pr(\bar{B}) \le \pr(\bar{A}) + \pr(\bar{B}|A) = e^{-\Omega(n)}.
\]
Fix an integer $t$, $(1-\eps) n \le t \le n$, and a subset $V_2 = \tilde{V}$ of size $t$.
Conditionally on $V_2 = \tilde{V}$ and the event $B$, 
the random graph $G_2$ is a uniformly random graph on $\tilde{V}$
from $\apexp k (\ex \cb)$.

By Lemma~\ref{lem.apexring} there is a constant $c_1 > 0$
such that for all large enough $n$, conditionally on $V_2 = \tilde{V}$ and $B$,
the graph $G_2$ with probability at least $1 - e^{-c_1 (1-\eps) n}$
has a set $S$, where every vertex in $S$ is incident to 
every vertex of at least $c (1-\eps) n \ge cn/2$
disjoint pendant subgraphs of $G_2$ isomorphic to $H$. 
At most $q$ such subgraphs
can have vertices in $Q$, so if $n$ is large enough then $cn/2 - q > cn/4$ and $S$ is good.

Now
\[
\pr(\bar{C}) \le \pr(\bar{A}) + \pr(\bar{B}) + \pr(\bar{C} |A,B).
\]
%Conditionally on $V_2 = \tilde{V}$ and the event $B$, the graph $G_2$
%is a uniformly random graph on $\tilde{V}$ from $\apex^k(\ex \cb)$.
%
%If $|G_2| \ge (1- \eps) n$ and 
For large enough $n$, by the above argument and symmetry the last term on the right side is
\begin{align*}
 &\frac 1 {\pr(A, B)} \sum_{\tilde{V} \subseteq [n], |\tilde{V}| \ge (1-\eps)n} \pr(\bar{C} |V_2 = \tilde{V}, B) \pr(V_2 = \tilde{V}, B)
 \\ &\le e^{-c_1 (1-\eps) n} \frac {\pr(A,B)} {\pr(A,B)} = e^{-\Omega(n)},
\end{align*}
and the last bound of (\ref{eq.e1}) follows.

Finally, the fact that $| (\ex (k+1) \cb)_n \setminus (\apexp k (\ex \cb))_n| \ge e^{-\Theta(n)} a_n'$
follows by Lemma~5.5 of \cite{cmcdvk2012}.
\end{proof}

\bigskip
For $\cb$ as in Theorem~\ref{thm.gc} and sufficiently large $k$ we
have, using Theorem~\ref{thm.main1},
that $R_n \in_u \rd {2k+1} \cb$ belongs to $\apexp {2k-1} \cb \supseteq \apexp k \cb$
with probability $e^{-\Omega(n)}$.
The next lemma shows that the two candidates for 
the main subclass of $\ex (k+1) \cb$ studied so far 
essentially do not overlap.
\begin{lemma} \label{lem.apex_not_rd}
    Let $\ca$ be a proper addable minor-closed class. Let $\cb$ be its set of minimal excluded minors. 
    There is a constant $c > 0$
    such that with probability $1 - e^{-\Omega(n)}$ every redundant blocker 
    $R_n \in_u \apexp k (\ex \cb)$ is of size at least $c n$.
\end{lemma}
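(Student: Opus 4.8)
The plan is to exploit the fact that $R_n \in_u \apexp k (\ex \cb)$ typically contains a large family of disjoint pendant copies of a fixed $\cb$-critical building block, and to show that a redundant blocker cannot ``miss'' too many of these. First I would apply Lemma~\ref{lem.apexring} with the class $\ex \cb$ and a connected graph $H$ chosen so that $H$ is a subgraph of some $G \in \cb$ with exactly one vertex removed (so that adding a single apex vertex joined to all of $V(H)$ creates a member of $\cb$ as a minor). This gives a constant $c_1 > 0$ and, with probability $1 - e^{-\Omega(n)}$, a set $S$ of $k$ vertices in $R_n$ such that $R_n - S$ contains a family $\ch$ of at least $c_1 n$ pairwise disjoint pendant subgraphs isomorphic to $H$, with every vertex of $S$ joined to every vertex of each $\tilde H \in \ch$. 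Thus $S \cup V(\tilde H)$ spans a graph with a minor in $\cb$, for each $\tilde H \in \ch$ (take $|S| = k \ge 1$ and use one apex vertex of $S$).

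Next I would argue that any redundant $\cb$-blocker $Q$ of $R_n$ must be large. Suppose $Q$ is a redundant blocker. For each $\tilde H \in \ch$, the graph $S \cup V(\tilde H)$ has a minor in $\cb$, so $Q$ must contain at least one vertex of $S \cup V(\tilde H)$, i.e.\ at least one vertex of $V(\tilde H)$ unless $Q \cap S \ne \emptyset$. If $Q \cap S = \emptyset$, then $Q$ contains a vertex from each of the $\ge c_1 n$ disjoint sets $V(\tilde H)$, hence $|Q| \ge c_1 n$ and we are done with $c = c_1$. So assume $Q$ contains some vertex $s \in S$. Here I use redundancy: $Q \setminus \{s\}$ must also be a $\cb$-blocker. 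But in $R_n - (Q \setminus \{s\})$ the vertex $s$ together with the vertices of any $\tilde H \in \ch$ that avoids $Q \setminus \{s\}$ still spans a graph with a minor in $\cb$; since the $V(\tilde H)$ are disjoint, at most $|Q \setminus \{s\}|$ of them can be hit, so unless $|Q \setminus \{s\}| \ge c_1 n - 1$ there is such an $\tilde H$, contradicting that $Q \setminus \{s\}$ blocks. Hence $|Q| \ge c_1 n$. Taking $c = c_1/2$, say, absorbs the small additive losses and covers all cases, and the failure probability is $e^{-\Omega(n)}$ coming solely from Lemma~\ref{lem.apexring}.

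The routine part is bookkeeping: making sure the required $H$ exists (it does, since $\cb \ne \emptyset$ and each graph in $\cb$ has $\ge 2$ vertices, being an excluded minor of a nonempty proper class), and that $\ex \cb$ is nonempty and addable as required by Lemma~\ref{lem.apexring}. The one point needing a little care — the main obstacle — is the case $Q \cap S \ne \emptyset$: a naive count only bounds $|Q|$ in terms of how many $\tilde H$'s are $\emph{not}$ dominated by $S \cap Q$, which could a priori be few if $|S \cap Q|$ were large; but since $|S| = k$ is a fixed constant, removing all of $S \cap Q$ still leaves a blocker that must stab $\ge c_1 n - O(1)$ disjoint copies $V(\tilde H)$, and the redundancy hypothesis lets us peel off vertices of $S \cap Q$ one at a time to force exactly this. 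This is why the redundancy assumption is essential and why the statement fails for arbitrary (non-redundant) blockers, where $S$ itself of size $k$ would suffice.
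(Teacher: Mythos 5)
Your proof is correct, and it takes a cleaner, more self-contained route than the paper's. The paper's proof invokes Lemma~\ref{lem.apexring} \emph{together with} Theorem~1.3 of \cite{cmcdvk2012} to ensure that the random graph has a \emph{unique} $\cb$-blocker $S$ of size $k$ and that any blocker not containing $S$ already has $\Omega(n)$ vertices; with this extra structural information the paper can immediately conclude that a small redundant blocker $Q$ must contain $S$, and then one pendant appearance disjoint from $Q$ contradicts redundancy. You avoid the appeal to Theorem~1.3 entirely by splitting into two cases: when $Q\cap S=\emptyset$, the ordinary blocker property already forces $Q$ to stab each of the $\ge c_1 n$ disjoint sets $S\cup V(\tilde H)$ and hence each $V(\tilde H)$, giving $|Q|\ge c_1 n$; when $Q\cap S\ne\emptyset$, redundancy lets you delete a single $s\in Q\cap S$ and the pendant-appearance count then forces $|Q|\ge c_1 n$ again. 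So you get the same conclusion using strictly fewer external ingredients, at the cost of a short case analysis.

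One small remark on your final paragraph: the worry that one might need to ``peel off vertices of $S\cap Q$ one at a time'' is not actually needed, and the redundancy hypothesis would not support it directly (redundancy gives that $Q\setminus\{v\}$ is a blocker for each single $v\in Q$, not that $Q$ minus several vertices is a blocker). Fortunately your first description already handles the general case: since each $\tilde H$ is a pendant subgraph of $R_n-S$, we have $V(\tilde H)\cap S=\emptyset$, so $\tilde H$ avoids $Q\setminus\{s\}$ exactly when it avoids $Q\setminus S$, and removing a single $s\in S\cap Q$ is all that is required. It would be worth rewording that paragraph so it does not suggest a multi-step peeling argument that the hypothesis does not license.
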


\begin{proof}
    Fix a graph $H \in \cb$. Let $H_0 = H - v$, where $v \in V(H)$ is any vertex.
    Fix $\eps \in (0,1)$, and let $A_{\eps} = A_{\eps}(n)$ be the event
    that the random graph $R_n$ has a unique blocker $S$ of size $k$
    and the graph $R_n - S$ has at least $\eps n$ pendant appearances $\tilde{H}$ of the graph $H_0$, such 
    that every vertex of $\tilde{H}$ is connected to every vertex of $S$ (call such pendant appearances \emph{good}),
    and any $\cb$-blocker not containing $S$ has at least $\eps n$ vertices.
    By Lemma~\ref{lem.apexring} and Theorem~1.3  of \cite{cmcdvk2012} we can choose
    %and Theorem 4.1 of \cite{msw05},
    %there is $\delta > 0$ such that
    $\eps$ so, that $A_\eps$ occurs with probability $1 - e^{-\Omega(n)}$.

    Let $c = \eps / 2$. Let $n$ be sufficiently large, so that $(\eps - c) n > 2$.
    Suppose $A_\eps$ occurs and $R_n$ has a redundant blocker $Q$ of size at most $c n$.
    Then $Q$ must contain $S$ and there must be at least one good appearance $\tilde{H}$ disjoint from $Q$.
    %On this event, if $R_n$ has a set $Q$ with $S \subseteq Q$ and less that $c n$, $c = \min(\eps, \delta)$ vertices,
    %there is a good pendant appearance $\tilde{H}$ in $R_n - S$ such that $V(\tilde{H}) \cap Q = \emptyset$. 
    Now any vertex $x \in S$
    together with $\tilde{H}$ induces a graph containing $H$ in $R_n - (Q \setminus \{x\})$, thus  $Q$ is not a redundant blocker. 
    %So for all $n$ large enough 
    So the probability that $R_n$ has a redundant blocker of size at most $cn$ is
    at most $\pr(\bar{A}_\eps) = e^{-\Omega(n)}$.
\end{proof}

\section{Analytic combinatorics for $\ex 2 K_4$}   %{The class $\rd 3 (K_4)$}
\label{sec.part2}

%In this section we use analytic combinatorics to prove the following lemma in order to complete the proof of our second main theorem ???. % Theorem~\ref{thm.exkK4}.

%\begin{lemma}\label{lem.gcrd3}
% The class $\rd 3 (K_4)$ of graphs  which have a redundant $K_4$-blocker of size 3 has a growth constant $\gamma = \rho^{-1} = ??.????..$. 
%\end{lemma}

In this section we focus on the case $\cb = \{K_4\}$ and the class $\cc^3$. 
%(this case is the most interesting to us,
%we believe that similar ideas would also work for $\cb = \{K_{2,3}\}$ and $\cb = \{K_{2,3}, K_4\}$)
%As in the proof of Lemma~\ref{lem.ca_3}, let 
Recall that $\crd l$ denotes the set of $\{0,1\}^l$-coloured graphs $G$ such that
if $G \in \crd {l,n}$ then $\{n+1, \dots, n+l\}$ is a redundant $\cb$-blocker for~$G^{\{n+1, \dots, n+l\}}$ 
and $\cc^l$ is the class of connected
graphs in $\crd l$.
%and $\cu'$ denotes the set of graphs in $\cc^l$ that are not $l$-rootable.
%Clearly, we have $\rho(\crd k) = \rho(\rd k K_4)$. In order to count the connected
%graphs in $\crd k$ it is natural to first count the rooted connected graphs.
%By counting the rootable graphs in $\crd 3$ we will show the following. % $\crd 3$ has a growth $\gamma > \gamma(\ex K_4)$.
%Thus for $\cb =\{K_4\}$, we can take $k_0=1$ in Theorem~\ref{thm.main1}: 
%using Lemma~\ref{lem.gamupper} and Lemma~\ref{gc.indstep} then
%we will be able to conclude that $\gamma(\ex (k+1) K_4) = \gamma(\ca_{2k+1})$.
The main result of this section is the following.
\begin{lemma} \label{lem.part2main}
    Let $\cb = \{K_4\}$. The class $\cc^3$ % \setminus \cu'$ of $3$-rootable graphs
    has growth constant
    $\gamma(\cc^3) = 23.5241..$\,.
\end{lemma}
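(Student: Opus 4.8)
The growth constant $\gamma(\cc^3)$ already exists by Lemma~\ref{lem.gcrd}: the set $\cb=\{K_4\}$ consists of a single $3$-connected graph, $\aw 2(\ex K_4)=2$ is finite, and $\ex K_4$ omits some $2$-fan, some $K_{3,t}$ and some wheel (each of these graphs has a $K_4$ minor). The content of the lemma is therefore the numerical value $\gamma(\cc^3)=\rho(\cc^3)^{-1}$, and since $\cc^3$ is not bridge-addable (Figure~\ref{fig.unrootable}), the superadditivity argument of Lemma~\ref{lem.gcrd} gives only existence, so an explicit generating-function computation is needed. My plan has four parts: (i) give a purely combinatorial description of which $\{0,1\}^3$-coloured graphs lie in $\cc^3$; (ii) use it to isolate a subclass $\cc^3_\star\subseteq\cc^3$ with $\rho(\cc^3_\star)=\rho(\cc^3)$ which decomposes uniquely into disjoint labelled gadgets drawn from a few explicitly described auxiliary classes, glued along a tree-like skeleton; (iii) translate this decomposition into a system of equations for exponential generating functions, following the series-parallel machinery of~\cite{momm05} and the symbolic method of~\cite{fs09}; and (iv) solve the system numerically and read off $\rho(\cc^3)^{-1}=23.5241\ldots$.

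Step (i) is the crux and the step I expect to be hardest. Recall (from the discussion preceding the lemma) that, for $G$ with $N(G)\in\ex K_4$, a colour $c\in[3]$ is good for $G$ precisely when $N(G)$, with a new vertex $s_c$ joined to every colour-$c$ vertex, is again $K_4$-minor-free. Since $N(G)$ itself is $K_4$-minor-free, a witnessing $K_4$ minor must use $s_c$, and unravelling such a minor shows that colour $c$ is bad exactly when $N(G)$ contains a small rooted obstruction attached to the colour-$c$ set (for instance a cycle through three colour-$c$ vertices, or a $K_4^{-}$-configuration whose two degree-$3$ vertices carry colour $c$). Descending through the block tree of $N(G)$ and the series-parallel (SPQR-type) decomposition of its $2$-connected blocks into series and parallel compositions — the same decomposition~\cite{momm05} use to obtain $\gamma(\ex K_4)=9.073\ldots$ — I would convert this into a \emph{local} rule: within each block, colour $c$ may occupy only a restricted sub-configuration dictated by that block's decomposition tree. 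Imposing the rule for all three colours simultaneously is rather rigid; the hard part is to extract from it a \emph{unique} decomposition of the dominant part $\cc^3_\star$ of $\cc^3$ into disjoint labelled pieces, and to prove that what is discarded — chiefly configurations in which some colour class sits on a ``large'' $2$-connected block near the boundary of admissibility — forms a subclass of strictly smaller growth rate. This last bound I would get by the pendant-appearance arguments of Section~\ref{subsec.turningpoint} (cf.\ Lemmas~\ref{lem.apexring} and~\ref{lem.apex_not_rd}).

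Granted the decomposition, step (iii) is standard analytic combinatorics for series-parallel-type classes. Starting from the network, $2$-connected and connected series-parallel generating functions of~\cite{momm05}, I would introduce coloured refinements recording which of the boundedly many admissible colour patterns appears at the terminals of each gadget: a series composition becomes a labelled product, a parallel composition a $\SET$ construction, and the gluing of the coloured gadgets along the tree-like skeleton a Cayley-tree-type equation of the shape $Y=x\,\Phi(Y)$ (this is also the origin of the $n^{-5/2}$ factor appearing for the unrooted class in Theorem~\ref{thm.K4}). Eliminating the auxiliary series leaves a single analytic equation $F(x,Y)=0$ (or a small system) whose relevant singularity $x=\rho$ is a branch point, pinned down by $F(x,Y)=0$ together with $F_Y(x,Y)=0$. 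Solving this pair numerically to the required precision gives $\rho(\cc^3)=1/23.5241\ldots$, hence $\gamma(\cc^3)=23.5241\ldots$.

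What remains is verification: that every graph of $\cc^3$ either decomposes as claimed or lies in the discarded subdominant subclass; that the symbolic translation is exact (decompositions unique, relabelling correct); and that the system has no competing singularity before $\rho$. This is lengthy but mechanical, and I would carry it out over Sections~\ref{sec.part2}--\ref{sec.unrootedstructure}, with Figure~\ref{fig.Ctree2} recording the resulting structure; the outcome is the stated value $\gamma(\cc^3)=23.5241\ldots$.
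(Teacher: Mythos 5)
Your roadmap matches the paper's strategy in broad outline (a unique decomposition of the rooted connected coloured graphs into series--parallel gadgets, a Cayley-tree functional equation $Y=x\,\Phi(Y)$, and a square-root singularity pinned down numerically), but the proposal leaves out precisely the piece of combinatorics that makes the computation possible, and gets one subsidiary step wrong. Your step (i) posits that ``colour $c$ is bad exactly when $N(G)$ contains a small rooted obstruction attached to the colour-$c$ set (for instance a cycle through three colour-$c$ vertices, or a $K_4^-$-configuration\ldots)'' — this is not a correct characterisation. The actual local rule the paper extracts is the $C$-tree formalism of Section~\ref{subsec.cbt}: working in the rooted block tree, each block carries \emph{at most one} vertex $v\ne r(B)$ whose \emph{subtree} $G_v$ contains colour $c$ (Proposition~5.15), and a block with $k$ such distinguished vertices decomposes as an SP-network substitution into an ``apex-tree'' $\cf_k'$ (Theorem~\ref{thm.Tk}). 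Without this the generating-function system cannot be written down: it is what produces $\hat A_C = \sum_{S\subseteq C}(-1)^{|C|-|S|}2^{|S|}\exp(\sum_{S'\subseteq S}A_{S'})$ (Lemma~\ref{lem.BAC}), the block-tree recursion $A_C=\sum_P B_{|P|}\prod_S\hat A_S$ (Lemma~\ref{lem.multitype}), and eventually $\tilde A_{RGB}=E\,e^{\tilde A_{RGB}}=R(E(x))$ followed by $E(x)=e^{-1}$, which is where $23.5241\ldots$ actually comes from (Lemmas~\ref{lem.A_RG}--\ref{lem.A_RGB}, and the composition $A(x)=8e^{A_{RGB}(F(x))}F(x)e^{\sum_{S\subset[3]}A_S(F(x))}$ in the proof).

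Two further points of divergence. First, the paper does not isolate a dominant subclass $\cc^3_\star\subseteq\cc^3$ and argue directly about it; instead it passes to the \emph{rooted} class $\cc^{\bullet 3}$, whose generating function is computed exactly, and then handles the unrootable graphs separately. Second, your proposed tool for dismissing the discarded subclass — ``the pendant-appearance arguments of Section~\ref{subsec.turningpoint} (Lemmas~\ref{lem.apexring} and~\ref{lem.apex_not_rd})'' — is the wrong instrument: those lemmas are about typical graphs in $\apexp k\ca$ and blockers of unrooted graphs, not about the coloured class $\cc^l$. The paper's argument is Lemma~\ref{lem.unrootable} (non-rootable graphs satisfy $\gamupper(\cu')\le\gamupper(\cc^{l-1})$, proved through the block tree together with Lemma~\ref{lem.bounded_aw_no_wheel} and Lemma~\ref{lem.2cuts}), combined with the subadditivity step Lemma~\ref{lem.gcrooted} and the union argument Lemma~\ref{lem.gcindstep}. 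Also note that the ``tree-like skeleton'' giving the $n^{-5/2}$ factor in Theorem~\ref{thm.K4} (the nice-core-tree decomposition of Section~\ref{subsec.unrootedstr}) is a \emph{different} decomposition from the one that produces the growth constant in Lemma~\ref{lem.part2main}; conflating the two is another sign the combinatorics has not been pinned down.
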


This shows that in Theorem~\ref{thm.main1} and Theorem~\ref{thm.gc} we have $k_0 = 1$ for $\cb  = \{K_4\}$:
\begin{corollary}\label{col.part2main}
    Let $\cb = \{K_4\}$. For any $k = 1, 2, \dots$ 
    \[
    \gamma(\ex (k+1) K_4) = \gamma(\rd {2k+1} K_4) = \gamma (\crd {2k+1}).
    \]
\end{corollary}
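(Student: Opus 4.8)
The plan is to derive the corollary from Lemma~\ref{lem.part2main} together with Theorem~\ref{thm.main1}, Lemma~\ref{lem.gcrd} and Proposition~\ref{prop.r-property}; the crux is to check that the index $k_0$ appearing in Theorem~\ref{thm.main1} equals $1$ for $\cb=\{K_4\}$.

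First I would propagate the growth constant of $\cc^3$ upward. As in the proof of Theorem~\ref{thm.gc}, the exponential formula gives that $\crd 3$ has the same growth constant as its connected subclass $\cc^3$, namely $23.5241\ldots$ by Lemma~\ref{lem.part2main}; and by Proposition~\ref{prop.r-property} the class $\rd 3 K_4$ has growth constant $23.5241\ldots$ as well. Since a redundant $\{K_4\}$-blocker of size $3$ forces $G$ to have no two disjoint subgraphs with a $K_4$-minor, $\rd 3 K_4 \subseteq \ex 2 K_4$, and therefore
\[
\gamlower(\ex 2 K_4) \ge \gamma(\rd 3 K_4) = 23.5241\ldots > 18.14\ldots = 2\,\gamma(\ex K_4).
\]
Thus $\gamupper(\ex 2 K_4) > 2\gamma(\ex K_4) = \gamma(\apexp 1 (\ex K_4))$, so in the notation of the proof of Theorem~\ref{thm.main1} the smallest positive integer $k_0$ with $\gamupper(\ex(k_0+1)\cb) > 2^{k_0}\gamma$ is $k_0=1$.

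Next I would verify that the hypotheses of Theorem~\ref{thm.main1} hold with $\ca=\ex K_4$: this class is proper, addable ($K_4$ is $2$-connected, indeed $3$-connected) and minor-closed with growth constant $\gamma=\gamma(\ex K_4)=9.073\ldots$; it contains all fans (fans are series-parallel) but not all $2$-fans, not all $K_{3,t}$, and not all wheels, since a sufficiently large $2$-fan, $K_{3,3}$, and $W_4=K_4$ each contain a $K_4$-minor. Applying Theorem~\ref{thm.main1} with $k\ge k_0=1$ gives $\rho(\ex(k+1)K_4)=\rho(\rd{2k+1}K_4)$, i.e.\ $\gamupper(\ex(k+1)K_4)=\gamupper(\rd{2k+1}K_4)$. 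By Lemma~\ref{lem.gcrd} the class $\cc^{2k+1}$ has a growth constant, hence (exponential formula, Proposition~\ref{prop.r-property}) so do $\crd{2k+1}$ and $\rd{2k+1}K_4$, all with the same value. Finally, using $\rd{2k+1}K_4\subseteq\ex(k+1)K_4$,
\[
\gamlower(\ex(k+1)K_4)\ge\gamlower(\rd{2k+1}K_4)=\gamupper(\rd{2k+1}K_4)=\gamupper(\ex(k+1)K_4)\ge\gamlower(\ex(k+1)K_4),
\]
so all four quantities coincide, which yields $\gamma(\ex(k+1)K_4)=\gamma(\rd{2k+1}K_4)=\gamma(\crd{2k+1})$ for every $k\ge 1$.

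The substantive difficulty is entirely in Lemma~\ref{lem.part2main} --- the generating-function analysis of Section~\ref{sec.part2} that produces the numerical value $\gamma(\cc^3)=23.5241\ldots$. Within the corollary proper, the only point needing care is that it is the strict inequality $23.5241\ldots>2\cdot 9.073\ldots$ that forces $k_0=1$, together with confirming that $\ex K_4$ genuinely satisfies the ``all fans / not all $2$-fans / not all $K_{3,t}$'' hypotheses of Theorem~\ref{thm.main1}.
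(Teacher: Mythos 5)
Your proposal is correct and follows essentially the same route as the paper's proof: Lemma~\ref{lem.part2main} plus Proposition~\ref{prop.r-property} give $\gamma(\rd 3 K_4)=23.52\ldots > 2\gamma(\ex K_4)$, which forces the turning point $k_0=1$, and then the lemmas of Sections~\ref{sec.struct}--\ref{sec.gcgen} close the argument. The only stylistic difference is that the paper invokes Lemma~\ref{lem.gamupper} (where $k_0$ is a free parameter one supplies, so the inequality $\gamupper(\ex 2 K_4)/\gamupper(\ex K_4)>2$ is checked directly) together with Theorem~\ref{thm.gc}, whereas you reach for Theorem~\ref{thm.main1} and argue about the value of $k_0$ from inside its proof and then use Lemma~\ref{lem.gcrd} directly for existence of $\gamma(\rd{2k+1}K_4)$; these are equivalent, as Theorem~\ref{thm.main1} and Theorem~\ref{thm.gc} are themselves derived from Lemma~\ref{lem.gamupper} and Lemma~\ref{lem.gcrd}.
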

%\m{perrasyti sita: siuo metu nera iskaitoma}
\begin{proof}
    Bodirsky, Gim\'{e}nez, Kang and Noy \cite{momm05} showed that $\gamma(\ex K_4) = 9.073\dots$. 
    By the exponential formula
    \[
      [x^n]C^3(x) \le [x^n]A_3 (x) \le [x^n]e^{C^3(x)},
    \]
    so by Proposition~\ref{prop.r-property} %,  Lemma~\ref{lem.part2main} 
    and Lemma~\ref{lem.part2main} 
    \[
        \gamma(\rd 3 K_4) = \gamma(\crd 3) = \gamma(\cc^3) > 2 \gamma(\ex K_4).
    \]
    %By Proposition~\ref{prop.r-property},
    %$\gamma(\rd {2k+1} K_4) = \gamma(\crd {2k+1})$.
%    By Lemma~\ref{lem.part2main} $\gamma(\rd 3 K_4) / \gamma(\ex K_4) > 2$, so
    %$\gamma(\ex (k+1) K_4) = \gamma(\rd {2k+1} K_4)$ for any $k$
    Now since $\rd 3 K_4 \subseteq \ex 2 K_4$, 
    the claim follows by Lemma~\ref{lem.gamupper} and Theorem~\ref{thm.gc}.
%    by Lemma~\ref{lem.gamupper},
%    Proposition~\ref{prop.r-property}, Lemma~\ref{lem.gcindstep} and Lemma~\ref{lem.unrootable},
%    since the class of unrootable graphs in $\cc^l$ has upper growth constant at most %graphs that are not $(2k+1)$-rootable have upper growth constant at most
%    \[
%      \gamma(\cc_{2k}) \le 2 \gamma (\cc_{2k-1}) < \gamma(\crd {2k+1}).
%    \]
    % $\gamma(\ex (k+1) \cb) = \gamma(\rd {2k+1})$ for any $k = 1, 2, \dots$
\end{proof}

%By Lemma~\ref{lem.gammaHiolder}, the ratio $\gamma_{l+1}/\gamma{l}$ is always greater than $2$. Furthermore, the numerical estimates
%indicate that this ratio is 

\subsection{Series-parallel networks}

Recall that a graph $G$ is called series-parallel if $G \in \ex K_4$.
A series-parallel graph $G$ with an ordered pair of distinguished vertices $s$ and $t$ 
is called an  \emph{SP-network} if $G$ is connected and adding an edge $st$ to $G$, the resulting 
multigraph is 2-connected and series-parallel (so a network isomorphic to $K_2$ is also an $SP$-network). $s$ and $t$
are called the \emph{poles} of $G$; $s$ is the \emph{source} of $G$ and $t$ is the \emph{sink} of $G$. 
The poles have no label and do not contribute to the size of $G$. A vertex $v \in V(G)$ that is not a 
pole, is called an \emph{internal vertex}.

%Denote by $\cs$ the class of series $SP$-networks and by $\cP$ the class of
%parallel $SP$-networks. Also, 
Denote by $\cd$ the class of all $SP$-networks and by $\ce_2$ the class of $SP$-networks consisting of a single edge between the source and the sink.
Also, denote by $\ce_1$ the class of degenerate 
%Here $\eps$ denotes a class of degenerate 
networks with source and sink represented by the same vertex and zero internal vertices.
The corresponding exponential generating functions are $E_2(x) = E_1(x) = 1$.
%\m{citation for Trakhtenbrot}
\begin{lemma}[Trakhtenbrot 1958, \cite{trakhtenbrot}, see also \cite{walsh82}]\label{lem.SP}
 %Each $SP$-network $G \in \cd$ is either an edge, or a parallel $SP$-network,
 %or a series $SP$-network.
 We have
 \[
 \cd = \ce_2 + \cs + \cP.
 \]
 %Every network in $\cs$ (respectively, $\cp$) decomposes in a unique way into networks in $\cp + e$ (respectively, $\cs + e$) as shown in Figure~??%\ref{fig.SP}.
 Here $\cs$ and $\cp$ are defined by $|\cs_0| = |\cp_0| = 0$ and
 \begin{align*}
     &\cs = (\cP + \ce_2) \times \SEQ_{\ge 1} (\cz \times (\cP + \ce_2) );
     \\ &\cP = \ce_2 \times \SET_{\ge 1} (\cs) + \SET_{\ge 2} (\cs).
 \end{align*}
 Furthermore, the classes $\cs$ and $\cp$ correspond to disjoint classes of networks
 and the above relation corresponds to % description 
 a unique decomposition of a graph $G \in \cs$ (respectively, $G \in \cp$) into subgraphs in $\cp \cup \ce_2$ (respectively, $\cs \cup \ce_2$)
 with pairwise disjoint sets of labels.
 % each graph $G \in \cs$ (respectively $G \in \cp$)
\end{lemma}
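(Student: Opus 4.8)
The plan is to prove this as a structure theorem, establishing the decomposition together with the disjointness and uniqueness claims by induction on $|E(G)|$ for a network $G\in\cd$. The one external input is the classical fact, already implicit in the excerpt, that a graph is series-parallel exactly when it has no 3-connected minor (combine $\ex K_4=$ series-parallel with Lemma~3.2.1 of \cite{diestel}); consequently series-parallel graphs are closed under 2-sums, that is, under gluing two series-parallel graphs along a pair of vertices and optionally deleting the resulting parallel edge. For $G\in\cd$ with poles $s,t$ I write $H=G+st$ for the 2-connected series-parallel multigraph obtained by adding a fresh edge $e_0$ between the poles; since $G$ is connected, $|E(G)|\ge 1$, with equality exactly when $G\in\ce_2$, so we may assume $|E(G)|\ge 2$.

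The dichotomy is whether or not $G$ has an \emph{internal cut vertex}. First I would record two consequences of 2-connectivity of $H$: no pole can be a cut vertex of $G$ (removing a pole from $H$ deletes $e_0$ too), so ``$G$ has no internal cut vertex'' is equivalent to ``$G$ is 2-connected''; and any cut vertex $v$ of $G$ splits $G$ into \emph{exactly} two pieces, one containing $s$ and one containing $t$, since a third piece would survive in $H-v$. In the series case this lets me order the cut vertices $v_1,\dots,v_{k-1}$ of $G$ along $s,\dots,t$, with $k\ge 2$, obtaining bricks $B_1,\dots,B_k$ where $B_i$ is the piece between $v_{i-1}$ and $v_i$ (with these two vertices as its, unlabelled, poles). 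Each $B_i$ has no internal cut vertex, hence by the induction hypothesis lies in $\cP+\ce_2$, and since the $v_i$ are labelled internal vertices of $G$, and the bricks pairwise share only poles, this is exactly the decomposition $\cs=(\cP+\ce_2)\times\SEQ_{\ge 1}(\cz\times(\cP+\ce_2))$ into subgraphs with disjoint label sets; each $B_i$ has fewer edges, so the recursion is well-founded.

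In the parallel case ($G$ is 2-connected) I would decompose $H$ at $\{s,t\}$ into its \emph{bridges}: each $st$-edge of $H$ is a singleton bridge, and each component $K$ of $H-\{s,t\}$ gives the bridge $H_K$ spanned by $V(K)\cup\{s,t\}$ together with the edges meeting $K$ (so $H_K$ contains no $st$-edge). A short argument from 2-connectivity of $H$ shows each such $K$ is adjacent to both poles and that $H_K+st$ is again 2-connected; being also series-parallel it is a network in $\cd$, and since $H_K-\{s,t\}=K$ is connected and $H_K$ has no $st$-edge, $H_K$ has no split at $\{s,t\}$ and hence, by the induction hypothesis, is not parallel; not being a single edge either, it must lie in $\cs$. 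There are at least two bridges ($e_0$, plus at least one $H_K$, as $H$ cannot be a bounded multi-edge), and $G=H-e_0$ is their union with $e_0$ removed; since $G$ is simple it retains at most one $st$-edge. If it retains one, then $G\in\ce_2\times\SET_{\ge 1}(\cs)$; if not, $G$ is the union of the $H_K$'s, of which there must be at least two, since a single one would be a series network, contradicting the parallel hypothesis, so $G\in\SET_{\ge 2}(\cs)$. This is exactly $\cP=\ce_2\times\SET_{\ge 1}(\cs)+\SET_{\ge 2}(\cs)$, again as a decomposition into subgraphs with disjoint label sets. The converse inclusions $\cs,\cP\subseteq\cd$ and the facts that a series composition genuinely has an internal cut vertex while a parallel composition is genuinely 2-connected follow from the 2-sum closure of series-parallel graphs plus routine connectivity checks; disjointness of $\ce_2,\cs,\cP$ and uniqueness of both decompositions are then immediate, since the edge count, the internal cut vertices, and the $\{s,t\}$-bridges are canonical invariants of $G$.

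The main obstacle is the bookkeeping in the parallel case: one must fix the notion of bridge so that the bridges partition $E(H)$ and pairwise meet only in $\{s,t\}$, verify that re-adding $st$ makes each bridge 2-connected (this is precisely where series-parallelness is used, since a 3-connected portion of $H$ supported at $\{s,t\}$ would produce a bridge that is neither a series network nor an edge), and correctly account for the single $st$-edge that may survive in $G$, which is what forces the lower bound $1$ versus $2$ on the number of terms in the two $\SET$ operators. The series case and the converse directions are comparatively routine once the 2-sum characterisation of $\ex K_4$ is available.
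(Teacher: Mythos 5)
The paper does not actually prove this lemma --- it cites Trakhtenbrot and Walsh and only explains the decomposition verbally and via a figure, so there is no in-paper proof to compare against. Your inductive structure-theorem approach (induction on $|E(G)|$, dichotomy on whether $G$ has an internal cut vertex, decomposition of $H=G+st$ into $\{s,t\}$-bridges in the 2-connected case) is the standard route and is essentially sound: the claims about cut vertices giving exactly two sides, the reduction of each brick $B_i$ to a smaller network in $\cP+\ce_2$, and the bridge bookkeeping in the parallel case are all correct, as is the observation that $\ce_2$, $\cs$, $\cP$ are distinguished by being a single edge, having an internal cut vertex, or being 2-connected with at least two edges, which also yields disjointness and uniqueness.

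There is, however, one genuine circularity in the parallel case. You apply the induction hypothesis to each bridge $H_K$ to conclude $H_K\in\cs$, which requires $|E(H_K)|<|E(G)|$; you then rule out the possibility of a single $H_K$ and no surviving $st$-edge (i.e.\ $H_K=G$) ``since a single one would be a series network, contradicting the parallel hypothesis.'' But the fact that $H_K$ is a series network was itself the conclusion drawn from the induction hypothesis, which cannot be invoked when $H_K=G$. What is needed is a direct argument, not appealing to the induction hypothesis, that a 2-connected series-parallel network $G$ with $|E(G)|\ge 2$ and no $st$-edge must have $G-\{s,t\}$ disconnected. This is short: by 2-connectivity there are two internally disjoint $s$-$t$ paths $P_1,P_2$, both with internal vertices since there is no $st$-edge; if $G-\{s,t\}$ were connected, one could choose a path $Q$ in $G-\{s,t\}$ from an internal vertex of $P_1$ to an internal vertex of $P_2$, internally disjoint from $P_1\cup P_2$; then $P_1\cup P_2\cup Q$ together with the added edge $st$ contracts to $K_4$, contradicting that $G+st$ is series-parallel. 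This is precisely the point you flag as ``where series-parallelness is used,'' but it must sit outside the induction rather than be derived from it. With that lemma inserted before applying the induction hypothesis to the bridges, the proof goes through.
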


%\emph{I need a connection between the combinatorial structures and the graphs, because the bijection does not say anything about the structure of a network}.

\begin{figure}
     \begin{center}
    \includegraphics[height=5cm]{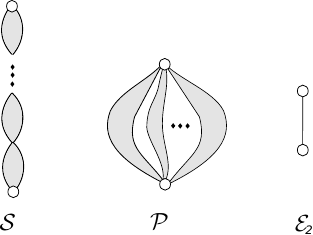}
    \end{center}
    \caption{The structure of the class  $\cd$ provided by Lemma~\ref{lem.SP}.}
    \label{fig.SP}
\end{figure}
%The above equalities indicate only the combinatorial isomorphism, but a stronger fact is true.
%The result is actually stronger than just a
%A fact that we will use many times below is 
%that we have a stronger kind of isomorphism than just 
%that 
% (i.e., equality between counts) here: the above relations also encode natural unique decompositions of series-parallel graphs (this will also be true for most of the identities proved in this section).
%Let us explain 
The last statement of the lemma asserts that there is a stronger kind of isomorphism than just combinatorial one.
More precisely, the classes $\cs$, $\cp$ and $\ce_2$ can (and will) be considered as classes of graphs, which naturally partition the class of all $SP$-networks, %and the decomposition given in Lemma~\ref{lem.SP} naturally corresponds to a unique decomposition of $SP$-networks,
see Figure~\ref{fig.SP}. A network $G \in \cs$ is
called a \emph{series SP-network}. $G$ can be decomposed uniquely into $k \ge 2$ networks
$H_1, \dots, H_k \in \cp + \ce_2$, where the sink of $H_i$ is the source of $H_{i+1}$ for $i = 1,\dots,k-1$, the source of $G$ is
the source of $H_1$, the sink of $G$ is the sink of $H_k$, and
the sets of internal vertices are disjoint for $H_i, H_j$, $i \ne j$. We say that $G$ is obtained from $H_1, \dots, H_k$
by \emph{series composition}.

A network $G \in \cp$ with source $s$ and sink $t$ is called a \emph{parallel SP-network}.  $G$ can be decomposed uniquely
into $k \ge 2$ networks $S_1, \dots, S_k \in \cs \cup \ce_2$, where at most one network is in $\ce_2$. 
In such a decomposition,
the sets of internal vertices of $S_1, \dots, S_k$ are pairwise disjoint, the source of $S_1, \dots, S_k$ is $s$, and the
sink of $S_1, \dots, S_k$ is $t$. We say that $G$ is obtained from $S_1, \dots, S_k$ by \emph{parallel composition}.
%Note that 
The above decomposition also implies that for any internal vertex $v$ of $G \in \cp$, we may represent $G$ as a parallel composition
of a network $S \in \cs$ (where $S = S_j$ with $v \in V(S_j)$) and a network $D \in \cd$ (where $D = \cup_{i \ne j} S_i$).

%Since $G$ has at least one internal vertex $v$, there is at least one network in its decomposition, say $S_1$, which
%is a series network. The parallel composition of the remaining networks $S_2, \dots, S_k$ is a graph in $D \in \cd$.
%An SP-network $G$ is called \emph{parallel} if $G - uv$ is 2-connected. \m{check: use definitions from other sources.}
%Let 
%%$\cp$ denote the class
%$P$ denote the egf
%of  the parallel SP-networks and let 
%%$\cd$ denote the class of all
%$D$ denote the egf of all SP-networks. % (where the poles in both cases bear no label/weight). 

It has been shown, see \cite{momm05} and \cite{bps08}, that the exponential generating functions of $\cd$ and $\cp$ satisfy %\m{are the unique solutions with positive coefficients of the following equations}
\begin{align}
    \frac {x D(x)^2} {1 + x D(x)} &= \ln \left( \frac {1 + D(x)} 2 \right); \label{eq.D}
    \\ P(x) + 1  &= \frac {D(x)} {1 + x D(x)}. \label{eq.P}
\end{align}
To keep formulas shorter, for exponential generating functions $A(x)$ we will
often skip ``$(x)$''; $x \in \mathbb{C}$ will usually be fixed, and its value should be clear from the context.
Identities where the range of $x$ is not explicitly stated, will hold for some $\delta > 0$ and
any $x \in \mathbb{C}$ with $|x| < \delta$.

The following simple facts were used already in \cite{trakhtenbrot}. % \cite{trakhtenbrot}.

\begin{prop} \label{prop.SPpath}
  Let $G$ be an $SP$-network with poles $s$ and $t$. Then for each internal vertex $v$
of $G$ there is a path from $s$ to $t$ containing $v$.
\end{prop}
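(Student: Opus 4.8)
The plan is to reduce the claim to the elementary fact that any two vertices of a $2$-connected graph lie on a common cycle, applied to a suitable auxiliary graph built from $G$.

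First I would dispose of the trivial case: if $G$ has no internal vertex there is nothing to prove, so assume $v$ is an internal vertex, whence $v \notin \{s,t\}$ and $|V(G)| \ge 3$. By the definition of an $SP$-network, the multigraph $G^+$ obtained from $G$ by adding a new edge between $s$ and $t$ is $2$-connected. To avoid reasoning about multigraphs, I would then subdivide this new edge by a fresh vertex $w$, producing a simple graph $G'$ on at least four vertices; since subdividing an edge of a $2$-connected graph leaves it $2$-connected, $G'$ is $2$-connected, and by construction $G' - w = G$.

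Next I would apply the standard consequence of Menger's theorem (see \cite{diestel}) that in a $2$-connected graph any two vertices lie on a common cycle, to the vertices $v$ and $w$ of $G'$: this yields a cycle $C'$ of $G'$ containing both. Since the only neighbours of $w$ in $G'$ are $s$ and $t$, the two edges of $C'$ incident to $w$ must be $ws$ and $wt$, so $P := C' - w$ is a path of $G' - w = G$ with endpoints $s$ and $t$; moreover $v \in V(C') \setminus \{w\} = V(P)$. Thus $P$ is a path from $s$ to $t$ in $G$ passing through $v$, as required.

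I expect essentially no real obstacle here; the only points deserving a word of care are that $v$ is genuinely distinct from both poles (so that it is eligible to play the role of the second vertex on the cycle) and the case $st \in E(G)$, in which $G^+$ really is a multigraph — the subdivision step is precisely what lets me invoke the cycle lemma in the simple-graph setting. An alternative would be an induction along the series-parallel decomposition of Lemma~\ref{lem.SP}, tracking the poles of the series and parallel components, but the argument above is shorter and self-contained.
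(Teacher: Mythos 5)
Your proof is correct, and it takes a genuinely different route from the paper's. The paper argues by case analysis on the series-parallel decomposition of Lemma~\ref{lem.SP}: if $G$ is parallel it applies the two-disjoint-paths consequence of $2$-connectedness directly, and if $G$ is series it decomposes $G$ into $H_1,\dots,H_k \in \cp + \ce_2$ and threads pole-to-pole paths through each piece, with $v$ on the piece containing it. Your argument sidesteps the decomposition entirely: you build the auxiliary simple graph $G' = G + w$ with $\Gamma(w) = \{s,t\}$, observe it is $2$-connected since it is a subdivision of the $2$-connected multigraph $G^+$, invoke the standard fact that two vertices of a $2$-connected graph lie on a common cycle, and read off the $s$--$t$ path as $C' - w$. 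The small points you flag are handled correctly: assuming $v$ internal forces $v \notin \{s,t\}$ and $|V(G')| \ge 4$, and the subdivision step cleanly reduces the parallel-edge case $st \in E(G)$ to the simple-graph setting. Your approach is shorter and does not depend on Lemma~\ref{lem.SP}; the paper's approach is closer in spirit to how the series-parallel structure is used elsewhere in the paper (e.g.\ Proposition~\ref{prop.SPcycle}), but for this particular statement either works.
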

\begin{proof}
  If $G$ is a parallel $SP$-network, then since $G$ is 2-connected,
there are internally disjoint paths, a path from $v$ to $s$ and a path from $v$ to $t$. Connecting
them, we obtain a path from $s$ to $t$. 

  Suppose $G$ is a series network. %We may use induction on the number of vertices in $G$.
  Let $H_1, \dots, H_k$ be the decomposition of $G$ into graphs in $\cp + \ce_2$ as in Lemma~\ref{lem.SP}.
  Then for $i = 1, \dots, k$, the network $H_i$ contains a path $P_i$ connecting its poles, and, if $v$ is an internal vertex of $H_i$, also containing $v$. Connecting each of the paths $P_i$ yields a path from $s$ to $t$ that contains $v$.
\end{proof}

\begin{prop} \label{prop.SPcycle}
  Let $G$ be a parallel $SP$-network with source $s$ and sink $t$. Then
  for each internal vertex $v$ of $G$  there are two internally disjoint paths from $s$ to $t$ such 
  that one of the paths contains $v$.
\end{prop}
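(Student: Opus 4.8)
The plan is to use the unique parallel decomposition of $G$ from Lemma~\ref{lem.SP} together with Proposition~\ref{prop.SPpath}. Since $G \in \cp$, write $G$ as a parallel composition of networks $S_1, \dots, S_k \in \cs \cup \ce_2$ with $k \ge 2$, all having source $s$ and sink $t$, and with pairwise disjoint sets of internal vertices. The internal vertex $v$ lies in exactly one of these, say $S_j$; since $v$ is internal and a network in $\ce_2$ has no internal vertices, we have $S_j \in \cs$, so $S_j$ is itself an $SP$-network with poles $s$ and $t$ in which $v$ is an internal vertex.

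First I would apply Proposition~\ref{prop.SPpath} to $S_j$ to obtain a path $P$ from $s$ to $t$ inside $S_j$ that passes through $v$. Next, since $k \ge 2$, pick some index $i \ne j$; the network $S_i$ is connected and contains both $s$ and $t$, so it contains a path $P'$ from $s$ to $t$ (if $S_i \in \ce_2$ this is just the edge $st$, and if $S_i \in \cs$ one again concatenates paths through its $\cp+\ce_2$-decomposition as in the proof of Proposition~\ref{prop.SPpath}).

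Finally I would check that $P$ and $P'$ are internally disjoint. The internal vertices of $P$ lie among the internal vertices of $S_j$, and those of $P'$ among the internal vertices of $S_i$; these two vertex sets are disjoint by the decomposition, and moreover $s$ and $t$ are the only vertices that $S_i$ and $S_j$ have in common. Hence $P$ and $P'$ meet only in $\{s,t\}$, which gives the two required internally disjoint $s$--$t$ paths, one of which, namely $P$, contains $v$.

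There is no genuine obstacle here; the one point to handle carefully is the observation that an internal vertex cannot lie in a trivial $\ce_2$-component, so that the component $S_j$ containing $v$ is a bona fide series network (or more generally a non-trivial $SP$-network) to which Proposition~\ref{prop.SPpath} applies.
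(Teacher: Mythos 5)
Your proposal is correct and follows essentially the same route as the paper: decompose $G$ into parallel components, apply Proposition~\ref{prop.SPpath} inside the component $S_j$ containing $v$, and take a second $s$--$t$ path through a different component. The only cosmetic difference is that the paper bundles all the other components into a single network $D = \bigcup_{i\ne j} S_i \in \cd$ and takes the second path there, whereas you pick a single $S_i$ with $i\ne j$; both yield the same internally disjoint paths.
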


\begin{proof}
%  By Menger's theorem, since $G$ is 2-connected there are two paths
%  $P_1$ from $v$ to $s$ and $P_2$ from $v$ to $t$ in $G$ that share only the vertex $v$.
  %Let $\cp_2$ be the class of multigraphs consisting of two vertices and at least two parallel
%edges connecting them.
 % By Lemma~\ref{lem.SP}, see also \cite{walsh85}, $G$ can be obtained by replacing
 % each edge of a graph $H \in \cp_2$ by a network in $\cs + e$. $v$ belongs
 % to exactly one series network $H$ in such a decomposition. By %the Menger's theorem,
 % Proposition~\ref{prop.SPpath} there is a path $P$ from $s$ to $t$ in $H$ that contains $v$.
 % Let $H'$ be another network in the decomposition of $G$ that corresponds to a different edge of the multigraph. $H'$ is connected, so it contains a path from $s$ to $t$ internally disjoint from $P$ in $H'$.
 By Lemma~\ref{lem.SP} % and the comment after it, 
 the graph $G$ can be obtained by a parallel composition of two networks $S \in \cs$ and $D \in \cd$ with disjoint sets of vertices
 where $v$ is an internal vertex of $\cs$. By Proposition~\ref{prop.SPpath}, there is a path $P$ from $s$ to $t$ that contains $v$. Now $D$ contains another path
 from $s$ to $t$ internally disjoint from $P$.
\end{proof}

\begin{prop}\label{prop.v}
       Let $G \in \cd$. The network $G'$ obtained by adding a new vertex $w$ connected to both poles of $G$
       satisfies $G' \in \cp$.
\end{prop}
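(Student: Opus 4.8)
The plan is to realise $G'$ explicitly as a parallel composition of two $SP$-networks and then read off $G'\in\cp$ directly from the decomposition in Lemma~\ref{lem.SP}. Let $P$ be the $SP$-network carried by the path $s$--$w$--$t$, with poles $s,t$ and unique internal vertex $w$; this is the series composition of two copies of the single-edge network $\ce_2$, so $P\in\cs$. Since $w\notin V(G)$, the networks $G$ and $P$ share exactly their poles, and the graph $G'$ of the statement is precisely their parallel composition (identify the two sources, identify the two sinks). In particular, once $G'$ is shown to lie in $\cd$ it is automatically an $SP$-network with poles $s,t$ and $w$ internal, so no separate verification of connectivity, $2$-connectivity of $G'+st$, or $K_4$-minor-freeness is needed.

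Next I would split into the three cases $G\in\ce_2$, $G\in\cs$, $G\in\cp$ afforded by $\cd=\ce_2+\cs+\cp$. If $G\in\ce_2$, then $G'$ is the parallel composition of one edge and the single series network $P$, so $G'\in\ce_2\times\SET_{\ge 1}(\cs)\subseteq\cp$. If $G\in\cs$, then $G'$ is the parallel composition of the two series networks $G$ and $P$ (with disjoint internal-vertex sets, since $w$ is fresh), so $G'\in\SET_{\ge 2}(\cs)\subseteq\cp$. Finally, if $G\in\cp$, write $G$ as the parallel composition of series networks $S_1,\dots,S_m$ ($m\ge 1$) together with at most one copy of $\ce_2$; then $G'$ is the parallel composition of $S_1,\dots,S_m,P$ together with the same (at most one) copy of $\ce_2$, which is again a member of $\cp$ by Lemma~\ref{lem.SP}. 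In all three cases $G'\in\cp$, as claimed.

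The only point that needs a moment's care is the last case: one must check that adjoining $P$ as a further parallel component of an already parallel network genuinely yields the unique parallel decomposition of $G'$, rather than collapsing components or producing a series network. This is immediate from the shape of the decomposition in Lemma~\ref{lem.SP} — any network that is the parallel composition of a family of series networks and at most one single edge, all with pairwise disjoint internal vertices, lies in $\cp$ — and the required disjointness is guaranteed by $w\notin V(G)$. I do not expect any real obstacle beyond unwinding the definitions of series and parallel composition.
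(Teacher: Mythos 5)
Your proof is correct and takes the same route the paper does: the paper simply asserts that the claim is an immediate consequence of Lemma~\ref{lem.SP} and the discussion following it, which is exactly the bijection between $\cp$ and parallel compositions of members of $\cs\cup\ce_2$ (at most one of them in $\ce_2$) that you invoke. Your version merely spells out the three cases $G\in\ce_2$, $G\in\cs$, $G\in\cp$, in each case adjoining the fresh series network $P=s\text{--}w\text{--}t$ as an extra parallel component, which is precisely the intended reading.
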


\begin{proof} This is an immediate consequence of Lemma~\ref{lem.SP}, see the comment after it.
\end{proof}

\bigskip

%\begin{prop}
%    Let $G \in \cp$ and let $S_1, \dots, S_k \in \cs \cup e$ be the decomposition of $G$ provided by Lemma~\ref{lem.SP}.

%Given a path $P$, we denote by $int(P)$ the path obtained from $P$ by removing its endpoints.
%If a graph $H'$ is a subdivision of $H$, then we call a path $P$ in $H'$
%\emph{simple} with respect to $H$, if $int(P)$ does not contain any original vertex of $H$,
% so that $P$ is a ``subpath'' of one of the subdivided edges of $H$.
%We call a subgraph $H$ of an network $G$ its \emph{subnetwork}, if $H$ has
%exactly two vertices separating the rest of $H$ from $G-H$. 

%\begin{lemma}
%  Let $B$ be a subnetwork of a network $G$. Let $H' \subseteq B$ be a subdivision of a 3-connected
%  graph $H$. Then $H'$ contains a simple path  $P$
%  (with respect to $H$) such that $H'[V(B)] = P$ or $H'[V(B)] = H' - int(P)$.   
%\end{lemma}

\subsection{Rooted graphs of multiple types}
\label{subsec.cbt}

Let $F(x)$ and $B(x)$ denote the exponential generating functions of
rooted connected series-parallel graphs and biconnected series-parallel graphs 
%together with the class $\ce_2$),
respectively. 
Then (see, e.g., \cite{momm05, gimeneznoy09})
\begin{equation}\label{eq.unitypeblocks}
  F(x) = x e^{B'(F(x))}.
\end{equation}
%A similar
An analogous formula works for any addable class of graphs. %, for example the class of planar graphs.
However, it fails for classes $\cc^k$: we have to consider several types of rooted graphs instead.

Let $G$ be a coloured graph with one pointed uncoloured vertex $r$, called the \emph{root} of $G$. %, and at least one other vertex.
%We call $r$ the root of $G$. 
Let $C$ be the set of all colours of $G$. 
We call a colour $c$ \emph{good} for $G$, if the graph obtained from $G$ by adding
a new vertex $w$ connected to $r$ and each vertex of $G$ coloured $c$
contains no $K_4$ as a minor. Otherwise we call $c$ \emph{bad} for $G$.
%Suppose that for each $c \in C$, the graph $G$ satisfies the following property:
We call $G$ a \emph{$C$-tree}, if the following three conditions are satisfied:
\begin{itemize}
    \item[(a)] each colour $c \in C$ is good for $G$, 
    \item[(b)] $G$ is connected and it has no cut vertex $x$ such that $G-x$ has a component without colours and without $r$, and
    \item[(c)] $r$ is not coloured, not a cut vertex of $G$ and not the only vertex of $G$.
\end{itemize}
For a positive integer $k$ and $C \subseteq [k]$, denote by $\ca_C$ the family of all
$C$-trees, see Figure~\ref{fig.RGB-tree}. We define $\ca_\emptyset = \emptyset$. If $C \ne \emptyset$, then for $n = 0, 1, 2$,  $|\ca_{C, n}|$ is equal to $0, 1$ and $2^{|C|}$ respectively.
We will now study the exponential generating functions
of $\ca_C$; in the end of this section we will use the results to obtain the growth constant of~$\cc^k$.
%We study exponential generating functions of these classes in order to obtain the growth constant of $\cc^l$.
%We will see later that growth constants of $[l]$-trees
%and $l$-rootable graphs (with respect to $\cb = \{K_4\}$) are the same.
% (the graph consisting of a single empty vertex belongs to $\ca_C$).
%It will follow from the main result of this section that for any positive integer $k$ $\rho(\rd k K_4) = \rho(\ca_{[k]})$.

\begin{figure}
     \begin{center}
    \includegraphics[height=5cm]{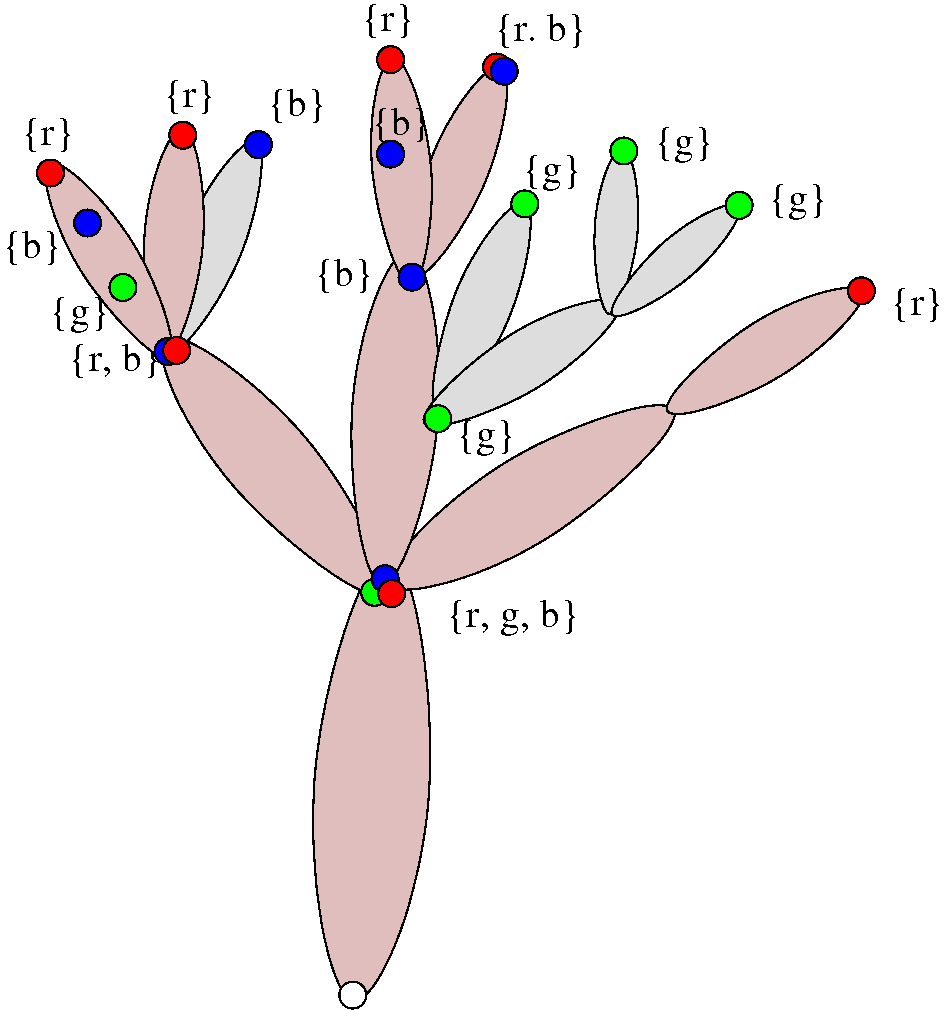}
    \end{center}
    \caption{A \textit{\{{\color{red} red}, {\color{green}green}, {\color{blue}blue}\}}-tree $G$. The elliptic shapes represent blocks, the white node is the root $r$.
             For each colour $c$ the blocks on the path from the root vertex to a vertex coloured $c$ 
             form a sequence of SP-networks joined at their poles and only the ``joints'' of these networks can have colour~$c$. All such
             blocks for any given colour $c$ form a $\{c\}$-tree which is a ``subtree'' of $G$, here the subtree for $c = red$ is highlighted.}
       \label{fig.RGB-tree}
\end{figure} 

% % %The next lemma, which we prove in the end of this section, shows why it is worth to focus on the classes $\ca_C$.
% % %\begin{lemma} \label{lem.rhoRGBenough}
% % %    Denote by \, $\cc$ the class of connected rooted series-parallel graphs.
% % %    Let $k$ be a positive integer.
% % %    If $\rho(\crd {[2k+1]}) < \rho(\crd {[2k - 1]}) /2$ then 
% % %    \[
% % %    \rho(\rd {2k+1} K_4) = \rho(\crd {[2k+1]}(\cc)).
% % %    \]
% % %  %Suppose $\ca_3'$ has a growth constant $\gamma > 2\gamma_{SP}$. Then $\gamma$ is the growth constant of $\rd 3 (K_4)$.
% % %\end{lemma}

Let $G$ be a $C$-tree with root $r$, for some $C \subseteq [k]$.
Then $G$ has a unique rooted block tree $T$ with root $r$. %\m{change the definition of block tree}
%Assuming that $|V(G)| > 1$, c
Consider any block $B$ of $G$. % that contains the root.
%Recall %from Section~???
%that
Denote by $r(B)$ the vertex of $B$ closest to $r$ in $G$.
%$X_c(B)$ denotes the set of vertices $v \in V(B) \setminus \{r(B)\}$
%such that either $v$ has colour $c$, or some component of $G - v$ not containing the root $r$ has colour $c$.
For $v \in V(G)$, denote by $G_v$ the subgraph of $G$ induced on $v$ and the vertices of
all blocks of $G$ that are ancestors of $v$ in $T$ (if $v$ is not a cut vertex and $v \ne r$, then
it has no ancestors), with the label and colour from $v$ removed. 
We call the set of colours $\Col(v) \cup \Col(G_v)$ the \emph{type} of $v$ in $G$, and denote it by $type_G(v)$.
For any block $B$ of $G$ and any colour $c$, let $X_c(B)$ denote the set of vertices $v \in V(B)$ such that $c \in type_G(v)$. 
\begin{prop}
 %Let $k$ be a positive integer, let $C \subseteq [k]$. 
 If $G$ is a $C$-tree
 then for any block $B$ of $G$ and any $c \in C$ we have $X_c(B) \le 1$.
\end{prop}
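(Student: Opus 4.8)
The plan is to argue by contradiction. Suppose some block $B$ of the $C$-tree $G$ and some colour $c\in C$ have $|X_c(B)|\ge 2$, and fix distinct $u,v\in X_c(B)$. Since $c$ is good for $G$ (condition (a) in the definition of a $C$-tree), the graph $G^{+}$ obtained from $G$ by adjoining a new vertex $w$ joined to $r$ and to every vertex coloured $c$ has no $K_4$ minor; as $K_4$ has maximum degree three this is equivalent to $G^{+}$ having no subdivision of $K_4$. I would derive a contradiction by exhibiting such a subdivision.

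First I would translate the hypothesis $u,v\in X_c(B)$ into structural information, by unwinding the definition of $type_G$ on the rooted block tree $T$. Write $G_{r(B)}$ for the subgraph spanned by $r(B)$ together with the ancestor blocks of $B$ in $T$; then $G_{r(B)}$ is connected and $V(G_{r(B)})\cap V(B)=\{r(B)\}$. Using that a non-cut vertex of $G$ has no ancestors in $T$, one checks that a vertex $x\in V(B)$ lies in $X_c(B)$ precisely when $x$ is coloured $c$, or $x$ is a cut vertex with $x\ne r(B)$ and some vertex of $V(B)\cup V(G_{r(B)})$ is coloured $c$, or $x=r(B)$ and some vertex of $V(G_{r(B)})$ is coloured $c$. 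Hence $|X_c(B)|\ge 2$ forces a vertex $z$ coloured $c$ with $z\in V(B)\cup V(G_{r(B)})$, and therefore a path in $G^{+}$ from $w$ to $r(B)$ whose interior misses $V(B)$ (run $w\,r$ and then a path to $r(B)$ through the connected graph $G_{r(B)}$, or use $w\,z$ first when $z\in V(G_{r(B)})$). Likewise, since $c\in type_G(u)$ there is a vertex coloured $c$ in the connected graph $G_u$; a path in $G_u$ to it followed by its edge to $w$ is a path from $u$ to $w$ in $G^{+}$, and similarly for $v$.

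The core step is to marry these routes to the $2$-connectivity of $B$. Assuming first that $|V(B)|\ge 3$, for any three vertices of the $2$-connected graph $B$ there is a cycle of $B$ through all of them; I would choose a cycle $C$ through $u$, $v$ and a well-chosen third vertex $p$ — taking $p=r(B)$ and using the route through $G_{r(B)}$, or $p$ a $c$-coloured vertex of $B$ and using the edge $wp$ — so that the three arcs of $C$ are the paths $uv$, $vp$, $pu$ of a $K_4$-subdivision with branch vertices $u,v,p,w$, the remaining three paths being the routes from $w$ to $u$, to $v$ and to $p$ built above. The real work, and what I expect to be the main obstacle, is choosing $p$ and these routes so that all six paths are pairwise internally disjoint — the routes leaving $w$ tend to re-enter $B$ or $G_{r(B)}$ where the cycle and the $G_{r(B)}$-route already pass — and dealing with the degenerate configurations in which the obvious construction collapses: $B$ a single edge, a $c$-coloured witness equal to $r(B)$ or to $r$, or $r$ of degree one. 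In those cases I would fall back on condition (b) of a $C$-tree (every component hanging off a cut vertex of $G$ carries a colour) to locate a further vertex of $X_c(B)$, hence a further route from $w$ into $V(B)$, and on Lemma~\ref{lem.2cuts} applied with the $2$-cut $\{r(B),w\}$ to suitable subgraphs of $G^{+}$ (legitimate since $K_4$ is $3$-connected) to control how a would-be subdivision meets the cut and so force the paths apart. In every case one obtains a $K_4$-subdivision in $G^{+}$, contradicting that $c$ is good; hence $|X_c(B)|\le 1$.
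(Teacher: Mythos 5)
Your approach is genuinely different from the paper's, and as written it contains a false step plus an acknowledged gap precisely where the real work is.

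The paper's proof never constructs a $K_4$-subdivision in $G^{+}$ explicitly. After reducing to the root block, it takes two vertices $x,y\in X_c(B)\setminus\{r\}$ and passes to the \emph{coloured minor} $H$ of $G$ obtained by contracting the pendant piece below $x$ onto $x$ and the one below $y$ onto $y$: $H$ is exactly $N(B)$, a $2$-connected series-parallel graph rooted at $r$ with $\Col(x)=\Col(y)=\{c\}$. The only remaining ingredient is the one-line fact that adding a new vertex with at least three neighbours (here $w\sim r,x,y$) to a $2$-connected graph in $\ex K_4$ yields a $K_4$ minor. Hence $c$ is bad for $H$, and since being bad is inherited from coloured minors, $c$ is bad for $G$, contradicting condition (a). The contraction step is what lets the paper avoid the disjoint-paths bookkeeping you flag as the main obstacle.

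Two concrete problems with your route. First, the assertion ``for any three vertices of the $2$-connected graph $B$ there is a cycle of $B$ through all of them'' is false: cyclability of three prescribed vertices requires $3$-connectivity, and $K_{2,3}$ — a perfectly valid block of a series-parallel graph — has no cycle through its three degree-$2$ vertices. Second, you explicitly defer the central difficulty (choosing the third branch vertex $p$ and routing six pairwise internally disjoint paths, while the routes to $w$ re-enter $B$ and the ``stalk'') and conclude with ``in every case one obtains a $K_4$-subdivision'' without supplying the argument; that case analysis is the whole proof and is not given. In effect you are trying to re-derive by hand the general fact about attaching a degree-$3$ vertex to a $2$-connected $\ex K_4$ graph, but only in the special configuration at hand and without a clean reduction, which is why the disjointness does not fall out. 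A smaller point: your description of $X_c(B)$ via ``ancestor'' blocks is not the reading that makes the proposition (or the block decomposition in Lemma~\ref{lem.multitype}) work — the relevant object $G_v$ must be the pendant subgraph hanging \emph{below} $v$, which is also what gets contracted in the paper's minor argument.
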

\begin{proof}
    Let $G_B$ be the subgraph of $G$ induced on the vertices of $B$ and
    the vertices of all blocks that are ancestors of $B$ in the rooted block tree of $G$.
    It is easy to see that $G_B$ is  a $C'$-tree for some $C' \subseteq C$. Therefore
    it suffices to prove the claim in the case where $r(B)$ is the root $r$ of $G$.

    Suppose $X_c(B) \ge 2$. Then $G$ has a coloured minor isomorphic to a vertex-pointed 
    2-connected graph $H$
    obtained from $N(B)$ by setting $\Col(x)$ $= \Col(y) = \{c\}$ for two distinct vertices $x, y \in V(B) \setminus \{r\}$.
    %, $r \not \in \{x,y\}$.
    Now $c$ is bad for $H$, since %$\cb = \{K_4\}$  and 
    adding a new vertex with at least three neighbours to 
    a 2-connected graph in $\ex K_4$ yields $K_4$ as a minor. It follows that $c$ is bad for $G$.
\end{proof}

\bigskip

%Since $G$ is a $C$-tree,  the graph obtained from $G$ by setting $\Col(r(B)) = C$ for a block $B$, still %has no
%cannot have a
%bad colour.
%Recall that 
%Biconnected graphs in $\crd k$ can have at most $2$ vertices of colour $c \in [k]$. 
%Therefore we have $|X_c(B)| \le 1$.

%We claim that $v$ is of type $C'$ if and only if $G_v \in \ca_{C'}$. This is true,
%since $G \in \ca_C$ and the graph obtained by colouring $v$ with the set $C'$
%in the graph $G_v$ is isomorphic to a coloured minor of $G$.

%Let $v_1, \dots, v_l$ be the set of all vertices $v$ of a block $B$ of $G$, such
%that $v \ne r(B)$ and $type(v) \ne \emptyset$. Assume these vertices are sorted so that
%$S = (type(v_1), \dots, type(v_l))$ is a lexicographically increasing sequence.

%For a block $B$ of $G$, let $\bar B$ be a coloured graph obtained from $B$ by
%removing the label and colour from $r(B)$ and setting $C(v) = type(v)$ for all other
%vertices $v$. We say that $\bar B$ is a coloured graph of type $P$, where $P$
%is the set of all non-empty colour sets $C(v)$ of $v \in V(\bar B)$. Note 
%that for $u, v \in V(\bar{B})$ we have  $C(v) \cap C(u) = \emptyset$.
For $k = 1, 2, \dots$, denote by $\cb_k$ the family of biconnected graphs in $\ca_{[k]}$, such that each vertex has at most one colour. Again, if $B \in \cb_k$ and $c \in [k]$, then there is exactly one vertex
in $B$ coloured~$c$. For $n=0,1,2$, $|\cb_{1,n}|$ is equal to $0, 1, 2$ respectively; also $\cb_{k,j} = 0$ if $j < k$.

For a set $C$ of positive integers, denote by $\hat{\ca}_C$ the set of all vertex-pointed graphs $G$,
such that $\Col(G) = C$ and which further satisfy the conditions (a) and (b)
of the definition of a $C$-tree. %For technical reasons, we assume that $|\hat{\ca}_{C,0}|=1$; 
It is easy to check using
the definition that $|\hat{\ca}_{C, n}|$ is equal to $1$ and $4^{|C|} - 2^{|C|}$  for $n=0$ and $1$ respectively.
%It is easy to see (by analogy to (\ref{eq.unitypeblocks}) or by considering rooted block trees) that e
Each non-empty graph in $\hat{\ca}_C$ can be decomposed uniquely into a (coloured and pointed) root vertex $r$ and a set of graphs $G_1, \dots G_t$  where for $i = 1, \dots, t$, the graph
$G_i$ is a $C_i$-tree for some $C_i \subseteq C$, and $V(G_i) \cap V(G_j) = \{r\}$ for $i \ne j$.

%by identifying the roots of each graph in the set $\cs$, where each $G \in \cs$ is a $C(G)$-tree
%and $\cup_{G \in \cs} C(G) = C$. The following proposition shows, that the converse is also true.
\begin{prop}\label{prop.rootablejoin}
    Let $C_1, C_2$ be finite non-empty sets of positive integers.
    Suppose $G_1 \in \hat{\ca}_{C_1}$ and $G_2 \in \hat{\ca}_{C_2}$ have
    only their root vertex in common. 
  %  The graph $G$ obtained by identifying the roots of $G_1$ and $G_2$ 
    %satisfies
    Then
    $G_1 \cup G_2 \in \hat{\ca}_{C_1 \cup C_2}$.
\end{prop}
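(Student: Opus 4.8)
The plan is to verify directly that $G = G_1 \cup G_2$ satisfies conditions (a) and (b) of the definition of a $C$-tree, where $C = C_1 \cup C_2$, and that $\Col(G) = C$. The last fact is immediate since $\Col(G_i) = C_i$ and the two graphs share only the (uncoloured) root. Condition (b) is also straightforward: the shared root $r$ is the only vertex in common, so $G$ is connected; and if $x$ were a cut vertex of $G$ such that $G - x$ had a component $K$ with no colour and not containing $r$, then $x$ must lie in one of the $G_i$, say $G_1$, and $K \subseteq G_1 - x$ as well (any path in $G$ from $K$ to $G_2 - r$ must pass through $r$, hence through a vertex of $G_1$, so if it avoids $x$ it stays in $G_1$; thus $K$ is a union of components of $G_1 - x$ avoiding $r$). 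Then $K$ would be a colourless, rootless component of $G_1 - x$, contradicting that $G_1 \in \hat{\ca}_{C_1}$ satisfies (b). One has to handle the corner case $x = r$ separately, but there the components of $G - r$ split into those coming from $G_1 - r$ and those from $G_2 - r$, each of which is fine by (b) for $G_i$.

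The main work is condition (a): every colour $c \in C$ is good for $G$, i.e.\ adding a new vertex $w$ joined to $r$ and to all vertices of $G$ coloured $c$ creates no $K_4$ minor. Fix such a $c$; by symmetry say $c \in C_1$. Let $G^+ = G + w$ be the resulting graph and suppose for contradiction it has a $K_4$ minor, witnessed by a $\{K_4\}$-critical subgraph $H \subseteq G^+$. Since $K_4$ is $3$-connected, $H$ is a subdivision of $K_4$, in particular $2$-connected. Now $\{r, w\}$ is a cut of $G^+$ separating $V(G_1) \setminus \{r\}$ from $V(G_2) \setminus \{r\}$: indeed $w$'s only neighbours outside $G_1$ is $r$ (as $c \notin C_2$, no vertex of $G_2 - r$ has colour $c$), and removing $r$ and $w$ disconnects the rest. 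Write $G_1^+ = G^+[V(G_1) \cup \{w\}]$ and $G_2^+ = G^+[V(G_2) \cup \{w\}]$; these are subgraphs with $V(G_1^+) \cap V(G_2^+) = \{r, w\}$ and $G_1^+ \cup G_2^+ = G^+$. Apply Lemma~\ref{lem.2cuts} (with $H$ a subdivision of the $3$-connected graph $K_4$): either $H$ is contained in one of $G_1^+, G_2^+$, or $H \cap G_1^+$ or $H \cap G_2^+$ is a path from $r$ to $w$.

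I expect the bookkeeping of these cases to be the crux, but each is quickly disposed of. If $H \subseteq G_2^+$, then since $w$ has only the single neighbour $r$ in $G_2^+$, $w$ has degree $\le 1$ in $H$, impossible in a $K_4$-subdivision; so $H \subseteq G_2$, contradicting $N(G_2) \in \ex K_4$ (as $G_2$ is a $\hat{\ca}_{C_2}$-graph). If $H \subseteq G_1^+$, then $H$ exhibits a $K_4$ minor in $G_1 + w$, which contradicts that $c$ is good for $G_1$. If $H \cap G_2^+$ is a path $P$ from $r$ to $w$: since $w$'s only neighbour in $G_2^+$ is $r$, this path is just the edge $rw$, and then $H - (\text{internal vertices of } P) = H$ is essentially contained in $G_1^+$, again contradicting goodness of $c$ for $G_1$ (one replaces the occurrence of the edge $rw$, which is present in $G_1^+$ too since $w$ is joined to $r$, so $H \subseteq G_1^+$ outright). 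The symmetric case $H \cap G_1^+$ a path from $r$ to $w$: such a path lies inside $G_1^+$, so $H$ is contained in $G_2^+$ together with one $r$--$w$ path through $G_1$; but we may reroute that path through the edge $rw$, obtaining a $K_4$-subdivision inside $G_2^+$, which as above forces $H \subseteq G_2$, a contradiction. In every case we reach a contradiction, so $G^+$ has no $K_4$ minor and $c$ is good for $G$. Since $c \in C$ was arbitrary, (a) holds, and therefore $G \in \hat{\ca}_{C_1 \cup C_2}$. $\Box$
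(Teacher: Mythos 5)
Your overall approach matches the paper's: both proofs reduce to the $2$-cut $\{r,w\}$ in $G^{+}$ and invoke Lemma~\ref{lem.2cuts}. But there is a genuine gap. Having fixed $c\in C_1\cup C_2$ and chosen, by symmetry, $c\in C_1$, you then write ``as $c\notin C_2$, no vertex of $G_2-r$ has colour $c$'' and rely on this throughout Cases 1, 3 and 4 (in the form ``$w$'s only neighbour in $G_2^{+}$ is $r$''). This is not a consequence of $c\in C_1$: the sets $C_1$ and $C_2$ may overlap, and when $c\in C_1\cap C_2$ the vertex $w$ has genuine neighbours in $G_2-r$, so $w$ can perfectly well have degree $\ge 2$ in a subdivision $H\subseteq G_2^{+}$. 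In that situation the degree-$\le 1$ arguments in Cases 1 and 4 simply collapse, and in Case 3 the intersection $H\cap G_2^{+}$ need not be the single edge $rw$.

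The omitted case is not hard — if anything it is easier — but it does require a different line of reasoning than the one you wrote. When $c\in C_2$, the graph $G_2^{+}=G^{+}[V(G_2)\cup\{w\}]$ is \emph{exactly} the test graph for goodness of the colour $c$ in $G_2$, so $H\subseteq G_2^{+}$ (Case 1) immediately contradicts condition (a) for $G_2$, with no need to argue about $w$'s degree. Likewise, in Cases 3 and 4, after rerouting the $r$--$w$ path of $H\cap G_i^{+}$ through the edge $rw$ (which lies in both $G_1^{+}$ and $G_2^{+}$), one lands on a $K_4$-subdivision inside one of $G_1^{+},G_2^{+}$, contradicting condition (a) for $G_1$ or $G_2$ directly. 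This is in fact how the paper treats the matter uniformly: it concludes from Lemma~\ref{lem.2cuts} that $G_i'+rw$ contains a subdivision of $K_4$ for some $i$ and observes that this forces $G_i\notin\hat{\ca}_{C_i}$, whether $c\in C_i$ (goodness of $c$ fails) or $c\notin C_i$ ($w$ is a pendant vertex in $G_i'$, so $G_i$ itself has a $K_4$ minor). Remove the unjustified $c\notin C_2$ assumption and phrase the contradiction in that uniform way, and the proof closes.
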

\begin{proof} 
  It is easy to see that the condition (b) holds for $G$.
  Suppose (a) does not hold, i.e., $c \in C_1 \cup C_2$ is bad for $G$. Consider the graph $G^+$
  obtained by adding a new vertex $w$, connected to the root of $G$
  and each vertex coloured $c$. For $i = 1,2$ let $G_i' = G^+[V(G_i) \cup \{w\}]$.
  By Lemma~\ref{lem.2cuts} for some $i = 1,2$ we have $G_i' + rw$ has a subdivision of $K_4$, thus
  $G_i \not \in \hat{\ca}_{C_i}$. This is a contradiction.
\end{proof}

\bigskip
%the graph $G'$ obtained by removing
%the colour from the root satisfies $G' \in \ca_{S}$ for some $S \subseteq C$.

%proof should be easy, but I have not enough energy to prove this now
%I found enough energy on 2013-05-17

\begin{lemma}\label{lem.BAC} Let $C$ be a finite non-empty set of positive integers.
    The exponential generating functions
    of $\hat{\ca}_C$ and $\ca_C$ are related by
    \[
    \hat{A}_C = \sum_{S\subseteq C} (-1)^{|C|-|S|} 2^{|S|} \exp\left(\sum_{S' \subseteq S} A_{S'}\right).
    \]
\end{lemma}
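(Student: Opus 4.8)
The plan is to pass from the unique block-type decomposition of the graphs in $\hat{\ca}_C$ (stated just above the lemma) to an identity of exponential generating functions, and then to recover $\hat{A}_C$ from the ``subset'' versions by one M\"obius inversion over the Boolean lattice of subsets of $C$.

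For $S\subseteq C$, let $\hat{\ca}_{\subseteq S}$ be the class of vertex-pointed graphs $G$ with $\Col(G)\subseteq S$ that satisfy conditions (a) and (b) in the definition of a $C$-tree. Partitioning by the exact colour set gives $\hat{\ca}_{\subseteq S}=\bigsqcup_{S'\subseteq S}\hat{\ca}_{S'}$, hence
\[
  \hat{A}_{\subseteq S}=\sum_{S'\subseteq S}\hat{A}_{S'}.
\]
The first task is therefore to evaluate $\hat{A}_{\subseteq S}$. I would read off from the decomposition that a graph in $\hat{\ca}_{\subseteq S}$ is exactly a root vertex $r$ carrying an arbitrary colour set $C_r\subseteq S$, together with a finite set of branches at $r$ (the components of $G-r$, each completed by $r$ with $r$ left uncoloured and labels made disjoint), each branch being a $C_i$-tree with $\emptyset\ne C_i\subseteq S$; conversely every such datum yields a member of $\hat{\ca}_{\subseteq S}$, because a single vertex coloured $C_r$ lies in $\hat{\ca}_{C_r}$, each $C_i$-tree lies in $\hat{\ca}_{C_i}$, and iterating Proposition~\ref{prop.rootablejoin} keeps the glued graph in $\hat{\ca}$ (with colour set $C_r\cup\bigcup_iC_i\subseteq S$). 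Since a $C'$-tree has colour set exactly $C'$ and $\ca_\emptyset=\emptyset$, a single admissible branch ranges over the disjoint union $\bigcup_{S'\subseteq S}\ca_{S'}$, so by the $\SET$ construction the branch-family factor has exponential generating function $\exp\bigl(\sum_{S'\subseteq S}A_{S'}\bigr)$, while the root contributes a factor $2^{|S|}$ (its colour set) and nothing to the size. Hence
\[
  \hat{A}_{\subseteq S}=2^{|S|}\exp\Bigl(\sum_{S'\subseteq S}A_{S'}\Bigr).
\]

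Combining the two displays, $\sum_{S'\subseteq S}\hat{A}_{S'}=2^{|S|}\exp\bigl(\sum_{S'\subseteq S}A_{S'}\bigr)$ for every $S\subseteq C$, and M\"obius inversion on the Boolean lattice (i.e.\ $\hat{A}_C=\sum_{S\subseteq C}(-1)^{|C|-|S|}\hat{A}_{\subseteq S}$) gives exactly
\[
  \hat{A}_C=\sum_{S\subseteq C}(-1)^{|C|-|S|}2^{|S|}\exp\Bigl(\sum_{S'\subseteq S}A_{S'}\Bigr),
\]
as claimed.

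The algebra here is entirely routine; the step that needs care is the description of $\hat{\ca}_{\subseteq S}$ as an \emph{unconstrained} product of ``root colour set'' and ``set of branches''. One must check that the colour set of the root may be chosen freely, with no interaction with the colours occurring in the branches, and that each branch may be an arbitrary $C_i$-tree with $C_i\subseteq S$ --- that is, that the block-type decomposition quoted above is genuinely a bijection in both directions. That gluing $C_i$-trees at a common root preserves membership in $\hat{\ca}$, which is the non-trivial half, is precisely the content of Proposition~\ref{prop.rootablejoin} together with the definition of a good colour; once this is in hand, the rest is the binomial collapse above.
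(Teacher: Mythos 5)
Your proof is correct and follows essentially the same route as the paper's. The paper introduces $\tilde{\ca}_S=\bigcup_{T\subseteq S}\hat{\ca}_T$ (identical to your $\hat{\ca}_{\subseteq S}$), asserts $\tilde{A}_{C'}=2^{|C'|}\exp\bigl(\sum_{S\subseteq C'}A_S\bigr)$ from the branch decomposition, and then recovers $\hat{A}_C$ by an explicit inclusion--exclusion over the colours of $C$; your M\"obius inversion over the Boolean lattice is the same computation written in different language. If anything, your write-up is a bit more careful at the one genuinely non-routine point, spelling out that the converse direction of the root/branch decomposition requires iterating Proposition~\ref{prop.rootablejoin} (together with the trivial cases of an uncoloured root or no branches), whereas the paper asserts the product form of $\tilde{A}_{C'}$ without further comment.
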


Notice, that by definition $A_\emptyset(x) = 0$ and $\hat{A}_\emptyset(x) = 1$.

\medskip

\begin{proof}
    %For technical reasons, assume that for any $T \subseteq C$, $\hat{\ca}_{T,0}$
    %consists of a single empty coloured graph $H_T$ such that $\Col(H_T) = T$. Accordingly,  
    %define $\hat{\ca}_\emptyset$ as the class consisting only of the graphs isomorphic to $H_\emptyset$. So 
    %for any $T \subseteq C$ we have $\hat{A}_T(0) = 1$.
    % 
    %Define $\hat{\ca}_\emptyset$ be the class of graphs consisting of a single pointed uncoloured vertex, $\hat{A}_\emptyset(x) = 1$.
%
    For any set $S \subseteq C$, define 
    \[
      \tilde{\ca}_S = \cup_{T \subseteq S} \hat{\ca}_T,
    \]
    and note that $|\tilde{\ca}_{S, 0}| = 2^{|S|}$ . 
    %($\hat{\ca}_\emptyset$ consists of a single graph of one pointed vertex and size~0).
    Since $\ca_S \cap \ca_{S'} = \emptyset$ for $S \ne S'$, for any $C' \subseteq C$ %graphs in $\bar{\ca}_C$ are exactly the class obtained by taking an arbitrary set of 
    %$C$-trees by colouring the root with an
    %arbitrary subset of $|C|$, and
    \[
    \tilde{A}_{C'} = 2^{|C'|} \exp \left(\sum_{S \subseteq C'} A_S\right).
    \]
    Fix a non-negative integer $n$, and for $S \subseteq C$ let $b_S = b_{S,n}$ be the number of graphs $G$ in $\tilde{\ca}_{C,n}$, such that $\Col(G) \cap S = \emptyset$. Then
    the number of graphs in $\tilde{\ca}_{C,n}$ where some colour from $C$ is missing,
    by the inclusion-exclusion principle is
    \[
    \sum_{S \subseteq C, S \ne \emptyset} (-1)^{|S|-1} b_S.
    \]
    Now $b_S = \left|\tilde{\ca}_{C\setminus S, n}\right|$, therefore, summing over all $n \ge 0$
    \begin{align*}
     &\tilde{A}_C - \hat{A}_C = \sum_{S \subseteq C, S \ne \emptyset} (-1)^{|S|-1} \tilde{A}_{C \setminus S}
     %\\ &
     = \sum _ {S \subset C} (-1)^{|C|-|S|-1} \tilde{A}_S
    \end{align*}
    and
    \[
    \hat{A}_C = \sum _{S \subseteq C} (-1)^{|C|-|S|} \tilde{A}_S = \sum _{S \subseteq C} (-1)^{|C|-|S|} 2^{|S|} \exp\left(\sum_{S' \subseteq S} A_{S'}\right).
    \]
\end{proof}

\bigskip

%For a set $C$, let $2^C$ denote the set of all subsets of $C$; for a family $\cc \subseteq 2^C$,
%write $U(\cc) = \cup_{S \in \cc} S$.
%The exponential generating functions of $A_C$ and $\hat{A}_C$ are related by
%\begin{equation}\label{eq.BAC}
%    %\hat{A}_C(x) = \sum_{\cc \subseteq 2^C \setminus \emptyset} 2^{|U(\cc)|} {\prod_{S \in \cc} (e^{A_S(x)} - 1)}.
%    \hat{A}_C(x) = \sum_{\cc \subseteq 2^C} 2^{|U(\cc)|} {\prod_{S \in \cc} (e^{A_S(x)} - 1)}.
%\end{equation}
% Indeed, to form a graph in $\hat{\ca}_C$, we can
% take a set of graphs $\cs = \{G_i, i = 1, \dots, t\}$ where each $G_i$ is 
% an $S_i$-tree with $S_i \subseteq C \setminus \emptyset$,
% identify their roots, and colour this new root with $ (C \setminus U(\cc)) \cup Q$, where $\cc = \{S_i, i \in [t]\}$,
% and $Q \subseteq U(\cc)$ is arbitrary.
% 
% 
% \m{add picture for a graph in $\hat{\ca}$}
 \begin{prop}\label{prop.multiconstruction}
    Let $k$ be a positive integer, and let $C$ be a set of positive integers, $|C| \ge k$.
    Let $B \in \cb_k$. For $i = 1, \dots, k$, denote by $v_i$ be the vertex of $B$ coloured $\{i\}$.
    Let $P$ be a partition of $C$ into $k$ non-empty sets $S_1, \dots, S_k$ (listed in the lexicographic order),
    and let $G_1, \dots, G_k$  % $(G_i, i = 1, \dots, k)$ be a sequence of
    be pairwise disjoint graphs, all disjoint from $B$,
    with $G_i \in \hat{\ca}_{S_i}$, $i = 1, \dots, k$.

    Then the graph $G$ obtained by identifying the  root $r_i$ of $G_i$
    with $v_i$ and setting $\Col_G(v_i) = \Col_{G_i}(r_i)$ for each $i$, is a $C$-tree.
    %with $S$, so that the colour of $v_S$ in $G$
    %is the colour of the root of $G_S$.
    %Then $G$ is a $C$-tree.
\end{prop}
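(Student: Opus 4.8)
The plan is to verify the three conditions (a), (b), (c) in the definition of a $C$-tree for the graph $G$ built from $B \in \cb_k$ and the graphs $G_1, \dots, G_k \in \hat{\ca}_{S_i}$. Condition (c) is immediate: since each $G_i$ satisfies the analogue of (b), and $r_i = v_i$ is a vertex of the $2$-connected block $B$, the identified vertices $v_i$ remain non-cut vertices in $G$; I should take the root of $G$ to be any vertex of $B$ that is not one of the $v_i$ — such a vertex exists since $B$ has at least $k+1$ vertices (it is biconnected with $k$ coloured vertices, and at least one more because $\cb_{k,j} = 0$ for $j < k$ and a $2$-connected graph on $k$ vertices with $k$ distinct colours would need $k \ge 3$... actually I must be careful here). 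In fact the cleanest route is: $G$ has a natural rooted block tree whose root-most block is $B$, and the $G_i$ hang off $B$ at the $v_i$; then (c) for $G$'s chosen root (a non-$v_i$ vertex of $B$, or handled directly if $B = K_2$) follows from the structure, and no internal vertex of any $G_i$ becomes a cut vertex separating off an uncoloured componentless piece because each $G_i$ already satisfies (b) and $B$ contributes no such piece.

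Condition (b) is a gluing statement. A cut vertex $x$ of $G$ is either a cut vertex of some $G_i$ (in which case, since $G_i$ satisfies (b), every component of $G_i - x$ not containing $r_i$ has a colour or contains... wait, it must contain a colour, and that colour survives in $G$), or $x$ is one of the $v_i$ or lies in $B$. If $x \in V(B)$, then removing $x$ from $G$ leaves components, each of which either meets $B - x$ (hence, since $B$ is $2$-connected, is connected to the rest of $B$, so the only components are those of the $G_j$ with $v_j = x$), and each such component is a subgraph of $G_j - r_j$ which, by (b) for $G_j$, carries a colour from $S_j$. So (b) holds for $G$.

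Condition (a) — that every colour $c \in C$ is good for $G$ — is the main obstacle. Here is the intended argument: fix $c \in C$, say $c \in S_i$, and let $G^+$ be $G$ with a new vertex $w$ joined to the root $r$ of $G$ and to every vertex of $G$ coloured $c$. Suppose for contradiction $G^+$ has a $K_4$-minor, hence contains a subdivision $H$ of $K_4$ (since $K_4$ is $3$-connected and subcubic, a $K_4$-minor yields a $K_4$-subdivision). Every colour-$c$ vertex of $G$ lies in $G_i$ (colours in $S_j$, $j \ne i$, appear only in $G_j$, and $v_i$ carries colour $i \notin S_i$ unless... $\Col_G(v_i) = \Col_{G_i}(r_i) \subseteq S_i$, consistent). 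By Proposition~\ref{prop.multiconstruction}'s setup the only vertices of $G$ that $w$ is adjacent to lie in $V(G_i) \cup \{r\}$. Now I want to ``localize'' $H$ into the piece $G_i' := G[V(G_i)] \cup \{w, r\}$ together with the path(s) through $B$ and the other $G_j$. The key tool is Lemma~\ref{lem.2cuts}: the pair $\{r_i, w\}$ — more precisely I should use a $2$-cut separating $V(G_i) \setminus \{r_i\}$ from the rest — is a cut in $G^+$ separating $G_i$'s internal part from everything else, so by Lemma~\ref{lem.2cuts} either $H$ lies inside $G_i^+ := G^+[V(G_i) \cup \{w\}]$ (plus possibly the edge $rw$ routed back, but $r \in V(B)$ connects through $B$), giving $G_i \notin \hat{\ca}_{S_i}$ and contradicting (a) for $G_i$; or the intersection of $H$ with one side is just a path between the two cut vertices, which can be rerouted to show the $K_4$-subdivision already lives in the smaller piece. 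The delicate point is getting the cut set exactly right: $w$ is adjacent to $r$ (outside $G_i$) and to internal colour-$c$ vertices of $G_i$, so the separating pair is not literally $\{r_i, w\}$; instead one observes that any colour-$c$ vertex of $G_i$ together with $r_i$ lies on a path to both poles within an SP-network structure (Propositions~\ref{prop.SPpath}, \ref{prop.SPcycle}, \ref{prop.v}), and that $w$ joined to such vertices behaves, from the $K_4$-subdivision's viewpoint, like a vertex attached inside the SP-network at $G_i$'s contact vertex $v_i$. I expect this reduction — showing the new vertex $w$ can be ``pushed'' entirely into $G_i$ without creating a $K_4$-minor there unless one existed in $G^+$ — to require the most care, essentially re-running the argument of Proposition~\ref{prop.rootablejoin} (which handles exactly the case $B = K_2$, two pieces) but now accommodating an arbitrary biconnected $B$ and $k$ pieces; the inductive/gluing structure of $\hat{\ca}_C$ and repeated application of Lemma~\ref{lem.2cuts} across the blocks of $B$ should make this go through.
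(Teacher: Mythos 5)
Your plan correctly identifies (a) as the real content and correctly reaches for Lemma~\ref{lem.2cuts}, but it then goes astray at precisely the point you flag as ``delicate.'' You claim the separating pair ``is not literally $\{r_i,w\}$'' because $w$ is adjacent to the root $r$ outside $G_i$. That worry is unfounded: $w$ is \emph{itself} one of the two cut vertices, so its adjacency to $r$ does nothing to spoil the separation. Concretely, setting $G_1' = G^+[V(G_i)\cup\{w\}]$ and $G_2' = G^+ - (G_i - v_i)$ gives $G_1'\cup G_2' = G^+$ and $V(G_1')\cap V(G_2') = \{v_i,w\}$: every colour-$c$ vertex of $G$ lies in $G_i$ (since $c\in S_i$ and colours from $S_j$, $j\ne i$, are confined to $G_j$, while the non-$v_j$ vertices of $B$ are uncoloured after the construction), so the edges of $w$ go only into $G_i$ and to $r$, and $G_i$ meets the rest of $G$ only at $v_i$. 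Hence $\{v_i,w\}$ is a genuine $2$-cut and Lemma~\ref{lem.2cuts} applies directly. The four cases then resolve cleanly: if the $K_4$-subdivision $K'$ lies in $G_1'$, then $c$ is bad for $G_i$; if in $G_2'$, then $c$ is bad for $B$; if $K'\cap G_2'$ is a $v_i$--$w$ path, replace it by the single edge $v_iw$ to see $c$ is bad for $G_i$; if $K'\cap G_1'$ is a $v_i$--$w$ path, replace it by $v_iw$ to see $c$ is bad for $B$. Each contradicts a hypothesis.

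Because of this misreading of the cut, you veered into a much heavier plan involving SP-network propositions (\ref{prop.SPpath}, \ref{prop.SPcycle}, \ref{prop.v}), an ``inductive/gluing'' argument over the blocks of $B$, and an attempt to ``push $w$ into $G_i$'' --- none of which is needed, and none of which you actually carry out (you explicitly say you ``expect'' it to go through). That leaves the hard case of the proof open. Your verification of (b) and (c) is essentially fine, though note that there is no choice to make for the root of $G$: it is the (already uncoloured, pointed) root of $B$, so the non-$v_i$ vertex you search for exists by definition of $\cb_k$ rather than by a cardinality argument. The fix for (a) is to recognize that $\{v_i,w\}$ is the cut and run the short four-case analysis above; no SP-network machinery is required.
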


\begin{proof}
    We have to show the conditions (a)-(c) of the definition of the $C$-tree are satisfied. 
    %For internal vertices of $G_i$ 
    %the condition (a) is satisfied 
    %because $G_i$ is an $S_i$-tree and there is a path in $B$ from $r_i$ to $r$.
    %For $v \in V(B)$, the condition (a) is satisfied, bec
    It is trivial to check (b) and (c), so we will check just (a).
    Suppose it does not hold, i.e., $c \in C$ is bad for $G$. Consider the graph $G^+$ formed by adding to $G$ a new vertex $w \not \in V(G)$ and connecting $w$
    to the root $r$ of $G$ and each vertex coloured $c$. Let $S_i$ be the set containing $c$.
    Let $G_1' = G[V(G_i) \cup w]$ and $G_2' = G^+ - (G_i - v_i)$. We have $G_1' \cup G_2' = G^+$
    and $V(G_1') \cap V(G_2') = \{v_i, w\}$. Let $K'$ be a subdivision of $K_4$ in $G^+$.
    
    By Lemma~\ref{lem.2cuts}, either $K'$ is contained in $G_1'$ or $G_2'$, or the intersection of $K'$
    with $G_j'$ is a path from $v_i$ to $w$ for some $j \in \{1,2\}$. If $K'$ is a subgraph of $G_1'$,
    then $c$ is bad for $G_1$; if $K'$ is a subgraph of $G_2'$, then $c$ is bad for $B$.
    If $G_2' \cap K'$ is a path from $w$ to $v_i$, then $G_1' + w v_i$ contains a subdivision of $K_4$,
    so $c$ is bad for $G_i$. If $G_1' \cap K'$ is a path from $w$ to $v_i$, then $B + w v_i$
    contains a subdivision of $K_4$, and so $c$ is bad for $B$. In each case we get a contradiction.
    %,
    % so $G$ must be a $C$-tree.
\end{proof}

\bigskip

%Another, also easy proposition:
%
%\begin{prop}
%    Let $G_1, \dots, G_k$ be such that $G_i$ is a $C_i$-tree. Then the graph $G$ obtained by identifying
%    their roots is a $C$-tree.
%\end{prop}
%
%\begin{proof}
%    Suppose $c$ is bad for $G$. Let $G^+$ be the graph obtained by adding a vertex $w \not \in V(G)$ to $G$
%    and connecting $w$ to each vertex coloured $c$ and the root $r$ of $G$.
%    We also have to use Lemma~\ref{lem.2cuts}.
%\end{proof}

We can now use the above observations and the decomposition into blocks, similarly
as in (\ref{eq.unitypeblocks})
to give the exponential generating function for $\ca_C$.
%that uses the information about the types of blocks.
%Let $\cb_P$ be the family of
%blocks of type $P$. 
Given a set $C \subseteq [k]$ for some positive integer $k$, let $\cp(C)$ be the set of all set partitions of $C$,
so that $|\cp([j])|$ is the $j$-th Bell number.
%Let $\cp'(C) = \cup_{S} \cp(S)$, where
%the union is over all non-empty subsets $C'$ of $C$.
\begin{lemma}\label{lem.multitype}
    Let $k$ be a positive integer. For any non-empty set $C \subseteq [k]$, the exponential
    generating function of $\ca_C$ is
    \begin{equation*}
        A_C(x) = \sum_{P \in \cp(C)} B_{|P|}(x) \prod_{S \in P} \hat{A}_S(x).
\end{equation*}
\end{lemma}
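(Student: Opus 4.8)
The plan is to count $C$-trees by decomposing each $C$-tree at its root block, exactly in the spirit of the block-decomposition identity~(\ref{eq.unitypeblocks}) for series-parallel graphs, but now keeping track of the colour types carried up the block tree. First I would fix a non-empty $C\subseteq[k]$ and a $C$-tree $G$ with root $r$. By condition (c), $r$ is not a cut vertex, so $G$ has a unique root block $B_0$ containing $r$. Consider the vertices $v\in V(B_0)$: by the Proposition preceding this lemma, for each colour $c\in C$ there is at most one vertex of $B_0$ with $c$ in its type, so the colour types of the vertices of $B_0$ (restricted to $C$) are pairwise disjoint, and since $G$ must actually realise every colour of $C$ (the colours of $G$ are exactly $C$ in $\ca_C$), these types partition $C$ into $|P|$ non-empty blocks where $|P|$ is the number of vertices of $B_0$ that carry a non-empty type. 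Contracting $B_0$ with its colour information recorded as a biconnected graph with one vertex per type class coloured $\{1\},\dots,\{|P|\}$ yields precisely a member of $\cb_{|P|}$ on $|V(B_0)|$ vertices; and at each such coloured vertex $v_i$ the rest of $G$ hanging below (the union of the blocks of $G$ that are descendants of $v_i$, together with $v_i$ itself, with $v_i$ as root) is, by conditions (a) and (b), a graph in $\hat{\ca}_{S_i}$ where $S_i$ is the type class of $v_i$. Conversely, Proposition~\ref{prop.multiconstruction} says that any such assembly — a choice of $B\in\cb_{|P|}$, a partition $P=\{S_1,\dots,S_{|P|}\}$ of $C$, and graphs $G_i\in\hat{\ca}_{S_i}$ glued at the $v_i$ — produces a $C$-tree.

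Next I would verify that this correspondence is a bijection at the level of labelled structures, so that the symbolic translation is valid. The root block $B_0$ is recovered from $G$ as the unique block containing $r$; the partition $P$ is recovered as the set of type classes of its vertices; and each $G_i$ is recovered as the subgraph of $G$ induced on $v_i$ together with all blocks descendant of $v_i$, rooted at $v_i$, with its colour set read off as the type of $v_i$. These recovery operations invert the construction of Proposition~\ref{prop.multiconstruction}, so the map is a bijection between $\ca_C$ and the class of such assemblies. Passing to exponential generating functions, a labelled member of $\cb_m$ contributes $B_m(x)$ (the poles/root-block vertices are labelled and count towards the size, exactly as internal vertices of $SP$-networks do elsewhere — I should just match the size convention of Section~\ref{sec.def}), and for a fixed partition $P$ the independent, label-disjoint choices of $G_i\in\hat{\ca}_{S_i}$ translate into the product $\prod_{S\in P}\hat{A}_S(x)$ via the labelled product rule (the root vertices $v_i$ are identified with the block vertices, so no extra size or $x$-factor is introduced). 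Summing over all partitions $P\in\cp(C)$ gives
\[
A_C(x)=\sum_{P\in\cp(C)}B_{|P|}(x)\prod_{S\in P}\hat{A}_S(x),
\]
which is the claimed formula.

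The main obstacle I anticipate is bookkeeping the size and labelling conventions cleanly: the vertices of the root block are genuine labelled vertices of $G$, whereas when a $G_i\in\hat{\ca}_{S_i}$ is glued in, its root $r_i$ is identified with a block vertex $v_i$ and thus must not be double-counted, so one has to be careful that $\hat{A}_S$ is the right generating function (with its pointed root not contributing a factor of $x$) and that $B_m$ already accounts for all $m$ of its vertices being labelled. A secondary subtlety is ensuring that exactly the colours of $C$ appear — this is why the sum is over partitions into non-empty parts and why $\hat{\ca}_S$ (which forces $\Col(G)=S$) rather than $\tilde{\ca}_S$ is the correct building block; the case $C=\emptyset$ is excluded by hypothesis and is consistent with $A_\emptyset=0$. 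Once these conventions are pinned down, the identity is just the symbolic-method translation of the bijection, and no further analytic input is needed.
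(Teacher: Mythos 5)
Your proposal is correct and follows the paper's proof in all essentials: decompose a $C$-tree at its unique root block, use the preceding proposition ($|X_c(B)|\le 1$) to see that the types of the block's vertices partition $C$, recognise each pendant subgraph $G_v$ as a member of $\hat{\ca}_{type_G(v)}$, use Proposition~\ref{prop.multiconstruction} for the converse direction, and translate via the labelled-product rule. The only cosmetic slip is the word ``Contracting'' in your description of $\tilde{B}$ — no contraction occurs; one merely re-colours the type-carrying vertices of $B_0$ by $\{1\},\dots,\{|P|\}$ — but your intent is clear and matches the paper.
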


\begin{proof}
    %By Proposition~\ref{prop.multiconstruction}, we can represent 
    Each $C$-tree $G$ may be decomposed into the (uncoloured) block $B$
    containing its root $r$, and a set of graphs $G_v$, such that
    $v \in X = \cup_{c\in C} X_c(B)$. Since the graph $\hat{G}_v$ obtained from $G_v$ with vertex $v$ coloured
    $\Col_G(v)$ and its label removed, is isomorphic to a coloured minor of $G$, we have that $\hat{G}_v \in \hat{\ca}_{C'}$ where $C' = type_G(v)$.
    Let $v^{(1)}, \dots, v^{(t)}$ be the vertices of $X$ sorted according to their type in the lexicographic order,
    and for $v \in X$ let $ind(v)$ be the position of $v$ in this list.
    The graph $\tilde{B}$ obtained from $B$ by setting $\Col_{\tilde{B}}(v) = \{ind(v)\}$ for each $v \in X$ satisfies
    $\tilde{B} \in \cb_t$. Now using Proposition~\ref{prop.multiconstruction} we see that
    each graph in $\ca_C$ can be represented uniquely by and constructed from
    \begin{itemize}
        \item a root block $B \in \cb_t$, for some $t \in [|C|]$,
        \item a partition $P = \{S_1, \dots, S_t\}$ of $C$ into non-empty sets (indexed in the lexicographic order), and
        \item pairwise disjoint graphs $G_1, \dots, G_t$, all disjoint from $B$, where $G_i \in \hat{\ca}_{S_i}$ for $i = 1 ,\dots t$
    \end{itemize}
by identifying the root $r_i$ of $G_i$ with the vertex $v$ of $B$ coloured $\{i\}$, and colouring that vertex $\Col_{G_i}(r_i)$.
Now the exponential generating function is obtained in a standard way, see \cite{fs09}.
\end{proof}

%sets of $C'$-trees, where $C' \subseteq C$. This is because the edges of the root block can be contracted so that the root of the graph $G_v$
%has all colours $C$. This leads to recursive definition of the class of all $C$-trees and explains their name, see Figure~\ref{fig.RGB-tree}.
%We obtain generating functions for the class of relevant root blocks in Section~\ref{sec.T2} and use it in building the grammar of
%$\ca_R, \ca_{RG}$ and $\ca_{RGB}$ in Section~???, where the decomposition of $C$-trees is described in more detail.
%\m{Change $A_R$ to $A_{[1]}$, etc.}
\bigskip
%\m{ $\ca_R \to \ca_1$, $\ca_k \to ?$}
We write for
shortness $\ca_R = \ca_{\{1\}}, \ca_{RG} = \ca_{\{1,2\}}$ and $\ca_{RGB} = \ca_{\{1,2,3\}}$. %\m{ drop this!!!}
%For small values of $k$ Lemma~\ref{lem.multitype} yields
\begin{lemma}\label{lem.multitype_small} The exponential generating functions of $\ca_R, \ca_{RG}$ and $\ca_{RGB}$ satisfy 
 %   (for shortness, we drop ``$(x)$'' for all functions $A(x)$ below)
    \begin{align*}
        &A_R = B_1 \hat{A}_R;
        \\ &A_{RG} = B_1 \hat{A}_{RG} + B_2 \hat{A}_R^2;
        \\ &A_{RGB} = B_1 \hat{A}_{RGB} + 3 B_2 \hat{A}_R \hat{A}_{RG}
      %\\ &\qquad\quad
      + B_3 \hat{A}_R^3.
    \end{align*}
    Here
    \begin{align*}
        &\hat{A}_R = 2 e^{A_R} - 1;
      \\&\hat{A}_{RG} = 4 e^{A_{RG} + 2 A_R} - 4 e^{A_R} + 1;
      \\&\hat{A}_{RGB} = 8 e^{A_{RGB} + 3 A_{RG} + 3 A_R} - 12 e^{A_{RG} + 2 A_R} + 6 e^{A_R} - 1.
    \end{align*}
\end{lemma}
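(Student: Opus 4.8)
The plan is to derive each of the six identities from the general decomposition results already established, specialising Lemma~\ref{lem.multitype} and Lemma~\ref{lem.BAC} to the sets $C = \{1\}$, $C = \{1,2\}$ and $C = \{1,2,3\}$. For the first three identities (for $A_R$, $A_{RG}$, $A_{RGB}$), I would simply write out the sum over set partitions $P \in \cp(C)$ in Lemma~\ref{lem.multitype}. For $C = \{1\}$ there is only the trivial partition, giving $A_R = B_1 \hat{A}_R$. For $C = \{1,2\}$ there are two partitions: the one-block partition $\{\{1,2\}\}$ contributing $B_1 \hat{A}_{RG}$, and the two-block partition $\{\{1\},\{2\}\}$ contributing $B_2 \hat{A}_{\{1\}}\hat{A}_{\{2\}}$; since $\hat{A}_{\{1\}} = \hat{A}_{\{2\}} = \hat{A}_R$ by the colour-symmetry of the classes (relabelling colours is an isomorphism of combinatorial classes), this is $B_2 \hat{A}_R^2$. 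For $C = \{1,2,3\}$ there are five partitions: one one-block partition giving $B_1\hat{A}_{RGB}$; three two-block partitions of shape $2+1$, each giving $B_2 \hat{A}_{\text{(pair)}}\hat{A}_{\text{(singleton)}} = B_2 \hat{A}_{RG}\hat{A}_R$, for a total of $3 B_2 \hat{A}_R \hat{A}_{RG}$; and one three-block partition giving $B_3 \hat{A}_R^3$. Here I must check that $B_{|P|}$ depends only on the number of blocks, which is exactly how $\cb_k$ was defined, and that $\hat{A}_S$ depends only on $|S|$ by colour symmetry, and that summing over partitions of a fixed set correctly counts labelled $C$-trees because the decomposition in Lemma~\ref{lem.multitype} is a genuine bijection respecting disjoint label sets.

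For the three formulas expressing $\hat{A}_R$, $\hat{A}_{RG}$, $\hat{A}_{RGB}$ in terms of $A_R$, $A_{RG}$, $A_{RGB}$, I would apply Lemma~\ref{lem.BAC} with the appropriate $C$, again using colour symmetry to collapse $A_S$ to a function of $|S|$ only. With $C = \{1\}$: $\hat{A}_R = \sum_{S \subseteq \{1\}} (-1)^{1-|S|} 2^{|S|} \exp(\sum_{S' \subseteq S} A_{S'})$. The term $S = \emptyset$ gives $(-1)^1 \cdot 1 \cdot \exp(A_\emptyset) = -1$ since $A_\emptyset = 0$; the term $S = \{1\}$ gives $(+1)\cdot 2 \cdot \exp(A_\emptyset + A_R) = 2 e^{A_R}$. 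So $\hat{A}_R = 2 e^{A_R} - 1$. With $C = \{1,2\}$: the $S = \emptyset$ term gives $(-1)^2 \cdot 1 = 1$; the two $S$ of size $1$ each give $(-1)^1 \cdot 2 \cdot \exp(A_R) = -2 e^{A_R}$, total $-4 e^{A_R}$; the $S = \{1,2\}$ term gives $(+1) \cdot 4 \cdot \exp(A_\emptyset + A_{\{1\}} + A_{\{2\}} + A_{\{1,2\}}) = 4 e^{2 A_R + A_{RG}}$. So $\hat{A}_{RG} = 4 e^{A_{RG} + 2 A_R} - 4 e^{A_R} + 1$. With $C = \{1,2,3\}$: the $S = \emptyset$ term gives $(-1)^3 = -1$; the three size-$1$ subsets each give $(+1) \cdot 2 \cdot e^{A_R}$, total $6 e^{A_R}$; the three size-$2$ subsets each give $(-1) \cdot 4 \cdot \exp(\sum_{S' \subseteq \text{pair}} A_{S'}) = -4 e^{2 A_R + A_{RG}}$, total $-12 e^{A_{RG} + 2 A_R}$; the size-$3$ subset gives $(+1) \cdot 8 \cdot \exp(A_\emptyset + 3 A_R + 3 A_{RG} + A_{RGB}) = 8 e^{A_{RGB} + 3 A_{RG} + 3 A_R}$. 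This gives exactly $\hat{A}_{RGB} = 8 e^{A_{RGB} + 3 A_{RG} + 3 A_R} - 12 e^{A_{RG} + 2 A_R} + 6 e^{A_R} - 1$.

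The only genuinely non-routine point — and the step I would treat most carefully — is justifying the colour-symmetry reduction, namely that $A_S(x)$ and $\hat{A}_S(x)$ depend only on $|S|$, not on which specific colours appear in $S$. This is immediate because permuting the colour labels $1, \dots, k$ is a combinatorial isomorphism between $\ca_S$ and $\ca_{\pi(S)}$ (and likewise for $\hat{\ca}$), as the defining conditions (a)--(c) of a $C$-tree and of $\hat{\ca}_C$ refer to colours only through "being good/bad" and "having colour $c$", all of which are preserved by relabelling. I would also remark that the summations in Lemma~\ref{lem.BAC} with $C$ replaced by $S \subseteq C$ are over subsets $S' \subseteq S$, and that the inner exponents $\sum_{S' \subseteq S} A_{S'}$ collapse using $\sum_{j=0}^{|S|}\binom{|S|}{j}[\,\cdot\,]$ and $A_\emptyset = 0$: for $|S| = 1$ the exponent is $A_R$, for $|S| = 2$ it is $2 A_R + A_{RG}$, for $|S| = 3$ it is $3 A_R + 3 A_{RG} + A_{RGB}$. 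Assembling these substitutions yields precisely the stated system, completing the proof.
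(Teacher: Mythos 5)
Your proof is correct and takes exactly the same route as the paper: the paper's own proof is simply the one-line remark that by colour symmetry $A_C$ depends only on $|C|$, and then the formulas follow from Lemma~\ref{lem.multitype} and Lemma~\ref{lem.BAC}, which is precisely what you spell out.
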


\begin{proof}
    Notice, that by symmetry we have $A_C = A_{C'}$ whenever $|C'| = |C|$. 
    The lemma follows by Lemma~\ref{lem.BAC} and Lemma~\ref{lem.multitype}. % and a simple calculation.
\end{proof}
\bigskip
%We will define this grammar for the class $\ca_{RGB}$ from the bottom up, starting with the class~$\ca_R$.

\subsection{Blocks of coloured trees (two colours)}\label{sec.T2}

In this section we present a decomposition for coloured graphs in the class~$\cb_2$. % of pointed 2-connected graphs with two distinct
For a network $G$ with two poles, we denote by $G^+$ the network obtained by connecting 
the poles with an edge. We will say that a colour $c$ is \emph{bad} for a (coloured) network $G$,
if adding a new vertex $w$ to $G$ connected to %both poles
the source %!!!
of $G$ and each vertex coloured $c$,
we obtain a graph not in $\ex \cb$. %, where $\cb = \{K_4\}$. 
%Thus, a colour is declared bad for a (coloured) graph $G$ with some pointed vertices, if adding a new vertex connected to each vertex coloured $c$
%and each pointed vertex produces a graph not in $\ex \cb$.
Recall that in this section $\cb = \{K_4\}$.
%if $|\ca_n| = |\cb_n|$ for $n = 0, 1, \dots$. 
\begin{lemma}\label{lem.S1}
    Let $\cs_1$ be the class of $\{0,1\}^1$-coloured 
    series $SP$-networks $G$ where exactly one internal vertex is coloured $\{red\}$, and 
    the colour red is good for $G^+$. 
    
    Each graph in $\cs_1$ admits a unique decomposition into two graphs in $\cd$ or a graph $\cb_2$ and a graph in $\cd$.
    The exponential generating function of $\cs_1$ is
    \[
    S_1(x) = D(x) (x D(x) + B_2(x)).
    \]
\end{lemma}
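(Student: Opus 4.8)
The plan is to split $\cs_1$ according to whether the unique red internal vertex $v$ separates the source $s$ from the sink $t$ in $G$: the separating case will be counted by $\cz\times\cd\times\cd$ and the non-separating case by $\cb_2\times\cd$, which gives $S_1(x)=xD(x)^2+B_2(x)D(x)$. The key preliminary observation is a reformulation of the hypothesis that red is good for $G^+$. Let $G'=G+sv$ be the network obtained from $G$ by adding the edge from $s$ to $v$. In the definition of a bad colour the extra vertex $w$ has degree $2$ with neighbours $s$ and $v$, so contracting one of its two edges shows that $G^+$ together with $w$ has a $K_4$ minor if and only if $G^++sv=G'+st$ does. Since $G'$ is connected and $G'+st$ is $2$-connected (because $G+st$ already is), red is good for $G^+$ exactly when $G'$ is an $SP$-network with poles $s,t$.

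In the separating case, $v$ cannot be an internal vertex of any parallel component of the series decomposition $G=H_1\cdots H_k$, since by Proposition~\ref{prop.SPcycle} that would produce an $s$--$t$ path avoiding $v$; hence $v$ is a joint $u_i$, and $N_1=H_1\cdots H_i$, $N_2=H_{i+1}\cdots H_k$ give the unique decomposition $G=N_1\cdot N_2$ with $N_1,N_2\in\cd$. Conversely, if $N_1\in\cd$ has poles $(s,v)$, $N_2\in\cd$ has poles $(v,t)$ and $v$ is coloured red, then $G=N_1\cdot N_2$ is a series $SP$-network with $v$ its only red internal vertex, and $G'=(N_1+sv)\cdot N_2$ is the series composition of the parallel network $N_1\parallel\ce_2$ with $N_2$, again an $SP$-network, so $G\in\cs_1$. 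Since the joint of a series composition becomes a labelled vertex, this alternative contributes $x\,D(x)^2$.

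In the non-separating case I would first show that $v$ lies inside the first parallel component $H_1$ of $G$. Since $v$ is not a joint, it is internal to some $H_j$; if $j\ge2$, then taking $s$, $v$ and the two poles $u_{j-1},u_j$ of $H_j$ as branch vertices, together with the edge $sv$, a path through $H_1\cdots H_{j-1}$, a path through $H_{j+1}\cdots H_k$ followed by the edge $ts$, and the two internally disjoint $u_{j-1}$--$u_j$ paths in $H_j$ supplied by Proposition~\ref{prop.SPcycle}, one obtains a subdivision of $K_4$ in $G'+st$, contradicting the first paragraph. So $v$ is internal to $H_1\in\cp$, whence $H_1$ is $2$-connected, and one checks --- using $H_1+sv\subseteq G'+st$ for one implication and $G'=(H_1+sv)\cdot(H_2\cdots H_k)$ for the other --- that red is good for $G^+$ iff $H_1+sv$ is series-parallel. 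Then the rooted coloured graph obtained from $H_1$ by rooting $s$, colouring $v$ red and $u_1$ green lies in $\cb_2$: it is biconnected with at most one colour per vertex, red is good by the preceding line, and green is good automatically because adjoining a new vertex joined to $s$ and $u_1$ only produces $H_1\parallel\ce_2$. Conversely, given $B\in\cb_2$ with root $r$, red vertex $x_R$, green vertex $x_G$, and given $N\in\cd$: the coloured network on $V(B)$ with poles $r,x_G$ and red on $x_R$ is an $SP$-network, and since $B$ is $2$-connected with more than two vertices it is not a series network, hence parallel; identifying $x_G$ with the source of $N$ yields a series $SP$-network $G$ (with cut vertex $x_G$) whose only red internal vertex $x_R$ satisfies that red is good for $G^+$, so $G\in\cs_1$. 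These two constructions are mutually inverse and create no new vertex, so this alternative contributes $B_2(x)D(x)$.

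Adding the two contributions gives $S_1(x)=x\,D(x)^2+B_2(x)D(x)=D(x)\bigl(xD(x)+B_2(x)\bigr)$. I expect the main obstacle to be the non-separating case: pinning $v$ down in the first block via the explicit $K_4$-subdivision (invoking Proposition~\ref{prop.SPcycle} and the regrouping of parallel components from the remark after Lemma~\ref{lem.SP}), and then checking the membership conditions for $\cb_2$ in both directions. The degree-$2$ suppression of $w$ and the uniqueness of the two decompositions should be routine.
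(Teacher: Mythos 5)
Your proposal follows the same broad strategy as the paper's proof: split on whether the red vertex $v$ is a joint of the series decomposition $G = H_1\cdots H_k$ or internal to some $H_j$, pin $j=1$ in the latter case via a $K_4$-subdivision argument (matching the paper's use of Proposition~\ref{prop.SPcycle}), and match the two cases to $\cz\times\cd^2$ and $\cb_2\times\cd$. The degree-$2$ suppression of $w$, giving the clean reformulation ``red is good for $G^+$ iff $G+sv$ is an $SP$-network,'' is a nice streamlining of some of the paper's steps, and the separating case and the pinning of $v$ to $H_1$ are correct.

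There is, however, a genuine gap in the non-separating converse, which you acknowledge as the main obstacle but do not actually close. Your asserted equivalence ``red is good for $G^+$ iff $H_1+sv$ is series-parallel'' is the lemma's hardest claim, and your sketch of the hard implication — reading $G' = (H_1+sv)\cdot(H_2\cdots H_k)$ as a series composition of $SP$-networks — tacitly uses that $H_1+sv$ is already an $SP$-network with poles $s,u_1$, i.e.\ that $(H_1+sv)+su_1$ is also series-parallel. But the $\cb_2$ conditions on $\tilde H_1$ only give (via Lemma~\ref{prop.networkize} and degree-$2$ suppression) that $H_1+sv$ has no $K_4$ minor and that $H_1+su_1$ has no $K_4$ minor; they do not directly control $H_1+sv+su_1$, and the Lemma~\ref{lem.2cuts} analysis of $G'+st$ with cut $\{s,u_1\}$ really does land you on exactly this graph in the last case. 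Showing that a $K_4$-subdivision in $H_1+sv+su_1$ forces one in $H_1+sv$ is precisely what the paper's Case~3 does: it observes that the new vertex $w$ must lie on $K'$ with degree $2$, that $K'_1-\{w,r,x\}$ is connected (Figure~\ref{fig.subdivs}), hence lies in a single parallel branch $S_1$ of $H_1$, and then re-routes the escaping path $P$ through a second branch $S_2$ to obtain a $K_4$-subdivision entirely inside $\tilde H_1$. This rerouting through the parallel decomposition is the crux of the lemma and is missing from your argument; without it the converse direction of the $\cb_2\times\cd$ correspondence is unproven.
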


%change the basic vertex from red to green or to blue.

\bigskip

We will use the following simple observation.
%
%\begin{prop} \label{prop.networkize}
%       Let $G \in \cb_k$ and let $c \in [k]$. Let $u$ be the vertex of $G$ coloured $\{c\}$, and let $r$ be the root of $G$.
%       Then the network $G'$ with source $r$ and sink $u$ obtained from $G$ by removing the colour and the label of $u$ satisfies $N(G') \in \cp + \ce_2$. 
%\end{prop}
%
%
\begin{lemma} \label{prop.networkize}
      Let $k$ be a positive integer and let $G$ be a $\{0,1\}^k$-coloured graph with one pointed vertex $r$ and exactly $k$ coloured vertices, so that
      for each $i \in [k]$ there is a vertex $v_i \in V(G)\setminus\{r\}$ coloured $\{i\}$.
      %$G \in \cb_k$ if only if for each $c\in [k]$the network $G_c$. 
      Denote by $G_c$ the network with source $r$ and sink $u$ obtained from $G$ by removing the colour and the label from the vertex $u$
      coloured $\{c\}$. 
      
      $G \in \cb_k$ if and only if for each $c \in [k]$ we have $N(G_c) \in \cp + \ce_2$.
       %. Let $u$ be the vertex of $G$ coloured $\{c\}$, and let $r$ be the root of $G$.
\end{lemma}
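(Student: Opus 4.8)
The plan is to prove the two implications separately, both by exploiting the fact that $\cb_k$ consists of biconnected graphs all of whose colour classes are singletons, together with Proposition~\ref{prop.v} and the characterisation of $\cp + \ce_2$ as those networks whose augmentation by an edge between the poles is $2$-connected and series-parallel.

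First I would handle the forward direction. Suppose $G \in \cb_k$. Fix $c \in [k]$, let $u = v_c$ be the unique vertex coloured $\{c\}$, and consider $G_c$, the network with source $r$ and sink $u$ obtained by deleting the label and colour of $u$. Since $G \in \cb_k \subseteq \ca_{[k]}$, condition (a) of the definition of a $C$-tree says $c$ is good for $G$: adding a new vertex $w$ adjacent to $r$ and to every vertex coloured $c$ — that is, $w$ adjacent exactly to $r$ and $u$ — produces a graph $G^\star$ with no $K_4$-minor, so $G^\star \in \ex K_4$, i.e.\ $N(G_c)^+$ together with the extra vertex $w$ forming a triangle $r, u, w$. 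But $N(G_c)^+$ is obtained from $G^\star$ by contracting $w$ onto the $ru$-edge (or, equivalently, $G^\star$ is a subdivision of $N(G_c)^+$ in a single edge), so $N(G_c)^+$ is series-parallel iff $G^\star$ is. Moreover $G$ is $2$-connected (being biconnected and not $K_2$, since it has at least $k \ge 1$ coloured internal vertices plus $r$; the $K_2$ case is handled separately and trivially), so $N(G_c)^+ = N(G) + ru$ is $2$-connected. Hence $N(G_c)$ is a network with $N(G_c)^+$ $2$-connected and series-parallel, i.e.\ $N(G_c) \in \cd$. It remains to rule out $N(G_c) \in \cs$: if $N(G_c)$ were a series network, then by Proposition~\ref{prop.v} the vertex $u$, being an internal vertex of... wait — rather, I use that in a series network one of the pole-separating cut vertices disconnects; more directly, $r$ is a cut vertex of $N(G_c)^+$'s underlying graph minus an edge... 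The cleanest route: a network is in $\cp + \ce_2$ iff it is not in $\cs$ iff $G$ has no internal cut vertex separating the poles; but $G = N(G)$ has no cut vertex at all (it is $2$-connected), and $r, u$ are the only vertices $G_c$ removes colouring/label from, so $N(G_c)$ inherits $2$-connectedness of $N(G_c)^+$ and cannot be series (a series network $H$ has $H^+$ with a $2$-cut $\{$pole, internal joint$\}$ whose removal separates, but more to the point $H$ itself has a cut vertex, contradicting $2$-connectedness of $N(G) \supseteq$ the relevant subgraph — I would make this precise via Lemma~\ref{lem.SP}). Thus $N(G_c) \in \cp + \ce_2$.

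For the converse, suppose $N(G_c) \in \cp + \ce_2$ for every $c \in [k]$. Taking any single $c$ already gives that $N(G) + r v_c = N(G_c)^+$ is $2$-connected and series-parallel, hence $N(G)$ is $2$-connected (or $K_2$), so $G$ is biconnected. To show $G \in \ca_{[k]}$ I must verify conditions (a)--(c): (b) and (c) are immediate for a biconnected graph with all colour classes singleton and $r$ uncoloured and not a cut vertex (a $2$-connected graph has no cut vertex, and $K_2$ is trivial). For (a), fix $c$; by Proposition~\ref{prop.v} applied to $N(G_c) \in \cp + \ce_2 \subseteq \cd$ — actually directly: adding a vertex $w$ adjacent to the two poles $r$ and $v_c$ of the network $N(G_c)$ yields, by Proposition~\ref{prop.v}, a graph in $\cp$, in particular series-parallel, i.e.\ $K_4$-minor-free; this graph is exactly $G$ with the good-colour test vertex $w$ for colour $c$ attached (since $w$'s only neighbours among $c$-coloured vertices is $v_c$, plus $r$). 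Hence $c$ is good for $G$. This holds for all $c \in [k]$, so all conditions hold and $G \in \cb_k$.

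The main obstacle I anticipate is the bookkeeping around edges that may already be present between the poles: when $N(G_c)$ already contains the edge $r v_c$, the test graph $G^\star$ has $w$ forming a multi-edge situation, and I must be careful that Proposition~\ref{prop.v} (stated for simple networks in $\cd$) still applies and that ``adding $w$ adjacent to $r$ and $v_c$'' coincides with the good-colour test, which adds $w$ adjacent to $r$ and to \emph{all} $c$-coloured vertices — fine here because there is exactly one. I would also need to state cleanly the elementary fact, implicit in Lemma~\ref{lem.SP}, that a network lies in $\cp + \ce_2$ exactly when its underlying graph, with the pole-edge added, is $2$-connected — equivalently when it is not a series network — and that a series network's underlying graph has a cut vertex. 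These are standard but should be spelled out to make the equivalence airtight.
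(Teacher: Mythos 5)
Your proof is correct and rests on the same key observation as the paper's: the test graph for goodness of colour $c$ is obtained from $N(G_c)^+$ by subdividing the pole edge, so one has a $K_4$ minor if and only if the other does. The only cosmetic differences are that you invoke Proposition~\ref{prop.v} in the backward direction (the paper instead runs the contraction argument directly via the contrapositive), and that your forward-direction discussion of ruling out $N(G_c) \in \cs$ meanders before arriving at the clean observation that $N(G)$ is $2$-connected and hence cut-vertex-free.
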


   %\medskip
   
   \begin{proof}
       If either $G \in \cb_k$ or $N(G_c) \in \cp + \ce_2$, then $G_c$ is biconnected, so  $G_c \not \in \cs$.
       %For any $c \in [k]$ we have $G_c \not \in S$, since $G_c$ is biconnected. %2-connected.

       ($\Rightarrow$) Suppose $N(G_c) \not \in \cp + \ce_2$ and let $u$ be the vertex coloured $\{c\}$.
                       Then $G_c + r u$ contains $K_4$ as a minor. We may replace $r u$ by the path $r w u$ where $w \not \in V(G)$ to see that the colour
                       $c$ is %not good 
                       bad for $G$, a contradiction.

       ($\Leftarrow$) Suppose %for some $c \in [k]$ 
       we have $N(G_c) \in \cp + \ce_2$, but $c$ is not good for $G$.
                      Let $u$ be the vertex coloured $c$.
                      The assumption implies that with a new vertex $w \not \in V(G)$, the graph $G' = N(G + r w u)$ contains $K_4$ as a minor.
                      Then so does $G_c + r u$ and $N(G_c) \not \in \cd$, a contradiction. 
       %has a subdivision of $K_4$ then the colour $c$ is bad for $G$. Also, $G'$ is 2-connected, since $G$ is.
       %So $N(G) \in \cp + \ce_2$.
   \end{proof}
\bigskip

\begin{proofof}{Lemma~\ref{lem.S1}}
%    We will prove more than a combinatorial isomorphism: we will show that $G \in \cs_1$ if and only if
%    \begin{itemize}
%     \item either $G$ is a series composition of two graphs in $\cd$, where the common vertex is coloured red and labelled
%     \item or $G$ is obtained from a graph in $\cb$, by identifying its green vertex with a source of a graph in $\cd$, and removing the green colour,
% \end{itemize}
% and the cases are non-overlapping.
%
    Let $G \in \cs_1$, let $s$ and $t$ be its source and sink respectively, and let $v$ be the vertex coloured red.
    Then by Lemma~\ref{lem.SP}, $G$ may be decomposed into a sequence of (pairwise internally disjoint)
    networks $H_1, H_2, \dots, H_k \in (\cP + e)$ with $k \ge 2$ and vertices $x_1, \dots, x_{k-1}$,
    where $x_i$ is both the sink of $H_i$ and the source of $H_{i+1}$.

     Suppose $v$ is an internal vertex of some $H_j$, $2 \le j \le k$. For $j = 1,\dots, k$ denote by
 $s_j$ and $t_j$ the source and the sink of $H_j$ respectively (we have $s_1 = s$ and $t_k = t$). % Since $H_j$ is 2-connected,
%there are internally disjoint paths $P_1$ and $P_2$ in $H_j$ connecting $v$ with $s_j$ and $t_j$ respectively. Now by Proposition~\ref{prop.cyc}
By Proposition~\ref{prop.SPcycle}, $H_j$ contains a cycle $C$ with vertices $v, s_j$ and $t_j$.
By Lemma~\ref{lem.SP} and Proposition~\ref{prop.SPpath}, there is a path $P_1$ from
$s$ to $s_j$ in $H_1 \cup \dots \cup H_{j-1}$, a path $P_2$ from $t_j$ to $t$ in
$H_{j+1}, \dots, H_k$ (which is trivial if $j = k$). Now the graph obtained from the union of $C, P_1, P_2$
and $rt$ demonstrates that the colour red is bad for $G^+$. Therefore $v$ cannot be an internal vertex 
of $H_j$, $j \ge 2$. 

So $v$ can have one of the following positions (and the cases are non-overlapping):
\begin{itemize}
    \item[(a)] $v = x_j$ for some $j \in [k-1]$;
    \item[(b)] $v$ is an internal vertex of $H_1$.
\end{itemize}
    %Suppose first that $v = x_j$ for some $j \in [k-1]$. 
    Suppose first that (a) holds.
    Denote by $D_1$ the $SP$-network with source $s$ and sink $v$ obtained from the union (series composition) of $H_1, \dots, H_j$ and $x_1, \dots, x_{j-1}$.
    %with source $s$ and sink $v$.
    Denote by $D_2$ the network with source $v$ and sink $t$ obtained from the union (series composition) of $H_{j+1}, \dots, H_k$ 
    and the vertices $x_{j+1}, \dots, x_{k-1}$. By Lemma~\ref{lem.SP} and the comment thereafter, $D_1, D_2 \in \cd$.

    Now let $D_1', D_2' \in \cd$ be arbitrary, and let $G'$ be a network obtained by series composition of $D_1'$ and $D_2'$ by
    colouring the common pole $\{red\}$ and giving it an arbitrary label. Let $s$ be the source of $D_1'$ and $G'$, and let
    $t$ be the sink of $D_2'$ and $G'$.

    Lemma~\ref{lem.SP} and a comment after it, 
    %provides a combinatorial isomorphism between 
    the decomposition of a graph $\cs$ to a graph in $(\cp + \ce_2)$ and a 
    sequence of graphs in $\cz \times (\cp + e)$ is unique. 
    Therefore, if $G' \in \cs_1$ applying the decomposition of $G = G'$ into graphs $H_1, \dots, H_k$ as above,
    we recover $D_1 = D_1'$ and $D_2 = D_2'$.

    Let us check that $G' \in \cs_1$. Consider the network $\tilde{G}$ obtained from $G'^+$
    by making $v$ a sink and $t$ an internal vertex. $N(\tilde{G})$ is a parallel $SP$-network, since it is obtained
    by a series composition of the network $st$ and the SP-network $\overleftarrow{D}_2'$ and a parallel composition of the resulting network with
    the $SP$-network $D_1'$ (here $\overleftarrow{D}_2'$ denotes $D_2'$ with its source and sink swapped. This change of orientation does not change the type of
    the network). By Proposition~\ref{prop.v}, the red colour is good for $\tilde{G}$, and so it is good for $G'^+$, and $G' \in \cs_1$. 
    
    %The same reasoning 
    %shows that if we take two arbitrary $SP$-networks $D_1'$ and $D_2'$, identify the sink of $D_1'$ with the source of $D_2'$, label this common vertex,
    %and call the source of $D_1'$ $s$ and the sink of $D_1'$ $t$, the resulting graph $G'$ belongs to $\cs_1$, and furthermore, the 
    %decomposition described above with $G = G'$ yields $D_1  = D_1'$ and $D_2 = D_2'$.

%We claim that if $v$ is an internal vertex of $H_1$, then $H_1 \in \cb_2$. 
Now consider the case (b). Let $\tilde{H}_1$ be a $\{0,1\}^2$-coloured graph obtained from $H_1$ by colouring
its sink green and assigning the label $x_1$. Since $\tilde{H}_1$ is a subgraph of $G^+$, if the red colour is bad for $\tilde{H}_1$, then it is also bad for $G$.
If the green colour is bad for $\tilde{H}_1$, then the path $P$ from $s$ to $x_1$ in $G^+$, where $P$ consists of the edge $s t$ and a path from $t$ to $x_1$ in $D = H_2 \cup \{x_2\} \cup \dots \cup\{x_{k-1}\} \cup H_k \in \cd$ shows that $G^+$ contains $K_4$ as a minor, which is a contradiction. %It follows
%that $\tilde{H}_1 \in \cb_2$.
%since the edge $s t_1$ of $H_1^+$ may be replaced by. In this case we see 
It follows that $\tilde{H}_1 \in \cb_2$ and $D \in \cd$.

%To finish the proof
%we need to show that 
Now take an arbitrary graph $H' \in \cb_2$ with root $r$, an arbitrary network $D' \in \cd$ with source $s'$ and sink $t'$,
and identify the green vertex of $H'$ with the source of $D'$ (call this vertex $x$) to obtain
a network $G'$ with source $r$ and sink $t'$. %We claim that $G' \in \cs_1$ and
Denote the vertex of $G'$ coloured $\{red\}$ by $v$.

It is easy to see using the decomposition given by Lemma~\ref{lem.SP} that if $G' \in \cs_1$, then the procedure described above applied with $G = G'$ recovers $H'$ and $D'$ of $G'$
as $\tilde{H}_1$ and $D$ respectively. It remains to show that $G' \in \cs_1$.

Consider the graph $G'' = G'^{+(w)}$ obtained by adding a new vertex $w$ to $G'^+$, such that $\Gamma(w) = \{r, v\}$. Assume that
red is bad for $G'$. Then $G''$ contains a subdivision $K'$ of $K_4$. Since $K'$ is 2-connected, it must contain both vertices $v$ and $r$.
Clearly, $\{r, x\}$ is a cut in $G$. Apply Lemma~\ref{lem.2cuts} to $G''$, and its subgraphs $\tilde{H}' = G''[V(H') \cup \{w\}]$ and $R = G'' - (\tilde{H}' - \{r,x\})$.
We consider three possible cases.

\emph{Case 1.} $K'$ is entirely contained in $\tilde{H}'$. Then $H' \not \in \cb_2$, which is a contradiction.

\emph{Case 2.} $K' \cap \tilde{H}'$ is a path from $r$ to $x$. Let $G_1$ be the network with poles $r$ and $x$ obtained by a series composition of $rt'$ and $\overleftarrow{D'}$. Then the network $G_1 + r x \in \cp$ contains $K_4$ as a minor, this is a contradiction to Lemma~\ref{lem.SP}.

\emph{Case 3.} $K' \cap R$ is a path $P$ from $r$ to $x$. Consider $K'_1 = K' \cap \tilde{H}'$. Then
%, using Lemma~\ref{lem.2cuts},
               $K'_1$ is a subdivision of $K_4$ with a part of subdivided edge (i.e., the internal vertices of $P$) removed.
               %of a subdivision of the diamond graph $K_4 - e$ and disjoint paths (perhaps trivial) connecting one vertex of
               %degree 2 with $s$ and another vertex of degree 2 with $t_1$.
               
               It must be that $w \in V(K'_1)$, otherwise
               $N(G^+) \not \in \cd$, again contradicting Lemma~\ref{lem.SP}. Since $K'$ is 2-connected, $w$ must have degree 2 in $K_1'$. %, and it is a neighbour of $s$.
               %Absolutely non-verbal!
               %OK. Try. 
               %Question: CAN WE PROVE STRONG THINGS JUST BY USING VISUAL PROOFS?
               %CAN WE REDUCE OTHER HARD PROBLEMS TO GRAPHS?
               %FOR EXAMPLE, LEA RNING?
               %For example in this proof I IMAGINE the graph K_1' as a physical object
               %It is very hard to speak about it only formally.
               %Let $K$ be a subdivision of $K_4$ and let $P$ be a subgraph of $K$ that
               %is a path with all internal vertices of degree 2 in $G$.
               %Colour the endpoints of $P$ red and remove the internal vertices of $P$
               %from $K$ to obtain a graph $K' = K - int(P)$. Since $K'$ contains all
               %of the 'basic nodes' (the original vertices of the $K_4$ which have degree 3 in K)
               %and the edges of $P$ all belong to a single subdivided edge of $K_4$ (otherwise we would have a contradiction to Lemma~\ref{lem.2cuts}
               %which says that $P$ has all internal vxs of degree 2 in $K$), it must be
               %that $K'$ is a subdivision of $K_4 - e$, where the two 'basic nodes'
               %of degree 2 have pendant paths attached (perhaps trivial paths).
               %Here by basic nodes (with respect to $H$) of a subdivision $H'$ of a graph $H$ we refer to the original vertices of $H$.
               %We omit the phrase ``with respect to'' when the graph $H$ is obvious or irrelevant.
               We have that $K'_1$ is a subdivision of one of the graphs shown in Figure~\ref{fig.subdivs}, with the restriction that $rw$ cannot be subdivided.
               %Note that the unlabelled vertices may be
               %arbitrary vertices of $H'_1$. 
               Importantly, in all cases, the graph $K'' = K'_1 - \{w, r, x\}$ is connected.
               The SP-network $H'$ is parallel (it contains an internal vertex $v$), so by Lemma~\ref{lem.SP}
               it can be obtained in a unique way by parallel composition of some $l\ge 2$ networks $S_1, \dots, S_l \in \cs \cup e$. %SP-networks so that there are no edges between
               %the internal vertices of $S_i$ and the internal vertices of $S_j$ for $1 \le i < j \le l$.
               %where at most one network is the edge $st_1$.
               The connected graph $H''$ belongs to exactly one of these networks; change the indices if necessary, so that this network is $S_1$. Now, since $V(K') \cap V(S_2) \subseteq \{r, x\}$ we may use a path in $S_2$ from $r$ to $x$ to replace the path $P$ and show
               that $\tilde{H}'$ also contains a subdivision of $K_4$. 
               This demonstrates that $H' \not \in \cb_2$, which is a contradiction.
\begin{figure}
     \begin{center}
    \includegraphics[height=5cm]{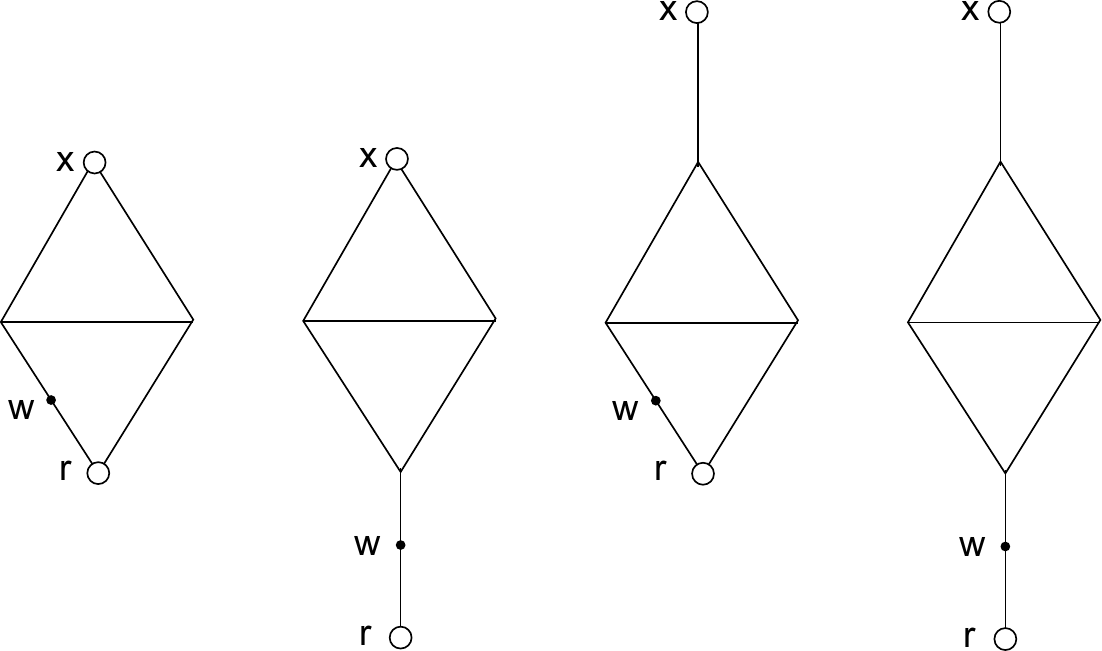}
    \end{center}
    \caption{The part $K'_1$ of the subdivision of $K_4$ contained in $\tilde{H}_1$ is a subdivision of one of these four types of graphs. Here the unlabelled vertices
             can be any vertices of~$\tilde{H}_1$.}
       \label{fig.subdivs}
\end{figure}

Combining the decompositions in each of the cases (a) and (b) yields the bijection
\[
  \cs_1 = \cz \times \cd ^2 + \cd \times \cb_2,
\]
which translates into the generating function, see \cite{fs09}, as claimed.
\end{proofof}

\begin{lemma}\label{lem.T2}
    Each graph in $\cb_2$ admits a unique decomposition into a network in $\cd$ and a network in $\cs_1$. The
    exponential generating function of $\cb_2$ satisfies
    \[
        B_2(x) = x D(x) S_1(x) %= x^2 D^3 + x D B_2.
    \]
\end{lemma}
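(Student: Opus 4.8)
The plan is to build on the preceding lemmas, especially Lemma~\ref{lem.S1} (decomposition of $\cs_1$) and the ``networkize'' characterisation, Lemma~\ref{prop.networkize}, to show the bijection $\cb_2 = \cz \times \cd \times \cs_1$, which immediately yields $B_2(x) = x D(x) S_1(x)$ via the standard symbolic rules of~\cite{fs09}. First I would take $B \in \cb_2$ with the two coloured vertices $v_1$ (coloured $\{red\}$) and $v_2$ (coloured $\{green\}$); by Lemma~\ref{prop.networkize}, viewing $B$ as a graph with root $r$, the network $B_{green}$ obtained by making $v_2$ a sink is in $\cp + \ce_2$, and similarly for $B_{red}$. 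Since $|\cb_{2,j}| = 0$ for $j < 2$, the underlying network has at least two vertices besides the poles, so $N(B_{green}) \in \cp$ (it cannot be the single edge $\ce_2$ because it must contain $v_1$ as an internal vertex). I then want to ``open up'' the root: the crucial observation is that $v_1$, being an internal vertex of the parallel network $N(B_{green})$ with poles $r$ and $v_2$, lies on a cycle through $r$ and $v_2$ by Proposition~\ref{prop.SPcycle}, so that $B$ really does carry both colours in a ``serial'' fashion along a single strand.

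The main structural step is to exhibit the cut that splits $B$ into the $\cd$-part and the $\cs_1$-part. Concretely, I would look at the third pole added back, i.e., reintroduce an edge $r v_2$; then $N(B_{green})^+$ is $2$-connected and series-parallel, and by Lemma~\ref{lem.SP} it decomposes as a parallel composition, one summand of which is the edge $r v_2$ and another of which is a series network $S$ containing $v_1$. The piece $S$ with its poles $r$ and $v_2$, with $v_1$ coloured red internally, should be shown to lie in $\cs_1$: the red colour is good for $S^+$ because it is good for $B$ (any $K_4$-subdivision witnessing badness for $S^+$ would, via Lemma~\ref{lem.2cuts} applied at the cut $\{r, v_2\}$, either sit inside $S$, contradicting $B \in \cb_2$, or reduce to the path $r v_2$, which is already present). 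The remaining part — essentially what hangs off the poles $r$ and $v_2$ outside $S$ — is the $\cd$-network $D$, and the distinguished root vertex $r$ supplies the $\cz$ factor (one labelled vertex). Conversely, given $D' \in \cd$ and $G' \in \cs_1$ with the red vertex $v$, I identify the source of $D'$ with the source of $G'$, label that common vertex (this is the $\cz$), keep $v$ coloured red and colour the sink of $G'$ green, and the resulting graph is biconnected and series-parallel with both colours good, hence in $\cb_2$; uniqueness of the decomposition follows from the uniqueness clauses of Lemma~\ref{lem.SP} (the parallel split into the edge $r v_2$ versus the rest is canonical, as is the internal structure of the series network).

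The step I expect to be the main obstacle is verifying carefully that the series network $S$ extracted above genuinely lies in $\cs_1$ and, in the reverse direction, that the reconstructed graph is genuinely in $\cb_2$ — both amount to checking that ``goodness'' of a colour is preserved under the gluing and splitting at the $2$-cut $\{r, v_2\}$. This is exactly the kind of argument handled in the proof of Lemma~\ref{lem.S1} via repeated use of Lemma~\ref{lem.2cuts} and Proposition~\ref{prop.v}, so I would mirror that case analysis: a $K_4$-subdivision witnessing badness is $2$-connected, hence meets the cut in a controlled way, and in each case either it is confined to one side (contradicting that side's membership in $\cb_2$ or $\cd$) or it uses only a $r$-to-$v_2$ path on one side, which can be rerouted through the parallel edge or the other network, again producing a contradiction. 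Once this compatibility is established, the bijection $\cb_2 = \cz \times \cd \times \cs_1$ is clear and the generating-function identity $B_2(x) = x D(x) S_1(x)$ follows from the product rule for exponential generating functions.
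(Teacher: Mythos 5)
Your plan follows the same route as the paper: networkize $B$ at the green vertex via Lemma~\ref{prop.networkize} to get a parallel $SP$-network with poles $r$ and $u$; use the remark following Lemma~\ref{lem.SP} to split it into the unique series piece $S$ containing the red vertex $v$ and the complementary network $D \in \cd$; verify $S \in \cs_1$; and reverse by parallel composition. For the verification that $S \in \cs_1$, the paper simply observes that $S^+$ is a minor of $\tilde G$, so badness transfers directly — no case analysis via Lemma~\ref{lem.2cuts} is needed in the forward direction (it is used in the converse, where you correctly invoke it together with Proposition~\ref{prop.v}).

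However, the bookkeeping for the $\cz$ factor is wrong in a way that would matter if written up. You attribute the $\cz$ to the root $r$, but $r$ is the \emph{pointed} vertex of a graph in $\cb_2$: it is unlabelled, uncoloured, and contributes nothing to the size. The labelled vertex accounting for the $x$ in $B_2 = x\,D\,S_1$ is the green vertex $u$, i.e.\ the common sink of $D$ and $S$. This is exactly what Lemma~\ref{prop.networkize} encodes: when $B$ has $n$ labelled vertices, the network $B_{\mathrm{green}}$ has $n-1$ internal labelled vertices, so $n(B) = n(D) + n(S) + 1$. In the converse you must join $D'$ and $S'$ by a full parallel composition (identify \emph{both} pairs of poles, not only the sources — otherwise the result has a cut vertex and is not biconnected), declare the common source to be the pointed root $r$ (unlabelled, uncoloured), and label the common sink and colour it green. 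As you have written it, the common source is labelled (it cannot be: the root of a member of $\cb_2$ must be pointed and uncoloured), and it is unclear whether the sinks are identified at all. Once these roles are straightened out, your argument is correct and matches the paper.
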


   \begin{proof} %{Lemma~\ref{lem.T2}}
       Let $G \in \cb_2$, let $r$ be the root of $G$ and let $u$ and $v$ be the vertices coloured $\{green\}$ and $\{red\}$ respectively.
       Consider the $\{0,1\}^1$-coloured network $\tilde{G}$ with poles $r$ and $u$
       obtained with $c = green$ as in Proposition~\ref{prop.networkize}. 
       $N(\tilde{G}) \in \cp$ (it has an internal vertex $v$),
       so it can be decomposed uniquely using Lemma~\ref{lem.SP} into a network $D \in \cd$ and a series graph $S$ that contains $v$,
       both with poles $s$ and $u$. Since the graph $S^+$ is isomorphic to a minor
       of $\tilde{G}$, if red is bad for $S^+$, then it is bad for $\tilde{G}$. Thus
       $S \in \cs_1$.

       Now take arbitrary networks $D' \in \cd$ and $S' \in \cs_1$ with disjoint
       sets of internal vertices
       and join them in parallel. %\m{def}
       Label the sink vertex $u$ and colour it green. We claim that the resulting graph $G' \in \cb_2$.
       Suppose, not. The colour green cannot be bad for $G'$ by Proposition~\ref{prop.v}. Suppose red is bad for $G'$.
       The root $r$ of $G'$ is its unique pointed vertex (the common sink of $D'$ and $S'$).
       Using Lemma~\ref{lem.2cuts}, since $D' \in \cd$, we get that red is bad for $S' + r u$, which contradicts to the definition of $\cs_1$.
       So we have
       \[
         \cb_2 = \cz \times \cd \times \cs_2,
       \]
       and applying the standard conversion to generating functions \cite{fs09} completes the proof.
       %Let $H$
       %be a subdivision of $K_4$ in the graph $\tilde{G}'$ obtained from $G'$ by adding
       %a new vertex $w$ with $\Gamma_{\tilde{G}'}(w) = \{r, v\}$.
       %Apply Lemma~\ref{lem.2cuts} with  $G_1 = \tilde{G}'[V(D')]$ and $G_2 = \tilde{G}'[V(S') \cup \{w\}]$. 
       %It follows that either $G_1 + r u$ or $G_2 + r u$ contains a subdivision of $K_4$.
       %But this is a contradiction to the definition of $\cd$ or $\cs_1$ respectively.
   \end{proof}

   \bigskip

   We may combine the results of this section to obtain the full picture of graphs in the class $\cb_2$, see Figure~\ref{fig.T2}.
   \begin{corollary} \label{col.T2}
  Each graph in $\cb_2$ admits a unique decomposition into three graphs in $\cd$ or two graphs in $\cd$ and a graph in $\cb_2$:
 \[
 \cb_2 = \cz^2 \times \cd ^ 3 + \cz \times \cd^2 \times \cb_2.
\]
\end{corollary}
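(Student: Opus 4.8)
The plan is to chain together the two unique decompositions established just above. By Lemma~\ref{lem.T2}, every $G \in \cb_2$ admits a unique (label-disjoint) decomposition $\cb_2 = \cz \times \cd \times \cs_1$: one deletes the labelled root $r$ of $G$, forms the network $\tilde{G}$ with poles $r$ and the green vertex $u$ as in Lemma~\ref{prop.networkize}, and splits the parallel network $N(\tilde{G})$ according to the series decomposition of Lemma~\ref{lem.SP} into the network $D \in \cd$ and the series network $S \in \cs_1$ that contains the red vertex $v$. By Lemma~\ref{lem.S1}, the graph $S \in \cs_1$ in turn admits a unique decomposition $\cs_1 = \cz \times \cd^2 + \cd \times \cb_2$, according to whether the red vertex is a ``joint'' between two consecutive $\cd$-blocks of $S$ or lies strictly inside the first parallel block (which is then a member of $\cb_2$).

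First I would substitute the second relation into the first:
\[
\cb_2 \;=\; \cz \times \cd \times \cs_1 \;=\; \cz \times \cd \times \left(\cz \times \cd^2 + \cd \times \cb_2\right) \;=\; \cz^2 \times \cd^3 + \cz \times \cd^2 \times \cb_2,
\]
and then check that the composite decomposition is again unique and label-disjoint: given $G \in \cb_2$, the pair $(D, S) \in \cd \times \cs_1$ is uniquely determined by Lemma~\ref{lem.T2}, and then the further split of $S$ into $(\cz, D', D'')$ or into $(D', G')$ with $G' \in \cb_2$ is uniquely determined by Lemma~\ref{lem.S1}; conversely, the inverse constructions of Lemma~\ref{lem.S1} followed by that of Lemma~\ref{lem.T2} send a triple in $\cz^2 \times \cd^3$ (respectively in $\cz \times \cd^2 \times \cb_2$) back into $\cb_2$. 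This yields the asserted bijection; at the level of exponential generating functions it reads $B_2(x) = x D(x)\bigl(x D(x) + B_2(x)\bigr) = x^2 D(x)^3 + x D(x)^2 B_2(x)$, in agreement with Lemmas~\ref{lem.T2} and \ref{lem.S1}.

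There is essentially no obstacle here: the corollary is a purely formal consequence of the two preceding lemmas. The one point that warrants a sentence of care is the bookkeeping of the distinguished and coloured vertices through the substitution — the pointed root $r$ of $G \in \cb_2$ is uncoloured, the intermediate network $\tilde{G}$ has $r$ as its source and the vertex of $G$ coloured $\{green\}$ as its sink, and the vertex coloured $\{red\}$ is the internal vertex of $S$ at which the two $\cd$-networks (or the $\cd$-network and the $\cb_2$-graph) are glued; one verifies that the label and colour assignments prescribed by the two lemmas are mutually compatible, so no information is lost or double-counted. Figure~\ref{fig.T2} then records the resulting two-level block structure.
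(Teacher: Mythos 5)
Your proposal is correct and follows exactly the paper's approach: the paper's proof is the one-line instruction ``Combine Lemma~\ref{lem.S1} and Lemma~\ref{lem.T2},'' which is precisely the substitution of $\cs_1 = \cz \times \cd^2 + \cd \times \cb_2$ into $\cb_2 = \cz \times \cd \times \cs_1$ that you carry out. Your additional remarks on uniqueness, label-disjointness and the bookkeeping of the poles and coloured vertices are a reasonable elaboration of what the paper leaves implicit.
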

\begin{proof}
  Combine Lemma~\ref{lem.S1} and Lemma~\ref{lem.T2}.
\end{proof}
  % It seems that the graphs in the class $\cb_2$ can be formed by taking a fan $F_k$ on $k \ge 3$ vertices (a path and an apex vertex connected to every node on that path) colouring its vertices of degree 2 $\{red\}$ and $\{green\}$ respectively and adding a new apex vertex connected to the endpoints and arbitrary vertices of that path, and finally substituting a parallel SP-network  for each edge. However, more complicated constructions are possible for $\cb_3$.
   \begin{figure}
     \begin{center}
    \includegraphics[height=3cm]{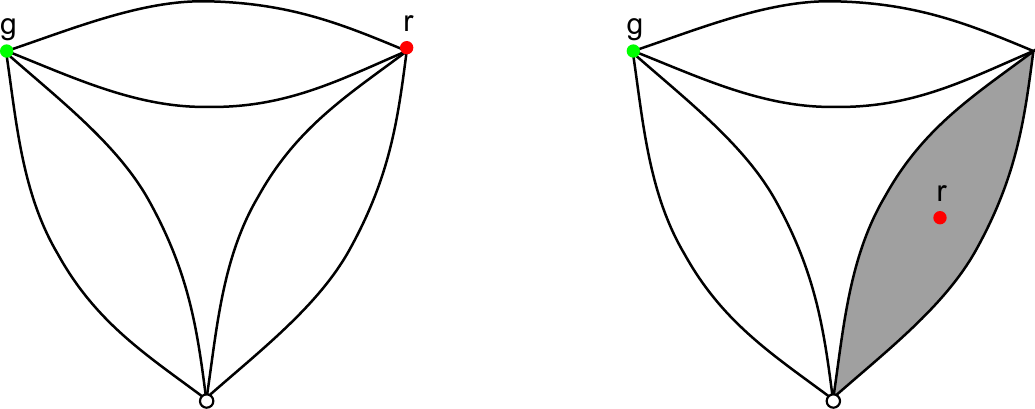}
    \end{center}
    \caption{ Graphs in the class $\cb_2$ can be decomposed into one of the following constructions. Here the white shapes represent networks in $\cd$,
             the grey shape represents a graph in $\cb_2$ where the green vertex is converted into the sink. $r$ and $g$ mark vertices coloured red and green respectively.}
       \label{fig.T2}
\end{figure}

\subsection{Blocks of coloured trees (general case)}

%The argument of the proofs of Lemma~\ref{lem.S1} and Lemma~\ref{lem.T2} can be extended to find the exponential generating function for $\cb_3$. 
In this section we give a nice characterisation of the class of coloured blocks $\cb_k$ for arbitrary $k$.
It turns out that each graph in $\cb_k$ can be formed by substituting an SP-network for each edge of an ``apex tree''.

%Given a positive integer $k$, 
Let $k$ be a positive integer. Let $\ct_k'$ be the class of $\{0,1\}^k$-coloured Cayley trees $T$
containing exactly $k$ coloured vertices $v_1, \dots, v_k$, where vertex $v_i$ has colour $\{i\}$
for each $i = 1, \dots, k$ with the following restriction: if a vertex $u \in V(T)$ has no colour,
then it must have degree at least 3. Since every leaf is coloured by a unique colour, each $T \in \ct'_k$
has only one automorphism. Therefore, we have $|\ct_{k,n}'| = n!|(\cu\ct'_k)_n|$, where $\cu\ct'_k$ is the class of unlabelled $\{0,1\}^k$-coloured trees that can be obtained from the trees in $\ct'_k$. 

%It is easy to generate all trees in $(\cu \cb_k)_n$
For $k = 1, 2, \dots$ the number of elements in $\cu \ct_k'$ is
\[
%1, 1, 4, 31, 367, \dots
    1, 1, 4, 32, 396, 6692, \dots\footnote{The counts were fixed on 2019-07-13. A wrong sequence $1, 1, 4, 31, 367, \dots$ was published in the previous version of this paper and in the dissertation of the author. This only affected the last digit of $\gamma(\rd 5 K_4)$ given the table of Section~\ref{sec.conclusion}.}
\] 
 For example, all trees in the class $\cu \ct_3'$ are shown in Figure~\ref{fig.T'_3}.
%It is interesting to note, that the above sequence does not yet appear in the Sloane's Encyclopedia of
%Integer Sequences 
Using The On-Line Encyclopedia of Integer Sequences \cite{sloane} we find that the sequence is known as A005263 and the trees in $\cu \ct_k', k \in \{1, 2, \dots,\}$ (with colours represented as integers) are called \emph{labelled Greg trees}.

Now let $\cf_k'$ be the class of all vertex-pointed graphs that can be obtained by taking a coloured tree $T \in \ct_k'$,
subdividing its edges arbitrarily (by inserting new uncoloured labelled vertices) to get a tree $T'$, and finally
adding a %n unlabelled
pointed
root vertex $r$ connected to each leaf of $T'$ and each uncoloured vertex of degree 2 (edges $rv$ where $v$ is coloured or has at least 3 neighbours may be included or not included). 
%Let $F_k'(x,y)$ be
%the bivariate exponential generating function for $\cf_k'$ where the variable $y$ counts
%the number of edges.

Let $\cf_k'(\cdot, \cd)$ denote
the class of graphs %where each edge of $G \in \cf'(k)$ is replaced by a network in $\cd$.
that can be obtained from graphs in $\cf_k'$ by replacing their edges by arbitrary networks
in $\cd$. 
Since each network has an orientation (it starts with its source and ends with its sink), in order for such replacement to be well defined for a given $G \in \cf_k'$ and $\{D_e \in \cd: e \in E(H)\}$, the edges of $G$ have to be oriented. %Therefore we can fix the following rule: before the substitution, orient the edges of $G$ so that
We can assume that each edge in the tree $G-r$ points towards the red vertex, and each edge of $G$ adjacent to $r$ points away from $r$.
\begin{figure}
     %\begin{center}
     % \includegraphics[height=3cm]{result_1_0_1.pdf}
     % \includegraphics[height=3cm]{result_2_0_1.pdf}
     \centering
     \includegraphics[height=5cm]{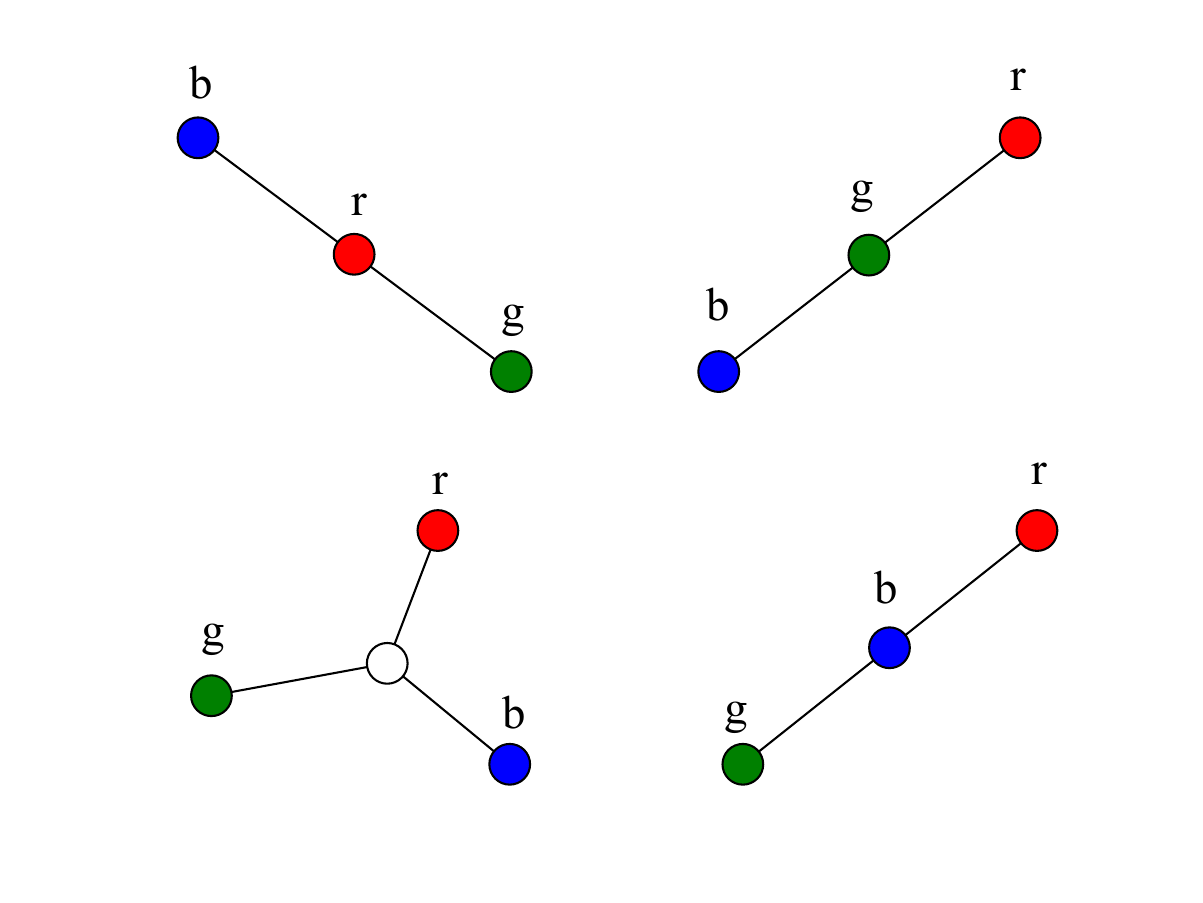}
    %\end{center}
    \caption{The isomorphism groups for the class of coloured trees $\ct_3'$ (equivalently, the labelled Greg trees with 3 black nodes).}
       \label{fig.T'_3}
\end{figure}
\begin{theorem}\label{thm.Tk}
  Let $k$ be a positive integer. Each graph in $\cb_k$ can be obtained in a unique way by substituting
an SP-network for each edge of a graph in $\cf_k'$:
%\[
% \cb_k = \cf_k'(\cdot, \cd).
%\]
\[
\cb_k = \left\{
  \begin{array}{l l}
    \cf_1'(\cdot, \cp+\ce_2) = \cz \times (\cp + \ce_2) & \quad \text{for $k=1$}\\
    \cf_k'(\cdot, \cd) & \quad \text{otherwise.}
  \end{array} \right.
\]
%\m{Counts? Take care in the definition of $=$.}
%The exponential generating functions are related by
%\[
%  T_k(x) = F_k'(x, D(x)).
%\]
\end{theorem}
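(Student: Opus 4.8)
The plan is to prove Theorem~\ref{thm.Tk} by induction on $k$, leveraging the full decomposition of $\cb_2$ obtained in Section~\ref{sec.T2} (Corollary~\ref{col.T2}) as the base of the argument and the structural results on coloured trees, $C$-trees, and SP-networks already established. Before the induction, I would record the case $k=1$ separately: by Lemma~\ref{prop.networkize} (with $k=1$), a $\{0,1\}^1$-coloured graph $G$ with root $r$ and one vertex coloured $\{1\}$ lies in $\cb_1$ iff the associated network $G_1$ with poles $r$ and the coloured vertex is in $\cp+\ce_2$; and $\cf_1'$ consists precisely of a single edge from $r$ to the red vertex, so $\cf_1'(\cdot,\cp+\ce_2)=\cz\times(\cp+\ce_2)$ exactly describes $\cb_1$. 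This matches the first line of the claimed formula.

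\medskip

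For the inductive step with $k\ge 2$, I would argue that a graph $G\in\cb_k$, being biconnected and series-parallel after removing colours, decomposes via Lemma~\ref{lem.SP}. The key observation is that the $k$ coloured vertices $v_1,\dots,v_k$ must occupy a ``tree-like skeleton'' inside $G$: since adding a new apex vertex connected to the root and to any single coloured class may not create a $K_4$ (goodness, condition (a) of a $C$-tree, applied through $\cb_k\subseteq\ca_{[k]}$), each individual coloured vertex $v_i$ lies on an SP-network structure joined at cut-like poles, exactly as illustrated in Figure~\ref{fig.RGB-tree}. Concretely, I would take the root $r$ and the $k$ coloured vertices and look at the subgraph they ``span'' in the SP-decomposition tree: internal vertices of the parallel/series composition tree that separate coloured vertices from each other or from $r$ play the role of the uncoloured degree-$\ge3$ nodes of a tree in $\ct_k'$, while the pieces of the SP-network lying between consecutive skeleton nodes are precisely arbitrary networks in $\cd$ (or in $\cp+\ce_2$ for the edges incident to $r$, since those are the ``first blocks'' along each path and cannot be series — this is why the $k=1$ case has $\cp+\ce_2$ and why for $k\ge2$ the constraint is absorbed). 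The formal vehicle for ``the skeleton is a tree in $\cf_k'$ and the pieces are networks'' is repeated application of Lemma~\ref{lem.2cuts}: whenever $\{s,t\}$ is a $2$-cut of $G$ separating some coloured vertices from others, a $K_4$-subdivision witnessing badness of a colour $c$ would have to meet one side in just a path from $s$ to $t$, which one then reroutes to contradict goodness on a strictly smaller piece. This is the same mechanism used in Lemma~\ref{lem.S1} and Proposition~\ref{prop.multiconstruction}, now iterated along the whole skeleton.

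\medskip

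For uniqueness of the decomposition I would invoke the uniqueness of the SP-decomposition in Lemma~\ref{lem.SP} (series into $\cp+\ce_2$ pieces, parallel into $\cs\cup\ce_2$ pieces): the skeleton tree $T\in\ct_k'$ is recovered as the minimal subtree of the SP-decomposition tree spanning the coloured leaves, the subdivision $T'$ and the root attachments are forced by which skeleton nodes are degree-$2$ uncoloured (and hence must be joined to $r$), and the networks $D_e\in\cd$ are then read off uniquely from the SP-pieces between adjacent skeleton nodes. Conversely, given any $G\in\cf_k'$ and any assignment of networks $\{D_e\in\cd\}$, I would check that the resulting coloured graph is biconnected series-parallel (a parallel/series composition of SP-networks is again an SP-network, and the apex $r$ with its leaf connections closes everything into a $2$-connected series-parallel graph, cf.\ Proposition~\ref{prop.v}), has exactly the $k$ coloured vertices with the right degrees, and that every colour is good — again by Lemma~\ref{lem.2cuts}, any $K_4$-subdivision created by adding an apex for colour $c$ would localise to a single network piece or a rerouted path, contradicting that the pieces are genuine $\cd$-networks and that $T$ respects the $\ct_k'$ degree constraint. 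Hence the map $\cf_k'(\cdot,\cd)\to\cb_k$ is a bijection.

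\medskip

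The main obstacle I anticipate is making precise the claim that the coloured vertices, together with $r$, ``span a tree'' in a way that exactly matches the definition of $\ct_k'$ and $\cf_k'$ — in particular handling the interplay between series and parallel compositions so that the uncoloured degree-$\ge3$ nodes of $\ct_k'$ correspond to the branch points, that no ``extra'' uncoloured degree-$2$ skeleton node appears without being attached to $r$, and that edges incident to $r$ land in $\cp+\ce_2$ rather than all of $\cd$ when $k=1$ but this distinction disappears for $k\ge2$. I expect this bookkeeping to be where most of the work lies; the goodness/badness arguments themselves are by now routine applications of Lemma~\ref{lem.2cuts} in the style of Lemmas~\ref{lem.S1}--\ref{lem.T2}, and the case $k=2$ already handled in Corollary~\ref{col.T2} serves both as a sanity check ($\cu\ct_2'$ has a single element, a path on two red/green leaves, possibly subdivided, giving exactly the two constructions $\cz^2\times\cd^3+\cz\times\cd^2\times\cb_2$) and as the anchor of the induction.
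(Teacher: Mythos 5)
Your overall plan — induction on $k$, base case from Corollary~\ref{col.T2}, heavy use of Lemma~\ref{lem.SP} and Lemma~\ref{lem.2cuts}, and goodness/badness arguments in the style of Lemmas~\ref{lem.S1}--\ref{lem.T2} — matches the ingredients of the paper's proof. But you have a real gap precisely where you flag a worry: the claim that the skeleton tree ``is recovered as the minimal subtree of the SP-decomposition tree spanning the coloured leaves'' is not a well-defined recipe. The SP-decomposition of Lemma~\ref{lem.SP} is a decomposition of a \emph{network}, i.e.\ of $G_c$ for a chosen colour $c$, and it changes as $c$ changes; there is no single ``SP-decomposition tree'' of the uncoloured graph $G$ (or of $G-r$) from which to read off $T \in \ct_k'$. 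The paper resolves this by splitting the bijection into two lemmas and doing the recovery in a strictly local, iterative way. Lemma~\ref{lem.FkD} proves injectivity of $(H,\cd_H)\mapsto G(H,\cd_H)$ by a peeling induction on $|V(H)|$: one first characterises the unique leaf $u$ of $T=H-r$ with largest colour index as the unique coloured vertex such that $G-\{r,u\}$ has no two coloured components, then extracts $D_{ru}$, and then gives a careful characterisation (the ``candidate'' cut vertex: coloured, or with $G-C(v',u)$ $2$-connected, and minimal in that set with respect to not dominating other candidates) of the neighbour $v$ of $u$ in $T$, before relabelling and recursing. That characterisation is the non-trivial step your ``minimal spanning subtree'' slogan does not supply, and without it you have not shown that two distinct pairs $(H,\cd_H)\ne(H',\cd_{H'})$ give different graphs.

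Your inductive step for surjectivity is also more holistic than the paper's. Lemma~\ref{lem.TFkD} removes the single colour $k$, applies induction to get $(H',\cd_{H'})$ for $G'\in\cb_{k-1}$, and then performs a three-way case analysis on where the $\{k\}$-coloured vertex $u$ sits relative to that decomposition: (a) $u$ is already a node of $H'-r$; (b) $u$ is internal to a network $D'_e$ with $e\in E(T')$, where Lemma~\ref{lem.SP} and Proposition~\ref{prop.SPcycle} force $u$ to be a cut vertex of $D'_e$ so the edge subdivides; or (c) $u$ is internal to $D'_{rv}$ for some $v$, which requires reducing to the already-proved $k=2$ case on a suitably rooted piece. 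Your plan of ``spanning a tree'' all at once would have to confront all three of these situations simultaneously and show that they patch together into a graph in $\cf_k'$ with the degree constraint on uncoloured nodes respected; that is precisely the bookkeeping you defer, and it is the bulk of the proof. In short: right tools, right base case, right $2$-cut mechanism, but the heart of the argument (the local recovery algorithm of Lemma~\ref{lem.FkD} and the case split of Lemma~\ref{lem.TFkD}) is left as a wish, not a proof.
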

%Note that the theorem is true with $k=1$, if we replace $\cd$ by $\cp$. 
We will prove Theorem~\ref{thm.Tk} using the next two lemmas.

\begin{lemma}\label{lem.FkD}
    Let $k \ge 2$ be an integer. Each pair $(H, \cd_H)$, where $H \in \cf_k'$
    and $\cd_H = \{D_e \in \cd : e \in E(H)\}$ yields a unique graph $G=G(H, \cd_H) \in \cb_k$,
    where $G$ is obtained from $H$ by replacing $e$ with $D_e$ for each edge $e \in E(H)$. 
\end{lemma}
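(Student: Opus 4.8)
The plan is to show two things: first, that the resulting graph $G = G(H, \cd_H)$ actually lies in $\cb_k$, and second, that the pair $(H, \cd_H)$ can be recovered from $G$, so the construction is injective. For membership in $\cb_k$ I would verify the conditions using Lemma~\ref{prop.networkize}: it suffices to check that for each colour $c \in [k]$, if $v_c$ is the vertex of $G$ carrying colour $\{c\}$ and $G_c$ is the network with source the root $r$ and sink $v_c$ obtained by deleting the colour and label of $v_c$, then $N(G_c) \in \cp + \ce_2$. I would first establish this at the level of $H$ itself (each $H \in \cf_k'$, viewed with $r$ as source and $v_c$ as sink, should give a parallel network $N(H_c) \in \cp$; this is where the structure of $\cf_k'$ — a subdivided tree plus a root joined to all leaves and degree-2 vertices — is used, essentially the observation that the root together with the tree-path to $v_c$ forms a "fan-like'' parallel network). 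Then, since replacing each edge of a network by an $SP$-network again yields an $SP$-network of the same parallel/series type (Lemma~\ref{lem.SP} and the comments after it, applied to the edges of $H_c$), we get $N(G_c) \in \cp + \ce_2$ as required. I would also note $G$ has exactly $k$ coloured vertices, one per colour, each of which is an internal vertex (not $r$), so $G \in \cb_k$.

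For injectivity I would describe the decoding procedure. Given $G \in \cb_k$ obtained from some $(H, \cd_H)$, I would recover $H$ as follows: the root $r$ is the unique pointed vertex. The tree $T = H - r$ has as its "structural'' vertices exactly the coloured vertices (the $v_i$) together with the uncoloured vertices of degree $\ge 3$ in $T$; all other vertices of $T$ are internal vertices of subdivided edges. In $G$, the branch vertices and coloured vertices can be identified as those vertices adjacent to $r$ that remain "cut-like'' after the network substitutions, or more robustly: the $SP$-networks $D_e$ substituted for the edges $e$ of $H$ are precisely the "pieces'' of $G$ hanging between consecutive structural vertices, and each such piece is a maximal $SP$-subnetwork not containing any structural vertex in its interior. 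I would argue that the series/parallel decomposition of $G$ (Lemma~\ref{lem.SP}), applied along each path from $r$ to a $v_c$, exposes the $D_e$ uniquely as the segments between two structural vertices, because the root-edges of $\cf_k'$ and the structural vertices are exactly the vertices of $G$ through which every such root-to-$v_c$ path must pass. Collapsing each $D_e$ back to a single edge recovers $H$ uniquely, and the $D_e$ are read off directly.

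The main obstacle I expect is the injectivity (decoding) argument, specifically ruling out ambiguity at the interface between the "tree skeleton'' $H$ and the substituted networks: one must be sure that a substituted network $D_e$ cannot be mistaken for part of the skeleton (e.g.\ if $D_e$ is itself a single edge $\ce_2$, or if $D_e$ is a parallel network that could be confused with the root being joined to an internal vertex). I would handle this by exploiting the definition of $\cf_k'$ carefully — in particular the rules governing which root-edges $rv$ are present (forced for leaves and degree-2 uncoloured vertices, optional otherwise) — and the uniqueness statement in Lemma~\ref{lem.SP}, which guarantees that the series and parallel decompositions of $G$ are canonical. The bookkeeping of edge orientations (each tree-edge pointing towards the red vertex, each root-edge pointing away from $r$) is what makes the substitution well-defined and the decoding unambiguous, and I would make explicit use of it; beyond that the verification is routine given Lemmas~\ref{lem.SP}, \ref{prop.v} and \ref{prop.networkize}.
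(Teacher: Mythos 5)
Your plan splits correctly into membership and injectivity, and the membership half takes a genuinely different (and potentially workable) route than the paper. You propose to show that the network $N(H_c)$ is already in $\cp + \ce_2$ for each $H \in \cf_k'$ and each colour $c$, and then argue that edge-substitution by SP-networks preserves the series/parallel type. Both sublemmas are plausible, but neither is trivial and neither is proved: the first requires a careful analysis of the structure of $\cf_k'$ (which vertices are forced to be adjacent to $r$), and the second requires an inductive verification that substituting into a parallel (resp.\ series) network of the canonical decomposition yields a parallel (resp.\ series) network, a statement which Lemma~\ref{lem.SP} does not directly give you. The paper instead argues about $G$ itself: if $G_c + rv_c$ contained a subdivision $K'$ of $K_4$, then since the vertices of $V(H)\setminus\{r\}$ are cut points of $G_c-r$, the 2-core of $K'-r$ would lie inside a single $D_e$; combining this with the observation that $G_c$ has a path between the poles of $D_e$ avoiding its interior, Lemma~\ref{lem.2cuts} forces $D_e \notin \cd$, a contradiction. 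This is shorter and avoids both of your sublemmas.

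The real gap is in the injectivity argument, and you essentially flag it yourself. Your decoding procedure is circular as written: you want to find the structural vertices of $H$ by locating the pieces $D_e$ ("the $D_e$ are precisely the pieces hanging between consecutive structural vertices"), but you can only delimit the pieces once you know which vertices are structural. The heuristic "vertices adjacent to $r$ that remain cut-like" does not break the circularity, because $r$ is generically adjacent to many internal vertices of the networks $D_{rv}$ (since $r$ is a pole of each such network), and conversely not every structural vertex of $T$ is adjacent to $r$ (only leaves and degree-$2$ uncoloured vertices are forced to be, the rest are optional). Some explicit recursive or greedy procedure is needed, and its correctness has to be argued. The paper does this by induction on $|V(H)|$: it singles out a specific leaf $u$ of $T$ (the coloured vertex with largest index $j$ such that $G-\{r,u\}$ has at most one coloured component), shows that this choice identifies $u$ unambiguously from $G$, recovers $D_{ru}$ as the part of $G$ cut off by $\{r,u\}$ that contains no colour, then identifies the neighbour $v$ of $u$ in $T$ as the unique "candidate" cut vertex, recovers $D_{uv}$, and reduces to a strictly smaller instance (with a dummy path added so the reduced structure again lies in $\cf_{k'}'$). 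That bookkeeping is precisely what your plan is missing.
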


\begin{proof}
    Fix a pair $(H, \cd_H)$ %, let 
    %$G = G(H, \cd_H)$, and 
    and let $r$ be the root of $G = G(H, \cd_H)$. Let us first prove that 
    $G \in \cb_k$. To this aim, by Lemma~\ref{prop.networkize} it suffices to show that for each colour $c\in[k]$, 
    if $v_c$ is the vertex coloured $\{c\}$, then
    the network $G_c$ with source $r$ and sink $v_c$ obtained from $G$ as in Lemma~\ref{prop.networkize} satisfies $N(G_c) \in \cp$.

    The graph $G_c$ is 2-connected, % and has at least 3 vertices
    so $N(G_c) \not \in \cs \cup \ce_2$.
    Suppose $G_c + r v_c$ contains a subdivision $K'$ of $K_4$. Let $B$ be the block of $G_c - r$ containing the 
    %cycle $C$
    $2$-core
    of $K' - r$ (the $2$-core of a graph is the unique graph obtained by repeatedly deleting
    vertices of degree at most $1$, until no such vertices remain).
    Since each vertex $v \in V(H) \setminus \{r\}$ is a cut point of $G_c - r$, we have that $B$ is isomorphic to a subgraph of $D_e$ for some $e = xy \in H$. Furthermore, $G_c$ contains a path from the source to the sink of $D_e$ which does not
    use any internal vertex of $D_e$.
    It follows by Lemma~\ref{lem.2cuts} that $D_e$ is not an $SP$-network, a contradiction.
    %there must be a cycle $C \in G'$, with disjoint pendant paths $P_1, \dots, P_3$, such that
    %for $i = 1, 2, 3$ the path $P_i$ shares only its endpoint vertex $x_i$ with $C$, while the other endpoint of $P_i$ is red.
    
    %Note, each blocks of $G'$ is graphs $D_e$
   Let us now prove that given $G = G(H, \cd_H)$ where $H \in \cf_k'$ for any $k \ge 2$
   we can always recover $H$ and $\cd_H$. We prove this claim by induction on $|V(H)|$. 

    %Let $r$ be the root of $G$ and let $T  = H - r$.
    Suppose $|V(H)| = 3$. Then the unique decomposition is provided by Corollary~\ref{col.T2},
    and the corresponding tree in $\cf_k'$ is the unique tree with two nodes $u$ and $v$, which are the vertices of $G$ coloured $\{green\}$ and $\{red\}$ respectively. %, together with their colour.

    Now let $h \ge 4$ be an integer. Suppose that for any $\tilde{H} \in \cf_k'$ on at most $h-1$ vertices, we can always recover $\tilde{H}$ and $\cd_{\tilde{H}}$ given just the graph $G = G(\tilde {H}, \cd_{\tilde{H}})$. 
    Let $H \in \cf_k'$ be a graph on $h$ vertices with root $r$
    %$T \in \ct_k'$ be the corresponding tree
    and let $\cd_H = \{D_e \in \cd : e \in E(H)\}$ be arbitrary.
    Denote the tree $H - r$ by $T$ (it is a subdivision of a graph in $\ct_k'$).
    Let $u$ be a coloured vertex in $G - r$,
    such that there are no two coloured components in $G - \{r, u\}$. If
   there is more than one candidate for $u$, let $u$ be such that $\Col(u) = \{j\}$
   has the largest~$j$.
    At least one such vertex
    exists since for any leaf $x$ of the tree $T$, all coloured vertices of $T-u$ (and also in $G - \{r, u\}$) are in a single component. Furthermore, for any vertex $u'$ of $T$ that is an internal vertex of $T$, 
   there are at least $2$ coloured components in $T - u'$ (and also in $G - \{r, u'\})$.
   So $u$ is the leaf vertex of $T$ with the largest colour index.

   Let $C$ be the component of $G - \{r,u\}$ containing all coloured vertices. Then the network
   $D_{ru}'$ with poles $r$ and $u$ obtained from $G - C$ is the network~$D_{ru}$. %sleepy

   Now consider the graph $\tilde{G} = G - (V(D_{ru}')\setminus \{r, u\})$.
   %Let $S$ be the set of vertices $v' \in V(\tilde{G}) \setminus \{r\}$ which have 
   %the following properties:
%   
   For each cut vertex $v'$ of $\tilde{G}$, let
   $C(v', u)$ denote the component of $\tilde{G} - v'$ containing $u$.
   Let $S$ be the set of cut vertices $v'$ such that either $v'$ is 
   coloured or $G - C(v', u )$ is 2-connected. Finally, call a vertex $v' \in S$ a \emph{candidate}
   if $G - C(v', u)$ does not contain any vertex from $S$. It is not difficult to see
   that there is exactly one candidate: the neighbour $v$ of $u$ in $T$.
   %$v$
   %is well-defined, since the neighbour of $u$ in $T$ satisfies.
%
%   Define $v$ as the vertex $v \in V(\tilde{G}) \setminus \{r,u\}$ such that 
%  %(a) in $G - \{r,v\}$,  $u$ is the only coloured vertex of
%  % its component and (b) $v$ is either coloured, or adjacent to $r$ in $G$ or $ G - \{r,v\}$
%   %has at least three coloured components. 
%   (a) the component of $\tilde{G} - v$ containing $u$, has
%   no other coloured vertex nor $r$, and (b) either $v$ is coloured, or $\tilde{G} - \{r, v\}$ has at least three connected components.

   %It is not difficult to see that $v$ must be the neighbour of $u$
   %in $T$. %satisfies both of these conditions, and any other vertex $v' \in V(\tilde{G}) \setminus \{r, u\}$ does not.
   Let $D_{uv}'$ be the network obtained from 
   $\tilde{G} [V(C(v,u)) \cup \{v\}]$ by making $u$ the source and $v$ the sink.
   %$\tilde{G} - C_2$,
   %where $C_2$ is the graph consisting of the coloured components of $\tilde{G} - \{u,v\}$.
   We can see that $D_{uv}' = D_{uv}$ (the orientation is correct, since by definition $j > 1$).
   
   Now consider the graph $\tilde{G} - V(D_{uv}' - v)$.
   If $v$ is not coloured, colour it $\{j\}$, and add a dummy path $v w r$, where $w$ is a vertex not in $\tilde{G}$. Denote the resulting graph by $\tilde{G}_2$.
   At the same time consider the graph obtained from $H - u$ by adding an edge $r v$, if it is not already there, and colouring $v$ with $\{j\}$, if it is not
   coloured.
   Let $P_{rwv}$ be the network with poles $r$ and $v$ obtained from the path $rwv$.
   If $rv \not \in E(H)$, let $\tilde{D}_{rv} = P_{rwv}$ 
   otherwise %let
   define $\tilde{D}_{rv} = D_{rv} \cup \tilde P$.
   The graph $\tilde{G}_2$ can be obtained from $H'$ by replacing each edge by a network $D_e'$,
   where $D_e' = D_e$, if $e \in E(H') \setminus \{rv\}$ and $D_{rv}' = \tilde{D}_{rv}$.

   By induction (rename colours, if necessary) we may recover $H'$ and $\cd'$ uniquely. Now we see, that connecting $u$ to $v$ and $r$ in $H'$,
   returning the original colour to $v$, and removing $r v$ if $D_{rv}' = P_{rwv}$, recovers the graph $H$.
   For $e \in E(H) \setminus \{ru, rv, uv\}$, the network $D_e$ is the network $D_e'$ by induction,
   for $e \in \{rv, uv\}$, we have shown above that $D_e  = D_e'$. Finally, if $r v \in H'$,
   we obtain $D_{rv}$ as $D_{rv}' - w$.  
 \end{proof}

\bigskip

\begin{lemma}\label{lem.TFkD}
  For any integer $k \ge 2$ we have $\cb_k \subseteq \cf_k'(\cdot, \cd)$.
\end{lemma}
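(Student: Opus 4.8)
The plan is to take an arbitrary $G \in \cb_k$ with root $r$ and show it decomposes as a graph in $\cf_k'$ with edges replaced by SP-networks, by induction on $k$ (with $k=2$ handled already by Corollary~\ref{col.T2}, which gives $\cb_2 = \cz^2 \times \cd^3 + \cz \times \cd^2 \times \cb_2$; one checks directly that the right side is $\cf_2'(\cdot,\cd)$, since $\cf_2'$ consists of ``caterpillar''-type trees with the root joined to the path of subdivision vertices). For the inductive step, let $v_k$ be the vertex of $G$ coloured $\{k\}$, and form the network $G_k$ with source $r$ and sink $v_k$ as in Lemma~\ref{prop.networkize}; by that lemma $N(G_k) \in \cp + \ce_2$. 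First I would peel off $v_k$: since $G \in \cb_k$, removing the colour $k$ yields a graph in $\cb_{k-1}$ once we also handle the ``stub'' at $v_k$. Concretely, I would locate the block structure of $G - r$, find the last cut vertex $v$ on the ``way'' from $r$ to $v_k$ (so that $G-\{r,v\}$ has all remaining coloured vertices in one component together with the detached piece $C(v,v_k)$ containing $v_k$), and observe that the piece hanging off at $v$ towards $v_k$ is a single SP-network $D_{v v_k}$ — this uses Lemma~\ref{lem.2cuts} with the cut $\{v, v_k\}$ to rule out a $K_4$-subdivision crossing it, exactly as in the proof of Lemma~\ref{lem.FkD}.

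Next I would verify that after deleting the internal vertices of $D_{v v_k}$ and, if necessary, re-colouring $v$ with $\{k\}$ (and splicing in the edge $rv$ with a network when $r v \notin E(G)$, or merging into the existing $D_{rv}$ otherwise), the resulting smaller graph $G_2$ lies in $\cb_{k-1}$: the condition that each colour is good is inherited because any offending $K_4$-subdivision in an extension of $G_2$ would, via Lemma~\ref{lem.2cuts} applied at the cut separating $D_{vv_k}$, lift to one in the corresponding extension of $G$. Then the induction hypothesis gives $G_2 \in \cf_{k-1}'(\cdot,\cd)$, i.e.\ $G_2 = G(H', \cd_{H'})$ for some $H' \in \cf_{k-1}'$; finally I would attach the leaf $v_k$ back to $H'$ (adding the tree-edge $v v_k$ carrying $D_{vv_k}$, and if $r v$ was artificially added, deleting it again) to produce a graph $H \in \cf_k'$ with $G = G(H, \cd_H)$. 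The only subtlety is checking that the constraints defining $\cf_k'$ are met — in particular that an uncoloured vertex of degree $2$ in the underlying tree $H - r$ gets joined to $r$ (which holds because such a vertex is a pole shared by two networks and the corresponding cut vertex of $G-r$ must be handled by the $\cp$-structure of $N(G_c)$) and that uncoloured non-root vertices have degree $\ge 3$ in the tree after suppressing subdivision vertices.

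The main obstacle I anticipate is the bookkeeping around the root-edges: in $\cf_k'$ some edges $r v$ are optional (present exactly when $v$ is a subdivision vertex of degree $2$ or a leaf, optional when $v$ is coloured or has $\ge 3$ tree-neighbours), and during the inductive step the vertex $v$ which becomes a coloured leaf in $H'$ may need a freshly-created or modified root-edge that then has to be reconciled with the root-edge structure of the final $H$. Getting this matching exactly right — so that the reconstructed $H$ is genuinely in $\cf_k'$ and the networks $D_e$ are assigned consistently — is the delicate part; essentially it mirrors the reconstruction argument already carried out in Lemma~\ref{lem.FkD}, and I would cite that proof's mechanism wherever possible rather than repeat it. Everything else (2-connectivity of $G_k$, the $K_4$-subdivision exclusions) is a routine application of Lemma~\ref{lem.2cuts} and Lemma~\ref{prop.networkize}.
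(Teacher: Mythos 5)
There is a genuine gap in the inductive step. You propose to peel off the vertex $v_k$ coloured $\{k\}$ by finding a cut vertex $v$ of $G-r$ such that $G - \{r, v\}$ separates $v_k$ from all other coloured vertices, and then detaching a single $SP$-network $D_{vv_k}$. But such a $v$ need not exist: $v_k$ can itself be the cut vertex of $G-r$ that separates the other coloured vertices from one another, or it may sit inside a biconnected piece that joins two other coloured vertices, in which case no removal of $r$ and one further vertex can isolate it. A concrete example: take the underlying tree $T = v_1 - v_3 - v_2 \in \ct_3'$ with $v_i$ coloured $\{i\}$, all networks trivial, and $r$ adjacent to each $v_i$. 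Then $v_3$ is the only cut vertex of $G - r$, and there is no $v \neq v_3$ such that $G - \{r, v\}$ puts $v_3$ in its own component. So the "pendant piece towards $v_k$" that your argument is built around does not exist, and the induction cannot get started.

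The paper's proof avoids this by reversing the order: first \emph{remove the colour} $k$ from $u = v_k$ to get $G' \in \cb_{k-1}$, apply induction to decompose $G'$ as $G(H', \cd_{H'})$ with $H' \in \cf_{k-1}'$, and only then determine where $u$ fell: (a) $u$ is a node of the tree $T' = H' - r$ (so just recolour it), (b) $u$ is an internal vertex of a tree-edge's network (then $u$ is shown to be a cut vertex of that network, and the tree-edge is subdivided at $u$), or (c) $u$ is an internal vertex of a root-edge's network (here one recolours two poles, invokes the $k=2$ case for that piece, and grafts a pendant path onto $T'$). Your proposal only covers case (c). Cases (a) and (b), where $v_k$ is interior to the tree or subdivides a tree-edge, cannot be handled by a pre-emptive structural peeling because you do not know the shape of the decomposition until after the recursion. (There is also a bookkeeping slip: you recolour $v$ with $\{k\}$ and then assert $G_2 \in \cb_{k-1}$, which by definition has no colour $k$; even if one repairs this, it does not fix the case-analysis gap.)
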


\begin{proof}
%By Lemma~\ref{lem.FkD} it suffices to show that $\cb_k \subseteq \cf_k'(\cdot, \cd)$. 
We use induction on $k$. 
%
%For $k = 1$, note that each  $G \in \cf_k$ consists
%of just one vertex coloured $\{red\}$. %and $F(x,y) = xy$. Thus $F_1'(x, D(x)) = x D(x) = T_1(x)$,
%and the bijection follows by definition of $\ct_1$.
%Consider the case $k = 2$.  A representation graph $G \in \cb_2$
%by a graph in $H \in \cf_2'$ where each edge $e \in E(H)$ is replaced by a network $D_e \in \cd$ can
%be seen as follows. The class $\ct_2'$ consists of paths on at least two vertices
% with one endpoint coloured $\{red\}$ and another endpoint coloured $\{green\}$.
%
For $k = 2$, fix $G \in \cb_2$. By Corollary~\ref{col.T2}, $G$ admits either a representation
by three SP-networks (we say that $G$ is of the first type) or by two SP-networks and a (smaller) network $G' \in \cb_2$ (we say that $G$ is of the second type), see Figure~\ref{fig.T2}.
Let $G_0 = G$. For $i = 0, \dots$, if the graph $G_i$ is of the second type, define $G_{i+1} = G_i'$. Let $j$ be the index
of the last $G_i$ that has been defined.

To prove the lemma for the case $k=2$ we apply induction on $j$. When $j = 0$, we have that $G$
is of the first type, so by Corollary~\ref{col.T2}, it is a triangle $H \in \cf_{2,2}'$ with each edge replaced by a network in $\cd$. Now let $j' \ge 1$, assume that the claim holds for $k=2$ and all $j \in \{0, \dots, j' - 1\}$, and suppose $j = j'$.
%\emph{OK, here we need to give names for the applications of the decomposition..}
Then by Corollary~\ref{col.T2}, the graph $G = G_0$ is obtained
from $G_1$ by taking a series composition $D$ of two graphs $D_1, D_2 \in \cd$ with the common pole coloured
$\{green\}$, identifying the sink of $D$ with the green vertex $u$ of $G_1$ and removing the colour from $u$.
By induction, $G_1$ can be obtained  from a graph $H' \in \cf_2'$ %and %a set of networks $\{D_e' \in \cd | e\in E(H)\}$
by replacing each edge $e \in E(H')$ with a network $D_e' \in \cd$. 

Let $H$ be a graph obtained from $H'$ by inserting the vertex $u \not \in V(H')$, so that $u$ is connected to the green vertex $u'$ of $H'$ and the root, colouring 
$u$ $\{green\}$ and removing the colour from $u'$. Clearly, $H \in \cf_2'$.
%Let $\cd_H$ be a family of networks indexed by the edges of $H$ and defined by $\cd_H = \cd_{H'} \cup \{D_{ru}, D_{uu'}\}$ where $D_{ru} = D_1$ and $D_{uu'} = D_2$.
Also, for $e \in E(H')$ define $D_e = D_e'$, and let $D_{ru} = D_1$ and $D_{uu'} = D_2$.
Thus $G_0$ can be obtained from the graph $H \in \cf_2'$, by replacing 
each edge $e \in E(H)$ with $D_e$. This completes the proof for the case $k = 2$.
%Finally, to check that each graph $G \in \cb_2$
%has actually a unique representation, we show that the corresponding exponential generating functions are identical.
%This completes the proof for the case $k=2$.
%Note, that if only the
%combinatorial isomorphism (i.e., equivalence between generating functions) is of interest,
%it is enough to check that
% Indeed, since the graphs in $\ct_k'$ are paths with coloured endpoints,
% \[
% F_2'(x, y) = \sum_{n=2}^\infty \frac {n!} {n!} x^n y^{2n-1} = x^2 y^3/(1-x y^2),
% \]
% and using Corollary~\ref{col.T2}, we see that $T_2(x) = F_2'(x, D(x))$.

Assume now that  we have proved the lemma for $\cb_l$ with $l \in  \{2, \dots, k-1\}$, and suppose $G \in \cb_k$, where $k \ge 3$. % is an integer.
Let $u$ be the vertex of $G$ coloured $\{k\}$. Remove the colour from $u$ to obtain a graph $G' \in \cb_{k-1}$. Use induction
to find a graph $H' \in \cf_{k-1}'$ and a set of networks $\cd_{H'} = \{D_e': e \in E(H')\}$ such that $G'$ is the graph obtained
by replacing each edge $e$ of $H'$ by $D_e'$. Let $r$ be the root vertex of $H'$, write $T'  = H' - r$, and recall that 
%%$T' \in \ct'_{k-1}$. 
$T'$ is a subdivision of a tree in $\ct'_{k-1}$. 

The vertex $u$ may have one of the following positions:
\begin{itemize}
    \item[(a)] $u \in V(T')$.
    \item[(b)] $u$ is an internal vertex of $D_e'$ for some $e = xy \in E(T')$.
    \item[(c)] $u$ is an internal vertex of $D_{rv}'$ for some $v \in V(T')$. 
\end{itemize}

The case (a) is easy: we let $H$ be the graph obtained from $H'$ by colouring $u$ with $\{k\}$, and let $\cd_H = \cd_{H'}$.

Consider the case (b). Suppose, $u$ is not a cut vertex of $D_e'$. By Lemma~\ref{lem.SP} and Proposition~\ref{prop.SPcycle},
$D_e$ contains a minor $M$ isomorphic to the triangle $K_3$, such that $x,y$ and $u$ all belong to different bags.
Now, since each component of $T' - xy$ contains at least one leaf of $T'$, there are paths $P_1$ and $P_2$ in  $G' - (D_e' - \{x,y\})$
from $r$ to $x$ and $y$ respectively.  Now $M$, $P_1$ and $P_2$ demonstrate that the colour $k$ is bad for $G$.
Thus, $u$ must be a cut vertex of $D_e'$. Let $H$ be the graph obtained from $H'$ by subdividing the edge $xy$ with the vertex $u$.
Let $D_{ux}$ and $D_{yu}$ be the networks with the common pole $u$ (the orientation may be reversed, if necessary), such that $D_e'$ results from the series composition of $D_{ux}$ and $D_{yu}$.
For $e \in E(H')\setminus \{e\}$, let $D_e = D_e'$, and define $\cd_H = \{D_e : e \in E(H)\}$. Then we have $G = G(H, \cd_H)$.

Now consider the case (c). Let $G_1$ be the graph obtained from $G[V(D_{rv}')]$, by colouring the vertex $u$ $\{green\}$ and the vertex $v$
$\{red\}$ and adding the edge $r v$, if $rv \not \in E(G)$.
For a $\{0,1\}^k$-coloured graph $H$ and $c \in [k]$, let $(H)_c = H_c$ be as in Lemma~\ref{prop.networkize}. By Lemma~\ref{lem.SP}, $G_1$ is 2-connected and $(G_1)_{red} \in \cp$. By
Lemma~\ref{prop.networkize}, $(G)_k \in \cp$, and since $G$ contains a path from $r$ to $v$
internally disjoint from $D_{rv}'$ which can be contracted to an edge $rv$, we get that 
$(G_1)_{green} \in \cp$. Therefore, applying Lemma~\ref{prop.networkize} second time, we see that $G_1 \in \cb_2$.

We have $N(G_1) \in \cp$ and $G_1 \in \cb_2$ by Lemma~\ref{prop.networkize}. 
%Since $T'$ is 2-connected, 
%there is a path from $v$ to $r$ that does not use any internal vertex of $D_{rv}'$. 

%This means that changing the colour of $u$ to $\{red\}$, we obtain a graph $D \in \cs_1$.
%Consider the decomposition of $D$ given in Lemma~\ref{lem.S1}. If $D$ is a series composition of two series networks $D_{uv}'$ and $D_{rv}'$ with the common pole $u$,
%we are done, since we can define $H$ by inserting a new path $r u v$ to $H'$, and taking $D_H = \{D_e': e \in E(H)\}$. Clearly, $H \in \cf_k'$:
%this is obvious if $v$ is coloured; otherwise notice that $d_{H - r}(v) = d_{T'}(v) + 1 \ge 4$. 

%It remains the case where $D$ is a composition of a graph $D_{v'v}'' \in \cd$ and a graph $G_1 \in \cb_2$ as in Lemma~\ref{lem.S2}. 
Using the already proved case $k=2$, $G_1$ can be obtained from a graph $H_1 \in \cf_2'$ by replacing
each edge $e \in E(H_1)$ with an $SP$-network $D_e''$. Let $\tilde{H}_1$ be obtained from $H_1$
by setting $\Col_{\tilde{H}_1}(u) = \{k\}$ and $\Col_{\tilde{H}_1}(v) = \Col_G(v)$. 

Now, if $D_{rv}'' \in \ce_2$ and $rv \not \in E(G)$, let $H = H' \cup (\tilde{H}_1 - rv)$, 
otherwise, let $H = H' \cup \tilde{H}_1$. We can see that the tree $T = H - r$ is obtained from $T'$
by attaching at the vertex $v$ the graph $P = \tilde{H}_1-r$ (which is a path from $v$ to $u$). The vertex $u$ is coloured $\{k\}$ in $P$  
and for each vertex $x \in V(P) \setminus \{v\}$ there is an edge $rx \in E(H)$ as required by
the definition of $\cf_k'$. Since $v \in V(T')$ and $T' \in \cf_{k-1}'$, if $v$ is not coloured
it has degree at least $2$ in $T'$, and degree at least 3 in $T$. Hence $H \in \cf_k'$.
For $e \in E(H') \setminus \{rv\}$, define $D_e = D_e'$;
for $e \in E(H_1) \setminus \{rv\}$, let $D_e = D_e''$. Finally, if $rv \in E(G)$,  set $D_{rv} = D_{rv}'' - rv$.
Let $\cd_H = \{D_e: e \in E(H)\}$: we have proved that $G = G(H, \cd_H)$, as required. 
%Suppose, $u \in V(T')$. Let $H''$ be the graph obtained by colouring the vertex $u$ of $H'$ $\{green\}$. Then $G$ is the graph obtained by replacing $e$ with $D'_e$
%for each $e \in E(H'')$.
\end{proof}
\bigskip

\begin{proofof}{Theorem~\ref{thm.Tk}}
    The case $k=1$ follows by Lemma~\ref{prop.networkize}. %we simply recall that the class of 2-connected
    %SP-networks is $\cp+\ce_2$, and adding a label to the sink
    %of each graph from this class yields a unique graph in $\cb_1$ (and vice versa 
    For $k \ge 2$, we combine Lemma~\ref{lem.FkD} and Lemma~\ref{lem.TFkD}.
\end{proofof}

\bigskip
Given a class $\ca$ of graphs and a parameter $X: \ca \to \mathbb{Z}_{\ge0}$, let $\ca_{n,k} = \ca_{n,k}^X$ denote 
the family of graphs $G \in \ca_n$ with $X(G) = k$. We call
\[
A(x,y) = \sum_{n\ge 0, k\ge 0} \frac {|\ca_{n,k}|} {n!} x^n y^k
\]
the \emph{bivariate generating function} of $\ca$ (where $y$ ``counts'' $X$). Below,
wherever $X$ is not specified, $y$ counts the number of edges, i.e. $X(G) = |E(G)|$.

It is not difficult to get the bivariate generating function $F_k'$ for $\cf_k'$, when $k$ is small.
\begin{lemma} \label{lem.F2F3} The bivariate generating functions of $\cf_2'$ and $\cf_3'$ are
    \begin{align*}
        &F_2(x,y) = \frac {x^2 y^3} {1-x y^2}; &F_3(x,y) = \frac {x^3 y^4 (3 - 2 x y^2) ( 1 + y)} { (1 - xy^2) ^ 3}. 
    \end{align*}
\end{lemma}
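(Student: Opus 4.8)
The plan is to compute the bivariate generating functions directly from the structural description of $\cf_k'$ given just before the statement, for the two small cases $k=2$ and $k=3$. Recall that a graph in $\cf_k'$ is built from a coloured tree $T \in \ct_k'$ by subdividing its edges arbitrarily and then attaching a root $r$ joined to every leaf of the subdivided tree $T'$ and to every uncoloured degree-$2$ vertex of $T'$ (with edges $rv$ at coloured vertices or at uncoloured vertices of degree $\ge 3$ optionally present). Here $y$ counts edges and $x$ counts labelled (i.e.\ internal, uncoloured) vertices; the $k$ coloured vertices and the root $r$ are pointed and unlabelled, so they do not contribute to the $x$-exponent. The main obstacle is purely bookkeeping: getting the edge count $y$ right, since each subdivision of an edge simultaneously adds one labelled vertex, one new tree-edge, and (because the new vertex has degree $2$ and is uncoloured) one new edge to the root.

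First I would handle $k=2$. The only tree in $\ct_2'$ is the single edge $u\!-\!v$ with $u,v$ coloured; it has no uncoloured vertices. Subdividing this edge $m$ times ($m \ge 0$) inserts $m$ labelled vertices, each of degree $2$ and uncoloured, hence each joined to $r$; the subdivided path $u\!-\!v$ then contributes $m+1$ edges, and there are $m$ edges from $r$ to the internal vertices, plus the two optional edges $ru$ and $rv$. Thus the contribution of the $m$-subdivision is $x^m y^{(m+1)+m} (1+y)^2$ — wait, one must be careful: the vertices $u$ and $v$ are leaves of $T'$, so $ru$ and $rv$ are \emph{required}, not optional (edges to the root are optional only at coloured vertices of degree $\ge 2$ in $T'$, but here $u,v$ are leaves). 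Rechecking the definition: ``adding a root vertex $r$ connected to each leaf of $T'$ and each uncoloured vertex of degree $2$'' — so $ru, rv$ are forced, giving a factor $y^2$, and each of the $m$ internal vertices forces one more edge to $r$. Hence the $m$-term is $x^m y^{2m+1}\cdot y^2 = x^m y^{2m+3}$, and summing the geometric series over $m\ge 0$ gives
\[
F_2(x,y) = \sum_{m\ge 0} x^m y^{2m+3} = \frac{x^2 y^3}{1 - x y^2}
\]
after noting $x^2$ arises because... actually $\sum_{m\ge0} x^m y^{2m+3} = y^3/(1-xy^2)$; the stated answer has $x^2 y^3$, so the convention must be that $u$ and $v$ themselves carry labels, or the minimal tree already has two labelled vertices. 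I would reconcile this with the normalization ``$|\cb_{k,j}| = 0$ if $j<k$'' and the remark that $|\cb_{2,2}| = $ (triangle count), settling the precise $x$-shift, then read off $F_2$.

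Next, for $k=3$ I would enumerate the trees in $\ct_3'$: there is the ``star'' $K_{1,3}$ with a central uncoloured degree-$3$ vertex and three coloured leaves, and the ``path'' $v_1\!-\!v_2\!-\!v_3$ is \emph{not} allowed since its middle vertex would be uncoloured of degree $2$... no, degree $2$ is forbidden only for uncoloured vertices — actually the restriction is ``if an uncoloured vertex has no colour it must have degree $\ge 3$'', ruling out the path, so $K_{1,3}$ is essentially the only shape, consistent with $|\cu\ct_3'| = 4$ once subdivisions of the star's three legs are counted as distinct small trees up to symmetry (Figure~\ref{fig.T'_3}). From the star skeleton with three legs of subdivision-lengths $m_1,m_2,m_3 \ge 0$, I would track: the central vertex $c$ (uncoloured, degree $3$, so $rc$ optional — factor $(1+y)$), the $m_1+m_2+m_3$ internal leg-vertices (each degree $2$, forced edge to $r$), the three coloured leaves (forced edges to $r$), and all the leg-edges. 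Summing $\prod_{i}\sum_{m_i\ge 0}(xy^2)^{m_i}$ times the appropriate fixed factors, and accounting for the $\mathbb{Z}/3$-type symmetries only where legs are equal (which the exponential/labelled framework handles automatically via the factor $n!/|\mathrm{Aut}|$, but here all coloured leaves are \emph{distinguishable} so there is no symmetry to divide by), yields a rational function with denominator $(1-xy^2)^3$. Collecting the numerator — which should come out to $x^3 y^4(3 - 2xy^2)(1+y)$ — finishes the proof; I would present the sum, perform the geometric summation, and simplify to match the claimed closed form. The only genuine subtlety is making sure the ``optional root-edge at $c$'' factor $(1+y)$ and the three forced root-edges at the leaves combine correctly with the leg contributions to produce exactly the factor $(3 - 2xy^2)(1+y)$ rather than something off by a power of $y$ or $x$; this I would verify by checking the lowest-order term against $|\cf_{3,2}'|$ (the unsubdivided star has $2$ labelled vertices? — again a normalization check) and against $|\cb_{3}|$ at small sizes via Theorem~\ref{thm.Tk}.
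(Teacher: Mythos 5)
Your $F_2$ computation is essentially the paper's once the normalisation is sorted out: in $\ct_k'$ the coloured vertices $v_1,\dots,v_k$ are ordinary labelled vertices of a Cayley tree, so they do contribute to the $x$-exponent, which supplies the $x^2$ you were missing. (The paper writes $\tilde{T}_2'(x) = x^2/(1-x)$, with the two coloured endpoints counted, and then $F_2' = \tilde{T}_2'(xy^2)/y$.)

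The $k=3$ case has a genuine gap. You rule out the path $v_1\!-\!v_2\!-\!v_3$ on the grounds that its ``middle vertex would be uncoloured of degree $2$,'' but the middle vertex \emph{is} $v_2$, which carries colour $\{2\}$; the degree-$\ge 3$ constraint applies only to uncoloured vertices and therefore does not exclude the path. In fact there are three path shapes in $\ct_3'$ (one for each choice of which colour sits in the middle) plus the one star, which is precisely how $|\cu\ct_3'| = 4$ arises — your reading of the $4$ as ``subdivisions of the star counted up to symmetry'' is inconsistent with your own claim that the star is the only shape. If you sum only over the star you obtain
\[
\frac{x^4 y^6 (1+y)}{(1-xy^2)^3},
\]
which is not the stated $F_3$; the numerator $x^3 y^4(3-2xy^2)(1+y)$ cannot be recovered from the star alone. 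The paths contribute an additional
\[
\frac{3\,x^3 y^4 (1+y)}{(1-xy^2)^2}
\]
(the middle coloured degree-$2$ vertex has an optional root edge, exactly like the star's centre, giving the $(1+y)$; the two legs give the squared denominator), and adding the two terms over a common denominator produces $3x^3y^4 - 3x^4y^6 + x^4y^6 = x^3y^4(3-2xy^2)$ in the numerator. Without the path trees your proof would derive a formula that simply disagrees with the lemma.
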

\begin{proof}
Denote by $\tilde{\ct}_k'$ the class of trees obtained by subdividing 
edges of trees in $\ct_k'$ arbitrarily.
The graphs in $\tilde{\ct}_2'$ are paths with coloured endpoints (the colours provide a unique orientation) and the univariate exponential generating function
\[
\tilde{T}_2'(x) = \sum_{n=2}^{\infty} \frac {n!} {n!} x^n = \frac {x^2} {1-x}.
\]
Each $T \in \tilde{\ct}_2'$ on $n$ vertices yields a unique fan $F \in \cf_2'$ with $2n - 1$ edges, so
\[
F_2'(x,y) = \frac {\tilde{T}_2'(x y^2)} y =  \frac {x^2 y^3} {1-x y^2}.
\]
%(To verify Theorem~\ref{thm.Tk} for $k = 2$, we see using Corollary~\ref{col.T2}, that indeed $T_2(x) =F_2'(x, D(x))$.)
%Let us find the exponential generating function $\tilde{T}_3'(x)$.
Now consider $k=3$.
There are $3 n! (n-2)$ trees $T \in \tilde{\ct}'_{3,n}$ such that $N(T)$ is isomorphic to a path,
and $n! \binom {n-2} 2$ trees $T \in \tilde{\ct}'_{3,n}$ which are subdivided 3-stars, see Figure~\ref{fig.T'_3}.
Therefore the exponential generating function of $\tilde{\ct}_3'$ is
\[
\tilde{T}_3'(x) = \sum_{n=3}^\infty 3 (n-2) x^n + \sum_{n=4}^\infty \binom {n-2} 2 x^n = \frac {3 x^3} {(1-x)^2} + \frac {x^4} { (1-x)^3}.
\]
From each tree in $\tilde{\ct}_3'$ on $n$ vertices we can obtain two fans $F_1, F_2 \in \cf'_{3,n}$ with $2n - 1$ and $2n - 2$ edges
respectively (this is because the middle coloured vertex in the ``path'' case, and the centre of the star, in the ``star'' case
may or may not be connected to the root).
This yields the exponential generating function
\[
F_3'(x) = \tilde{T}'_3(x y^2) (y^{-1} + y^{-2}) =  \frac {x^3 y^4 (3 - 2 x y^2) ( 1 + y)} { (1 - xy^2) ^ 3},
\]
as claimed.
\end{proof}

% \[
% F_2'(x, y) = \sum_{n=2}^\infty \frac {n!} {n!} x^n y^{2n-1} = x^2 y^3/(1-x y^2),
% \]
% and using Corollary~\ref{col.T2}, we see that $T_2(x) = F_2'(x, D(x))$.

%\subsection{Trees of multitype blocks}

%Pavargau. Nežinau kaip gražiai surašyti šį skyrių. Reikia nuo mano sąvokos C-tree, pereiti prie paprastesnės
%sąvokos.

%2013-03-20 Let's move on.

\subsection{Growth of the class $\ca_R$}
\label{subsec.A_R}

%In this and the following sections we will apply the analytic method

We will use below the following fact about the class of SP-networks $\cd$.
%We will need below a fact proved in Lemma~2.3 of \cite{momm05}: \m{todo: formulate as lemma}
\begin{lemma}{(Lemma 2.3 of \cite{momm05})}
 \label{lem.Datrho}
 Let $\cb$ be the class of biconnected series-parallel graphs. Then
\begin{align*} 
    &\rho(\cb) = \rho(\cd) = \rho(\cp) = \frac {(1+t_0) (t_0 - 1)^2}  {t_0^3} =0.1280.., \text{ and} 
 \\  &D(\rho(\cd)) = \frac {t_0^2} {1-t_0^2} = 1.8678..,
\end{align*}
 where $t_0 = 0.8070..$ is the unique positive solution of 
 \[
  (1-t^2)^{-1} \exp(- t^2 / (1+t)) = 2.
 \]
 \end{lemma}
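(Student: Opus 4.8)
\textbf{Proof proposal for Lemma~\ref{lem.Datrho}.}

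The plan is to derive the three displayed claims from the functional equation~(\ref{eq.D}) for $D$ and the explicit relation~(\ref{eq.P}) between $P$ and $D$, using the classical singularity analysis of a smooth implicit function. First I would rewrite~(\ref{eq.D}) in the form $\Phi(x,D)=0$, where
\[
\Phi(x,d) = \ln\!\left(\frac{1+d}{2}\right) - \frac{x d^2}{1+xd}.
\]
Since $D(x)$ has non-negative coefficients and $D(0)=1$ (the single-edge network), $D$ is increasing on $[0,\rho(\cd))$ and its dominant singularity $\rho=\rho(\cd)$ is the first point where the implicit-function theorem fails, i.e.\ where $\partial_d\Phi(x,D(x))=0$ simultaneously with $\Phi(x,D(x))=0$. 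Solving $\partial_d\Phi=0$ gives an algebraic relation between $x$ and $d=D(x)$; eliminating $x$ between this relation and $\Phi=0$ yields a single equation for the critical value $d_0:=D(\rho)$. A short computation should reduce this to the stated equation $(1-t^2)^{-1}\exp(-t^2/(1+t))=2$ after the substitution $d_0 = t_0^2/(1-t_0^2)$ (equivalently $t_0^2 = d_0/(1+d_0)$), which is exactly the substitution that makes $\ln((1+d_0)/2)$ collapse nicely; I would verify numerically that the relevant root is $t_0=0.8070..$, giving $D(\rho)=t_0^2/(1-t_0^2)=1.8678..$. Back-substituting into the relation $x = x(d_0)$ obtained from $\partial_d\Phi=0$ and simplifying produces the closed form $\rho(\cd) = (1+t_0)(t_0-1)^2/t_0^3 = 0.1280..$.

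Next I would handle $\rho(\cb)$ and $\rho(\cp)$. For $\cp$: by~(\ref{eq.P}), $P(x)+1 = D(x)/(1+xD(x))$; the denominator $1+xD(x)$ is positive and analytic at $x=\rho$ (it equals $1+\rho\,d_0 > 0$), so $P$ has exactly the same dominant singularity as $D$, hence $\rho(\cp)=\rho(\cd)$. For $\cb$: a biconnected series-parallel graph is obtained from a parallel network by identifying the two poles (adding the pole-edge), so $B'(x)$ and $P(x)$ are related by an explicit analytic transformation that does not move the singularity; alternatively one quotes that this is exactly Lemma~2.3 of~\cite{momm05}, which is the reference given, so I would simply cite it. Indeed the cleanest route for the whole lemma is to observe that it is literally a restatement of a known result, and the ``proof'' amounts to reproducing the singularity computation for completeness.

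The one genuine technical point — and the step I expect to be the main obstacle — is the elimination step: showing that the system $\{\Phi(x,d)=0,\ \partial_d\Phi(x,d)=0\}$ really does collapse, under the substitution $t^2=d/(1+d)$, to the clean equation $(1-t^2)^{-1}\exp(-t^2/(1+t))=2$ and to $\rho=(1+t_0)(t_0-1)^2/t_0^3$, rather than to something messier. Concretely, from $\partial_d\Phi=0$ one gets $\frac{1}{1+d} = \frac{\partial}{\partial d}\frac{xd^2}{1+xd} = \frac{2xd + x^2d^2}{(1+xd)^2} = \frac{xd(2+xd)}{(1+xd)^2}$, which must be solved for $x$ in terms of $d$ and fed back into $\Phi=0$; I would carry this algebra out carefully, keeping track of which branch of the quadratic in $x$ is the one with positive radius of convergence, and then check the final identities numerically to make sure no sign or branch error has crept in. Once the critical pair $(\rho,d_0)$ is pinned down, the non-degeneracy of the second $d$-derivative of $\Phi$ there (which one checks is nonzero) guarantees that $D$, $P$ and $B'$ all have the square-root-type singularity underlying the later asymptotic analysis, though that refinement is not needed for the statement of this lemma.
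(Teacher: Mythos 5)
The paper does not actually prove this statement: it is quoted verbatim as Lemma 2.3 of \cite{momm05} and used as a black box in the analytic part of the argument. So there is no ``paper's proof'' to compare against, and what you have written is a sketch of how the authors of \cite{momm05} derive the result.

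Your sketch follows the standard characteristic-system approach and the main steps are right: the function $\Phi(x,d)=\ln\!\left(\tfrac{1+d}{2}\right)-\tfrac{xd^2}{1+xd}$ is the correct implicit relation coming from~(\ref{eq.D}), your computation of $\partial_d\Phi$ is correct, the transfer $\rho(\cp)=\rho(\cd)$ via~(\ref{eq.P}) works because $1+\rho D(\rho)>0$, and the numerical values check out. Two small points deserve care. First, rewriting the relation as $d = G(x,d)$ with $G(x,d)=2\exp\!\left(\tfrac{xd^2}{1+xd}\right)-1$, the kernel $G$ does \emph{not} have all non-negative power-series coefficients (the expansion of $\tfrac{xd^2}{1+xd}$ alternates in sign), so the plain smooth-implicit-function schema of Flajolet--Sedgewick does not apply off the shelf; Bodirsky et al.\ sidestep this with an algebraic parametrization that makes the square-root branch point explicit, and that parametrization is precisely the source of the auxiliary variable $t$. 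Second, your description of the step from networks to $\cb$ is slightly off: one does not ``identify the poles''; rather, biconnected SP graphs with a distinguished oriented edge correspond bijectively to SP-networks (delete the marked edge, the endpoints become the poles), and the closed-form expression of $B(x)$ as an analytic function of $x$ and $xD(x)$ (quoted in the proof of Lemma~\ref{lem.part2main}) then shows that $B$ inherits the singularity of $D$ at $\rho(\cd)$, giving $\rho(\cb)=\rho(\cd)$. You are right that the cleanest route here is simply to cite \cite{momm05}, which is exactly what the paper does.
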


%Equipped with equalities for the exponential generating functions of $\ca_R$, $\ca_{RG}$, $\ca_{RGB}$, we can apply standard analytic combinatorics methods to determine the growth of their coefficients.
%Since $B_1(x) = x (P(x) + 1)$, we have 
By Lemma~\ref{lem.multitype_small} and Theorem~\ref{thm.Tk}
\begin{equation}\label{eq.A_R}
    A_R = B_1 (2 e^{A_R} -1) = x (P+1) (2 e^{A_R} -1).
\end{equation}
%We could use the last formula to obtain the asymptotic counting formula for $|\ca_{R,n}|$, but
%we do not need it.
%we use the following simple argument to show that the convergence radius of $\ca_R$ is the same as the 
%convergence radius of the class of SP-networks.
Notice, that if we add a new vertex $w$ to
any $\{red\}$-tree $G$, and connect it to the root and every vertex coloured red, we obtain
a 2-connected series-parallel graph $G'$. This follows directly from the definition of a $C$-tree:
if we delete any vertex $x \in V(G') \setminus \{w\}$, $w$ has a neighbour in each of the components
of $V(G) - \{x,w\}$, so $G' - x$ is connected. If we delete $w$ we obtain the connected graph $G$.
Thus each $\{red\}$-tree of size $n$ gives a unique 2-connected series-parallel graph of size $n+2$
(we may label the root and the new vertex $n+1$ and $n+2$ respectively). Thus,
%if $\rho$ is the convergence radius of 2-connected series-parallel graphs,
if $\cb$ is the class of biconnected series-parallel graphs,
we have
$\rho(\cb) = \rho(\cd) \le \rho(\ca_R)$.
%Bodirsky at al. \cite{momm05} have shown that
%\[
%\rho(\cb)  = \rho(\cd) = \rho(\cp) = 0.12800../
%\]
On the other hand, either looking at (\ref{eq.A_R}) or recalling that each $SP$-network yields a unique $\{red\}$-tree, we see that
$\rho(\ca_R) \le \rho(\cd)$. We conclude that $\rho(\ca_R) = \rho(\cd)$.

%We will need two more facts.
\begin{prop}\label{prop.rhoBk}
    For any positive integer $k$, we have $\rho(\cb_k) = \rho(\cd)$. %\m{actually we have the growth constant.}
\end{prop}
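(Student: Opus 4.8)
The plan is to read everything off the decomposition $\cb_k = \cf_k'(\cdot,\cd)$ of Theorem~\ref{thm.Tk}. The case $k=1$ is immediate: there $\cb_1 = \cz\times(\cp+\ce_2)$, so $B_1(x)=x(P(x)+1)$, and since multiplying by $x$ (an entire factor) does not change the radius of convergence, $\rho(\cb_1)=\rho(\cp)=\rho(\cd)$ by Lemma~\ref{lem.Datrho}. So assume $k\ge 2$. Replacing each edge of a graph $H\in\cf_k'$ by a network of $\cd$ (whose poles are identified with the endpoints of the edge and whose internal vertices become new labelled atoms) is a labelled substitution of $\cd$ into the edge variable, so
\[
B_k(x) = F_k'\bigl(x, D(x)\bigr),
\]
where $F_k'(x,u)$ is the bivariate generating function of $\cf_k'$ with $x$ marking labelled vertices and $u$ marking edges, exactly as in Lemma~\ref{lem.F2F3}.

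For $\rho(\cb_k)\le\rho(\cd)$ I would fix one skeleton $H_0\in\cf_k'$ with $v_0$ non-root vertices and at least one edge $e_0$ (for instance the $k$-vertex path with vertices coloured $1,\dots,k$, to which the root is joined at the two ends; this skeleton is rigid). Replacing $e_0$ by an arbitrary network of $\cd$ and every other edge by $\ce_2\in\cd$ produces, by Lemma~\ref{lem.FkD}, a graph of $\cb_k$, and distinct networks with distinct labellings give distinct graphs; distributing labels gives $|\cb_{k,n}|\ge \tfrac{n!}{(n-v_0)!}\,|\cd_{n-v_0}|$ for all large $n$. Hence $\gamupper(\cb_k)\ge\gamupper(\cd)=\rho(\cd)^{-1}$, i.e.\ $\rho(\cb_k)\le\rho(\cd)$.

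The main point is the reverse inequality $\rho(\cb_k)\ge\rho(\cd)$, and the key structural observation is that the class $\ct_k'$ of coloured Cayley trees is \emph{finite} for each $k$: a tree in $\ct_k'$ has at most $k$ leaves (each leaf carrying a distinct colour), at most $k$ coloured vertices altogether, and every uncoloured vertex has degree at least $3$, which forces $|V(T)|=O(k)$. Consequently every graph in $\cf_k'$ arises from one of finitely many ``core'' trees $T\in\ct_k'$ by subdividing edges and choosing some optional root-edges, where each subdivision vertex contributes one labelled vertex together with two incident edges (one from splitting the edge, one forced edge to the root). Tracking this — and comparing the explicit shapes in Lemma~\ref{lem.F2F3} — shows that $F_k'(x,u)$ is a \emph{finite} sum of terms of the form (polynomial in $x,u)\cdot(1-xu^{2})^{-m}$ with $m$ a non-negative integer. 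Substituting $u=D(x)$, the only candidate singularities of $B_k(x)=F_k'(x,D(x))$ in $|x|\le\rho:=\rho(\cd)$ are the singularity of $D$ at $\rho$ and the zeros of $1-xD(x)^{2}$. But $xD(x)^{2}$ is increasing on $[0,\rho]$ (a product of non-negative increasing functions), and at $x=\rho$ it equals $\rho(\cd)\,D(\rho(\cd))^{2}=0.1280\ldots\times(1.8678\ldots)^{2}<1$ by Lemma~\ref{lem.Datrho}; by non-negativity of the coefficients of $D$, $|xD(x)^{2}|\le|x|\,D(|x|)^{2}<1$ for every complex $x$ with $|x|<\rho$. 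Hence every term of $F_k'(x,D(x))$ is analytic on $\{|x|<\rho\}$, so $B_k$ is analytic there and $\rho(\cb_k)\ge\rho(\cd)$. Together with the previous paragraph this gives $\rho(\cb_k)=\rho(\cd)$.

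The step I expect to be the most delicate is making the claim ``$F_k'(x,u)$ is a finite sum of terms (polynomial)$\cdot(1-xu^{2})^{-m}$'' fully precise for general $k$: one must describe exactly which vertices of a core tree are forced to be joined to the root and which such joins are optional, and check that the only \emph{infinite} (geometric) series entering $F_k'$ come from repeated subdivisions and therefore produce only the harmless factors $(1-xu^{2})^{-1}$; once this bookkeeping is set up correctly, the analyticity argument above is routine.
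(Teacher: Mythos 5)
Your proof is correct, and it establishes the harder inclusion $\rho(\cb_k)\ge\rho(\cd)$ by a genuinely different route from the paper. The paper handles this direction in one line via Lemma~\ref{prop.networkize}: from $G\in\cb_{k,n}$ one recovers a network in $\cp+\ce_2$ on $n-1$ internal vertices by selecting a colour $c$ and declaring the vertex coloured $\{c\}$ the sink, which gives $|\cb_{k,n}|\le n^k\,|(\cp+\ce_2)_{n-1}|$ and hence $\rho(\cb_k)\ge\rho(\cp)=\rho(\cd)$ directly, without touching Theorem~\ref{thm.Tk}. You instead push the structure theorem $\cb_k=\cf_k'(\cdot,\cd)$ all the way, observe that $\ct_k'$ contains only finitely many isomorphism types (your degree-count $n\le 2k-2$ is right), deduce that $F_k'(x,u)$ is a finite sum of rational terms with denominator powers of $1-xu^2$, and then check that $xD(x)^2$ stays strictly below $1$ on $|x|<\rho(\cd)$ using $\rho(\cd)D(\rho(\cd))^2\approx 0.447<1$ from Lemma~\ref{lem.Datrho} and non-negativity of coefficients. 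That works, and it actually gives more information than needed (it tells you $B_k$ has the same square-root singularity type as $D$ at $\rho(\cd)$, not just the same radius), at the cost of the bookkeeping about mandatory versus optional root-edges that you flag as delicate. For the other inclusion $\rho(\cb_k)\le\rho(\cd)$ both you and the paper use essentially the same device: a single construction turning a non-series $SP$-network plus a fixed small skeleton into a graph of $\cb_k$, counted injectively via Lemma~\ref{lem.FkD} (the paper glues a coloured path to a network; your skeleton is the $k$-path joined to the root at both ends). So: same conclusion, same easy direction, but for the other direction the paper's Lemma~\ref{prop.networkize} is the cheap shortcut, while your approach is a heavier but self-contained singularity analysis of $F_k'(x,D(x))$.
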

\begin{proof}
    By Lemma~\ref{prop.networkize}
    \[
      |\cb_{k,n}| \le n^k | (\cp + \ce_2)_{n-1}|,
    \]
    so $\rho(\cb_k) \ge \rho(\cp)$. By Theorem~\ref{thm.Tk},
    each graph, obtained from a non-series SP-network $G$ and a coloured path
    $P \in \tilde{\ct}_k'$, by identifying the first endpoint of $P$ with the sink of $G$ and adding an edge between the source of $G$ and second endpoint of $P$ (if it is not already there), is in $\cb_k$. So 
    \[
    |\cb_{k,n}| \ge (n)_k |(\cp+\ce_2)_{n-k}|,
    \]
    and $\rho(\cb_k) \le \rho(\cp)$. So $\rho(\cb_k) = \rho(\cp) = \rho(\cd)$ by Lemma~\ref{lem.Datrho}.
\end{proof}

\bigskip

We remind a definition from \cite{fs09}. Given two numbers $\phi, R$ with $R>1$
and $0 < \phi < \pi/2$, define
\[
\Delta(\phi, R) = \left\{z \in {\mathbb C} : |z| < R, z \ne 1, |arg(z-1)| > \phi \right\}.
\]
A domain is a \textit{$\Delta$-domain at 1} if it is $\Delta(\phi, R)$ for some $R$ and
$\phi$. For a complex number $\zeta \ne 0$, a $\Delta$-domain at $\zeta$
is the image by the mapping $z \to \zeta z$ of a $\Delta$-domain at 1.
%A function is \textit{$\Delta$-analytic} if it is analytic in some $\Delta$-domain. \m {the last sentence makes no sense..}

For complex functions $f, g$ we write $f(z) = O(g(z))$ as $z \to z_0$ if
there are constants $C, \eps > 0$ such that $|f(z)| \le C |g(z)|$ for all $z$ with $|z - z_0| < \eps$.
%
%$|f(z)/g(z)|$ is bounded 
%as $z \to z_0$.

The following fact is well known. % (see \cite{fs09, drmota}).
\begin{lemma}\label{lem.cayley_tree}
    The exponential generating function 
    $R(x) = \sum_{n\ge1} \frac{n^{n-1} x^n} {n!}$ of rooted Cayley trees has a unique dominant singularity $e^{-1}$.
    $R(x)$ can be extended analytically to a $\Delta$-domain $\Delta$ at $e^{-1}$, such that for
    all $x \in \Delta$ we have $R(x) = x e ^ {R(x)}$ and
    for $x \to e^{-1}$, $x \in \Delta$ we have
    \begin{equation}\label{eq.Rsing}
      R(x) = 1 - \sqrt 2 (1 - ex)^{1/2} + O(1-ex).
  \end{equation}
  Furthermore, $R(x)$ is the unique solution $y(x)$ of $y = x e^y$, which is analytic at 0 and satisfies $R(0) = 0$.
\end{lemma}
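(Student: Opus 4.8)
The plan is to treat the four claims in turn, each of which is classical; the only mildly delicate point is the analytic continuation to a $\Delta$-domain and the precise form of the singular expansion~(\ref{eq.Rsing}).

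\textbf{Step 1: existence and uniqueness of the power series solution.} First I would recall that the formula $R(x)=\sum_{n\ge 1} n^{n-1}x^n/n!$ is the generating function of rooted labelled (Cayley) trees, and that by the standard pointing–decomposition of a rooted tree into a root together with a set of rooted subtrees hanging off it, $R$ satisfies the functional equation $y=xe^{y}$. Uniqueness as an analytic-at-$0$ solution with $R(0)=0$ follows from the implicit function theorem applied to $\Phi(x,y)=y-xe^{y}$ at $(0,0)$, since $\Phi_y(0,0)=1\ne 0$; this pins down $R(x)$ locally and hence, by analytic continuation, globally as the unique such solution.

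\textbf{Step 2: the dominant singularity.} Here I would apply the smooth implicit-function / inversion schema of Flajolet--Sedgewick~\cite{fs09} (the ``$y=x\phi(y)$'' analysis with $\phi(y)=e^{y}$): the candidate singularity $\tau$ on the positive axis is determined by the system $\tau=\rho\,\phi(\tau)$, $1=\rho\,\phi'(\tau)$, i.e. $\tau=1$ and $\rho=e^{-1}$. Since $\phi(y)=e^{y}$ is aperiodic and has all nonnegative Taylor coefficients, the singularity analysis guarantees that $e^{-1}$ is the unique dominant singularity of $R$ and that $R$ is analytic in a $\Delta$-domain $\Delta$ at $e^{-1}$; on $\Delta$ the functional equation $R(x)=xe^{R(x)}$ persists by analytic continuation.

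\textbf{Step 3: the singular expansion.} Near $x=e^{-1}$ I would invert the relation locally. Writing $\phi(y)=e^{y}$ and expanding $\Psi(y):=y/\phi(y)=ye^{-y}$ around $y=1$ gives $\Psi(1)=e^{-1}$, $\Psi'(1)=0$, $\Psi''(1)=-e^{-1}$, so $x=\Psi(R)= e^{-1}\bigl(1-\tfrac12(R-1)^2+O((R-1)^3)\bigr)$. Solving for $R-1$ yields $R(x)=1-\sqrt{2}\,(1-ex)^{1/2}+O(1-ex)$ as $x\to e^{-1}$ in $\Delta$, which is exactly~(\ref{eq.Rsing}); the choice of sign of the square root is forced by $R(x)\to 1^{-}$ as $x\to e^{-1}$ along the reals from below.

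\textbf{Main obstacle.} The only step requiring genuine care is Step 2--3, namely verifying the hypotheses of the singularity-analysis schema (positivity and aperiodicity of $\phi$, the characteristic system having the stated solution, and the resulting continuation to a $\Delta$-domain) so that the local expansion is legitimate on a slit neighbourhood rather than merely on the real axis; everything else is a routine invocation of standard facts, and indeed the statement is flagged as ``well known'', so the write-up can be brief and cite~\cite{fs09}.
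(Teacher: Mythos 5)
Your proposal is correct and takes essentially the same route as the paper: the paper simply cites Theorem~VII.3 of Flajolet--Sedgewick and Theorem~2.19 of Drmota for the singular expansion and the $\Delta$-domain continuation, and invokes the identity principle to carry $R(x)=xe^{R(x)}$ across the $\Delta$-domain; you spell out the classical $y=x\phi(y)$ inversion (characteristic system $\tau=1$, $\rho=e^{-1}$, local expansion of $ye^{-y}$ at $y=1$) that underlies those citations, so there is no substantive difference.
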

\begin{proof}
    See, e.g., Theorem VII.3 of \cite{fs09} or Theorem 2.19 of \cite{drmota}.
    For extension to a $\Delta$-domain see, e.g., proof of Theorem~2.19 of \cite{drmota}.
    The identity $R(x) = x e ^ {R(x)}$ for $|x| < e^{-1}$ is shown, i.e., in \cite{fs09}. The identity
    then extends to the whole $\Delta$ domain  by the Identity principle
    (see, e.g., Theorem 8.12 of \cite{cplxnotes}). 
\end{proof}

\bigskip

%Let $R(x) = \sum_{n\ge 1} \frac {n^{n-1}} {n!} x^n$ be the exponential generating function for rooted Cayley trees.
%Recall (see e.g., \cite{fs09}) that
%the convergence radius of $R$ is $1/e$ and for $x \in [0, 1/e)$, \m{ $|x| \le 1/e$?} $R(x)$ is the unique solution $y = y(x)$ of
%\[
%  y = x e^y,
%\]
%such that $y(0) = 0$.

Recall that when we omit ``$(x)$'' in identities involving exponential generating functions and do not mention otherwise,
we mean that they hold for some $\delta > 0$ and any $x \in \mathbb{C}$ with $|x| < \delta$.
(If each side is an exponential generating function of a combinatorial class, this means that the counting sequences
of both classes are identical.)
\begin{lemma}\label{lem.eval_A_R}
    %For $x \in [0; \rho(\cd))$ 
    %For $|x| \le \rho$ 
    We have
    \[
    A_R = R(2 B_1 e^{-B_1}) - B_1.
    \]
\end{lemma}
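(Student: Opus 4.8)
The plan is to recognize that equation~(\ref{eq.A_R}), namely $A_R = x(P+1)(2e^{A_R}-1)$, together with the identity $B_1 = x(P+1)$ for the exponential generating function of biconnected (i.e.\ rooted series-parallel with a root block) graphs, exhibits $A_R$ as a solution of a functional equation of the Cayley-tree type. Concretely, set $y = A_R + B_1$. Then $A_R = y - B_1$ and (\ref{eq.A_R}) becomes
\[
y - B_1 = B_1\bigl(2e^{y-B_1} - 1\bigr) = 2 B_1 e^{-B_1} e^{y} - B_1,
\]
so that $y = (2 B_1 e^{-B_1})\, e^{y}$. Writing $u = 2 B_1 e^{-B_1}$, this is exactly the defining equation $y = u\, e^{y}$ of the rooted-Cayley-tree generating function evaluated at argument $u$.

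The key steps, in order, are: (1) Record the identity $B_1 = x(P+1)$ (stated just before (\ref{eq.A_R}) and coming from $\cb_1 = \cz \times (\cp + \ce_2)$ in Theorem~\ref{thm.Tk}, with $E_2 = 1$, $P = P(x)$), so that (\ref{eq.A_R}) reads $A_R = B_1(2e^{A_R} - 1)$. (2) Substitute $y := A_R + B_1$ and perform the algebra above to obtain the fixed-point equation $y = u\,e^{y}$ with $u := 2 B_1 e^{-B_1}$. (3) Invoke the uniqueness clause of Lemma~\ref{lem.cayley_tree}: $R(\cdot)$ is the unique solution $w(t)$ of $w = t\,e^{w}$ that is analytic at $t = 0$ with $w(0) = 0$. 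Since $A_R$ and $B_1$ are power series with zero constant term ($|\ca_{R,0}| = 0$ and $B_1(0) = 0$), the composite $y(x) = A_R(x) + B_1(x)$ is analytic at $x = 0$ with $y(0) = 0$; likewise $u(x) = 2 B_1(x) e^{-B_1(x)}$ is analytic at $0$ with $u(0) = 0$. Hence $y$ as a function of $x$ coincides, as a formal power series (equivalently, near $x = 0$), with $R$ evaluated at $u$: $y = R(u)$, i.e. $A_R + B_1 = R\bigl(2 B_1 e^{-B_1}\bigr)$. (4) Rearrange to $A_R = R\bigl(2 B_1 e^{-B_1}\bigr) - B_1$, which is the claim.

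One small technical point to get right is the application of the uniqueness statement: Lemma~\ref{lem.cayley_tree} characterizes $R$ as a function of one variable, so to conclude $y(x) = R(u(x))$ I should argue that $R \circ u$, viewed as a power series in $x$, is \emph{the} analytic solution of $\phi = u\, e^{\phi}$ vanishing at $x=0$, and that $y$ is another such solution; uniqueness of the formal power series solution of $\phi = u\,e^{\phi}$ with $\phi(0)=0$ (which holds because the equation determines the coefficients of $\phi$ recursively once $u$ is fixed) then forces $y = R\circ u$. I expect this bookkeeping — confirming that all series involved have zero constant term and that the substitution is legitimate at the level of formal power series — to be the only real obstacle; the algebraic manipulation itself is immediate. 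No analytic continuation is needed for the identity as stated (it is an identity of power series valid for small $|x|$), though the extension of $R$ to a $\Delta$-domain in Lemma~\ref{lem.cayley_tree} will be what makes this formula useful for the subsequent singularity analysis.
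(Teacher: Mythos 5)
Your proposal is correct and follows essentially the same route as the paper: substitute $\tilde{A}_R = A_R + B_1$, rewrite (\ref{eq.A_R}) as the Cayley fixed-point equation $\tilde{A}_R = u\,e^{\tilde{A}_R}$ with $u = 2B_1 e^{-B_1}$, and conclude $\tilde{A}_R = R(u)$ by uniqueness. The only difference is in how you invoke uniqueness: the paper compositionally inverts $E = u$ (using $E'(0) = 2\neq 0$) to exhibit $f(u) := \tilde{A}_R(\psi_E(u))$ as a solution of $f = u e^f$ in the variable $u$ and then applies Lemma~\ref{lem.cayley_tree} directly, whereas you observe that the formal power series equation $\phi = u(x)e^\phi$ with $\phi(0)=0$ already has a unique solution once $u$ is fixed, so $y = A_R + B_1$ and $R\circ u$ must coincide. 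Your route avoids the invertibility of $E$ and is arguably a touch cleaner, but it is the same underlying argument.
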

%\m{2013-05-16: $\le$ added: check if the proof still holds}
\begin{proof}
    We may rewrite (\ref{eq.A_R}) as
    \[
    \tilde{A}_R = E e^{\tilde{A}_R}
    \]
    where $\tilde{A}_R = A_R + B_1$ and $E = 2 B_1 e^{-B_1}$.
    %Since $B_1(x)$ is increasing for $x \in (0; \rho(\cd))$, $B(0) = 0$
    %and $f(x) = x e^{-x}$ is increasing, we have that $E(x)$ is increasing
    %for $x \in (0;\delta)$ for some $\delta > 0$.
    By Proposition~\ref{prop.rhoBk}, $\rho(\cb_1) = \rho(\cd) > 0$, so $E$ is analytic at zero. Since $E'(0) = 2 |\cb_{1,1}| = 2 > 0$,
    $E$ has an analytic inverse $\psi_E$ at zero. %, such that $E(\psi_E(u)) = u$
    Thus there is $\delta > 0$ such that for all $u \in \mathbb{C}$ with $|u| < \delta$  
    we have
    \[
       f(u) = u e^{f(u)},
    \]
    where $f(u) = \tilde{A}_R(\psi_E(u))$.
    Since $f(0) = E(0) = \psi_E(0) = 0$, we conclude (using Lemma~\ref{lem.cayley_tree}) that $f(u) = R(u)$ for all $u$ with $|u| < \delta$. This implies that there is $\eps > 0$, such that for all $x \in \mathbb C$ with $|x| < \eps$ we have $\tilde{A}_R(x) = f(E(x)) = R(E(x))$, or
    \[
      A_R(x) = R(E(x))  - B_1(x).
    \]
    Since the two analytic functions are identical on an open disc, they are identical for all $x$ with $|x| < \rho(\cd)$.
   % By the Identity principle (see, e.g., Theorem 8.12 of \cite{cplxnotes}), this identity holds for all $x$ with $|x| \le \rho(\cd)$.% \m{$\leftarrow$}
\end{proof}

\subsection{Growth of the class $\ca_{RG}$}

In contrast to $\ca_R$, the exponential generating function of $\ca_{RG}$ has a dominant singularity smaller than $\rho(\cd)$.
%
%
%\bigskip
%
\begin{lemma}\label{lem.A_RG}
    For $|x| < \rho(\cd)$ define a function $E = E(x)$ by
    \[
      E = 4 B_1 \exp \left(2 A_R - (4 e^{A_R} - 1) B_1 + (2e^{A_R} - 1)^2 B_2\right). 
    \]
    The equation $E(x) = e^{-1}$ has only one solution $x_0 = 0.086468..$ in the interval $(0, \rho(\cd))$ and
    $\rho(\ca_{RG}) = x_0$. % of the exponential generating function of $\ca_{RG}$.
    %, and the
    %singular expansion of $A_{RG}$ at $x_0$ is
    %\begin{equation} \label{eq.singA_RG}
    %   c_0 - c_1 (1 - x/x_0)^{1/2} + O( (1 - x/x_0) ), 
    %\end{equation}
    %where $c_0=0.679672..$ and $c_1=1.556279..$ are computable constants.
    %Therefore
    %\[
    %\ca_{RG, n} = 2^{-1} \pi^{-1/2} c_1 n^{-3/2} x_0^{-n} (1 + o(1)).
    %\]
\end{lemma}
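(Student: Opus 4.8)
The plan is to mimic the structure of the proof of Lemma~\ref{lem.eval_A_R}, but now track the extra colour. First I would set up the analogue of the fixed-point equation. Writing $\tilde A_{RG} = A_{RG} + B_1 \hat A_{RG} + \dots$ is messier here because, unlike the single-colour case where $A_R = B_1\hat A_R$, the two-colour case has the extra summand $B_2 \hat A_R^2$ coming from Lemma~\ref{lem.multitype_small}. So the first step is to substitute the closed forms $\hat A_{RG} = 4e^{A_{RG}+2A_R} - 4e^{A_R} + 1$ and $\hat A_R = 2e^{A_R}-1$ and the block generating functions into $A_{RG} = B_1\hat A_{RG} + B_2\hat A_R^2$, and rearrange so that $A_{RG}$ appears only through a single exponential. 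Concretely, collecting terms one gets an identity of the shape
\[
e^{-A_{RG}}\bigl(A_{RG} - (\text{terms free of }A_{RG})\bigr) = 4 B_1 e^{2A_R},
\]
which after renaming $u = A_{RG}$ minus its ``$A_{RG}$-free part'' becomes $u = E(x)\, e^{u}$ with $E$ as defined in the statement (the constant $4B_1$, the factor $e^{2A_R}$, and the correction $-(4e^{A_R}-1)B_1 + (2e^{A_R}-1)^2 B_2$ collecting exactly the pieces of $\hat A_{RG}$ and $B_2\hat A_R^2$ that do not multiply $e^{A_{RG}}$). The bookkeeping here is the one genuinely fiddly part, but it is just algebra.

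Next I would invoke Lemma~\ref{lem.cayley_tree} exactly as in the proof of Lemma~\ref{lem.eval_A_R}: since $E(x)$ is analytic at $0$ with $E(0)=0$ and $E'(0) = 4|\cb_{1,1}| \ne 0$ (all the other factors evaluate to $1$ at $x=0$ since $A_R(0)=A_{RG}(0)=B_2(0)=0$ and $B_1(0)$... one should check $B_1(0)=0$, so the leading behaviour of $E$ is $4x(1+o(1))$), $E$ has an analytic inverse near $0$, and composing shows $A_{RG}(x) = R(E(x))$ in a neighbourhood of $0$, hence for all $|x|$ less than the radius of convergence. Therefore the dominant singularity of $A_{RG}$ is the smallest positive $x$ at which either $E$ itself becomes singular (i.e. $x = \rho(\cd)$, since $A_R, B_1, B_2$ are all analytic precisely up to $\rho(\cd)$ by Proposition~\ref{prop.rhoBk} and Section~\ref{subsec.A_R}) or $E(x)$ reaches the singularity $e^{-1}$ of $R$. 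So $\rho(\ca_{RG}) = \min(\rho(\cd),\, x_0)$ where $x_0$ is the least positive root of $E(x) = e^{-1}$ in $(0,\rho(\cd))$.

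The remaining work — and I expect this to be the main obstacle — is to show that $E(x) = e^{-1}$ really does have a (unique) solution in $(0,\rho(\cd))$, and that $x_0 < \rho(\cd)$ so that this ``confluence'' singularity dominates rather than the branch point of $\cd$. For uniqueness and existence I would argue that $E$ is continuous and strictly increasing on $(0,\rho(\cd))$ with $E(0)=0$: strict monotonicity follows because $E$ is (up to a positive analytic factor) a product/exponential of generating functions with non-negative coefficients and positive linear terms, so $E$ and $E'$ are positive on the positive reals inside the disc of convergence; hence $E$ is injective there and $E(x)=e^{-1}$ has at most one solution. For existence one needs $E(x) \to$ something $\ge e^{-1}$ as $x \uparrow \rho(\cd)$ — equivalently $E(\rho(\cd)) \ge e^{-1}$, using the known value $D(\rho(\cd)) = 1.8678..$ from Lemma~\ref{lem.Datrho} to evaluate $B_1 = x(P+1)$, $B_2 = xD S_1$, $A_R = R(2B_1e^{-B_1}) - B_1$, etc., at $x = \rho(\cd)$ and checking the numerical inequality $E(\rho(\cd)) > e^{-1}$. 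This forces $x_0 < \rho(\cd)$, so $\rho(\ca_{RG}) = x_0$, and a numerical root-find gives $x_0 = 0.086468..$. The delicate points are thus (i) the algebraic reduction to $u = Ee^u$, and (ii) certifying the numerical inequality $E(\rho(\cd)) > e^{-1}$ and the value of $x_0$ to the claimed precision; both are routine in spirit but require care.
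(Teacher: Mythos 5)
Your plan follows essentially the same route as the paper: algebraically reduce the defining equation from Lemma~\ref{lem.multitype_small} to the form $\tilde A_{RG} = E\, e^{\tilde A_{RG}}$ with $\tilde A_{RG} = A_{RG} + B$ for the right correction $B$, identify $\tilde A_{RG} = R(E(x))$ via the Cayley-tree function as in Lemma~\ref{lem.eval_A_R}, show $E$ is increasing on $(0,\rho(\cd))$ and exceeds $e^{-1}$ before $\rho(\cd)$, and conclude that the smallest root $x_0$ of $E(x)=e^{-1}$ is the dominant singularity. (One small slip: you write ``$A_{RG}(x) = R(E(x))$'' where you mean $\tilde A_{RG}(x)=R(E(x))$, i.e.\ $A_{RG}=R(E)-B$; this is harmless given the earlier setup.)

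However, there is a real gap in your monotonicity argument. You claim that $E$ is strictly increasing because it is ``a product/exponential of generating functions with non-negative coefficients''. That is false: the exponent in
\[
E = 4 B_1 \exp\Bigl(2 A_R - (4 e^{A_R} - 1) B_1 + (2e^{A_R} - 1)^2 B_2\Bigr)
\]
contains the term $-(4e^{A_R}-1)B_1$, so $E$ does \emph{not} have non-negative Taylor coefficients, and positivity of $E'$ on $(0,\rho(\cd))$ does not follow for free. This matters both for uniqueness of $x_0$ and for the final step, where $E'(x_0)>0$ is needed to argue that $R(E(x))$ genuinely inherits the square-root singularity of $R$ at $E=e^{-1}$ rather than it being cancelled. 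The paper closes this gap with a small but essential algebraic trick: factor $E = E_1 E_2$ with $E_1 = 4\exp\bigl((2e^{A_R}-1)^2 B_2\bigr)$, which does have non-negative coefficients, and then use the functional equation $A_R = B_1(2e^{A_R}-1)$ from~(\ref{eq.A_R}) to simplify $2A_R - (4e^{A_R}-1)B_1 = -B_1$, so that $E_2 = B_1 e^{-B_1}$. Monotonicity of $E_2$ is then obtained not from sign-definite coefficients but from the fact that $t\mapsto t e^{-t}$ is increasing on $(0,1)$ together with the numerical bound $B_1(\rho(\cd)) = 0.1929\dots < 1$. Without this reduction your uniqueness and singularity-transfer steps are not justified.

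Minor remarks: the paper verifies existence of $x_0$ by evaluating $E(0.12) > e^{-1}$ (a point safely inside $(0,\rho(\cd))$) rather than computing $E(\rho(\cd))$; either works once monotonicity is in hand. Also note $|\cb_{1,1}|=1$ so $E'(0)=4$, consistent with your observation that $B_1(0)=0$, $A_R(0)=A_{RG}(0)=B_2(0)=0$.
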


\begin{proof}
    Combining Lemma~\ref{lem.multitype_small}, Lemma~\ref{lem.F2F3} and Theorem~\ref{thm.Tk} we get
    \begin{align}
       &A_{RG} = B_1 \left(4 e^{A_{RG} + 2 A_R} - 4 e^{A_R} + 1 \right) + B_2 (2e^{A_R} - 1)^2;  \label{eq.A_RG2}  
    \\ &B_1 = x (P+1); \qquad B_2 =  \frac {x^2 D^3} {1 - xD^2}. \nonumber
   \end{align}
    Denote
    \[
    B = %B(x) = 
    (4 e^{A_R} - 1)B_1 - (2 e^{A_R} - 1)^2 B_2.
    \]
    and
    \[
    \tilde{A}_{RG} %=  \tilde{A}_{RG}(x) 
    =  A_{RG} + B. 
    \]
    We may rewrite (\ref{eq.A_RG2}) as
    \begin{equation*}
        \tilde{A}_{RG} = E e^{\tilde{A}_{RG}}.
    \end{equation*}
    Since all of the functions $P, A_R, D, B_1, B_2$ have convergence radius $\rho(\cd)$, the function $E(x)$
    is analytic in the open disc $|x| < \rho(\cd)$. Furthermore, $E(x)$ is increasing
    for $x\in (0, \rho(\cd))$. To see this, notice that the Taylor coefficients
    of $E_1 = E_1(x)$ given by
    \[
    E_1 = 4 \exp \left ((2e^{A_R} -1)^2 B_2 \right)
    \]
    are non-negative, so $E_1(x)$ is continuously increasing for $x \in (0, \rho(\cd))$. Also, by (\ref{eq.A_R}) we have
    \[
    2 A_R - (4 e^{A_R} - 1) B_1  = - B_1.
    \]
    So
    \[
    E_2 = B_1 \exp \left(2 A_R - (4 e^{A_R} - 1) B_1 \right) = B_1 e^{- B_1}.
    \]
    The function %$g(x) = B_1(x)$
    $B_1(x)$
    continuously increases as $x \in (0, \rho(\cd))$,
    since $B_1$ has non-negative Taylor coefficients, not all zero. Also, we have $B_1(0) = 0$.
    By Lemma~\ref{lem.Datrho}, %(\ref{eq.Datrho}), 
     (\ref{eq.P})
    and numeric evaluation we get $B_1(\rho(\cd)) = 0.1929.. < 1$.
    Since the function $y(t) = t e^{-t}$ continuously increases for $t \in (0,1)$ we conclude that both
    $y(B_1(x))$ and $E(x) = E_1(x) E_2(x)$ continuously increase for $x \in (0, \rho(\cd))$.

    We now claim that
    \begin{equation} \label{eq.RA_RG}
        \tilde{A}_{RG}(x) = R(E(x)),
    \end{equation}
    where $R$ is the exponential generating function for rooted Cayley trees. To see why,
    first note that
    \[
    E'(0) = 4 (P(0) + 1) e^0 = 4
    \]
    and so, since $E$ is analytic at 0 and $E(0) = 0$, $E(x)$ has an analytic inverse $\psi_E(u)$ for
    $ | u| < \delta$, with some positive $\delta$, such that $\psi_E(u)=0$.
    For such $u$ we have
    \begin{equation}\label{eq.ftree}
      f(u) = u e^{f(u)}
    \end{equation}
%    where $f(u)  = \tilde{A}_{RG}(\psi_E (u))$. Since $\tilde{A}_{RG}(0) = \psi_E(0)= 0$ we have $f(0) = 0$. 
%    %Also, $\ksi_E(u)$ is analytic, thus
%    By the Analytic Implicit Function theorem (see, i.e., Theorem~B.4 of \cite{fs09}),
%    $R(u)$ is the only solution $f$ that is analytic and satisfies $f(0) = 0$.
%    So it must be that for some small $\delta_1 > 0$ we have for all $u \in \mathbb C$ with $|u| < \delta_1$ %\in (0; \delta_1)$
%    \[
%    f(u) = R(u).
 %   \]
 %   %By the Identity principle (see, e.g., Theorem 8.12 of \cite{cplxnotes}), this identity holds for all $|u| < \delta_1$,
 %   %and hence, 
 %   and since $\psi_E$ is a analytic, for some $\eps > 0$ and all $|x| < \eps$ we have \m{remove?}
    and we conclude as in the proof of Lemma~\ref{lem.eval_A_R} that for all $x \in \mathbb C$, $|x| < \rho$ where $\rho$
    is the radius of convergence of $R(E(x))$
    \[
    \tilde{A}_{RG}(x) = f(E(x)) = R(E(x)).
    \]
    %Since the functions are identical and analytic on an open disc, the equality holds for all $x < \rho$ where $\rho$ is the
    %radius of convergence of $R(E(x))$. 
    Returning to $A_{RG}$ we have
    \begin{equation}\label{eq.eval_A_RG}
       A_{RG}(x) = R(E(x)) - B(x).
    \end{equation}
    Since $A_{RG}$ has non-negative Taylor coefficients, by Pringsheim's theorem (see, e.g., \cite{fs09}),
    it has a dominant singularity in $[0; \infty]$.  The function $E$ is continuously increasing for $x \in (0, 0.12] \subset (0, \rho(\ca_R))$
    and $E(0.12) = 0.6436.. > e^{-1}$, therefore %$x_0$ is the unique
    there is exactly one
    solution of $E(x) = 1/e$ in $(0, \rho(\cd))$; we call this solution $x_0$.
    Here we used (\ref{eq.D}), Lemma~\ref{lem.eval_A_R}, (\ref{eq.eval_A_RG}) in Maple, to get
    the numeric evaluation of $E(0.12)$ and solve $E(x) = 1/e$. (Let us
    note here that $D$ and $R$ have explicit functional inverses, see \cite{momm05, fs09},
    therefore $D, R, A_R, A_{RG}$ can be evaluated numerically at any point inside their disc of convergence).
    %\m{show how?!!}
    %PROVE IN THE A_R SECTION THAT A_R = R(B_1 e^{B_1}) - B_1.
    %THEN EVERY THING CAN BE COMPUTED JUST USING THE INVERSE FUNCTION OF D.

    The function $A_{RG}$ is analytic for all $x < x_0$, and there is $\eps > 0$ such that $B$ and $E$ are analytic for all $x$ with $|x| < x_0 + \eps$.
    %$E(x)$ approaches $1/e$ as $x \to x_0$.
    Using the fact that $E$ has an analytic inverse at $x_0$ (since $E'(x_0) > 0$) we conclude that $x_0$ must be a singularity of $A_{RG}$.
% WRONG %    %Proof: assume h = f(g) does not have a singularity at x_0
% WRONG %    %here f = R, g = E.
% WRONG %    %Then h has an analytic continuation h* in an open region around x_0.
% WRONG %    %Also, in an open region around x_0 g has a functional inverse psi_g
% WRONG %    %Now define f* = h*(psi_g). We see that since h* and psi_g
% WRONG %    %are analytic at x_0, so must be f* and f* = f where f is defined.
% WRONG %    %Thus g(x_0) is not a singularity of f - a contradiction.
% WRONG %    The singular expansion of $A_{RG}$ at $x_0$ is (cf Section~VI.9 of \cite{fs09})
% WRONG %    \m{cite something about the composition}
% WRONG %%
% WRONG %%    The singular expansion of $A_{RG}$ is 
% WRONG %    \[
% WRONG %     -B(x_0) + f \left(E(x_0) + (x-x_0) E'(x_0) + O( (x-x_0)^2) \right)
% WRONG %    \]
% WRONG %    where $f$ is the singular expansion of $R$ at $x_0$, which is, see \cite{fs09} \m{citation needed}
% WRONG %    \begin{equation}\label{eq.Rsing}
% WRONG %      f(x) = 1 - \sqrt{2} (1 - e x)^{1/2} +  O(1-ex). %2/3 (1-e x) - \frac {11 \sqrt 2} {36} (1 - e x)^{3/2} + O\left( (1-ex)^2 \right).
% WRONG %  \end{equation}
% WRONG %    This yields (\ref{eq.singA_RG}) % the singular expansion of $A_{RG}$ at $x_0$ given in the lemma
% WRONG %    with $c_0 = 1 - B(x_0) = 0.679672..$ and $c_1 = (2 e x_0 E'(x_0))^{1/2} = 1.556279..$.
% WRONG %    The asymptotics of $\ca_{RG, n}$ now follow by applying the transfer theorem of Flajolet and Odlyzko\m{citation needed}, see \cite{fs09}.
\end{proof}

\subsection{Growth of the class $\ca_{RGB}$.}

For $\ca_{RGB}$ we will apply a very similar analysis as in the previous section,
we only have to work with slightly longer formulas.
\begin{lemma}\label{lem.A_RGB}
    For $|x| < \rho(\ca_{RG})$ define %$E = E(x) = E_1 E_2$,
    $E(x) = E_1(x) E_2(x)$ where
    %where $E_1 = E_1(x)$ and $E_2 = E_2(x)$ are given by
    \begin{align*}
        &E_1 = 4 \exp \{ 3 B_2 (2 e^{A_R} -1)^2 (4e^{A_{RG}+ 2 A_R} - 4e^{A_R} + 1) + B_3(2e^{A_R} - 1)^3\};  
        \\ &E_2 = 2 B_1 \exp \{3 A_{RG} + 3 A_R + B_1(6 e^{A_R} - 12 e^{A_{RG} + 2 A_R} - 1) \}.
    \end{align*}
    The equation $E(x) = e^{-1}$ has only one solution
    $x_1 = 0.044495..$ in the interval $(0, \rho(\ca_{RG}))$
    and $\rho(\ca_{RGB})  = x_1$.
    %is a dominant singularity of $A_{RGB}$.
%    , and the
%    singular expansion of $A_{RGB}$ at $x_1$ is
%    \begin{equation} \label{eq.singA_RGB}
%    c_2 - c_3 (1 - x/x_1)^{1/2} + O( (1-x/x_1)),
%    \end{equation}
%    where $c_2 = 0.590548..$ and $c_3 = 1.479408..$ are computable constants.
%    Therefore
%    \[
%    \ca_{RGB, n} = 2^{-1} \pi^{-1/2} c_3 n^{-3/2} x_1^{-n} (1 + o(1)).
%    \]
\end{lemma}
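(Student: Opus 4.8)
The plan is to mirror closely the proof of Lemma~\ref{lem.A_RG}, since the class $\ca_{RGB}$ is built from $\ca_{RG}$ and $\ca_R$ in exactly the same way that $\ca_{RG}$ was built from $\ca_R$. First I would combine Lemma~\ref{lem.multitype_small}, Lemma~\ref{lem.multitype} and Theorem~\ref{thm.Tk} to write out the functional equation for $A_{RGB}$ explicitly in terms of $A_R$, $A_{RG}$, $B_1$, $B_2$ and $B_3$. From Lemma~\ref{lem.multitype_small} we have $A_{RGB} = B_1 \hat{A}_{RGB} + 3 B_2 \hat{A}_R \hat{A}_{RG} + B_3 \hat{A}_R^3$ with $\hat{A}_{RGB} = 8 e^{A_{RGB} + 3 A_{RG} + 3 A_R} - 12 e^{A_{RG} + 2 A_R} + 6 e^{A_R} - 1$, and this isolates the term $8 B_1 e^{A_{RGB} + 3 A_{RG} + 3 A_R}$ that depends on $A_{RGB}$. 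Collecting everything that does not involve $A_{RGB}$ into a single function $B$, one rewrites the relation as $\tilde A_{RGB} = E e^{\tilde A_{RGB}}$ where $\tilde A_{RGB} = A_{RGB} + B$ and $E = E_1 E_2$ is the function in the statement; the precise bookkeeping (using the identity $2 A_R - (4 e^{A_R}-1) B_1 = -B_1$ from (\ref{eq.A_R}) to simplify part of the exponent, exactly as in Lemma~\ref{lem.A_RG}, together with the analogous simplification coming from (\ref{eq.A_RG2})) is what produces the stated split into $E_1$ and $E_2$.

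Next I would argue, exactly as in the proof of Lemma~\ref{lem.eval_A_R} and Lemma~\ref{lem.A_RG}, that $\tilde A_{RGB}(x) = R(E(x))$ for $x$ in a neighbourhood of $0$, and hence $A_{RGB}(x) = R(E(x)) - B(x)$. The key points are: $E$ is analytic on $|x| < \rho(\ca_{RG})$ because all of $P, D, B_1, B_2, B_3, A_R, A_{RG}$ are analytic there (the first dominant singularity among these is $\rho(\ca_{RG}) = x_0$ by Lemma~\ref{lem.A_RG}); $E(0) = 0$; and $E'(0) = 8 B_1'(0) = 8 |\cb_{1,1}| / 1 \cdot \ldots$, in any case $E'(0) \ne 0$, so $E$ has an analytic inverse at $0$ and the composition $R \circ E$ satisfies $f = E e^f$ near $0$ with $f(0)=0$, forcing $f = R$ by the uniqueness clause of Lemma~\ref{lem.cayley_tree}. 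Then I would show $E(x)$ is real, positive and strictly increasing on $(0,\rho(\ca_{RG}))$: both $E_1$ and $E_2$ are, up to the factor $B_1 e^{-B_1}$, exponentials of power series with non-negative coefficients (after using $(\ref{eq.A_R})$ to turn $3 A_R + B_1(6e^{A_R}-1)$-type combinations into the monotone form $-B_1$ plus non-negative-coefficient series), and $B_1(x)$ increases through a range contained in $(0,1)$ on which $t \mapsto t e^{-t}$ is increasing — here I would reuse the numeric bound $B_1(\rho(\cd)) < 1$ already established, and the fact that $x_1 < \rho(\ca_{RG}) < \rho(\cd)$. From strict monotonicity and the numeric facts $E(x) \to 0$ as $x \to 0^+$ and $E$ evaluated at some point of $(0,\rho(\ca_{RG}))$ exceeds $e^{-1}$ (to be checked numerically in Maple, using that $D, R, A_R, A_{RG}$ can all be evaluated via their explicit functional inverses as remarked after Lemma~\ref{lem.A_RG}), there is exactly one solution $x_1$ of $E(x) = e^{-1}$ in $(0,\rho(\ca_{RG}))$, and numerically $x_1 = 0.044495..$.

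Finally I would conclude $\rho(\ca_{RGB}) = x_1$ by the same two-sided argument as in Lemma~\ref{lem.A_RG}: since $A_{RGB}$ has non-negative Taylor coefficients, Pringsheim's theorem gives a dominant singularity on the positive real axis; for $x < x_1$ we have $E(x) < e^{-1}$, so $R(E(x))$ is analytic (as $E$ is analytic there and $R$ is analytic on $|u| < e^{-1}$) and $B(x)$ is analytic since $x_1 < \rho(\ca_{RG})$, whence $A_{RGB}$ is analytic on $[0,x_1)$; and at $x_1$, because $E$ has an analytic inverse there ($E'(x_1) > 0$) while $R$ has a branch-point singularity at $e^{-1}$ by (\ref{eq.Rsing}), the composition $R(E(x)) = \tilde A_{RGB}(x)$ is singular at $x_1$, and since $B$ is analytic at $x_1$ so is $A_{RGB}$ singular there. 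Hence $\rho(\ca_{RGB}) = x_1 = 0.044495..$. The main obstacle I expect is the bookkeeping in the first step — correctly collecting the non-$A_{RGB}$ terms into $B$ and verifying that the exponent of $E$ simplifies, via (\ref{eq.A_R}) and (\ref{eq.A_RG2}), into precisely the stated $E_1$, $E_2$ with the right monotonicity; once the equation is in the canonical form $\tilde A = E e^{\tilde A}$ with $E$ analytic, positive and increasing, everything else is a verbatim repetition of the analytic-combinatorics argument already used twice in the preceding two subsections.
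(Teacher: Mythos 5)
Your proposal follows essentially the same approach as the paper's own proof: isolate the $8B_1 e^{A_{RGB}+3A_{RG}+3A_R}$ term, set $\tilde A_{RGB}=A_{RGB}+B$, rewrite the functional equation as $\tilde A_{RGB}=E e^{\tilde A_{RGB}}$, simplify the exponent of $E_2$ via (\ref{eq.A_R}) and (\ref{eq.A_RG2}) to $-B_1 + 3B_2\hat A_R^2$, argue monotonicity of $E$ from the non-negativity of coefficients plus the bound $B_1 < 1$, deduce $\tilde A_{RGB}=R(E(x))$, and locate the dominant singularity at the unique $x_1$ with $E(x_1)=e^{-1}$ using Pringsheim plus the branch-point of $R$ at $e^{-1}$. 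The bookkeeping step you flag as the main obstacle is exactly what the paper works through, and your remaining steps match the paper's argument line for line.
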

\begin{proof}
    By Lemma~\ref{lem.multitype_small} we have
    \[
       A_{RGB} = 8 B_1 e^{A_{RGB} + 3 A_{RG} + 3 A_R} - B,
    \]
    where %\m{was $+B_3$}
    \begin{align*}
        B &=   B_1 (12 e^{A_{RG} + 2 A_R} - 6 e^{A_R} + 1)
        \\ &- 3 B_2 (2e^{A_R} - 1)^2 (4e^{A_{RG}+ 2 A_R} - 4e^{A_R} + 1) - B_3 (2e^{A_R} - 1)^3.
    \end{align*}
    Setting $\tilde{A}_{RGB} = A_{RGB} + B$, we may rewrite this as
    \[
     \tilde{A}_{RGB} = E e^{\tilde{A}_{RGB}}.
    \]
    Notice, that by Lemma~\ref{lem.A_RG}, and Section~\ref{subsec.A_R}, $B_k$ (for $k \ge 1$), $A_R$ and $A_{RG}$ all convergence radius at least $\rho(\ca_{RG})$. Therefore
    $E$ is analytic (and continuous) at any point $x \in (0, \rho(\ca_{RG}))$,  
    We claim that $E(x)$ is increasing for $x \in (0, \rho(\ca_{RG}))$.
    %,
    %and, furthermore, its coefficients are nonnegative, not all zero.
    To see why,
     recall the notation of Lemma~\ref{lem.multitype_small}, and note that 
    \[
    E_1 = 4 e^{3 B_2 \hat{A}_R^2 \hat{A}_{RG} + B_3 \hat{A}_R^3}
    \]
    is an exponential generating function for a class of combinatorial objects, so its coefficients are 
    non-negative, not all zero. Hence $E_1(x)$ is increasing for $x \in (0, \rho(\ca_{RG}))$.
    %Now using (\ref{eq.A_RG2}) and (\ref{eq.A_R}) in the first and second term of the (main) exponent of $E_1$ we
    Now, by (\ref{eq.A_R}) and (\ref{eq.A_RG2}), the exponent in $E_2$ is
    \begin{align*}
       &3 A_{RG} + 3 A_R + B_1 \left(6 e^{A_R} - 12 e^{A_{RG} + 2 A_R} - 1\right)
    \\ &= 3 B_1 \left( 4e^{A_{RG}+ 2 A_R} - 4e^{A_R} + 1 \right) + 3 B_2 \left(2e^{A_R} -1\right)^2 
   \\  &+ 3 B_1 \left(2e^{A_R} - 1\right) + B_1 \left(6 e^{A_R} - 12 e^{A_{RG} + 2 A_R} - 1\right)
   \\ & = - B_1 + 3 B_2 \hat{A}_R^2.
    \end{align*}
    Thus
    \[
    %E_1 = 2 B_1 e^{2 B_1 + 3 B_2 \hat{A}_R^2 + 3 B_1 \hat{A}_R}
    E_2 = 2 B_1 e^{-B_1} e^{3 B_2 \hat{A}_R^2}
    \]
    %also has nonnegative coefficients, not all zero, and so 
    is increasing for %$x \in (0; \rho(\ca_{RG}))$.
    $x \in (0, \rho(\ca_RG))$ using a similar
    argument as in the proof of Lemma~\ref{lem.A_RG}. % where $x' = \sup \{ x \le \rho(\ca_{RGB}): B(x) \le 1\}$.
    Since $E = E_1 E_2$, $E$ has the same property.
    Now since $E(0) = 0$, $E'(0) = 8 |\cb_{1,1}| = 8$, we have similarly
    as in the proof of Lemma~\ref{lem.eval_A_R}  %\m{Implicit function theorem!}
%    (or by the Implicit Function Theorem)
    \[
     \tilde{A}_{RGB}(x) = R(E(x))
    \]
    for all $x$ in the disc of convergence of $\tilde{A}_{RGB}$, or, equivalently
    \[
    A_{RGB}(x) = R(E(x)) - B(x).
    \]
    Now %$E(x)$ has nonnegative coefficients, 
    using a numeric evaluation with 
    $0.08 < \rho(\ca_{RG})$ yields 
    %$E(0.08) = 1.300 > 1/e$,
    $E(0.08) = 0.855.. > 1/e$,
    and
    $B(x)$ is analytic for $x < \rho(\ca_{RG})$.
    It follows similarly as in Lemma~\ref{lem.A_RG}
    that the smallest positive number $x_1$
    such that $E(x_1) = 1/e$ is a dominant singularity of $R(E(x))$.
    Numerically solving with Maple yields $x_1 = 0.044495..$. 
    (Here the numeric evaluation of $E(x)$ for $x \in (0, \ca_{RG})$ can
    be easily carried out using the inverse functions of $R$ and $D$,
    Lemma~\ref{lem.eval_A_R} and (\ref{eq.eval_A_RG}).)
    Since the convergence radius of $B(x)$ is at least $\rho(\ca_{RG})$,
    it follows that $x_1$ is the convergence radius of $\ca_{RGB}$.
%
%    
% WRONG %    It follows by Section~VI.9 of \cite{fs09} \m{check} that we are in the so called ``supercritical'' case and
% WRONG %    the dominant singularity is the unique solution $x_1 \in (0; \rho(\ca_{RG}))$ of $E(x) = 1/e$. Here the numeric evaluation of $E(x)$ for $x \in (0; \ca_{RG})$ can be easily carried out using the inverse functions of $R$ and $D$, Lemma~\ref{lem.eval_A_R} and (\ref{eq.eval_A_RG}).
% WRONG %    Numerically solving with Maple yields $x_1 = 0.044495..$.
% WRONG %    
% WRONG %    Now $E$ is analytic at $0$ and the singular 
% WRONG %    expansion of $R(E(x))$ is obtained by combining the singular expansion $f$ of $R$, (\ref{eq.Rsing}), with the regular expansion of $E$ at $x_1$, see \cite{fs09}. Thus, as in the previous section we see that the singular expansion of $A_{RGB}$ at $x_1$ is
% WRONG %    \[
% WRONG %     -B(x_1) + f \left(E(x_1) + (x-x_1) E'(x_1) + O( (x-x_1)^2) \right)
% WRONG %    \]
% WRONG %    where $E(x_1) = 1/e$. % and, by a numeric evaluation $E'(x_1) = 9.114103..$.
% WRONG %    Now (\ref{eq.singA_RGB}) follows with $c_2 = 1 - B(x_1) = 0.590548..$ % 0.592930..$
% WRONG %    and 
% WRONG %    $c_3 = (2 e x_1 E'(x_1))^{1/2} = 1.479408..$ %1.483752..$ 
% WRONG %    by numeric evaluation. Finally the first-order
% WRONG %    asymptotics for $|\ca_{RGB,n}|$ follow from (\ref{eq.singA_RGB}) and the transfer theorem \cite{fs09}.
\end{proof}

\subsection{Completing the proofs}

\begin{proofof}{Lemma~\ref{lem.part2main}}
    Let $\cf$ be the class of rooted series-parallel graphs.
    Bodirsky, Gim\'{e}nez, Kang and Noy \cite{momm05} showed that the
    functional inverse $\psi_F$ of $F$ satisfies
    \[
     \psi_F(u) = u e^{-B'(u)}
    \]
    where
    \[
    B(x) = \frac 1 2 \ln (1 + x D(x)) - \frac {x D(x) (x^2 D(x)^2 + x D(x) + 2 - 2 x)} {4 (1 + x D(x))}
    \]
    is the exponential generating function of biconnected
    %(either 2-connected or isomorphic to $K_2$)
    series-parallel graphs. %Also, by Lemma~3.3 of \cite{momm05} 
    Using Theorem~3.4 of \cite{momm05},
    $\psi_F(u)$ is continuously increasing for $u \in [0, u_0)$, where $u_0 = F(\rho(\cf)) = 0.127969..$ (denoted $\tau(1)$ in \cite{momm05}) and $\psi_F(u_0) = \rho(\cf) = 0.11021..$. 

    Recall that $\ca=\cc^{\bullet3}$ is the class of all $3$-rootable graphs rooted at a
    $3$-rootable vertex. Then (cf. (\ref{eq.unitypeblocks}) and the proof of Lemma~\ref{lem.BAC})
    \[
    \ca =  2^3 \times \cf \times \prod_{S \subseteq [3]} \SET(\ca_S(\cf)).
    \]
    Therefore
    \begin{equation} \label{eq.rrr}
    A(x) =  8 e^{A_{RGB}(F(x))} F(x) e^{\sum_{S\subset [3]} A_S(F(x))}. 
   \end{equation}
    By Lemma~\ref{lem.A_RGB} $\rho(\ca_{RGB}) = 0.044.. <  F(\rho(\cf)) = u_0 = 0.1279..$. 
    %\m{removed: By Section VI.9 of \cite{fs09}, the }
    Let $\rho$ be
    the unique solution in $(0, u_0)$ of
    \[
       F(u) = \rho(\ca_{RGB}),
    \]
    so that $\rho = \psi_F(\rho(\ca_{RGB})) = 0.042509..$ % 0.042251..$. 
    
    Since $F$ and $A_{RGB}$ have non-negative coefficients and $F'(\rho) \ne 0$,
    $\rho$ is a singularity of $e^{A_{RGB}(F(x))}$. %\m{$\leftarrow$}
    Moreover, since $\rho(\ca_S) > \rho(\ca_{RGB})$ for any $S \subset [3]$ by Lemma~\ref{lem.A_RG} and Lemma~\ref{lem.A_RGB}, we have that
    \[
    g(x) = F(x) e^{\sum_{S\subset [3]} A_S(F(x))}
    \]
    is analytic at $\rho$ and $g(\rho) \ne 0$. It follows, see \cite{fs09}, that the radius of convergence of $A(x)$ is $\rho$, and so $\gamupper(\ca) = \rho^{-1}= 23.524122..$. %23.556525..$.    
    By Lemma~\ref{lem.gcrooted}, $\rho^{-1}$ is the growth constant of $\ca = \cc^{\bullet3}$.

    Now 
     \[
       \gamupper(\cc^2) \le \gamma(\apex(\ex K_4)) \le 2 \gamma(\ex K_4) < \rho^{-1},
  \]
 since $\gamma(\ex K_4) = 9.07..$ by \cite{momm05}. Also $\gamma(\cc^2)$ exists and is equal to $\gamupper(\cc^2)$
 by Lemma~\ref{lem.gcrd}.
    Applying Lemma~\ref{lem.gcindstep} completes the proof.
\end{proofof}

%\section{Unrooting graphs in $\cc^l$} %{Asymptotic number and structure of graphs in $\ex 2 K_4$.}
\section{Counting tree-like graphs}
\label{sec.unrooting}
\subsection{Substituting edges, internal vertices and leaves of Cayley trees}

Let $\ct'$ be a class of trees. Let $\cd, \ci, \cL$ be arbitrary non-empty classes of labelled objects. As before,
we assume that all classes are closed under isomorphism of the labels.
Although we use the same symbol to denote the class of series-parallel networks, in this section $\cd$ will be an arbitrary class.
%Each object in $\ct(\cd, \ci, \cL)$ (or $\cR(\cd, \ci, \cL)$) naturally yields a graph (or a rooted graph).
%Fix an arbitrary rule for orienting the edges of Cayley trees (for example, each edge can be turned into an arc pointing fromits smaller endpoint to the larger one).
We will consider the class $\ct'(\cd, \ci, \cL)$ obtained from trees in $\ct'$ by attaching to leaves, internal vertices and edges objects from $\cL$, $\ci$, $\cd$ respectively. More precisely,  denote by $L(T)$ and $I(T)$ the sets of
all labelled leaves and labelled internal nodes of a tree $T$ respectively (in this section, a node of $T$ is called a leaf, if its degree is at most one; otherwise it is called an internal node).
Then $\ct'(\cd, \ci, \cL)$ is the class of all tuples $(T, \cd', \ci', \cL')$ where $T \in \ct'$, 
$\cd' = \{D_e : e \in E(T)\}$, $\ci' = \{I_v: v \in I(T)\}$ and $\cL' = \{L_v: v \in L(T)\}$ are families of objects from $\cd, \ci$ and $\cL$ respectively, and the sets of labels of each object in $\{T\} \cup \cL_T \cup \ci_T \cup \cd_T$ are pairwise disjoint.

%We will denote by $\cR(\cd, \ci, \cL)$ the class obtained in the same way, but such that $T \in \cR$ is a roooted Cayley tree.

%We will consider the classes $\ct(\cd, \ci, \cL)$ and $\cR(\cd, \ci, \cL)$ where $\ct$ and $\cR$ are
%classes of Cayley trees and rooted Cayley trees respectively, 
Suppose $\ci, \cL$ are classes of vertex-pointed graphs, $\cd$ is a class of networks, and graphs in $\ct'$
have at most one pointed (unlabelled) vertex.
Then each object $\alpha = (T, \cd_T, \ci_T, \cL_T) \in \ct'(\cd, \ci, \cL)$ corresponds naturally to a graph $G(\alpha)$ defined as follows, see Figure~\ref{fig.leaf-attached}.
Starting with $T$, identify the pointed vertex of $L_v$ with the node $v$ of $T$ for each $v \in L(T)$, identify the pointed vertex of $I_u$ with the node $u$ of $T$ for each $u \in I(T)$ and replace
each edge $uv \in E(T)$ by the network $D_{uv}$. To carry out the last substitution, fix a rule for orientation of edges of $T$. For instance, identify the source and the sink of $D_{uv}$ with the smaller and the larger of $\{u,v\}$ respectively; a pointed vertex can be assumed to be smaller than any labelled vertex.

Let $\ct$ be the class of (unrooted) Cayley trees. For example, if $\cz_C$ is the class of graphs consisting of a single pointed vertex coloured $C$ and $\ce_2$ is the class of trivial networks of size $0$ containing a single edge, then the class $\ct(\ce_2, \cz_\emptyset, \cz_{\{red\}} \cup \cz_{\{green\}})$ is isomorphic to the class of all unrooted Cayley trees where the leaves are coloured either red or green.
%Similarly, we can define a rooted graph $G(\alpha)$ for each $\alpha \in \cR(\cd, \ci, \cL)$,
%such that the root vertex of $T$ becomes the root of $G(\alpha)$.

%Recall that $\cR$ is the class of rooted Cayley trees.

For $\alpha \in \ct'(\cd, \ci, \cL)$, let $T(\alpha)$ denote the underlying tree $T$.
Our aim in this section is to enumerate general ``supercritical''
classes $\ct(\cd, \ci, \cL)$ and obtain results on the underlying tree size: an application of this will be one of the key elements in the proof of Theorem~\ref{thm.K4}. 

\begin{figure}
     \begin{center}
    \includegraphics[height=5cm]{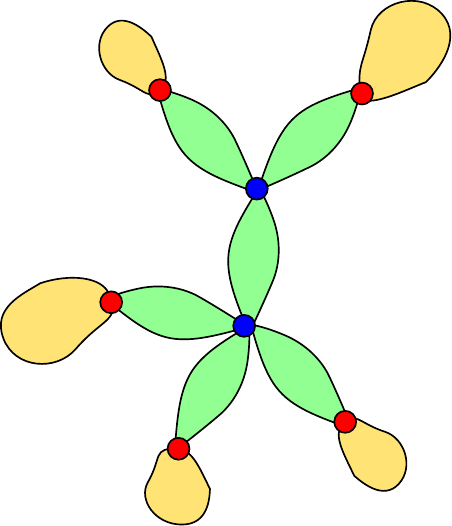}
    \end{center}
    \caption{Theorem~\ref{thm.treeleaf} characterises the growth of general ``supercritical'' class of graphs $\ct(\cd, \ci, \cL)$ obtained
    by replacing edges, internal nodes and leaves of Cayley trees by objects from classes $\cd, \ci$ and $\cL$ respectively.}
    \label{fig.leaf-attached}
\end{figure}
\begin{theorem}\label{thm.treeleaf}
    Let $\cd, \ci, \cL$ be non-empty classes of
    labelled objects.
    % pointed graphs. Let $\cd$ be a non-empty class of networks.
    Let $\ca = \ct(\cd, \ci, \cL)$ and $\ca' = \cR(\cd, \ci, \cL)$. 
    Suppose $\rho = \rho(\ca) < \min(\rho(\cd), \rho(\ci), \rho(\cL))$ and 
    assume there are positive integers $i,j,k$ with $gcd(k-i,j-i) = 1$ such that 
    $\ca$ contains an object of each of the sizes $i, j$ and $k$.
    
    There are constants $a> 0$ and $c >0$ such that %for any $\eps > 0$
    the following holds. %for any $\eps > 0$.
    Let $R_n \in_u \ca'$ or  $R_n \in_u \ca$ and let 
    $Y_n = |V(T(R_n))|$.
    \begin{enumerate}[ \indent 1)]
    \item For any $\eps > 0$, $\pr \left( \left |\frac {Y_n} n  - a \right| > \eps \right) = e^{-\Omega(n)}$;
    \item $|\ca_n| = (1 + o(1)) (an)^{-1} |\ca_n'| = c a^{-1} n^{-5/2} n! \rho^{-n} (1 + o(1))$;
    \item $A$ and $A'$ converge at $\rho$.
    \end{enumerate}
\end{theorem}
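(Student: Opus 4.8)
The plan is to set up the problem in the language of analytic combinatorics of tree-like structures, following the symbolic method of Flajolet and Sedgewick, and then apply the "rooting-unrooting" dictionary for Cayley trees. First I would write down the generating function identities. Because $\ct$ is the class of unrooted Cayley trees and $\cR$ the class of rooted Cayley trees, substituting networks $\cd$ for edges, pointed graphs $\ci$ for internal vertices and pointed graphs $\cL$ for leaves, the rooted class $\ca' = \cR(\cd, \ci, \cL)$ has a generating function $A'(x)$ which is the "tree function" associated with the substitution: if $y = A'(x)$ then $y$ satisfies a fixed-point equation of the shape $y = x L(x) + (\text{internal-node term}) \cdot \phi(D(x)\, y)$ for an appropriate analytic $\phi$ with non-negative coefficients, obtained by the usual pointing-at-an-edge / pointing-at-a-vertex decomposition of rooted trees (a leaf contributes $\cL$, an internal node of degree $d$ contributes $\ci$ times a $\mathrm{SET}$ of $d$ subtrees each reached through a copy of $\cd$). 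The unrooted function $A(x)$ is then recovered by the dissymmetry-type / unrooting formula for Cayley trees: $A(x) = A'(x) - \tfrac12\big(\text{edge-substitution of }A'^2\big) + (\text{correction})$, i.e. the standard "$A = A' - \frac{x}{2}(\cdots)$"-style identity that expresses an unrooted tree of $n$ vertices as $\frac1n$ of its rooted versions at the level of coefficients, $[x^n]A \sim (an)^{-1}[x^n]A'$.

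Next I would locate the dominant singularity. By Pringsheim both $A$ and $A'$ have their dominant singularity on the positive real axis; call it $\rho = \rho(\ca)$. The hypothesis $\rho < \min(\rho(\cd),\rho(\ci),\rho(\cL))$ is the "supercriticality" condition: it guarantees that the singularity of $A'$ comes not from the inserted objects $D, I, L$ (which are analytic and have non-negative coefficients in a neighbourhood of $\rho$) but from the tree-like fixed-point equation itself, so that $A'$ has a square-root singularity of the smooth-implicit-function / inverse-function type (Theorem VII.3 of \cite{fs09}, the "tree-like" or "smooth inverse function" schema): $A'(x) = \tau - b\sqrt{1 - x/\rho} + O(1-x/\rho)$ for constants $\tau, b > 0$, with $A'$ analytically continuable to a $\Delta$-domain at $\rho$. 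In particular $A'(\rho) = \tau < \infty$, which already gives convergence of $A'$ at $\rho$; feeding this into the unrooting identity and using that the edge-substitution term $\tfrac12(\cdots)(A'^2)$ is built from $A'$ and the analytic function $D$, we get that $A$ too is finite at $\rho$ with a singular expansion whose leading singular term is of order $(1-x/\rho)^{3/2}$ — this is the source of the $n^{-5/2}$ in the statement. Then singularity analysis (transfer theorems, \cite{fs09}) yields $[x^n]A' \sim c' n^{-3/2}\rho^{-n}$ and $[x^n]A \sim c\, n^{-5/2}\rho^{-n}$, i.e. $|\ca'_n| = c' n^{-3/2}\rho^{-n} n!(1+o(1))$ and $|\ca_n| = c\, a^{-1} n^{-5/2}\rho^{-n} n!(1+o(1))$, establishing part 2) and part 3).

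For part 1), the concentration of the underlying tree size $Y_n = |V(T(R_n))|$, the plan is to introduce the bivariate generating function $A(x,u)$ (and $A'(x,u)$) where $u$ marks the number of tree-nodes, i.e. in the fixed-point equation every occurrence that corresponds to a node of $T$ is weighted by $u$. For $u$ in a real neighbourhood of $1$ the same supercritical tree-schema applies, giving a singularity $\rho(u)$ that is analytic in $u$ with $\rho(1) = \rho$, and $\rho(u)$ is non-constant (here the arithmetic hypothesis $\gcd(k-i, j-i) = 1$ guaranteeing objects of sizes $i,j,k$ rules out a lattice/periodicity obstruction that would make $Y_n$ supported on a sublattice and is exactly what is needed for the variability / aperiodicity of the singular behaviour). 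Then by the standard "moving singularity" argument — either the Quasi-Powers theorem of Hwang or a direct large-deviation bound via Markov's inequality applied to $u^{Y_n}$ and $u^{-Y_n}$ with the tilted generating functions — one gets that $Y_n/n$ is concentrated around $a := -\rho'(1)/\rho(1)$ with exponentially small tails, $\pr(|Y_n/n - a| > \eps) = e^{-\Omega(n)}$. The same computation on $\ca$ versus $\ca'$ shows the two models agree asymptotically (the rooting only perturbs things by a factor polynomial in $n$), and in particular the constant $a$ is the same and the refined count $|\ca_n| = (1+o(1))(an)^{-1}|\ca'_n|$ drops out of comparing the $n^{-5/2}$ and $n^{-3/2}$ asymptotics.

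The main obstacle I expect is making the "tree-like schema" genuinely applicable: one has to verify that the fixed-point equation $y = \Phi(x,y)$ coming from the substitution really is of the smooth implicit-function type at its positive dominant singularity — i.e. that $\Phi$ is analytic with non-negative coefficients on the relevant bidisc (this is where $\rho < \min(\rho(\cd),\rho(\ci),\rho(\cL))$ is used), that the characteristic system $y = \Phi, \ 1 = \partial_y\Phi$ has a positive solution (so the singularity is of square-root type rather than the insertion-induced one), and that the aperiodicity condition from $\gcd(k-i,j-i)=1$ forces the singular exponent to be exactly $1/2$ (not a higher lattice-type obstruction). Setting up the correct unrooting identity for these decorated Cayley trees — keeping careful track of the edge-substitution term so that the $(1-x/\rho)^{3/2}$ term is correctly isolated and its coefficient is nonzero — is the other delicate bookkeeping point, but it is a finite computation once the singular expansion of $A'$ is in hand.
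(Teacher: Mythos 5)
Your proposal is correct in its broad architecture and matches the paper on parts (1) and (3): the square-root singularity of the rooted generating function via the smooth-implicit-function schema, the aperiodicity hypothesis being exactly what is needed to apply Theorem~VII.3 of Flajolet--Sedgewick, and a Markov-inequality tilting argument with $s = 1 \pm \delta$ to concentrate $Y_n$ around $a = -\rho'(1)/\rho$. The paper works with the bivariate equation $A_2(x,s) = s x D I\,e^{s x D (L-I)} e^{A_2(x,s)}$, which it recognises as exactly the Cayley tree functional equation $y = f(x,s)e^y$, and so it can invoke the Cayley tree function $R$ directly; your more generic ``$y = \Phi(x,y)$ with characteristic system'' framing is equivalent.

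Where you diverge from the paper is in part (2). You propose to obtain the $n^{-5/2}$ asymptotic for $|\ca_n|$ by an explicit dissymmetry-type unrooting identity $A = A' - \tfrac12(\cdots) + (\text{correction})$ and then reading off a $(1-x/\rho)^{3/2}$ singular expansion of $A$ directly. The paper instead avoids any dissymmetry identity. It observes the exact combinatorial relation $|\ca_n'| = \sum_{\alpha \in \ca_n} |V(T(\alpha))|$ (rooting is at tree nodes, and labelled trees have trivial automorphism group), which gives the sandwiching $a_n(k,l) \cdot k \le a_n'(k,l) \le a_n(k,l) \cdot l$ for objects with tree size in $[k,l]$. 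Feeding in the already-proved concentration of $Y_n$ around $an$ then yields $|\ca_n| = (1+o(1))(an)^{-1}|\ca_n'|$, and the $n^{-5/2}$ follows from the $n^{-3/2}$ of $A'$ with no further singularity analysis. This is cleaner for the decorated setting: deriving a correct dissymmetry identity for trees where leaves, internal nodes and edges carry distinct decorations requires tracking an edge-rooted auxiliary class and verifying that the $(1-x/\rho)^{1/2}$ terms cancel while the $(1-x/\rho)^{3/2}$ coefficient is nonzero, and you would additionally need to check that the resulting constant is consistent with $1/a$. You flag this as ``delicate bookkeeping,'' which is fair, but note the paper's probabilistic transfer sidesteps it entirely and also automatically produces the stated relation $|\ca_n| \sim (an)^{-1}|\ca_n'|$ rather than requiring you to reconcile two independently computed constants.

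One smaller point: the paper's notion of ``leaf'' in $\cR$ is ``degree at most one'' (not ``at most one child''), precisely so that the leaf/internal classification is root-independent and the counting identity $|\ca_n'| = \sum |V(T)|$ holds exactly. Your proposal does not explicitly engage with this, but it is the kind of detail that would matter if you set up the generating functions from scratch; the paper handles it by working with the Flajolet--Sedgewick bivariate Cayley tree function that counts leaves by degree.
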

%We note that a very slight modification of the proof also allows to treat the case $|V(T)| = 1$ as special and ``attach'' a graph from a different rooted class $\cd$ with $\rho(\cd) > \rho(\ca')$ at the root vertex in this case. 
%\medskip
%Before proving the theorem, we consider a related class of graphs.
To prove the theorem, we will need some preliminary results and a technical lemma. Let $\cd, \ci, \cL$ be as
in Theorem~\ref{thm.treeleaf}.
Let $\ct_1$ be the class of Cayley trees pointed at a leaf and containing at least two vertices.
Consider the class $\ca_1 = \ct_1(\cd, \ci, \cL)$ with the bivariate generating function $A_1(x,s)$ where
%of graphs $G$ in $\ct(\cd, \ci, \cL)$ pointed at a
%leaf node of $T(G)$ and such that $|V(T(G))| \ge 2$. 
the second variable $s$ counts the size of the underlying tree (which is the number of its nodes minus one). Then, writing $D=D(x)$, $I=I(x)$ and $L=L(x)$,
%Then the bivariate
%exponential
%generating function $A_1(x,s)$ satisfies
    \[
    A_1(x,s)  =  s x D L + s x D I \left(e^{A_1(x,s)} - 1 \right).
    \]
Consider additionally the class $\ca_2$ %$  = \ct_2(\cd, \ci, \cL)$
with specification
\[
  \ca_2 = \ca_1 - \cz \times \cd \times \cL + \cz \times \cd \times \ci,
\]
% i.e., %the same as $\ct_1(\cd,\ci,\cL)$,
% %except 
% in the case when the underlying tree $T$ has $|V(T)| = 2$,
% we attach an object from $\ci$ rather than from $\cL$ to the unique labelled node of $T$.
Alternatively, $\ca_2$ is the class $\ct_1'(\cd, \ci, \cL)$, where $\ct_1'$ is the same as $\ct_1$,
except that when we have a tree of size one (i.e., isomorphic to $K_2$), then its (unique) labelled vertex
$u$ is treated as an internal vertex and an object from $\ci$, rather than from $\cL$ is attached to it.

The bivariate generating function $A_2(x,s)$ of $\ca_2$  %for  constructions in %$\ct_2(\cd, \ci, \cL)$
%$\ca_2$ 
satisfies
\begin{equation}\label{eq.bivbiv}
    A_2(x,s) = s x D I e^{sx D (L-I)} e ^ {A_2(x,s)}. 
\end{equation}
%$0 < \phi < \frac {\pi} 2$, and 
%Given numbers $R $An open domain
%\[
%\Delta(\phi,R) = \left\{z | |z| < R, z \ne \rho, |arg(z - \rho)| > \phi \right\}.
%\]
%is called a \textit{$\Delta$-domain at $\rho > 0$}.
%if there are $R > \rho$ and $\phi \in (0; \frac \pi 2)$ such that
%A function $f$ is called $\Delta$-analytic, if it is analytic in some $\Delta$-domain.

Call a class $\ca$ \emph{aperiodic}, if there are positive integers $i, j, k$ such that $i < j < k$,
 $|\ca_i|, |\ca_j|, |\ca_k| > 0$ and $gcd(k-i, j-i) = 1$.  
%Recall that in this paper all classes are closed under isomorphism of the labels.
%For an object $\alpha \in \ct'(\cd, \ci, \cl)$ we
%and $x \in V(T) \cup E(T)$, let $obj(\alpha, x)$ denote the object associated with it.
\begin{lemma}\label{lem.aperiodic}
    Let $\cd, \ci, \cL$ be non-empty classes of labelled objects.
    If any of the classes $\ca = \ct(\cd, \ci, \cL), \ca_1, \ca_2$ is aperiodic,
    then all of them are.
\end{lemma}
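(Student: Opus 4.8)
The plan is to show that the three classes $\ca = \ct(\cd,\ci,\cL)$, $\ca_1 = \ct_1(\cd,\ci,\cL)$ and $\ca_2 = \ca_1 - \cz\times\cd\times\cL + \cz\times\cd\times\ci$ have the same set of sizes $n$ for which $|\cdot_n|>0$ \emph{modulo a bounded shift}, and more precisely that the differences of achievable sizes in each class generate the same subgroup of $\mathbb Z$, whence aperiodicity of one transfers to all. The key point is that each of $\ca_1$ and $\ca_2$ is built from $\ca$ by pointing and by swapping a single $\cL$-object for an $\ci$-object on a pendant vertex, operations which change the size only by a constant that I can control.

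First I would record the elementary fact that if a combinatorial class $\cC$ contains objects of sizes $i<j<k$ with $\gcd(k-i,j-i)=1$, then $\cC$ contains objects of \emph{all} sufficiently large sizes: indeed $\{\,$sums of copies of $(j-i)$ and $(k-i)\,\}$ eventually covers every residue and every large integer (the Chicken McNugget / Frobenius argument, together with the fact that the class is closed under ``disjoint union'' of trees rooted appropriately — more carefully, I would use that attaching extra leaves via $\cd$-edges increases the size by a fixed amount, so one can build arbitrarily large examples by repeatedly splicing in a small fixed gadget). So ``aperiodic'' is equivalent to ``$|\cC_n|>0$ for all sufficiently large $n$''. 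With this reformulation, the lemma reduces to showing: $|\ca_n|>0$ for all large $n$ $\iff$ $|\ca_{1,n}|>0$ for all large $n$ $\iff$ $|\ca_{2,n}|>0$ for all large $n$.

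Next I would compare the size-sets directly. Going from $\ca$ to $\ca_1$: an unrooted Cayley tree of size $\ge 2$ always has a leaf, so pointing at a leaf never changes the object, only distinguishes one; hence $|\ca_{1,n}|$ and $|\ca_n|$ are positive for exactly the same $n\ge 2$ (up to the trivial size-$0,1$ bookkeeping), since a leaf always carries an $\cL$-object and an internal node an $\ci$-object and both $\cL,\ci$ are non-empty so the substitutions can always be carried out. Going from $\ca_1$ to $\ca_2$: the only change is that a size-one underlying tree ($K_2$) gets an $\ci$-object instead of an $\cL$-object on its unique node, and all larger underlying trees are treated identically; so $|\ca_{2,n}|>0 \iff |\ca_{1,n}|>0$ whenever the underlying tree has $\ge 2$ nodes, and for the single exceptional tree one uses that $\cd,\ci,\cL$ are non-empty and have minimal sizes bounded by constants, so the exceptional sizes are confined to a bounded range and do not affect ``all large $n$''. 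Combining, $\ca$, $\ca_1$, $\ca_2$ have $|\cdot_n|>0$ for all large $n$ simultaneously, i.e.\ one is aperiodic iff all are.

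The main obstacle I anticipate is being careful with the low-order terms and with the precise definition of ``size'' in each class: $\ca_1,\ca_2$ count the labelled vertices of the assembled graph (pointed vertices and the poles of networks being unlabelled), while the underlying-tree size is tracked by $s$, so I must make sure that swapping $\cL\leftrightarrow\ci$ on one vertex and pointing a leaf genuinely shift the total size by an amount lying in a fixed finite set determined only by $\min$-sizes in $\cd,\ci,\cL$ — and hence are invisible to the ``eventually all $n$'' criterion. Once that bookkeeping is pinned down, the argument is just the Frobenius/numerical-semigroup observation plus the three elementary size-set comparisons above.
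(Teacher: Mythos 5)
Your approach is genuinely different from the paper's, but it has gaps that I don't think are easily patched. The paper proves each of the three implications ($\ca\Rightarrow\ca_2$, $\ca_2\Rightarrow\ca_1$, $\ca_1\Rightarrow\ca$) \emph{directly}: given three objects $\alpha_1,\alpha_2,\alpha_3$ of sizes $i<j<k$ with $\gcd(k-i,j-i)=1$ in the source class, it constructs a single fixed auxiliary gadget $\alpha$ and merges it with each $\alpha_l$, producing three objects in the target class whose sizes are $i+|\alpha|, j+|\alpha|, k+|\alpha|$. Since a common shift preserves all pairwise differences, the gcd condition is inherited immediately — no reformulation is needed. The merging is done carefully by always identifying the gadget with a vertex of degree $\neq 1$, treating the 2-node-tree case separately, precisely to avoid changing the \emph{role} (leaf vs.\ internal) of the attachment point, because a role change would swap $\cL$ for $\ci$ and spoil the fixed-size accounting.

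Your step (i), reformulating aperiodicity as ``$|\cC_n|>0$ for all sufficiently large $n$,'' is not automatic and is not proven in your sketch. For a general combinatorial class the paper's definition of aperiodicity (three sizes with coprime gaps) does not imply eventually-all-$n$. For $\ct(\cd,\ci,\cL)$ it plausibly does hold, but proving it requires precisely the kind of careful splicing argument the paper carries out: you cannot ``splice in a gadget at an arbitrary node'' without worrying that the node's degree changes, which forces an $\cL\leftrightarrow\ci$ swap and a non-constant size change. So the reformulation route is not a shortcut; it secretly needs the same work as the paper's direct argument, plus an extra layer (Frobenius and interval-overlap bookkeeping).

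Your step (ii) also contains a concrete error. You assert that pointing at a leaf ``never changes the object, only distinguishes one,'' so $|\ca_{1,n}|$ and $|\ca_n|$ are positive for the same $n$. That is false in this setup: the paper's convention is that a pointed vertex is unlabelled and does not contribute to size, and moreover (reading off the functional equation $A_1 = sxDL + sxDI(e^{A_1}-1)$) the pointed leaf carries no $\cL$-object. Thus pointing at a leaf $v$ decreases the size by $1+|L_v|$, a quantity that is not fixed when $\cL$ has objects of more than one size. The size-sets of $\ca$ and $\ca_1$ genuinely differ. What \emph{is} true, and is exactly how the paper handles the $\ca_1\Rightarrow\ca$ implication, is the converse construction: label the pointed vertex of $\beta\in\ca_1$ and attach a minimal $\cL$-object, giving an object in $\ca$ of size $|\beta|+1+\min|\cL|$ — a fixed shift. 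You would need to formulate all your size comparisons as one-directional fixed-shift constructions of this type; at that point you have essentially reproduced the paper's proof.
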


\medskip

\begin{proof}
    Given $\alpha \in \ct'(\cd, \ci, \cL)$,
    %where each labelled vertex and each edge is associated an object from some set
    and $x \in V^*(T(\alpha)) \cup E(T(\alpha))$ we denote by $Obj_x(\alpha)$
    the object in $\cd \cup \ci \cup \cd$ associated with $x$. Here $V^*(T) \subseteq V(T)$ denotes
    the set of labelled vertices of $T$.

    \textit{1) Proof of $\ca$ aperiodic $\implies$ $\ca_2$ aperiodic.} 
    Consider an object $\alpha$ obtained from the path $P_2=uvw$ where $u$ and $v$ are poles,
    with associated objects $D_{uv}, D_{vw} \in \cd$, $I_v \in \ci$ such that
    the label sets of $D_{uv}, D_{vw}, I_v$ are pairwise disjoint, and disjoint from $u$.
    Let $\alpha_1, \alpha_2, \alpha_3 \in \ca$ be objects of sizes $i_1, i_2, i_3$ respectively, such that
    $\gcd(i_3-i_1, i_2-i_1) = 1$. Let $l \in \{1,2,3\}$; we can assume that the label set of $\alpha_l$ is disjoint
    from the label set of $\alpha$.  Construct a new object $\alpha_l'$ from $\alpha_l$ and $\alpha$
    as follows. If $|V(T(\alpha_l))| \ne 2$, then let $x$ be a vertex of $T$ with $d_T(v) \ne 1$. Merge
    $\alpha$ and $\alpha_l$ by identifying the vertex $u$ of $T(\alpha)$ with $x$.
    If $V(T(\alpha_l))$ has two vertices, say $a$ and $b$, then let $\alpha_l'$ be an object with underlying tree
    on edges $\{wv, va, vb\}$ by identifying $u$ and $a$, so that $Obj_{vb}(\alpha_l') = Obj_{ab}(\alpha)$ and other associations are inherited from
    $\alpha$ and $\alpha_l$, see Figure~\ref{fig.gcd}. 
    We have $|\alpha_l'| = |\alpha_l| + |\alpha|$, and $\alpha_l \in \ca_2$.
    Thus $\ca_2$ contains objects of sizes $i_l' = i_l + |\alpha|$ for $l = 1, 2, 3$ and $\gcd(i_3'-i_1', i_2'-i_1') = 
    \gcd(i_3 - i_2, i_2 - i_1) = 1$.

\begin{figure}
     \begin{center}
    \includegraphics[height=5cm]{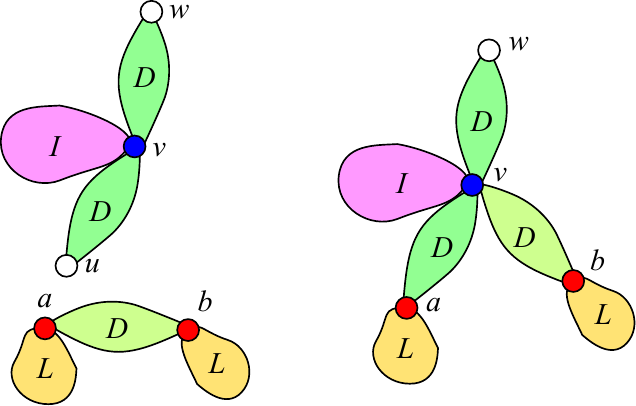}
    \end{center}
    \caption{Top left: the object $\alpha$, bottom left: the object $\alpha_l$ in the case $|T(\alpha_l)|=2$, 
    right: the object in $\ca_1$ of size $|\alpha| + |\alpha_l|$ obtained by merging $\alpha_l$ and $\alpha$.}
    \label{fig.gcd}
\end{figure}

    \textit{2) Proof of $\ca_2$ aperiodic $\implies$ $\ca_1$ aperiodic.} By definition, the only objects in the symmetric difference of $\ca_1$ and $\ca_2$
    are those, where the underlying tree has only one edge.

    Let $\alpha \in \ca_1$ be such that $|V(T(\alpha))| = 2$. $T(\alpha)$ contains one labelled and one 
    pointed vertex.
    Let $\alpha_1, \alpha_2, \alpha_3 \in \ca_2$ be objects of sizes $i_1, i_2, i_3$ respectively, such that
    $\gcd(i_3-i_1, i_2-i_1) = 1$. We may assume that the set of labels of $\alpha$ is disjoint from the
    set of labels of $\alpha_l$ for $l = 1,2,3$.
    From $\alpha_l$ we may obtain a new object as follows.
    Let $u$ be an internal vertex of $T(\alpha_l)$, if $|V(T(\alpha_l))| \ge 3$, otherwise,
    let $u$ be the unique labelled vertex of $T(\alpha_l)$.
    Merge the objects $\alpha$ and $\alpha_l$ by identifying the pointed vertex of $T(\alpha)$ with $u$,
    so that all the associated objects are inherited from the relevant tree. 
    In particular, associate with $u$ the object $Obj_u(\alpha_l) \in \ci$. Call
    the resulting structure $\alpha_l'$, and note that $|\alpha_l'| =  |\alpha_l| + |\alpha|$ and $\alpha_l' \in \ca_1$
    since $u$ is an internal vertex of $T(\alpha_l')$.
    Similarly as above, it follows that $\ca_1$ is aperiodic.
   % The object associated with $u$ is in the class $\ci$, and $u$ 
   % has degree $2$ in $T(\alpha_l')$, so $\alpha_1' \in \ca_1$.
    %identify the pointed vertex of $T(\alpha)$
    %Let $u$ be an arbitrary labelled vertex of $T(\alpha_l)$, 
    %such that the associated object $obj(\alpha_l, u)$ is in $\ci$. Define
    %a new tree $T' = T(\alpha_l')$ by attaching $T(\alpha)$ to $T(\alpha_l)$ at $u$. 
    %let $obj(\alpha_l', x) = obj(\alpha, x)$ for $x \in E(T(\alpha)) \cup (V(T(\alpha)) \setminus \{u\})$,
    %and $obj(\alpha_1', x) = obj(\alpha_l,x)$ for $x \in E(T(\alpha_l)) \cup V(T(\alpha_l))$.
        %and the
    %remaining edges or vertices inherit the associated object from the object, the tree of which they are in. 

 \textit{3) Proof of $\ca_1$ aperiodic $\implies$ $\ca$ aperiodic.} From any object $\alpha \in \ca_1$
 we may obtain an object in $\ca$ by labelling the pointed vertex $u$ of $\alpha$ and
 associating with $u$ an object in $\cL$ of some fixed size. Now the claim follows similarly as in the previous cases.
%
%
    %obvious, but difficult to write down.
\end{proof}

\medskip

\begin{lemma}\label{lem.samecr}
    Let $\cd, \ci, \cL$ be non-empty classes of labelled objects. For $\ca = \ct(\cd,\ci,\cL)$,
    $\rho(\ca) = \rho(\ca_1) = \rho(\ca_2)$.
\end{lemma}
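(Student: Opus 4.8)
The claim is that the three classes $\ca=\ct(\cd,\ci,\cL)$, $\ca_1=\ct_1(\cd,\ci,\cL)$ and $\ca_2$ all have the same radius of convergence. The plan is to compare the three bivariate (and univariate) generating functions directly via their defining functional equations, exploiting the fact that $\ca$ is ``supercritical'' relative to the component classes $\cd,\ci,\cL$, i.e.\ $\rho(\ca)<\min(\rho(\cd),\rho(\ci),\rho(\cL))$. First I would observe that $\ca_1$ and $\ca_2$ differ only in the treatment of the unique labelled vertex of the two-vertex tree $K_2$; since $\cz\times\cd\times\cL$ and $\cz\times\cd\times\ci$ are analytic on a disc of radius $\min(\rho(\cd),\rho(\cL))\ge \rho(\cd)>\rho(\ca)$ (this strict inequality will follow from the supercriticality hypothesis, once we have $\rho(\ca_1)\le\rho(\ca)$), subtracting one finite-radius correction term and adding another cannot move the dominant singularity of $A_2$ away from that of $A_1$ unless that singularity is already at radius $\ge\rho(\cd)$. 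So the heart of the argument is to show $\rho(\ca_1)=\rho(\ca)$, and the equality $\rho(\ca_1)=\rho(\ca_2)$ will then be a routine consequence.

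For $\rho(\ca_1)=\rho(\ca)$, I would argue both inequalities. For $\rho(\ca)\le\rho(\ca_1)$: every object in $\ca_1$ is, after labelling its pointed vertex and attaching a fixed single-vertex object from $\cL$ (or $\ci$), an object in $\ca$ of bounded-larger size, and conversely one can ``point'' an object of $\ca$ at a leaf to land in $\ca_1$ (up to bounded combinatorial factors). More cleanly, I would use the functional relations: $A(x)=x\,D(x)\,L(x)+x\,D(x)\,I(x)\bigl(e^{A_1(x,1)}-1\bigr)\cdot(\text{rooting factor})$, or simply the pointing identity relating $A'(x)$ and $A_1$ in the style of the dissymmetry/pointing transfer used elsewhere in the paper. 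The key analytic point is that $A_1(x)=A_1(x,1)$ satisfies $A_1 = xDL + xDI(e^{A_1}-1)$, a smooth implicit equation $A_1=\Phi(x,A_1)$ with $\Phi$ analytic wherever $D,I,L$ are, i.e.\ on $|x|<\min(\rho(\cd),\rho(\ci),\rho(\cL))$. By Pringsheim and the standard theory of such equations, either $A_1$ continues analytically up to $\min(\rho(\cd),\rho(\ci),\rho(\cL))$ (in which case its radius equals that, contradicting nothing yet but then forcing $\rho(\ca)$ to equal the same value by the inequalities), or the singularity is of ``branch-point'' type arising from $\partial_y\Phi(x,A_1(x))=1$; in the latter case the characteristic system for $A_1$ and for $A$ coincides, giving the same positive critical $x$. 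I would make this precise by noting $A$ itself satisfies $A = xDL + xDI(e^{A_1}-1)$ composed with the unrooting (set-of-rooted-trees) operation, so that a singularity of $A$ must be a singularity of $A_1$ and vice versa within $|x|<\min(\rho(\cd),\rho(\ci),\rho(\cL))$.

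The step I expect to be the main obstacle is handling the degenerate possibility that $D,I,L$ themselves contribute the dominant singularity — i.e.\ ruling out that $\rho(\ca_1)=\min(\rho(\cd),\rho(\ci),\rho(\cL))$ strictly above $\rho(\ca)$. The hypothesis $\rho(\ca)<\min(\rho(\cd),\rho(\ci),\rho(\cL))$ of Theorem~\ref{thm.treeleaf} is exactly what is needed, but Lemma~\ref{lem.samecr} as stated does not explicitly assume it; so I would either (i) note that $\rho(\ca_1)\le\rho(\ca)$ holds unconditionally by the size-comparison embedding above, hence $\rho(\ca_1)\le\rho(\ca)<\min(\rho(\cd),\rho(\ci),\rho(\cL))$ is forced whenever we are in the supercritical regime that this lemma is used in, or (ii) observe that in the non-supercritical case the common value $\min(\rho(\cd),\rho(\ci),\rho(\cL))$ is still attained by all three, since each of $A,A_1,A_2$ dominates a class built from $\cd$ (or $\ci$, $\cL$) via a simple substitution and is dominated by a product of such, pinning all three radii to the same minimum. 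Either way, once $\rho(\ca_1)\le\rho(\ca)$ and $\rho(\ca)\le\rho(\ca_1)$ are both established, and the finite-radius correction argument gives $\rho(\ca_2)=\rho(\ca_1)$, the lemma follows.
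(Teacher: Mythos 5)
The paper's proof is purely combinatorial and much shorter than what you propose: it reuses the explicit constructions from Lemma~\ref{lem.aperiodic} to build, for each cyclic pair $(\cc',\cc'')\in\{(\ca,\ca_2),(\ca_2,\ca_1),(\ca_1,\ca)\}$, an injection from $\cc'_n$ into $\cc''_{n+s}$ for some fixed positive integer $s$ depending only on the pair. This yields $|\cc''_{n+s}|\ge|\cc'_n|$, hence an inequality between the three radii that cycles around, forcing all three to be equal. No supercriticality hypothesis, no functional equations, no implicit function schema.

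Your proposal takes a genuinely different, analytic route through the defining functional equations of $A$, $A_1$, $A_2$, Pringsheim, and the smooth implicit function theory. The main gap is the one you yourself flag: that machinery is only available in the supercritical regime $\rho(\ca)<\min(\rho(\cd),\rho(\ci),\rho(\cL))$, which Lemma~\ref{lem.samecr} does not assume. Your workaround (ii) — that in the non-supercritical case all three radii are ``pinned to the same minimum'' — is asserted, not proved; it would require showing that each of $A$, $A_1$, $A_2$ has radius of convergence exactly $\min(\rho(\cd),\rho(\ci),\rho(\cL))$, which is not automatic without more work (one direction needs domination by a convergent product, the other needs a class built from $\cd,\ci,\cL$ that realises the minimum, and both still have to be established for each of the three classes). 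Your workaround (i) just restricts to the supercritical case, which is not a proof of the lemma as stated. A smaller point: the embedding you sketch for $\rho(\ca_1)\le\rho(\ca)$ attaches ``a fixed single-vertex object from $\cL$ (or $\ci$)'' to the pointed vertex, but the classes $\cL$ and $\ci$ are only assumed non-empty, so a size-$1$ object may not exist; you need to attach an object of some fixed size that the class actually contains. The paper's combinatorial shift argument handles this cleanly and is the more robust route; if you want to salvage the analytic approach, you should first establish the cycle of shifted-size injections unconditionally (this is essentially what the paper does) and then observe that these inequalities alone close the argument, making the analytic layer unnecessary for this particular lemma.
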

\begin{proof}
    Constructions as in Lemma~\ref{lem.aperiodic} show that for 
    \[
      (\cc', \cc'') \in \{(\ca, \ca_2), (\ca_2, \ca_1), (\ca_1,\ca)\},
    \]
    there is a positive integer $s$, such that
    from any object in $\cc_n'$ we can construct a unique object in $\cc_{n+s}''$. So
    $|\cc_{n+s}''| \ge |\cc_n'|$, $\rho(\cc') \le \rho(\cc'')$ and the claim follows.
\end{proof}

\medskip

%%% 
%%% %
%%% %%The behaviour of $R(f(x,s))$ at its singularities is given by 
%%% %But first
%%% %%to apply this lemma
%%% %we have to check that the ``non-periodicity'' condition holds for the class $\ca_2 = \ct_2(\cd, \ci, \cL)$.
%%% %%\m{$\ca_2???$}
%%% %
%%%    Consider a structure $H$ formed from the path $P_2$,
%%%     by replacing its two edges with arbitrary networks $D_1, D_2 \in \cd$, attaching
%%%     a graph $I_1 \in \ci$ at the internal vertex and making the endpoints of $P_2$
%%%     poles (i.e. removing their label and weight). Let $h$ be the size of $H$,
%%%     that is $h = |D_1| + |D_2| + |I_1| + 1$. From any object $G \in \ct(\cd, \ci, \cL)$
%%%     of size $n$ we can obtain an object in $\ca_2$ of size $n + h$:
%%%     if $|T(G)| \ge 3$ we can attach $H$
%%%     to an internal vertex $v$ of $T(G)$ (of course we can always choose the labels, so that $V(H) \cap V(G) = \{v\}$).
%%%     Similarly, if $|T(G)| = 1$, we may attach $H$ to the vertex $r \in V(T(G))$. Finally, if $|T(G)| = 2$ we
%%%     can transform $G$ as shown in Figure~\ref{fig.gcd} to get an object in $\ca_2$ of size $|G| + h$.
%%%     
%%%     By the assumption of the theorem,
%%%     there are objects of sizes $i, j, k$ in $\ct(\cd, \ci, \cL)$, 
%%%     such that $gcd(k-i, j-i) = 1$. Thus there are objects in $\ca_2$ of sizes $i'=i+h$, $j'=j+h$ and $k'=k+h$,
%%%     so that $gcd(k'-i', j'-i') = gcd(k-i, j-i) = 1$.
%%%     %The last argument verifies one of the conditions of 
%%% 
%%% 

For the function $f = f(x,s)$ below we denote by $f_x$ and $f_s$ its
partial derivatives with respect to $x$ and $s$. 
%Given a domain $\Delta' \subseteq \mathbb C$, its limit point $x_0$
%and complex functions $f, g$, we write $f(x) = O(g(x))$ as $x \to x_0$, $x \in \Delta'$,
%if there is a constant $C > 0$, such that for any $x \to x_0$, $x \in \Delta'$ 

\begin{lemma}\label{lem.treeleaf}
    Let $\cd, \ci, \cL$ be non-empty classes of labelled objects. % of pointed graphs. 
    %Let $\cd$ be a non-empty class of networks,
    Let $\ca_2$ be the class with  the bivariate generating function 
    $A_2(x,s)$ given in (\ref{eq.bivbiv}).
    Suppose that % the radius of convergence of $A_2(x,1)$ is smaller than
    $\rho(\ca_2) < m = \min(\rho(\cd), \rho(\ci), \rho(\cL))$ and $\ca_2$ is aperiodic. %. Suppose further,
   % that there are positive integers $i, j, k$ such that $gcd(k-j, k-i) = 1$
    %and $\ca_2$ contains at least one object of each of the sizes $i$, $j$ and $k$ (the non-periodicity condition).

    %Then for any $\eps > 0$ there is
    There is $\delta > 0$ such that the following holds. For any
    fixed $s \in [1-\delta; 1+\delta]$ we have
    \begin{equation}\label{eq.A2R}
    A_2(x,s) = R(f(x,s)),
\end{equation}
    where $f(x,s) = s x D(x) I(x) e^{ s x D(x)(L(x)-I(x))}$ and $R$ is the Cayley tree function.
    The function $A_2(x,s)$ has % only one singularity $\rho(s)$
%    on the boundary of its disk of convergence and 
    a dominant singularity
    at $\rho(s)$, which is the
    %  (unique) % smallest 
    %why did I write unique??
    smallest
    number in $(0, m)$ such that
    \[
    f(\rho(s),s) = e^{-1}.
    \]
    Let $\rho = \rho(1)$. 
    We have  $f_s(\rho,1) > 0$,  ${f_x(\rho,1)} > 0$ and $0 < \rho D(\rho) I(\rho) < 1$,
    $\rho(t)$ is continuously differentiable for $t \in [1-\delta, 1+\delta]$ and $\rho'(1) = - f_s(\rho,1)/f_x(\rho,1)$.
%    For $\tilde{\rho}(s) = \rho - \frac {(s-1) f_s(\rho,1)}  {f_x(\rho,1)}$ we have 
%    \begin{equation}\label{eq.rho_s}
%        \left| \rho(s) - \tilde{\rho}(s) \right| \le  |s-1| \rho \eps/2.
%   \end{equation}
    Furthermore $A_2(x,s)$ is analytic in a $\Delta$-domain $\Delta'$ at $\rho(s)$ and for 
    $x \to \rho$, $x \in \Delta'$ we have %, uniformly in $s \in [1-\delta, 1+\delta]$
    \begin{equation}\label{eq.A_2}
     A_2(x, s) = 1 - c(s) (1 - x/\rho(s))^{1/2} + O((1 - x/\rho(s)))
 \end{equation}
    where $c(s) =  (2 e \rho(s) f_x(\rho(s), s)) ^ {1/2}$ is positive. 
    %The constant in $O()$ 
    %does not depend on~$s$.
\end{lemma}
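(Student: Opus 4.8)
The plan is to treat this as a standard application of the smooth implicit-function/Cayley-tree singularity machinery of Flajolet--Sedgewick, applied to the functional equation (\ref{eq.bivbiv}). First I would rewrite (\ref{eq.bivbiv}) in the form $A_2 = f(x,s)\,e^{A_2}$ with $f(x,s) = sxD(x)I(x)\,e^{sxD(x)(L(x)-I(x))}$, and observe that $f$ is analytic and has non-negative Taylor coefficients in $x$ near $0$ (since $D,I,L$ are power series with non-negative coefficients and $f$ is built from them by products, exponentials and multiplication by $x$), with $f(0,s)=0$ and $f_x(0,s) = sD(0)I(0) > 0$ (all of $\cd,\ci,\cL$ are non-empty, so $D(0),I(0),L(0)\ge 1$). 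Hence for each fixed $s$ near $1$, $f(\cdot,s)$ has an analytic inverse at $0$, and composing with the defining identity $R(u)=ue^{R(u)}$ of the Cayley tree function (Lemma~\ref{lem.cayley_tree}), exactly as in the proof of Lemma~\ref{lem.eval_A_R}, gives $A_2(x,s)=R(f(x,s))$ first on a small disc and then, by the Identity Principle, throughout the disc of convergence of $R(f(x,s))$. This is equation (\ref{eq.A2R}).

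Next I would locate the dominant singularity. Since $A_2(x,1)$ has non-negative coefficients, by Pringsheim it has a positive real dominant singularity $\rho=\rho(1)$; because $R$ is singular only at $e^{-1}$ and $f(\cdot,s)$ is analytic on $|x|<m$ (all component functions converge past $m$), a singularity of $A_2(x,s)=R(f(x,s))$ in $(0,m)$ can only occur where $f(x,s)=e^{-1}$. One checks, as in Lemma~\ref{lem.A_RG}, that $f(x,1)$ is (strictly) increasing and continuous on $(0,m)$ with $f(0,1)=0$, so there is a unique such crossing point; monotonicity plus $\rho(\ca_2)<m$ forces this crossing to be exactly $\rho$, and $\rho(s)$ is defined analogously as the smallest solution in $(0,m)$ of $f(\rho(s),s)=e^{-1}$. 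Aperiodicity of $\ca_2$ guarantees, via the Daffodil/supercritical-sequence lemma of \cite{fs09}, that $f(\rho,1)=e^{-1}$ is the \emph{only} singularity of $A_2(\cdot,1)$ on the circle $|x|=\rho$, so $\rho$ is the unique dominant singularity.

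Then I would establish the quantitative claims. Strict positivity of $f_x(\rho,1)$ and $f_s(\rho,1)$ follows because $f$ has non-negative, not-all-zero Taylor coefficients in $x$ (resp. in $s$: the exponent $sxD(L-I)$ may not have sign-definite coefficients, but the prefactor $sxDI$ ensures $f_s$ is positive at a positive real point — I would argue directly from $f(x,s)=e^{-1}>0$ and $f_s = f\cdot\partial_s\log f = f\cdot\big(\tfrac1s + xD(L-I)\big)$, noting the sign may need care, which I flag as the delicate point below). The inequality $0<\rho D(\rho)I(\rho)<1$: positivity is clear, and the upper bound follows since $f(\rho,1)=\rho D(\rho)I(\rho)\,e^{\rho D(\rho)(L(\rho)-I(\rho))}=e^{-1}<1$ would give it provided the exponential factor is $\ge e^{-1}$... again a sign issue. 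Given $f_x(\rho,1)\ne 0$, the implicit function theorem applied to $f(\rho(s),s)=e^{-1}$ yields that $\rho(s)$ is continuously differentiable near $s=1$ with $\rho'(1)=-f_s(\rho,1)/f_x(\rho,1)$. Finally, for the singular expansion: writing $u=f(x,s)$ and using the square-root singularity $R(u)=1-\sqrt2\,(1-eu)^{1/2}+O(1-eu)$ from (\ref{eq.Rsing}), together with the local expansion $1-ef(x,s) = e\rho(s) f_x(\rho(s),s)\,(1-x/\rho(s)) + O((1-x/\rho(s))^2)$ obtained by Taylor-expanding $f(\cdot,s)$ at $\rho(s)$ (valid since $f$ is analytic there and $f_x(\rho(s),s)>0$), substitution gives $A_2(x,s) = 1 - \sqrt2\,\big(e\rho(s)f_x(\rho(s),s)\big)^{1/2}(1-x/\rho(s))^{1/2} + O(1-x/\rho(s))$, i.e. (\ref{eq.A_2}) with $c(s)=(2e\rho(s)f_x(\rho(s),s))^{1/2}>0$; analytic continuation of $R$, hence of $A_2$, to a $\Delta$-domain at $\rho(s)$ is inherited from Lemma~\ref{lem.cayley_tree} and analyticity of $f$.

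The main obstacle I anticipate is not the analytic-combinatorics skeleton — that is entirely routine given Lemmas~\ref{lem.cayley_tree} and \ref{lem.eval_A_R} — but rather the sign/monotonicity bookkeeping: verifying that $f(x,1)$ is genuinely increasing on $(0,m)$, that $f_s(\rho,1)>0$, and that $\rho D(\rho)I(\rho)<1$, all of which hinge on controlling the exponent $sxD(x)(L(x)-I(x))$ whose coefficients need not be non-negative when $I$ dominates $L$. I would handle this by rewriting $f_s/f$ and $f_x/f$ explicitly and using $f(\rho,1)=e^{-1}$ as the anchor: from $A_2=R(f)$ and $R(e^{-1})=1$ we get $\partial_x A_2 = R'(f)f_x$ with $R'(u)=R(u)/(u(1-R(u)))\to+\infty$ as $u\to e^{-1}$, so the finiteness of the relevant derivatives at interior points combined with the known value $A_2(\rho(\ca_2))<\infty$ being violated past $\rho$ pins down the signs; alternatively one appeals to the supercriticality hypothesis $\rho(\ca_2)<m$ directly, which is precisely what rules out the pathological case where $f$ fails to reach $e^{-1}$ inside $(0,m)$. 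Once these positivity facts are in hand, everything else is a transcription of the singularity-analysis template.
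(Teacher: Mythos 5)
Your skeleton is the right one and matches the paper's: rewrite (\ref{eq.bivbiv}) as $A_2 = f(x,s)e^{A_2}$, recognize $A_2(x,s) = R(f(x,s))$ via the Cayley-tree equation and the identity principle, locate the dominant singularity at the first solution of $f(\cdot,s)=e^{-1}$, and transfer the square-root singularity of $R$. The aperiodicity step and the use of the implicit function theorem for $\rho(s)$ are also correct. However, there is a genuine gap, and you have mostly identified it yourself.

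The claim at the start of your argument that $f$ ``has non-negative Taylor coefficients in $x$ near $0$'' is false, and this is not a minor slip: the paper explicitly remarks that ``the function $f(x,s)$ (and $F^{[s]}$) can have negative coefficients, therefore we will work with the function $A_1(x,s)$.'' The culprit is the factor $e^{sxD(L-I)}$, since $L-I$ need not have a sign. Your later paragraph correctly flags the resulting sign/monotonicity difficulties, but the resolution you sketch (appeal to $R'(u)\to\infty$, finiteness of derivatives, supercriticality) does not actually yield the needed inequalities. What is missing is the key observation in the paper's proof: show first that $\rho D(\rho)I(\rho) < 1$ by contradiction, using the factorization $f(x,1) = h\bigl(xD(x)I(x)\bigr)\,e^{xD(x)L(x)}$ with $h(z)=ze^{-z}$. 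If $x_0$ is the first point with $x_0D(x_0)I(x_0)=1$, then $f$ is the product of two increasing functions on $(0,x_0)$ while $f(x_0,1)=e^{-1}e^{L(x_0)/I(x_0)}>e^{-1}$, forcing $\rho<x_0$. With $\rho D(\rho)I(\rho)<1$ established, $f(x,1)$ is increasing past $\rho$ and $f_x(\rho,1)>0$ follows; and the explicit formula $f_s(x,s)=xDIe^{sxD(L-I)}\bigl(1+sxDL-sxDI\bigr)$ shows $f_s(\rho,1)>0$ precisely because $1-\rho DI>0$. Your alternative computation $f_s = f\cdot\bigl(\tfrac1s + xD(L-I)\bigr)$ is algebraically equivalent but does not make positivity visible without this anchor. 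A second, smaller point: for the singularity-analysis step the paper does not work with $f$ or $F^{[s]}(z,w)=f(z,s)e^w-w$ directly (which may have negative coefficients) but with $G^{[s]}(x,w) = sxDL + sxDI(e^w-1)$ from the equation for $A_1$, which has non-negative coefficients; this is what lets the smooth implicit function schema and the Meir--Moon/Flajolet--Sedgewick theorem be invoked cleanly. Your proposal would need to make that substitution as well to be rigorous.
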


%Let us add that it is not difficult to prove 
It is not difficult to modify the proof and show that $O()$ holds uniformly for some $\delta > 0$ and
$s \in [1-\delta, 1+\delta]$.

\medskip

\begin{proof}
    We will write, for shortness, $D = D(x)$, $I = I(x)$ and 
    $L = L(x)$. % denote the respective exponential generating functions.

    Fix $s > 0$. We have $f(0,s) = 0$ and since $m > 0$,
    $f(x,s)$ is analytic at $0$. Define $F^{[s]}(z,w) = f(z,s)e^w - w$.
    Then, see (\ref{eq.bivbiv}), the points $(x, A_2(x,s))$ are solutions of $F^{[s]}(x, y) = 0$.
    %Furthermore, 
    We have  $R(f(0,s)) = 0$ and for $x$ in a neighbourhood of $0$, see Section~\ref{subsec.A_R},
    \[
    R(f(x,s)) = f(x,s) e^{R(f(x,s))},
    \]
    so $(x,R(f(x,s)))$ are also solutions of $F^{[s]}(x,y)  = 0$.
    Since the derivative of $F^{[s]}$ with respect to $w$ satisfies
    $F^{[s]}_w(0,0) = -1 \ne 0$ and $F^{[s]}(0,0) = 0$, (\ref{eq.A2R}) and the fact that $\rho(\ca_2) > 0$ follow by the Analytic 
    Implicit Function Theorem (Theorem~B.4 %B.16
    of \cite{fs09}) and the Identity principle.
    %that
    %$A_2(x,s) = R(f(x,s))$, so (\ref{eq.}.
    
    To prove the rest of the lemma we will apply
   % the theorem of Meir and Moon \cite{meirmoon},
    %presented as the
    the ``smooth implicit function schema'' %by Flajolet and Sedgewick
    and a theorem of Meir and Moon \cite{fs09, meirmoon}. 
    The function $f(x,s)$ (and $F^{[s]}$) can have negative coefficients, 
    therefore we will work with the function $A_1(x,s)$, which
    satisfies
    \[
    A_1(x,s) = G^{[s]}(x, A_1(x,s)) \quad\mbox{where}\quad G^{[s]}(x,w) = s x  D L  + s x D I (e^w-1).
    \]
    (Alternatively, one could apply Theorem~2 of \cite{meirmoon} directly to~$F^{[s]}$.)

    %Let us first show some properties of $f$. Notice that since $A_1(x,1) = A_2(x,1) + x D (I - L)$ and 
    %the convergence radius of $x D (I - L)$ is at least $m > \rho(\ca_2)$, we have $\rho(\ca_1) = \rho(\ca_2)$.

    Suppose $f(x,1) < e^{-1}$ for all $x \in (0, m)$.
    Then
    for any $x \in  (0, m)$,  $A_2$ %$R(f(x,1))$ %(and $A'(x,1)$)
    is analytic at $x$.
    Since by the Pringsheim's theorem (see \cite{fs09}), $A_2(x,1)$ has a dominant singularity in $(0, \infty)$,
    we conclude that $\rho(\ca_2) \ge m$, a contradiction. %$\rho = \rho(\ca_2) = \rho(\ca_1) \ge m$, which
    %contradicts the assumption $\rho(\ca_2) < m$ of the lemma. 
    By continuity of $f(x,1)$, there exists a smallest positive $\rho \in (0,m)$, such that
    %at least one $x \in (0, m)$ such that
    $f(\rho,1) = e^{-1}$.
    %Let $\rho = \rho(1)$ be the smallest such $x$. 

An important observation is that $\rho D(\rho) I(\rho) < 1$. Suppose, this is false.
$x D(x) I(x)$ continuously increases for $x \in (0,m)$ (it counts a non-empty combinatorial class),
so there is a unique positive $x_0  \in (0, \rho]$ such that $x_0 D(x_0) I(x_0) = 1$.
Since the function $h(z) = z e^{-z}$ is increasing for $z \in [0, 1)$ and 
$e^{x L(x) I(x)}$ is increasing for $x \in (0,m)$, we have that 
$f(x,1) = h(x D(x) I(x)) e^{x L(x) I(x)}$ is increasing for $x \in (0, x_0)$. 
But
\[
f(x_0,1) = e^{-1} e^{x_0 D(x_0) L(x_0)} = e^{-1} e^{L(x_0)/I(x_0)} > e^{-1},
\]
so $\rho < x_0$, a contradiction.
So $\rho D(\rho) I(\rho) < 1$, $\rho < m$, and we can further conclude
that $f(x,1)$ is continuously increasing
for $x \in (0; \rho + \eps_1)$ for some $\eps_1 > 0$.
%The fact that $\rho D(\rho) I(\rho) < 1$ also 
This implies that % $f_x(x,s) = \frac {d f(x,s)} {dx}$ satisfies
$f_x(\rho,1) > 0$. Furthermore, for $|x| < m$
\[
f_s(x,s) = x D I e^{s x D(L-I)} (s x D L + 1 - s x D I),
\]
and so $f_s(\rho, 1) > 0$. 

%%%Since $f(x,s)$ is analytic at $(\rho, 1)$, the Taylor
%%%    approximation gives that %of $f(x,s)$ at $(\rho, 1)$ we have that 
%%%    there is $\delta > 0$ such that for any $s \in (1-\delta, 1+\delta)$, 
%%%    %$\rho(s)$ satisfies 
%%%    $\rho(s) I(\rho(s)) D(\rho(s)) < 1$,
%%%    $f(x,s)$ is increasing for $x \in (0; \rho(s)]$,
%%%    $f_x(x,s) > 0$, $\rho(s) < m$ and
%%%    (\ref{eq.rho_s}) holds.

%Analytic bivariate: Appendix B.8, p 767 (783) of Analytic Combinatorics

Now consider a function $\tilde{F}(x,s) = f(x,s) - e^{-1}$, as a real function. Since
$\tilde{F}(\rho, 1) = 0$, $\tilde{F}_x(\rho,1) = f_x(\rho,1) > 0$
and $\tilde{F}_s(\rho,1) = f_s(\rho,1) > 0$, by the Implicit Function
Theorem (see, e.g., \cite{rudin}, Theorem~9.28), there is $\delta_1 > 0$
and a function $\rho(t): \mathbb{R} \to \mathbb{R}$, such that
for $t \in [1-\delta_1, 1+ \delta_1]$, $\rho(1) = \rho$, $\rho(t)$ is continuously differentiable,
$\tilde{F}(\rho(t), t) = 0$,
$\rho'(1) = - \frac {\tilde{F}_s(\rho,1)} {\tilde{F}_x(\rho,1)} = - \frac {f_s(\rho,1)} {f_x(\rho,1)}$,
and $\{(t,\rho(t)): t \in (1-\delta_1, 1+\delta_1)\}$ contains all the solutions of $\tilde{F}(x,t) = 0$
in the region $[\rho - \delta_1, \rho+\delta_1] \times [1-\delta_1, 1+\delta_1]$.

Since $\rho < m$, $\rho I(\rho) D(\rho) < 1$,
$\rho(t)$ is continuous at $t=1$ and $x I(x) D(x)$ is analytic at $x = \rho$, we see that
there is $\delta \in (0, \delta_1)$, such that for $t \in [1-\delta, 1+\delta]$,
$s \rho(t) I(\rho(t)) D(\rho(t)) < 1$ and $\rho(t) < m$. % and (\ref{eq.rho_s}) holds.
%Since $\rho(s) I(\rho(s)) D(\rho(s)) < 1$ and $\rho(s) < m$ 
Now, since it is a product of two continuously increasing functions, 
$f(x, t) = h(t x D(x) I(x)) e^{t x D(x) L(x)}$ increases for $x \in (0, \rho(t))$, so
$\rho(t)$ is the smallest positive solution of $f(x,t) = e^{-1}$.

%%uniform%% Lastly, $f(x,s)$ is analytic for $|x| < m$ when $s \in \mathbb{C}$ is fixed,
%%uniform%% and it is analytic at any $s$ when $|x| < m$ is fixed (i.e., it is bivariate
%%uniform%% analytic at $(\rho,1)$, see Appendix B.4. of \cite{fs09}). Therefore there is $\delta \in (0, \delta_2)$,
%%uniform%% such that  for $s \in [1-\delta, 1+\delta]$
%%uniform%% \begin{align}
%%uniform%%     &|f_{xx}(\rho(s), s)| < 2 |f_{xx}(\rho, 1)|,\quad f_x(\rho(s), s) \in (0.5 {f_x(\rho,1)}, 2 f_x(\rho,1)) \nonumber
%%uniform%%     \\
%%uniform%%     &\mbox{and } \rho(s) \in (0.5 \rho, 2 \rho). \label{eq.cplxsmoothness}
%%uniform%% \end{align}

Assume $s \in [1-\delta, 1+\delta]$.
    Define $\tau(s) = 1 + s \rho(s) D(\rho(s)) \left(L(\rho(s)) -  I(\rho(s))\right)$. We claim that
    the function $G^{[s]}$ satisfies the ``smooth implicit function schema'' (Definition VII.4 of \cite{fs09}).
    Indeed, $\rho(s) < m$, $\tau(s) < \infty$, the condition ($I_1$) is satisfied, since $G$ is (bivariate) analytic for $|x| < m$ and $|w| < \infty$.
    The condition ($I_2$) follows since $G^{[s]}(0,0) = G_w^{[s]}(0,0) = 0$, and
    for any positive integer $m$,  $[w^m] G^{[s]}(x,w) = (m!)^{-1} s x D(x) I(x)$ has non-negative coefficients,
    not all zero, since $\cd, \ci$ are non-empty. %\m{o kodel as nepakeiciu, o prisegu?}
    It remains to check the condition ($I_3$). 
    \[
    G^{[s]}(\rho(s), \tau(s)) = \tau(s)\quad\mbox{and}\quad G_w^{[s]}(\rho(s), \tau(s)) = 1.
    \]
    Both identities follow after a simple calculation using the definition of $\tau(s)$
    and the fact that $f(\rho(s), s) = e^{-1}$.
    
    %Hence, for any $s\in (1-\delta, 1+\delta)$,
    %$G^{[s]}$ satisfies 
    %the ``smooth implicit function schema''. 
    Furthermore, by Lemma~\ref{lem.aperiodic},
    there are positive integers $i_1, i_2, i_3$, such that $|\ca_{1, i_l}| > 0$,
    $\gcd(i_3-i_1, i_2-i_1)=1$ and,
    denoting $\ca_{1, n, m}$ the set of objects $\alpha \in \ca_1$ with $|\alpha| = n$ and
    $|V(\alpha)| = m$,
    \[
    [x^{i_l}] A_1(x,s) = [x^{i_l}] (n!)^{-1} \sum_{m \ge 1} |\ca_{1, i_l,m}| s^m > 0
    \]
    for $l = 1,2,3$.
    Hence we can apply Theorem~VII.3 of \cite{fs09}, which yields that $\rho(s)$ is the unique dominant singularity %on
    of $A_1(x,s)$
    and there is a $\Delta$-domain $\Delta$ at $\rho(s)$, such that for $x \to \rho_s$, $x \in \Delta$ 
    \[
    A_1(x, s) = \tau(s) - c(s) (1 - z/\rho(s))^{1/2} + O( (1 - x/\rho(s)))
    \]
    where
    \[
    c(s) = \left( \frac {2 \rho(s) G^{[s]}_x(\rho(s),\tau(s))} {G^{[s]}_{ww}(\rho(s), \tau(s))} \right)^{1/2} = (2 e \rho(s) f_x(\rho(s),s))^{1/2}.
    \]
    The last equality follows using $f(\rho(s), s) = e^{-1}$, $G^{[s]}_{ww}(\rho(s), \tau(s)) = 1$ and comparing
    $G^{[s]}_x(\rho(s), \tau(s))$ and $f_x(\rho(s),s)$ term by term.

    %Lemma IV.2: analytic inversion: if derivative non-zero, analytic inverse egsists
    %Lemma VI.3: singular inversion: subordinate to the next, but is not implied by it.
    %Lemma VI.6: singular inversion: if derivative zero, square or higher order root singularity and expansion given
    %Smooth implicit function schema: page 467

    Finally, let us show that there is a $\Delta$-domain at $\rho(s)$, such that $y(x) = A_1(x,s)$ is analytic at each $x \in \Delta'$ . By Lemma~VII.3 of \cite{fs09}, $y$ is analytic in a region $D_0 = \{z \in {\mathbb C}: |z - \rho(s)| < r, |\arg(z) - \rho(s)| > \theta\}$ for some
    $r > 0$ and $0 < \theta < \frac \pi 2$. By Note VII.17 of \cite{fs09} $y$ is analytic at
    any $\xi$ with $|\xi| = \rho(s)$ and $\xi \ne \rho(s)$, i.e., there exists
    an open ball $B_\xi$ centered at $\xi$ and an analytic continuation of $y$ in $B_\xi$. 
    %By the Analytic Inversion Lemma, see i.e., Lemma~VI.2
    %of \cite{fs09}, there is an analytic continuation of $A_1$ in an open ball containing $\ksi$. 
    By compactness (see, e.g., proof of Theorem~2.19 of \cite{drmota}), we may pick a finite number of points $\xi$, such that the respective balls cover all points $x \in D_0$ with $|x| = \rho(s)$, and the union of these finite balls together with
    $\{z: |z| < \rho(s)\}$ contains some $\Delta$-domain $\Delta'$.

\end{proof}
\bigskip

\begin{proofof}{Theorem~\ref{thm.treeleaf}}
    In Example IX.25 of \cite{fs09}, Flajolet and Sedgewick give a  %important 
    bivariate generating function $\tilde{R}(x,z)$ for rooted Cayley trees where the variable $z$ counts leaves (the root is counted as a leaf only in the case $n=1$)
\begin{equation}\label{eq.cayleyleaf}
    \tilde{R}(x,z) =  xz  + x \left(e^{\tilde{R}(x,z)} - 1 \right)
\end{equation}
which they show how to express using the usual (univariate) Cayley tree function
\begin{equation*}
    \tilde{R}(x,z) = x (z-1) + R(x e^{x (z-1)}).
\end{equation*}
Let $R(x,z)$ be the bivariate generating function
for rooted Cayley trees, where $z$ counts leaves, and the root is also counted as a leaf whenever its degree is at most one. Then, considering the
cases when the root has 0, 1 or 2 children separately and using $x e^{R(x)} = R(x)$ we get
\begin{align}
    R(x,z) &= zx +  z x \tilde{R}(x,z) + x(e^{\tilde{R}(x,z)} - \tilde{R}(x,z) -1) \nonumber
   \\      &=R\left(x e^{x(z-1)}\right) (x (z-1) + 1) + x^2(z-1)^2 + x(z-1). \label{eq.cayleyleafuni}
\end{align}
Let us add a variable $w$ that counts internal nodes:
\[
 R(x,w,z) = R(x w,z/w).
\]

And another variable $y$, that counts edges:
\[
 R(x,y,w,z) =  R(xyw, z/w) /y.
\]
We get using (\ref{eq.cayleyleafuni})
%\[
%R(x,y,w,z) = x (z-w) + R \left(x y w e^{x y (z-w)} \right) / y.
%\]
\[
R(x,y,w,z) = (xy (z-w) + 1) R \left(x y w e^{x y (z-w)} \right) y^{-1} + y (x(z-w))^2 + x (z-w).
\]

%Write
%\begin{align*}
%    &A = A(x) = x ( A_1(x)/x - A_3(x)/x);
%\\  &B = B(x) = x D(x) A_3(x)/x;
%\\  &E = E(x) = x D(x) ( A_1(x)/x - A_3(x)/x).
%\end{align*}
Then the bivariate generating function $\ca'$, where for $\alpha \in \ca'$, the variable $s$ counts the size of the underlying
tree $T(\alpha)$ is
%\[
%A'(x,s) = x (L - I) + R( s x D I e^{-s x D I} e^{s x D L})/D.
%\]
\begin{align*}
    &A'(x,s) = R(sx, D, I, L) 
\\    &= (s x D(L-I) + 1)  \frac {R(f(x,s))} D 
      + (s x (L-I))^2 D + sx (L-I).
\end{align*}
Here $D = D(x)$, $I = I(x)$, $L = L(x)$ are the exponential generating functions of $\cd, \ci$ and $\cL$ respectively,
and $f(x,s) = s x D I e^{sxD(L-I)}$ as  before.

By Lemma~\ref{lem.aperiodic}, $\ca_2$ is aperiodic. Denote $m = \min (\rho(\cd), \rho(\ci), \rho(\cL))$. By Lemma~\ref{lem.samecr}, $\rho(\ca_2) = \rho(\ca) < m$.
Therefore we can apply Lemma~\ref{lem.treeleaf} to the class $\ca_2$ and its bivariate generating function
$A_2(x,s) = R(f(x,s))$. 

%To study $A'(x,s)$ we will use Lemma~\ref{lem.treeleaf}. 
Let $\rho = \rho(1) > 0$ be as in Lemma~\ref{lem.treeleaf}. 
%Then
%$f_x(\rho, 1) > 0$ and $f_s(\rho,1) > 0$.
Our constant $a$ will be
\[
    a= -\frac {\rho'(1)} \rho = \frac {f_s(\rho,1)}  {\rho f_x(\rho,1)}.
\]
By Lemma~\ref{lem.treeleaf}, $a$ is positive.
%(we will show below that both $f_s(\rho,1)$ and $f_x(\rho,1)$ are positive).

Fix a small $\eps \in (0, \min(0.5, 0.5 a))$. Let $R_n' \in_u \ca'$ be a uniformly random
construction of size $n$ and let $X_n' = |V(T(R_n'))|$. The key part of the proof will be to show that
\begin{equation}\label{eq.nrootable}
\pr( |X_n' - an| > \eps n) = e^{-\Omega(n)}. 
\end{equation}
Let $\delta$ be given by Lemma~\ref{lem.treeleaf} applied with $\cd, \ci, \cL$.
We can assume that $\delta < \min \left(\frac 1 2, \frac {\eps} {16 a }, \frac \eps {16 a^2}\right)$. %\m{why?}
Fix $ s \in \{1 -\delta, 1, 1+\delta\}$.
By Lemma~\ref{lem.treeleaf}, we may also assume that $\delta$ is small enough that
%for $s = 1\pm \delta$ %all $s \in [1-\delta, 1+\delta]$
\begin{align}
|\rho(s) - \rho - \rho'(1)| < \frac {\eps \rho \delta} 2 \quad\mbox{and}\quad 0 < \rho(s) < m. \label{eq.rhoisgood}
\end{align}

%Because the functions 
%Fix $s \in [1-\delta, 1+\delta]$.
We can write $A'(x,s) = E_1 D^{-1} A_2(x,s) + E_2$ where
\begin{align*}
    &E_1 = E_1(x,s) = {s x D (L - I) + 1};
 \\ &E_2 = E_2(x,s) = (s x (L - I))^2 D + sx (L-I).
\end{align*}
%By Lemma~\ref{lem.treeleaf}, %$0.5 \rho \le \rho(s) \le 2 \rho$, $\rho(s) < m$
%and $\rho(s)$ is continuous for $s \in [1-\delta, 1+\delta]$. %\m{!!need properties in $[1-\delta, 1+ \delta]$}
Using (\ref{eq.rhoisgood}) $D(\rho(s)) > 0$, so $E_1 D^{-1}$ and $E_2$ are analytic at $x = \rho(s)$. % Writing
So we have %, uniformly for $s \in [1 - \delta, 1 + \delta]$,
as $x \to \rho(s)$
\begin{align*}
    &E_1 D^{-1} = E_1(\rho(s)) D(\rho(s))^{-1} + O(x - \rho(s)),
    &E_2 = E_2 (\rho(s)) + O(x - \rho(s)).
\end{align*}
Using Lemma~\ref{lem.treeleaf}, $R(x) = x e^{R(x)}$, the fact that $m < \rho(s)$ and writing
\[
    A_2(x,s) = E_1 D^{-1} R(f(x,s)) + E_2 = E_1 s x I e^{s x D (L -I) + R(f(x,s))} + E_2
\]
we get (see \cite{fs09}) that $\rho(s)$ is the unique dominant singularity of $A'(x,s)$ 
and there is a $\Delta$-domain $\Delta'$ at $\rho(s)$, such that for $x \to \rho(s)$, $x \in \Delta'$
% uniformly for $s \in (1- \delta; 1+\delta)$
%all have radius of convergence at least $m > \rho(s)$, so we get
    \[
        A'(x,s) = c_0(s) - c_1(s) \left(1 - \frac x {\rho(s)} \right)^{1/2} + O \left(1-\frac x {\rho(s)}\right),
   \]
   with $c_1(s) = E_1(\rho(s),s) c(s) D(\rho(s))^{-1}$, $c_0(s) = c_1(s) + E_2(\rho(s),s)$.
   %and the bound in $O()$ holds uniformly for $s \in [1-\delta, 1+\delta]$.
%   Here
%    \begin{align*}
%    &c_0(s) = (s \rho(s) (L(\rho(s))-I(\rho(s))))^2 D(\rho(s)) + s \rho(s) (L(\rho(s))-I(\rho(s))).
% \\ &c_1(s) =  - (2 e \rho(s) f_x(\rho(s), s)) ^ {1/2} (s \rho(s) D(\rho(s))(L(\rho(s))-I(\rho(s))) + 1)/D(\rho(s))
%    \end{align*}
%    Clearly, the function $A'(x,s)$ remains $\Delta$-analytic. Note, that 
%    since 
%    \[
%    \rho(1) D(\rho(1)) I(\rho(1)) < 1,
%    \]
%    we may choose $\delta$ small enough, so that $\rho(s) D(\rho(s)) I(\rho(s)) < 1$ for $s \in (1-\delta; 1+\delta)$.
%   For convenience, we will use the notation $x = y\pm z$ to  mean $x \in (y-z; y+z)$.
%   Since $f(x,s)$ and its partial derivatives are continuous, and the second order derivatives are bounded [\dots], we have that there is $\delta$,
%   \[
%   0 < \delta < \min \left(\frac 1 2, \frac {\eps} {12 a }\right) %\frac \eps {4 a}, $\delta < 1/2$, $\delta < \eps/(2 a^2 + 4 a)
%   \]
%   such that
%   for $s \in (1-\delta; 1+\delta)$,
%   %for $h \in (-\delta; \delta)$ and $s = 1+h$
%   the radius of convergence $\rho(s)$ of $A'(x,s)$ is %given by
%   %a unique $\rho(s)$ where \m{fix $\approx.$}
%   %\[
%   % |\rho(s) - \rho + \frac {(s - 1) f_s(\rho, 1)} {f_x(\rho,1)} |  < |s-1| \rho \eps / 2.
%   %\]
%   %and for $h \in (-\delta; \delta)$
%   \[
%   \rho(s) = \rho - \frac {(s-1) f_s(\rho,1)}  {f_x(\rho,1)} \pm (s-1) \rho \eps/2.
%   \]
%    \m{I hope I will find a general source to cite this}
%In particular, for $s \in (1- \delta, 1)$ we have that $\rho(s) > \rho$.

%Fix  $s \in (1-\delta, 1)$.

Now by the ``Transfer method'' of Flajolet and Odlyzko (Theorem~VI.1 and Theorem~VI.3 of~\cite{fs09})
%, the singular expansion of $A'(x,s)$ at $\rho$ is
%    \[
%      c_1(s) r(g(x,s)) + c_2(s)
%    \]
%    where $r$ is the singular expansion of $R$ at $e^{-1}$, (\ref{eq.Rsing}),
%    $g(x,s)$ is the regular expansion of $f(x,s)$ at $\rho(s)$; and
%    \begin{align*}
%        %&c_3 = \rho D(\rho) (L(\rho) - I(\rho));
%        &c_1(s) = \left(\rho(s) D(\rho(s)) (L(\rho(s)) - I(\rho(s))) + 1\right)/D(\rho(s));
%        \\&c_2(s) = \rho(s)^2 \left(L(\rho(s)) - I(\rho(s))\right)^2 D(\rho(s)) + \rho(s) (L(\rho(s)) - I(\rho(s))). 
%    \end{align*}
%    We have for $x$ in the neighbourhood of $\rho(s)$
%    \[
%    g(x,s) = e^{-1} + (x - \rho(s)) f_x(\rho(s),s) + O\left( (x - \rho(s))^2\right).
%    \]
%    Thus, the singular expansion of $A'(x,s)$ at $\rho$ is
%    \[
%    c_1(s) + c_2(s) - c_1(s) (2 f_x(\rho(s),s) \rho(s))^{1/2} \left(1 - \frac x {\rho(s)} \right)^{1/2} + O\left(1 - \frac x {\rho(s)} \right).
%    \]
%    Now the Transfer theorem \cite{fs09} gives that
%\m{was $O(1/n)$??}
\[
%[x^n/n!] A'(x,s) = c_1(s) \left(\frac {2 f_x(\rho(s),s) \rho(s)} {\pi}\right)^{1/2} n^{-3/2} \rho(s)^{-n} \left( 1 + o(1) \right),
    [x^n/n!] A'(x,s) = \frac {c_1(s)} {2 \sqrt \pi} n^{-3/2} \rho(s)^{-n} \left( 1 + O(n^{-1/2}) \right),
\]
%For completeness, we include the expression of $f_x(x, 1)$:
%\[
%f_x(x, 1) = e^{x D (L-I)} \left( (I D + x I' D + x I D') (1 - x I D) + (LD + x L' D + x L D') \right).
%\]
 %by the Transfer
%theorem[\dots]
the probability generating function of $X_n'$ at $s$ 
satisfies
\[
    \E s^{X_n'} = \frac{[x^n] A'(x,s)} {[x^n] A'(x,1)} = \left(\frac \rho {\rho(s)} \right)^n (1 + O(n^{-1/2})), %= e^{c(s) n} (1 + o(1)), 
\]
%uniform% and the constant in the term $O(n^{-1/2})$ does not depend on $s$.
%singular expansion of $R$ at $x = 1/e$, \m{cite eq.Rsing}
%\[
% R(x) = 1 - \sqrt{2} (1- e x)^{1/2} + O(1-ex)
%\]
%where $c(s) = \ln (\rho / \rho(s))$ is negative if $s < 1$ and positive if $s > 1$.
By Markov's inequality, for $s = 1-\delta$ %taking $h \in (0, \delta)$ and $s = 1 - h$
\begin{align*}
    &\pr(X_n' \le (a-\eps) n) = \pr(s^{X_n'} \ge s^{(a-\eps) n}) \le \frac {\E s^{X_n'}} {s^{(a-\eps) n}}
    \\ &= \exp \left( (p_1 - (a - \eps) ) n \ln s + o(1) \right) = e^{-\Omega(n)}.
\end{align*}
since by (\ref{eq.rhoisgood}) and our choice of $\eps, \delta$ % \m{$\leftarrow$ netikrinta dabar} 
\begin{align*}
    &p_1 = \frac{ \ln \rho - \ln \rho(1-\delta)} {\ln (1-\delta)} \ge \frac{\ln (1 + a \delta - \delta\eps/2)} {- \ln (1-\delta)}
    %\\ &
    \ge \frac {\ln(1 + a \delta - \delta \eps/2)} {\delta + \delta^2} 
    \\ &\ge \frac { a \delta  - \delta\eps/2 - (a \delta)^2} {\delta + \delta^2}
    %\\ &
    \ge \left( a - \eps/2 - \delta a^2 \right) (1 - \delta)
    \\ & \ge a - \eps/2 - \delta a^2 - \delta a > a - \eps.
\end{align*}
Here we used simple inequalities $b - b^2 \le \ln (1 + b) \le b$
%, which is valid
%for all $h \in (-0.5; 0.5)$ and an inequality
and $1/(1+b) \ge 1 - b$, which are valid for any $b \in (-0.5, 0.5)$.

%Applying the same inequality with $t = s + \eps \in (1; 1+\delta)$
Now taking $s = 1 + \delta$, we get by Markov's inequality
\begin{align*}
 & \pr(X_n' \ge (a+\eps)n) \le \frac {\E s^{X_n'}} {s^{(a+\eps)n}} = \left(\frac {\rho} {\rho(s) s^{a+\eps}}\right)^n (1+o(1))  
    \\ &= \exp \left( (p_2 - (a + \eps)) n \ln s + o(1) \right) = e^{-\Omega(n)} 
\end{align*}
%when $a > \frac{ \ln \rho - \ln \rho(t)} {\ln t} \approx \frac r \rho$.
since similarly as above
\begin{align*}
    &p_2 = \frac{\ln \rho - \ln \rho(s)} {\ln (1 +\delta)} %= \frac {- \ln (1 - ah \pm \eps h/2)} {\ln (1+h)}
    %\\  &
    \le \frac{-\ln(1-a\delta-\eps \delta/2)} {\ln(1+\delta)} 
    %\\ &
    \le \frac {a\delta + \delta\eps/2 + (a\delta + \delta\eps)^2} {\delta - \delta^2}
    \\ &\le (a+\eps/2 + 4 a^2 \delta) (1 + 2 \delta) \le a+\eps/2 + %6 a h 
       4 a^2 \delta + 4 a \delta  < a + \eps.
\end{align*}

This completes the proof of (\ref{eq.nrootable}) and yields 1) with $Y_n = X_n'$. 

Let us finish the proof of the theorem. %Write $X_n' = |V(T(R_n'))|$ and $X_n = |V(T(R_n))|$.
%Finally, we show how (\ref{eq.nrootable}) implies the theorem. 
Let $a_n(k,l)$ (respectively, $a_n'(k,l)$) be the number of objects $\alpha$ in $\ct(\cd, \ci, \cL)_n$ (respectively, $\cR(\cd, \ci, \cL)_n$)
such that $|V(T(\alpha))| \in [k,l]$. Also let $A_n = a_n(1,n)$ and $A_n'  = a_n'(1,n)$.
%
%Let $a_n$ be the number of constructions in $\ct(\cd, \ci,\cL)$ on the vertex set $[n]$
%and let $a_n'$ be the number of constructions in $\cR(\cd, \ci, \cL)$ on the vertex set $[n]$.
%Then, s
Since each unrooted tree $T$ corresponds to exactly $|V(T)|$ rooted trees
\begin{equation}\label{eq.rootunroot}
A_n \le A_n' \le n A_n \quad \mbox{and} \quad k a_n(k,l) \le a_n'(k,l) \le l a_n(k,l) .
\end{equation}
By (\ref{eq.nrootable}) we have
\[
  d_n = A_n' \pr (|X_n' - an| > \eps n) = A_n' e^{-\Omega(n)}.
\]
For $R_n \in_u \ca$, let $X_n = X(R_n)$. The fact that 1) holds for $Y_n = X_n$ follows since
\[
\pr \left( |X_n - an| > \eps n \right) \le \frac { n d_n } {A_n'} = e^{-\Omega(n)}.
%\frac {a_n - a_n( (a-\eps)n, (a+\eps)n)} {a_n} \le \frac { (a_n' - \left( (a-\eps)n, (a+\eps)n \right)) } { a_n'/n} = e^{-\Omega(n)}.
\]
%Now note that each construction $G \in \ct(\cd, \ci, \cL)$ yields exactly $|V(T(G))|$ different constructions
%in $\ct(\cd, \ci, \cL)$. 
Let us now show 2). 
Observe that since $X_n' \le n$, by \ref{eq.nrootable} it must be $a \in (0,1]$.
Let $\eps' = \min(\eps, 1-a)$. 
By (\ref{eq.rootunroot}) %We have % the number $\hat{a}_n$ of graphs $G \in \ct(\cd, \ci, \cL)_n$ with $|V(T(G))| \in \left( (a-\eps)n ; (a+\eps)n \right)$ satisfies
\[
\frac { a_n'\left( (a-\eps)n, (a+\eps')n \right)} { (a+\eps') n} \le a_n\left( (a-\eps)n, (a+\eps')n \right) \le \frac { a_n'\left( (a-\eps)n, (a+\eps')n \right)} { (a-\eps) n}. 
\]
%Thus with $\eps < a/2$ we have
So
\[
a_n\left( (a-\eps)n, (a+\eps')n \right) \in \left( \frac { (A_n' - d_n) (1-2\eps/a)} {an}, \frac {A_n' (1+2\eps/a)} {an} \right)
\]
and also 
\[
A_n - a_n\left( (a-\eps)n, (a+\eps')n \right) \le d_n = e^{-\Omega(n)} A_n'. 
\]
So 
\[
\left| A_n - \frac {A_n'} {an} \right| \le \frac{ A_n'} {an} \left( \frac {2 \eps} {a} +  e^{-\Omega(n)} \right).
\]
Letting $\eps$ go to zero shows that $A_n \sim \frac {A_n'} {an}$. Thus 2) follows with 
\[
    c = \frac {c_1(1)} {2 \sqrt \pi} = \frac{\rho D(\rho) (L(\rho) - I(\rho)) + 1} {D(\rho)} 
                                      \left( \frac {e \rho f_x(\rho,1) } \pi \right)^{1/2}.
\]
Finally, since $R(e^{-1}) = 1$, we have that 
\[
A'(\rho) = A'(\rho, 1) = c_0(\rho) + c_1(\rho)
\]
is finite. % 2) shows  that
Using Lemma~\ref{lem.samecr} we have $\rho(\ca) = \rho$. By (\ref{eq.rootunroot}), the coefficients
of $A(x)$ are dominated by the coefficients of $A'(x)$ and so $A(\rho) \le A'(\rho) = c_0(\rho) + c_1(\rho)$.
\end{proofof}

\subsection{The case $\cb = \{K_4\}$}

%\subsection{Proof of Theorem~\ref{thm.K4} and Theorem~\ref{thm.K4struct}}
\label{subsec.unrootedstr}
%Also, recall
%Recall that a graph $G \in \cc^l$ is nice if it has at least one vertex $x$ such that 
%$G - x$ has at least two components both containing all colours $[l]$.
In this section we will have $\cb = \{K_4\}$ fixed and $l$ a positive integer.
Recall from the proof of Lemma~\ref{lem.unrootable},
that $G \in \cc^l$ is called nice if there is a vertex $x \in V(G)$
such that $G - x$ has at least two components containing all colours $[l]$. 
In this case, we call the vertex $x$ \emph{nice} in $G$.
For $G \in \crd l$ we say that a vertex $x$ is nice if it is
nice in its connected component.
Also recall that by $\cu = \cu^{<l>}$ we denote the class of graphs in $\cc^l$ that are not nice.
%From the previous section we know how to obtain the generating function for $C$-trees.
By Lemma~\ref{lem.unrootable}, Lemma~\ref{lem.gcrooted} and Lemma~\ref{lem.gcindstep}, 
we know that $\gamma(\cc^l)$ exists and $\gamupper(\cu) < \gamma(\cc^l)$.
%\m{and $\gamma(\cc^{l-1}) < \gamma(\cc^l)$??}
%
%Dissertation topic names ``Two unrelated random graph models''
%``Random graphs with disjoint forbidden minors and random intersection graphs.''
%``Random graphs from two different models''
%-> ``On two models of random graphs with dependent edges'' <-
%
Consider a graph $G \in \cc^l$. %, which %is rootable at $r \in V(G)$.
%has a nice vertex $r$.
Repeatedly ``trim off''
``pendant uncoloured subgraphs'' from $G$ (i.e. for $x \in V(G)$ such that $G-x$ has an uncoloured component $H$, delete $V(H)$ from $G$)
until no such subgraphs remain. We call the remaining graph $G'$
the \emph{coloured core} of $G$. 

Suppose $G$ has a nice vertex $r$. Then since all coloured vertices remain in $G'$,
$G'$ is also nice.
Consider the rooted block tree $T_r$ of $G'$. 
Recall that the nodes of $T_r$ are $\{r\} \cup X \cup \ch$,
where $X$ is the set of cut points of $G'$ and $\ch$ is the set of blocks of $G'$. %\m{Is block tree defined above?}

We call a block $B$ of $G'$ \emph{simple} if there are at most two coloured components
in $G' - E(B)$. If there are more than two coloured components in $G' - E(B)$, we call $B$
\emph{complex}. Suppose $y$ is a nice vertex and $y \ne r$. Then, using Proposition~\ref{prop.SPcycle} we see that every block node
on the path $P_{yr}$ from $y$ to $r$ in $T_r$ must correspond to a simple block $B$. 
By Theorem~\ref{thm.Tk}, the edges of $B$
%each such block $B$
form either a single edge or a parallel $SP$-network, where only the poles can be coloured. Furthermore,
the poles $s$ and $t$ of $B$ must be nice. 
%Furthermore, by Proposition~\ref{prop.SPcycle}, every such block $B$ is simple.

All paths $\{ P_{yr}: \mbox{$y$ is nice} \}$ form an (unrooted) subtree $T'$ of $T_r$,
and the blocks corresponding to them form a connected subgraph of $G'$.
Since a block node $B$ of $T'$ corresponds to a simple block,
it has only two neighbours in $T'$. Therefore
we may consider an unrooted tree $T$ with $V(T) = \{y \in V(G'): y \mbox{ is nice} \}$
and $E(T) = \{xy :  T' \mbox{ contains a path  } x B y \mbox{ for some } B \in \ch\}$.
%replace each path $x B y$,
%where $x, y \in X$ and $B \in \ch$ by an edge $xy$ to obtain
%an unrooted tree $T$.
The trees $T'$ and $T$ do not depend on which nice vertex $G$ is initially rooted at.
We call $T$ the \emph{nice core tree} of $G$. 

%Consider a vertex $v$ of $T$. 
%Then since $v$ is nice in $G'$ 
%exactly two components of $G' - v$ contain all colours.
Fix %any nice vertex 
$v \in V(T)$. %Let $C$ be the component of $G' - v$ containing
%all other nice vertices, and let $C$ be the empty graph if no such component exists.
Consider, the graph $G_v'$ induced on $v$ and the vertices of those components of  $G' - v$ that 
do not contain any nice vertex, and pointed at the vertex $v$ (by retaining the colour of $v$).% We think of $G_v'$ as rooted at $v$.
%This graph contains all colours $[l]$ but it contains no nice vertex.
%(otherwise $v$ would not be a leaf of $T$).

First suppose that $v$ is a leaf node of $T$. By Proposition~\ref{prop.rootablejoin},
$G_v'$ admits a unique decomposition to a $\{0,1\}^l$-coloured vertex (i.e. the root) and
some graphs $H_1, \dots, H_t$, where 
$H_i$ is a $C_i$-tree with root $v$, such that $C_i \subseteq [l]$, $C_i \ne \emptyset$.
The requirement that $v$ is nice implies that at least one graph $H_i$ must be an $[l]$-tree
with the restriction that $H_i$ does not have a subgraph $H'$ rooted at a cut vertex $v' \ne v$, % of $G'$,
such that $H'$ is an $[l]$-tree (otherwise $v'$ would also be nice and $v$ would not be a leaf vertex). The generating function counting the class $\bar{\ca}$ of such graphs $H_i$ is 
the same as the one given in Lemma~\ref{lem.multitype}, with the only difference that we % forbid the partition
%$\{ [l] \}$ in the sum, i.e., it is
do not allow an $[l]$-tree to be attached to the root block, so using Lemma~\ref{lem.BAC} and Lemma~\ref{lem.multitype}
\begin{align*}
&\bar{A} = A_{[l]} - 2^l B_1 (e^{A_{[l]}} - 1) \exp \left(\sum_{S \subset [l]} A_S\right)  %\hat{A}_{[l]}(x). % = A_{[l]}(x) (1 - B_1(x)).
\\ & = B_1 %\exp \left(\sum_{S \subset [l]} A_S\right) 
\left( 2^l \exp \left(\sum_{S \subset [l]} A_S\right) + \sum_{S \subset [l]} 2^{|S|} (-1)^{l-|S|} \exp\left(\sum_{S'\subseteq S} A_{S'}\right) \right)
 \\ &+ \sum_{P \in \cp([l]), |P| \ge 2} B_{|P|} \prod_{S \in P} \hat{A}_S.
\end{align*}

If $T$ has at least two vertices, then   $G' - G'_v$ is non-empty and has a component that contains all colours $[l]$. 
Then the exponential generating function for the class $\cL_2$ of all possible graphs $G_v'$ is 
\[
L_2 = 2^l (e^{\bar{A}} - 1)  \exp \left( \sum_{C \subset [l]} {A_C} \right).
\]
If $T$ has only one vertex, then there must be at least two graphs $H_i, H_j \in \bar{\ca}$,
so the exponential generating function for the class $\cL_1$ of all possible graphs $G_v'$ is
\[
L_1 = 2^l (e^{\bar{A}} - \bar{A} - 1)  \exp\left(\sum_{C \subset [l]} A_C \right).
\]
Now suppose $v$ is an internal node of $T$. Then $G' - G'_v$ has at least
two components containing other nice vertices, and each such component contains all  colours $[l]$.
Therefore $G'_v$ is in the class $\ci$ with the exponential generating function
\[
%\ca_3 = \ca_1 \cup \left(\bigcup_{C \subset [l], C \ne \emptyset} \hat{\ca}_C \right).
I = 2^l \exp \left( \bar{A} + \sum_{C \subset [l]} A_C \right).
\]
Observe that our expressions for $\bar{A}, L_1, L_2, I$ all are given in terms of
analytic functions of $A_C$, $C \subset [l]$ and $B_i$, $i \le l$.
Thus, by Lemma~\ref{lem.part2main}, Lemma~\ref{lem.Datrho} and Proposition~\ref{prop.rhoBk} 
the convergence radii of each of these functions are at least $\rho(\ca_{l-1}) > \rho(\ca_l)$.

If $G$ is a graph, $v$ is a vertex of $G$ and $\ca$ is a class of vertex-pointed graphs,
$G'$ is obtained from $G$ by \textit{attaching} a graph $H \in \ca$ at $v$ if %\m{move up?}
$G' = G \cup H$, $V(G') \cap V(H) = \{v\}$ and we assume that $v$ inherits the label 
of $G$. 

Recall that $\cf$ denotes the class of all rooted series-parallel graphs. Let $\cf_\circ$ denote 
the class of all vertex-pointed series-parallel graphs, so that $F_\circ(x) = F(x)/x$.
For two pointed graphs $G_1$ and $G_2$ with disjoint sets of labels, let $G_1 \times G_2$ be the pointed graph
obtained by identifying their roots. Call classes $\cd_1, \cd_2$ of pointed graphs \emph{uniquely mergeable},
if $G_1 \times G_2 \ne G_1' \times G_2'$ for all $G_1, G_1' \in \cd_1$, $G_2, G_2' \in \cd_2$, where
$G_1 \times G_2$ and $G_1' \times G_2'$ are defined. If $\cd_1$ and $\cd_2$ are uniquely mergeable,
we will identify with the combinatorial class $\cd_1 \times \cd_2$, the class of all graphs
$G_1 \times G_2$, where $G_1 \in \cd_1$, $G_2 \in \cd_2$, and $G_1 \times G_2$ is defined.

Obviously, the classes $\cf_\circ $ and  $\ca$ are uniquely mergeable, when $\ca$ is $\cL_1$, $\cL_2$ or $\ci$:
the vertices of $G \in \cf_\circ \times \ca$ that belong to the graph $G_1 \in \cf_\circ$ are exactly
the root $r$ of $G$ and those vertices that are in the uncoloured components of $G - r$.

Given a class of graphs $\ca$ and a class of rooted graphs $\cc$, we denote by $\ca(\cc)$ the
class obtained from graphs in $\ca$ by replacing each vertex by a graph in $\cb$.
Let $\cp_+ = \cp \cup \ce_2$, be the class of non-series $SP$-networks.
The above observations imply that each graph $G \in \cu$ can be constructed as follows.
\begin{itemize}
    \item Take a tree $T$ of size at least one from the set of all unrooted trees~$\ct$ (i.e. a nice core).
    \item Replace each edge $e$ of $T$ by a network $D_e \in \cp_+(\cf)$ (to fix the orientation, we may assume that edges of $T$ are oriented away from the
          node with the smallest label in~$T$). %and substituting a rooted series parallel graph for each internal vertex of $D_e$, $e \in E(T)$.
      \item Attach at each leaf node of $T$ a graph in $\cf_\circ \times \cL_1(\cf)$ (respectively in $\cf_\circ \times \cL_2(\cf)$) if $T$ has one node (respectively, at least two nodes). %, so that $v$ is the root of $v$ and $V(G_1) \setminus v$ is disjoint from the rest of the graph.
      \item Attach at each internal node of $T$ a graph in $\cf_\circ \times \ci(\cf)$.
\end{itemize}
It is easy to see (for example, by fixing a root and comparing this construction with the construction of an $[l]$-tree) that the above decomposition is unique and the construction always yields a graph in $\cc^l \setminus \cu$. 

%If we allow a graph from $\ca_1$ to be attached to the root $r$ of $T$ when $T$ consists of a single vertex $r$, we obtain
%a slightly bigger class
%We have that the class of our constructions is
%\[
%\cc^{l} - \cd = \ct(\cd, \ca_1, \ca_3) - Z \times \ca_1 + Z \times \ca_2.
%\]

\begin{lemma}\label{lem.nice} Consider $\cb = \{K_4\}$.
    Let $l \ge 2$ an integer, let $R_n \in_u \cc^l$. Let $Y_n$ denote the number of nice
    vertices in $R_n$.
    There is a positive constant $a_l$, such that
    \[
    \pr( |Y_n - a_l n| > \eps n) = e^{-\Omega(n)}.
    \] 
  %  $R_n$ has at least $a_l n$
  %  nice vertices with probability at least $1 - e^{-\Omega(n)}$.
\end{lemma}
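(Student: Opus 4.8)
The plan is to deduce Lemma~\ref{lem.nice} from Theorem~\ref{thm.treeleaf} together with the structural decomposition of graphs in $\cc^l \setminus \cu$ described just above. Recall that $\cu = \cu^{<l>}$ is the class of not-nice graphs in $\cc^l$, and that $\gamupper(\cu) < \gamma(\cc^l) = \rho(\ca_l)^{-1}$ by Lemma~\ref{lem.unrootable}, Lemma~\ref{lem.gcrooted} and Lemma~\ref{lem.gcindstep}. The set of nice vertices of $G \in \cc^l \setminus \cu$ is exactly the vertex set of the nice core tree $T$, so with $\ca = \ct(\cd,\ci,\cL)$ where $\cd = \cp_+(\cf)$, $\ci = \cf_\circ \times \ci(\cf)$ and $\cL = \cf_\circ \times \cL_2(\cf)$ (using the ``$T$ has at least two nodes'' version of the leaf class, which is legitimate since the one-node case contributes only $O(1)\cdot n!\rho^{-n}$-negligibly-many graphs and in any case can be absorbed), the unique construction listed above gives a size-preserving bijection between $\ca_n$ and the graphs of $\cc^l \setminus \cu$ of size $n$, under which $Y_n = |V(T(R_n))|$.

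First I would verify the hypotheses of Theorem~\ref{thm.treeleaf} for this $\ca$. Non-emptiness of $\cd,\ci,\cL$ is immediate. The crucial inequality $\rho(\ca) < \min(\rho(\cd),\rho(\ci),\rho(\cL))$: since $\cf$ has radius of convergence $\rho(\cf) = 0.110\ldots$ and $\cp_+$ has radius $\rho(\cd_{\mathrm{SP}})=0.128\ldots$, a composition argument (as used repeatedly in Section~\ref{subsec.unrootedstr} and Section~\ref{sec.part2}) shows $\rho(\cp_+(\cf)) = \rho(\cf)$, and similarly the functions $\bar A, L_2, I$ are analytic combinations of $A_C$ ($C \subsetneq [l]$) and $B_i$ ($i \le l$), so their radii of convergence are at least $\rho(\ca_{l-1}) > \rho(\ca_l)$, whence $\rho(\ci(\cf)), \rho(\cL(\cf)) \ge \rho(\ca_{l-1}) > \rho(\ca_l) = \rho(\cc^l\setminus\cu)$; the left-hand side $\rho(\ca)$ equals $\rho(\cc^l \setminus \cu) = \rho(\cc^l) = \rho(\ca_l)$ because removing the subdominant class $\cu$ does not change the radius. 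Aperiodicity ($\gcd$ condition): $\ca$ contains constructions of many consecutive sizes — e.g. taking the tree $T$ to be a single edge and varying the attached series-parallel pieces — so one can exhibit objects of three sizes $i<j<k$ with $\gcd(k-i,j-i)=1$ directly. Then Theorem~\ref{thm.treeleaf}(1) applied with $R_n \in_u \ca$ yields a constant $a_l = a > 0$ with $\pr(|Y_n - a_l n| > \eps n) = e^{-\Omega(n)}$ for $R_n \in_u \ca$, i.e. for a uniformly random graph in $\cc^l \setminus \cu$.

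It remains to pass from $\cc^l \setminus \cu$ to $\cc^l$. Since $\gamupper(\cu) < \gamma(\cc^l)$, there is a constant $\beta < 1$ with $|\cu_n| \le \beta^n |\cc^l_n|$ for all large $n$ (more precisely $|\cu_n|/|\cc^l_n| = e^{-\Omega(n)}$), so a graph drawn uniformly from $\cc^l_n$ lies in $\cc^l \setminus \cu$ with probability $1 - e^{-\Omega(n)}$. Conditioning on this event gives a uniform element of $(\cc^l\setminus\cu)_n$, and a nice vertex of a graph outside $\cu$ is one of the $Y_n$ vertices of its nice core tree while nice vertices of graphs in $\cu$ are vacuously absent; hence for $R_n \in_u \cc^l$,
\[
\pr(|Y_n - a_l n| > \eps n) \le \pr(R_n \in \cu) + \pr(|Y_n - a_l n| > \eps n \mid R_n \notin \cu) = e^{-\Omega(n)}.
\]
The main obstacle I anticipate is not any single deep step but the bookkeeping needed to confirm that the construction preceding the lemma is genuinely a bijection in the strong (label-respecting, unique-decomposition) sense required so that $Y_n$ really corresponds to $|V(T(R_n))|$ under Theorem~\ref{thm.treeleaf}, and to check the strict radius inequality $\rho(\ca) < \min(\rho(\cd),\rho(\ci),\rho(\cL))$ carefully — in particular that $\rho(\ca_l) < \rho(\ca_{l-1})$, which for $l=2,3$ is the content of Lemma~\ref{lem.A_RG} and Lemma~\ref{lem.A_RGB}, and for general $l$ follows from the corresponding monotonicity $\rho(\ca_{[l]}) < \rho(\ca_{[l-1]})$ built into those arguments.
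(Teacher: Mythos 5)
Your overall strategy matches the paper's: reduce to the tree-decomposed class, apply Theorem~\ref{thm.treeleaf}, and show the discrepancy with $\cc^l$ is exponentially small. However, there are two concrete problems with the details you give, both of which would need to be fixed.

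First, several of your radius inequalities are incorrect as stated because they drop the composition with $\cf$. You claim $\rho(\ci(\cf)), \rho(\cL(\cf)) \ge \rho(\ca_{l-1})$ and that $\rho(\ca) = \rho(\ca_l)$. Neither is true: since $\psi_F(u) = u e^{-B'(u)} < u$ on the relevant range, we have $\rho(\cc^{l}) = \psi_F(\rho(\ca_{[l]})) < \rho(\ca_{[l]})$, and $\rho(\ci(\cf))$ is roughly $\psi_F(\rho(\ci)) < \rho(\ci)$, not $\ge \rho(\ci)$. The correct chain is $\rho(\ci(\cf)) \ge \psi_F(\rho(\ca_{[l-1]})) = \rho(\cc^{l-1}) > \rho(\cc^l) = \rho(\ca)$, where the strict middle inequality is what Corollary~\ref{col.part2main} and Lemma~\ref{lem.gamupper} give (this is where the paper starts its proof). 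Your conclusion is right, but only because $\psi_F$ is increasing so the ordering $\rho(\ca_{[l-1]}) > \rho(\ca_{[l]})$ survives the composition; as written the intermediate steps are false.

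Second, the claim that the construction gives a ``size-preserving bijection between $\ca_n$ and the graphs of $\cc^l\setminus\cu$ of size $n$'' is not correct: the decomposition uses $\cL_1$ (not $\cL_2$) for one-node nice core trees, so $\tilde{\ct}=\ct(\cp_+(\cf), \cf_\circ\times\ci(\cf), \cf_\circ\times\cL_2(\cf))$ is not in bijection with $\cc^l\setminus\cu$. The paper handles this by the identity (\ref{eq.cceq}), $\cc^l + \cz\times(\cf_\circ\times\cL_2(\cf)) = \tilde{\ct} + \cz\times(\cf_\circ\times\cL_1(\cf)) + \cu$, and then bounds the radii of all three correction classes by $\rho(\cc^{l-1}) > \rho(\cc^l)$. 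Your dismissal of the one-node case as ``$O(1)\cdot n!\rho^{-n}$-negligibly-many graphs'' is both imprecise (that estimate is the same order as the main term, not smaller) and unproved; what you actually need is that each correction contributes an $e^{-\Omega(n)}$ fraction, which again rests on exactly the $\psi_F$-adjusted radius bounds you skipped. With those two repairs the proof goes through and coincides with the paper's argument.
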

\begin{proof}
    Combining Corollary~\ref{col.part2main} with Lemma~\ref{lem.gamupper} we get that $\rho(\cc ^ l) < \rho(\cc ^ {l-1})$. %\m{ref?}
    Write 
    \[
    \tilde{\ct} = \ct(\cp_+(\cf), \cf_\circ \times \ci(\cf), \cf_\circ \times \cL_2(\cf)), 
    \]
    and notice that $\tilde \ct$ is aperiodic, since it
    contains, for example, all Cayley trees, where each node has colour $[l]$.
The construction given in this section above yields the following identity
\begin{equation} \label{eq.cceq}
    \cc^l + \cz \times (\cf_\circ \times \cL_2(\cf)) = \tilde{\ct} + \cz \times (\cf_\circ \times \cL_1(\cf)) + \cu.
\end{equation}
We will prove that the convergence radii of
the exponential generating functions of $\cu$, $\cp_+(\cf)$, $\cf_\circ \times \ci(\cf)$, $\cf_\circ \times \cL_1(\cf)$ and $\cf_\circ \times \cL_2(\cf)$ are all at least $\rho(\cc^{l-1})$. 
This implies that $|\cc^l_n| = |{\tilde \ct_n}| (1 + e^{-\Omega(n)})$ 
%the fraction of graphs in $\cc^l_n$, that are not in $\{G(\alpha): \alpha \in \tilde{\ct}_n\} \cap \cc^l_n$ is exponentially small
and the claim
follows by Theorem~\ref{thm.treeleaf}. %\m{I should mention that some graphs can be obtained twice}

Consider the class $\bar{\cc}$ of rooted graphs obtained from graphs in $\ca_{[l-1]}$ by replacing each labelled vertex
by a graph in $\cf$ and labelling the root. Then $\bar{C}(x) = x A_{[l-1]}(F(x))$ and $\bar{\cc} \subseteq \cc^{\bullet (l-1)}$ (defined in Section~\ref{subsec.proofgc}). Using Lemma~\ref{lem.gcindstep}, ${\bar \rho} := \rho({\bar \cc}) \ge \rho (\cc^{\bullet (l-1)}) = \rho(\cc^{l-1})$. By \cite{momm05} the functional inverse $\psi_F(x)$ of $F$ is increasing for $x \in (0, x_0)$ and $F(x_0) > \rho(\cd)$,
where $x_0 = F(\rho(\cf)) = 0.1279..$ (denoted $\tau(1)$ in \cite{momm05}). By Proposition~\ref{prop.rhoBk} $\rho(\cd) = \rho(\cb_{l-1}) \ge \rho(\ca_{[l-1]})$, so 
$x_0 \ge \rho(\ca_{[l-1]})$. We see (using e.g., Section VI.9 of \cite{fs09}) that ${\bar \rho} = \psi_F(\rho(\ca_{l-1}))$.

By our construction above, for $i \in \{1,2\}$, $\rho(\cL_i) \ge \rho(\ca_{[l-1]})$, therefore $\rho(\cz \times (\cf_0 \times \cL_i(\cf))) \ge \psi_F(\rho(\ca_{[l-1]})) = {\bar \rho} \ge \rho(\cc^{l-1})$. Since each graph in $\cp_{+,n}$ yields a unique graph in
$\cf_{n+2}$, $\rho(\cp_+(\cf)) \ge \rho(\cf) \ge \rho(\cc^{l-1})$.
Finally, $\rho(\cu) \ge \rho(\cu') \ge  \rho(\cc^{l-1})$ by Lemma~\ref{lem.unrootable}.
%%Let $\psi_F$ be the functional inverse of $F$. Let $x_0 = \psi_F(\rho(\cc^{l-1}))$ and $x_1 = \psi_F(\rho(\cc^l))$.
%%Since $\psi_F(u)$ is increasing for $0 \le u \le \rho(\cc^{l-1}) \le \rho(\cf)$, we have $0 < x_1 < x_0$.
%%
%%By Section VI.9 of \cite{fs09}, the convergence radii of the exponential generating functions of $\cu$, $\cp_+(\cf)$, $\cf_\circ \times \ci(\cf)$, $\cf_\circ \times \cL_1(\cf)$ and $\cf_\circ \times \cL_2(\cf)$ are all at least $x_0$. % $\rho(\cc^{l-1}) > \rho(\cc^l)$,
%%
%%By Lemma~\ref{lem.part2main}, we know that $\rho$ of the class on the left of (\ref{eq.cceq}) is $\rho(\cc^l) = x_1$.
%%Therefore, $\rho(\tilde\ct)$ must also be $x_1$.
%%%the  classes $\cd_n$, $(Z \times \ca_1)_n$, $(Z \times \ca_2)_n$ and $\cd_n$
%%%contain 
%%Thus the fraction of graphs in $\cc^l_n$, that are not in $\tilde{\ct}_n$ is exponentially small.
%%%there are
%%%exponentially fewer graphs in these classes on the vertex set $[n]$
%%%than in the class $\tilde{\ct}_n$.
%%%$\cc^l$ or $\ct(\cd, \ca_1, \ca_3)$. 
%%Now the proof follows by Theorem~\ref{thm.treeleaf}.
\end{proof}

\section{Structure of random graphs in $\ex (k+1) K_4$}
\label{sec.unrootedstructure}
\subsection{Proof of Theorem~\ref{thm.K4} and Theorem~\ref{thm.K4struct}}
\label{subsec.unrootedstructure}
%\begin{conjecture}\label{conj.countRD}
%    Let $k$ be a positive integer and write $\gamma_k = \gamma(\crd l)$.
%    There is a constant $c_l$, such that
%            \[
%            (\r)_n = c_k n^{-3/2} n! \gamma_k^n.
%            \]
%\end{conjecture}
%\m{it is enough to prove it for connected}
Let $H$ be a fixed connected coloured graph on vertices $\{1,\dots,h\}$.
%Suppose
%$H$ is rootable at $1$ (with respect to some fixed set $\cb$). \m{rootable}  
Following \cite{msw05}, we say that $H$ \emph{appears} in $G$ at $W \subseteq V(G)$ if (a) the increasing bijection
from $\{1,\dots, h\}$ to $W$ gives an isomorphism between $H$ and $G[W]$
and (b) there is exactly one edge in $G$ between $W$ and the rest of $G$,
and it is incident with the smallest element of $W$. We let $f_H(G)$ denote the number of sets $W$ such that $H$ appears at $W$ in $G$. 

Let $\ca$ be a class of (coloured) graphs and let $H$ be a connected graph, rooted at $r \in V(H)$. Let $G \in \ca$, and let $S \subseteq V(G)$.
Suppose $G$ and $S$ have the following property:
if we take any number of pairwise disjoint copies of $H$, all disjoint from
$G$, and add an edge between the root of each copy and a vertex in $S$ then the
resulting graph is still in $\ca$. The set $S$ is called an \emph{$H$-attachable} subset of $G$ (with respect to $\ca$).

%We say that a vertex $x$ of a (coloured) graph $G \in \ca$ is $H$-attachable (with respect to $\ca$), if adding any number of disjoint copies of $H$, disjoint from $G$, and an edge between $x$ and the root vertex of each of the copies, and ``forgetting'' the roots, we obtain a graph $G' \in \ca$.
The next lemma and its proof is just an adaptation of Theorem 4.1 of \cite{msw05} for graphs where not necessarily all of the vertices form an $H$-attachable set.
\begin{lemma}\label{lem.pendant} Let $\cc$ be a non-empty class of (coloured) graphs, and suppose $\gamma(\cc) = c \in [e^{-1}; \infty)$. Let $H$ be a connected (coloured) graph on the vertex set $\{1,\dots,h\}$ rooted at $1$. 
    Suppose there are  constants $a \in (0,1)$, $N_0 > 0$ and $d > 0$ such that the probability that 
    $R_n \in_u \cc$ has an $H$-attachable subset (with respect to $\cc$) of size at least $a n$  is at least $1 - e^{-dn}$ all $n \ge N_0$.
    Fix $\alpha$, such that $\alpha < d$ and  $\alpha \le a/(9 e^2 c^h (h+2) h!)$. Then there exists $n_0$ such that
    \[
    \pr(f_H(R_n) \le \alpha n) \le e^{-\alpha n} \mbox { for all } n \ge n_0.
    \]
\end{lemma}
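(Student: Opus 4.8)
The strategy is the classical "pendant appearances" argument of McDiarmid, Steger and Welsh, adapted so that the $H$-attachable set need not be all of $V(G)$. I would fix $n$ large and condition on the event $\ce_n$ that $R_n \in_u \cc$ has an $H$-attachable subset $S$ of size at least $an$; by hypothesis $\pr(\overline{\ce_n}) \le e^{-dn}$, which is negligible since $\alpha < d$. On $\ce_n$ the idea is to build, from any $G \in \cc_n$ with few pendant copies of $H$, many distinct larger graphs in $\cc$ by gluing disjoint copies of $H$ onto vertices of $S$; since $\cc$ is small (growth constant $c$), this counting must be consistent, forcing $f_H(G)$ to be large for all but an exponentially small fraction of $G$.

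\textbf{Key steps.} First I would set up the gluing operation: given $G\in\cc_n$ with an $H$-attachable $S$, $|S|\ge an$, and given any $m$, attach $m$ pairwise disjoint copies of $H$ (each on a fresh block of $h$ labels) by joining the root of each copy to a vertex of $S$. The result lies in $\cc_{n+mh}$ by the definition of $H$-attachability. Second, I would estimate from below the number of graphs in $\cc_{n+mh}$ reachable this way from graphs in $\cc_n$ with \emph{at most} $\alpha n$ pendant $H$-appearances: choosing $m$ proportional to $n$ (say $m = \lceil \beta n\rceil$ for a suitable small $\beta$), the number of ways to pick attachment vertices and to label/place the copies is roughly $(an)^m \cdot (n+mh)!/(n!\,(h!)^m m!)$ up to symmetry corrections, which using Stirling dwarfs the factor $c^{-n}$ losses. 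Third — the crux of the double-counting — I would bound from above how many times a single graph $G'\in\cc_{n+mh}$ can be produced: one must be able to recover $G$ from $G'$ by identifying $m$ disjoint pendant copies of $H$ in $G'$ and deleting them (together with their connecting edges). The number of pendant appearances of $H$ in any graph on $N$ vertices is at most $N$, so the number of ways to choose which $m$ pendant copies to strip is at most $\binom{N}{m}$ with $N = n+mh$; this is exactly where the smallness of $\cc$ and the factor $e^2 c^h (h+2) h!$ in the hypothesis on $\alpha$ enter, to make the arithmetic close. Comparing the lower bound on the number of (graph, attachment)-pairs with $|\cc_{n+mh}|\le C (n+mh)!\,c^{n+mh}$ times the multiplicity bound yields that the number of $G\in\cc_n$ with $f_H(G)\le\alpha n$ is at most $e^{-\alpha n}|\cc_n|$ once $n\ge n_0$, after optimizing the choice of $\beta$ (or simply taking $m$ as in \cite{msw05}).

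\textbf{Main obstacle.} The delicate point is the bookkeeping in the double count: one must choose the auxiliary parameter $m$ (as a function of $n$) so that the entropy gained by attaching copies strictly beats both the $c^{n}$-type penalty coming from $\cc$ being only "small" and the over-counting penalty $\binom{n+mh}{m}$ from the non-uniqueness of the stripping operation, and this is precisely what pins down the explicit threshold $\alpha \le a/(9 e^2 c^h (h+2) h!)$. The one new wrinkle compared to \cite{msw05} is that attachments are only allowed at the subset $S$, not all of $V(G)$; since $|S|\ge an$ this only costs a constant factor $a$ per attachment, which is absorbed into the constant, so the argument of \cite{msw05} goes through essentially verbatim with $n$ replaced by $an$ in the count of attachment sites. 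I would also need the easy observations that $\gamma(\cc)=c\ge e^{-1}>0$ so that $|\cc_n|$ is genuinely of order $n!\,c^{n}$ up to subexponential factors (Pringsheim / smallness), and that removing an exponentially small fraction $e^{-dn}$ for $\overline{\ce_n}$ does not affect the conclusion since $\alpha<d$.
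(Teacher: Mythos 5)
Your high-level strategy matches the paper's (and McDiarmid–Steger–Welsh): glue $m \approx \alpha n$ disjoint copies of $H$ onto the attachable set $S$, count the resulting graphs in $\cc_{n+mh}$ in two ways, and derive a contradiction using $\gamma(\cc)=c$. The lower bound on the number of constructions — roughly $(an)^m (n+mh)!/(n!\,(h!)^m\,m!)$ times $|\tilde\cc_n|$, and the observation that restricting attachments to $S$ only costs a factor $a$ per copy — is all correct.

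The genuine gap is in the overcounting step, and it is decisive. You bound the multiplicity by $\binom{N}{m}$ with $N = n + mh$, justified by "the number of pendant appearances of $H$ in a graph on $N$ vertices is at most $N$." This throws away exactly the hypothesis that makes the argument close: only graphs $G_0$ with $f_H(G_0) \le \alpha n$ are being extended, so the resulting $G'$ has at most about $(h+2)\alpha n$ pendant $H$-appearances, not $N$. The paper's bound is therefore $\binom{\lfloor (h+2)\alpha n\rfloor}{\lfloor \alpha n\rfloor} \le e\bigl((h+2)e\bigr)^{\lfloor\alpha n\rfloor}$, i.e.\ a constant $(h+2)e$ per added copy — and it is precisely the $(h+2)$ here that reappears in the threshold $\alpha \le a/(9e^2 c^h (h+2) h!)$. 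Your bound $\binom{N}{m} \approx \bigl(e(1+\alpha h)/\alpha\bigr)^{\alpha n}$ has an extra $1/\alpha$ in the base. When you divide the construction count by this, the factor $(ae/\alpha)^{\alpha n}$ gained from $(an)^m/m!$ in the lower bound is cancelled, and what survives is roughly $\bigl(a/(e(1+\alpha h)h!c^h)\bigr)^{\alpha n}(1+\eps)^{-(1+\alpha h)n}(1-\eps)^n$, whose base in the first factor is not forced to exceed $1$ by any choice of $\alpha$ (it contains no $\alpha$ in the denominator, so the hypothesis on $\alpha$ never bites). Replacing $m=\alpha n$ by $m=\beta n$ for some other $\beta$ does not help, because the same $1/\beta$ cancellation occurs. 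The fix is to use the constraint $f_H(G_0)\le\alpha n$ to control $f_H(G')$ and replace $\binom{N}{m}$ by $\binom{(h+2)m}{m}$; with that, the argument (including the choice of $\eps$ via $(\alpha\beta)^\alpha = 1-3\eps$ for $\beta = a^{-1}e^2c^h(h+2)h!$) goes through.
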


%\m{$\leftarrow$ must read}

\begin{proof} The proof is a %straightforward adaptation
    simple modification
    of the proof of Theorem 4.1 of \cite{msw05}. We skip some of the
 details and refer the reader for them to \cite{msw05}. 
 %As \cite{msw05} we suppress the brackets in $\left \lfloor x n \right \rfloor$
 %and $\left \lceil x n \right \rceil$ and write just $x n$ for real $x$.
 %Instead of writing $\left \lfloor x n \right\rfloor$ in 
 %this proof we write $x n$.
 Write $\beta = a^{-1} e^2 c^h (h+2) h!$ and let $\eps \in (0, 1/3)$ be such that
$(\alpha \beta)^\alpha = 1 - 3 \eps$. Let $f(n)$ denote the number of graphs
in $\cc_n$. % and let $f_1(n)$ denote the number of graphs in $\cc_n$ that
%have at least $a n$ $H$-attachable vertices.
Then since $\gamma(\cc) = c$ there is $n_1 \ge N_0$ such that for each $n \ge n_1$ we have $e^{-\alpha n} \ge 2 e^{-d n}$ and
\begin{equation}\label{eq.est1}
2 (1-\eps)^n n! c^n \le f(n) \le (1+\eps)^n n! c^n.
\end{equation}
Assume that for infinitely many $n \ge n_1$
the claim of the lemma does not hold: that is, at least $e^{-\alpha n}$ fraction 
of graphs  in $G \in \cc_n$ ``have few pendant appearances'', i.e., $f_H(G) \le \alpha n$. Let $\tilde{\cc}_n \subseteq \cc_n$
consist of those graphs in $\cc_n$ that have few pendant appearances and 
%at least $an$ 
an $H$-attachable subset with at least $an$ vertices. Then 
\[
|\tilde{\cc}_n| \ge f(n) ( e^{-\alpha n} - e^{-d n}) \ge e^{-\alpha n} (1 - \eps)^n n! c^n.
\]
 Let $\delta \in (0, 1)$ be given by $\delta = \alpha h$. We can construct a graph $G$ on $\lceil n(1+\delta) \rceil$ vertices
by putting a graph $G_0$ isomorphic to a graph in $\tilde{\cc}_n$ on some $n$ of these vertices, and adding $\lfloor \alpha n \rfloor$ disjoint copies of $H$
on the remaining $\lceil \delta n \rceil$ vertices, so that for each added copy $H'$ of $H$
there is an edge between 
%an $H$-attachable vertex
the least vertex of $H'$ and some $y \in S_0$, where $S_0$
is the largest $H$-attachable subset of $G_0$. The number of such
constructions $b_{\lceil(1+\delta) n\rceil}$ satisfies
% we have for h integer
% h floor(x)<=ceil(h x)< h ceil(x)
% so floor (ceil(h x)/h) = floor(x).
\begin{align*}
    b_{\lceil (1+\delta) n\rceil} &\ge \binom { \lceil (1 + \delta) n \rceil} n  |\tilde{\cc}_n| \binom {\lceil \delta n \rceil} {h, \dots, h} \frac {(an)^{\lfloor \alpha n \rfloor}} {\lfloor \alpha n \rfloor!} 
  \\   &\ge
    \lceil (1 + \delta) n\rceil ! e^{-\alpha n} (1-\eps)^n c^n  \frac {a^{\lfloor \alpha n \rfloor}} {h! (h! \alpha)^{\lfloor \alpha n \rfloor}}.
\end{align*}
Now \cite{msw05} show that each graph in this way is constructed at most $\binom {\lfloor (h+2) \alpha n \rfloor} {\lfloor \alpha n \rfloor} \le e((h+2) e)^{\lfloor \alpha n \rfloor}$ times. This,
after a similar calculation as in \cite{msw05} yields that
\begin{align*}
  &f( \lceil(1+\delta)n\rceil) \ge %\binom { (1 + \delta) n} n 
 % e^{-\alpha n} (1-\eps)^n c^n n! \frac {(\delta n)! a^{\alpha n}} {(h! \alpha)^{\alpha n}} 
    \frac {b_{\lceil(1+\delta) n\rceil}} {e ((h+2) e)^{\lfloor \alpha n \rfloor}}
%\\ & 
\ge c' f ( \lceil (1+\delta) n \rceil) \left(\frac {1-\eps} {(1-3\eps) (1+\eps)^2}\right)^n,
\end{align*}
for some constant $c' > 0$ which does not depend on $n$. Since $\frac {1-\eps} {(1-3\eps) (1+\eps)^2} > 1$, 
our assumption cannot hold for infinitely many $n$, a contradiction.
%there is an integer $n \ge n_1$, for which (\ref{eq.est1}) does not hold, contradicting our assumption.
\end{proof}

\begin{corollary}\label{col.pendantK4} %If Conjecture~\ref{conj.nice} holds the
    Let $\cb = \{K_4\}$, let $h\ge 1$ and $l \ge 2$ be integers and suppose $H \in \cc^l_h$ is rootable at 1. 
    There is a constant $a = a(l, h)  > 0$ such that
    %for any graph  on vertices $\{1,\dots,h\}$ which is rootable at 1 
    the random graph $R_n \in_u {\crd l}$ satisfies
    \[
    \pr(f_H(R_n) \ge a n) \ge 1 - e^{-\Omega(n)}.
    \]
\end{corollary}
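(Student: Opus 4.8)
The idea is to apply Lemma~\ref{lem.pendant} to the class $\cc = \cc^l$ of connected graphs in $\crd l$, with the fixed rooted graph $H$, and then transfer the conclusion from $\cc^l$ to $\crd l$. First I would record that $\cc^l$ has a growth constant $c = \gamma(\cc^l)$ by Lemma~\ref{lem.gcrd} (or Lemma~\ref{lem.part2main} and Lemma~\ref{lem.gcindstep} for $\cb = \{K_4\}$), and that $c \ge \gamma(\ex K_4) = 9.073.. > e^{-1}$, so the hypothesis $\gamma(\cc) \in [e^{-1};\infty)$ of Lemma~\ref{lem.pendant} is satisfied. The main work is to verify the second hypothesis: that $R_n \in_u \cc^l$ has, with probability $1 - e^{-\Omega(n)}$, an $H$-attachable subset of linear size. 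Here I would take the attachable set $S$ to be (a linear-size subset of) the set of \emph{nice} vertices of $R_n$: by Lemma~\ref{lem.nice}, the number $Y_n$ of nice vertices satisfies $\pr(|Y_n - a_l n| > \eps n) = e^{-\Omega(n)}$ for a positive constant $a_l$, so with the required probability $R_n$ has at least $\tfrac12 a_l n$ nice vertices.

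The key claim to check is that the set of nice vertices is $H$-attachable with respect to $\cc^l$, i.e.\ that adding pendant copies of $H$ rooted at nice vertices keeps the graph in $\cc^l$. For this I would argue as follows. Let $G' \in \cc^l$ be obtained from $G \in \cc^l$ by attaching, at a nice vertex $x$, a pendant copy $H'$ of $H$ via a bridge $xy$ where $y$ is the root of $H'$ (rootable at $1$, hence colouring $y$ with $[l]$ keeps $H'$ in $\cc^l$). Since $x$ is nice, $G-x$ has (at least) two components each containing all colours $[l]$; in particular $x$ is a cut vertex and $G'-x$ still has two such colour-complete components, none of which is $H' - y$ unless $H'$ carries all colours — but in any case the argument of Lemma~\ref{lem.unrootable} (the ``nice $\Rightarrow$ rootable'' part) applies: any $K_4$-critical subgraph produced by testing goodness of some colour $i$ either lies inside a single component of $G'$ minus a cut vertex, or the cut-separation argument via Lemma~\ref{lem.2cuts} (using that $K_4$-critical graphs are subdivisions of $3$-connected graphs) forces a contradiction. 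The only new point is that the bridge $xy$ and the tree-like piece $H'$ do not create bad colours at $x$: a bad colour at $x$ would have to come from a $K_4$-critical subgraph meeting $H'-y$, but $H'-y$ is attached to the rest by the single edge $xy$, so any such subgraph, being $2$-connected, is confined to $H'$ or avoids $H'$ entirely, and in the latter case goodness is inherited from $G$, in the former from the fact that $H \in \cc^l$ is rootable at $1$. Iterating over finitely many (indeed linearly many) disjoint pendant attachments, the colour-goodness condition at each nice vertex is preserved because the attachments are pairwise vertex-disjoint and each is attached by a single bridge. Hence $S = \{$nice vertices of $G\}$ is $H$-attachable.

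Granting this, Lemma~\ref{lem.pendant} (applied with $a = \tfrac12 a_l$, the growth constant $c = \gamma(\cc^l)$, and $h = |V(H)|$) yields a constant $\alpha = \alpha(l,h) > 0$ and $n_0$ with $\pr(f_H(R_n) \le \alpha n) \le e^{-\alpha n}$ for $R_n \in_u \cc^l$, $n \ge n_0$. Finally I would transfer this to $\crd l$: by the exponential formula, a uniformly random graph from $\crd l_n$ has, with probability $1 - e^{-\Omega(n)}$, a connected component of size $(1-o(1))n$ — indeed a linear-size component suffices — and conditionally on its vertex set that component is uniform in $\cc^l$; a pendant appearance of $H$ inside that component is still a pendant appearance in the whole graph. (Alternatively one notes $f_H$ restricted to the largest component only decreases when passing to a subgraph appearance argument, and a Boltzmann/component-size tail bound for bridge-addable decomposable classes gives the linear-size giant.) Choosing $a = a(l,h)$ slightly smaller than $\alpha$ to absorb the lost $o(n)$ vertices, we get $\pr(f_H(R_n) \ge a n) \ge 1 - e^{-\Omega(n)}$ for $R_n \in_u \crd l$.

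\textbf{Main obstacle.} The routine parts (growth constant, Lemma~\ref{lem.nice}, the exponential-formula transfer) are standard; the delicate point is the $H$-attachability claim, specifically showing that attaching \emph{many} disjoint pendant $H$'s simultaneously at nice vertices never destroys colour-goodness. I expect this to require care in bookkeeping which cut vertex separates which attached piece, and in invoking Lemma~\ref{lem.2cuts} with the right $2$-cut $\{x, s\}$ (where $s$ is the test vertex for a colour) — essentially re-running the ``nice vertices are rootable'' argument from the proof of Lemma~\ref{lem.unrootable} in the presence of the extra pendant trees, and checking that niceness of $x$ survives each attachment (it does, since attaching a pendant subgraph via a bridge at $x$ leaves the two colour-complete components of $G-x$ intact).
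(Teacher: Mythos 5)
Your proposal follows essentially the same route as the paper: the $H$-attachable set is the set of nice vertices, its linear size with exponential concentration is supplied by Lemma~\ref{lem.nice}, and Lemma~\ref{lem.pendant} converts this into many pendant appearances. The paper derives the $H$-attachability of nice vertices from Proposition~\ref{prop.rootablejoin} together with the coloured-core decomposition of Section~\ref{subsec.unrootedstr}, while you re-run the ``nice $\Rightarrow$ rootable'' argument of Lemma~\ref{lem.unrootable} directly via Lemma~\ref{lem.2cuts} --- this is the same underlying argument; your explicit transfer from $\cc^l$ to $\crd l$ makes precise a step the paper leaves implicit, and it does rely (as you gesture at via the exponential formula) on $A_l(\rho_l) < \infty$, established independently in Lemma~\ref{lem.sizeAl}.
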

\begin{proof}
    By Proposition~\ref{prop.rootablejoin} and the decomposition of Section~\ref{subsec.unrootedstr} the set of nice vertices of a graph $G \in \crd l$ is $H$-attachable. %\m{Prove more rigorously?}
    The claim follows by Lemma~\ref{lem.nice} and Lemma~\ref{lem.pendant}.
\end{proof}

\bigskip

Fix positive integers 
%$r$ and $k$, $r \ge k$.
$r$ and $l$, $r > l$.
Recall the class $\tilde{\ca} = {\tilde \ca}^{<\cb, l, r>}$ defined in the proof of Lemma~\ref{lem.doubleblockers}: $\tilde{\ca}$ is
the class of $\{0,1\}^r$-coloured graphs 
 corresponding to the class of graphs that have
 an
 %$(2k, 2, \cb)$-double
 $(l, 2, \cb)$-double
 blocker of size $r$, and $\tilde{\cc}$ is the class of connected such graphs.
 (In this section $\cb = \{K_4\}$ is fixed.)
 %\m{in lem.doubleblockers: $k \to l$.}

%Similarly as in \cite{cmcdvk2012, cmcdmk2011}

Suppose $H$ is an induced subgraph of a coloured graph $G$. Similarly as in  \cite{cmcdvk2011, cmcdvk2012},
we will call $H$ a \emph{spike} of $G$ if all of the following hold:
\begin{itemize}
    \item $H$ is a path $v_1 \dots v_{l+1}$;
    \item there is only one edge between $V(H)$ and $V(G - H)$, and this edge is $u v_1$ where $u \in V(G - H)$;
    \item $\Col_H(v_1) = \dots = \Col_H(v_{l+1}) = \{1, \dots, l, x\}$ where $x \in \{l+1, \dots, r\}$;
    \item $u < v$ for each $v \in V(H)$.
\end{itemize}
It is easy to see that two different spikes must be pairwise disjoint.
%Also, if $H$ is a spike, then there are
%two colours $s,t \in C(H)$ such that $ext_{s,t}(H) \not \in \ex \cb$. \m{ext} 
%\m{parameterize $\tilde{\cc}$?}
\begin{lemma}\label{lem.conspikes} %Assume Conjecture~\ref{conj.nice} holds.
    Let $\cb = \{K_4\}$, let  $r$ and $l$ be positive integers, $r > l$ and
    consider the random graph $R_n \in_u \tilde{\cc}$. 
    There is a constant $a' = a'(r, l)$ such that
    \[
    \pr(R_n \mbox { has less than $a' n$ spikes}) \le e^{-\Omega(n)}.
    \]
\end{lemma}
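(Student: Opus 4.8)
The strategy is to reduce the statement about the class $\tilde{\cc}$ to the already-available linear-growth result for pendant appearances, Lemma~\ref{lem.pendant} (via Corollary~\ref{col.pendantK4}). The key point is that a spike, as defined, is a very specific coloured graph $H^*$: a path $v_1 \dots v_{l+1}$ with every vertex coloured $\{1,\dots,l,x\}$ for a fixed $x \in \{l+1,\dots,r\}$, rooted at $v_1$. There are $r-l$ such graphs $H_x^*$, one for each choice of $x$. First I would check that each $H_x^*$ lies in an appropriate rooted coloured class and is ``rootable'' in the relevant sense, so that attaching disjoint copies of $H_x^*$ via a single edge from the root $v_1$ to an $H$-attachable set preserves membership in $\tilde{\cc}$ (or rather in a decomposable class whose connected members include those of $\tilde{\cc}$). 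Essentially, appending such a coloured path at a vertex $y$ cannot create a $K_4$-minor in any extension $G^L$ after deleting the $l$ ``special'' roots, because once the colours $\{1,\dots,l\}$ are removed the spike becomes a monochromatic pendant path, which is series-parallel and attaches along a cutvertex; and it cannot spoil the $(2,\cb)$-blocker property of $\{l_{k+1},\dots,l_r\}$ either, since a pendant path contributes no new $K_4$-critical subgraph meeting the colour-$x$ apex in more than the obvious way.

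Next I would invoke Corollary~\ref{col.pendantK4}, or more precisely the combination of Lemma~\ref{lem.nice}, Lemma~\ref{lem.pendant} and the structural decomposition of Section~\ref{subsec.unrootedstr}, to assert that there is a constant $a > 0$ such that $R_n \in_u \tilde{\cc}$ has, with probability $1 - e^{-\Omega(n)}$, an $H_x^*$-attachable subset of size at least $an$; here one should take $x$ fixed, say $x = l+1$. The hypotheses of Lemma~\ref{lem.pendant} require that $\gamma(\tilde{\cc})$ exists and is at least $e^{-1}$: this follows from Lemma~\ref{lem.doubleblockers} ($\rho(\tilde{\cc}) = \rho(\cc^{l+1})$ up to the combinatorial-isomorphism bookkeeping, so $\gamma(\tilde{\cc}) = \gamma(\cc^{l+1})$ exists by Lemma~\ref{lem.gcrd}/Lemma~\ref{lem.part2main}), together with the trivial lower bound $\gamma(\tilde{\cc}) \ge \gamma(\ex K_4) = 9.07.. > e^{-1}$. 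Then Lemma~\ref{lem.pendant} gives a constant $\alpha > 0$ with $\pr(f_{H_x^*}(R_n) \le \alpha n) \le e^{-\alpha n}$ for $n$ large. Finally I would note that every pendant appearance of $H_x^*$ in $G$ is exactly a spike of $G$ (the definition of ``appears at $W$'' matches the four bullet conditions for a spike, given that $v_1$ is the least vertex of $W$ and the unique external edge is incident to it), so $f_{H_x^*}(R_n)$ counts spikes, and we may take $a' = \alpha$.

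The main obstacle I anticipate is the $H$-attachability verification: one must show carefully that attaching arbitrarily many disjoint spike-paths at nice vertices (equivalently, at vertices of the $H$-attachable set guaranteed by the Section~\ref{subsec.unrootedstr} decomposition) keeps the graph inside $\tilde{\cc}$ rather than merely inside some larger class. This requires understanding precisely which vertices of a graph in $\tilde{\cc}$ are safe attachment points for a coloured pendant path carrying the full colour set $\{1,\dots,l,x\}$ — a vertex that is ``nice'' for the $(l+1)$-coloured core obtained by collapsing to colours $\{1,\dots,l,x\}$ will work, and Lemma~\ref{lem.nice} (with $l$ replaced by $l+1$) provides a linear-sized supply of such vertices with probability $1 - e^{-\Omega(n)}$. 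A secondary subtlety is the passage between $\tilde{\cc}$ and the decomposable ``core'' class $\cc^{l+1}$: since $\tilde{\cc}$ is not itself of the exact form treated in Section~\ref{subsec.unrootedstr}, I would either argue directly that the nice-vertex count of the $\cc^{l+1}$-representative is linear and transfers to $\tilde{\cc}$ through the coefficient comparison of Lemma~\ref{lem.doubleblockers}, or reprove the attachability statement for $\tilde{\cc}$ from scratch using the fact (established in the proof of Lemma~\ref{lem.doubleblockers}) that each graph in $\tilde{\cc}$ is built from a bounded number of $\cc^{l+1}$-pieces by contractions — attaching a spike at a nice vertex of a piece commutes with those contractions. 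Once this is in place, the rest is a direct citation of Lemma~\ref{lem.pendant}.
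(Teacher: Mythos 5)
Your plan diverges from the paper's proof at the crucial point, and the divergence leaves a real gap. The paper never applies Lemma~\ref{lem.pendant} to $\tilde{\cc}$. It applies Corollary~\ref{col.pendantK4} to the class $\cc^{l+1}$ and transfers the conclusion to $\tilde{\cc}$ through the overcounting construction $\mathcal{M}(n)$ from the proof of Lemma~\ref{lem.doubleblockers}: every $G\in\tilde{\cc}_n$ arises from $\kappa\le N$ disjoint graphs in $\cc^{l+1}$ by adding and then contracting a set $J$ of at most $N-1$ edges; the largest piece has $\ge n/N$ vertices and so, except with probability $e^{-\Omega(n)}$, has $\Omega(n)$ pendant appearances of $H$; at most $2|J|=O(1)$ of the resulting spikes can be destroyed by the contractions; and a total-probability argument over the parameter choices finishes. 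This is designed precisely to avoid the statement your proof relies on but does not establish, namely that $R_n\in_u\tilde{\cc}$ has a linear-sized $H$-attachable subset with probability $1-e^{-\Omega(n)}$. Corollary~\ref{col.pendantK4}, Lemma~\ref{lem.nice}, and the nice-core decomposition of Section~\ref{subsec.unrootedstr} are all stated for $\crd{l}$ and $\cc^l$, not for $\tilde{\cc}$; and merging the colours $\{l+1,\dots,r\}$ of a uniformly random $\tilde{\cc}$-graph does not give a uniformly random $\cc^{l+1}$-graph, so Lemma~\ref{lem.nice} cannot be cited after ``collapsing''. Your alternative suggestion, that attaching a spike at a nice vertex of one $\cc^{l+1}$-piece commutes with the contraction, would still have to verify that condition (c) of the double-blocker property (the set $\{q_{l+1},\dots,q_r\}$ remaining a $(2,\cb)$-blocker) survives the attachment, and this is not immediate since the spike carries one of the colours $\{l+1,\dots,r\}$. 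Without one of these arguments carried out, Lemma~\ref{lem.pendant} cannot be invoked for $\tilde{\cc}$.

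There is also a smaller error: the graph $H_x^*$ you propose, a coloured path on $l+1$ vertices, does not produce spikes under pendant appearance. The definition of ``appears at $W$'' forces the unique external edge to leave from $\min W$ but constrains nothing about its other endpoint $u$, whereas a spike requires $u < v$ for every spike vertex $v$. The paper's $H$ has $l+2$ vertices, with vertex $1$ uncoloured, precisely so that the attachment vertex lies inside $W$ and is therefore its minimum; a pendant appearance at $W = \{w_1 < \dots < w_{l+2}\}$ then yields a spike on $\{w_2,\dots,w_{l+2}\}$. You would need the same adjustment before the spike count could be read off from the pendant-appearance count.
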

\begin{proof}
    Let $H$ be a $\{0,1\}^{l+1}$-coloured path on the vertex set $[l+2]$
    such that one of its endpoints is $1$ and for $v \in \{2, \dots, l+2\}$, we have $\Col_H(v) = [l+1]$.
    %Let $a = a(l)$ be the constant from 
    By Corollary~\ref{col.pendantK4}, there are positive constants $a, c$ and  $C$, such 
    that the number of graphs in $\cc^{l+1}_n$ with at most $a n$ pendant
    appearances $H$ is at most
    $C e^{-c n}|\cc^{l+1}_n|$ for every $n$.

    %Let 
    %$c$ be any positive real, such that $c < a / N$, where $N = (1 + \aw 2 \cb)^{r- l - 1}$.
    %Let us count the graphs in $\tilde{\cc}_n$ that have less than $c n$ spikes.

    %Recall from the proof of Lemma~\ref{lem.doubleblockers} that by the Colour reduction
    %lemma, each graph in $\cc_n$ can be obtained by choosing a number $\kappa \le N$,
    %taking a set $\cs = \{C_1, \dots, C_\kappa\}$ of components in $\cc^{l+1}$,
    %so that $\bigcup_{C \in \cs} V(C) = [n+j]$ for some $j \in \{0, \dots, N\}$,
    %adding a set $J$ of at most $N$ edges between these components and contracting
    %each edge in $J$.

    %Consider the (multi-)set $\cf_n$ of all constructions provided by Lemma~\ref{lem.doubleblockers}
    %that yield a graph on $n$ vertices and $\tilde{\cc}_n \subseteq \cf_n$. That is,
    %$\cf_n$ is given by:

    Let $N = (1+\aw 2 (\ex K_4))^{l-1} = 3^{l-1}$.
    In the proof of Lemma~\ref{lem.doubleblockers} we have shown that each graph in $\tilde{\cc}_n$, as well as some other
    graphs, can be obtained as follows.
    \begin{itemize}
        \item Pick $\kappa \in [N]$ and  $j, m \in \{0, \dots, N-1\}$;
        \item choose a partition $\cs$ of $[n+j]$ into $\kappa$ sets $V_1 ,\dots, V_{\kappa}$;
        \item for each $i = 1, \dots, \kappa$ put an arbitrary graph $H_i \in \cc^{l+1}$ on $V_i$;
        \item for each $i = 1, \dots, \kappa$ choose $q_i \in  \{l+1, \dots, r\}^\kappa$ and map the colour $l+1$ in $H_i$ to $q_i$;
        \item choose a set $J$ of $m$ edges between the components $H_1, \dots, H_\kappa$ and add them
            to the resulting graph;
        \item finally, contract all edges $J$, so that the vertex resulting from a contraction of an edge $e = xy$ receives $max(x,y)$ as a label.
    \end{itemize}
    Consider the set $\mathcal{M}(n)$ of all possible constructions that yield a graph on the vertex set $[n]$ (the
    (multi-)set of the resulting graphs contains~$\tilde{\cc}_n$). %Let $R$ be a uniformly random construction from $\mathcal{M}$.
    
    By Lemma~\ref{lem.gcrd} the class $\cc^{l+1}$ has a growth constant $\gamma$.
    By Lemma~\ref{lem.part2main} and the proof of Lemma~\ref{lem.doubleblockers} %that 
    %By Lemma~\ref{lem.doubleblockers},
    \[
    |\mathcal{M}(n)| = n! \gamma^{n (1 + o(1))} \quad \mbox{and}\quad |\tilde{\cc}_n| = n! \gamma^{n (1 +o(1))}.
    \]

    %Suppose  with 
    Fix $\kappa = \kappa_0$, $m = m_0$
    $j = j_0$, $\cs = \cs_0$, $q = q_0$ and $J = J_0$ such 
    that there is at least one construction in $\mathcal{M}(n)$ with these parameters. Then every choice 
    of the graphs in $\{H_i\}$ yields a construction in $\mathcal{M}(n)$. In particular, writing $n_i = |V_i|$, there are in total
    $t_0 = \prod_{i=1}^{\kappa_0} |\cc^{l+1}_{n_i}|$ constructions in $\mathcal{M}(n)$ with these parameters.

    Note that the largest set $V_j$ in $\cs_0$ always contains $n' \ge n / \kappa_0 \ge n / N$ elements.
    It is easy to see that each pendant appearance of $H$ in $H_j$ yields a spike in $H_j$. %It is also easy to
    %see that the spikes are always pairwise disjoint.
    %By Corollary~\ref{col.pendantK4} 
    There are at most $C e^{-cn'} |\cc^{l+1}_{n_i}|$ ways
    to choose the graph $H_j$ so, that $H_j$ has less than $ a n'$ spikes.
    If $H_j$ has more than $a n'$ spikes, then the graph $G$ resulting from the construction
    has at least $a n' - 2 |J_0| \ge an' - 2N$ spikes, since the spikes are disjoint and each edge in $J$ can touch at most two spikes.

    Therefore there are at most
    \[
    C e^{-cn'} t_0 \le C e^{-(c/ N) n} t_0
    \]
    ways to finish the construction by choosing $H_1, \dots, H_{\kappa_0}$, 
    so that the resulting graph $G$ has less than $(a / N) n - 2N$ spikes.

    Since this bound holds for every $\kappa_0, m_0, j_0, \cs_0, q_0$ and $J_0$, we get by the law of
    total probability that the number of constructions in $\mathcal{M}(n)$ that yield a graph
    with at most $(a/N)n - N$ spikes is at most
    \[
    C e^{-(c/N) n} |\mathcal{M}(n)| \le  n! e^{-(c/N) n}  \gamma^{n + o(n)}.
    \]
    %where $\gamma = \gamma(\cc^{l+1})$. % and we used the fact from the proof
    So for any $a' < a / N$ and $n$ large enough
    \[
    \pr(R_n \mbox { has less than $a' n$ spikes}) \le C e^{-(c/N) n + o(n)} = e^{-\Omega(n)}.
    \]
\end{proof}

\begin{lemma}\label{lem.disconspikes} Let $\cb = \{K_4\}$, let $l, r$ and $K$ be positive integers, $r > l$. Then for $R_n \in_u \tilde{\ca}$ we have
    \[
    \pr(R_n \mbox { has at most $K$ spikes}) \le e^{-\Omega(n)}.
    \]
\end{lemma}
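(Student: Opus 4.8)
\textbf{Proof plan for Lemma~\ref{lem.disconspikes}.}
The plan is to bootstrap the connected case (Lemma~\ref{lem.conspikes}) to the general class $\tilde{\ca}$ via the exponential formula, exactly as one passes from $\tilde{\cc}$ to $\tilde{\ca}$ in the proof of Lemma~\ref{lem.doubleblockers}. First I would recall that $\tilde{\ca}$ is decomposable: each $G \in \tilde{\ca}$ is a disjoint union of its connected components, each of which lies in $\tilde{\cc}$ (this is why $[x^n]\tilde{A}(x) \le [x^n] e^{\tilde{C}(x)}$). Spikes are a local feature of a single component (a spike has exactly one edge leaving it), so the number of spikes of $G$ is the sum of the numbers of spikes over its components. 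Hence if $G$ has at most $K$ spikes in total, then \emph{every} component of $G$ has at most $K$ spikes, and in particular the largest component does.

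The key quantitative step is then: by Lemma~\ref{lem.gcrd} (applied as in Lemma~\ref{lem.conspikes}) the classes $\cc^{l+1}$, $\tilde{\cc}$ and $\tilde{\ca}$ all have the same growth constant $\gamma$, so $|\tilde{\ca}_n| = n!\,\gamma^{n(1+o(1))}$. A random $G \in_u \tilde{\ca}_n$ has a largest component on $n' \ge n/2$ vertices with probability $1 - e^{-\Omega(n)}$; more precisely, since $\tilde{\ca}$ is decomposable and has a growth constant, the expected number of vertices outside the largest component is $O(1)$ (alternatively one can argue directly: the number of graphs in $\tilde{\ca}_n$ whose largest component has fewer than $n/2$ vertices is at most $\sum_{n_1 \le n/2}\binom{n}{n_1}|\tilde{\ca}_{n_1}||\tilde{\ca}_{n-n_1}|$, and splitting the mass evenly is exponentially more expensive than concentrating it, giving a bound $n!\,\gamma^{n-\Omega(n)}$). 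Conditioned on the largest component $C$ having $n'$ vertices and being on a fixed vertex set, $C$ is a uniformly random element of $\tilde{\cc}_{n'}$, so by Lemma~\ref{lem.conspikes} it has at least $a'n' \ge a'n/2 > K$ spikes with probability $1 - e^{-\Omega(n')} = 1 - e^{-\Omega(n)}$, provided $n$ is large enough that $a'n/2 > K$. Combining the two exponentially small bad events via the union bound gives $\pr(R_n \text{ has at most } K \text{ spikes}) \le e^{-\Omega(n)}$.

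The main obstacle is making the ``largest component is linear-sized whp'' step clean for $\tilde{\ca}$: this class is decomposable but, as noted in Section~\ref{sec.gcgen}, it is not bridge-addable, so one cannot invoke the standard bridge-addable-class fragment results of \cite{cmcd09} directly. I would instead get the concentration purely from the existence of a growth constant together with the convexity/superadditivity-type estimate $|\tilde{\ca}_{n_1}|\,|\tilde{\ca}_{n_2}| \le |\tilde{\ca}_{n_1+n_2}|$-style comparison used implicitly throughout the paper: any partition of $n$ into a part $\le n/2$ and the complement loses an exponential factor relative to keeping one giant component, because $\gamma^{n_1}\gamma^{n_2} = \gamma^n$ but one also pays the subexponential deficit of two smaller ``most-likely structures'' (two $n^{-5/2}$ or $n^{-3/2}$ factors instead of one), and more crudely because $\tilde{\cc}$ itself has growth constant $\gamma$ so a single connected graph on $n$ vertices already achieves the full exponential rate. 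Since we only need a $1 - e^{-\Omega(n)}$ statement rather than the exact limiting law, a soft argument of this kind suffices and avoids any appeal to bridge-addability.
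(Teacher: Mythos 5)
Your structural first step is exactly the one the paper uses: a spike is a local feature of a single component, so if $G\in\tilde{\ca}$ has at most $K$ spikes then \emph{every} component of $G$ has at most $K$ spikes, i.e.\ $\tilde{\ca}_1\subseteq\SET(\tilde{\cc}_1)$ where $\tilde{\ca}_1$ and $\tilde{\cc}_1$ denote the respective subclasses with at most $K$ spikes. At that point, however, you take a detour through a ``giant component'' argument, and that detour has a genuine gap.

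The flawed step is the claim that ``splitting the mass evenly is exponentially more expensive than concentrating it,'' so that the number of $G\in\tilde{\ca}_n$ whose largest component has fewer than $n/2$ vertices is $n!\,\gamma^{n-\Omega(n)}$. This is not forced by the coarse asymptotics $|\tilde{\ca}_n|/n! = \gamma^{n(1+o(1))}$. Writing $g(n)=|\tilde{\ca}_n|/n!$, your bound is
\[
\sum_{n_1\le n/2}\binom{n}{n_1}\,|\tilde{\ca}_{n_1}|\,|\tilde{\ca}_{n-n_1}| \;=\; n!\sum_{n_1\le n/2} g(n_1)\,g(n-n_1),
\]
and if $g(n)\sim c\,\gamma^n n^{-\alpha}$ (as is indeed expected here, with $\alpha=5/2$) the sum on the right is of order $\gamma^n n^{-\alpha}$, i.e.\ the \emph{same} exponential and even the \emph{same} polynomial order as $g(n)$ itself, not $\gamma^{n-\Omega(n)}$. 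You note yourself that the only saving is ``two $n^{-5/2}$ factors instead of one,'' which is a polynomial gain, yet the conclusion you draw from it is an exponential one. So the concentration of the largest component at linear size with probability $1-e^{-\Omega(n)}$ is not established, and since $\tilde{\ca}$ is not bridge-addable you also cannot import it from \cite{cmcd09}, as you correctly observe. (The concentration statement may well be true, but it needs a real proof; it is exactly the kind of ``smoothness'' estimate the paper works hard for elsewhere, e.g.\ in Lemma~\ref{lem.sizeAl}.)

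The paper sidesteps the giant-component issue entirely. From $\tilde{\ca}_1\subseteq\SET(\tilde{\cc}_1)$ one gets $\tilde{A}_1(x)\le e^{\tilde{C}_1(x)}$ coefficientwise, and since $\gamupper(\tilde{\cc}_1)<\gamma(\tilde{\cc})$ by Lemma~\ref{lem.conspikes} and exponentiation preserves the radius of convergence, $\gamupper(\tilde{\ca}_1)\le\gamupper(\tilde{\cc}_1)<\gamma(\tilde{\cc})=\gamma(\tilde{\ca})$. The ratio $|\tilde{\ca}_{1,n}|/|\tilde{\ca}_n|$ is then $e^{-\Omega(n)}$ without needing to say anything about how the vertices distribute among components. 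If you replace your ``giant component'' paragraph by this one-line generating-function comparison, your argument becomes correct and is essentially the paper's.
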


\begin{proof} Let $\tilde{\ca}_1$ be the class of graphs in $\ca$
    that have at most $K$ spikes, and let $\tilde{\cc}_1$ be the class
    of graphs in $\tilde{\cc}$ that have at most $K$ spikes.
    Then $\tilde{\ca}_1 \subseteq \SET(\tilde{\cc}_1)$ and
    \[
    \tilde{A}_1(x) \le e^{\tilde{C}_1(x)}.
    \]
    Using Lemma~\ref{lem.conspikes}, 
    \[
     \gamupper(\tilde{\ca}_1) \le \gamupper(\tilde{\cc}_1) < \gamma(\tilde{\cc}) = \gamma(\tilde{\ca}).
    \]
    The lemma follows by (\ref{eq.small}).
\end{proof}

\bigskip

    For an $r$-coloured graph $G$, $S_1, S_2 \subseteq [r]$, %such that $S_1 \cap S_2 = \emptyset$
    and sets $Q_1, Q_2$ disjoint from $V(H)$, such that $|S_i| = |Q_i|$ for $i=1,2$,
    we denote by $G^{S_1 \to Q_1, S_2 \to Q_2}$ the graph obtained by adding to $G$ new vertices $Q_1 \cup Q_2$,
    and for $i=1,2$ adding an edge between $q_i^{(j)}$ and each vertex coloured $s_i^{(j)}$. Here $q^{(j)}_i$ and
    $s^{(j)}_i$ is the $j$-th smallest element in $S_i$ and $Q_i$ respectively.

\begin{lemma}\label{lem.rdmain} Let $k$ be a positive integer. % Suppose Conjecture~\ref{conj.nice} holds.
    Then 
    \begin{equation}\label{eq.rdmain}
    |(\ex (k+1) K_4)_n| = (1 + e^{-\Omega(n)}) |(\rd {2k+1} K_4)_n|.
    \end{equation}
\end{lemma}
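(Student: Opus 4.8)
The plan is to prove \eqref{eq.rdmain} by showing that the two classes differ only on an exponentially small fraction, using the spike machinery developed in Lemmas~\ref{lem.conspikes} and~\ref{lem.disconspikes} together with the reduction results of Section~\ref{sec.struct}. First I would recall that $\rd{2k+1}K_4 \subseteq \ex(k+1)K_4$ (this is noted right after Theorem~\ref{thm.main1}), so one inequality is trivial and the work is to bound $|(\ex(k+1)K_4)_n \setminus (\rd{2k+1}K_4)_n|$ from above by $e^{-\Omega(n)} |(\rd{2k+1}K_4)_n|$. By Corollary~\ref{col.part2main} we have $\gamma(\ex(k+1)K_4) = \gamma(\rd{2k+1}K_4) = \gamma(\crd{2k+1})$, so it suffices to show the inverse radius of convergence of the class of ``bad'' graphs (those in $\ex(k+1)K_4$ with no redundant blocker of size $2k+1$) is strictly smaller than $\gamma_k := \gamma(\crd{2k+1})$.

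The key step is to apply Lemma~\ref{lem.red2} and Lemma~\ref{lem.2canddecomp} with $l = 2k$: every $G \in \ex(k+1)K_4$ decomposes (up to bounded-size adjustments) as $G = G_1 \cup G_2$ where $G_1$ has a $(2k,2,K_4)$-double blocker $Q_1$ of bounded size $r$ with special set $S$ of size at most $2k$, and $G_2 \in \apex(\ex k K_4)$, with $V(G_1) \cap V(G_2) \subseteq Q_1$. Passing to the coloured representation, the dominant part of $\ex(k+1)K_4$ is captured by $\tilde{\ca} = \tilde{\ca}^{\langle K_4, 2k, r\rangle}$, i.e.\ by graphs with a $(2k,2,K_4)$-double blocker of size $r$; by Lemma~\ref{lem.doubleblockers} this has growth constant $\gamma_k$. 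I would then argue that for $G$ in (the coloured version of) this dominant class, the special set $S$ of size $2k$ together with \emph{any one} vertex from one spike gives a redundant blocker of size $2k+1$: the colours $\{1,\dots,2k\}$ correspond to the $2k$ vertices that, when deleted, leave a graph where the remaining blocker-colour is only a $(2,K_4)$-blocker, and the $(l+1)$-st coordinate of a spike carries all colours $\{1,\dots,2k\}$ plus one of the ``free'' colours, so attaching it to the blocker forces redundancy. More precisely, using the operation $G^{S_1 \to Q_1, S_2 \to Q_2}$, one extends $G$ at the spike colours to produce an element of $\rd{2k+1}K_4$, and the map is injective enough (a spike is recoverable) to not lose more than a polynomial factor.

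So the structure is: (i) split off the exponentially small non-dominant part via Lemma~\ref{lem.apexdomination}-type arguments — actually here one only needs that $\gamma(\apex(\ex k K_4)) \le 2\gamma(\ex k K_4) < \gamma_k$ (which follows from Corollary~\ref{col.part2main} since $\gamma_k = \gamma(\crd{2k+1}) > 2^k \gamma(\ex K_4) \ge 2\gamma(\ex k K_4)$ by Theorem~1.2 of \cite{cmcdvk2012}, i.e.\ $\gamma(\ex k K_4) = 2^{k-1}\gamma(\ex K_4)$), so the ``complex'' graphs are negligible; (ii) among the remaining graphs, which lie in $\tilde{\ca}$ (as coloured graphs) with a genuine $(2k,2,K_4)$-double blocker and special set $S$, apply Lemma~\ref{lem.disconspikes} with $K=0$ to conclude that all but an $e^{-\Omega(n)}$ fraction have at least one spike; (iii) for each such graph, the special set $S$ plus one vertex of a spike is a redundant $K_4$-blocker of size $2k+1$ (check: deleting all of $S$ and that vertex leaves $G$ minus a $(2,K_4)$-blocker-worth of colours, hence series-parallel; deleting all but one of these $2k+1$ vertices still kills all $K_4$-minors because every $K_4$-subgraph meets $S$ in at least... one needs the double-blocker property plus the spike's extra colour), so the graph is in $\rd{2k+1}K_4$; (iv) translate back from the coloured classes using Proposition~\ref{prop.r-property}. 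The main obstacle I expect is step (iii): verifying carefully that the union of the special set $S$ and a single spike-vertex is genuinely a \emph{redundant} blocker — one must trace through exactly which $K_4$-subgraphs can survive deletion of $2k$ of the $2k+1$ chosen vertices, using that $Q_1 \setminus S$ is only a $(2,K_4)$-blocker for $G-S$ and that the spike is attached pendant-wise with the full colour set $\{1,\dots,2k\}$ on each of its vertices. This is the delicate combinatorial core; the rest is bookkeeping with generating-function radii already established in the preceding lemmas.
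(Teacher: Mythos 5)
Your step (iii) contains a fatal error: you propose taking the redundant blocker to be $S$ together with \emph{a vertex of a spike}, but the spike vertices live in $V(G)\setminus Q$. Removing $S$ and one spike vertex still leaves all $r-2k\ge 1$ apex vertices of $Q\setminus S$ in the graph, so $G - S - v$ is not even series-parallel, and $S\cup\{v\}$ is generally not a $K_4$-blocker at all, let alone a redundant one. In the paper's proof the extra vertex $x$ is taken from $Q\setminus S$ — specifically, $x$ is the apex vertex corresponding to a colour $q\in\{2k+1,\dots,r\}$ which appears on many spikes. Then $S'=S\cup\{x\}$ is shown to be a redundant blocker by the following argument: if some subdivision $H'$ of $K_4$ had at most one vertex in $S'$, then (Lemma~5.3 of \cite{cmcdvk2012}) $H'$ touches at most $2r+12$ spikes, leaving at least $k$ spikes with colour $\{1,\dots,2k,q\}$ disjoint from $H'$; pairing each of the $k$ disjoint pairs in $S'\setminus V(H')$ with one remaining spike gives $k$ disjoint $K_4$-minors, and $H'$ gives an $(k+1)$-st, contradicting $G\in\ex(k+1)K_4$. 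Your argument never recovers this structure.

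A second, related gap is in step (ii): Lemma~\ref{lem.disconspikes} with $K=0$ (``at least one spike'') is not enough. The actual proof needs $\tilde G$ to have $K = r(k+2r+12)$ spikes, so that by pigeonhole some colour $q$ is shared by at least $k+2r+12$ of them; this is the quantity needed to survive both the $\le 2r+12$ spikes a single $K_4$-subdivision can touch and still leave $k$ disjoint witnesses. Finally, your step (i) contains a spurious intermediate claim $\gamma(\ex kK_4)=2^{k-1}\gamma(\ex K_4)$; this fails already for $k=2$ (where $\gamma(\ex 2K_4)=\gamma_1=23.5\ldots>2\gamma(\ex K_4)=18.1\ldots$). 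The inequality $\gamma(\apex(\ex kK_4))<\gamma_k$ you actually need does hold, but it follows from Lemma~\ref{lem.gamupper}/Lemma~\ref{lem.gammaHiolder} applied with $k_0=1$, not from your stated identity. The paper's step (i) is handled by invoking Lemma~\ref{lem.notcomplex} together with Theorem~\ref{thm.gc}, Lemma~\ref{lem.part2main} and Lemma~\ref{lem.gamupper}.
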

\begin{proof}
    By Lemma~\ref{lem.gamupper}, Lemma~\ref{lem.part2main}, Lemma~\ref{lem.notcomplex} and Theorem~\ref{thm.gc}, there is a constant $r = r(k) > 2k$ such that 
    all but an exponentially small fraction of graphs from $\ex (k+1) K_4$,
    have a $(2k, 2, K_4)$-double blocker of size $r$. Here we will show 
    that all but an exponentially small fraction of graphs in the latter class
    have a redundant blocker of size $2k+1$. Since each graph in $\rd {2k+1} K_4$
    is in the class $\ex (k+1) K_4$, the claim will follow.
    %the number graphs in $\ex (k+1) K_4$
    %that have a $(2k, 2, K_4)$-double blocker of size $r$ is as the quantity on the right of (\ref{eq.rdmain}). \m{left???}
%
    %he proof of this lemma is based on the 
    We will use the idea of the proof of the main result of \cite{cmcdvk2012}.

    Fix $n \ge r$. All graphs in $(\ex (k+1) K_4)_n$ that have a $(2k, 2, K_4)$-double blocker (and some other graphs) can be constructed as follows.
    \begin{itemize}
        \item Choose $Q \subseteq [n]$ of size $r$ and $S \subset Q$ of size $2k$.
        \item Put an arbitrary graph in $\tilde{G} \in \tilde{\ca} = \tilde{\ca}^{<\{K_4\}, 2k, r>}$ on $[n] \setminus Q$.
        \item Put an arbitrary graph $H$ on $Q$.
        \item Let $G = \tilde{G}^{\{1,\dots,2k\} \to S, \{2k+1, \dots, r\} \to Q \setminus S}$. This finishes the construction of a
            graph $G$. % with a $(2k, 2, K_4)$-double blocker.
    \end{itemize}
    Suppose $\tilde{G}$ has more than $ K = r (k + 2r + 12)$ spikes. Then there is a (smallest) colour $q \in \{2k+1, \dots, r\}$
    such that there are at least $k + 2r + 12$ spikes coloured $\{1,\dots,2k, q\}$. Let $x$ be the vertex in $Q\setminus S$
    whose neighbours in 
    %$\tilde{G}^{\{1,\dots,2k\} \to S, \{2k+1, \dots, r\} \to Q \setminus S}$
    $G$
    are the vertices coloured $q$
    in $\tilde{G}$. 
    %Now 
    Denote $S' = S \cup \{x\}$.

    Suppose $G$ contains a $\cb$-critical subgraph $H'$ (that is, a subdivision of $K_4$) which has at most one vertex in $S'$. 
    %Then
    %$H'$ can touch at most 
    %Let $H \subseteq G$ be a $K_4$-critical graph -- i.e., a subdivision of $K_4$.
    By Lemma~5.3 of \cite{cmcdvk2012},
    $H'$ can touch at most $2 (r + |E(K_4)|) = 2r + 12$ spikes in $\tilde{G}$. Thus there are
    at least $k$ spikes that are disjoint from $H$. Form an arbitrary maximal matching in 
    the set $S' \setminus V(H)$: the matching has exactly $k$ pairs. For $y,z \in S'$ 
    and a spike $P$ in $\tilde G$, we have that $G[V(P) \cup \{y,z\}] \not \in \ex K_4$. Thus, we can produce
    $k$ disjoint minors in $\cb$ for each pair in the matching. The graph $H$ yields $(k+1)$-st disjoint
    minor in $\cb$. 
    %\m{Kas yra $\tilde{\ca}$. Ar $S$ yra $\cb$-blocker??? Is everything OK with this???} 

    Thus, whenever $\tilde{G}$ has at least $K$ spikes and $G \in \ex (k+1) \cb$, we have that
    $S'$ is a redundant blocker for $G$.
    %has a redundant blocker $S'$.
    So each construction $G$
    such that $G \in \ex (k+1) K_4 \setminus \rd {2k+1} K_4$ is formed by taking
    a graph $\tilde{G}$ with at most $K$ spikes in the second step.
    
    By Lemma~\ref{lem.disconspikes},
    the number of choices for $\tilde{G}$, such that $\tilde{G}$ has less than $K$ spikes is
    at most
    \[
        e^{-\Omega(n)} |\ca_{n-r}|. 
    \]
    Therefore if $\cd = \ex (k+1) K_4 \setminus \rd {2k+1} K_4$, we have for $n \ge r$, $n \to \infty$
    \[
        |\cd_n| \le \binom n r \binom r {2k} 2^{\binom {2k} 2}  e^{-\Omega(n)} |{\tilde \ca}_{n-r}|
    \]
    and $\gamupper(\cd) < \gamma({\tilde \ca}) = \gamma(\ex (k+1) K_4)$.
    %\m{Use the term ``special set'' throughout}
\end{proof}

\begin{lemma} \label{lem.rdcount}
    %Suppose Conjecture~\ref{conj.nice} holds. Then
    Let $\cb = \{K_4\}$, and let $l \ge 2$ be an integer. 
    We have
    \[
        |(\rd l \cb)_{n+l}| = a_n  (1 - e^{-\Omega(n)}) \quad \mbox{where} \quad  %2^{\binom l 2}  \binom {n+l} l |\crd {l, n}|
    a_n = 2^{\binom l 2}  \binom {n+l} {l} |\crd {l, n}|. 
    \]
\end{lemma}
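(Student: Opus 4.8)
The statement relates the count $|(\rd l \cb)_{n+l}|$ of uncoloured graphs on $n+l$ vertices having a redundant $K_4$-blocker of size $l$ to the count $|\crd{l,n}|$ of $\{0,1\}^l$-coloured graphs on $n$ vertices all of whose colours are good. The plan is to set up an explicit (many-to-one) correspondence, following exactly the combinatorial bookkeeping of Proposition~\ref{prop.r-property}, and then argue the overcount is \emph{exactly} the claimed factor rather than merely bounded by it. First I would recall that, by definition, $G \in (\rd l \cb)_{n+l}$ means $G$ has a set $Q$ of $l$ vertices such that $Q \setminus \{x\}$ is still a $K_4$-blocker for every $x \in Q$; equivalently, $G$ arises from a coloured graph $G_0 \in \crd{l,n}$ by choosing an ordered set $(q_1 < \dots < q_l)$ of ``apex labels'', forming the extension $G_0^{\{q_1,\dots,q_l\}}$, and then superimposing an arbitrary graph on $Q = \{q_1,\dots,q_l\}$ (edges among the roots are free, since $r$-properties are closed under deleting edges between roots and the redundant-blocker property does not constrain edges inside $Q$).

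The core of the argument is a counting identity. Given $G \in (\rd l \cb)_{n+l}$, I would fix a redundant blocker $Q$ of size $l$; removing $Q$ and re-labelling $V(G)\setminus Q$ by $[n]$ in the order-preserving way, and colouring each $v$ by the set of its neighbours in $Q$ (via the order-preserving identification of $Q$ with $[l]$), produces a coloured graph in $\crd{l,n}$ — it lies in $\crd{l,n}$ precisely because each colour is good, which is the translation of $Q$ being a redundant blocker. Conversely, starting from any $G_0 \in \crd{l,n}$, the number of ways to reconstruct a graph in $(\rd l \cb)_{n+l}$ with a \emph{chosen} redundant blocker placed at a \emph{chosen} $l$-subset of $[n+l]$ is: $\binom{n+l}{l}$ for the choice of which labels form $Q$ (the induced order then pins down which apex label plays which colour, so there is no further factor of $l!$), times $2^{\binom l 2}$ for the free choice of edges inside $Q$. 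This gives $a_n = 2^{\binom l 2}\binom{n+l}{l}|\crd{l,n}|$ as an upper bound for $|(\rd l \cb)_{n+l}|$ counted \emph{with multiplicity} over all (blocker, placement) pairs, hence trivially $|(\rd l \cb)_{n+l}| \le a_n$.

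The remaining, and genuinely delicate, direction is the lower bound $|(\rd l \cb)_{n+l}| \ge a_n(1 - e^{-\Omega(n)})$, i.e.\ showing that for all but an exponentially small fraction of the $a_n$ constructions the resulting graph has a \emph{unique} redundant blocker of size $l$ placed in a \emph{unique} location, so that the map construction $\mapsto$ graph is essentially a bijection. This is where I expect the main obstacle to lie, and the tool is already in the excerpt: Lemma~\ref{lem.apex_not_rd} and — more to the point for the $l \ge 2$, good-colour setting — the machinery of spikes developed in Lemmas~\ref{lem.conspikes}, \ref{lem.disconspikes} and used in Lemma~\ref{lem.rdmain}. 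Concretely, I would argue that a uniformly random graph built from a random $G_0 \in \crd{l,n}$ (which, after passing through the coloured-graph picture, is governed by the growth constant $\gamma(\cc^l)$ and its subexponential refinements from Section~\ref{sec.unrooting}) has, with probability $1 - e^{-\Omega(n)}$, linearly many disjoint spikes; any second redundant blocker $Q'$ with $|Q \setminus Q'| \ge 1$ would have to avoid, and hence be disjoint from, almost all of these spikes, yet each spike together with two vertices of the would-be special set produces a $K_4$-subdivision — exactly the contradiction extracted in Lemma~\ref{lem.rdmain}. Thus both the multiset of blockers and their placements are forced, the overcount collapses to $1$ up to an $e^{-\Omega(n)}$ fraction, and the two bounds combine to give $|(\rd l \cb)_{n+l}| = a_n(1 - e^{-\Omega(n)})$. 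A minor point to handle carefully is small $n$: the identity is asymptotic, so I would simply note the constructions and graphs agree up to the stated error term and not worry about exact equality for bounded $n$.
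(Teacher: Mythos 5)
Your proposal is correct and follows essentially the same route as the paper: set up the many-to-one construction (choose $Q$, a coloured graph $G_0 \in \crd{l,n}$ on the remaining labels, and an arbitrary graph on $Q$), get the trivial upper bound $|(\rd l \cb)_{n+l}| \le a_n$, and then show that multiply-counted graphs are an $e^{-\Omega(n)}$ fraction by showing that once $G_0$ has at least $l+1$ spikes the redundant blocker is forced to be $Q$ — since any other redundant blocker $Q'$ would, for $z \in Q \setminus Q'$, $x \in Q \setminus \{z\}$, have to meet every spike $S$ (else $\{x,z\} \cup V(S)$ gives a $K_4$ minor surviving in $G - (Q'\setminus\{x\})$), forcing $|Q'| > l$. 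The one small imprecision is the citation: the spike bound needed here is Corollary~\ref{col.pendantK4} (pendant appearances directly in $\crd l$), not Lemmas~\ref{lem.conspikes} and~\ref{lem.disconspikes}, which concern the double-blocker class $\tilde{\cc}$; otherwise the argument is the paper's.
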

\begin{proof}
  %  Write
  %  \[
  %   \]
 %   Clearly, $|(\rd {l} \cb)_{n+l}| \le a_n$. 
     Consider the following constructions
     of graphs on $[n+l]$: first pick a set $Q \subseteq[n+l]$ of size $l$, next take a graph $G_0 \in \crd l$ 
     with $V(G_0) = [n+l] \setminus Q$ and an arbitrary graph $H$
     with $V(H) = Q$. 
     Let %$G = G_0^{[l] \to Q}$.
     $G = G_0^Q \cup H$.
     % $G = G_0[ [2k+1] \to Q]$.
     Each graph in $(\rd l \cb)_{n+l}$ can be obtained in this way, so $|(\rd {l} \cb)_{n+l}| \le a_n$. 
     We aim to bound the number of constructions
     that can be obtained twice, i.e., the ones which have two or more different redundant $K_4$-blockers
     of size $l$.

     %Let $K > 2k$ be any constant.
     If $G_0$ has at least %$K$
     $l+1$
     spikes then $Q$ is a unique redundant blocker of size $l$.
     Indeed, if $Q'$ is another such blocker, $Q' \ne Q$, then take a vertex $z \in Q \setminus Q'$
     and any $x \in Q \setminus \{z\}$.
     %$Q'$ must contain a vertex from $x \in Q$, otherwise it is not a $\cb$-blocker of $G$. 
     
     Now $x, z$ and
     the vertices of any spike $S$ induce a minor of $K_4$. Therefore, since $Q' \setminus \{x\}$ 
     must still be a $\cb$-blocker for $G$, $Q'$ must contain a vertex from each of the $l+1$ spikes, and so $|Q'| > l$,
     a contradiction. %\ge 1 + K > 2k+1$.
     %\m{hmmm.. strange: for this theorem only 2 spikes suffice..}

     Thus every construction where $Q$ is not the unique redundant blocker is obtained when $G_0$ has at most $l$ spikes.
     By Corollary~\ref{col.pendantK4}, there are at most
     \[
     2^{\binom l 2}\binom {n+l} {l} e^{-\Omega(n)} |\crd {l, n}| = a_n e^{-\Omega(n)} 
     \]
     such constructions, so the number of graphs in $(\rd l \cb)_n$ that have a unique redundant blocker is at least
    $a_n (1 - e^{-\Omega(n)})$.
\end{proof}

%Let $Big(G)$ denote the (lexicographically) largest component of $G$. 
%
%\begin{conjecture}\label{conj.components}
%    Let $k$ be a positive integer and let $R_n \in_u \rd _ {2k+1}$.
%    Then whp $Big(G)$ is nice and $\E |Big(G)| = n - o(1)$.
%\end{conjecture}

\begin{lemma}\label{lem.sizeAl} Let $l \ge 2$ be an integer and let $\crd l$ be the class of coloured
    graphs defined for $\cb = \{K_4\}$ in Section~\ref{subsec.b2k}. Let $\rho_l = \rho(\crd l)$. Then $A_l(\rho_l) < \infty$ and there is a constant $a_l > 0$ such that % Then $a_l = A^l (\rho(\crd l))$ satisfies $0 < a_l < \infty$ and
    \[
    |\crd {l,n}| = a_l n^{-5/2} n! \rho_l^{-n} \left( 1 + o(1) \right).  
    \]
\end{lemma}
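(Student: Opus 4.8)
The plan is to combine the structural decomposition of $\cu^{<l>}$ and $\cc^l\setminus\cu^{<l>}$ obtained in Section~\ref{subsec.unrootedstr} with the enumeration machinery of Theorem~\ref{thm.treeleaf}, and then pass from the connected class $\cc^l$ to $\crd l$ via the exponential formula. First I would recall the identity~(\ref{eq.cceq}),
\[
\cc^l + \cz \times (\cf_\circ \times \cL_2(\cf)) = \tilde{\ct} + \cz \times (\cf_\circ \times \cL_1(\cf)) + \cu,
\]
where $\tilde\ct = \ct(\cp_+(\cf),\, \cf_\circ\times\ci(\cf),\, \cf_\circ\times\cL_2(\cf))$. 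As established in the proof of Lemma~\ref{lem.nice}, the classes $\cp_+(\cf)$, $\cf_\circ\times\ci(\cf)$, $\cf_\circ\times\cL_1(\cf)$, $\cf_\circ\times\cL_2(\cf)$ and $\cu$ all have convergence radius at least $\rho(\cc^{l-1})$, which is strictly larger than $\rho(\cc^l)=\rho_l$ by Corollary~\ref{col.part2main} together with Lemma~\ref{lem.gamupper}. Hence the ``correction terms'' on both sides of~(\ref{eq.cceq}) are analytic in a disc strictly larger than $|x|<\rho_l$, so $C^l(x)$ and $\tilde T(x)$ have the same dominant singularity $\rho_l$ and the same singular behaviour up to lower-order analytic terms; in particular $|\cc^l_n| = |\tilde\ct_n|(1+e^{-\Omega(n)})$.

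Next I would apply Theorem~\ref{thm.treeleaf} to $\ca=\tilde\ct$ with $\cd=\cp_+(\cf)$, $\ci=\cf_\circ\times\ci(\cf)$, $\cL=\cf_\circ\times\cL_2(\cf)$. The hypotheses are met: $\rho(\tilde\ct)=\rho_l < \min(\rho(\cd),\rho(\ci),\rho(\cL))$ by the paragraph above, and $\tilde\ct$ is aperiodic since it contains all Cayley trees with every node coloured $[l]$ (as noted in the proof of Lemma~\ref{lem.nice}). Theorem~\ref{thm.treeleaf}(2) then gives a constant $c>0$ with
\[
|\tilde\ct_n| = c\, a^{-1} n^{-5/2} n!\, \rho_l^{-n}(1+o(1)),
\]
and part~(3) gives that the generating function of $\tilde\ct$ (hence, using~(\ref{eq.cceq}), $C^l(x)$) converges at $\rho_l$. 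Combining with $|\cc^l_n| = |\tilde\ct_n|(1+e^{-\Omega(n)})$ yields $|\cc^l_n| = c'\, n^{-5/2} n!\, \rho_l^{-n}(1+o(1))$ for a constant $c'>0$.

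It then remains to transfer this asymptotics from the connected class $\cc^l$ to $\crd l$. Since $\crd l = \SET(\cc^l)$, we have $A_l(x) = e^{C^l(x)}$; because $C^l$ converges at its singularity $\rho_l$ with a square-root-type singularity of exponent $3/2$ (more precisely $C^l(x)$ has an expansion whose first singular term is of order $(1-x/\rho_l)^{3/2}$, as the $n^{-5/2}$ coefficient asymptotics indicates), the function $e^{C^l(x)}$ inherits the same type of singularity with exponent $3/2$, by the standard composition rule for singularities of exponential-of-analytic type (see Proposition~IV.1 and the singularity-analysis composition lemmas of Flajolet--Sedgewick~\cite{fs09}). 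Hence $A_l(\rho_l)=e^{C^l(\rho_l)}<\infty$, and the transfer theorem (Theorem~VI.3 of~\cite{fs09}) gives a constant $a_l>0$ with
\[
|\crd{l,n}| = a_l\, n^{-5/2} n!\, \rho_l^{-n}(1+o(1)),
\]
as claimed. The main obstacle I anticipate is bookkeeping rather than conceptual: one must check carefully that the ``correction'' classes in~(\ref{eq.cceq}) really do have convergence radius bounded away from $\rho_l$ (this rests on the chain $\rho(\cL_i)\ge\rho(\ca_{[l-1]})$, $\psi_F$ increasing, and $\rho(\cc^{l-1})>\rho(\cc^l)$), and that the square-root-exponent-$3/2$ singular type is genuinely preserved under the operations $\cc^l\mapsto\tilde\ct$ and $C^l\mapsto e^{C^l}$, so that applying singularity analysis at $\rho_l$ is legitimate; the aperiodicity needed to apply the transfer theorem with a \emph{unique} dominant singularity also has to be invoked (it propagates from $\tilde\ct$ to $\crd l$ just as aperiodicity of $\cc^{\bullet l}$ was used earlier).
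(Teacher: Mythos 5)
Your treatment of $\cc^l$ via the identity~(\ref{eq.cceq}) and Theorem~\ref{thm.treeleaf} matches the paper closely, and is fine: the correction classes are analytic in a disc strictly larger than $|x|<\rho_l$, so $|\cc^l_n|$ has the stated $n^{-5/2}$ asymptotics and $C^l(\rho_l)<\infty$. The gap is in the last step, the passage from $\cc^l$ to $\crd l$. You want to apply singularity analysis (Theorem~VI.3 of Flajolet--Sedgewick) to $A_l(x)=e^{C^l(x)}$, which requires $C^l(x)$ (and hence $A_l(x)$) to be analytically continuable to a $\Delta$-domain at $\rho_l$ with a known singular expansion of the type $c_0-c_1(1-x/\rho_l)+c_2(1-x/\rho_l)^{3/2}+\cdots$. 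But Theorem~\ref{thm.treeleaf} does \emph{not} give you this. Part~(2) of that theorem yields the coefficient asymptotics $|\tilde\ct_n|=c\,a^{-1}n^{-5/2}n!\rho_l^{-n}(1+o(1))$, and part~(3) gives convergence at $\rho_l$, but the proof derives the unrooted count from the rooted one by a probabilistic averaging/concentration argument (controlling the tree size $Y_n$), not by exhibiting an analytic relation such as an integration or a dissymmetry identity between the generating functions. Consequently nothing is established about $\Delta$-analyticity of $\tilde T(x)$ or $C^l(x)$ at $\rho_l$. Your phrase ``as the $n^{-5/2}$ coefficient asymptotics indicates'' is where the argument slips: coefficient asymptotics of the form $c\,n^{-5/2}\rho^{-n}$ do not by themselves imply a $(1-x/\rho)^{3/2}$ singular expansion on a $\Delta$-domain, and without that you cannot invoke the composition rule for $e^{C^l}$ or the transfer theorem.

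The paper avoids this precisely by not using singularity analysis at this step. It instead invokes Theorem~2 of Bell, Bender, Cameron and Richmond~\cite{bbcr2000}, whose hypotheses --- convergence of $C^l$ at $\rho_l$, positivity of all $|\cc^l_n|$, smoothness $|\cc^l_{n+1}|/(n|\cc^l_n|)\to\rho_l^{-1}$, and a convolution estimate $\sum_{k=w}^{n-w}\binom{n}{k}|\cc^l_k||\cc^l_{n-k}|=o(|\cc^l_n|)$ --- are purely coefficient-level conditions and all follow from the $n^{-5/2}$ asymptotics you (and the paper) obtained. In particular the convolution bound is established by a direct integral estimate. So the right fix for your argument is to replace the final ``$\Delta$-analyticity + transfer theorem'' paragraph with this BBCR-type smoothness argument; everything you did before that point is sound and is essentially the paper's route.
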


\begin{proof}
    By the exponential formula we have $\rho(\cc^l) = \rho_l$.
    We will show that
    %\begin{itemize}
    
    \bigskip 
    \begin{doublespace}
    \begin{tabular}{c l}
       (*)    & $C^l$ converges to some positive constant at $\rho_l$. \\ 
       (**)   & $|\cc^l_n| > 0$ for all $n \ge 1$. \\ 
       (***)  & $\cc^l$ is smooth, i.e. ${|\cc^l_{n+1}|} / {n |\cc^l_n|} \to \rho_l^{-1}$. \\ 
       (****) & For any $w = w(n) \to \infty$ we have
 \end{tabular}
 \end{doublespace}
 \vskip -0.5 cm
 \begin{align*} 
     &S(n, w) = \sum_{k = w}^{n-w} \binom n k |\cc^l_k| |\cc^l_{n-k}| = o(|\cc^l_n|).
 \end{align*}
    Then $A_l(\rho_l) \le  e^{C^l(\rho_l)} < \infty$ and
    Theorem~2 of Bell, Bender, Cameron and Richmond \cite{bbcr2000}  yields
    \[
      |\crd {l,n}| = \frac 1 {A_l(\rho_l)} |\cc^l_n| \left( 1+ o(1) \right).
    \]

    %Let $\tilde \ct$, $\cL_1$, $\cL_2$, $\cp_+$ and $\cf_\circ$ be as in 
    We will use the identity (\ref{eq.cceq}) and the notation from the proof of Lemma~\ref{lem.nice}. %Section~\ref{subsec.unrootedstr}. 
    There we have shown %in that proof of Lemma~\ref{lem.nice} 
    that the %exponential generating function
  %  for the class
    class $\tilde{\ct}$ %$\ca = \ct(\cp_+(\cf), \cf_\circ \times \ci(\cf), \cf_\circ \times \cl(\cf))$
    has $\rho(\tilde{\ct}) = \rho_l$ and by Lemma~\ref{thm.treeleaf}
    $\tilde{T}(\rho_l) < \infty$. Since $\rho(\cf_\circ)$, $\rho(\cL_1(\cf))$, $\rho(\cL_2(\cf))$, and $\rho(\cu)$
    are all strictly larger than $\rho_l$,
    we have by (\ref{eq.cceq}) that $C^l(\rho_l) < \infty$. Since 
    the coefficients of $C^l$ are non-negative, (*) follows. The condition (**) is obvious ($\cc^l$ includes, i.e., every uncoloured path on $n$ vertices).
    Furthermore,
    by (\ref{eq.cceq}) and Theorem~\ref{thm.treeleaf}
    \begin{equation} \label{eq.sizeCl}
    |\cc^l_n| = c n^{-5/2} n! \rho_l^{-n} (1 + o(1)) 
    \end{equation}
    for some constant $c > 0$, so (***) follows. %This yields (***) and completes the proof.

    Finally, let us prove (****). Let $w = w(n) \to \infty$. We may assume $2 \le w(n) \le n/2$ for all $n$.
    %Then
    By (\ref{eq.sizeCl}) 
    for any $\eps > 0$, for all sufficiently large $j$ we have $|\cc^l_j| \le  (c+\eps) j^{-5/2} j! \rho_l^{-j}$. So
    for $n$ sufficiently large
    \[
    S(n) = \sum_{k=w}^{n-w} \binom n k |\cc^l_k| |\cc^l_{n-k}| \le (c+\eps)^2 \rho_l^{-n} n! \sum_{k=w}^{n-w} k^{-5/2} (n-k)^{-5/2}.
    \]
    Now symmetry and a standard approximation of a sum by an integral gives for $w' = w-1$
    \[
    f(n) = \sum_{k=w}^{n-w} k^{-5/2} (n-k)^{-5/2} \le 2 n^{-4} \int_{x = \frac 1 2}^{1 - \frac {w'} n} x^{-5/2} (1-x)^{-5/2} dx.
    \]
    Since for $t \in (1/2, 1)$
    \[
        \int_{1/2}^t x^{-5/2} (1-x)^{-5/2} dx = - \frac { 2 (1 + 6t -24 t^2 + 16 t^3)} {3 t^{3/2} (1-t)^{3/2}}
    \]
    we have
    \[
    f(n) \le \frac {4 (n^3 + 6w'n^2 - 24 w'^2 n + 16 w'^3)} {3 n^4 (n-w')^{3/2} w'^{3/2}} = O\left(n^{-5/2} w^{-3/2}\right). 
    \]
    Thus $S(n) = O(|\cc^l_n| w^{-3/2}) = o(|\cc^l_n|)$. This completes the proof.
\end{proof}

\bigskip

We are now ready to prove Theorem~\ref{thm.K4}.

\bigskip

\begin{proofof}{Theorem~\ref{thm.K4}}
    To replace $\Omega$ by $\Theta$ in the result of Lemma~\ref{lem.rdmain},
    note that by Lemma~\ref{lem.apex_not_rd} and Theorem~1.2 of \cite{cmcdvk2012}, $(\ex (k+1) K_4)_n$ contains
    at least
    \[
    |(\apexp k K_4)_n \setminus (\rd {2k+1} K_4)_n| %= c_k 2^{kn} |(\ex K_4)_n|
    = n! (2^k\gamma(\ex K_4))^{n + o(n)}
    \]
    graphs that do not have a redundant $K_4$-blocker of size $2k+1$.
    
    The theorem follows by Lemma~\ref{lem.part2main}, Lemma~\ref{lem.rdmain}, Lemma~\ref{lem.rdcount} and Lemma~\ref{lem.sizeAl}. 
\end{proofof}

\bigskip

\begin{proofof}{Theorem~\ref{thm.K4struct}}
    Let $R_n'$ be a random construction as in the proof of Lemma~\ref{lem.rdcount},
    where we pick the set $Q$ of size $2k+1$, the graph $G_0 \in \crd {l,n}$
    and the graph $H$ on $Q$ uniformly at random. 
    Then Theorem~\ref{thm.K4} and the proof of Lemma~\ref{lem.rdcount} imply that 
    the total variation distance between $R_n$ and $R_n'$ satisfies
    %where we pick the set $Q$. Then by Theorem~\ref{thm.K4}, we have
    \[
    d_{TV}(R_n, R_n') = e^{-\Theta(n)}.
    \]
    Therefore it is enough to prove the theorem for the random graph $R_n'$.
    By Corollary~\ref{col.pendantK4}, there is a constant $a_k > 0$, such 
    that the graph $G_0$ has at least $a_k n$ spikes (and so, each vertex in $Q$ has degree at least $a_k n$)
    with probability $1 - e^{-\Omega(n)}.$

    Suppose there is a blocker $Q'$ of $R_n'$ and at least two distinct vertices $x,y \in Q' \setminus Q$.
    Then any spike and $\{x,y\}$ induces a minor $K_4$, thus every such blocker must have
    at least $a_k n$ vertices with probability at least $1-e^{-\Omega(n)}$.
    Similarly $Q$ is with probability $1-e^{-\Omega(n)}$ a unique redundant $K_4$-blocker for $R_n$ of size $2k+1$.
  %  :
  %  for any other such blocker $Q'$ there is a vertex $x \in Q' \setminus Q$, so for any $y \in Q \setminus \{x\}$ $Q'\setminus \{y\}$ with probability $1 - e^{-\Omega(n)}$ $R_n$ contains a spike independent of $Q'$ and this spike together with $\{x,y\}$ induces a $K_4$ minor.
    This finishes the proof of \textit{(a)}.

    %The property \textit{(b)} follows directly from Lemma~2.5 of \cite{cmcd09}. 
    Finally, \textit{(b)} follows by a result of \cite{cmcd09} (restated in 
    a slightly more convenient form in \cite{cmcdvk2012}).
    More precisely, by Lemma~6.2 of \cite{cmcdvk2012}, the graph $Frag(R_n)$
    obtained from $R_n$ by removing its (lexicographically) largest
    component is in the class $\ex K_4$ with probability $1 - e^{-\Omega(n)}$.
%    and (see Lemma~6.3 of \cite{cmcdvk2012}), since
    The class $\ex (k+1) K_4$ is  bridge-addable and by Theorem~\ref{thm.K4} it is smooth and $\rho = \gamma(\ex (k+1) K_4)^{-1} > 0$.
    Therefore by \cite{cmcd09}, see Lemma~6.3 of \cite{cmcdvk2012},
    $Frag(R_n)$ converges in total variation to the ``Boltzmann-Poisson'' random graph 
    with parameters $\cb$ and $\rho$, in particular $\pr(|V(Frag(R_n))|=0) \to p_k = A(\rho)^{-1}$.
    %$d_{TV}(Frag(R_n),R(\cb, \rho)) \to 0$,
    %where $R(\cb, \rho)$ is the Boltzmann-Poisson random graph, such that 
    %follows directly from Lemma~\ref{lem.sizeAl} and (\ref{eq.sizeCl}). The constant $c_k = \lim |\cc^{2k+1}|$
\end{proofof}

%\newpage
\subsection{An illustration}
\label{subsec.illustration}

We present Figure~\ref{fig.Ctree2} illustrating some properties of typical graphs in $\ex 2 K_4$.

\begin{figure}[h]
     \begin{center}
    \includegraphics[height=7cm]{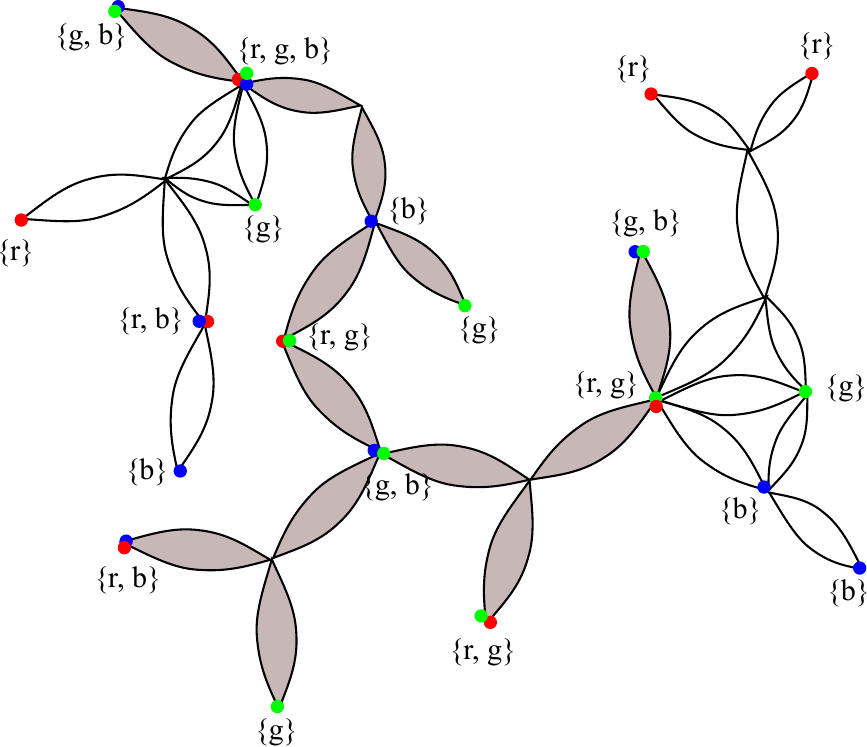}
    \end{center}
    \caption{Structure of (coloured cores of) graphs in $\ex 2 K_4$.}
    \label{fig.Ctree2}
\end{figure}

The picture shows just
          the coloured core of a graph in $\ex 2 K_4$ (it is typically of linear size). The leaf-like shapes are arbitrary series-parallel networks, most of them small.  Attach arbitrary rooted series-parallel graphs at arbitrary vertices of this;
          add a redundant blocker - $\{x,y,z\}$ - and connect $x$ to each red point, $y$ to each green point and $z$ to each blue point (there can also be arbitrary edges on $\{x,y,z\}$). 
          Unlike in planar graphs, there are vertices of different types: colours can occur only at the ``joints''; only the joints of the grey blocks can have arbitrary colour. For example, the top-right vertex cannot have colour blue, otherwise $z$ alone would form a minor $K_4$, and $\{x,y,z\}$ would not be a redundant $K_4$-blocker.

\section{The class $\ex (k+1) \{K_{2,3}, K_4\}$}
\label{sec.outerplanar}

The class $\ex (k+1) \{K_{2,3}, K_4\}$ is a subclass of $\ex (k+1) K_4$. 
%The classes $\ex (k+1) \{K_{2,3}, K_4\}$ do 
Even though it does not satisfy the condition of Theorem~\ref{thm.gc},
we can still adapt most of the techniques from the preceding sections %here to count the classes $\ex (k+1) \{K_4\}$
(the proofs are simpler for this case).

\subsection{Coloured cores and paths}
\label{subsec.outerplanarcores}

%To study it, we will adapt arguments from the previous section
%Similar arguments as above (but simpler) can be used to obtain.

%In order to avoid additional indexing, 
In this section we fix $\cb = \{K_{2,3}, K_4\}$.
To avoid an additional index, we will accordingly write
$\cc^{l} = \cc^{l, \cb}$, $\crd l = \crd {l,\cb}$,
and %will
assume that the definitions such as ``good colour'',
``nice vertex'' and ``nice graph''
are with respect to $\cb = \{K_{2,3}, K_4\}$.

%We will also $\cf$
%to denote now the classes of outerplanar networks 
%(i.e., 2-connected networks such that adding an edge between the poles,
%we obtain an outerplanar graph), parallel outerplanar networks
%and rooted outerplanar graphs respectively.
\begin{lemma}\label{lem.outerplanarpath}
    Let $\cb = \{K_{2,3}, K_4\}$, let $l$ be a positive integer
    and let $G \in \cc^l$ be nice. 
    The nice core tree of $G$ is a path.
\end{lemma}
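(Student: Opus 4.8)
The plan is to show that the nice core tree $T$ of a nice graph $G \in \cc^l$ cannot have a vertex of degree at least $3$, which forces $T$ to be a path. Recall from Section~\ref{subsec.unrootedstr} that the nodes of $T$ are the nice vertices of $G$, and two nice vertices $x,y$ are adjacent in $T$ precisely when the rooted block tree (after taking the coloured core $G'$) contains a path $x\,B\,y$ through a \emph{simple} block $B$, i.e. a block $B$ such that $G' - E(B)$ has at most two coloured components. So the first step is simply to recall that every block of $G'$ appearing on a path between two nice vertices is simple, and that by Theorem~\ref{thm.Tk} such a block is either a single edge or a parallel $SP$-network whose only coloured vertices can be its two poles.

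The core of the argument is to rule out a node $v$ of $T$ with three neighbours $x_1,x_2,x_3$. If $v$ had degree $\ge 3$ in $T$, then in $G'$ there would be three internally disjoint ``arms'' leaving $v$, each ending in a further nice vertex and hence each containing, in its component of $G'-v$, all $l$ colours — in particular each arm contains at least one red vertex ($l \ge 1$), and since $v$ is nice, $v$ itself can be assumed not to carry every colour needed (more carefully: root $G$ at $v$; because $v$ is nice there are two components of $G-v$ each containing all colours, and a third arm gives a third such "coloured branch"). Now I would add the apex vertex $x_{\mathrm{red}}$ adjacent to $v$ and to every red vertex of $G$ and produce a $K_{2,3}$-minor: contract each of the three arms down to a single vertex $a_i$ (each arm is connected and contains a red vertex, so $a_i$ is adjacent to $x_{\mathrm{red}}$ after contraction), and contract a suitable connected subgraph through $v$ so that $v$ (or its bag) is adjacent to $a_1,a_2,a_3$; together with $x_{\mathrm{red}}$ this exhibits $\{v, x_{\mathrm{red}}\}$ versus $\{a_1,a_2,a_3\}$ as a $K_{2,3}$-minor. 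This contradicts goodness of the red colour, contradicting $G \in \cc^l$. The technical point needing care is that the three arms together with $v$ must be arranged so that one genuinely obtains the complete bipartite structure $K_{2,3}$ and not merely a $K_{1,3}$ plus pendant bits; this is where I will use that the blocks between $v$ and the $x_i$ are simple parallel $SP$-networks, so each such block already contains two internally disjoint $v$–$x_i$ paths, giving enough connectivity to realise $v$ as adjacent to all three contracted arms.

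An alternative, possibly cleaner route is to argue directly with $SP$-network structure rather than an explicit minor: a parallel $SP$-network $B$ incident to $v$ gives two internally disjoint paths from $v$ to the far pole; three such blocks at $v$ plus the observation that the far ends all see the red colour lets one invoke Lemma~\ref{lem.2cuts} or Proposition~\ref{prop.SPcycle}-style reasoning to locate a $K_{2,3}$ subdivision after adding $x_{\mathrm{red}}$. Either way the conclusion is that no vertex of $T$ has degree $\ge 3$. Since $T$ is a tree (it is a subtree of the block tree, as established in Section~\ref{subsec.unrootedstr}) with maximum degree at most $2$, it is a path, which is the claim.

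\textbf{Main obstacle.} The delicate part will be producing the $K_{2,3}$-minor cleanly: one must verify that the three ``arms'' at a degree-$3$ node, after the necessary contractions and after adjoining the red apex vertex, yield exactly the bipartition $\{v_{\text{bag}}, x_{\mathrm{red}}\}$ versus three independent vertices — in particular that $v$'s bag really is joined to all three arm-vertices and that $x_{\mathrm{red}}$ really is joined to all three (which needs each arm to contain a red vertex, hence uses that each arm's component of $G-v$ contains all $l$ colours because it leads to another nice vertex). Getting the orientation/pole bookkeeping of the simple parallel blocks right so that this connectivity is guaranteed is the step I expect to require the most care; everything else is a direct appeal to Theorem~\ref{thm.Tk}, Proposition~\ref{prop.SPcycle} and the definition of the nice core tree.
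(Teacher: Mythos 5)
Your proof is correct and follows essentially the same approach as the paper: the paper argues that if the nice core tree $T$ had three leaves, then contracting the all-colours pendant at each leaf and adjoining the apex vertex for a colour $c$ yields a $K_{2,3}$ minor, which is just the "three leaves" formulation of your "degree-$\ge 3$ vertex" argument. The technical worry you flag about realising the full bipartite structure is not an actual obstacle — once the three arms are pairwise disjoint components of $G'-v$, each containing a vertex of colour $c$ (because each contains a nice vertex with a pendant carrying all colours), contracting each arm gives a vertex adjacent to both $v$ and $x_c$, and that is already a $K_{2,3}$ model regardless of extra edges or of the internal $SP$-network structure of the simple blocks.
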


\begin{proof}
	In Section~\ref{subsec.unrootedstr} we showed that the nice core tree $T$ of $G \in \cc^{l, \cb} \subset \cc^{l, \{K_4\}}$
    is a tree. If $T$ has at least three leaves, we can produce a minor $K_{2,3}$ by adding
    a new vertex connected to each of these leaves, thus every colour is bad for $G$.
\end{proof}

\bigskip

%Recall that we say that a colour $c$ for a network is good, if adding a new vertex connected
%to each pole and each vertex coloured $c$ we obtain a graph in $\ex \cb$.
%It is well known that 
%each 2-connected outerplanar graph
%has a unique Hamilton cycle. \m{$K_2$ not 2-connected!}
Let $\tilde{\cd}$ be the %smallest isomorphism-closed
class of all biconnected outerplanar
networks $G$ %containing $\ce_2$
%and all 2-connected outerplanar networks, 
such that adding a new vertex connected
to each pole of $G$ gives an outerplanar graph. Let $G \in \tilde{\cd}$
be 2-connected.
%have $|V(G)| \ge 3$.
Then $G$ has a unique Hamilton cycle $H$  (see, e.g., \cite{bps08}).
The poles $s$ and $t$ of $G$ must be neighbours in $H$, otherwise adding a new vertex connected to each pole yields
a minor $K_{2,3}$. Conversely, if we take an arbitrary 2-connected outerplanar graph $G$ and pick an oriented edge $s t$ from
its Hamilton cycle $H$, the graph with source $s$ and sink $t$ obtained from $G$ is from the class $\tilde{\cd}$: this follows by the relation of 2-connected outerplanar graphs and polygon dissections \cite{bps08}. Therefore from each
2-connected rooted outerplanar graph we can obtain two networks in $\tilde{\cd}$ (the
root becomes the source and either
the left or the right neighbour of the root on the Hamilton cycle becomes the sink) with $n-2$ vertices. It follows that
\[
\tilde{D}(x) = \frac {2 B(x)} {x^2} - 1,
\]
where $B(x)$ is the exponential generating function of rooted 
%2-connected
biconnected
 outerplanar graphs (which contains $K_2$). Bernasconi, Panagiotou and
Steger \cite{bps08} show that
\[
B(x) = \frac 1 2 (D(x) + x^2)
\]
where $D(x)$ is the exponential generating function for polygon dissections.
Thus $\tilde{D}(x) = D(x)/x^2$ and we get by (4.1) of \cite{bps08}
\begin{equation}\label{eq.outerplanarnetworks}
    \tilde{D}(x) = \frac 1 {4x} \left( 1 + x - \sqrt{x^2 - 6x +1} \right).
\end{equation}
Solving quadratic equations and using the ``first principle'' from \cite{fs09} we get that 
\begin{equation} \label{eq.rhoDrho}
	\rho(\tilde{\cd}) = 3 - 2 \sqrt 2 \quad \mbox{ and } \quad
\rho(\tilde{D}) \tilde{D}(\rho(\tilde{\cd})) = 1 - \frac {\sqrt 2} 2 = 0.292\dots
\end{equation}

%Recall that terms ``nice'' and ``coloured core'' were given in Section~\ref{subsec.unrootedstr}.
\begin{lemma}\label{lem.structouterplanar} Let $\cb = \{K_{2,3}, K_4\}$, let $l \ge 2$ be an integer, and let $\tilde{\cc}^l$ be the class of coloured cores of nice graphs in $\cc^l$.
%    Let $\tilde{\cc}$ consist of graphs in $\cc$ %which have
%    with %nice core tree on at least two vertices.
%  at least two nice vertices.
    Then
    %\begin{enumerate}
       % \item The
	the	class $\tilde{\cc}^l$ has exponential generating function
            \begin{equation}\label{eq.outerplanarc}
                \tilde{C}^l(x) =  %\frac {x^2 2^{2l} \tilde{D}(x) L(x)^2} {2(1-2^l x \tilde{D}(x))} + \frac {2^l x L(x)^2} 2,
                                \frac {2^{l-1} x L(x)^2} {1 - 2^l x \tilde{D}(x)},
            \end{equation}
            where 
            %$\tilde{D}(x) = P(x) + 1$ is the exponential generating function of non-series outerplanar networks
            %and
	    $L$ is the exponential generating function of a class $\cL = \cL^{<l>}$
	    with $\rho(\cL) \ge \rho( {\tilde \cc}^{l-1})$.
     %   \item $\rho(\cc \setminus \tilde{\cc}) \ge \rho (\cc^{l-1})$.
     %  \end{enumerate}
\end{lemma}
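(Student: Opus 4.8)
The plan is to imitate the analysis carried out for $\cb = \{K_4\}$ in Section~\ref{subsec.unrootedstr} and Section~\ref{sec.unrooting}, but with the simplifications afforded by Lemma~\ref{lem.outerplanarpath}: since the nice core tree of a nice $G \in \cc^l$ is now a \emph{path}, the underlying structure is a sequence rather than a tree, and the formula (\ref{eq.outerplanarc}) will come out as a rational function rather than via the Cayley tree function. First I would recall the decomposition of a nice graph $G \in \cc^l$ into its coloured core $G'$ together with pendant uncoloured series-parallel graphs --- here, pendant biconnected outerplanar pieces and their attached rooted outerplanar graphs. By Lemma~\ref{lem.outerplanarpath} the nice core tree $T$ is a path $v_1 v_2 \dots v_m$; the two endpoints $v_1, v_m$ play the role of ``leaf'' nodes and $v_2, \dots, v_{m-1}$ are ``internal'' nodes. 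As in the $K_4$ case, to each node $v_i$ we attach the graph $G'_{v_i}$ (the root together with all components of $G'-v_i$ containing no nice vertex), and the consecutive nice vertices on the path are joined by simple blocks, which by the analogue of Theorem~\ref{thm.Tk} for $\cb = \{K_{2,3},K_4\}$ are either single edges or biconnected outerplanar networks with only the poles possibly coloured --- that is, networks in $\tilde{\cd}$ possibly decorated with rooted outerplanar graphs on their non-pole vertices.

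Next I would set up the generating functions for the three kinds of building blocks exactly as in Section~\ref{subsec.unrootedstr}: a class $\cL = \cL^{<l>}$ counting the ``end'' decorations $G'_{v_1}$, a class $\ci$ counting the ``internal'' decorations $G'_{v_i}$, $2 \le i \le m-1$, and the edge-class $\cp_+$ replaced here by $\tilde{\cd}$ decorated by rooted outerplanar graphs. Because the core is a path, the class $\tilde{\cc}^l$ is a marked sequence: one end decoration, then an alternating sequence of ``edge block'' and ``internal decoration'', then a final end decoration. The key point is that an end decoration is a set of at least one $\bar{\ca}$-type piece (a $C$-tree that is an $[l]$-tree, with the restriction preventing a further nice vertex), times arbitrary decorations of strictly smaller colour-support, and each of these has convergence radius at least $\rho(\tilde{\cc}^{l-1})$; one then reads off $L(x)$ and checks $\rho(\cL) \ge \rho(\tilde{\cc}^{l-1})$ from the fact that the corresponding sub-classes involve only $A_S$ and $B_i$ with $|S| < l$, whose radii of convergence dominate $\rho(\tilde{\cc}^{l-1})$ by Lemma~\ref{lem.part2main}'s analogue (Section~\ref{subsec.A_R} reasoning) together with Lemma~\ref{lem.Datrho} and Proposition~\ref{prop.rhoBk}. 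The factor $2^{l-1}$ in the numerator and the $2^l$ in the denominator come from the $2^{|S|}$-type multiplicities when one un-roots: each end vertex of the path can be the source of colours in $2^l$ ways but the overall path has a symmetry of order $2$ (reversing it), giving $2^{l-1}$ for the pair of ends; each interior ``edge block'' contributes a factor $2^l x \tilde{D}(x)$ counting the choice of colour-assignment at its poles and the network itself, so summing the geometric series over the number of interior blocks yields $1/(1 - 2^l x \tilde{D}(x))$, and the two end decorations contribute $L(x)^2$. Assembling, $\tilde{C}^l(x) = 2^{l-1} x L(x)^2 / (1 - 2^l x \tilde{D}(x))$, which is (\ref{eq.outerplanarc}).

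Concretely, I would carry out the proof in the following order: (1) invoke Lemma~\ref{lem.outerplanarpath} to reduce the nice core to a path; (2) repeat the block-decomposition argument of Section~\ref{subsec.unrootedstr} verbatim, replacing $\cp_+(\cf)$ by the class of decorated networks in $\tilde{\cd}$ and noting that no complex blocks occur on the path; (3) define $\cL$, $\ci$ explicitly as in the $K_4$ case and observe that for $\cb = \{K_{2,3},K_4\}$ the ``internal'' decorations and the edge-blocks combine so that the univariate generating function is a rational function of $x$, $\tilde{D}(x)$ and $L(x)$; (4) translate the path (sequence) structure into the rational expression (\ref{eq.outerplanarc}), being careful with the symmetry factor as above; (5) bound $\rho(\cL)$ by $\rho(\tilde{\cc}^{l-1})$ using that every class entering $L$ has colour-support of size at most $l-1$, hence (by the analogues of Lemma~\ref{lem.multitype}, Lemma~\ref{lem.BAC} and Proposition~\ref{prop.rhoBk} for $\cb = \{K_{2,3},K_4\}$) convergence radius at least $\rho(\cb_{l-1}) = \rho(\tilde{\cd})$, which dominates $\rho(\tilde{\cc}^{l-1})$. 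The main obstacle I anticipate is not the generating-function bookkeeping but verifying the \emph{unique} decomposition and the precise combinatorial meaning of the constants $2^{l-1}$ and $2^l$: one must confirm that, after passing to coloured cores, every nice graph decomposes in exactly one way into (end piece, alternating sequence, end piece), that the reversal symmetry of the path is the only over-counting, and that the colour choices at the poles of interior blocks are genuinely independent --- this is the same subtlety handled by Proposition~\ref{prop.rootablejoin} and Proposition~\ref{prop.multiconstruction} in the $K_4$ case, and the argument there adapts because $K_{2,3}$ and $K_4$ are both $2$-connected (indeed $3$-connected except for $K_{2,3}$, but $K_{2,3}$-criticality is still well-behaved under the $2$-cut splitting of Lemma~\ref{lem.2cuts} since $K_{2,3}$ has no vertices of degree $\le 1$). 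Once this is in place, (\ref{eq.outerplanarc}) follows by the symbolic method \cite{fs09}.
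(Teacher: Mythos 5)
You have the right high-level outline — Lemma~\ref{lem.outerplanarpath} reduces the nice core to a path, so the generating function should be rational — but there is a genuine gap in your step~(3). You propose to ``define $\cL, \ci$ explicitly as in the $K_4$ case and observe that \dots\ the `internal' decorations and the edge-blocks combine so that the univariate generating function is a rational function of $x$, $\tilde D(x)$ and $L(x)$.'' But if $\ci$ is defined as in the $K_4$ case it is genuinely nontrivial (there $I(x) = 2^l e^{\bar A + \sum_{C\subset [l]} A_C}$), and your sequence decomposition would produce a factor $1/(1 - 2^l x\tilde D(x)\,I(x))$, which is not (\ref{eq.outerplanarc}). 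The missing ingredient — which you never state, let alone prove — is that an \emph{internal} nice vertex $v_i$ of the path carries no pendant coloured subgraph at all (so it contributes only $2^l x$). This is where $K_{2,3}$ does its work: if $v_i$ had a pendant piece with colour $c$, then $G' - v_i$ would have at least three components containing colour $c$ (the two halves of the path — each of which contains an endpoint and hence all colours — plus the pendant piece), and a new vertex $w$ joined to every $c$-coloured vertex, together with $v_i$ and contracted representatives of the three components, would give a $K_{2,3}$ minor, contradicting that $c$ is good for $G'$. The same observation forces the endpoint to carry exactly one $\cL$-decoration when $m\geq 2$ and exactly two when $m=1$, which is why the paper's decomposition has $\tilde\cc^{l,1} = 2^l\times\cz\times\SET_2(\cL)$. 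The paper flags this explicitly in the parenthetical about forbidden pendant coloured graphs; without it the decomposition is neither unique nor of the claimed shape.

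A secondary issue is your accounting of the constant $2^{l-1}$. ``Each end vertex can be the source of colours in $2^l$ ways, \dots dividing by $2$ for reversal, giving $2^{l-1}$ for the pair'' would in fact give $2^{2l-1}$, not $2^{l-1}$ (and you also drop a factor of $x$). The genuine $2^{l-1}$ only appears after writing the two sub-cases separately with their correct symmetry factors and summing:
\[
\tilde C^l = \underbrace{2^{l-1}xL^2}_{m=1} + \underbrace{\frac{2^{2l-1}x^2L^2\tilde D}{1-2^l x\tilde D}}_{m\ge 2}
 = \frac{2^{l-1}xL^2}{1-2^l x\tilde D}.
\]
Finally, when you bound $\rho(\cL)\ge\rho(\tilde\cc^{l-1})$ you assert that $\rho(\tilde\cd)$ ``dominates'' $\rho(\tilde\cc^{l-1})$; this needs the chain $\rho(\tilde\cd)\ge\rho(\tilde\cc^1)\ge\cdots\ge\rho(\tilde\cc^{l-1})$, i.e.\ the monotonicity (\ref{eq.ccup}) from the paper, which you should state and justify rather than take for granted.
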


\begin{figure}
     \begin{center}
    \includegraphics[height=2.5cm]{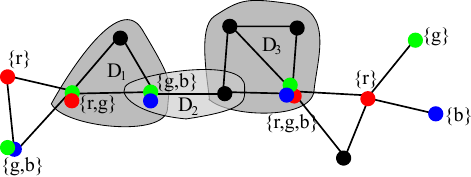}
    \end{center}
    \caption{Coloured core of a graph in $\cc^3$ in the case $\cb =\{K_{2,3}, K_4\}$.
    Replacing the three
    grey networks (which belong to the class $\tilde{\cd}$) with edges we obtain its nice core tree (a path of length three). The
             graphs attached to the endpoints of this path belong to the class $\cL$.}
    \label{fig.outerplanarcore}
\end{figure}

\begin{proof}
    %For $\cb = \{K_{2,3}, K_4\}$, 
    Let $\cL = \cL^{<l>}$ be the class of all pointed $\{0,1\}^l$-coloured connected graphs $G$ % $\ca_{[l]}$ as in Section~\ref{subsec.cbt}.
    satisfying the following conditions: 
   a) each colour is good for $G$;
   b) $G-r$ is connected, $\Col(G-r) = [l]$ and $\Col(r) = \emptyset$, where $r=r(G)$ is the root of $G$;
   c) for any $x \in V(G)$, each component of $G-x$
      contains at least one colour or $r$, and %there
      %are no more than two components containing 
      at most one component contains
      all colours $[l]$
      or $r$.
 %
 %  c) if we colour the root $r$ of $G$ with $[l]$, then
 %   for each vertex $x$, $G-x$ has at most one component containing all colours $[l]$ and 
%
%
    %Let $\cL$ be the subclass of graphs $G$ in $\ca_{[l]}$, such that if we colour
    %the root $[l]$, there is no vertex $x \in V(G)$ such that $G-x$ has two components
    %containing colour $[l]$.
    The class $\cL$ corresponds to the class $\cL_2$ from Section~\ref{subsec.unrootedstr}.
   
    We will now prove that $\rho(\cL) \ge \rho({\tilde \cc}^{l-1})$. 
    We first claim that for each positive integer $i$ and each $n = 0, 1, \dots$, 
    \begin{equation} \label{eq.ccup}
        |{\tilde \cc}^i_n| \ge |{\tilde \cc}^{i-1}_n|.
    \end{equation}
    Indeed, for any graph in $G \in {\tilde \cc}^{i}_n$ we can assign a graph $G' \in {\tilde \cc}^{i+1}_n$ by picking
    the lexicographically minimal pair of vertices coloured red and adding colour $i+1$ to their colour sets. Such a pair always exists since $G$ is nice, and since the colour red is good for $G$, the colour $i+1$ must be good for $G'$.

%    Now let us get a bound on the convergence radius of $\cL$.
%    For $G \in \cL$, let $B$ be the block of $G$ containing
%    the pointed vertex. Denote by $X_c$ the number of components in $G-E(B)$
%    containing colour $c$, and by $X$ the total number coloured components in $G - E(B)$.
 
Let $\cc$ be the class of all pointed graphs $H$, such that 
\begin{enumerate}
    \item $\Col(H) \subset [l]$;
    \item if we add to $H$ a new vertex $w$ coloured $\Col(H)$ and connected to the root of $H$
        and label the root of $H$ arbitrarily, we obtain a graph in $\tilde{\cc}^l$.
\end{enumerate}
The coefficients of $C(x)$ are dominated by those of
$\sum_{j \in [l-1]} \binom l j x^2 {\tilde C}^j(x)'$, therefore $\rho(\cc) \ge \min_{j \in [l-1]} \rho({\tilde \cc}^j) \ge
\rho({\tilde \cc}^{l-1})$. The last inequality follows using (\ref{eq.ccup}).
Recall also that we denote by $\cz_\emptyset$ the class of graphs consisting of a single pointed uncoloured vertex only.

Now let $G \in \cL$.
%    we have $X_c \le 2$ for each colour $c$ (otherwise $c$ is not good for $G$),
%    so $X \le 2l$. If a component $C$ of $G - E(B)$
%    consists of more than one vertex we can  
    %every block of $H \in \cL$ can contain at most $2 l$ cut vertices. 
    %Thus $\cL$ can be expressed as\dots \m{missing missing missing!!!\dots}
$G$ can be constructed as follows:
    take a pointed %2-connected
    biconnected
    outerplanar graph $B$ ($B$ is the block of $G$ containing the root with its colours removed),
    colour $t \le 2l$ of its (non-root) vertices $v_1, \dots, v_t$ with some subsets of $[l]$
    %(which is an outerplanar network with its sink labelled)
    and identify $v_i$ with the root of a pointed graph 
    $G_i \in \cc \cup \cz_\emptyset$ (so that labels of $G$, $G_1, \dots, G_t$ are disjoint). To verify the last statement, note first that
    the number of cut vertices and coloured vertices in $B$ is bounded by $2l$, since each
    component of $G - E(B)$ can contain at most two components containing colour
    $c$ for each $c \in [l]$ (see Section~\ref{subsec.cbt}).
    Secondly, suppose $u$ is a cut vertex of $G$, and let $G_u$ the graph obtained from the component of $G - E(B)$ containing $u$
    by pointing the vertex $u$. The graph $H$ obtained from $G$ by contracting all vertices
    in $G - G_u$ into a single vertex $w$, setting $\Col_H(w) = \Col_G(G_u - u) \subset [l]$ and $\Col_H(u) = \emptyset$ is a coloured minor of $G$: this shows that indeed $G_u \in \cc$.

    Let $\cb_\circ$ denote the class of pointed biconnected outerplanar graphs. By the properties of biconnected outerplanar graphs
    discussed above in this section we see that
    $\rho(\cb_\circ) = \rho({\tilde \cd}) \ge \rho(\tilde{\cc}^1)$. (The last inequality follows since from each graph in $\tilde \cd$
    we may obtain a graph in ${\tilde \cc}^1$ by labelling its poles and colouring them $\{red\}$.)

    The above observations imply that the coefficients of $L(x)$ are bounded by the coefficients of
    \[
        \sum_{t=0}^{2l} x^t B_\circ(x)^{(t+1)} (2^l (1 + C(x)))^t.
    \]
    Therefore $\rho(\cL) \ge \min(\rho(\cc), \rho(\cb_\circ)) \ge \rho({\tilde \cc}^{l-1})$.

%
%  Therefore
%    each colour in $\Col(H)$ is good for $H$, and the vertex $v_i$ is a nice vertex of $H$, and up to the isomorphism of colours, the graph $H$ belongs
%    to ${\tilde \cc}^{l'}$, where $l' < l$. We thus see that the coefficients of $L(x)$ are bounded by 
%Here $B(x)$ is the exponential generating function of pointed 2-connected graphs in $\ex \cb$.
%    The convergence radius of $B$ is at least $\rho( {\tilde \cc}^0)$.
%    Therefore, since $\rho(\cc) \ge \rho({\tilde \cc}^{l-1})$, the convergence radius of $L(x)$ is at least %$\min(\rho(\tilde{\cd}),\rho(\cc^{l-1})) = 
%    $\rho({\tilde \cc}^{l-1})$. 
%    %But for $l \ge 2$, $\rho(\cd) \ge \rho(\cc^{l-1})$.
   
    Now let ${\tilde \cc}^{l,1}$ denote the subclass of graphs in ${\tilde \cc}^l$ with the nice tree of size 1, and let
    ${\tilde \cc}^{l,2} = {\tilde \cc}^l \setminus {\tilde \cc}^{l,1}$.
    Using Lemma~\ref{lem.outerplanarpath} and Section~\ref{subsec.unrootedstr}, each graph in ${\tilde \cc}^{l,2}$ can be obtained
    as a series composition of $r \ge 1$ outerplanar networks $D_1, \dots, D_r$, where only the poles of these networks
    can be coloured (with arbitrary colours in $2^{[l]})$, by attaching two rooted graphs $L', L''$ at the source of $D_1$ and the sink of $D_r$ respectively (attaching pendant
coloured graphs to vertices corresponding to internal vertices of the nice core path is forbidden,
otherwise some colour would be bad for the resulting graph), see Figure~\ref{fig.outerplanarcore}. Let $c \in [l]$. Since each pole of a network $D_i$ is nice, 
there are two disjoint
    paths from each of the poles ending with a vertex coloured $c$. This shows that $D_i \in \tilde{\cd}$.
    
    It is easy to see that $L',L''$ must always be from the class $\cL$. On the other hand, every such construction gives a valid graph in ${\tilde \cc}^{l,2}$, and every graph in $\tilde{\cc}^{l,2}$ is obtained exactly twice (reversing each network and their order in the sequence and swapping $L'$ with $L''$ gives the same graph).
    
    Therefore
    \[
        2 \times \tilde{\cc}^{l,2} =  (2^l \times \cz \times \cL)^2 \times \tilde{\cd} \times \SEQ ( (2^l \times \cz) \times \tilde{\cd}).
     \]
     Similarly 
     \[
         \tilde{\cc}^{l,1} = 2^l \times \cz \times \SET_2(\cL).
     \]
     Finally, the identity $\tilde{\cc}^{l} = \tilde{\cc}^{l,1} + \tilde{\cc}^{l,2}$ and a standard conversion to exponential generating functions \cite{fs09}  yields (\ref{eq.outerplanarc}).
%
%     and 1) follows.
%     We omit the proof of 2), since it follows using arguments similar to those above, in Section~\ref{subsec.unrootedstr} and in the proof of Lemma~\ref{lem.unrootable}.
\end{proof}

\bigskip

%The following lemma follows easily from \cite{momm05} and \cite{bps08}

%\begin{lemma} The exponential generating function of outerplanar networks is
%    \[
%    D(x) = \frac{ 1 + x - \sqrt{x^2 - 6x + 1}} {4x}.
%    \]
%    The exponential generating function for the class of non-series outerplanar networks
%    is 
%    \[
%     \tilde{P}(x) = \frac {D(x)} {1 + x D(x)}.
%    \]

\begin{lemma}\label{lem.rho.tildeC}
    Let $l \ge 2$ be an integer and let $\cL^{<L>}, {\tilde \cc}^l$ be as in Lemma~\ref{lem.structouterplanar}. We have $\rho(\cL^{<l>}) > \rho ( {\tilde \cc}^ l) = r_l$, where
    \[
    r_l = \frac 1 {2^l} \left( 1 - \frac 1 {2^l-1} \right). 
    \]
    There are constants $R_1 = R_1(l) > r_l$ and $c_l' > 0$ and a function $g_1(x) = g_{1,l}(x)$ analytic for $x \in \mathbb{C}$ with $|x| < R_1$
    such that
    \begin{equation}\label{eq.exptildec}
        {\tilde C}^l(x) = \frac {c_l'} {1 - x/r_l} + g_1(x).
    \end{equation}
\end{lemma}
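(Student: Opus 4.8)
The plan is to extract the singularity structure of $\tilde{C}^l$ directly from the closed formula (\ref{eq.outerplanarc}),
\[
\tilde{C}^l(x) = \frac{2^{l-1} x L(x)^2}{1 - 2^l x \tilde{D}(x)},
\]
using that $\tilde{D}$ and $L$ are both analytic well past the value of $x$ where the denominator first vanishes. First I would study the denominator $h(x) = 1 - 2^l x \tilde{D}(x)$. Since $\tilde{D}$ has non-negative Taylor coefficients and $\tilde{D}(0)=1$ (a single edge), the function $x \tilde{D}(x)$ is continuous and strictly increasing on $(0, \rho(\tilde{\cd}))$, starting from $0$; by (\ref{eq.rhoDrho}), $\rho(\tilde{\cd}) \tilde{D}(\rho(\tilde{\cd})) = 1 - \tfrac{\sqrt 2}{2} \approx 0.292 > 2^{-l}$ for every $l \ge 2$, so the equation $x \tilde{D}(x) = 2^{-l}$ has a unique solution $r_l \in (0, \rho(\tilde{\cd}))$, and $h(x) > 0$ for $0 \le x < r_l$. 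To identify $r_l$ explicitly, substitute the algebraic expression (\ref{eq.outerplanarnetworks}) for $\tilde{D}$: the condition $2^l x \tilde{D}(x) = 1$ becomes $2^{l-2}(1 + x - \sqrt{x^2-6x+1}) = 1$, which after isolating the square root and squaring is a linear equation in $x$, and solving it gives $r_l = \frac{1}{2^l}\bigl(1 - \frac{1}{2^l-1}\bigr)$ (one checks the spurious root of the squared equation does not satisfy $x^2-6x+1 \ge 0$ or lies outside $(0,\rho(\tilde{\cd}))$). This is a routine but slightly fiddly computation; I expect it to be the one place where care is needed, mostly to confirm the root we keep is the relevant one and that $r_l < \rho(\tilde{\cd})$.

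Next I would check that $r_l$ is a \emph{simple} zero of $h$: since $h'(x) = -2^l(\tilde{D}(x) + x\tilde{D}'(x))$ and $\tilde{D}, \tilde{D}' > 0$ on the positive axis, $h'(r_l) \ne 0$. Also $\rho(\cL) > r_l$: by Lemma~\ref{lem.structouterplanar} we have $\rho(\cL) \ge \rho(\tilde{\cc}^{l-1})$, and applying the same denominator analysis one level down shows $\rho(\tilde{\cc}^{l-1}) = r_{l-1} > r_l$ (the sequence $r_l$ is strictly decreasing); alternatively one invokes $\rho(\tilde{\cc}^{l-1}) > \rho(\tilde{\cc}^l) = r_l$, which follows from the chain of inequalities already available (combining the monotonicity (\ref{eq.ccup}) with the strict drop $\rho(\cc^l) < \rho(\cc^{l-1})$ established earlier via Lemma~\ref{lem.gamupper} for $\cb = \{K_4\}$ and hence for the subclass here, or simply comparing $r_{l-1}$ with $r_l$ directly). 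Likewise $\rho(\tilde{D}) = \rho(\tilde{\cd}) > r_l$. Therefore both $L(x)^2$ and $\tilde{D}(x)$ — and hence the numerator $2^{l-1} x L(x)^2$ — are analytic in a disc $|x| < R_1$ for some $R_1 > r_l$ (take $R_1 = \min(\rho(\cL), \rho(\tilde{\cd}))$, shrunk slightly), and within this disc the only singularity of $\tilde{C}^l$ is the simple pole at $r_l$.

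Finally I would perform the partial-fraction / Laurent expansion at the simple pole. Write the numerator as $N(x) := 2^{l-1} x L(x)^2$, analytic for $|x| < R_1$. Since $h$ has a simple zero at $r_l$, $N(x)/h(x)$ has a simple pole there with residue $N(r_l)/h'(r_l)$, and
\[
\tilde{C}^l(x) = \frac{N(r_l)/h'(r_l)}{x - r_l} + g_0(x) = \frac{c_l'}{1 - x/r_l} + g_1(x),
\]
where $c_l' = -N(r_l)/(r_l h'(r_l))$ and $g_1(x) = \tilde{C}^l(x) - c_l'/(1 - x/r_l)$ is analytic on $|x| < R_1$ because the principal parts cancel. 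Positivity of $c_l'$ is immediate: $\tilde{C}^l$ has non-negative Taylor coefficients with positive radius of convergence $r_l$ (it counts a non-empty class of coloured graphs, so it is not entire), hence by Pringsheim its singularity at $r_l$ forces $\tilde{C}^l(x) \to +\infty$ as $x \uparrow r_l$, which is only consistent with $c_l' > 0$; equivalently $N(r_l) > 0$ and $h'(r_l) < 0$. This gives (\ref{eq.exptildec}) with the stated $R_1, c_l', g_1$, and the inequality $\rho(\cL^{<l>}) > r_l$ was established along the way. \noproof
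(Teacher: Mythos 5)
Your overall approach is sound and in fact closely parallels the paper's: identify $r_l$ as the root of the denominator $h(x)=1-2^l x\tilde D(x)$, observe it is a simple root, and extract the simple pole by a partial-fraction argument (the paper phrases this as an application of the supercritical sequence schema, Theorem~V.1 of \cite{fs09}, but it is the same meromorphic-function calculation). Your explicit computation showing $2^{l-2}(1+x-\sqrt{x^2-6x+1})=1$ reduces to $r_l$ is correct. However, there are two gaps.

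First, and most importantly, you assert ``within this disc the only singularity of $\tilde C^l$ is the simple pole at $r_l$'' without justification. Your monotonicity argument gives $h(x)>0$ for $0<x<r_l$ and, via the triangle inequality and non-negativity of the coefficients of $\tilde D$, that $h(x)\ne 0$ for $|x|<r_l$. But it does not rule out another zero of $h$ on the circle $|x|=r_l$ (or just outside it), which would give $\tilde C^l$ another pole and make $g_1=\tilde C^l-c_l'/(1-x/r_l)$ fail to be analytic in any disc of radius $R_1>r_l$. To close this you need \emph{aperiodicity}: since $\tilde\cd$ contains $K_2$ and cycles of every size, $[x^n]\,x\tilde D(x)>0$ for all $n\ge1$, so by the Daffodil lemma $|x\tilde D(x)|<r_l\tilde D(r_l)=2^{-l}$ for $|x|=r_l$, $x\ne r_l$, hence $h$ has no other zeros on that circle; since zeros of the analytic function $h$ are isolated, $R_1$ can then be chosen so that $r_l$ is its only zero in $\{|x|<R_1\}$. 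This is exactly the step the paper carries out before invoking Theorem~V.1, and it is not optional.

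Second, the inductive justification of $\rho(\cL^{<l>})>r_l$ is not correctly grounded at the base. You write that ``applying the same denominator analysis one level down shows $\rho(\tilde\cc^{l-1})=r_{l-1}>r_l$'', but for $l=2$ this reads $\rho(\tilde\cc^1)=r_1$, which is false: $r_1=0$, whereas $\tilde\cc^1$ is not governed by the formula (\ref{eq.outerplanarc}) at all, and $\rho(\tilde\cc^1)=\rho(\tilde\cd)=3-2\sqrt2>1/6=r_2$. The sequence $(r_j)$ is strictly decreasing only from $j=2$ onward; the base case has to be handled separately by the $\rho(\tilde\cd)>1/6$ comparison, as the paper does. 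Your fallback route (importing the strict inequality $\rho(\cc^l)<\rho(\cc^{l-1})$ from the $\cb=\{K_4\}$ setting) does not transfer either, since a strict inequality for a larger class says nothing about the subclass. Once these two points are repaired, the partial-fraction conclusion and the Pringsheim argument for $c_l'>0$ go through as you describe.
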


\begin{proof}
    A simple calculation shows that the unique %positive
    solution 
    of $2^l x \tilde{D}(x) = 1$ is $r_l$. Notice also that
    since $\tilde{\cd}$ contains a network isomorphic to $K_2$ and a network
    isomorphic to an arbitrary cycle, we have
    $[x^n] 2^l x \tilde{D}(x) > 0$ for $n = 1, 2, \dots$ (that is, $2^l x \tilde{D}(x)$ is strongly aperiodic, see \cite{fs09}).

    We will also use that $r_2 = \frac 1 6$ and $r_{j+1} < r_j$ for any integer $j = 2, 3, \dots$. Define
    \[
    c_l' = \frac {r_l L(r_l)^2} {2 (\tilde{D}(r_l) + r_l \tilde{D}'(r_l)) }.
    \]
    We prove the claim by induction on $l$. First consider the case $l = 2$. By Lemma~\ref{lem.structouterplanar}
    $\rho(\cL^{<2>}) \ge \rho( {\tilde \cc}^1) = \rho(\tilde \cd) = 3 - 2 \sqrt 2 > \frac 1 6$.
    We can write ${\tilde C}^2(x) = h(x) f(x)$ where $f(x) = (1 - 2^2 x {\tilde D}(x))^{-1}$ and
    $h(x) = 2^2 x L^{<l>}(x)^2$.
    By (\ref{eq.rhoDrho}), $f(x)$  corresponds
    to a supercritical sequence schema and $h(x)$ is analytic in $\Delta = \{x \in \mathbb C: |x| < R_1\}$ for some $R_1 > r_2$, $h(r_2) > 0$.

%    Furthermore, the function $2^2 x {\tilde D}(x)$ 
    We get using Theorem~V.1 of \cite{fs09}, its proof and properties of meromorphic functions that
    $\tilde{C}^2$ is meromorphic and has only one pole $r_2$ (which is simple) in $\Delta$,
    where it satisfies (\ref{eq.exptildec}).

    The proof of the general case $l \ge 3$ follows similarly, since using Lemma~\ref{lem.structouterplanar} and
    induction we have $\rho(\cL^{<l>}) \ge \rho({\tilde \cc}^{l-1}) = r_{l-1} > r_l$, $2^l \rho(\tilde \cd) {\tilde D}(\rho(\tilde \cd)) > 1$ and the convergence
    radius of $2^l x \tilde{D}(x)$ is  $\rho(\tilde \cd) > r_2 > r_l$.
\end{proof}

\bigskip

\subsection{Proof of Theorem~\ref{thm.outerplanar}}
%\bigskip

It remains to collect and combine the analytic results for classes related with $\ex \cb$.
Denote by $\cf$ the class of connected rooted outerplanar networks (we reuse the symbol from the previous section). 
\cite{momm05} show that the functional inverse of its exponential generating function $F$ is
\begin{equation}\label{eq.inv_rooted_outerplanar}
    \psi_F(u) = u e^{\frac 1 8 \left( \sqrt{1-6u+u^2} - 5u - 1 \right)}.
\end{equation}
%\end{lemma}

\begin{lemma}\label{lem.outerplanarCl}
    Let $l \ge 2$ and let $\cb = \{K_{2,3}, K_4\}$. 
    Define $\sigma_l = \psi_F(r_l)$, where % where $r_l$ is the smallest positive solution of $2^l x \tilde{D}(x) = 1$.
    $r_l$ is as in Lemma~\ref{lem.rho.tildeC}.
    %\[
    %r_l = \frac 1 {2^l} \left( 1 - \frac 1 {2^l-1} \right). 
    %\]
    There are constants $c_l > 0$, $R > \sigma_l$ and a function $g(x)$ analytic
    in $\{z \in \mathbb C: |x| < R\}$ such that
    %\[
    % |\cc^l_n| = c_l n! r_l^n (1 + o(1))
    %\]
    \[
    C^l(x) = \frac {c_l} {1 - x/\sigma_l} + g(x).
    \]
   % where $g(x)$ is analytic for $x$ with $|x| \le R$.
\end{lemma}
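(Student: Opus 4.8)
The plan is to transfer the singularity analysis of $\tilde{C}^l$ from Lemma~\ref{lem.rho.tildeC} to $C^l$ via the substitution $x \mapsto F(x)$, exactly as in the proof of Lemma~\ref{lem.part2main} where the analogous step for $\cb=\{K_4\}$ was carried out. First I would recall the block decomposition: a connected graph in $\cc^l$ is built from its coloured core (a graph in $\tilde{\cc}^l$) by attaching arbitrary rooted outerplanar graphs at arbitrary vertices, and after trimming these pendant outerplanar pieces one recovers the core. This gives a composition-type identity analogous to (\ref{eq.rrr}), namely something of the form
\[
C^l(x) = \tilde{C}^l(F(x)) \cdot (\text{analytic factor}) + (\text{lower-order terms}),
\]
where $F$ is the exponential generating function of rooted connected outerplanar networks with inverse $\psi_F$ given in (\ref{eq.inv_rooted_outerplanar}), and the ``lower-order terms'' account for the non-nice graphs in $\cc^l$ (the class $\cu$) and for degenerate cases; all of these have radius of convergence at least $\rho(\cc^{l-1})$, hence strictly larger than $\sigma_l$ once we check $\sigma_l < \rho(\cc^{l-1})$.

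Next I would verify that $\sigma_l = \psi_F(r_l)$ is well-defined and lies in the relevant range. By Theorem~3.4 of \cite{momm05}, $\psi_F(u)$ is continuously increasing on $[0, u_0)$ with $u_0 = F(\rho(\cf))$, and $r_l < u_0$ must be checked (this holds since $r_l \le r_2 = 1/6$ while $u_0 = \tau(1)$ is larger, the same numerical fact used in Lemma~\ref{lem.part2main}). Thus there is a unique $\sigma_l \in (0, \rho(\cf))$ with $F(\sigma_l) = r_l$, and $F'(\sigma_l) \ne 0$. Since $\tilde{C}^l$ has a simple pole at $r_l$ with the local expansion (\ref{eq.exptildec}) and is otherwise analytic in a disc of radius $R_1 > r_l$, composing with the function $F$, which is analytic at $\sigma_l$ with non-zero derivative, produces a function $\tilde{C}^l(F(x))$ that is meromorphic in a neighbourhood of $\sigma_l$ with a single simple pole there: writing $1 - F(x)/r_l = (1 - x/\sigma_l) \cdot h(x)$ for a function $h$ analytic and non-vanishing near $\sigma_l$ (by Taylor expansion of $F$ around $\sigma_l$), one gets
\[
\tilde{C}^l(F(x)) = \frac{c_l' / h(x)}{1 - x/\sigma_l} + (\text{analytic}),
\]
and $c_l := c_l'/h(\sigma_l) = -c_l' r_l / (\sigma_l F'(\sigma_l))$ is the residue-type constant, which is positive because $F'(\sigma_l) > 0$ and, by Pringsheim, $\sigma_l$ is the dominant singularity so the coefficients force the sign.

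Then I would assemble the pieces: the analytic factor multiplying $\tilde{C}^l(F(x))$ in the composition identity is analytic at $\sigma_l$ (its radius of convergence is governed by $\rho(\ca_S)$-type quantities for proper subsets $S$, all exceeding $\sigma_l$ by the induction underlying Lemma~\ref{lem.rho.tildeC} together with $\rho(\cf) > \sigma_l$), and likewise the $\cu$-contribution and degenerate terms are analytic in a disc of radius strictly larger than $\sigma_l$ since their radii are at least $\psi_F(\rho(\tilde{\cc}^{l-1})) = \psi_F(r_{l-1}) > \sigma_l$. Hence $C^l(x)$ has the claimed form with $R := \min$ of all these larger radii and $g$ collecting everything analytic. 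The main obstacle I anticipate is bookkeeping the composition identity precisely — making sure the decomposition of a graph in $\cc^l$ into ``coloured core with rooted outerplanar graphs hung at each vertex'' is stated as a genuine combinatorial isomorphism (so that $C^l(x)$ really is $\tilde{C}^l(F(x))$ times an explicit analytic correction plus $\cu$), and that the substitution is legitimate, i.e.\ $F$ maps a neighbourhood of $\sigma_l$ biholomorphically onto a neighbourhood of $r_l$ inside the domain of analyticity of $\tilde{C}^l$. Once that is set up, the transfer of the simple pole is routine, mirroring the $\{K_4\}$ argument and the supercritical-composition discussion in \cite{fs09}.
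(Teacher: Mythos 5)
Your proposal follows essentially the same route as the paper: decompose $\cc^l$ into nice graphs (whose coloured core gives $\bar{C}(x)=\tilde{C}^l(F(x))$) plus the non-nice class $\cu$, verify $r_l<F(\rho(\cf))$ so that $\sigma_l=\psi_F(r_l)<\rho(\cf)$, transfer the simple pole of $\tilde{C}^l$ at $r_l$ through $F$ (the paper uses Theorem~V.1 of \cite{fs09} where you factorise $1-F(x)/r_l$ by hand; these are equivalent), and absorb $U^{<l>}$ and the remaining terms into $g$ since their radii are at least $\rho(\cc^{l-1})=\sigma_{l-1}>\sigma_l$. Two small slips: in the $\{K_{2,3},K_4\}$ case the composition identity for nice graphs is clean, $\bar{C}(x)=\tilde{C}^l(F(x))$, with no multiplicative analytic factor of the kind appearing in (\ref{eq.rrr}) for the $\{K_4\}$ case; and your formula for $c_l$ carries a spurious minus sign — differentiating $1-F(x)/r_l=(1-x/\sigma_l)h(x)$ at $\sigma_l$ gives $h(\sigma_l)=\sigma_l F'(\sigma_l)/r_l$, hence $c_l=c_l'r_l/(\sigma_l F'(\sigma_l))>0$ directly, matching the paper.
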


\begin{proof}
 %%%   Let us first consider the exponential generating function $\tilde{C}^l$
 %%%   given in (\ref{eq.outerplanarc}).
 %%%   Note, that $r_2 = \frac 1 6$ and $r_{l+1} < r_l$ for $l = 2, 3, \dots$. 
 %%%   Since $l \ge 2$ we have using (\ref{eq.rhoDrho}) that $2^l \rho(\tilde{\cd}) \tilde D (\rho(\tilde D)) > 1$, therefore we have the supercritical
 %%%   composition. The unique %positive
 %%%   solution 
 %%%   of $2^l x \tilde{D}(x) = 1$ is $r_l$.
 %%%   %By the monotonicity of $x \tilde{D}(x)$, we
%
 %%%   Write $f(x) = 2^l x \tilde{D}(x)$.
 %%%   Because $\tilde{\cd}$ includes the network of size~$0$, $[x^n] f(x)>  0$ for each $n = 1, 2, \dots$.
 %%%   Since $x L(x)^2$ has convergence radius larger than $r_l$, we get by Theorem~V.1 of \cite{fs09} (and its proof) that there is $R_1 > r_l$
 %%%   such that 
 %%%   $\tilde{C}$ is meromorphic and has only one pole $r_l$ (which is simple) in the closed disk $\{z: |z| \le R_1\}$.
 %%%   Also, for $x \le R_1$, the function $\tilde{C}(x)$ satisfies
 %%%   \[
 %%%   \tilde{C}(x) = \frac {c_l'} {1 - x/r_l} + g_1(x)
 %%%   \]
 %%%   where $g_1(x)$ is analytic at $x$ with $|x| \le R_1$ and
 %%%   %Using Section~VI.9 of \cite{fs09},
 %%%   %the convergence
 %%%   %radius and the function $\tilde{C}$ is $\Delta$-analytic
 %%%   %with a unique dominant singularity $r_l$. Furthermore, for $x$ in the neighbourhood
 %%%   %of $r_l$ we have     
 %%%   %\[
 %%%   %\tilde{C}(x) = \frac {c_l'} {1 - x/r_l} + O(1).
 %%%   %\]
 %%%   %where
 %%%   \[
 %%%   c_l' = \frac {r_l D(r_l) L(r_l)} {2^{l+1} (\tilde{D}(r_l) + r_l \tilde{D}'(r_l)) }.
 %%%   \]
    The class $\bar{\cc} = \bar{\cc}^{<l>}$ of nice graphs in $\cc^l$ has %with the coloured core in $\tilde{\cc}$ has
    exponential generating function
    \[
    \bar{C}(x) = \tilde{C}(F(x)).
    \]
    By \cite{momm05}, $\tau = F(\rho(\cf))$ is the smallest positive solution
    of $3u^4 - 28 u^3 + 70 u^2 - 58 u + 8 = 0$, and a numeric evaluation yields that
    $\tau = 0.170\dots > 1/6 \ge r_l$. Furthermore,
    clearly $|\cf_n| > 0$ for $n = 1, 2,\dots$.
    Thus by \ref{eq.inv_rooted_outerplanar}
%    \begin{equation}\label{eq.sigmal}
      $\sigma_l = \psi_F(r_l)$ % %= e^{\frac 1 8 \left( (1-6r_l+r_l^2)^{1/2}- 5 r_l - 1 \right)}
%    \end{equation}
    is the smallest positive solution of $F(x) = r_l$
    and the unique dominant singularity of $\bar{C}(x)$.

    Using Lemma~\ref{lem.rho.tildeC} we get
    \[
        \bar{C}(x) = \frac {c_l'} {1 - F(x)/r_l} + g_1(F(x))
    \]
    and $g_1(x)$ is analytic for $|x| < R_1$ where $R_1 > r_l$.
    Since $F$ has convergence radius larger than $\sigma_l$, there is $\eps > 0$, such 
    that $F(x) < R_1$ for $x \in (0, \sigma_l + \eps)$. By the triangle inequality,
    for any $t \in \mathbb C$, $|t| < \sigma_1 + \eps$ we have $|F(t)| \le F(|t|) < R_1$
    so $g_1(F(x))$ is analytic at $x = t$.

    Now applying the supercritical 
    composition schema (Theorem~V.1 of \cite{fs09}) to the function $c_l' (1-F(x)/r_l)^{-1}$, we see
    that there is $R_2 \in (\sigma_l, \sigma_l + \eps)$ such that
    $\bar{C}(x)$ satisfies for $x \in \Delta := \{z \in \mathbb C, |z| < R_2\}$ 
    %for supercritical composition
    \[
    \bar{C}(x) = \frac {c_l} {1 - x/\sigma_l} + g_2(x), \quad %O\left( \frac 1 {(1 - x/\rho_l)^2} \right), 
      c_l = \frac {r_l c_l'} {\sigma_l F'(\sigma_l)} 
    \]
    for some function $g_2(x)$ which is analytic in $\Delta$.
    %\[
    %\]

    Finally, to obtain $C^l(x)$ we have to add to $\bar{C}(x)$ the exponential generating
    function $U^{<l>}(x)$ of graphs in $\cc^l$ that are not nice. An argument analogous
    to the one presented in the proof of Lemma~\ref{lem.unrootable} shows that $\rho(\cu^{<l>}) \ge \rho(\cc^{l-1})$.
    Furthermore, \cite{momm05} showed that the convergence radius of the class of outerplanar graphs $\rho(\ex \cb) = 0.1365..$.
    Now in the case $l=2$ the lemma follows, since $\rho(\cu^2) \ge \rho(\cc^1) \ge \rho(\ex \cb)> \sigma_2 = 0.1353..$,
    so $U^{<2>}(x)$ is analytic at any $t$ with $|t| < R:=\min (\rho(\ex \cb), R_2)$. Since $(r_l, l=1,2,\dots)$ is strictly decreasing
    and $\psi_F$ is increasing for $x \in (0, \rho(\cf))$, we have that $(\sigma_l, l = 1, 2, \dots)$ is strictly decreasing.
    Therefore have that $\rho(\cu^{<l>}) \ge \rho(\cc^{l-1}) = \sigma_{l-1} > \sigma_l$ by induction, and the lemma follows similarly
    as in the case $l = 2$.
    %Using Lemma~\ref{lem.structouterplanar} and monotonicity of $\psi_F$,
    %this class has a radius of convergence at least $\psi_F(r_{l-1}) > \sigma_l$, and the lemma follows.
    %By the general supercritical composition schema (\cite{fs09}), $\bar{C}(x)$ is $\Delta$-analytic
    %and $\rho_l$ is its unique dominant singularity.
    %Thus the Transfer method of \cite{fs09} completes 
    %the proof.
\end{proof}

\begin{lemma}\label{lem.outerplanarAl}
    Let $l \ge 2$ be an integer and let $\cb = \{K_{2,3}, K_4\}$. Let $c_l, \sigma_l$ and $g$ be
    as in Lemma~\ref{lem.outerplanarCl}. Then
    \[
    |\cc^l_n| = c_l n! \sigma_l^{-n} (1 + o(1))
    \]
    and 
    \[
    |\crd {l,n}| = b_l n^{-3/4} e^{2 (c_l n)^{1/2}} n! \sigma_l^{-n} (1 + o(1))
    \]
    where
    \[
    b_l = \frac {c_l^{1/4} e^{c_l/2 + g(\sigma_l)}} {2 \pi^{1/2}}.
    \]
\end{lemma}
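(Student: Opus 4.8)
The plan is to handle the two asymptotic formulas separately: the estimate for $|\cc^l_n|$ is a one-line consequence of Lemma~\ref{lem.outerplanarCl}, while the estimate for $|\crd{l,n}|$ comes from a saddle-point analysis of the exponential generating function $A_l(x)=\exp(C^l(x))$.

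First I would dispose of the connected count. By Lemma~\ref{lem.outerplanarCl} we have $C^l(x)=c_l\,(1-x/\sigma_l)^{-1}+g(x)$ with $g$ analytic in the disc $|x|<R$ for some $R>\sigma_l$, so $C^l$ is meromorphic in $|x|<R$ with a single, simple pole at $\sigma_l$. Extracting coefficients, $[x^n]C^l(x)=c_l\sigma_l^{-n}+[x^n]g(x)$, and for any $\rho'\in(\sigma_l,R)$ Cauchy's inequality gives $[x^n]g(x)=O(\rho'^{-n})=o(\sigma_l^{-n})$. Multiplying by $n!$ yields $|\cc^l_n|=c_l\,n!\,\sigma_l^{-n}(1+o(1))$.

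For the total count I would use that $\crd l$ is decomposable (a coloured graph lies in $\crd l$ exactly when each of its components does), so by the exponential formula $A_l(x)=\exp(C^l(x))$. Using Lemma~\ref{lem.outerplanarCl} and writing $c_l(1-x/\sigma_l)^{-1}=c_l+c_l(x/\sigma_l)(1-x/\sigma_l)^{-1}$,
\[
A_l(x)=e^{c_l}\,e^{g(x)}\,\exp\!\Big(\frac{c_l\,x/\sigma_l}{1-x/\sigma_l}\Big),
\]
so after the substitution $w=x/\sigma_l$ one must estimate $[w^n]\big(\psi(w)\,e^{c_l w/(1-w)}\big)$, where $\psi(w)=e^{c_l}e^{g(\sigma_l w)}$ is analytic for $|w|<R/\sigma_l$ and $\psi(1)=e^{c_l+g(\sigma_l)}\ne 0$. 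The series $e^{c_l w/(1-w)}$ has an essential singularity of ``exponential-of-a-pole'' type at $w=1$; for $c_l=1$ it enumerates fragmented permutations, and it is one of the standard examples treated by the saddle-point method in \cite{fs09}, Chapter VIII. I would carry out (or quote) that analysis: integrating $\psi(w)e^{c_l w/(1-w)}w^{-n-1}$ over the circle $|w|=r_n$ with $r_n=1-\sqrt{c_l/n}$ the approximate saddle, one checks that $\mathrm{Re}\big(c_l w/(1-w)\big)$ is maximised on that circle at $w=r_n$, so the integral concentrates in a shrinking arc about $r_n$; a local Gaussian approximation there gives
\[
[w^n]\,e^{c_l w/(1-w)}=\frac{c_l^{1/4}e^{-c_l/2}}{2\sqrt{\pi}}\,n^{-3/4}e^{2\sqrt{c_l n}}\,(1+o(1)),
\]
and, since $r_n\to 1$ and $\psi$ is continuous and nonzero at $1$, the smooth factor merely contributes $\psi(1)(1+o(1))$. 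Undoing the substitution, $[x^n]A_l(x)=\sigma_l^{-n}[w^n]\big(\psi(w)e^{c_l w/(1-w)}\big)$, and multiplying by $n!$ gives $|\crd{l,n}|=b_l\,n^{-3/4}e^{2(c_l n)^{1/2}}n!\,\sigma_l^{-n}(1+o(1))$ with $b_l=c_l^{1/4}e^{c_l/2+g(\sigma_l)}/(2\sqrt{\pi})$ (using $e^{c_l}e^{-c_l/2}=e^{c_l/2}$), exactly as stated.

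The conceptual content is entirely in the saddle-point estimate of $e^{c_l w/(1-w)}$; everything else is routine. The one step that needs care is the justification that the contour integral localises — a uniform bound on $|\psi(w)e^{c_l w/(1-w)}|$ away from the positive real axis on the saddle circle, together with the fact that multiplying by the smooth nonvanishing factor $\psi$ does not disturb the localisation. This is precisely the admissibility check for functions with a single finite dominant singularity of this type, and I would either verify it directly or invoke the corresponding schema of \cite{fs09}, Chapter VIII.
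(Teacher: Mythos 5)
Your proof is correct and follows essentially the same route as the paper: the paper's one-line proof simply replaces your explicit saddle-point computation with a citation to Proposition~23 of Bousquet-M\'elou and Weller~\cite{bmw2013}, which packages precisely the asymptotics of $[x^n]e^{C(x)}$ when $C(x)=c(1-x/\rho)^{-1}+g(x)$ with $g$ analytic past $\rho$. Your unpacking of that proposition is accurate, including the constant $b_l$ (via the observation $e^{c_l}e^{-c_l/2}=e^{c_l/2}$), and the estimate $|\cc^l_n|=c_l\,n!\,\sigma_l^{-n}(1+o(1))$ indeed falls out trivially from Lemma~\ref{lem.outerplanarCl} as you note.
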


\begin{proof}
    The lemma follows by Lemma~\ref{lem.outerplanarCl} and Proposition~23 of \cite{bmw2013}.
\end{proof}

\bigskip

\begin{proofof}{Theorem~\ref{thm.outerplanar}}
    By \cite{momm05}, there are computable
    constants $h$ and $\gamma$, with $\gamma^{-1} = 0.1365..$
    such that the
    number of outerplanar graphs on vertex set $[n]$ is
    \[
    h n^{-3/2} \gamma^n n! (1 + o(1)).
    \]
    %For $k=1$, u
    Using Lemma~\ref{lem.outerplanarCl}  %(\ref{eq.sigmal})
    and  Theorem~1.2 of \cite{cmcdvk2012}
    \[
    \gamma(\rd 3 \cb) = \sigma_3^{-1}  = 10.482.. < \gamma(\apex (\ex \cb)) = 2 \gamma = 14.642..
    \]
    %Here on the right we used
    Therefore by %Lemma~\ref{lem.apexdomination} 
    Theorem~\ref{thm.main1} we have
    \[
    |(\ex 2 \cb)_n| = | (\apex (\ex \cb))_n| (1 + e^{-\Theta(n)}),
    \]
    and using Theorem~1.2 of \cite{cmcdvk2012} 
    \[
    |(\ex 2 \cb)_n|  = \frac {h} {2 \gamma} n^{-3/2} n! (2 \gamma)^n (1 + o(1)).
    \] 
    Now 
    \[
    \gamma(\rd 5 \cb) = \sigma_5^{-1} = 34.099..
    \]
    and $\gamupper(\apex(\ex 2 \cb)) \le 4 \gamma = 29.2.. < \gamma(\rd 5 \cb)$.
    By Lemma~\ref{lem.gamupper} and Lemma~\ref{lem.outerplanarAl}
    \[
    \gamma_k' = \gamma(\ex (k+1) \cb) = \gamma(\rd {2k+1} \cb) < \gamma(\apexp k (\ex \cb))
    \]
    for each $k = 2, 3, \dots$.

    %By counting graphs $G^{[2k+1]}$, $G\in \crd {2k+1, n-2k-1}$ 
    Furthermore, using Lemma~\ref{lem.outerplanarAl} there are constants $b_{2k+1}, c_{2k+1} > 0$ such that
    \begin{align*}
    &|(\ex (k + 1) \cb)_{n+2k+1}| \ge 2^{\binom {2k+1} 2} |\crd {2k+1, n}| 
    \\ &= 2^{\binom {2k+1} 2} b_{2k+1} n^{-3/4} 
    \exp{\left(2 (c_{2k+1} n)^{1/2}\right)} n! (\gamma'_k)^n (1 + o(1))
    \\ & = e^{\Omega(\sqrt {n+2k+1})} (n+2k+1)! (\gamma_k')^{n+2k+1}.
    \end{align*}
    Finally, the values of $\gamma_k' = \sigma_{2k+1}^{-1}$ for $k = 2, 3, \dots$
    can be obtained using the closed-form expression $\sigma_l = \psi_F(r_l)$.
\end{proofof}

\section{Concluding remarks}
\label{sec.conclusion}

As the length of this paper indicates, analysis of classes without $k+1$ disjoint minors in $\cb$ becomes
more involved as the excluded minors get more complicated. In this work we concentrated on 
families of sets $\cb$, not covered by the results of \cite{cmcdvk2012}:
we found that indeed the highest-level structure of typical graphs in such cases
may obey a different pattern.

There are a few possible directions of further research. 
One can conjecture that for $\cb$ as in Theorem~\ref{thm.main1}, and perhaps for more general $\cb$, 
all but an exponentially small proportion of graphs in $(\ex (k+1) \cb)_n$ belong
to one of the classes $\rd {2k+1} \cb$ or $\apexp k (\ex \cb)$.
For certain $\cb$ our results imply part of this conjecture, and we gave specific
examples where this conjecture holds. To advance it further, one would need
to develop a general way of comparing growth constants for two or more candidate subclasses.
It is not clear whether this can be done without knowing the specific structure and
generating functions for $\rd {2k+1} \cb$.

It seems plausible, that for classes $\ca$ with $\aw 2 (\ca) \le j$ an analogue of Theorem~1.2 is true 
with a more general kind of redundant blockers. One can go even further and formulate conjectures as in \cite{cmcdvk2012} about classes $\ex (k+1) \cb$ in the case
when $\ex \cb$ contains all $j$-fans, $j \ge 2$, but not all $(j+1)$-fans. 
Yet another level of complexity would be to obtain any results 
in the case when $\cb$ does not contain a planar graph.

%It would be interesting to see if the conditions of Theorem~\ref{thm.main1} can be improved, and whether
%this theorem can be generalised to graphs with 

In Section~\ref{sec.part2} we proved decompositions for the class $\rd l K_4$ for general $l=1,2,\dots$. 
In the following table we present growth constants for $l$ up to $5$ obtained automatically with the help of Maple (and a simple
program to enumerate trees in $\ct_l'$ by size and number of leaves).
We explicitly proved validity of the numerical estimates up to $l=3$ in this paper.
\begin{center}
    \begin{tabular}{ | r | l | l | l |}
    \hline
    $l$ & $\gamma(\rd l K_4)$ & Comment & $\gamma(\rd l K_4)/(2^l e)$  \\ 
    \hline
    1 & 9.073311.. &  $=\gamma(\ex K_4)$, \cite{momm05} & 1.67..\\ 
    2 & 12.677273.. &  &1.17..\\
    3 & 23.524122.. & $=\gamma(\ex 2 K_4)$ & 1.08.. \\
    4 & 45.5488.. & & 1.05..\\
    5 & 89.5512.. & $=\gamma(\ex 3 K_4)$ & 1.03.. \\
    \hline
  \end{tabular}
\end{center}
The last column shows the ratio of the growth constant of $\rd l K_4$ and
the growth constant of the class $\apexp l( \ex K_3)$ (see \cite{cmcdvk2012}), where $\ex K_3$ is the class
of forests of labelled trees. Not surprisingly, the numerical estimates indicate that
this ratio approaches 1 as $l$ increases. A similar situation can be observed with the ratio $\gamma(\rd l \{ K_{2,3}, K_4 \})/2^l$.
%Thus, for large $k$ and certain $\cb$, $\ex (k+1) \cb$
This prompts the following questions:
%can growth constants of $\ex (k+1) \cb$ when $k$ is large
%be approximated by a growth constant of a much simpler class more generally.
is it possible that for some $k = k(n) \to \infty$, 
a typical graph from $(\ex (k + 1) K_4)_n$
consists of a forest and $2k+1$ apex vertices with probability $1 - o(1)$? 
Can this be generalised?

\end{document}